\documentclass[a4paper,11pt]{article}
\usepackage{amsfonts}
\usepackage{mathrsfs}
\usepackage{amsmath,amssymb,amsthm}
\usepackage{mathtools}
\usepackage{latexsym}
\usepackage{indentfirst}
\usepackage[top=1in, bottom=1in, left=1in, right=1in]{geometry}
\usepackage{color}
\usepackage{footmisc}
\usepackage{endnotes}
\usepackage{algpseudocode}
\usepackage{algorithm}
\usepackage{float}
\usepackage{graphicx}
\usepackage{caption}
\usepackage{subcaption}
\usepackage{epstopdf}
\usepackage{enumitem}
\usepackage{comment}
\usepackage{longtable}
\usepackage{multirow}
\usepackage[font={footnotesize}]{caption}
\usepackage{authblk}
\usepackage{bookmark}
\usepackage[numbers]{natbib}
\usepackage[toc,page,titletoc]{appendix}
\usepackage[capitalise,nameinlink]{cleveref}
\usepackage{arydshln}

\setlength{\footnotesep}{3ex}
\let\footnote=\endnote

\newcommand{\GG}[1]{}

\DeclareMathOperator*{\pr}{Pr}
\DeclareMathOperator*{\Cov}{Cov} \DeclareMathOperator*{\Var}{Var}
\DeclareMathOperator*{\EE}{E}

\DeclareMathOperator*{\diag}{diag}

\DeclareMathOperator*{\gp}{GP}
\DeclareMathOperator*{\kl}{KL}

\DeclareMathOperator*{\tr}{tr}
\DeclareMathOperator*{\op}{op}

\DeclareMathOperator*{\tv}{TV}
\DeclareMathOperator*{\numer}{\mathsf{N}}
\DeclareMathOperator*{\denom}{\mathsf{D}}

\def\underkappa{\underline\kappa}
\def\overkappa{\overline\kappa}
\newcommand{\ud}{\mathrm{d}}

\newtheorem{theorem}{Theorem}
\newtheorem{lemma}{Lemma}
\newtheorem{corollary}{Corollary}
\newtheorem{prop}{Proposition}

\theoremstyle{definition}

\allowdisplaybreaks

\DeclareMathOperator{\Acal}{\mathcal{A}}
\DeclareMathOperator{\Bcal}{\mathcal{B}}
\DeclareMathOperator{\Ccal}{\mathcal{C}}
\DeclareMathOperator{\Dcal}{\mathcal{D}}
\DeclareMathOperator{\Ecal}{\mathcal{E}}
\DeclareMathOperator{\Fcal}{\mathcal{F}}

\DeclareMathOperator{\Hcal}{\mathcal{H}}
\DeclareMathOperator{\Ical}{\mathcal{I}}

\DeclareMathOperator{\Kcal}{\mathcal{K}}
\DeclareMathOperator{\Lcal}{\mathcal{L}}

\DeclareMathOperator{\Ncal}{\mathcal{N}}

\DeclareMathOperator{\Scal}{\mathcal{S}}

\DeclareMathOperator{\Wcal}{\mathcal{W}}

\def\RR{\mathbb{R}}
\def\ZZ{\mathbb{Z}}
\def\CC{\mathbb{C}}
\def\NN{\mathbb{N}}
\def\ee{\mathrm{e}}

\def\rr{\mathrm{r}}
\def\vv{\mathrm{v}}
\def\bbm{\mathrm{m}}

\def\sP{\mathsf{P}}
\def\cbeta{b}

\begin{document}
\title{\bf Bayesian Fixed-domain Asymptotics for Covariance Parameters in a Gaussian Process Model}

\author[1]{Cheng Li \thanks{stalic@nus.edu.sg}}
\affil[1]{Department of Statistics and Data Science, National University of Singapore}

\date{}
\maketitle

\begin{abstract}
Gaussian process models typically contain finite dimensional parameters in the covariance function that need to be estimated from the data. We study the Bayesian fixed-domain asymptotics for the covariance parameters in a universal kriging model with an isotropic Mat\'ern covariance function, which has many applications in spatial statistics. We show that when the dimension of domain is less than or equal to three, the joint posterior distribution of the microergodic parameter and the range parameter can be factored independently into the product of their marginal posteriors under fixed-domain asymptotics. The posterior of the microergodic parameter is asymptotically close in total variation distance to a normal distribution with shrinking variance, while the posterior distribution of the range parameter does not converge to any point mass distribution in general. Our theory allows an unbounded prior support for the range parameter and flexible designs of sampling points. We further study the asymptotic efficiency and convergence rates in posterior prediction for the Bayesian kriging predictor with covariance parameters randomly drawn from their posterior distribution. In the special case of one-dimensional Ornstein-Uhlenbeck process, we derive explicitly the limiting posterior of the range parameter and the posterior convergence rate for asymptotic efficiency in posterior prediction. We verify these asymptotic results in numerical experiments.
\end{abstract}
{\bf Keywords:} Fixed-domain asymptotics, Limiting posterior distribution, Mat\'ern covariance function, Asymptotic efficiency in posterior prediction

\section{Introduction} \label{sec:intro}

Gaussian processes (GP) have been widely used in spatial statistics, computer experiments, machine learning, and many other fields. In this paper, we consider the observation from the following spatial Gaussian process regression model, known as the \textit{universal kriging model} (Chapter 3 Section 3.4, \citet{Cre93}):
\begin{align} \label{eq:obs.model}
Y(s_i) & = \bbm(s_i)^\top \beta +  X(s_i),\quad \text{for } i=1,\ldots,n.
\end{align}
In the model \eqref{eq:obs.model}, $\Scal_n=\{s_1,\ldots,s_n\}$ is a sequence of distinct sampling points in the fixed domain $\Scal=[0,T]^d$, and $0<T<\infty$ is a known constant and the dimension $d\in \{1,2,3\}$. Such a dimension $d$ is of primary interest in spatial statistics. Here $\bbm(\cdot)=(\bbm_1(\cdot),\ldots,\bbm_p(\cdot))^\top$ is a $p$-dimensional vector of linearly independent and known deterministic functions defined on $\Scal$, and $\beta\in \RR^p$ is the regression coefficient vector. In applications, $\bbm_1,\ldots,\bbm_p$ can include the constant function $1$, and hence $\beta$ can include an intercept term. In the model \eqref{eq:obs.model}, $X(\cdot)$ is a mean-zero Gaussian stochastic process $X=\left\{X(s):s\in \Scal\right\}$. We assume that the covariance function of $X$ is the isotropic Mat\'ern covariance function given by
\begin{align} \label{eq:MaternCov}
\Cov(X(s),X(t)) &= \sigma^2 K_{\alpha,\nu}(s-t) =\sigma^2\frac{2^{1-\nu}}{\Gamma(\nu)}\left(\alpha \|s-t\|\right)^{\nu} \Kcal_{\nu}\left(\alpha\|s-t\|\right),
\end{align}
for any $s,t\in \Scal$, where $\nu>0$ is the smoothness parameter, $\sigma^2>0$ is the variance (or partial sill) parameter, and $\alpha>0$ is the inverse range (or length-scale) parameter, $\Kcal_{\nu}(\cdot)$ is the modified Bessel function of the second kind (\citet{Kre12}), and $\|\cdot\|$ is the Euclidean norm. The Mat\'ern covariance function is popular in applications of spatial statistics and computer experiments because the smoothness parameter $\nu$ provides flexibility in controlling the smoothness of sample paths (\citet{Stein99a}). The observed data from the model \eqref{eq:obs.model} are $Y_n=(Y(s_1),\ldots,Y(s_n))^\top$. Parameter estimation and prediction of $Y(\cdot)$ at a new spatial location (known as kriging) is based on $Y_n$. For simplicity, we call $\alpha$ the range parameter in the rest of the paper.

In Bayesian inference on GP models (\citet{HanSte93}, \citet{DeO97}), it is common practice to assign prior distributions on the regression coefficient $\beta$ and the covariance parameters $(\sigma^2,\alpha)$, and the prediction of $Y(s^*)$ at a new location $s^*$ is based on the posterior distribution of $(\beta,\sigma^2,\alpha)$. There is abundant literature in Bayesian spatial statistics on speeding up the costly GP posterior computation for spatial datasets with a large sample size $n$ (\citet{Banetal08}, \citet{SanHua12}, \citet{Datetal16}, \citet{Guhetal17}, \citet{Heatonetal18}, etc.) However, there is a clear lack of theoretical understanding of the asymptotic properties of the Bayesian posterior distributions of covariance parameters $(\sigma^2,\alpha)$. This theory is important because in Bayesian inference, instead of taken as fixed values, the covariance parameters $(\sigma^2,\alpha)$ are randomly drawn from their posterior using sampling algorithms such as Markov chain Monte Carlo (MCMC), which eventually affect the posterior prediction performance of the GP model.

To illustrate our motivation, we fit a Bayesian universal kriging model in \eqref{eq:obs.model} to the sea surface temperature (SST) data. The data is obtained from National Oceanographic Data Centres (NODC) World Ocean Database (\href{https://www.ncei.noaa.gov/products/world-ocean-database}{https://www.ncei.noaa.gov/products/world-ocean-database}) and the entire data corresponds to sea surface temperature measured by remote sensing satellites on 16th August 2016. The data we test come from the Pacific Ocean between $45^\circ$--$48^\circ$ north latitudes and $150^\circ$--$153^\circ$ west longitudes. The original dataset is high-resolution on a $0.025^\circ\times 0.025^\circ$ fine grid. We choose subsets of size $\{400, 800, 1200, 1600, 2000\}$ on equispaced grids. For the regressors $\bbm(\cdot)$, we include all $p=10$ monomials of the latitude and longitude up to degree 3, since on average SST is lower at higher latitudes. We set $\nu=1/2$, and assign a flat prior $\pi(\beta)\propto 1$ on $\beta$, an inverse gamma prior with shape and rate parameters both equal to 2 on $\sigma^2$ and an independent $\text{Uniform}(0.01,300)$ prior on $\alpha$. The marginal posterior densities of $\alpha$ and $\theta=\sigma^2\alpha$ are shown in Figure \ref{fig:sst} below. As the sample size increases, the marginal posterior density of the parameter $\theta=\sigma^2\alpha$ seems to contract faster with $n$ than that of the range parameter $\alpha$. Even with sample size $n=2000$, the posterior of $\alpha$ still has a relatively large uncertainty. It is natural to ask the following questions: (i) Do the posteriors of $(\sigma^2,\alpha)$ (or $(\theta,\alpha)$) converge, and if so, at what rates? (ii) How does the posterior uncertainty in $(\sigma^2,\alpha)$ affect the posterior prediction of the response $Y(\cdot)$ at a new location?

\begin{figure}
\centering
\includegraphics[width=0.8\textwidth]{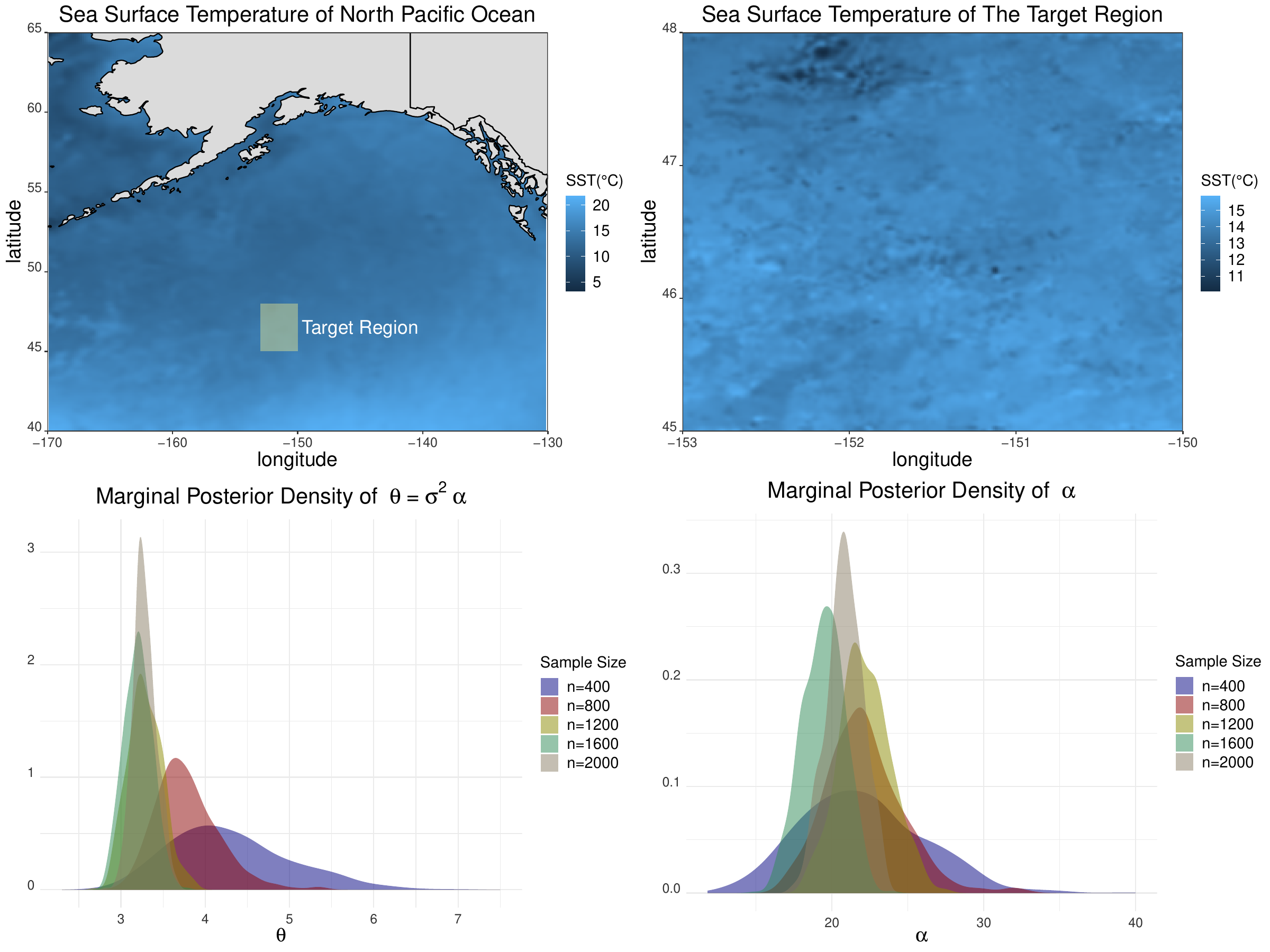}
\caption{Example of the Sea Surface Temperature (SST) data. Top left: The SST data in North Pacific Ocean and the target region of our sampled data. Top right: The SST data in the target region. Bottom left: The marginal posterior densities of $\theta=\sigma^2\alpha$ for sample sizes $n=400,800,1200,1600,2000$.  Bottom right: The marginal posterior densities of $\alpha$ for sample sizes $n=400,800,1200,1600,2000$. The posterior densities are based on 2000 MCMC draws.}
\label{fig:sst}
\end{figure}

We provide an answer to (i) by studying the limiting posterior distributions of the covariance parameters $(\sigma^2,\alpha)$ in the Mat\'ern covariance function in \eqref{eq:MaternCov}, under the \textit{fixed-domain asymptotics (or infill asymptotics)} framework (\citet{Stein88}, \citet{Stein99a}, \citet{Zhang04}). We further answer (ii) and show that the randomness in  $(\sigma^2,\alpha)$ in general does not affect the posterior prediction performance. To the best of our knowledge, this paper is the first theoretical work on the fixed-domain asymptotics for the Bayesian posterior distribution of the finite dimensional parameters in Gaussian process covariance functions. In the following, we explain the reasons we adopt the fixed-domain asymptotics regime and the main technical challenges.

\subsection{Why fixed-domain asymptotics?} \label{subsec:reasons}

In the fixed-domain asymptotics regime, the domain $\Scal$ remains fixed and bounded regardless of the increasing sampling size $n$. This implies that as $n$ goes to infinity, the sampling points $\Scal_n$ become increasingly dense in the domain $\Scal$, leading to increasingly stronger dependence between adjacent observations in $Y_n$. Besides the fixed-domain asymptotics regime, there are also increasing-domain asymptotics (\citet{MarMar84}) and mixed-domain asymptotics (\citet{Chaetal17}), in which the domain is assumed to increase as $n$ goes to infinity and therefore the minimum distance between two adjacent sampling points is either not decreasing or decreasing slowly with $n$.

Compared to these alternatives, the fixed-domain setup has several advantages. First and foremost, a fixed domain matches up with the reality in many spatial applications. The advances in remote sensing technology make it possible to collected spatial data in larger volume and higher resolution in a given region (\citet{Sunetal18}). The motivating example above of the SST data from NODC has millions of observations with high-resolution on the $0.025^\circ\times 0.025^\circ$ fine grid (about $2\sim 4$km range). Second, since the model \eqref{eq:obs.model} has a stationary Mat\'ern covariance function, this stationarity assumption of GP is more likely to hold on a fixed domain rather than an expanding domain. Therefore, the fixed-domain asymptotics regime is more suitable for interpolation of spatial processes; see Section 3.3 of \citet{Stein99a} for a cogent argument. Third, \citet{ZhaZim05} has shown that the fixed-domain asymptotics has better parameter estimation performance than the increasing-domain asymptotics.

\subsection{What are the main difficulties in Bayesian fixed-domain asymptotics?} \label{subsec:difficulties}
Theoretically, the increasingly stronger spatial dependence among the observed data $Y_n$ in fixed-domain asymptotics leads to a lack of consistent estimation for the covariance parameters $(\sigma^2,\alpha)$ (\citet{Zhang04}) and therefore poses significant challenges to theory development. When the dimension of sampling points $d=1,2,3$, a well known fixed-domain asymptotics result \citet{Zhang04} says that it is only possible to consistently estimate the \textit{microergodic parameter} $\theta=\sigma^2\alpha^{2\nu}$ in an isotropic Mat\'ern covariance function, but not the individual variance parameter $\sigma^2$ and the range parameter $\alpha$. The microergodic parameter is defined to be the parameter that uniquely determines the Gaussian measure induced by a Gaussian process, such that different values of microergodic parameter will lead to mutually orthogonal Gaussian measures; see Section 6.2 of \citet{Stein99a} for a detailed explanation on this definition. On the other hand, both the variance and range parameters $(\sigma^2,\alpha)$ can be consistently estimated if $d\geq 5$, with the case of $d=4$ still open (\citet{And10}). Nevertheless, the cases with $d=1,2,3$ are of primary interest in spatial and spatiotemporal applications and will be our main focus.

The standard Bayesian asymptotic theory consists of results such as posterior consistency, posterior convergence rates, and the Bernstein-von Mises (BvM) theorem (\citet{GhoVan17}). For parametric models, the BvM theorem typically relies on the local asymptotic normality (LAN) condition and the existence of uniformly consistent tests; see for example, Chapter 10 in \citet{Van98}. Since no consistent frequentist estimator exists for $(\sigma^2,\alpha)$ under fixed-domain asymptotics, one cannot expect to establish posterior consistency for $(\sigma^2,\alpha)$. Instead, we will consider the microergodic parameter $\theta=\sigma^2\alpha^{2\nu}$ which can be consistently estimated, and reparametrize the covariance function \eqref{eq:MaternCov} by $(\theta,\alpha)$. \citet{Cro76} is an early work on the asymptotic normality of maximum likelihood estimator (MLE) in the presence of dependent observations and nuisance parameters. We will establish the LAN condition for the microergodic parameter $\theta$, \textit{uniformly} over a wide range of values of the ``nuisance" range parameter $\alpha$. Such a uniform LAN condition based on data with increasingly stronger dependence is new in the literature and differs significantly from the LAN in classic parametric models with independent or weakly dependent data. The asymptotic normality for microergodic parameter $\theta$ is crucial and guarantees the posterior prediction performance of $Y(\cdot)$ at a new location.

For Bayesian inference on the GP covariance parameters, the only theoretical work we are aware of is \citet{ShaRup12}, who have worked under the increasing-domain asymptotics regime and have established that the joint posterior of all parameters in the tapered covariance functions converges to a limiting normal distribution. This is similar to the classic BvM theorem since the dependence among data does not get stronger under increasing-domain asymptotics. A key assumption in \citet{ShaRup12} is that the observed covariance matrix have lower and upper bounded eigenvalues, which no longer holds under fixed-domain asymptotics.

We define some universal notation. Let $\RR^+=(0,+\infty)$. For two positive sequences $a_n$ and $b_n$, we use $a_n\preceq b_n$ and $b_n\succeq a_n$ to denote the relation $\limsup_{n\to\infty} a_n/b_n<+\infty$, and $a_n\asymp b_n$ to denote the relation $a_n\preceq b_n$ and $a_n\succeq b_n$. For any integers $k,m$, we let $I_k$ be the $k\times k$ identity matrix, $0_k$ and $1_k$ be the $k$-dimensional column vectors of all zeros and all ones, $0_{k\times m}$ be the $k\times m$ zero matrix. For any generic matrix $A$, $cA$ denotes the matrix of $A$ with all entries multiplied by the number $c$, and $|A|$ denotes the determinant of $A$. If $A$ is positive semidefinite, then $\lambda_{\min}(A)$ and $\lambda_{\max}(A)$ denote the smallest and largest eigenvalues of $A$. Let $\Ncal(\mu,\Sigma)$ be the normal distribution with mean $\mu$ and covariance matrix $\Sigma$. Sometimes to highlight the random variable $Z \sim \Ncal(\mu,\Sigma)$, we also write  $\Ncal(z;\mu,\Sigma)$ and the normal measure as $\Ncal(\ud z;\mu,\Sigma)$.

The remainder of the paper is organized as follows. In Section \ref{sec:main.bvm} we introduce the basic model setup and present the main theorems on limiting posterior distribution of covariance parameters under fixed-domain asymptotics. Section \ref{sec:PAE} presents the theory on asymptotic efficiency in posterior prediction. Section \ref{sec:simulation} presents some empirical results from simulation study to verify the main theory. Section \ref{sec:discussion} includes some discussion on further extensions. The technical proofs of all theorems, propositions, lemmas, corollaries and additional simulation results are in the Supplementary Material.

\section{Limiting Posterior Distribution for Covariance Parameters} \label{sec:main.bvm}
\subsection{Bayesian Model Setup}\label{subsec:model}
We consider the Bayesian estimation of $(\beta,\sigma^2,\alpha)$ in the model \eqref{eq:obs.model} based on the observed data $Y_n$. Throughout the paper, we assume that the domain dimension satisfies $d\in\{1,2,3\}$, and that the smoothness parameter $\nu>0$ is fixed and known. Estimation of the smoothness parameter $\nu$ is an important research topic with some recent developments in frequentist literature (\citet{Loh15}, \citet{Lohetal20}), but is beyond the scope of the current paper. We let the true parameter values in the Mat\'ern covariance function that generates $X$ be $(\sigma^2_0,\alpha_0)$ and let the true regression coefficient vector be $\beta_0$. We use the notation $X\sim \gp(0,\sigma_0^2K_{\alpha_0,\nu})$ and hence $Y\sim \gp(\bbm^\top\beta_0, \sigma_0^2K_{\alpha_0,\nu})$.

Let $Y_n=(Y(s_1),\ldots,Y(s_n))^\top$. Let $M_n$ be the $n\times p$ matrix by stacking the row vectors $\bbm(s_i)^\top$ for $i=1,\ldots,n$. Throughout the paper, we assume that $M_n$ is a rank-$p$ matrix without loss of generality, since all our results are asymptotic with $n\to\infty$. Let $R_{\alpha}$ be the implied $n\times n$ Mat\'ern correlation matrix on $\Scal_n$ indexed by $\alpha$, whose $(i,j)$th entry is $R_{\alpha,ij}=K_{\alpha,\nu}(s_i-s_j)$, for $i,j\in\{1,\ldots,n\}$. We omit the dependence of $R_{\alpha}$ on $\nu$. The covariance matrix of $X_n$ is then $\sigma^2 R_{\alpha}$. Therefore, the model \eqref{eq:obs.model} can be equivalently written as $Y_n=M_n\beta + X_n$. The log-likelihood function based on $Y_n$ is
\begin{align} \label{eq:loglik}
\Lcal_n(\beta,\sigma^2,\alpha) &= -\frac{n}{2}\log \sigma^2 - \frac{1}{2}\log |R_{\alpha}| - \frac{1}{2\sigma^2} (Y_n-M_n\beta)^\top R_{\alpha}^{-1} (Y_n-M_n\beta).
\end{align}

We study the Bayesian posterior distribution based on the log-likelihood \eqref{eq:loglik}.  We follow the common practice in Bayesian spatial modeling literature (\citet{Banetal08}, \citet{SanHua12}, \citet{Datetal16}, \citet{Guhetal17}, \citet{Heatonetal18}, \citet{Peretal21}, etc.) and assign the conjugate normal prior on $\beta$, given by
\begin{align}\label{eq:beta.prior}
\beta~|~\sigma^2,\alpha\sim \Ncal\big(0_p,\sigma^2\Omega_{\beta}^{-1}\big),
\end{align}
which uses a rescaling with $\sigma^2$, and the prior precision matrix $\Omega_{\beta}\in \RR^{p\times p}$ is assumed to be symmetric positive semidefinite. Here we can set the prior mean to be $0_p$ without any loss of generality. This is because if the prior is $\beta|\sigma^2,\alpha\sim \Ncal(\mu_{\beta},\sigma^2\Omega_{\beta}^{-1})$ and the prior mean is $\mu_{\beta}\neq 0_p$, we can always define a new response variable $Y'(s)=Y(s)-\bbm(s)^\top \mu_{\beta}$, the new regression coefficient vector $\beta'=\beta-\mu_{\beta}$, and rewrite the original model \eqref{eq:obs.model} as $Y'(s)=\bbm(s)^\top \beta' + X(s) $ for $s\in \Scal$, where $Y'(s)$ is still fully observable on $\Scal_n$ given that $\bbm(\cdot)$ is observable and $\mu_{\beta}$ is known. Furthermore, we allow the precision matrix $\Omega_{\beta}$ to be arbitrarily small, leading to a prior of $\beta$ with arbitrarily large variance. In particular, all our later theory covers the extreme case of improper noninformative prior $\pi(\beta|\sigma^2,\alpha)\propto 1$ (\citet{Beretal01}, \citet{Guetal18}), which corresponds to $\Omega_{\beta}=0_{p\times p}$. The joint posterior density of $(\beta,\sigma^2,\alpha)$ is then $\pi(\beta,\sigma^2,\alpha|Y_n)\propto \exp\{\Lcal_n(\beta,\sigma^2,\alpha)\}\pi(\beta|\sigma^2,\alpha)\pi(\sigma^2,\alpha)$. Since $\pi(\beta|\sigma^2,\alpha)$ is the normal prior density, it is straightforward to obtain the conditional posterior of $\beta$:
\begin{align} \label{eq:beta.post}
& \beta~|~\sigma^2,\alpha,Y_n\sim \Ncal\left(\widetilde\beta_{\alpha}, \sigma^2\big(M_n^\top R_{\alpha}^{-1} M_n + \Omega_{\beta}\big)^{-1}\right),
\end{align}
where $\widetilde\beta_{\alpha} = \big(M_n^\top R_{\alpha}^{-1} M_n + \Omega_{\beta}\big)^{-1}M_n^\top R_{\alpha}^{-1} Y_n$ and the subscript is to highlight its dependence on $\alpha$ but not $\sigma^2$. We can further integrate out $\beta$ and obtain the marginal posterior density of the covariance parameters $(\sigma^2,\alpha)$. We write $\pi(\sigma^2,\alpha|Y_n)\propto \exp\{\Lcal_n(\sigma^2,\alpha)\} \pi(\sigma^2,\alpha)$, where the \textit{restricted log-likelihood} $\Lcal_n(\sigma^2,\alpha)$ is given by
\begin{align} \label{eq:loglik2}
\Lcal_n(\sigma^2,\alpha) &= -\frac{1}{2\sigma^2} Y_n^\top  \left[R_{\alpha}^{-1} - R_{\alpha}^{-1} M_n \big(M_n^\top R_{\alpha}^{-1} M_n + \Omega_{\beta}\big)^{-1} M_n^\top R_{\alpha}^{-1}  \right] Y_n   \nonumber \\
&\quad -\frac{n-p}{2}\log \sigma^2 - \frac{1}{2}\log |R_{\alpha}| - \frac{1}{2}\log \big|M_n^\top R_{\alpha}^{-1} M_n + \Omega_{\beta}\big|  .
\end{align}

In spatial statistical theory, it is well known (\citet{Zhang04}) that the parameters $(\sigma^2,\alpha)$ cannot be consistently estimated under fixed-domain asymptotics. The main reason is that for two Gaussian measures $\gp(0,\sigma_j^2K_{\alpha_j,\nu})$ ($j=1,2$) on the space of sample paths on the domain $\Scal=[0,T]^d$ and $d\in \{1,2,3\}$, they are equivalent (or mutually absolutely continuous) as long as $\sigma_1^2\alpha_1^{2\nu}=\sigma_2^2\alpha_2^{2\nu}$, and they are orthogonal otherwise. As a result, one cannot tell from a finite sample which parameter values $(\sigma_j^2,\alpha_j)$ ($j=1,2$) are correct. Empirically, this phenomenon has been also observed (\citet{And10}, \citet{Fugetal19}). Despite the lack of consistent estimator for $(\sigma^2,\alpha)$, the microergodic parameter $\theta=\sigma^2\alpha^{2\nu}$ can still be consistently estimated (\citet{Zhang04}). For a fixed $\alpha>0$, we maximize $\Lcal_n(\sigma^2,\alpha)$ with respect to $\sigma^2$ (and so $\theta$) to derive the \textit{restricted maximum likelihood estimator (REML)}, given by
\begin{align}\label{tildetheta1}
\widetilde \sigma^2_{\alpha} & = \frac{1}{n-p}Y_n^\top  \left[R_{\alpha}^{-1} - R_{\alpha}^{-1} M_n \big(M_n^\top R_{\alpha}^{-1} M_n + \Omega_{\beta}\big)^{-1} M_n^\top R_{\alpha}^{-1}  \right] Y_n , ~~\widetilde\theta_{\alpha} = \alpha^{2\nu} \widetilde\sigma^2_{\alpha} .
\end{align}
In \eqref{tildetheta1}, we have slightly extended the meaning of REML such that we can account for general prior precision matrix $\Omega_{\beta}$, including the special case of $\Omega_{\beta}=0_{p\times p}$ where $\beta$ can be viewed as normal random effects of $\bbm(\cdot)$, such that $\widetilde \sigma^2_{\alpha}$ (and $\widetilde\theta_{\alpha}$) can be viewed as the conventional REML of $\sigma^2$ (and $\theta$) in random effects models. We can plug in $\widetilde \theta_{\alpha}$ in \eqref{eq:loglik} to obtain the \emph{profile restricted log-likelihood of $\alpha$} (up to an additive constant), which plays an important role in our theory:
\begin{align}\label{def:prologlik}
\widetilde \Lcal_n(\alpha) & \equiv \Lcal_n(\alpha^{-2\nu}\widetilde \theta_{\alpha}, \alpha) \nonumber \\
&= -\frac{n-p}{2}\log  \left\{\frac{1}{n-p}Y_n^\top  \left[R_{\alpha}^{-1} - R_{\alpha}^{-1} M_n \big(M_n^\top R_{\alpha}^{-1} M_n + \Omega_{\beta}\big)^{-1} M_n^\top R_{\alpha}^{-1}  \right] Y_n \right\} \nonumber \\
&\quad - \frac{1}{2}\log \left|R_{\alpha}\right| - \frac{1}{2}\log \big|M_n^\top R_{\alpha}^{-1} M_n + \Omega_{\beta}\big| - \frac{n-p}{2} .
\end{align}
The frequentist asymptotic normality for the MLE of $\theta$ has been studied for the model \eqref{eq:obs.model} without the regression term, i.e., $Y(\cdot)\equiv X(\cdot)\sim \gp(0,\sigma^2 K_{\alpha,\nu})$. For this simplified model, \citet{Ying91} first studied the special case of $d=1$ and $\nu=1/2$, followed by \citet{Zhang04}, \citet{Duetal09}, \citet{WangLoh11}, and \citet{KauSha13} for a general $\nu>0$. If $\alpha\in [\alpha_1,\alpha_2]$ for some constants $0<\alpha_1<\alpha_2<\infty$, the MLE of $\theta$, denoted by $\widehat\theta$, satisfies that $\sqrt{n}(\widehat\theta-\theta_0)\overset{\Dcal}{\rightarrow} \mathcal{N}(0,2\theta_0^2)$ as $n\to\infty$ under fixed-domain asymptotics, where $\theta_0=\sigma_0^2\alpha_0^{2\nu}$ is the true value, and $\overset{\Dcal}{\rightarrow} $ is the convergence in distribution.

We study the fixed-domain asymptotic limit for the Bayesian posterior distribution of $(\sigma^2,\alpha)$ based on the log-likelihood \eqref{eq:loglik2}. We reparametrize the model using $(\theta,\alpha)$, with $\theta=\sigma^2\alpha^{2\nu}$ being the microergodic parameter. This reparametrization has been suggested in \citet{Stein99a} (p.175) and also used in recent Bayesian GP works such as \citet{Fugetal19}. For the consistency of notation, we will still maintain the parametrization of $(\sigma^2,\alpha)$ for the log-likelihood functions and quantities related to the probability distributions, such as $P_{(\beta,\sigma^2,\alpha)}$ for the probability distribution of $\gp(m^\top \beta,\sigma^2 K_{\alpha,\nu})$. The change of variable from $\sigma^2$ to $\theta=\sigma^2\alpha^{2\nu}$ is often clear from the context. We assign prior distributions on $(\theta,\alpha)$ and write the joint prior density as $\pi(\theta,\alpha)=\pi(\theta|\alpha)\pi(\alpha)$. The joint posterior density of $(\theta,\alpha)$ is given by
\begin{align}\label{jointpost1}
\pi(\theta,\alpha| Y_n) &= \frac{\exp\left\{ \Lcal_n(\theta/\alpha^{2\nu},\alpha) \right\} \pi(\theta|\alpha)\pi(\alpha)}{\int_0^{\infty}\int_0^{\infty} \exp\left\{\Lcal_n(\theta'/\alpha^{'2\nu},\alpha') \right\} \pi(\theta'|\alpha')\pi(\alpha')\ud \alpha' \ud \theta'}.
\end{align}
We will use $\Pi(\ud \theta, \ud \alpha |Y_n)$ to denote the posterior probability measure with the density in \eqref{jointpost1}.

\subsection{Main Results} \label{subsec:main.res}
We first present the limiting posterior distribution of $\theta$ conditional on a fixed $\alpha>0$. Let $L_2(\Scal)$ be the space of square integrable functions on $\Scal$ and $\|f\|_2$ be the $L_2(\Scal)$ norm of $f$ for any $f\in L_2(\Scal)$. Let $\mathsf{j}=(j_1,\ldots,j_d)$ with $j_1,\ldots,j_d\in \NN$, $|j|=\sum_{i=1}^d j_i$, and $\mathsf{D}^{\mathsf{j}}$ be the partial differentiation operator of order $\mathsf{j}$. For $k>0$, define the Sobolev space $\Wcal_2^k(\Scal)=\Big\{f\in L_2(\Scal): \|f\|^2_{\Wcal_2^k(\Scal)} = \sum_{\mathsf{j}\in \NN^d: |\mathsf{j}|\leq k} \left\|\mathsf{D}^{\mathsf{j}}f\right\|_2^2 <\infty\Big\}$. We make the following assumptions.

\begin{enumerate}[label=(A.\arabic*)]
\item \label{assump.m.func} $\bbm_j\in  \Wcal_{2}^{\nu+d/2}(\Scal)$ for each $j=1,\ldots,p$. $M_n$ is a rank-$p$ matrix for all $\Scal_n$ with $n\geq p$.
\item \label{prior.1} The prior of $\beta$ given $(\sigma^2,\alpha)$ is $\Ncal(0_p,\sigma^2\Omega_{\beta}^{-1})$ for a symmetric positive semidefinite matrix $\Omega_{\beta}$. The conditional prior density of $\theta$ given $\alpha$, $\pi(\theta|\alpha)$, is a proper prior density that is continuously differentiable in $\theta$, continuous in $\alpha$, and finite everywhere for all $\theta\in \RR^+$ and $\alpha\in \RR^+$. $\pi(\theta|\alpha)$ does not depend on $n$. $\pi(\theta_0|\alpha)>0$ for all $\alpha>0$.
\end{enumerate}
Assumption \ref{assump.m.func} is the regularity assumption on the regression functions $\bbm_1,\ldots,\bbm_p$. By Theorem 10.35 of \citet{Wen05}, $\Wcal_{2}^{\nu+d/2}(\Scal)$ is norm equivalent to the reproducing kernel Hilbert space (RKHS) associated with the Mat\'ern kernel $\sigma_0^2 K_{\alpha_0,\nu}$. As a result, Assumption \ref{assump.m.func} implies that $\bbm_1(\cdot),\ldots,\bbm_p(\cdot)$ are smoother functions than the sample paths from $\gp(0,\sigma^2 K_{\alpha,\nu})$ for any $(\sigma^2,\alpha)\in \RR^+ \times \RR^+$; see for example, Corollary 4.15 of \citet{Kanetal18}. Such a smoothness assumption is necessary. Otherwise, if $\bbm_1,\ldots,\bbm_p$ are rougher functions than the sample path of $X$, their roughness will overwhelm the information contained in the smoother $\gp(0,\sigma_0^2K_{\alpha_0,\nu})$, and one cannot expect to estimate any covariance parameter consistently, including $\theta$. As argued in p.12 of \citet{Stein99a}, $\bbm_1(\cdot),\ldots,\bbm_p(\cdot)$ in applications are often highly regular functions such as monomials, which are infinitely differentiable on $\Scal$ and therefore, satisfy Assumption \ref{assump.m.func}. Assumption \ref{prior.1} on $\pi(\theta|\alpha)$ is mild and satisfied in most applications.

For two probability measures $P_1,P_2$, let $\|P_1(\cdot)-P_2(\cdot)\|_{\tv}=\sup_{\Acal}|P_1(\Acal)-P_2(\Acal)|$, where the supremum is taken over all measurable sets $\Acal$.
\begin{theorem}[Limiting Distribution for Conditional Posterior] \label{thm:bvm1:theta}
Suppose that $\alpha>0$ is fixed and does not depend on $n$. Under Assumptions \ref{assump.m.func} and \ref{prior.1}, the REML $\widetilde\theta_{\alpha}$ defined in \eqref{tildetheta1} is asymptotically normal, with $\sqrt{n}\big(\widetilde \theta_{\alpha}-\theta_0\big)\overset{\Dcal}{\rightarrow} \Ncal(0,2\theta_0^2)$ as $n\to\infty$. Furthermore, the conditional posterior distribution of $\theta$ given $\alpha>0$ satisfies that
\begin{align}\label{limit:dist:fix}
\left\|\Pi(\ud\theta|Y_n,\alpha) - \Ncal \left(\ud\theta \big| \widetilde\theta_{\alpha}, 2\theta_0^2/n \right) \right\|_{\tv} \preceq n^{-1/2}\log^3 n \rightarrow 0,
\end{align}
as $n\to\infty$ almost surely $P_{(\beta_0,\sigma_0^2,\alpha_0)}$, where $\widetilde\theta_{\alpha}$ is given in \eqref{tildetheta1}, and $\Pi(\cdot|Y_n,\alpha)$ is the conditional posterior probability measure of $\theta$ given a fixed $\alpha>0$ with the density
\begin{align}\label{post:theta:rho}
\pi(\theta|Y_n,\alpha)=\frac{\exp\left\{\Lcal_n(\theta/\alpha^{2\nu},\alpha)\right\}\pi(\theta|\alpha)}{\int_0^{\infty} \exp\left\{\Lcal_n(\theta'/\alpha^{2\nu},\alpha)\right\}\pi(\theta'|\alpha)\ud \theta'}.
\end{align}
\end{theorem}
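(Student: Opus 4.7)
The approach is to exploit the fact that for fixed $\alpha$ the log-likelihood \eqref{eq:loglik} collapses into an explicit one-parameter family in $\theta$: writing $X_n^\top R_\alpha^{-1}X_n = n\widetilde\theta_\alpha/\alpha^{2\nu}$ and absorbing all $\theta$-free terms into a constant $C_n(\alpha)$, one has
\begin{align*}
\ell_n(\theta;\alpha) \equiv \Lcal_n(\theta/\alpha^{2\nu},\alpha) = -\frac{n}{2}\log\theta - \frac{n\widetilde\theta_\alpha}{2\theta} + C_n(\alpha).
\end{align*}
This is the log-likelihood of a scaled inverse-gamma/chi-squared family, uniquely maximized at $\widetilde\theta_\alpha$ from \eqref{tildetheta1}, with curvature $\ell_n''(\widetilde\theta_\alpha;\alpha) = -n/(2\widetilde\theta_\alpha^2)$. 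Crucially, the $\theta$-dependent portion depends on the data only through the scalar statistic $\widetilde\theta_\alpha$, so all of the asymptotics are driven by that one number.

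The sole probabilistic input I would invoke is the almost-sure convergence $\widetilde\theta_\alpha\to\theta_0$ under $P_{(\sigma_0^2,\alpha_0)}$, valid for every fixed $\alpha>0$; this is the consistency result of \citet{Zhang04} for the microergodic-parameter estimator in the presence of a misspecified range, in the explicit form recorded by \citet{WangLoh11} and \citet{KauSha13}. Granted that, I would follow the standard BvM skeleton: change variables to $t=\sqrt{n}(\theta-\widetilde\theta_\alpha)$; Taylor-expand $\ell_n$ to third order around $\widetilde\theta_\alpha$ to obtain
\[\ell_n(\widetilde\theta_\alpha+t/\sqrt{n};\alpha)-\ell_n(\widetilde\theta_\alpha;\alpha) = -\frac{t^2}{4\widetilde\theta_\alpha^2} + O\!\bigl(|t|^3/(\sqrt{n}\,\widetilde\theta_\alpha^3)\bigr),\]
which, since $\widetilde\theta_\alpha\to\theta_0>0$, is a valid LAN expansion uniformly on any window $|t|\le M_n$ with $M_n=o(n^{1/6})$; verify posterior concentration by observing that outside this window $\ell_n(\theta;\alpha)-\ell_n(\widetilde\theta_\alpha;\alpha)\le -cM_n^2$ for some $c>0$ (directly from the closed form, using strict concavity near $\widetilde\theta_\alpha$ together with the blow-ups of $-\widetilde\theta_\alpha/(2\theta)$ as $\theta\downarrow 0$ and of $-\tfrac{1}{2}\log\theta$ as $\theta\to\infty$); and use the continuity and positivity $\pi(\theta_0|\alpha)>0$ from Assumption \ref{prior.1} to replace $\pi(\widetilde\theta_\alpha+t/\sqrt{n}|\alpha)$ by $\pi(\theta_0|\alpha)$ to leading order on the window. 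Combining these ingredients via Scheff\'e's lemma, the rescaled posterior density of $t$ converges in $L^1$ to the $\Ncal(0,2\theta_0^2)$ density. The TV claim \eqref{limit:dist:fix} then follows from TV-invariance of affine changes of variable together with the fact that $\Ncal(\widetilde\theta_\alpha,2\widetilde\theta_\alpha^2/n)$ and $\Ncal(\widetilde\theta_\alpha,2\theta_0^2/n)$ have vanishing TV distance whenever $\widetilde\theta_\alpha/\theta_0\to 1$.

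The main obstacle is not any of the BvM steps sketched above: once the closed form of $\ell_n(\cdot;\alpha)$ and the consistency of $\widetilde\theta_\alpha$ are in hand, the rest is essentially mechanical. The genuine difficulty lies in the probabilistic input itself, namely that $\widetilde\theta_\alpha\to\theta_0$ under $P_{(\sigma_0^2,\alpha_0)}$ for an \emph{arbitrary} fixed $\alpha$, a statement resting on the equivalence-of-Gaussian-measures theorem for Mat\'ern covariances and the delicate spectral behaviour of $R_\alpha^{-1}$ in dimensions $d\le 3$. For the present conditional theorem the existing frequentist results cover exactly what is needed; the corresponding \emph{uniform-in-$\alpha$} strengthening that will eventually be required for the joint statement (Theorem \ref{thm:bvm2:joint}) is where genuinely new analytic work must be done.
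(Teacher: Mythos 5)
Your proposal is correct and follows essentially the same route as the paper: the paper's Lemma \ref{lemma:gndiff1} is built on exactly the closed-form identity $\Lcal_n(\alpha^{-2\nu}\theta,\alpha)-\Lcal_n(\alpha^{-2\nu}\widetilde\theta_\alpha,\alpha)=-\tfrac{n}{2}\varphi(\widetilde\theta_\alpha/\theta)$ with $\varphi(u)=u-\log u-1$ that you identify, followed by the same window/tail decomposition, prior-ratio replacement, and an $L^1$ (Scheff\'e-type) normalization step. The only organizational difference is the probabilistic input: you import almost-sure consistency of $\widetilde\theta_\alpha$ from the frequentist literature and argue pathwise (which suffices here, since the error bound needs only $\epsilon_{1n}=|\widetilde\theta_\alpha-\theta_0|\to 0$ with a suitably slow window $s_n$), whereas the paper proves a quantitative $n^{-1/2}\log n$ concentration bound via Lemmas \ref{WangLoh:thm2.0} and \ref{lem:theta.alpha0} plus Borel--Cantelli, a stronger form it reuses for the uniform-in-$\alpha$ statement in Theorem \ref{thm:bvm2:joint} — precisely the point you flag as where the new work lies.
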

Theorem \ref{thm:bvm1:theta} shows that under fixed-domain asymptotics, the REML $\widetilde\theta_{\alpha}$ is asymptotically normal, and the conditional posterior $\pi(\theta|Y_n,\alpha)$ is asymptotically close the normal distribution $\mathcal{N}(\widetilde\theta_{\alpha},2\theta_0^2/n)$ in total variation distance. Some comments are in order.

First, to the best of our knowledge, Theorem \ref{thm:bvm1:theta} is the first in the literature to establish both frequentist and Bayeisan asymptotic normality for the microergodic parameter $\theta$ for any $\nu>0$ and $d\in\{1,2,3\}$ in the universal kriging model \eqref{eq:obs.model} with regression terms $\bbm(\cdot)^\top \beta$. Most of the existing frequentist fixed-domain asymptotic theory has considered either only the GP model with mean zero and no regression terms (\citet{Zhang04}, \citet{Duetal09}, \citet{And10}, \citet{WangLoh11}, \citet{KauSha13}, \citet{BacLag20}), or only for some particular values of $\nu$ (such as $\nu=1/2$ in \citet{Ying91}, \citet{Ying93}, \citet{Chenetal00}, \citet{Chaetal14}, \citet{Veletal17}, \citet{Bacetal19}, and $\nu=3/2$ in \citet{Loh05}). Theorem 3 of \citet{Ying91} has shown the asymptotic normality for the MLE of $\theta$ in the GP model with regression terms, but only for the special case of $\nu=1/2$ and $d=1$, and their proof techniques cannot be generalized to any $\nu>0$ and $d>1$. Our proof is based on the general RKHS theory and spectral analysis of isotropic Mat\'ern covariance functions; see Section S1 of the Supplementary Material. Theorem 3 of \citet{Ying91} almost needs that $\bbm_1,\ldots,\bbm_p \in \Wcal_2^1([0,1])$ in the special case of $\nu=1/2$ and $d=1$, i.e.,  they are bounded functions with square integrable derivatives (following the comments after their Theorem 3), which is exactly the same as the space $\Wcal_2^{\nu+d/2}(\Scal)$ assumed in our Assumption \ref{assump.m.func}.

Second, if the model \eqref{eq:obs.model} does not have regression terms $\bbm(\cdot)^\top \beta$, i.e., if $p=0$ and we observe $Y_n=X_n$ directly from $\gp(0,\sigma_0^2K_{\alpha_0,\nu})$, then the REML $\widetilde\theta_{\alpha}$ in \eqref{tildetheta1} coincides with the MLE of $\theta$, and $2\theta_0^2$ is also the asymptotic variance of this MLE (\citet{WangLoh11}, \citet{KauSha13}).

Third, the posterior convergence of \eqref{limit:dist:fix} Theorem \ref{thm:bvm1:theta} has a similar format to the classic BvM theorem in regular parametric models for independent data, such as Theorem 8.2 in \citep{LehCas98} and Theorem 10.1 in \citep{Van98}, where the limiting normal distribution is centered at the MLE with variance equal to the asymptotic variance of MLE. However, the classic BvM theorem usually relies on the LAN condition and the existence of uniformly consistent tests (Theorem 10.1 in \citep{Van98}) which can be readily verified for models with independent and weakly dependent data. The main technical challenge for proving Theorem \ref{thm:bvm1:theta} is to establish the LAN condition for data with increasingly stronger dependence under fixed-domain asymptotics. We need the asymptotic normality of the REML $\widetilde\theta_{\alpha}$ at a given range parameter $\alpha>0$ which can be different from the true $\alpha_0$. Our proof leverages the spectral analysis of Mat\'ern covariance functions (see Section S1.4 in the Supplementary Material (Li 2020)), which has also been used in the previous works for the MLE of $\theta$ for GP with mean zero (\citep{Duetal09}, \citep{WangLoh11}, and \citep{KauSha13}), though they have not considered the model with regression terms as ours. Finally, we provide an explicit convergence rate $n^{-1/2}\log^3 n$ for the convergence in total variation distance. The $\log^3 n$ term is mainly used to ensure the strong mode of almost sure convergence.

In most spatial applications, the range parameter $\alpha$ is unknown and assigned a prior $\pi(\alpha)$. Next, we present a much stronger theorem for the limit of the joint posterior distribution of $(\theta,\alpha)\in \RR^+ \times \RR^+$. The consistency of the REML of $\theta$ and the nonexistence of consistent frequentist estimator for $\alpha$ indicates that the posterior of $\theta$ should converge to a normal limit, while the posterior of $\alpha$ does not necessarily converge to any fixed value under fixed-domain asymptotics. We prove this idea rigorously.

We define two small positive constants $\underkappa$ and $\overkappa$ that depend on the smoothness $\nu>0$ and the dimension $d$ ($d\in \{1,2,3\}$), together with two deterministic sequences $\underline \alpha_n$ and $\overline \alpha_n$:
\begin{align}\label{eq:2kappa}
& \underkappa = \frac{1}{2} \min\left\{\frac{0.9}{(2d+0.94)(8\nu+3d-0.9)},~~ \frac{1}{4(3\nu+d)}, ~~0.01\right\}, \quad \underline \alpha_n= n^{-\underkappa}, \nonumber \\
&\overkappa = \frac{1}{2} \min\left\{\frac{0.9}{(2d+0.94)(8\nu+5d+0.9)},~~ \frac{1}{2(2\nu+d)},~~0.01\right\},  \quad \overline \alpha_n = n^{\overkappa}.
\end{align}
The choices of $\overkappa$ and $\underkappa$ in \eqref{eq:2kappa} are not unique and can be replaced by other sufficiently small positive numbers; see Lemma S.20 in the Supplementary Material. By definition, $\underline \alpha_n \to 0$ and $\overline \alpha_n\to +\infty$ as $n\to\infty$, and both are in slow polynomial rates. A key result below is that uniformly for all $\alpha$ in the slowly expanding interval $[\underline \alpha_n,\overline \alpha_n]$, the difference between $\widetilde \theta_{\alpha}$ and $\widetilde \theta_{\alpha_0}$ converges to zero at a faster rate than $n^{-1/2}$.
\begin{lemma}[Monotonicity and Uniform Convergence of $\widetilde \theta_{\alpha}$] \label{lem:dimension reduction}
Suppose that Assumption \ref{assump.m.func} holds. Then for the REML $\widetilde \theta_{\alpha}$ defined in \eqref{tildetheta1},
\begin{itemize}[leftmargin=5mm]
\item[(i)] $\widetilde \theta_{\alpha}$ is a non-decreasing function of $\alpha$ for all $\alpha \in \RR^+$;
\item[(ii)] There exists a large integer $N_1$ and a positive constant $\tau\in (0,1/2)$ that only depend on $\nu,d,T,\beta_0,\theta_0,\alpha_0$ and the $\Wcal_2^{\nu+d/2}(\Scal)$ norms of $\bbm_1(\cdot),\ldots,\bbm_p(\cdot)$, such that for all $n>N_1$,
\begin{align}
&\pr \left( \sup_{\alpha \in [\underline {\alpha}_n, \overline {\alpha}_n]} \sqrt{n} \left|\widetilde \theta_{\alpha} - \widetilde \theta_{\alpha_0} \right| \leq \theta_0 n^{-\tau} \right) \geq 1- \exp(-2\log^2 n), \nonumber
\end{align}
where $\pr(\cdot)$ denotes the probability under the true probability measure $P_{(\beta_0,\sigma_0^2,\alpha_0)}$.
\end{itemize}
\end{lemma}
Lemma \ref{lem:dimension reduction} involves a new discovery in Part (i) that the REML $\widetilde \theta_{\alpha}$ is monotone in $\alpha$ for the universal kriging model \eqref{eq:obs.model}. The monotonicity of $\widetilde \theta_{\alpha}$ for the universal kriging model \eqref{eq:obs.model} has significantly extended the previous work of \citet{KauSha13} which only considered the MLE of $\theta$ for GP with mean zero. Previously \citet{WangLoh11} has shown that $\big|\widetilde \theta_{\alpha} - \widetilde \theta_{\alpha_0} \big|$ can be small, but only for a \textit{fixed and known} value of range parameter $\alpha$ and only for GP with mean zero. In Part (ii) of Lemma \ref{lem:dimension reduction}, we make a novel utilization of the monotonicity of $\widetilde\theta_{\alpha}$ in $\alpha$, and prove in Lemma \ref{lem:dimension reduction} that the difference $\big|\widetilde \theta_{\alpha} - \widetilde \theta_{\alpha_0} \big|$ can be uniformly small over an expanding interval $[\underline {\alpha}_n, \overline {\alpha}_n]$ for the more general model \eqref{eq:obs.model} with regression terms. Even though the REML $\widetilde \theta_{\alpha} $ defined in \eqref{tildetheta1} is in fact a stochastic process indexed by $\alpha$, our techniques using the monotonicity property of $\widetilde \theta_{\alpha}$ have the advantage of completely circumventing any empirical process argument. Our proof of Part (ii) also develops a much strengthened concentration inequality for $\widetilde\theta_{\alpha}$ using more detailed spectral analysis of Mat\'ern covariance functions than \citet{WangLoh11}.

The non-decreasing property of $\widetilde \theta_{\alpha}$ in \eqref{tildetheta1} is crucial for both establishing the uniform convergence of $\widetilde \theta_{\alpha}$ on the interval $[\underline {\alpha}_n, \overline {\alpha}_n]$ and understanding the asymptotic behavior of the joint posterior $\pi(\theta,\alpha|Y_n)$. Based on the uniform convergence in Lemma \ref{lem:dimension reduction}, a heuristic argument to extend the limiting conditional posterior in Theorem \ref{thm:bvm1:theta} to the joint posterior $\pi(\theta,\alpha|Y_n)$ is as follows: For each $\alpha\in [\underline {\alpha}_n, \overline {\alpha}_n]$, the conditional posterior $\pi(\theta|Y_n,\alpha)$ can be approximated by the normal distribution $\mathcal{N}(\widetilde\theta_{\alpha},2\theta_0^2/n)$. Since the center $\widetilde\theta_{\alpha}$ only differs from $\widetilde\theta_{\alpha_0}$ by a higher order term $O(n^{-1/2-\tau})$, this normal distribution can be further approximated by $\mathcal{N}(\widetilde\theta_{\alpha_0},2\theta_0^2/n)$, whose mean parameter only depends on the data $Y_n$ but not $\alpha$. Hence, the limiting distribution of $\theta$ is approximately independent of $\alpha$.

To solidify this idea, we need additional prior conditions such that the posterior probabilities outside the interval $[\underline \alpha_n,\overline \alpha_n]$ can be made small, such that the convergence to the normal limit inside $[\underline \alpha_n,\overline \alpha_n]$ is dominant in driving the asymptotics of the joint posterior $\pi(\theta,\alpha|Y_n)$. We specify the following general assumptions on the prior densities $\pi(\theta|\alpha)$ and $\pi(\alpha)$.
\begin{enumerate}[label=(A.\arabic*)]
\setcounter{enumi}{2}
\item \label{prior.2} There exist positive constants $C_{\pi,1}$, $C_{\pi,2}$, and $C_{\pi,3}$ that can depend on $\nu,d,T,\alpha_0,\theta_0$, such that $0<C_{\pi,1}+C_{\pi,2}<1/2$, $0<C_{\pi,3}<1$, and for $\underline\alpha_n$ and $\overline\alpha_n$ defined in \eqref{eq:2kappa}, for all sufficiently large $n$,
\begin{align}
&\sup_{\alpha\in [\underline\alpha_n, \overline\alpha_n]} \sup_{\theta\in (\theta_0/2,2\theta_0)}
\left|\frac{\partial \log \pi(\theta|\alpha)}{\partial \theta} \right| \leq n^{C_{\pi,1}},
\label{A2.1} \\
&\sup_{\alpha\in [\underline\alpha_n, \overline\alpha_n]} \sup_{\theta\in (\theta_0/2,2\theta_0)} \frac{\pi(\theta|\alpha)}{\pi(\theta_0|\alpha)} \leq n^{C_{\pi,2}}, \label{A2.2} \\
&\inf_{\alpha\in [\underline\alpha_n, \overline\alpha_n]} \log \pi(\theta_0|\alpha) \geq -n^{C_{\pi,3}}. \label{A2.3}
\end{align}
\item \label{prior.3} The marginal prior $\pi(\alpha)$ is a proper and continuous density function on $\RR^+$. $\pi(\alpha)$ does not depend on $n$. $\pi(\alpha_0)>0$. $\int_0^{\infty}\pi(\theta_0|\alpha)\pi(\alpha) \ud \alpha <\infty$. There exist positive constants $\underline{c_{\pi}}<(\nu+d/2)\underkappa$ and $\overline{c_{\pi}}<(\nu+d/2)\overkappa$ for $\underkappa$ and $\overkappa$ defined in \eqref{eq:2kappa}, such that for $\underline\alpha_n$ and $\overline\alpha_n$ defined in \eqref{eq:2kappa}, and for all sufficiently large $n$,
\begin{align}
&\max\left\{\int_0^{\underline\alpha_n} \alpha^{-n(\nu+d/2)}\pi(\alpha) \ud \alpha,  \int_0^{\underline\alpha_n} \alpha^{-n(\nu+d/2)} \pi(\theta_0|\alpha)\pi(\alpha) \ud \alpha \right\}\leq \exp\left(\underline{c_{\pi}} n\log n\right), \label{A3.1} \\
&\max\left\{\int_{\overline\alpha_n}^{\infty} \alpha^{n(\nu+d/2)} \pi(\alpha) \ud \alpha, \int_{\overline\alpha_n}^{\infty} \alpha^{n(\nu+d/2)} \pi(\theta_0|\alpha)\pi(\alpha) \ud \alpha \right\} \leq \exp\left(\overline{c_{\pi}} n\log n\right). \label{A3.2}
\end{align}
\end{enumerate}
We will discuss these two assumptions in greater detail after presenting our main theorem for the joint posterior of $(\theta,\alpha)$.
\begin{theorem}[Limiting Distributions for Joint and Marginal Posteriors] \label{thm:bvm2:joint}
Under Assumptions \ref{assump.m.func}, \ref{prior.1}, \ref{prior.2}, and \ref{prior.3}, the posterior distributions of $\theta$ and $\alpha$ are asymptotically independent, in the sense that the joint posterior distribution of $(\theta,\alpha)$ satisfies
\begin{align}\label{joint:theta1}
\left\|\Pi(\ud \theta, \ud \alpha |Y_n) - \mathcal{N}\left(\ud \theta \big| \widetilde\theta_{\alpha_0}, 2\theta_0^2/n \right) \times \widetilde \Pi(\ud \alpha|Y_n)\right\|_{\tv} \rightarrow 0,
\end{align}
as $n\to\infty$ almost surely $P_{(\beta_0,\sigma_0^2,\alpha_0)}$, where $\widetilde \Pi(\ud \alpha|Y_n)$ is the profile posterior distribution with density $\widetilde \pi(\alpha|Y_n)$ given by
\begin{align}\label{profile:post1}
\widetilde \pi(\alpha|Y_n) &= \frac{\exp\big\{\widetilde \Lcal_n(\alpha)\big\}\pi(\alpha|\theta_0)}{\int_0^{\infty} \exp\big\{\widetilde \Lcal_n(\alpha')\big\}\pi(\alpha'|\theta_0)\ud \alpha'},
\end{align}
where the profile restricted log-likelihood $\widetilde \Lcal_n(\alpha)$ is given in \eqref{def:prologlik} and $\pi(\alpha|\theta_0)$ is the conditional prior density of $\alpha$ given $\theta=\theta_0$. Furthermore, this profile posterior density $\widetilde \pi(\alpha|Y_n)$ is well defined for any given $n\geq p$ almost surely $P_{(\beta_0,\sigma_0^2,\alpha_0)}$. As a result, the total variation distance between $\Pi(\ud\theta|Y_n)$ and $\mathcal{N}\big(\ud\theta \big| \widetilde\theta_{\alpha_0},2\theta_0^2/n\big)$ converges to zero, and the total variation distance between $\Pi(\ud\alpha|Y_n)$ and $\widetilde \Pi(\ud\alpha|Y_n)$ converges to zero, as $n\to\infty$ almost surely $P_{(\beta_0,\sigma_0^2,\alpha_0)}$.
\end{theorem}
Theorem \ref{thm:bvm2:joint} provides a clear description of the limiting behavior of the joint posterior of $(\theta,\alpha)$ in the universal kriging model \eqref{eq:obs.model}. Under fixed-domain asymptotics, the microergodic parameter $\theta$ and the range parameter $\alpha$ have \textit{asymptotically independent} posterior distributions. The posterior of $\theta$ is centered at the REML $\widetilde\theta_{\alpha_0}$ and the variance is the same $2\theta_0^2/n$ as the asymptotic variance of REML $\widetilde\theta_{\alpha_0}$ in Theorem \ref{thm:bvm1:theta}. In fact, according to Part (ii) of Lemma \ref{lem:dimension reduction}, the center $\widetilde\theta_{\alpha_0}$ can be replaced by $\widetilde\theta_{\alpha_1}$ for any fixed $\alpha_1>0$, since $\alpha_1$ will be eventually covered by the slowly expanding interval $[\underline \alpha_n,\overline \alpha_n]$, and the difference between $\widetilde \theta_{\alpha_0}$ and $\widetilde \theta_{\alpha_1}$ is negligible compared to the limiting normal standard deviation $\sqrt{2\theta_0^2/n}$.

The posterior convergence of microergodic parameter $\theta$ with a varying range parameter $\alpha$ shows that we can consistently estimate the equivalent class of Gaussian measures using the Bayesian procedure even if the range parameter $\alpha$ has possibly large posterior uncertainty. An important consequence is that based on a random draw of parameters $(\theta,\alpha)$ from the posterior, the predictive variance at a new location is asymptotically close to the predictive variance based on the true parameters $(\theta_0,\alpha_0)$. We will elaborate this in Section \ref{sec:PAE}.

Theorem \ref{thm:bvm2:joint} has three advantages in its generality. First, the theorem works for the universal kriging model with regression terms $\bbm(\cdot)^\top \beta$. Second, it allows an unbounded prior support for $\alpha$, which is not available in previous frequentist fixed-domain asymptotics literature. Third, the theorem does not require any assumption on the design points $\Scal_n$. In other words, the asymptotic factorization and normality works for \textit{arbitrary} design of the sampling points $\Scal_n$, not even requiring $\Scal_n$ to be dense in the domain $\Scal$. Theorem \ref{thm:bvm2:joint} also shows that the marginal posterior density of $\alpha$ can be approximated by the more abstract profile posterior with density $\widetilde \pi(\alpha|Y_n)$, which is based on the profile restricted likelihood of $\alpha$. Using the result in \citet{Guetal18}, we can show that this profile posterior is always well defined. On the other hand, without further assumptions on $\Scal_n$, it is not likely that the form of the profile posterior density $\widetilde \pi(\alpha|Y_n)$ can be simplified. In general, this profile posterior of $\alpha$ does not necessarily converge to any point mass. In Theorem \ref{thm:OU1} below, for a special case of 1-dimensional Ornstein-Uhlenbeck process (Mat\'ern with $\nu=1/2$) observed on an equispaced grid without regression terms, we approximate $\widetilde \pi(\alpha|Y_n)$ using an explicit density of $\alpha$ that asymptotically does not contract to any fixed value with high probability. Such non-converging property of $\pi(\alpha|Y_n)$ explains the seemingly slow convergence of posterior of $\alpha$ in our SST data example in Section \ref{sec:intro}. We also demonstrate this phenomenon using simulation examples in Section \ref{sec:simulation}.

The difficulty in the estimation of range parameter $\alpha$ is a well-known problem in the GP literature (\citet{KenOha01}). Gaussian processes with different values of $\alpha$ but the same microergodic parameter $\theta$ in the Mat\'ern covariance function \eqref{eq:MaternCov} can have similar sample paths (\citet{Fugetal19}), making it difficult to infer an appropriate value for $\alpha$ from the data. \citet{Zhang04} and many others have observed that for a fixed value of $\theta>0$, $\Lcal_n(\theta/\alpha^{2\nu},\alpha)$ has a long right tail in $1/\alpha$ that creates problem for finding the MLE of $\alpha$. The sampling distribution of the MLE of $\alpha$ does not show any sign of convergence as $n\to\infty$. For Bayesian inference, \citet{Guetal18} identifies prior conditions using the objective priors in \citet{Beretal01} for robust estimation of $1/\alpha$ in finite samples. Though we do not study point estimation of $\alpha$, our technical proofs have derived some new properties for the profile posterior $\widetilde \pi(\alpha|Y_n)$, which could be of independent interest for Mat\'ern covariance functions; see Section S2 of the Supplementary Material for details.

Theorem \ref{thm:bvm2:joint} works for the domain dimension $d\in\{1,2,3\}$. For completeness, we also derive a similar theorem for the limiting joint posterior distribution when $d\geq 5$ under additional assumptions; see Section S3.4 of the Supplementary Material.

\subsection{On the Prior Assumptions} \label{subsec:prior.assumption}

We discuss the two technical prior assumptions \ref{prior.2} and \ref{prior.3}. The inequalities \eqref{A2.1} and \eqref{A2.2} in \ref{prior.2} require that the conditional prior $\pi(\theta|\alpha)$ does not vary too dramatically in a neighborhood of $\theta_0$ and in the slowly expanding interval $[\underline \alpha_n,\overline \alpha_n]$. The interval $(\theta_0/2,2\theta_0)$ in principle can be replaced by any neighborhood of the true parameter $\theta_0$, such as $(\theta_0-\delta_0,\theta_0+\delta_0)$ for some $0<\delta_0<\theta_0$. The inequality \eqref{A2.3} in \ref{prior.2} requires that the prior assigns a minimum of $\exp(-n^{C_{\pi,3}})$ prior mass on the true parameter $\theta_0$ uniformly over all $\alpha\in [\underline \alpha_n,\overline \alpha_n]$. Such minimal prior mass assumption is often necessary for achieving the basic posterior consistency in Bayesian models (\citet{GhoVan17}). In particular, we can verify Assumption \ref{prior.2} for the following examples of the prior $\pi(\theta|\alpha)$, some of which are commonly used in applications.
\begin{prop} \label{prop:prior2}
Suppose that the prior $\pi(\theta|\alpha)$ does not depend on the sample size $n$. Then Assumption \ref{prior.2} holds in either one of the following cases:
\begin{itemize}[leftmargin=5mm]
\item[(i)] $\pi(\theta|\alpha)=\pi(\theta)$ is independent of $\alpha$. $\pi(\theta)$ has continuous first derivative on $\RR^+$ and $\pi(\theta)>0$ for all $\theta\in \RR^+$.
\item[(ii)] $\pi(\alpha)$ is supported on a compact interval $[\alpha_1,\alpha_2]$, with constant lower and upper bounds $0<\alpha_1<\alpha_2<\infty$. $\pi(\theta|\alpha)$ is positive for all $(\theta,\alpha)\in \RR^+ \times \RR^+$, continuous in $\alpha\in \RR^+$, and has continuous first derivative with respect to $\theta$ on $\RR^+$ for all $\alpha\in \RR^+$.
\item[(iii)]  The prior of $\sigma^2$ is independent of $\alpha$ and belongs to the broad distribution family of the generalized beta of the second kind (or the Feller-Pareto family, \citet{Arn15}), with the density $\pi(\sigma^2)=\frac{\Gamma(\gamma_1+\gamma_2)}{\Gamma(\gamma_1)\Gamma(\gamma_2)}\frac{(\sigma^2/b)^{\gamma_2/\gamma-1}}{b\gamma [1+(\sigma^2/b)^{1/\gamma}]^{\gamma_1+\gamma_2}}$ with parameters $b>0,\gamma>0,\gamma_1>0,\gamma_2>0$.
\end{itemize}
\end{prop}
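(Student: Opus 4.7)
The plan is to verify (A.2.1)--(A.2.3) directly for each of the three prior families. The key observation throughout is that $\underline\alpha_n=n^{-\underline\kappa}$ and $\overline\alpha_n=n^{\overline\kappa}$ grow only polynomially slowly, so any quantity shown to be bounded by an absolute constant or by $O(\log n)$ is automatically dominated by $n^{C_\pi}$ for any positive $C_\pi$ once $n$ is large. I would dispatch the easy cases (i) and (ii) first, and spend the bulk of the effort on (iii).

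In case (i), $\pi(\theta|\alpha)=\pi(\theta)$ is continuously differentiable and strictly positive, so both $|\partial\log\pi/\partial\theta|$ and $\pi(\theta)/\pi(\theta_0)$ are continuous on the compact interval $[\theta_0/2,2\theta_0]$ and hence bounded by absolute constants, while $\log\pi(\theta_0)$ is a finite constant. In case (ii), the support of $\pi(\alpha)$ is the compact interval $[\alpha_1,\alpha_2]$, which is contained in $[\underline\alpha_n,\overline\alpha_n]$ for $n$ large, so the effective supremum in (A.2.1)--(A.2.2) is taken over the compact rectangle $[\theta_0/2,2\theta_0]\times[\alpha_1,\alpha_2]$; joint continuity and positivity of $\pi(\theta|\alpha)$ and of $\partial\pi(\theta|\alpha)/\partial\theta$ then give uniform constant bounds for all three quantities at once.

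Case (iii) requires genuine calculation. I would first change variables from $\sigma^2$ to $\theta=\sigma^2\alpha^{2\nu}$, yielding
\begin{align*}
\pi(\theta|\alpha) \;=\; \frac{\Gamma(\gamma_1+\gamma_2)}{\Gamma(\gamma_1)\Gamma(\gamma_2)}\,\frac{u^{\gamma_2/\gamma-1}}{b\alpha^{2\nu}\gamma\,[1+u^{1/\gamma}]^{\gamma_1+\gamma_2}},\qquad u=\theta/(b\alpha^{2\nu}),
\end{align*}
and then inspect the three quantities of interest. Direct differentiation gives
$\partial\log\pi(\theta|\alpha)/\partial\theta=(1/\theta)\bigl[(\gamma_2/\gamma-1)-((\gamma_1+\gamma_2)/\gamma)\cdot u^{1/\gamma}/(1+u^{1/\gamma})\bigr]$; since $u^{1/\gamma}/(1+u^{1/\gamma})\in(0,1)$ for every $\alpha>0$, the right side is bounded by an absolute constant on $(\theta_0/2,2\theta_0)$, giving (A.2.1). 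For (A.2.2), cancelling the common $\alpha^{-2\nu}$ and constants in $\pi(\theta|\alpha)/\pi(\theta_0|\alpha)$ reduces it to $(\theta/\theta_0)^{\gamma_2/\gamma-1}\bigl[(1+u_0^{1/\gamma})/(1+u^{1/\gamma})\bigr]^{\gamma_1+\gamma_2}$ with $u_0=\theta_0/(b\alpha^{2\nu})$; because $u/u_0=\theta/\theta_0\in[1/2,2]$, an elementary monotonicity argument shows the bracketed factor lies in a fixed compact subinterval of $(0,\infty)$, independently of $\alpha$. For (A.2.3), plugging $\theta=\theta_0$ and using $\alpha\in[n^{-\underline\kappa},n^{\overline\kappa}]$ makes each of $|\log\alpha|$, $|\log u_0|$, and $\log(1+u_0^{1/\gamma})$ of order $\log n$, which yields $|\log\pi(\theta_0|\alpha)|=O(\log n)$, comfortably below $n^{C_{\pi,3}}$.

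The main obstacle, and what distinguishes case (iii) from a purely routine check, is securing the \emph{$\alpha$-uniform} boundedness required in (A.2.1) and (A.2.2). This succeeds only because the $\alpha$-dependence in the generalized beta of the second kind collapses to the saturated functions $u^{1/\gamma}/(1+u^{1/\gamma})\in(0,1)$ and $(1+u^{1/\gamma})/(1+u_0^{1/\gamma})\in[2^{-1/\gamma},2^{1/\gamma}]$; for a more generic conditional prior $\pi(\theta|\alpha)$ these pieces could blow up as $\alpha\to 0$ or $\alpha\to\infty$, and additional structure on the family would be needed.
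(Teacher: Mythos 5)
Your proposal is correct and follows essentially the same route as the paper: cases (i) and (ii) reduce to continuity and positivity on compact sets, and case (iii) is handled by reparametrizing $\sigma^2\mapsto\theta=\sigma^2\alpha^{2\nu}$ and checking that the log-derivative, the density ratio on $(\theta_0/2,2\theta_0)$, and $\log\pi(\theta_0|\alpha)$ are respectively $O(1)$, $O(1)$, and $O(\log n)$ uniformly over $[\underline\alpha_n,\overline\alpha_n]$. The only (immaterial) difference is in how the ratio bound for (iii) is packaged — you bound the factor $\bigl[(1+u_0^{1/\gamma})/(1+u^{1/\gamma})\bigr]^{\gamma_1+\gamma_2}$ via $u/u_0=\theta/\theta_0\in[1/2,2]$, whereas the paper bounds the whole ratio by $\max\{(\theta/\theta_0)^{\gamma_2/\gamma-1},(\theta/\theta_0)^{-\gamma_1/\gamma-1}\}$; both yield the same constant-order conclusion.
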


Proposition \ref{prop:prior2} shows that Assumption \ref{prior.2} about $\pi(\theta|\alpha)$ is satisfied by a wide range of prior distributions on $\theta$ with continuously differentiable densities. Case (i) says that \ref{prior.2} holds as long as the priors of $\theta$ and $\alpha$ are independent. Case (ii) says that \ref{prior.2} holds as long as the support of the prior of $\alpha$ is bounded away from zero and infinity. Compactly supported priors for the range parameter $\alpha$ have been widely used in Bayesian spatial statistics literature; see for example, \citet{Banetal08}, \citet{Sanetal11}, \citet{Datetal16}, \citet{Guhetal17}, etc. Case (iii) provides the example in which an independent prior is assigned on the variance parameter $\sigma^2$ instead of on $\theta$. The generalized beta of the second kind (or Feller-Pareto family, \citet{Bra02}, \citet{Arn15}) has polynomially decaying tails at both $\sigma^2\to 0+$ and $\sigma^2\to +\infty$. This family covers a wide range of continuous distributions on $(0,+\infty)$ including the half-Student's $t$ distributions, the $F$ distributions, the log-logistic distributions, the Burr distributions, and many others (\citet{Arn15}). Case (iii) mainly illustrates that if $\pi(\alpha)$ has a full support on $[0,+\infty)$, then $\pi(\theta|\alpha)$ cannot decay too fast in the two tails. For example, if $\pi(\theta|\alpha)$ has exponentially decaying tails at either $\theta\to 0+$ and $\theta\to +\infty$, then \ref{prior.2} is not satisfied when $\pi(\alpha)$ has a full support on $[0,+\infty)$. Fortunately, most spatial applications use a compactly supported prior for $\alpha$, and \ref{prior.2} is satisfied as in Case (ii).

Next, we discuss Assumption \ref{prior.3}, which imposes some technical conditions on the tail behavior of $\pi(\alpha)$ as $\alpha\to 0+$ and $\alpha\to +\infty$.

\begin{prop} \label{prop:prior3}
Let $\underkappa,\overkappa,\underline\alpha_n,\overline\alpha_n$ be defined in \eqref{eq:2kappa}. If a nonnegative function $p(\alpha)$ for $\alpha>0$ satisfies either one of the following conditions:
\begin{itemize}[leftmargin=5mm]
\item[(i)] $p(\alpha)\leq \exp\left(-\alpha^{\delta_1}\right)$ for all $\alpha > \overline\alpha_n$, for some constant $\delta_1>1/\overkappa$ and for all sufficiently large $n$;
\item[(ii)] $p(\alpha)\leq n^{\delta_3} \exp\left(-n^{\delta_2} \alpha\right)$ for all $\alpha > \overline\alpha_n$, for some constant $1-\overkappa< \delta_2\leq \delta_3 <\infty$ and all sufficiently large $n$;
\end{itemize}
then there exists a constant $0<\overline{c_{\pi}}<(\nu+d/2)\overkappa$ such that for all sufficiently large $n$,
\begin{align} \label{ineq:g.right}
& \int_{\overline\alpha_n}^{\infty} \alpha^{n(\nu+d/2)} p(\alpha) \ud \alpha \leq \exp(\overline{c_{\pi}} n\log n),
\end{align}
Similarly, if a nonnegative function $p(\alpha)$ for $\alpha>0$ satisfies either one of the following conditions:
\begin{itemize}[leftmargin=5mm]
\item[(i)] $p(\alpha)\leq \exp\left(-\alpha^{-\delta_1}\right)$ for all $0<\alpha < \underline\alpha_n$, for some constant $\delta_1>1/\underkappa $ and for all sufficiently large $n$;
\item[(ii)] $p(\alpha)\leq n^{\delta_3} \exp\left(-n^{\delta_2}/\alpha\right)$ for all $0< \alpha < \underline\alpha_n$, for some constant $1-\underkappa < \delta_2\leq \delta_3 <\infty$ and all sufficiently large $n$;
\end{itemize}
then there exists a constant $0<\underline{c_{\pi}}<(\nu+d/2)\underkappa$ such that for all sufficiently large $n$,
\begin{align} \label{ineq:g.left}
& \int_0^{\underline\alpha_n} \alpha^{-n(\nu+d/2)} p(\alpha) \ud \alpha \leq \exp(\underline{c_{\pi}} n\log n).
\end{align}
\end{prop}

Whilst having formulated Proposition \eqref{prop:prior3} for a generic function $p(\alpha)$, we have in mind to apply it to the priors $\pi(\alpha)$ and $\pi(\theta_0|\alpha)\pi(\alpha)$ in \eqref{A3.1} and \eqref{A3.2} in Assumption \ref{prior.3}. Since $\int_0^{\infty} \pi(\theta_0|\alpha)\pi(\alpha) \ud \alpha<\infty$ as in \ref{prior.3}, the tail conditions on $\pi(\theta_0|\alpha)\pi(\alpha)$ are the same as the tail conditions on $\pi(\alpha|\theta_0)$. Two types of tail decaying conditions are given in Proposition \ref{prop:prior3}. In the first case, the tail of $\pi(\alpha)$ or $\pi(\alpha|\theta_0)$ decays at the exponential power rate $\exp(-\alpha^{\delta_1})$ in the right tail (or $\exp(-\alpha^{-\delta_1})$ in the left tail), with some lower conditions on $\delta_1$ depending on the values of $\overkappa$ (or $\underkappa$). This condition requires that $\pi(\alpha)$ and $\pi(\alpha|\theta_0)$ decay very fast in the right (or left) tail. One example of $\pi(\alpha)$ is that $\alpha^{1/\min(\underkappa,\overkappa)}$ follows the inverse Gaussian distribution, since the inverse Gaussian distribution has exponentially decaying tails at zero and infinity. In the second case of Proposition \ref{prop:prior3}, we allow the tails of $\pi(\alpha)$ and $\pi(\alpha|\theta_0)$ to be upper bounded by some exponential rate in $\alpha$ that depends on $n$. These tail decaying conditions in Proposition \ref{prop:prior3} and Assumption \ref{prior.3} can ensure that the convergence to a normal limit will be dominant in the joint posterior of $(\theta,\alpha)$.

We remark that the tail conditions in \ref{prior.3} are often stronger than necessary in practice. This is partly because we have made \emph{no assumption} on the design of the sampling points $\Scal_n$. Even when $\Scal_n$ is highly unevenly distributed in $\Scal$ or is not dense in the full space of $\Scal$, Theorem \ref{thm:bvm2:joint} still holds true under \ref{prior.3}, which allows the prior $\pi(\alpha)$ to have a full support in $[0,+\infty)$. If one is willing to impose more assumptions on $\Scal_n$, for example, the maximum distance between two adjacent points decreases at a certain rate to zero, then it is possible to relax the tail conditions in \ref{prior.3}. Furthermore, such assumptions on the sampling design $\Scal_n$ may also improve how fast the total variation distance between the joint posterior distribution $\Pi(\ud\theta,\ud\alpha|Y_n)$ and its limiting distribution in Theorem \ref{thm:bvm2:joint} converges to zero. For a general smoothness parameter $\nu$, analyzing the effect of design $\Scal_n$ inevitably requires more sophisticated matrix theory for the properties of the Mat\'ern correlation matrix $R_{\alpha}$ and the related quantities $Y_n^\top R_{\alpha}^{-1} Y_n$ and $|R_{\alpha}|$ as $\alpha\to 0+$ and $\alpha \to +\infty$, since these two terms determine the properties of the profile restricted log-likelihood function \eqref{def:prologlik}. We will see in Theorem \ref{thm:OU1} below that in a special case when the sampling points are from an equispaced grid, the tail conditions in \ref{prior.3} can be significantly weakened and the conclusion of Theorem \ref{thm:bvm2:joint} can hold for a broader class of priors on $\alpha$.

\subsection{Limiting Posterior Distribution for 1-Dimensional Ornstein-Uhlenbeck Process} \label{subsec:OU1}

For a concrete example of Theorem \ref{thm:bvm2:joint}, we consider the special case of $d=1$, $\Scal=[0,1]$, and $\nu=1/2$ in the Mat\'ern covariance function. The covariance function becomes $\Cov(X(s),X(t))=\sigma^2\exp(-\alpha|s-t|)$ for $s,t\in [0,1]$, which is also known as the exponential covariance function. The resulted stochastic process $X$ is the 1-dimensional Ornstein-Uhlenbeck process (\citet{RasWil06}). We assume that the sampling points in $\Scal_n$ are on the equispaced grid with $s_i=i/n$ for $i=1,\ldots,n$. For the regression terms, we consider two different cases:
\begin{itemize}[leftmargin=5mm]
\item[(i)] Model \eqref{eq:obs.model} without the regression term $\bbm(\cdot)^\top \beta$, i.e., $p=0$, $Y(s)=X(s)$ for any $s\in [0,1]$, which implies that $Y_n\sim \Ncal(0,\sigma_0^2 R_{\alpha_0})$;
\item[(ii)] Model \eqref{eq:obs.model} with a constant regression term, i.e., $p=1$, $\bbm_1(\cdot)\equiv 1$, $\beta\in \RR$, $Y(s)=\beta + X(s)$ for any $s\in [0,1]$, which implies that $Y_n\sim \Ncal(1_n\beta_0,\sigma_0^2 R_{\alpha_0})$, where $1_n$ denotes the $n$-dimensional column vector of all 1's.
\end{itemize}
For Case (i), we derive an explicit formula for the limiting posterior of $\alpha$ and relax the condition on the tail of $\pi(\alpha)$ in the new Assumption \ref{prior.3OU}. For Case (ii), we show that the posterior of $\beta$ does not converge to the true parameter $\beta_0$ as $n\to\infty$.

For the model in Case (i), the frequentist MLE of $(\theta,\alpha)$ under fixed-domain asymptotics has been extensively studied in \citet{Ying91}, \citet{Ying93}, \citet{Chenetal00}, \citet{Duetal09}, etc. Since $s_i=i/n$ for $i=1,\ldots,n$, the inverse matrix $R_{\alpha}^{-1}$ is given by
\begin{align}
(R_{\alpha}^{-1})_{ii} &= \left\{
\begin{array}{ll}
 (1-\ee^{-2\alpha/n})^{-1}, & ~~ i=1,n\\
 (1+\ee^{-2\alpha/n})/(1-\ee^{-2\alpha/n}), & ~~ i=2,\ldots,n-1,
\end{array}
\right. \nonumber \\
(R_{\alpha}^{-1})_{i,i+1} &= (R_{\alpha}^{-1})_{i+1,i} = - \ee^{-\alpha/n} (1-\ee^{-2\alpha/n})^{-1},~~ i=1,\ldots,n-1, \nonumber
\end{align}
and all other entries of $R_{\alpha}$ are zero. Furthermore, the determinant of $R_{\alpha}$ is  $|R_{\alpha}| = (1-\ee^{-2\alpha/n})^{n-1}$. Since the model does not contain $\beta$, the profile restricted log-likelihood in \eqref{def:prologlik} has the explicit form
\begin{align}
&\widetilde \Lcal_n(\alpha) = -\frac{n}{2}\log \left(A_1 \ee^{-2\alpha/n}-2A_2 \ee^{-\alpha/n} +A_3\right) + \frac{1}{2}\log (1-\ee^{-2\alpha/n}),\label{eq:prologlik2} \\
&\text{where } \quad
A_1=\sum_{i=2}^{n-1} Y(s_i)^2, \quad A_2=\sum_{i=1}^{n-1}Y(s_i)Y(s_{i+1}), \quad A_3=\sum_{i=1}^n Y(s_i)^2. \label{eq:3A}
\end{align}
For the prior of $\alpha$, instead of Assumption \ref{prior.3}, we use a weaker alternative assumption.
\begin{enumerate}[label=(A.4')]
\item \label{prior.3OU} The marginal prior $\pi(\alpha)$ is a proper and continuous density on $\RR^+$. $\pi(\alpha)$ does not depend on $n$. $\pi(\alpha_0)>0$.
    $\int_0^{\infty} \pi(\theta_0|\alpha)\pi(\alpha) \ud \alpha<\infty$. $\int_0^{\infty} \sqrt{\alpha} \pi(\theta_0|\alpha) \pi(\alpha)\ud\alpha<\infty$. $\int_0^{\infty} \sqrt{\alpha} \pi(\alpha)\ud\alpha<\infty$. Furthermore, for $\underline\alpha_n$ and $\overline\alpha_n$ defined in \eqref{eq:2kappa}, the following relations hold as $n\to\infty$:
\begin{align}
& \sqrt{n} \int_0^{\underline\alpha_n} \sqrt{\alpha} \pi(\alpha) \ud \alpha \rightarrow 0, \qquad \sqrt{n} \int_{\overline\alpha_n}^{\infty} \sqrt{\alpha} \pi(\alpha) \ud \alpha \rightarrow 0. \label{A3.1.OU}
\end{align}
\end{enumerate}
Assumption \ref{prior.3OU} is considerably weaker than Assumption \ref{prior.3}. Assumption \ref{prior.3OU} only requires that $\pi(\alpha)$ and $\pi(\theta_0|\alpha)\pi(\alpha)$ (or equivalently, $\pi(\alpha|\theta_0)$) to have polynomially decaying tails at zero and infinity, compared to the exponential power tails as in Proposition \ref{prop:prior3}. With appropriate choice of hyperparameters, $\pi(\alpha)$ in \ref{prior.3OU} can be taken as gamma, inverse gamma, inverse Gaussian, or the family of generalized beta of the second kind defined in Proposition \ref{prop:prior2}; see the beginning of Section S5 in the Supplementary Material for detailed discussion on the choice of hyperparameters.

\begin{theorem} \label{thm:OU1}
Consider the model \eqref{eq:obs.model} with $p=0$, $d=1$, $\Scal=[0,1]$, $\nu=1/2$, and observations $Y_n$ on the equispaced grid $s_i=i/n$ for $i=1,\ldots,n$. Suppose that Assumptions \ref{prior.1}, \ref{prior.2}, and \ref{prior.3OU} hold. Then
\begin{align}
&\left\|\Pi(\ud\theta,\ud\alpha|Y_n) - \mathcal{N} \left(\ud\theta \big| \widetilde\theta_{\alpha_0}, 2\theta_0^2/n\right) \times \widetilde \Pi(\ud\alpha|Y_n) \right\|_{\tv} \rightarrow 0, \label{eq:OU.joint1} \\
&\left\|\Pi(\ud\theta,\ud\alpha|Y_n) - \mathcal{N} \left(\ud\theta \big| \widetilde\theta_{\alpha_0}, 2\theta_0^2/n\right) \times \Pi_*(\ud\alpha|Y_n) \right\|_{\tv} \rightarrow 0, \label{eq:OU.joint2}
\end{align}
as $n\to\infty$ in $P_{(\sigma_0^2,\alpha_0)}$-probability, where $\widetilde\theta_{\alpha_0}=n^{-1} \alpha_0^{2\nu} Y_n^\top R_{\alpha_0}^{-1} Y_n$, the profile posterior distribution $\widetilde \Pi(\ud\alpha|Y_n)$ has the density $\widetilde \pi(\alpha|Y_n) \propto \exp\big\{\widetilde \Lcal_n(\alpha)\big\} \cdot \pi(\alpha|\theta_0)$ with $\widetilde \Lcal_n(\alpha)$ given in \eqref{eq:prologlik2}, and the distribution $\Pi_*(\ud\alpha|Y_n)$ has the density
\begin{align*}
& \pi_*(\alpha|Y_n)\propto \sqrt{\alpha} \exp\left\{-\frac{(\alpha-u_*)^2}{2v_*}\right\} \cdot \pi(\alpha|\theta_0), ~~\text{ for all } \alpha\in \RR^+, \\
\text{where } ~~ & u_* = \frac{n(A_1-A_2)}{A_1},\quad v_* = \frac{n(A_1-2A_2+A_3)}{A_1},
\end{align*}
and $A_1,A_2,A_3$ are defined in \eqref{eq:3A}.
Furthermore, $|u_*|\preceq 1$, $v_*>0$ and $v_*\asymp 1$ as $n\to\infty$ in $P_{(\sigma_0^2,\alpha_0)}$-probability. Therefore, $\pi(\alpha|Y_n)$ does not converge to any point mass distribution as $n\to\infty$ in $P_{(\sigma_0^2,\alpha_0)}$-probability.
\end{theorem}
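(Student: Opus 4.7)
The central observation is that the explicit form \eqref{eq:prologlik2} of $\widetilde\Lcal_n(\alpha)$ admits a tight second-order expansion in $\alpha/n$. Factor
\begin{align*}
A_1\xi^2 - 2A_2\xi + A_3 = A_1\bigl[(\xi - A_2/A_1)^2 + \sigma_n^2\bigr],
\end{align*}
with $\xi=e^{-\alpha/n}$ and $\sigma_n^2 = (A_1 A_3 - A_2^2)/A_1^2 > 0$ by Cauchy-Schwarz. Setting $\alpha^* = -n\log(A_2/A_1)$ and expanding $\xi - A_2/A_1 \approx -(\alpha-\alpha^*)/n$, together with $\tfrac{1}{2}\log(1-e^{-2\alpha/n}) = \tfrac{1}{2}\log(2\alpha/n) + O(\alpha/n)$, yields
\begin{align*}
\widetilde\Lcal_n(\alpha) = c_n(X_n) + \tfrac{1}{2}\log\alpha - \frac{(\alpha-\alpha^*)^2}{2\, n\sigma_n^2} + \epsilon_n(\alpha),
\end{align*}
where $c_n(X_n)$ is free of $\alpha$ and $\epsilon_n(\alpha)\to 0$ uniformly on compact subsets of $\RR_+$. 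Elementary algebra with $C=A_1-A_2$ gives $\alpha^* - u_* = O(1/n)$ and $v_* - n\sigma_n^2 = nC^2/A_1^2 = O(1/n)$, so the Gaussian shape in $\pi_*(\alpha|X_n)$ coincides with the quadratic part of $\exp(\widetilde\Lcal_n(\alpha))$ up to $o(1)$ in TV.

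\textbf{Positivity of $v_*$ and non-degeneracy.} The algebraic identity $A_1-2A_2+A_3 = \sum_{i=1}^{n-1}(X(s_i)-X(s_{i+1}))^2$ is strictly positive $P_{(\sigma_0^2,\alpha_0)}$-almost surely, so $v_*>0$ almost surely. Using $\EE(X(s_i)-X(s_{i+1}))^2 = 2\sigma_0^2(1-e^{-\alpha_0/n}) = 2\sigma_0^2\alpha_0/n + O(1/n^2)$ and a Hanson-Wright type concentration inequality for Gaussian quadratic forms, $A_1-2A_2+A_3 \to 2\sigma_0^2\alpha_0$ and $A_1/n \to \sigma_0^2$ almost surely, whence $v_*\to 2\alpha_0\asymp 1$. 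Since $u_*$ stays a random quantity with a nondegenerate limit in distribution and $v_*$ converges to a positive constant, the polynomially tilted normal $\pi_*(\alpha|X_n)$, and therefore (via the marginalization of \eqref{eq:OU.joint2}) also $\pi(\alpha|X_n)$, cannot approach any deterministic distribution.

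\textbf{TV convergence.} For \eqref{eq:OU.joint2}, restrict $\alpha$ to a bulk window $[u_*-L_n, u_*+L_n]$ with $L_n$ tending to infinity slowly: the uniform vanishing of $\epsilon_n(\alpha)$ and the matching of $(\alpha^*, n\sigma_n^2)$ with $(u_*, v_*)$ give $\widetilde\pi(\alpha|X_n)/\pi_*(\alpha|X_n) \to 1$ uniformly, then Scheff\'e's lemma upgrades this to TV convergence on the bulk. Outside the bulk, \eqref{eq:prologlik2} produces closed-form bounds: $\exp(\widetilde\Lcal_n(\alpha))\asymp \sqrt{\alpha/n}\cdot B^{-n/2}$ as $\alpha\to 0^+$, and $\exp(\widetilde\Lcal_n(\alpha))\to A_3^{-n/2}$ as $\alpha\to\infty$ with $A_3/B\asymp n$, so the right tail is $(2\alpha_0/n)^{n/2}$-suppressed relative to the bulk. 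The two moment conditions in \eqref{A3.1.OU} of \ref{prior.3OU} are then exactly what is needed to dominate these tail integrals against the bulk mass of order $n^{-1/2}B^{-n/2}$. For \eqref{eq:OU.joint1}, one either reruns the proof of Theorem \ref{thm:bvm2:joint} with the OU-specific explicit tail bounds replacing the exponential tail requirements of \ref{prior.3}, or invokes the triangle inequality with \eqref{eq:OU.joint2} and the identity $\|\mathcal{N}\times\widetilde\Pi - \mathcal{N}\times\Pi_*\|_{\tv} = \|\widetilde\Pi - \Pi_*\|_{\tv}$.

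\textbf{Main obstacle.} Uniform control of $\epsilon_n(\alpha)$ on the polynomially expanding window $[\underline\alpha_n,\overline\alpha_n]$, where $\alpha/n$ is not uniformly negligible, is the principal technical step; one must carefully track higher-order Taylor remainders such as $\alpha^3/n^2$ and cross-terms, and show that they are absorbed by the $O(1/n)$ cancellations from $C/A_1$. Compatibility of the weak polynomial tail conditions in \ref{prior.3OU} with this nonuniform behavior relies entirely on the explicit tail estimates supplied by \eqref{eq:prologlik2}, which would be unavailable without the one-dimensional equispaced OU structure.
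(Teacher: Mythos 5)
Your plan follows essentially the same route as the paper's proof: complete the square in $e^{-\alpha/n}$ inside the explicit profile log-likelihood \eqref{eq:prologlik2}, identify the resulting polynomially tilted Gaussian whose center and scale match $(u_*,v_*)$ up to negligible errors, establish the orders of $A_1,A_2,A_3$ in \eqref{eq:3A} by strong laws for Gaussian quadratic forms, control the tails of the profile posterior with the $\sqrt{\alpha}$-moment conditions \eqref{A3.1.OU} of \ref{prior.3OU}, and rerun the machinery of Theorem \ref{thm:bvm2:joint} for the joint statement \eqref{eq:OU.joint1} before passing to \eqref{eq:OU.joint2} by the triangle inequality.

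Two points in your outline need repair. First, the claim that $\sigma_n^2=(A_1A_3-A_2^2)/A_1^2>0$ ``by Cauchy--Schwarz'' does not hold as stated: applying Cauchy--Schwarz to the shifted sequences gives $A_2^2\le\bigl(A_1+X(s_1)^2\bigr)\bigl(A_1+X(s_n)^2\bigr)$, which exceeds $A_1A_3$ by $X(s_1)^2X(s_n)^2$, so positivity of $\sigma_n^2$ for every fixed $n$ is not automatic. It does hold eventually almost surely, via $A_1A_3-A_2^2=A_1(A_1-2A_2+A_3)-(A_1-A_2)^2\asymp n$, and the paper avoids the issue altogether by completing the square so that the manifestly nonnegative sum of squared increments $A_1-2A_2+A_3$ plays the role of the constant term. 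Second, the step you flag as the ``main obstacle'' is where essentially all of the work lies, and uniform smallness of $\epsilon_n(\alpha)$ is not the right target: because the quadratic form is raised to the power $n/2$, the \emph{relative} error incurred by replacing $1-e^{-\alpha/n}$ with $\alpha/n$ must be $o(1/n)$ uniformly on the window, and the subsequent replacement of $[1+(\alpha-u_*)^2/(nv_*)]^{-n/2}$ by the Gaussian kernel must be controlled with $(\alpha-u_*)^2/(nv_*)$ only polynomially small. The paper's Lemma \ref{lem:OU.orders}(vii)--(viii) delivers exactly this, with a uniform $O(n^{-3/2})$ relative error on $[0,n^{1/6}]$ so that $[1+O(n^{-3/2})]^{-n/2}=1+O(n^{-1/2})$, and the region $\alpha>n^{1/6}$ is disposed of by the crude bound that the normalized profile likelihood is at most $\sqrt{\alpha}$ together with \eqref{A3.1.OU}. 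With these two steps carried out, your outline coincides with the paper's argument.
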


Theorem \ref{thm:OU1} provides a concrete form for the limiting joint posterior distribution of $(\theta,\alpha)$ in the 1-dimensional Ornstein-Uhlenbeck process under fixed-domain asymptotics. Since the model does not contain $\beta$, we write $P_{(\sigma_0^2,\alpha_0)}$ instead of $P_{(\beta_0,\sigma_0^2,\alpha_0)}$ in Theorem \ref{thm:OU1}. Compared to Theorem \ref{thm:bvm2:joint}, Theorem \ref{thm:OU1} shows the same limiting distribution under the weaker \ref{prior.3OU}. Furthermore, Theorem \ref{thm:OU1} simplifies the profile posterior density $\widetilde \pi(\alpha)$ to a more explicit form $\pi_*(\alpha|Y_n)$, which is a \emph{polynomially tilted normal density} (\citet{BocGre14}) times the conditional prior density $\pi(\alpha|\theta_0)$. The ``normal" part of $\pi_*(\alpha|Y_n)$ is centered at $u_*$ with scale $v_*$. Both center $u_*$ and the scale $v_*$ are of constant order in $P_{(\sigma_0^2,\alpha_0)}$-probability. Moreover, \ref{prior.1} and \ref{prior.3OU} ensure that $\pi(\alpha|\theta_0)$ is positive for all $\alpha\in \RR^+$. Therefore, the limiting distribution $\pi_*(\alpha|Y_n)$ has a continuous and positive density with a non-shrinking variance on $\RR^+$. If $\pi(\alpha|\theta_0)$ does not depend on $n$, then as a result of the convergence in total variation distance in \eqref{eq:OU.joint2}, the marginal posterior $\pi(\alpha|Y_n)$ also cannot converge to any point mass distribution as $n\to\infty$. Therefore, the posterior of $\alpha$ does not converge to the true parameter $\alpha_0$. This Bayesian asymptotic result matches with the frequentist theory in \citet{Zhang04} that there exists no consistent estimator for $\alpha$ under fixed-domain asymptotics.

Next we consider Case (ii). To simplify the expressions, we assume the noninformative prior $\pi(\beta|\sigma^2,\alpha)\propto 1$ which corresponds to $\Omega_{\beta}=0_{p\times p}$ in Assumption \ref{prior.1}. We notice that in Case (ii), $m_1(\cdot)\equiv 1$ and it is infinitely differentiable on $[0,1]$ with all derivatives equal to zero. Hence it lies in $\Wcal_2^{\nu+d/2}([0,1])$ for any $\nu>0$ and $d\in\{1,2,3\}$, and Assumption \ref{assump.m.func} is satisfied. We have the following corollary from Theorem \ref{thm:bvm2:joint}.
\begin{corollary} \label{cor:OU2}
Consider the model \eqref{eq:obs.model} with $p=1$, $\bbm_1(\cdot)\equiv 1$, $\pi(\beta|\sigma^2,\alpha)\propto 1$, $d=1$, $\Scal=[0,1]$, $\nu=1/2$, and observations $Y_n$ on the equispaced grid $s_i=i/n$ for $i=1,\ldots,n$. Suppose that Assumptions \ref{prior.1}, \ref{prior.2}, and \ref{prior.3} hold. Then
\begin{align}
& \beta ~|~ Y_n,\theta,\alpha \sim \Ncal\left(\frac{B_2-B_1\ee^{-\alpha/n}}{(n-2)(1-\ee^{-\alpha/n})+2}, \frac{\theta\left(1+\ee^{-\alpha/n}\right)}{\left[(n-2)(1-\ee^{-\alpha/n})+2\right]\alpha} \right), \label{eq:OU2.beta}  \\
&\left\|\Pi(\ud\theta,\ud\alpha|Y_n) - \mathcal{N} \left(\ud\theta \big| \widetilde\theta_{\alpha_0}, 2\theta_0^2/n\right) \times \widetilde \Pi(\ud\alpha|Y_n) \right\|_{\tv} \rightarrow 0, \label{eq:OU2.joint},
\end{align}
as $n\to\infty$ almost surely $P_{(\beta_0,\sigma_0^2,\alpha_0)}$, where the profile posterior distribution $\widetilde\Pi(\ud\alpha|Y_n)$ has the density $\widetilde \pi(\alpha|Y_n)\propto \exp\big\{\widetilde \Lcal_n(\alpha)\big\}\cdot \pi(\alpha|\theta_0)$, and the formulas of $\widetilde\theta_{\alpha}$ and $\widetilde \Lcal_n(\alpha)$ are given by
\begin{align} 
&\widetilde\theta_{\alpha} = \frac{\alpha(1-\ee^{-2\alpha/n})^{-1}}{n-1}\left\{ \left(A_1 \ee^{-2\alpha/n}-2A_2 \ee^{-\alpha/n} +A_3\right) - \frac{ (1-\ee^{-\alpha/n})(B_2-B_1\ee^{-\alpha/n})^2}{(n-2)(1-\ee^{-\alpha/n})+2}\right\} , \nonumber \\
&\widetilde \Lcal_n(\alpha)  = -\frac{n-1}{2}\log \left\{\left(A_1 \ee^{-2\alpha/n}-2A_2 \ee^{-\alpha/n} +A_3\right) - \frac{ (1-\ee^{-\alpha/n})(B_2-B_1\ee^{-\alpha/n})^2}{(n-2)(1-\ee^{-\alpha/n})+2} \right\} \nonumber \\
&\qquad \qquad + \frac{1}{2}\log \frac{1+\ee^{-\alpha/n}}{(n-2)(1-\ee^{-\alpha/n})+2}, \nonumber
\end{align}
where $B_1 = \sum_{i=2}^{n-1} Y(s_i)$, $B_2  = \sum_{i=1}^{n} Y(s_i)$, and $A_1,A_2,A_3$ are as defined in \eqref{eq:3A}.

Furthermore, for any $\eta \in (0,1/4)$, there exists constants $\epsilon_0>0$, $\delta_0 \in (0,1)$ and a large integer $N_2$, such that $\pr\left(\Pi(|\beta-\beta_0|>\epsilon_0|Y_n)>\delta_0\right) > 1-\eta$ for all $n>N_2$. Therefore, the posterior distribution of $\beta$ is inconsistent for the true parameter $\beta_0$.
\end{corollary}

Corollary \ref{cor:OU2} provides a concrete example that the posterior of $\beta$ is not consistent under fixed-domain asymptotics. In fact, this can be seen from the conditional posterior variance of $\beta$ given in \eqref{eq:OU2.beta}. For a fixed $\alpha$, this variance is close to $2\theta_0/[\alpha(\alpha+2)]$ as $n\to\infty$ since $\theta$ drawn from the posterior is close to $\theta_0$. Therefore, the posterior variance of $\beta$ does not vanish as $n\to\infty$. We expect that this is also true for general $\bbm(\cdot)$ functions, since one cannot expect to consistently estimate the regression coefficients $\beta$ only based on a single sample path $Y(\cdot)$. This echoes the frequentist result that the MLE of $\beta$ is inconsistent under fixed-domain asymptotics; see for example, Lemma 5 of \citet{GuAnd18}.

\subsection{Relation to Previous Bayesian Results} \label{subsec:relations}

\noindent \textbf{Relation to previous BvM results.} In the presence of nuisance parameters, \citet{Shen02} and \citet{BicKle12} have developed general machinery for proving BvM results in the presence of possibly nonparametric nuisance parameters. They assume that the model depends on an identifiable parameter and a nuisance parameter. \citet{BicKle12} first establish a LAN result for each value of the identifiable parameter inside a neighborhood of the ``least-favorable submodel", which is a contracting neighborhood of the nuisance parameter around the minimizer of the Kullback-Leibler divergence. Then their Theorem 4.2 gives the integral LAN property with integration over the nuisance parameter. They further proposes a rate free BvM theorem in their Corollary 5.2 that allows a non-contracting posterior for the nuisance parameter, which can be related to the posterior distribution of $\alpha$ in our GP model.

Despite the similarity, we adopt a more direct proof technique for the GP model with isotropic Mat\'ern covariance function, instead of checking the condition on Hellinger distance in \citet{BicKle12} for uniform tests. There are several additional challenges. First, the likelihood function in our GP model cannot be written in an independent product form. The design of the sampling points $\Scal_n$ is arbitrary, making $R_{\alpha}^{-1}$ and $|R_{\alpha}|$ completely intractable. This determines that the LAN condition in our model is fundamentally different from that for independent or weakly dependent data considered in \citet{BicKle12}. We instead use the tools of RKHS theory and spectral analysis to establish the LAN condition for $\theta$. We integrate out $\theta$ for each given $\alpha$ and obtain the profile posterior distribution of $\alpha$ as in \eqref{profile:post1}. Second, our LAN condition holds uniformly over all $\alpha\in[\underline\alpha_n, \overline\alpha_n]$, but we still need to handle those $\alpha$ outside $[\underline\alpha_n, \overline\alpha_n]$. We derive sufficient tail conditions on $\pi(\alpha)$ such that the posterior probability outside $[\underline\alpha_n, \overline\alpha_n]$ vanishes as $n\to\infty$. This involves detailed analysis on the properties of the profile posterior distribution in \eqref{profile:post1}; see Section S2 of the Supplementary Material.

In the broader sense, our work contributes a new example to the literature of limiting posterior distribution for nonregular models; see for example, \citet{CheHon04}, \citet{KleKna12}, \citet{BocGre14}, \citet{Junetal15}, \citet{Chenetal18}, etc.

\vspace{2mm}

\noindent \textbf{Relation to partially identified models.}
Our theorems for the covariance parameters can also be related to the Bayesian literature of \emph{partially identified models}. Such models have been studied extensively in statistics and econometrics literature, but only for independent and weakly dependent data (\citep{Man03}, \citep{Tam10}, \citep{Gus15}). In partially identified models, the probability distribution of the data is compatible with a set of different parameter values. This parameter set is referred to as the \emph{identification region}. As a result, consistent point estimator for the true parameter does not exist, though one can still consistently estimate the identification region. The asymptotic property of posterior distributions in partially identified models have been studied in \citep{MonSch12}, \citep{Gus14}, \citep{Jiang17}, \citep{Chenetal18}, \citep{JiangLi19}, etc. However, the Bayesian theory from these works only applies to independent data and weakly dependent data, and does not apply to our GP model. Depending on the assumptions, the limiting posterior of the nuisance parameter can either only depend on the prior (\citep{MonSch12}), or depend on the prior and some asymptotically deterministic function of the identifiable part of the parameter vector (\citep{Jiang17}).

Our paper contributes a new example to the Bayesian partial identification literature. Consider the model \eqref{eq:obs.model} with isotropic Mat\'ern covariance function $\sigma^2 K_{\alpha,\nu}$ and without regression terms, i.e., $Y(\cdot)=X(\cdot)$. Under fixed-domain asymptotics, the distribution of $Y_n$ is asymptotically compatible with any parameters on the curve $\Gamma_{\theta_0} = \{(\sigma^2,\alpha)\in \RR^+ \times \RR^+: \sigma^2 \alpha^{2\nu}=\theta_0 \}$, which is the identification region in our problem. Different from \citet{MonSch12}, our Theorem \ref{thm:bvm2:joint} shows that both the prior and the data $Y_n$ play important roles in the posterior of $\alpha$. The data $Y_n$ influences the posterior through the profile restricted likelihood function. Different from \citet{Jiang17}, Theorem \ref{thm:OU1} shows that the influence from $Y_n$ is always stochastic instead of asymptotically deterministic, as the polynomially tilted normal distribution $\pi_*(\alpha|Y_n)$ has a scale $v_*$ dependent on $Y_n$ and not converging to any point limit asymptotically.

\section{Asymptotic Efficiency and Convergence Rate of Posterior Prediction} \label{sec:PAE}

The limiting theorems in Section \ref{sec:main.bvm} shows that the posterior of the microergodic parameter $\theta$ in the Mat\'ern covariance function satisfies the same $n^{-1/2}$-convergence to a normal limit. This result has an important implication for the Bayesian GP (or kriging) prediction with covarinace parameters randomly drawn from the posterior distribution at a new location $s^*\in \Scal \backslash \Scal_n$, i.e., $s^*$ is an arbitrary point in $\Scal$ but different from the sampling points $\Scal_n$. We first show that for the general model \eqref{eq:obs.model}, the Bayesian GP predictive variance is almost equal to the one with a known $\theta_0$. Then we discuss the detailed posterior asymptotic efficiency for the model without regression terms and the convergence rates for the model with regression terms. We also present results both for a fixed $\alpha$ and for a range of $\alpha$ values.

Consider the linear prediction (or kriging) of $Y(s^*)$ using the data $Y_n$. Let $r_{\alpha}(s^*)=(K_{\alpha,\nu}(s_1-s^*),\ldots,K_{\alpha,\nu}(s_n-s^*))^\top$ be the correlation vector between $s^*$ and $\{s_1,\ldots,s_n\}$. Then under a possibly misspecified model $Y\sim \gp\left(\bbm^\top \beta,\sigma^2 K_{\alpha,\nu}\right)$, the best linear unbiased predictor (BLUP) for $Y(s^*)$ using (Section 1.5 of \citet{Stein99a}) is
\begin{align}\label{eq:BLUP}
\widehat Y(s^*;\beta,\alpha) & =
\bbm(s^*)^\top \beta + r_{\alpha}(s^*)^\top R_{\alpha}^{-1} \left(Y_n - M_n \beta\right) .
\end{align}
This kriging predictor only depends on $(\beta,\alpha)$ but not $\sigma^2$. Now under the Bayesian setup, we randomly draw $(\beta,\sigma^2,\alpha)$ from the posterior $\Pi(\cdot|Y_n)$ to predict $Y(s^*)$. We denote the predicted variable as $\widetilde Y(s^*)$. Using the Gaussian process predictive distribution, we have
\begin{align*}
\widetilde Y(s^*)|Y_n,\beta,\sigma^2,\alpha &\sim \Ncal \left(\widehat Y(s^*;\beta,\alpha), ~\sigma^2\left\{1 - r_{\alpha}(s^*)^\top R_{\alpha}^{-1} r_{\alpha}(s^*)\right\} \right).
\end{align*}
We can integrate out $\beta$ using \eqref{eq:beta.post} to derive that
\begin{align}\label{eq:varBLUP.true}
&\widetilde Y(s^*)|Y_n,\sigma^2,\alpha \sim \Ncal \Big(\widehat Y(s^*;\alpha), \vv_n(s^*;\sigma^2,\alpha) \Big), \\
\text{where } &\widehat Y(s^*;\alpha) = r_{\alpha}(s^*)^\top R_{\alpha}^{-1} Y_n + b_{\alpha}(s^*)^\top \big(M_n^\top R_{\alpha}^{-1} M_n + \Omega_{\beta}\big)^{-1} M_n^\top R_{\alpha}^{-1}Y_n, \nonumber \\
&{\vv}_n(s^*;\sigma^2,\alpha) = \sigma^2\left\{1 - r_{\alpha}(s^*)^\top R_{\alpha}^{-1} r_{\alpha}(s^*)\right\} + \sigma^2 b_{\alpha}(s^*)^\top \big(M_n^\top R_{\alpha}^{-1} M_n + \Omega_{\beta}\big)^{-1} b_{\alpha}(s^*), \nonumber \\
\text{and } & b_{\alpha}(s^*)=\bbm(s^*) - M_n^\top R_{\alpha}^{-1} r_{\alpha}(s^*), \text{ for any } s^*\in \Scal.  \nonumber
\end{align}
The detailed derivation of \eqref{eq:varBLUP.true} is in Section S6.1 of the Supplementary Material. This normal predictive distribution is the same as in Equation (2.4) of \citet{HanSte93} which is for the special case of $\Omega_{\beta}=0_{p\times p}$. The predictive variance of $\widetilde Y(s^*)$, $\vv_n(s^*;\sigma^2,\alpha)$ in \eqref{eq:varBLUP.true}, is the main focus of this section, because it directly quantifies the Bayesian uncertainty of GP prediction.

We first show that if $(\sigma^2,\alpha)$ is randomly drawn from the posterior $\Pi(\cdot|Y_n)$, then the GP predictive variance ${\vv}_n(s^*;\sigma^2,\alpha)$ is almost equal to ${\vv}_n(s^*;\theta_0/\alpha^{2\nu},\alpha)$, i.e., as if the true microergodic parameter $\theta_0$ were known. We notice that ${\vv}_n(s^*;\sigma^2,\alpha)$ is random due to the randomness in the posterior distribution of $(\sigma^2,\alpha)$.
\begin{theorem}\label{thm:pae.micro}
(Posterior asymptotic efficiency compared to the half oracle model)
\begin{itemize}
\item[(i)] Under Assumptions \ref{assump.m.func} and \ref{prior.1}, for any fixed $\alpha>0$, as $n\to\infty$, almost surely $P_{(\beta_0,\sigma_0^2,\alpha_0)}$,
\begin{align*}
& \Pi\left[ \sup_{s^* \in \Scal \backslash \Scal_n} \left|\frac{{\vv}_n(s^*;\sigma^2,\alpha)}{{\vv}_n(s^*;\theta_0/\alpha^{2\nu},\alpha)} - 1 \right| > 7 n^{-1/2} \log n  \Bigg |Y_n ,\alpha \right] \rightarrow 0 .
\end{align*}
\item[(ii)] Under Assumptions \ref{assump.m.func}, \ref{prior.1}, \ref{prior.2} and \ref{prior.3}, as $n\to\infty$, almost surely $P_{(\beta_0,\sigma_0^2,\alpha_0)}$,
\begin{align*}
& \Pi\left[ \sup_{s^* \in \Scal \backslash \Scal_n} \left|\frac{{\vv}_n(s^*;\sigma^2,\alpha)}{{\vv}_n(s^*;\theta_0/\alpha^{2\nu},\alpha)} - 1 \right| > 7 n^{-1/2} \log n  \Bigg |Y_n \right] \rightarrow 0 .
\end{align*}
\end{itemize}
\end{theorem}
Theorem \ref{thm:pae.micro} shows that the GP predictive variance at an arbitrary new location $s^*$ evaluated under the measure $P_{(\beta,\sigma^2,\alpha)}$ is asymptotically equal to the predictive MSE evaluated under the measure $P_{(\beta_0,\theta_0/\alpha^{2\nu}, \alpha)}$. Part (i) and Part (ii) are the direct consequence of Theorem \ref{thm:bvm1:theta} for the posterior of $\theta$ given $\alpha$ and Theorem \ref{thm:bvm2:joint} for the joint posterior of $(\theta,\alpha)$, respectively. We also give the explicit convergence rate $n^{-1/2}\log n$, in which the $\log n$ factor is to ensure the almost sure convergence. Theorem \ref{thm:pae.micro} shows that the prediction performance from a random draw of $(\theta,\alpha)$ from the posterior is as good as the ``half oracle" model with the true microergodic parameter $\theta_0$ and the same range parameter $\alpha$. It is half oracle because Theorem \ref{thm:pae.micro} has not yet set the range parameter at the true $\alpha_0$ and compared with ${\vv}_n(s^*;\theta_0/\alpha_0^{2\nu},\alpha_0)$. On the other hand, Theorem \ref{thm:pae.micro} only requires the same conditions as Theorem \ref{thm:bvm2:joint}.

In the following, we will compare ${\vv}_n(s^*;\sigma^2,\alpha)$ with ${\vv}_n(s^*;\theta_0/\alpha_0^{2\nu},\alpha_0)$, the predictive variance from the full oracle model where both $\theta$ and $\alpha$ are set at their true values. We first study a simplified model without regression terms and prove the asymptotic efficiency in posterior prediction with respect to the full oracle model, and then consider the general model \eqref{eq:obs.model} and show the same optimal posterior convergence rates as the full oracle model.

\subsection{Posterior Asymptotic Efficiency Without Regression Terms} \label{subsec:pae.no.reg}
In this subsection, we consider a special case of the model \eqref{eq:obs.model} where the regression term $m(\cdot)^\top \beta$ is absent and the model simplifies to
\begin{align}\label{eq:obs.model.2}
Y(s)=X(s), \quad \text{for any }s\in \Scal, \quad X\sim \gp(0,\sigma^2 K_{\alpha,\nu}).
\end{align}
We observe $Y_n\sim \Ncal(0,\sigma_0^2 R_{\alpha_0})$ at the sampling points $\Scal_n$. This is equivalent to setting $p=0$. For this model, we prove the strong result that ${\vv}_n(s^*;\sigma^2,\alpha)$ with $(\sigma^2,\alpha)$ randomly drawn from the posterior is asymptotically equal to ${\vv}_n(s^*;\theta_0/\alpha_0^{2\nu},\alpha_0)$ and quantify the convergence rate. We need the following dense assumption.
\begin{enumerate}[label=(A.\arabic*)]
\setcounter{enumi}{4}
\item \label{assump.dense1} The sequence of $\Scal_n=\{s_1,\ldots,s_n\}$ is getting dense in $\Scal=[0,T]^d$ as $n\to\infty$, in the sense that $\sup_{s^*\in \Scal} \min_{1\leq i\leq n} \|s^*-s_i\| \to 0$ as $n\to\infty$.
\end{enumerate}
The sets $\Scal_1,\Scal_2,\ldots$ are increasingly dense in the fixed domain $\Scal$, so that we can predict at any new location accurately. But we do not require the sequence $\Scal_1,\Scal_2,\ldots$ to be nested.

In the model \eqref{eq:obs.model.2}, the BLUP of $Y(s^*)$ is $\widehat Y(s^*;\alpha)=r_{\alpha}(s^*)^\top R_{\alpha}^{-1} Y_n$, and $\vv_n(s^*;\sigma^2,\alpha)=\sigma^2\left\{1 - r_{\alpha}(s^*)^\top R_{\alpha}^{-1} r_{\alpha}(s^*)\right\}$ from \eqref{eq:varBLUP.true}. We notice that in this case, another interpretation of $\vv_n(s^*;\sigma^2,\alpha)$ is the GP prediction mean squared error of the BLUP $\widehat Y(s^*;\alpha)$ (\citet{KauSha13}). That is, if we let $e_n(s^*;\alpha)=\widehat Y(s^*;\alpha) - Y(s^*)$, then $\vv_n(s^*;\sigma^2,\alpha)={\EE}_{(\sigma^2,\alpha)}\left\{e_n(s^*;\alpha)^2\right\}$. The optimal ``oracle" predictive MSE using the true parameters is $\vv_n(s^*;\sigma_0^2,\alpha_0)={\EE}_{(\sigma_0^2,\alpha_0)}\left\{e_n(s^*;\alpha_0)^2\right\}$. Under the true model $\gp(0,\sigma_0^2K_{\alpha_0,\nu})$, the predictive MSE based on a misspecified $\alpha$ is
\begin{align}
{\EE}_{(\sigma_0^2,\alpha_0)}\left\{e_n(s^*;\alpha)^2\right\} & = \sigma_0^2 \Big\{1-2r_{\alpha}(s^*)^\top R_{\alpha}^{-1}r_{\alpha_0}(s^*)
+ r_{\alpha}(s^*)^\top R_{\alpha}^{-1}R_{\alpha_0}R_{\alpha}^{-1}r_{\alpha}(s^*)\Big\}. \nonumber
\end{align}

We are interested in whether ${\EE}_{(\sigma^2,\alpha)}\left\{e_n(s^*;\alpha)^2\right\}$, the predictive MSE under the true measure ${\EE}_{(\sigma_0^2,\alpha_0)}\left\{e_n(s^*;\alpha)^2\right\}$, and the oracle predictive MSE ${\EE}_{(\sigma_0^2,\alpha_0)}\left\{e_n(s^*;\alpha)^2\right\}$ are close to each other. In a series of works \citet{Stein88}, \citet{Stein90a}, \citet{Stein90b}, \citet{Stein93}, \citet{Stein97} and \citet{Stein99b}, Stein has systematically studied the GP prediction problem and shown that if an incorrect Gaussian process model is used for prediction, the predictive variance at $s^*$ is asymptotically equal to the predictive variance at $s^*$ using the incorrect model but evaluated under the true Gaussian process model, as long as the two Gaussian measures are compatible (or mutually absolutely continuous). For our GP model with mean-zero and isotropic Mat\'ern covariance function with $d\in\{1,2,3\}$, the compatibility of the incorrect model $\gp(0,\sigma^2 K_{\alpha,\nu})$ and the true model $\gp(0,\sigma_0^2 K_{\alpha_0,\nu})$ simplifies to the equivalence condition $\sigma^2\alpha^{2\nu}=\theta_0=\sigma_0^2\alpha_0^{2\nu}$, i.e., they have the same microergodic parameter $\theta_0$. If the equivalence condition holds, then \citet{Stein88}, \citet{Stein90a}, and \citet{Stein90b} have shown that for the model without regression terms \eqref{eq:obs.model.2}, as $n\to\infty$,
\begin{align} \label{eq:ae.original}
\sup_{s^*\in \Scal \backslash \Scal_n} \left| \frac{{\EE}_{(\sigma^2,\alpha)}\left\{e_n(s^*;\alpha)^2\right\}}{{\EE}_{(\sigma_0^2,\alpha_0)}\left\{e_n(s^*;\alpha)^2\right\} } - 1 \right| \to 0,
\sup_{s^*\in \Scal \backslash \Scal_n} \left| \frac{{\EE}_{(\sigma^2,\alpha)}\left\{e_n(s^*;\alpha)^2\right\}}{{\EE}_{(\sigma_0^2,\alpha_0)}\left\{e_n(s^*;\alpha_0)^2\right\} } - 1 \right| \to 0,
\end{align}
which is called \emph{asymptotic efficiency in linear prediction}. The first convergence shows that for the BLUP \eqref{eq:BLUP}, the predictive MSEs are almost the same under either the incorrect Gaussian measure $P_{(\sigma^2,\alpha)}$ or the true Gaussian measure $P_{(\sigma_0^2,\alpha_0)}$. The second convergence shows that the predictive MSEs obtained from the incorrect model $\gp(0,\sigma^2 K_{\alpha,\nu})$ is asymptotically equal to the optimal predictive MSE from the true model $\gp(0,\sigma_0^2 K_{\alpha_0,\nu})$.

Using the weakened conditions in \citet{Stein93}, Theorem 4 of \citet{KauSha13} shows that in the model \eqref{eq:obs.model.2}, for a given $\alpha>0$, the prediction based on the MLE of $\sigma^2$ for a \textit{fixed} $\alpha>0$ satisfies that
\begin{align*}
\sup_{s^*\in \Scal \backslash \Scal_n} \left| \frac{{\EE}_{(\widetilde \sigma^2_{\alpha},\alpha)}\left\{e_n(s^*;\alpha)^2\right\}}{{\EE}_{(\sigma_0^2,\alpha_0)}\left\{e_n(s^*;\alpha)^2\right\} } - 1 \right| \to 0,
\end{align*}
as $n\to\infty$ almost surely $P_{(\sigma_0^2,\alpha_0)}$, where $\widetilde \sigma^2_{\alpha} = n^{-1} Y_n^\top R_{\alpha}^{-1} Y_n$ is the MLE of $\sigma^2$.

Motivated by these works, we establish the Bayesian version of \eqref{eq:ae.original}, called \emph{asymptotic efficiency in posterior prediction}, which is the posterior asymptotic efficiency compared to the full oracle model. In Bayesian inference, we randomly draw $(\sigma^2,\alpha)$ from the joint posterior distribution, and compute the predictive MSE at a new location $s^*\in \Scal \backslash \Scal_n$ using the Gaussian measure $P_{(\sigma^2,\alpha)}$.

For a given $\alpha>0$, we define the following sequence $\varsigma_n(\alpha)$ which will be useful
\begin{align} \label{eq:vs.alpha}
\varsigma_n(\alpha) & = \max\left\{ \sup_{s^* \in \Scal \backslash \Scal_n}
\left|\tfrac{{\EE}_{(\theta_0/\alpha^{2\nu},\alpha)}\big\{e_n(s^*;\alpha)^2\big\}}
    {{\EE}_{(\sigma_0^2,\alpha_0)}\big\{e_n(s^*;\alpha)^2\big\}} - 1 \right|,
    \sup_{s^* \in \Scal \backslash \Scal_n}
    \left|\tfrac{{\EE}_{(\theta_0/\alpha^{2\nu},\alpha)}\big\{e_n(s^*;\alpha)^2\big\}}
    {{\EE}_{(\sigma_0^2,\alpha_0)}\big\{e_n(s^*;\alpha_0)^2\big\}} - 1 \right|
    \right\} .
\end{align}
For a given $\alpha>0$, as $n\to\infty$, Theorem 3.1 of \citet{Stein90a} shows that the first rate in $\varsigma_n(\alpha)$ in \eqref{eq:vs.alpha} converges to zero, and Theorem 1 of \citet{Stein90b} further implies that the second rate in $\varsigma_n(\alpha)$ in \eqref{eq:vs.alpha} converges to zero. To handle a random range parameter $\alpha$, we need the following uniform convergence condition.
\begin{enumerate}[label=(A.\arabic*)]
\setcounter{enumi}{5}
\item \label{assump.Mn} There exists a positive deterministic sequence $\varsigma_n \to 0$ as $n\to \infty$, such that $\sup_{\alpha \in [\underline\alpha_n, \overline\alpha_n]} \varsigma_n(\alpha) \leq \varsigma_n$ for the sequence $\varsigma_n(\alpha)$ defined in \eqref{eq:vs.alpha}.
\end{enumerate}

We have the following theorem for the prediction MSE in the model \eqref{eq:obs.model.2}.
\begin{theorem}\label{thm:pae.main}
(Posterior asymptotic efficiency compared to the full oracle model under \eqref{eq:obs.model.2})
\begin{itemize}[leftmargin=5mm]
\item[(i)] (For a fixed $\alpha$) Under Assumptions \ref{prior.1} and \ref{assump.dense1}, as $n\to\infty$, almost surely $P_{(\sigma_0^2,\alpha_0)}$,
\begin{align*}
& \Pi\left[ \sup_{s^* \in \Scal \backslash \Scal_n}  \left|\frac{{\EE}_{(\sigma^2,\alpha)}\big\{e_n(s^*;\alpha)^2\big\}}{{\EE}_{(\sigma_0^2,\alpha_0)}\big\{e_n(s^*;\alpha)^2\big\}} - 1 \right| > \max\left\{16n^{-1/2}\log n, 2\varsigma_n(\alpha)\right\} \Bigg |Y_n,\alpha \right] \rightarrow 0 ,\\
& \Pi\left[ \sup_{s^* \in \Scal \backslash \Scal_n}  \left|\frac{{\EE}_{(\sigma^2,\alpha)}\big\{e_n(s^*;\alpha)^2\big\}}{{\EE}_{(\sigma_0^2,\alpha_0)}\big\{e_n(s^*;\alpha_0)^2\big\}} - 1 \right| > \max\left\{16n^{-1/2}\log n, 2\varsigma_n(\alpha)\right\} \Bigg |Y_n,\alpha \right] \rightarrow 0,
\end{align*}
where $\varsigma_n(\alpha)$ is given in \eqref{eq:vs.alpha};
\item[(ii)] (For random $\alpha$) Under Assumptions \ref{prior.1}, \ref{prior.2}, \ref{prior.3}, \ref{assump.dense1} and \ref{assump.Mn}, as $n\to\infty$, almost surely $P_{(\sigma_0^2,\alpha_0)}$,
\begin{align}
& \Pi\left[ \sup_{s^* \in \Scal \backslash \Scal_n}  \left|\frac{{\EE}_{(\sigma^2,\alpha)}\big\{e_n(s^*;\alpha)^2\big\}}{{\EE}_{(\sigma_0^2,\alpha_0)}\big\{e_n(s^*;\alpha)^2\big\}} - 1 \right| > \max\left(16n^{-1/2}\log n, 2\varsigma_n \right) \Bigg |Y_n \right] \rightarrow 0 , \nonumber  \\
& \Pi\left[ \sup_{s^* \in \Scal \backslash \Scal_n}  \left|\frac{{\EE}_{(\sigma^2,\alpha)}\big\{e_n(s^*;\alpha)^2\big\}}{{\EE}_{(\sigma_0^2,\alpha_0)}\big\{e_n(s^*;\alpha_0)^2\big\}} - 1 \right| > \max\left(16n^{-1/2}\log n, 2\varsigma_n \right) \Bigg |Y_n \right] \rightarrow 0,
\end{align}
where $\varsigma_n$ is given in Assumption \ref{assump.Mn}.
\end{itemize}
\end{theorem}
We emphasize again that ${\EE}_{(\sigma^2,\alpha)}\big\{e_n(s^*;\alpha)^2\big\}={\vv}_n(s^*;\sigma^2,\alpha)$ and ${\EE}_{(\sigma_0^2,\alpha_0)}\big\{e_n(s^*;\alpha_0)^2\big\}={\vv}_n(s^*;\sigma_0^2,\alpha_0)$ for the model \eqref{eq:obs.model.2} without regression terms.
Part (i) of Theorem \ref{thm:pae.main} establishes two posterior convergence results. The first convergence is about the ratio of the predictive MSEs using a misspecified range parameter $\alpha$ evaluated under the measure $P_{(\sigma^2,\alpha)}$ and the true measure $P_{(\sigma_0^2,\alpha_0)}$, which implies that these two predictive MSEs are asymptotically equal. The second convergence is about the ratio of the predictive MSEs using the incorrect model $P_{(\sigma^2,\alpha)}$ and the full oracle optimal predictive MSE using the true model $P_{(\sigma_0^2,\alpha_0)}$. This implies that the predictive MSE computed with random parameters $(\theta,\alpha)$ drawn from the posterior can asymptotically recover the exact full oracle optimal predictive MSE. Both convergence rates depend on two parts: one is the posterior convergence rate of $\theta$ to $\theta_0$, which is as fast as $n^{-1/2}\log n$; the other is the convergence rate from the convergence of the two ratios in the definition of $\varsigma_n(\alpha)$ in \eqref{eq:vs.alpha}, which has been shown before by \citet{Stein90a} and \citet{Stein90b}.

Part (ii) of Theorem \ref{thm:pae.main} is similar to Part (i) with the same interpretation of asymptotic efficiency, except that $\alpha$ is also random and $(\sigma^2,\alpha)$ is drawn from their joint posterior. Furthermore, Assumption \ref{assump.Mn} is used to guarantee the uniform convergence over the majority of $\alpha$ values in the interval $[\underline\alpha_n, \overline\alpha_n]$. Part (ii) shows that the predictive MSE computed from randomly drawn $(\sigma^2,\alpha)$ from the posterior is asymptotically equal to the oracle optimal predictive MSE with the true parameters.

We emphasize that the posterior asymptotic efficiency in Theorem \ref{thm:pae.main} automatically implies that ${\EE}_{(\sigma^2,\alpha)}\big\{e_n(s^*;\alpha)^2\big\}$ with $(\sigma^2,\alpha)$ randomly drawn from the posterior must always converge at exactly the same rate to zero as ${\EE}_{(\sigma_0^2,\alpha_0)}\big\{e_n(s^*;\alpha_0)^2\big\}$, regardless of how fast ${\EE}_{(\sigma_0^2,\alpha_0)}\big\{e_n(s^*;\alpha_0)^2\big\}$ converges to zero. Therefore, the posterior asymptotic efficiency is stronger than posterior convergence rate results.

To clarify the rate $\varsigma_n$ in Assumption \ref{assump.Mn}, we revisit the 1-dimensional Ornstein-Uhlenbeck process in Case (i) in Section \ref{subsec:OU1} and derive an explicit form for $\varsigma_n$.
\begin{theorem} \label{thm:suprate.OU}
For the case of $d=1$, $\nu=1/2$, $\Scal=[0,1]$, and equispaced grid $s_i=i/n$, for $i=1,\ldots,n$, Assumption \ref{assump.Mn} is satisfied with $\varsigma_n= 3n^{-1/2+(\overline\kappa+\underline\kappa/2)}$, where $\overkappa$ and $\underkappa$ are defined in \eqref{eq:2kappa}. As a result, under Assumptions \ref{prior.1}, \ref{prior.2}, \ref{prior.3}, \ref{assump.dense1}, as $n\to\infty$, almost surely $P_{(\sigma_0^2,\alpha_0)}$,
\begin{align}
& \Pi\left[ \sup_{s^* \in \Scal \backslash \Scal_n}  \left|\frac{{\EE}_{(\sigma^2,\alpha)}\big\{e_n(s^*;\alpha)^2\big\}}{{\EE}_{(\sigma_0^2,\alpha_0)}\big\{e_n(s^*;\alpha)^2\big\}} - 1 \right| > 6n^{-1/2+(\overline\kappa+\underline\kappa/2)} \Bigg |Y_n \right] \rightarrow 0 , \nonumber  \\
& \Pi\left[ \sup_{s^* \in \Scal \backslash \Scal_n}  \left|\frac{{\EE}_{(\sigma^2,\alpha)}\big\{e_n(s^*;\alpha)^2\big\}}{{\EE}_{(\sigma_0^2,\alpha_0)}\big\{e_n(s^*;\alpha_0)^2\big\}} - 1 \right| > 6n^{-1/2+(\overline\kappa+\underline\kappa/2)} \Bigg |Y_n \right] \rightarrow 0. \nonumber
\end{align}
\end{theorem}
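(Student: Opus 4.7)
The plan is to split the proof into two tasks. First, I would verify Assumption \ref{assump.Mn} for the 1-dimensional OU process on the equispaced grid with the explicit uniform rate $\varsigma_n = 7 n^{2\overline\kappa + \underline\kappa - 1}$. Second, I would substitute this rate into Theorem \ref{thm:pae.joint}(ii) and check that the resulting $\max(16 n^{-1/2}\log n, 2\varsigma_n)$ collapses to $16 n^{-1/2}\log n$ for all large $n$. The first task is tractable because the OU process is Markov, so for a new location $s^* \in (s_i, s_{i+1})$ the BLUP depends only on the two adjacent observations, yielding the closed form
\[
\EE_{\sigma^2,\alpha}\{e_n(s^*;\alpha)^2\} = \sigma^2\frac{(1-e^{-2\alpha h_1})(1-e^{-2\alpha h_2})}{1-e^{-2\alpha d}}
\]
with $h_1 = s^*-s_i$, $h_2 = s_{i+1}-s^*$, $d = 1/n$. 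For the boundary case $s^* \in [0, 1/n)$ the predictor collapses to $e^{-\alpha(1/n-s^*)}X(s_1)$, giving $\sigma^2(1-e^{-2\alpha h})$ with $h = 1/n-s^*$. The true MSE of the wrong-$\alpha$ predictor is obtained either by completing the square, $\EE_{\sigma_0^2,\alpha_0}\{e_n(s^*;\alpha)^2\} = \EE_{\sigma_0^2,\alpha_0}\{e_n(s^*;\alpha_0)^2\} + \sigma_0^2 (a(\alpha)-a^*(\alpha_0))^{\top}C_{\alpha_0}(a(\alpha)-a^*(\alpha_0))$ in the interior, or directly from $\sigma_0^2[1 - 2e^{-(\alpha+\alpha_0)h} + e^{-2\alpha h}]$ on the boundary.

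Next I would Taylor-expand every exponential in the small parameter $\alpha d \leq \overline\alpha_n/n = n^{\overline\kappa-1} \to 0$, which stays uniformly small for $\alpha \in [\underline\alpha_n, \overline\alpha_n]$. Writing $g(x) = (1-e^{-2x})/(2x) = 1 - x + 2x^2/3 + O(x^3)$, one sees
\[
\EE_{\theta_0/\alpha,\alpha}\{e_n(s^*;\alpha)^2\} = \frac{2\theta_0 h_1 h_2}{d}\cdot\frac{g(\alpha h_1)g(\alpha h_2)}{g(\alpha d)}
\]
in the interior and $2\theta_0 h\cdot g(\alpha h)$ on the boundary, so under the constraint $\sigma^2 \alpha = \theta_0$ the leading term is independent of $\alpha$ and cancels exactly against the leading term of $\EE_{\sigma_0^2,\alpha_0}\{e_n(s^*;\alpha_0)^2\}$. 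The residual is of order $(\alpha-\alpha_0)h$ plus $(\alpha-\alpha_0)^2 h/\alpha$ on the boundary, and of order $(\alpha^2-\alpha_0^2)h_1 h_2$ plus $\sigma_0^2\|a(\alpha)-a^*(\alpha_0)\|^2$ in the interior. The binding bound is the boundary term $(\alpha-\alpha_0)^2 h/\alpha$: combining $|\alpha-\alpha_0| \leq 2\overline\alpha_n = 2n^{\overline\kappa}$, $h \leq 1/n$, and the crucial lower bound $\alpha \geq \underline\alpha_n = n^{-\underline\kappa}$ yields $(\alpha-\alpha_0)^2 h/\alpha \leq 4 n^{2\overline\kappa + \underline\kappa - 1}$, which pins down the stated exponent; the remaining contributions are of strictly smaller order, and a global constant of $7$ absorbs all the explicit constants.

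The main obstacle is ensuring the bounds hold genuinely uniformly over both $s^* \in \Scal \backslash \Scal_n$ and $\alpha \in [\underline\alpha_n, \overline\alpha_n]$. The most delicate case is an interior $s^*$ arbitrarily close to a grid point, where $h_1 h_2/d$ shrinks and the denominator of the ratios approaches zero; this requires the matching observation that $\|a(\alpha)-a^*(\alpha_0)\|^2$ shrinks proportionally in $h_1 h_2$, so the ratios remain controlled. The other delicate regime is the boundary case with $\alpha$ near $\underline\alpha_n$, which is precisely where the $n^{\underline\kappa}$ factor enters through the $1/\alpha$ bound. Once Assumption \ref{assump.Mn} is verified with $\varsigma_n = 7 n^{2\overline\kappa + \underline\kappa - 1}$, Theorem \ref{thm:pae.joint}(ii) applies. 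Because the definitions in \eqref{eq:2kappa} force $2\overline\kappa + \underline\kappa < 1/2$, one has $2\varsigma_n = 14 n^{2\overline\kappa + \underline\kappa - 1} \prec n^{-1/2}\log n$, so $\max(16 n^{-1/2}\log n, 2\varsigma_n) = 16 n^{-1/2}\log n$ for all sufficiently large $n$, delivering the two posterior convergence statements.
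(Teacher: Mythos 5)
Your proposal is correct in outline but takes a genuinely different route from the paper. The paper never analyzes individual prediction locations: it invokes Lemma \ref{lem:stein90b} (Stein, 1990b), which bounds both suprema in \eqref{eq:vs.alpha} by constant multiples of $\rr(\alpha)-\rr_n(\alpha)$, the gap between the finite-sample symmetrized Kullback--Leibler divergence of $P_{(\theta_0/\alpha,\alpha)}^{(n)}$ and $P_{(\sigma_0^2,\alpha_0)}^{(n)}$ and its limit; it then computes $\rr_n(\alpha)$ in closed form from the tridiagonal $R_\alpha^{-1}$ via the traces $\tr(R_\alpha^{-1}R_{\alpha_0})$ and $\tr(R_{\alpha_0}^{-1}R_\alpha)$, Taylor-expands to get $\rr(\alpha)-\rr_n(\alpha)=\frac{(\alpha-\alpha_0)^2(\alpha+\alpha_0)}{4\alpha\alpha_0 n}(1+o(1))$ uniformly on $[\underline\alpha_n,\overline\alpha_n]$, and bounds the supremum by $7n^{2\overline\kappa+\underline\kappa-1}$. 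Your route instead exploits the screening property of the OU process to reduce the $n$-point BLUP to the two adjacent observations and computes the MSE ratios pointwise; your closed forms are correct, and the case analysis over interior/boundary $s^*$ and $s^*$ near grid points is exactly the uniformity work that the KL route buys you for free. What your approach buys in return is self-containedness (no appeal to Stein's 1990b lemma) and sharper, location-specific information. One bookkeeping caveat: your claimed binding term $(\alpha-\alpha_0)^2 h/\alpha$, with the $1/\alpha$ denominator "sourcing" the $n^{\underline\kappa}$ factor, does not obviously arise in the direct expansion --- the denominators of both ratios are bounded below by a constant multiple of $\theta_0 h$ (respectively $\theta_0 h_1h_2/d$) uniformly over $\alpha\in[\underline\alpha_n,\overline\alpha_n]$, so the natural boundary term is $(\alpha-\alpha_0)^2h/(2\alpha_0)=O(n^{2\overline\kappa-1})$; in the paper the $\alpha_0/\underline\alpha_n$ factor genuinely appears inside $\rr(\alpha)-\rr_n(\alpha)$. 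Since the theorem only requires $\sup_{\alpha}\varsigma_n(\alpha)\le\varsigma_n$ and your $7n^{2\overline\kappa+\underline\kappa-1}$ dominates all the terms you would actually obtain, this does not break the argument. Your final step --- noting $2\overline\kappa+\underline\kappa<1/2$ so that $2\varsigma_n\prec n^{-1/2}\log n$ and the threshold in Theorem \ref{thm:pae.joint}(ii) collapses to $16n^{-1/2}\log n$ --- is identical to the paper's.
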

To prove Theorem \ref{thm:suprate.OU}, we use the result in \citet{Stein90b} and relate the rate $\varsigma_n$ in Assumption \ref{assump.Mn} to the convergence rate of the finite sample version of the symmetrized Kullback-Leibler divergence between two equivalent Gaussian measures towards its limit. Since $\overkappa$ and $\underkappa$ are both small positive numbers as given in \eqref{eq:2kappa}, the two posterior convergence rates for asymptotic efficiency in Theorem \ref{thm:suprate.OU} are both close to the rate $n^{-1/2}$.

\subsection{Optimal Rates for GP Predictive Variance with Regression Terms} \label{subsec:pae.reg}
We now consider the general universal kriging model \eqref{eq:obs.model} with the regression term $\bbm(\cdot)^\top \beta$. Like Theorem \ref{thm:pae.main}, we also need a similar assumption to Assumption \ref{assump.Mn}.
\begin{enumerate}[label=(A.6')]
\item \label{assump.Mn2} There exists a positive deterministic sequence $\tilde\varsigma_n\to 0$ as $n\to\infty$, such that
\begin{align}\label{eq:varsigma.p}
\sup_{\alpha\in [\underline\alpha_n,\overline\alpha_n]} \sup_{s^* \in \Scal\backslash\Scal_n} \left|\frac{(\theta_0/\alpha^{2\nu})[1-r_{\alpha}(s^*)^\top R_{\alpha}^{-1}r_{\alpha}(s^*)]}
    {\sigma_0^2[1-r_{\alpha_0}(s^*)^\top R_{\alpha_0}^{-1}r_{\alpha_0}(s^*)]} - 1 \right| \leq \tilde \varsigma_n .
\end{align}
\end{enumerate}
Because the relative error in \eqref{eq:varsigma.p} is exactly the second relative error in the definition of $\varsigma_n(\alpha)$ in \eqref{eq:vs.alpha}, Assumption \ref{assump.Mn2} is weaker than and implied by Assumption \ref{assump.Mn}. Therefore, by Theorem \ref{thm:suprate.OU}, we can take $\tilde \varsigma_n = 3n^{-1/2+(\overline\kappa+\underline\kappa/2)}$ for 1-dimensional Ornstein-Uhlenbeck process in Assumption \ref{assump.Mn2}.

To quantify the convergence rate of ${\vv}_n(s^*;\sigma^2,\alpha)$, we follow the literature on kriging and define the \textit{fill distance} given a set of design points $\Scal_n=\{s_1,\ldots,s_n\}$ as
\begin{align} \label{eq:fill.dist}
h_{\Scal_n} & = \sup_{s\in \Scal} \min_{s_i\in\Scal_n} \|s-s_i\|.
\end{align}
The fill distance quantifies the space-filling properties of $\Scal_n$. The convergence rates of kriging in Model \eqref{eq:obs.model} can often be expressed as a function of $h_{\Scal_n}$ (\citet{Wen05}, \citet{Wangetal19}, \citet{TuoWang20}, \citet{Wynetal21}). Then we have the following theorem on the posterior convergence rate of Bayesian GP predictive variance.

\begin{theorem} \label{thm:vn.rate}
Suppose that Assumptions \ref{assump.m.func}, \ref{prior.1}, \ref{prior.2}, \ref{prior.3}, \ref{assump.dense1}, and \ref{assump.Mn2} hold. Let $C_{\bbm}=\sum_{j=1}^p\|\bbm_j\|_{\Wcal_2^{\nu+d/2}(\Scal)}^2$. For an index set $\Ical\subseteq \{1,\ldots,n\}$, let $|\Ical|$ be its cardinality and $M_{\Ical}$ be the submatrix of $M_n$ with row indexes in $\Ical$. Assume that for each $\Scal_n$, $\underline \lambda(M_n,p)=$ $\max_{\Ical\subseteq \{1,\ldots,n\},|\Ical|=p} \lambda_{\min} \left(M_{\Ical}^\top M_{\Ical}\right) / p >0 $. Then for any $\eta,\delta\in (0,1)$, there exist large constants $C_{\vv,1}>0,C_{\vv,2}>0$ that depend on $\sigma_0^2,\alpha_0,\nu,d,T$, and a large constant $C_{\vv,3}>0$ and large integer $N_3$ that depend on $\eta,\delta,\sigma_0^2,\alpha_0,\nu,d,T$, such that for all $n>N_3$,
\begin{align} \label{eq:post.vn.opt.rate}
&\sup_{s^*\in \Scal} {\vv}_n(s^*;\sigma_0^2,\alpha_0) \leq C_{\vv,1} \left[C_{\bbm} \sigma_0^2 \underline \lambda(M_n,p)^{-1} + 1 \right] h_{\Scal_n}^{2\nu}, \quad \text{and } \nonumber \\
& \Pr\left(\Pi\left[\sup_{s^* \in \Scal} {\vv}_n(s^*;\sigma^2,\alpha) \leq C_{\vv,2} \left[C_{\vv,3} C_{\bbm} \underline \lambda(M_n,p)^{-1} + 1 \right] h_{\Scal_n}^{2\nu}~ \Big|~ Y_n\right] > 1-\delta \right) > 1-\eta .
\end{align}
\end{theorem}
Theorem \ref{thm:vn.rate} essentially shows that with $(\sigma^2,\alpha)$ randomly drawn from the posterior distribution $\Pi(\cdot|Y_n)$, the Bayesian GP predictive variance ${\vv}_n(s^*;\sigma^2,\alpha)$ converges to zero at almost the same rate as the oracle predictive variance ${\vv}_n(s^*;\sigma_0^2,\alpha_0)$ using the true parameters $(\sigma_0^2,\alpha_0)$, as $n\to\infty$ in $P_{(\sigma_0^2,\alpha_0)}$-probability. Given that the posterior support of $(\sigma^2,\alpha)$ is unbounded, $\sup_{s^*\in \Scal} {\vv}_n(s^*;\sigma^2,\alpha)$ with ${\vv}_n(s^*;\sigma^2,\alpha)$ defined in \eqref{eq:varBLUP.true} could be potentially very large if $\sigma^2$ is large. However, our Theorem \ref{thm:vn.rate} shows that the posterior convergence rate can still be controlled even with $(\sigma^2,\alpha)$ randomly drawn from the posterior with unbounded support. The proof of Theorem \ref{thm:vn.rate} crucially depends on the limiting posterior distribution of $(\theta,\alpha)$ proved in Theorem \ref{thm:bvm2:joint}.

The convergence rates of GP predictive error have been extensively studied in the frequentist literature (\citet{YakSzi85}, \citet{Stein90a}, \citet{Wangetal19}, \citet{TuoWang20}, etc.) \citet{WuSch93} has shown that the squared $L_2$ kriging prediction error for the GP with a Mat\'ern covariance function, fixed covariance parameters, and no regression terms is $O(h_{\Scal_n}^{2\nu})$ for sufficiently small $h_{\Scal_n}$. \citet{Rit00} and \citet{TuoWang20} have proved that for the GP with isotropic Mat\'ern $\sigma_0^2 K_{\alpha_0,\nu}$ and no regression terms, the \textit{optimal} convergence rate of squared $L_2$ kriging prediction error is $n^{-2\nu/d}$, which is also a lower bound and not improvable. This optimal rate $n^{-2\nu/d}$ can be attained when $\Scal_n$ has the quasi-uniform design, such as a regular grid in $\Scal$, such that $h_{\Scal_n} \asymp n^{-1/d}$; see Table 1 of \citet{TuoWang20}. If $C_{\bbm} \underline \lambda(M_n,p)^{-1}$ in Theorem \ref{thm:vn.rate} is of constant order, then Theorem \ref{thm:vn.rate} provides the upper bound of the order $h_{\Scal_n}^{2\nu/d} \asymp n^{-2\nu/d}$ for the Bayesian GP predictive variance ${\vv}_n(s^*;\sigma^2,\alpha)$ with a quasi-uniform design $\Scal_n$, which matches up with the optimal rate of squared $L_2$ kriging prediction error.

The multiplicative factor $C_{\bbm} \underline\lambda(M_n,p)^{-1}$ in the upper bounds in Theorem \ref{thm:vn.rate} is due to the regression terms $\bbm(\cdot)^\top \beta$. The same factor also appears in the frequentist kriging error bound in Theorem 2 of \citet{Wangetal19} under a fixed covariance functions. By Assumption \ref{assump.m.func}, $C_{\bbm}$ is already a constant. In many applications, the term $\underline\lambda(M_n,p)$ is bounded from below by constant for fixed $p$ as $n\to\infty$, for example, when $\Scal_n$ is either some regular grid in $\Scal$ or drawn from some sampling distribution (\citet{Wangetal19}). Then Theorem \ref{thm:vn.rate} leads to the optimal convergence rate for the posterior predictive variance with randomly drawn $(\sigma^2,\alpha)$.

In the special case of $p=1$, $\bbm_1(\cdot)\equiv 1$, and Mat\'ern with $d=1$ and $\nu=1/2$, \citet{PutYou01} has shown the stronger frequentist asymptotic efficiency in linear prediction. Therefore, one can possibly establish the Bayesian posterior asymptotic efficiency similar to Theorem \ref{thm:pae.main} for this special case. However, posterior asymptotic efficiency for the general universal kriging model \eqref{eq:obs.model} with $p>1$ regression functions, a general smoothness parameter $\nu>0$ and $d\in\{1,2,3\}$ is technically very challenging and likely to involve more demanding assumptions on the functions $\bbm_1(\cdot),\ldots,\bbm_p(\cdot)$ and the sampling design of $\Scal_n$. While we leave this general problem for future research, we provide some empirical evidence of this posterior asymptotic efficiency in the simulation study in Section S7 of the Supplementary Material.

Our results on convergence rates are not directly comparable with the previous literature on Bayesian Gaussian process regression, such as \citet{VarZan08a}, \citet{VarZan09}, \citet{VarZan11}, \citet{YanTok15}, etc., since our model assumes a random sample path $Y(\cdot)$ from a GP instead of a deterministic true function, and our model does not contain the additional measurement error as in these works.

\section{Simulation Study} \label{sec:simulation}
We verify our limiting theorems and posterior asymptotic efficiency using several numerical examples. In this section, we consider the 1 and 2-dimensional Ornstein-Uhlenbeck process with $\nu=1/2$ in the isotropic Mat\'ern covariance function without the regression terms $\bbm(\cdot)^\top \beta$. We provide additional simulation results for the model with regression terms $\bbm(\cdot)^\top \beta$ for $\nu=1/2,1/4,3/2$ and dimension $d=1,2$ in Section S7 of the Supplementary Material.

In the model without regression terms, we have $Y(s)=X(s)$ for $s\in \Scal$, $d=1,2$, and $X(\cdot)\sim \gp(0,\sigma_0^2 K_{\alpha_0,1/2})$. The main purpose is to verify Theorems \ref{thm:bvm2:joint} and \ref{thm:OU1}. The true covariance parameters are $\sigma_0^2=2$, $\alpha_0=1$, and $\theta_0=2$. We assign independent gamma priors to $\theta$ and $\alpha$, with the same shape parameter 1.1 and rate parameter 0.1. This prior satisfies Assumptions \ref{prior.1}, \ref{prior.2}, and the right tail condition (the second relation of \eqref{A3.1.OU}) in \ref{prior.3OU}, but does not satisfy the left tail condition (the first relation of \eqref{A3.1.OU}) in \ref{prior.3OU}; see Proposition \ref{prop:prior2}. We will see that empirically this prior still yields convergent results.

We consider two cases with dimensions $d=1$ and $d=2$. For the $d=1$ case, we set $\Scal=[0,1]$ and the sampling points of $\Scal_n$ to be the grid $s_i=\tfrac{2i-1}{2n}$ ($i=1,\ldots,n$), for $n=25,50,100,200,400$. For the $d=2$ case, we set $\Scal=[0,1]^2$ and the sampling points of $\Scal_n$ to be the regular grid $\left(\tfrac{2i-1}{2m},\tfrac{2j-1}{2m}\right)$ ($i,j=1,\ldots,m$), for $m=10,20,30$ and $n=m^2$. Then we draw $Y_n$ from the mean zero Gaussian process with the $\nu=1/2$ Mat\'ern covariance function observed on $\Scal_n$. We use the random walk Metropolis algorithm (RWM) to draw $5000$ samples after $1000$ burnins from the joint posterior $\Pi(\ud \theta,\ud \alpha|Y_n)$ and the limiting posterior $\mathcal{N}\big(\ud \theta \big| \widetilde\theta_{\alpha_0}, 2\theta_0^2/n \big) \times \widetilde \Pi(\ud \alpha|Y_n)$ in Theorem \ref{thm:bvm2:joint}, respectively. For the $d=1$ case, we further use RWM to draw 5000 samples from the limiting posterior $\mathcal{N}\big(\ud \theta \big| \widetilde\theta_{\alpha_0}, 2\theta_0^2/n \big) \times \Pi_*(\ud \alpha|Y_n)$ in Theorem \ref{thm:OU1}.

We compare the true posterior distribution with the limiting posterior distributions using two criteria: (a) the closeness of our limiting distributions in Theorems \ref{thm:bvm2:joint} and \ref{thm:OU1} to the true posterior, and (b) the convergence of the two asymptotic efficiency measures in \eqref{eq:ae.original} with $(\theta,\alpha)$ drawn from the joint posterior. For (a), since it is difficult to evaluate the total variation distance between two 2-dimensional posterior distributions based on finite posterior samples, we instead compute the Wasserstein-2 ($W_2$) distance between the marginal posteriors for $\theta$ and $\alpha$, respectively.  The $W_2$ distance between two 1-dimensional distributions $F_1$ and $F_2$ has the simple expression $W_2(F_1,F_2)^2=\int_0^1 \big[F_1^{-1}(u)-F_2^{-1}(u)\big]^2\ud u$, where $F_1^{-1}$ and $F_2^{-1}$ are the corresponding quantile functions. With finite samples from $F_1$ and $F_2$, $W_2(F_1,F_2)$ can be accurately estimated by replacing $F_1^{-1}$ and $F_2^{-1}$ with the empirical quantile functions (\citet{Lietal17}). In our simulation study, we replace $F_1$ and $F_2$ with $\Pi(\ud\theta|Y_n)$ and $\mathcal{N}\big(\ud \theta \big| \widetilde\theta_{\alpha_0}, 2\theta_0^2/n \big)$ for $\theta$, and $\Pi(\ud\alpha|Y_n)$ and $\widetilde \Pi(\ud \alpha|Y_n)$ for $\alpha$, respectively. For the $d=1$ case, we also compute the $W_2$ distance between $\Pi(\ud\alpha|Y_n)$ and $\Pi_*(\ud \alpha|Y_n)$. The convergence in $W_2$ distance is equivalent to the weak convergence plus the convergence in the second moment (\citet{Vil08}). Therefore, it provides useful empirical evidence for convergence in the posterior means and variances of $\theta$ and $\alpha$. Theoretically, \citet{ChaWal20} has shown that the Wasserstein distance provides an upper bound for the total variation distance between two kernel smoothed densities from discrete draws.

For the $d=1$ case, Table \ref{tab:W2.dim1} reports the estimated posterior means under the true posterior $\Pi(\cdot|Y_n)$, the limiting posterior $\widetilde\Pi(\cdot|Y_n)$ in Theorem \ref{thm:bvm2:joint}, the limiting posterior $\Pi_*(\cdot|Y_n)$ in Theorem \ref{thm:OU1}, and the $W_2$ distances between the marginal posteriors. The posterior mean estimates of the microergodic $\theta$ are accurate for the true value $\theta_0=2$ and the posterior variance decreases as $n$ increases. As expected, the posterior mean estimates of $\alpha$ are not consistent for the true $\alpha_0=1$, and show no sign of convergence for all three distributions. For the approximation accuracy, we can see that the $W_2$ distance between the true marginal posterior of $\theta$ and the normal limit in our theorem decreases quickly to zero as $n$ increases. Furthermore, the $W_2$ distances between the true marginal posterior of $\alpha$ and the two approximations, the profile posterior $\widetilde\Pi(\ud\alpha|Y_n)$ and the polynomially tilted normal distribution $\Pi_*(\ud\alpha|Y_n)$ in Theorem \ref{thm:OU1} also show clear decreasing trends towards zero as $n$ increases. These empirical observations have verified our limiting distributions in Theorems \ref{thm:bvm2:joint} and \ref{thm:OU1} for the 1-dimensional Ornstein-Uhlenbeck process.

\begin{table}[ht]
\caption{Parameter estimation and Wasserstein-2 distances between the true posterior and the limiting posteriors in Theorems \ref{thm:bvm2:joint} and \ref{thm:OU1} for the model with $\nu=1/2$, $d=1$ and without regression terms. $\EE(\cdot|Y_n)$, $\Var(\cdot|Y_n)$, $\widetilde \EE(\cdot|Y_n)$, $\widetilde \Var(\cdot|Y_n)$, $\EE_*(\cdot|Y_n)$, and $\Var_*(\cdot|Y_n)$ are the posterior means and variances under the true posterior, the limiting posterior in Theorem \ref{thm:bvm2:joint}, and the limiting posterior in Theorem \ref{thm:OU1}. The true parameter values are $\theta_0=2$ and $\alpha_0=1$. All numbers are averaged over 100 macro replications. The standard errors are in the parentheses.}
\label{tab:W2.dim1}
\centering
{\footnotesize
\begin{tabular}{c|ccccc}
\hline
$d=1$ & $n=25$ & $n=50$ & $n=100$ & $n=200$ & $n=400$ \\
\hline
$\EE(\theta|Y_n)$ & 2.6795 (0.0763) & 2.1932 (0.0434) & 2.1467 (0.0269) & 2.0740 (0.0202) & 2.0320 (0.0139) \\
$\Var(\theta|Y_n)$ & 0.9825 (0.0557) & 0.2441 (0.0096) & 0.1031 (0.0026) & 0.0455 (0.0010)  & 0.0212 (0.0003) \\
\hdashline
$\widetilde \EE(\theta|Y_n)$ & 2.0404 (0.0592) &  1.9357 (0.0391) & 2.0214 (0.0193) & 2.0130 (0.0251) & 2.0028 (0.0136) \\
$\widetilde \Var(\theta|Y_n)$ & 0.3197 (0.0007) & 0.1599 (0.0003) & 0.0798 (0.0002) & 0.0399 (0.0001) & 0.0200 (0.0000) \\
\hline
$\EE(\alpha|Y_n)$ &  3.1924 (0.2459) & 2.9803 (0.2527) & 2.7392 (0.2049) & 2.9947 (0.2819) & 2.5075 (0.2044) \\
$\Var(\alpha|Y_n)$ & 5.3673 (0.8032) & 4.0441 (0.6657) & 2.9987 (0.4264) & 3.7074 (0.6484) & 2.5080 (0.3876) \\
\hdashline
$\widetilde \EE(\alpha|Y_n)$ &  2.9717 (0.2246)& 2.8767 (0.2389) & 2.6941 (0.2001) & 2.9534 (0.2791) & 2.5012 (0.2044) \\
$\widetilde \Var(\alpha|Y_n)$ & 4.5474 (0.6732) & 3.7045 (0.5762) & 2.9094 (0.4093) & 3.6840 (0.6396) & 2.4664 (0.3818) \\
\hdashline
$\EE_*(\alpha|Y_n)$ & 2.5267 (0.1789) & 2.6534 (0.2135) & 2.5873 (0.1874) & 2.9105 (0.2723) & 2.4933 (0.2044) \\
$\Var_*(\alpha|Y_n)$ & 2.5207 (0.3018) & 2.7894 (0.3862) & 2.5783 (0.3414) & 3.3733 (0.5548) & 2.4291 (0.3660) \\
\hline
\end{tabular}
\begin{tabular}{c|ccccc}
\hline
$d=1$ & $n=25$ & $n=50$ & $n=100$ & $n=200$ & $n=400$ \\
\hline
\multirow{2}{*}{$W_2\left(\Pi(\ud\theta|Y_n),\Ncal\left(\ud\theta\Big|\widetilde\theta_{\alpha_0}, \tfrac{2\theta_0^2}{n}\right) \right)$} & 0.8051  & 0.3000  & 0.1449  & 0.0706  & 0.0335  \\
& (0.0326) & (0.0101) & (0.0042) & (0.0024) & (0.0010) \\
\multirow{2}{*}{$W_2(\Pi(\ud\alpha|Y_n),\widetilde \Pi(\ud\alpha|Y_n))$} & 0.3175  & 0.1807  & 0.1260  & 0.1303  & 0.1073  \\
& (0.0290) & (0.0183) & (0.0086) & (0.0099) & (0.0077) \\
\multirow{2}{*}{$W_2(\Pi(\ud\alpha|Y_n), \Pi_*(\ud\alpha|Y_n))$} & 0.8972  & 0.4259  & 0.2131  & 0.1583  & 0.1095  \\
& (0.0874) & (0.0504) & (0.0211) & (0.0160) & (0.0075) \\
\hline
\end{tabular}
}
\end{table}

For the $d=2$ case, the results are summarized in Table \ref{tab:W2.dim2}, showing similar trends to those from the $d=1$ case. The posterior mean estimates of $\theta$ are accurate with standard errors decreasing with $n$. The posterior mean estimates of $\alpha$ happen to be close to $\alpha_0=1$, though both the true posterior variance and the asymptotic posterior variance remain above 0.4 as $n$ increases. The $W_2$ distance between the true marginal posteriors and the limiting posteriors in Theorem \ref{thm:bvm2:joint} converges to zero as $n$ increases. This has verified the limiting distribution in Theorem \ref{thm:bvm2:joint} for the 2-dimensional process.

\begin{table}[ht]
\caption{Parameter estimation and Wasserstein-2 distances between the true posterior and the limiting posteriors in Theorem \ref{thm:bvm2:joint} for the model with $\nu=1/2$, $d=2$ and without regression terms. $\EE(\cdot|Y_n)$, $\Var(\cdot|Y_n)$, $\widetilde \EE(\cdot|Y_n)$, and $\widetilde \Var(\cdot|Y_n)$ are the posterior means and variances under the true posterior and the limiting posterior in Theorem \ref{thm:bvm2:joint}. The true parameter values are $\theta_0=2$ and $\alpha_0=1$. All numbers are averaged over 100 macro replications. The standard errors are in the parentheses.}
\label{tab:W2.dim2}
\centering
{
\footnotesize
\begin{tabular}{c|ccc}
\hline
$d=2$ & $n=10^2$ & $n=20^2$ & $n=30^2$ \\
\hline
$\EE(\theta|Y_n)$ & 2.0211 (0.0258) & 2.0152 (0.0135)& 1.9959 (0.0097)  \\
$\Var(\theta|Y_n)$ & 0.0835 (0.0022) & 0.0203 (0.0003) & 0.0089 (0.0001) \\
\hdashline
$\widetilde \EE(\theta|Y_n)$ & 2.0150 (0.0262) & 2.0110 (0.0134) & 1.9939 (0.0096)  \\
$\widetilde \Var(\theta|Y_n)$ & 0.0798 (0.0002) & 0.0200 (0.0000) & 0.0089 (0.0001) \\
\hdashline
$\EE(\alpha|Y_n)$ & 1.0936 (0.0479) & 1.1317 (0.0456) & 1.0909 (0.0397)  \\
$\Var(\alpha|Y_n)$ & 0.5054 (0.0392) & 0.4864 (0.0352) &  0.4500 (0.0266) \\
\hdashline
$\widetilde \EE(\alpha|Y_n)$ & 1.1094 (0.0486) & 1.1392 (0.0459) &  1.0941 (0.0397)  \\
$\widetilde \Var(\alpha|Y_n)$ &  0.5131 (0.0406) & 0.4796 (0.0348) & 0.4385 (0.0261) \\
\hline
$W_2\left(\Pi(\ud\theta|Y_n),\Ncal\left(\ud\theta\Big|\widetilde\theta_{\alpha_0}, \tfrac{2\theta_0^2}{n}\right) \right)$ & 0.0652 (0.0024) & 0.0185 (0.0008) & 0.0090 (0.0003) \\
$W_2(\Pi(\ud\alpha|Y_n),\widetilde \Pi(\ud\alpha|Y_n))$ & 0.0547 (0.0030) & 0.0514 (0.0024)  & 0.0505 (0.0021) \\
\hline
\end{tabular}
}
\end{table}

Figure \ref{fig:contour1} illustrates the convergence of posterior densities for the $d=1$ case. With $n=50$, there exists noticeable difference between the true posterior and the limiting posteriors. But their difference gradually disappears as $n$ increases. Furthermore, as $n$ increases, the posterior shrinks along the $\theta$ direction, but remains spread out in the $\alpha$ direction. The ``ridge" of the joint posterior is the REML $\widetilde\theta_{\alpha}$, which increases with $\alpha$ as proved in Lemma \ref{lem:dimension reduction}, but becomes flatter as $n$ increases, indicating the convergence from $\widetilde\theta_{\alpha}$ to $\theta_0=2$ over all values of $\alpha$.

\begin{figure}
\centering
\includegraphics[width=0.84\textwidth]{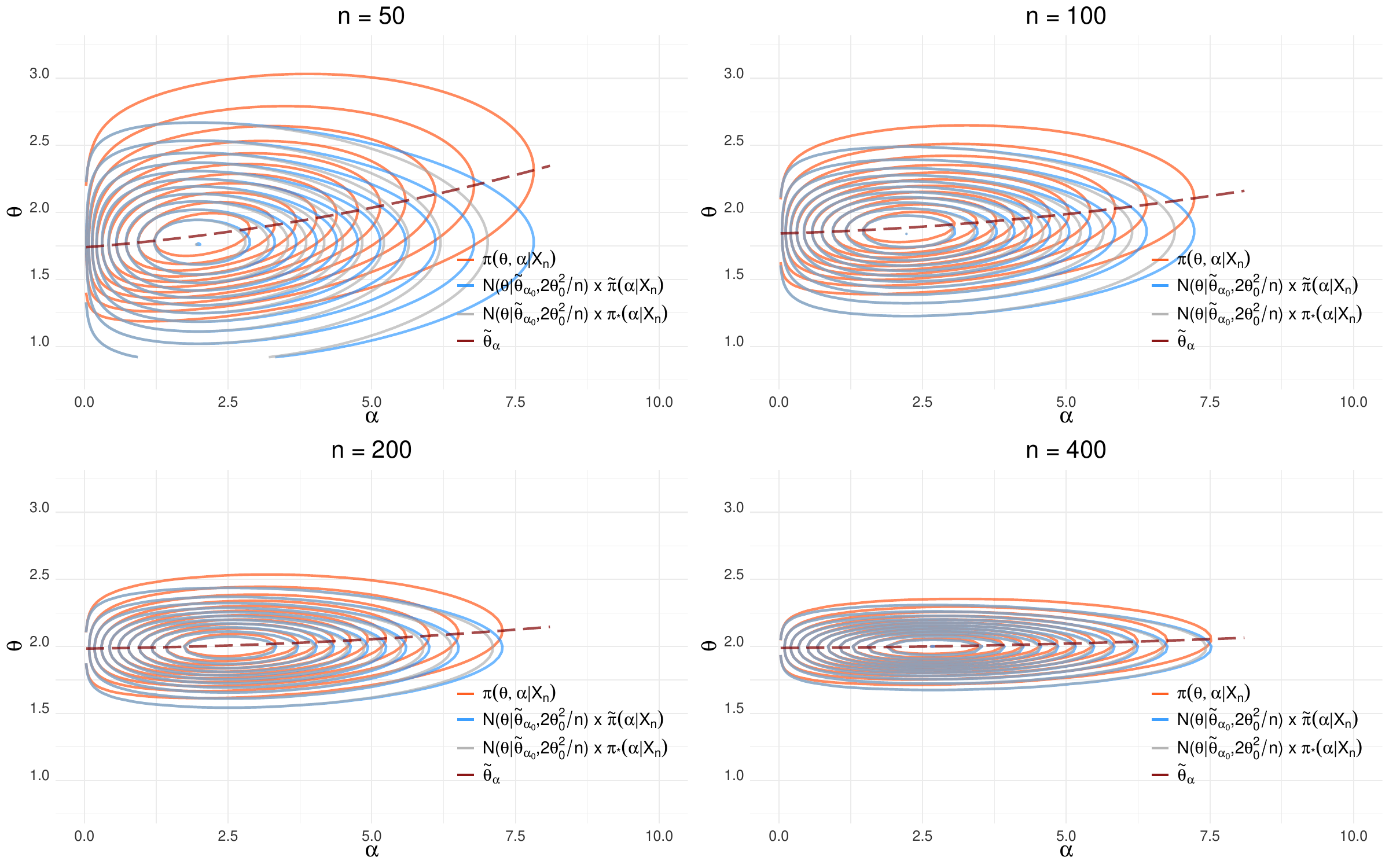}
\caption{Contour plots of the true joint posterior density $\pi(\theta,\alpha|Y_n)$ (in red), the limiting posterior density $\mathcal{N}(\theta|\widetilde \theta_{\alpha_0}, 2\theta_0^2/n)\times \widetilde \pi(\alpha|Y_n)$ in Theorem \ref{thm:OU1} Eq. \eqref{eq:OU.joint1} (in blue), and the limiting posterior density $\mathcal{N}(\theta|\widetilde \theta_{\alpha_0}, 2\theta_0^2/n)\times \pi_*(\alpha|Y_n)$ in Theorem \ref{thm:OU1} Eq. \eqref{eq:OU.joint2} (in grey), for the 1-d Ornstein-Uhlenbeck process with sample size $n=50,100,200,400$ in the model without regression terms. The dashed line is the ``ridge" REML $\widetilde\theta_{\alpha}$ given in \eqref{tildetheta1}. The true parameter values are $\theta_0=2$ and $\alpha_0=1$.}
\label{fig:contour1}
\end{figure}
For the posterior asymptotic efficiency in (b), we compute the two asymptotic efficiency measures in \eqref{eq:ae.original} and Theorems \ref{thm:pae.main} and \ref{thm:suprate.OU} empirically, using the posterior samples of $(\theta,\alpha)$. To approximate the supremums, we take the maximum of the ratios that depend on the random $(\sigma^2,\alpha)$ drawn from the posterior:
\begin{align}\label{eq:ratiodiff.table}
\mathsf{r}_{1n}(s^*) = \left| \tfrac{{\EE}_{(\sigma^2,\alpha)}\left\{e_n(s^*;\alpha)^2\right\}}{{\EE}_{(\sigma_0^2,\alpha_0)}\left\{e_n(s^*;\alpha)^2\right\} } - 1 \right| \text{ and } \mathsf{r}_{2n}(s^*) = \left| \tfrac{{\EE}_{(\sigma^2,\alpha)}\left\{e_n(s^*;\alpha)^2\right\}}{{\EE}_{(\sigma_0^2,\alpha_0)}\left\{e_n(s^*;\alpha_0)^2\right\} } - 1 \right|
\end{align}
over a large number of testing points $s^*$ from the Latin hypercube design. We use $1000$ testing points in $\Scal=[0,1]$ for the $d=1$ case, and $2500$ testing points in $\Scal=[0,1]^2$ for the $d=2$ case. Let the testing set be $\Scal^*$. We report the estimated posterior mean $\EE[\max_{s^*\in \Scal^*}\mathsf{r}_{1n}(s^*)|Y_n]$ and $\EE[\max_{s^*\in \Scal^*}\mathsf{r}_{2n}(s^*)|Y_n]$. The results are summarized in Table \ref{tab:mse}. The simulation results show that the posterior means of the two ratios in \eqref{eq:ratiodiff.table} decrease as $n$ increases, and their standard errors also decrease. This is observed for both 1 and 2-dimensional domains.

\begin{table}[ht]
\caption{The posterior means of the two ratios of predictive MSEs defined in \eqref{eq:ratiodiff.table} maximized over $2500$ testing points $s^*$ for the model with $\nu=1/2$ and without regression terms, averaged over 100 macro replications. The standard errors are in the parentheses.}
\label{tab:mse}
\centering
{
\footnotesize
\begin{tabular}{c|ccccc}
\hline
$d=1$ & $n=25$ & $n=50$ & $n=100$ & $n=200$ & $n=400$ \\
\hline
\multirow{2}{*}{$\EE\big[\max\limits_{s^*\in\Scal^*}\mathsf{r}_{1n}(s^*)|Y_n\big]$} & 0.5129 & 0.2804 & 0.1796 & 0.1232 & 0.0823 \\
& (0.0442) & (0.0197) & (0.0125) & (0.0082) & (0.0055)  \\
\multirow{2}{*}{$\EE\big[\max\limits_{s^*\in\Scal^*}\mathsf{r}_{2n}(s^*)|Y_n\big]$} & 0.4958 & 0.2626 & 0.1741 & 0.1188 & 0.0804 \\
& (0.0447) & (0.0198) & (0.0126) & (0.0082) & (0.0055)  \\
\hline
\hline
$d=2$ & $n=10^2$ & $n=20^2$ & $n=30^2$ & \\
\hline
\multirow{2}{*}{$\EE\big[\max\limits_{s^*\in\Scal^*}\mathsf{r}_{1n}(s^*)|Y_n\big]$} & 0.1887 & 0.0736 & 0.0702 & \\
& (0.0104) & (0.0051) & (0.0041) &   \\
\multirow{2}{*}{$\EE\big[\max\limits_{s^*\in\Scal^*}\mathsf{r}_{2n}(s^*)|Y_n\big]$} & 0.1827 & 0.0718 &  0.0705 &  \\
& (0.0101) & (0.0050) & (0.0041) &   \\
\hline
\end{tabular}
}
\end{table}

\section{Discussion} \label{sec:discussion}

Our theory has answered the two questions from the SST example in Section \ref{sec:intro}. For Question (i), Theorems \ref{thm:bvm2:joint} and \ref{thm:OU1} in Section \ref{sec:main.bvm} show that the posterior of the microergodic parameter $\theta$ converges to a normal limit at the parametric rate, while the posterior of the range parameter $\alpha$ does not converge to any point mass in general. For Question (ii), Theorems \ref{thm:pae.micro}, \ref{thm:pae.main}, \ref{thm:suprate.OU} and \ref{thm:vn.rate} in Section \ref{sec:PAE} show that the predictive performance based on the covariance parameters randomly drawn from their posterior distribution is asymptotically as good as the oracle predictive performance based on the true covariance parameters.

We discuss several future directions based on the current work. In many spatial applications, one may also add a measurement error term to the model, such that $Y(s_i)=\bbm(\cdot)^\top\beta + X(s_i)+\varepsilon(s_i)$ for $i=1,\ldots,n$ with a noise process $\{\varepsilon(s):s\in \Scal\}$ that is independent of $X$. Often it is assumed that $\varepsilon(s)\sim \mathcal{N}(0,\tau^2)$ for all $s\in \Scal$. The parameter $\tau^2$ is the \textit{nugget} parameter (\citep{Cre93}). From the frequentist fixed-domain asymptotic theory, it is already known (\citep{Stein90c}) that the presence of nugget parameter $\tau^2$ will significantly change the convergence rate of the microergodic parameter $\theta$, due to the convolution with Gaussian noise. For example, as shown in \citep{Chenetal00} for the 1-dimensional Ornstein-Uhlenbeck process ($\nu=1/2$) on an equispaced grid, the convergence rate of the MLE of $\theta$ deteriorates from $n^{-1/2}$ to $n^{-1/4}$, though both $\theta$ and the nugget $\tau^2$ can still be consistently estimated; see also the recent development in \citep{Tanetal19}. Therefore, in the Bayesian setting, we expect that the limiting posterior distribution of $(\theta,\alpha,\tau^2)$ will be dramatically different from those in Theorems \ref{thm:bvm2:joint} and \ref{thm:OU1}.

In the proof of Lemma \ref{lem:dimension reduction} and Theorem \ref{thm:bvm2:joint}, we have derived many useful properties of the spectral density of Mat\'ern covariance functions. These derivations can be possibly extended to the tapered Mat\'ern covariance functions (\citep{Duetal09}, \citep{WangLoh11}) and the generalized Wendland (GW) covariance functions (\citep{Gne02}), whose spectral densities also have polynomially decaying tails (\citep{Kauetal08}, \citep{Bevetal19}). As shown in Lemma 1 of \citep{Bevetal19}, for the GP model with mean zero, the MLE of the GW microergodic parameter also has the monotonicity property. Therefore, with suitable modification, we expect that our technical proofs can be generalized to a broader class of covariance functions whose spectral densities share similar tail behavior to Mat\'ern.

We have only considered the isotropic Mat\'ern covariance functions. For anisotropic Mat\'ern covariance functions, the existing fixed-domain asymptotic theory is very limited. Only a few special cases such as $\nu=1/2$ (\citep{Ying93}), $\nu=3/2$ (\citep{Loh05}), and $d>4$ (\citep{And10}) have been studied, while the theory for the anisotropic Mat\'ern with a general $\nu>0$ and $d=1,2,3$ remains unknown. We leave these directions for future research.

\vspace{3mm}

\noindent \textbf{Acknowledgements}
The author sincerely thanks the Associate Editor and two anonymous referees for valuable comments that have significantly improved the paper. The author thanks Michael L. Stein, Wei-Liem Loh, Wenxin Jiang, Sanvesh Srivastava, and Yichen Zhu for helpful discussion. The author was supported by the Singapore Ministry of Education Academic Research Funds Tier 1 Grants R-155-000-201-114 and A-0004822-00-00.

\vspace{1cm}
\appendix

\begin{center}
{\bf \Large Supplementary Material to ``Bayesian Fixed-domain Asymptotics for Covariance Parameters in a Gaussian Process Model"}
\end{center}

\setcounter{equation}{0}
\setcounter{lemma}{0}
\setcounter{section}{0}
\setcounter{table}{0}
\setcounter{figure}{0}
\setcounter{theorem}{0}
\renewcommand{\theequation}{S.\arabic{equation}}
\renewcommand{\thelemma}{S.\arabic{lemma}}
\renewcommand{\thetheorem}{S.\arabic{theorem}}

\makeatletter
\renewcommand{\thefigure}{S.\@arabic\c@figure}
\makeatother

\renewcommand{\thetable}{S.\arabic{table}}

\renewcommand\appendixname{ }

\renewcommand\thesection{S\arabic{section}}

The Supplementary Material includes more simulation results and all technical proofs of the theorems, lemmas, propositions, and corollaries in the main text. The contents are organized as follows. \\

Section \ref{supsec:lem.dimred.main} provides the proof of the monotonicity and uniform convergence of REML in Lemma \ref{lem:dimension reduction} of the main text, as well as auxiliary results on RKHS theory and spectral analysis of Mat\'ern covariance functions. Section \ref{supsec:proflik} includes technical lemmas for the profile likelihood function. Section \ref{sec:auxiliary} presents the proof of Theorem \ref{thm:bvm1:theta} and Theorem \ref{thm:bvm2:joint} of the main text, as well as the theory for $d\geq 5$. Section \ref{supsec:2prop} presents the proof of Propositions \ref{prop:prior2} and \ref{prop:prior3} of the main text. Section \ref{supsec:OU1} presents the proof of Theorem \ref{thm:OU1} and Corollary \ref{cor:OU2}. Section \ref{supsec:pae} presents the proof of all theorems in Section \ref{sec:PAE} of the main text, including Theorems \ref{thm:pae.micro}, \ref{thm:pae.main}, \ref{thm:suprate.OU}, and \ref{thm:vn.rate}. Section \ref{sec:add.sim} includes the additional simulation results for the model with regression terms for $\nu=1/2,1/4,3/2$ in both $d=1$ and $d=2$ cases. To keep consistency, every lemma in the Supplementary Material is immediately followed by its proof. \\

We first define some universal notation that will used throughout the proofs. Let $\RR^+=(0,+\infty)$ and $\ZZ^+$ be the set of all positive integers. For any $x=(x_1,\ldots,x_d)^\top \in \RR^d$, we let $\|x\|=\sqrt{\sum_{i=1}^d x_i^2}$, $\|x\|_1=\sum_{i=1}^d |x_i|$, and $\|x\|_{\infty}=\max(x_1,\ldots,x_d)$. For two positive sequences $a_n$ and $b_n$, we use $a_n\prec b_n$ and $b_n\succ a_n$ to denote the relation $\lim_{n\to\infty} a_n/b_n=0$, $a_n\preceq b_n$ and $b_n\succeq a_n$ to denote the relation $\limsup_{n\to\infty} a_n/b_n<+\infty$, and $a_n\asymp b_n$ to denote the relation $a_n\preceq b_n$ and $a_n\succeq b_n$. For any integers $k,m$, we let $I_k$ be the $k\times k$ identity matrix, $0_k$ and $1_k$ be the $k$-dimensional column vectors of all zeros and all ones, $0_{k\times m}$ be the $k\times m$ zero matrix. For any generic matrix $A$, $cA$ denotes the matrix of $A$ with all entries multiplied by the number $c$, and $|A|$ denotes the determinant of $A$. For a square matrix $A$, $\tr(A)$ denotes the trace of $A$. If $A$ is symmetric positive semidefinite, then $\lambda_{\min}(A)$ and $\lambda_{\max}(A)$ denote the smallest and largest eigenvalues of $A$, and $A^{1/2}$ denotes a symmetric positive semidefinite square root of $A$. For two symmetric positive semidefinite matrix $A$ and $B$, we use $A\leq B$ and $B\geq A$ to denote the relation that $B-A$ is symmetric positive semidefinite, and use $A<B$ and $B>A$ to denote the relation that $B-A$ is symmetric positive definite. For any matrix $A$, $\|A\|_{\op}=\sqrt{\lambda_{\max}(A^\top A)}$ denotes the operator norm of $A$. Let $\Ncal(\mu,\Sigma)$ be the normal distribution with mean $\mu$ and covariance matrix $\Sigma$. Sometimes to highlight the random variable $Z \sim \Ncal(\mu,\Sigma)$, we also write  $\Ncal(z;\mu,\Sigma)$ and the normal measure as $\Ncal(\ud z;\mu,\Sigma)$. $\pr(\cdot)$ denotes the probability under true probability measure $P_{(\beta_0,\sigma_0^2,\alpha_0)}$. The convergence in distribution is denoted by $\overset{\Dcal}{\rightarrow}$. The acronym i.i.d. stands for ``independent and identically distributed".

\section{Proof of Monotonicity and Uniform Convergence in Lemma \ref{lem:dimension reduction}} \label{supsec:lem.dimred.main}
This section is organized as follows.

Subsection \ref{supsec:mon.Matern} contains Lemmas \ref{lem:ABinv}, \ref{lem:2posdef}, \ref{lem:alpha.monotone.matrix}, and \ref{lem:theta1.monotone} for showing the monotonicity of REML $\widetilde\theta_{\alpha}$ in Part (i) of Lemma \ref{lem:dimension reduction} in the main text. The main proof is given in the strengthened Lemma \ref{lem:theta1.monotone}.

Subsection \ref{supsec:unif.conv} contains Lemmas \ref{lem:REML.decomp}, \ref{lem:theta2.bound.2ends}, \ref{lem:theta3.bound.2ends}, \ref{lem:theta1.bound.2ends}, \ref{lem:sup.theta1}, and \ref{lem:theta.alpha0}, for showing the uniform convergence of REML $\widetilde\theta_{\alpha}$ in Part (ii) of Lemma \ref{lem:dimension reduction} in the main text. We start with a decomposition of the REML $\widetilde\theta_{\alpha}$ in Lemma \ref{lem:REML.decomp}, and then provide detailed concentration inequalities for each terms in Lemmas \ref{lem:theta2.bound.2ends}, \ref{lem:theta3.bound.2ends}, and \ref{lem:theta1.bound.2ends}. The uniform convergence is proved in Lemma \ref{lem:sup.theta1}. Lemma \ref{lem:theta.alpha0} includes the proof of asymptotic normality of the REML $\widetilde\theta_{\alpha}$ in Theorem \ref{thm:bvm1:theta}, as well as a concentration error bound for $\widetilde\theta_{\alpha_0}$, which will be used as a crucial result in the proof of Theorem \ref{thm:bvm1:theta} in Section \ref{sec:auxiliary}.

Subsection \ref{supsec:rkhs} introduces the RKHS theory with the technical Lemmas \ref{lem:matern.sobolev}, \ref{lem:rkhs.quadratic}, and \ref{lem:rkhs.ordering}. They are used for proving Lemma \ref{lem:theta3.bound.2ends} and also later for proving Theorem \ref{thm:vn.rate}.

Subsection \ref{supsec:spectral} includes the spectral analysis of Mat\'ern covariance function, with the technical Lemmas \ref{lem:URU}, \ref{lem:specden_lambda}, \ref{lem:fxi}, \ref{lem:qngn}, \ref{lem:zetabound}, and \ref{lem:weight.bound}. Lemma \ref{lem:weight.bound} is used for proving the concentration inequality in Lemma \ref{lem:sup.theta1}. We also cite the two-sided chi-square concentration inequality from \citet{LauMas00} in Lemma \ref{lem:LauMas00} and the Hanson-Wright inequality from \citet{Hsuetal12} in Lemma \ref{lem:Hsuetal12}.

\vspace{5mm}

We assume Assumptions \ref{assump.m.func} throughout this section. We recall that the universal kriging model \eqref{eq:obs.model} in the main text implies that the underlying true model is $Y_n=M_n \beta_0 + X_n$ with $X_n\sim \Ncal(0_n,\sigma_0^2 R_{\alpha_0})$, where $R_{\alpha}$ is the $n\times n$ Mat\'ern correlation matrix on $\Scal_n=\{s_1,\ldots,s_n\}$ indexed by $\alpha$ with the $(i,j)$th entry $R_{\alpha,ij}=K_{\alpha,\nu}(s_i-s_j)$, for $i,j\in\{1,\ldots,n\}$. The REML $\widetilde\theta_{\alpha}$ is defined as
\begin{align} \label{tildetheta2}
\widetilde \theta_{\alpha} & = \frac{\alpha^{2\nu} Y_n^\top  \left[R_{\alpha}^{-1} - R_{\alpha}^{-1}M_n \big(M_n^\top R_{\alpha}^{-1} M_n + \Omega_{\beta}\big)^{-1} M_n^\top R_{\alpha}^{-1} \right]  Y_n }{n-p} .
\end{align}

We emphasize that all the proofs below apply to any symmetric positive semidefinite matrix $\Omega_{\beta}$, including the special case $\Omega_{\beta}=0_{p\times p}$ corresponding to the noninformative improper prior $\pi(\beta)\propto 1$.

\subsection{Proof of Monotonicity in Part (i) of Lemma \ref{lem:dimension reduction}} \label{supsec:mon.Matern}
\begin{lemma}\label{lem:ABinv}
Suppose that $A_1,A_2\in \RR^{n\times n}$ are two symmetric positive definite matrices and $A_2-A_1$ is also positive (semi)definite. Then $A_1^{-1}-A_2^{-1}$ is symmetric positive (semi)definite.
\end{lemma}

\begin{proof}[Proof of Lemma \ref{lem:ABinv}]
The lemma follows from Theorem 7.7.3 and Corollary 7.7.4 in \citet{HorJoh85}.
\end{proof}

\vspace{5mm}

\begin{lemma}\label{lem:2posdef}
Suppose that $A_1,A_2\in \RR^{n\times n}$ are two symmetric positive definite matrices and $A_2-A_1$ is also positive definite. Then for any $p\times p$ symmetric positive semidefinite matrix $\Omega$ and any full-rank $n\times p$ matrix $G$, the matrix
\begin{align}\label{eq:2A.proj}
\Delta A = \left[A_2-A_2G(G^\top A_2 G +\Omega)^{-1}G^\top A_2\right] - \left[A_1-A_1G(G^\top A_1 G+\Omega)^{-1}G^\top A_1\right].
\end{align}
is symmetric positive semidefinite.
\end{lemma}

\begin{proof}[Proof of Lemma \ref{lem:2posdef}]
For any $t>0$, we let $\Omega_t = \Omega + tI_p$. Then $\Omega_t$ is symmetric positive definite and hence invertible.

By the Sherman-Morrison-Woodbury formula, we have that for $i=1,2$,
\begin{align}\label{eq:A.wood}
& A_i - A_i G(G^\top A_i G +\Omega_t)^{-1}G^\top A_i = \left(A_i^{-1} + G\Omega_t^{-1} G^\top \right)^{-1}.
\end{align}
Since $A_2-A_1$ is symmetric positive definite, by Lemma \ref{lem:ABinv}, we have that $A_1^{-1}-A_2^{-1}$ is symmetric positive definite. But $A_1^{-1}-A_2^{-1} = \left(A_1^{-1} + G\Omega_t^{-1} G^\top\right) - \left(A_2^{-1} + G\Omega_t^{-1} G^\top\right)$ and $A_i^{-1} + G\Omega_t^{-1} G^\top$ for both $i=1,2$ are also symmetric positive definite. Therefore, we apply Lemma \ref{lem:ABinv} again to $A_i^{-1} + G\Omega_t^{-1} G^\top$ for $i=1,2$ to conclude that
$$\left(A_2^{-1} + G\Omega_t^{-1} G^\top\right)^{-1} - \left(A_1^{-1} + G\Omega_t^{-1} G^\top\right)^{-1}$$
is a symmetric positive definite matrix. This together with \eqref{eq:A.wood} implies that
\begin{align}\label{eq:2A.proj1}
&\left(A_2^{-1} + G\Omega_t^{-1} G^\top\right)^{-1} - \left(A_1^{-1} + G\Omega_t^{-1} G^\top\right)^{-1} \nonumber \\
={}& \left[A_2-A_2G(G^\top A_2 G +\Omega_t)^{-1}G^\top A_2\right] - \left[A_1-A_1G(G^\top A_1 G+\Omega_t)^{-1}G^\top A_1\right] \nonumber \\
={}& \left[A_2-A_2G(G^\top A_2 G +\Omega + tI_p)^{-1}G^\top A_2\right] - \left[A_1-A_1G(G^\top A_1 G+\Omega + tI_p)^{-1}G^\top A_1\right]
\end{align}
is symmetric positive definite. The eigenvalues of the last matrix in \eqref{eq:2A.proj1} are continuous functions of $t$. We take $t\to 0+$ and conclude that all eigenvalues of the matrix
$$\left[A_2-A_2G(G^\top A_2 G +\Omega)^{-1}G^\top A_2\right] - \left[A_1-A_1G(G^\top A_1 G+\Omega )^{-1}G^\top A_1\right]$$ are nonnegative. Therefore, this matrix is symmetric positive semidefinite.
\end{proof}

\vspace{8mm}

\begin{lemma}\label{lem:alpha.monotone.matrix}
For all $d\in \ZZ^+$, $\nu\in \RR^+$, for any $0<\alpha_1<\alpha_2<\infty$, the two matrices $\alpha_2^{2\nu} R_{\alpha_2}^{-1} - \alpha_1^{2\nu} R_{\alpha_1}^{-1}$ and $\alpha_2^{d} R_{\alpha_2} - \alpha_1^{d} R_{\alpha_1}$ are always positive definite as long as the $n$ points $\{s_1,\ldots,s_n\}$ are distinct in the domain $\Scal = [0,T]^d$.
\end{lemma}

\begin{proof}[Proof of Lemma \ref{lem:alpha.monotone.matrix}]
We first define the matrix $\Omega^{\dagger} = \alpha_1^{-2\nu}R_{\alpha_1} - \alpha_2^{-2\nu}R_{\alpha_2}$. Then the entries of $\Omega^{\dagger}$ can be expressed in terms of a function $\widetilde K_{\Omega^{\dagger}}: \RR^d\to \RR$, with
\begin{align*}
\Omega^{\dagger}_{ij} &= \widetilde K_{\Omega^{\dagger}}(s_i-s_j) = \alpha_1^{-2\nu} K_{\alpha_1,\nu}(s_i-s_j) - \alpha_2^{-2\nu} K_{\alpha_2,\nu}(s_i-s_j),
\end{align*}
for $i,j\in \{1,\ldots,n\}$. The matrix $\Omega^{\dagger}$ is positive definite if $\widetilde K_{\Omega^{\dagger}}$ is a positive definite function.

From \eqref{f.specden} in Section \ref{supsec:spectral}, for the isotropic Mat\'ern covariance function $\sigma^2 K_{\alpha,\nu}$ defined in \eqref{eq:MaternCov} of the main text, its spectral density is
\begin{align}
f_{\sigma,\alpha}(\omega) &= \frac{\Gamma(\nu+d/2)}{\Gamma(\nu)}\cdot \frac{\sigma^2\alpha^{2\nu}}{\pi^{d/2}\left(\alpha^2+\|\omega\|^2\right)^{\nu+d/2}}, \nonumber
\end{align}
for any $\omega \in \RR^d$. Therefore, we can compute the spectral density of $\widetilde K_{\Omega^{\dagger}}$:
\begin{align} \label{eq:fOmega1}
f_{\Omega^{\dagger}}(\omega) &= \frac{1}{(2\pi)^d} \int_{\RR^d} \ee^{-\imath \omega^\top x} \widetilde K_{\Omega^{\dagger}} (x) \ud x \nonumber \\
&= \frac{1}{(2\pi)^d} \left\{ \alpha_1^{-2\nu}\int_{\RR^d} \ee^{-\imath \omega^\top x}K_{\alpha_1,\nu}(x)\ud x -  \alpha_2^{-2\nu}\int_{\RR^d} \ee^{-\imath \omega^\top x}K_{\alpha_2,\nu}(x)\ud x \right\} \nonumber \\
&= \frac{\Gamma(\nu+d/2)}{\pi^{d/2}\Gamma(\nu)} \left\{ \alpha_1^{-2\nu} \cdot \frac{\alpha_1^{2\nu}}{\left(\alpha_1^2 + \|\omega\|^2\right)^{\nu+d/2}} -  \alpha_2^{-2\nu} \cdot \frac{\alpha_2^{2\nu}}{\left(\alpha_2^2 + \|\omega\|^2\right)^{\nu+d/2}} \right\} \nonumber \\
&= \frac{\Gamma(\nu+d/2)}{\pi^{d/2}\Gamma(\nu)} \left\{\frac{1}{\left(\alpha_1^2 + \|\omega\|^2\right)^{\nu+d/2}} -  \frac{1}{\left(\alpha_2^2 + \|\omega\|^2\right)^{\nu+d/2}} \right\}\nonumber \\
&>0, \text{ for all } \omega \in \RR^d,
\end{align}
where the last step follows because $0<\alpha_1<\alpha_2$. This has shown that $\widetilde K_{\Omega^{\dagger}}$ is indeed a positive definite function. Therefore, $\Omega^{\dagger}=\alpha_1^{-2\nu}R_{\alpha_1} - \alpha_2^{-2\nu}R_{\alpha_2}$ is a positive definite matrix. Since $\{s_1,\ldots,s_n\}$ are distinct, both $R_{\alpha_1}$ and $R_{\alpha_2}$ are positive definite matrices. By Lemma \ref{lem:ABinv}, $\alpha_2^{2\nu}R_{\alpha_2}^{-1} - \alpha_1^{2\nu}R_{\alpha_1}^{-1} $ is a positive definite matrix.

Next, we define the matrix $\Omega^{\ddagger} = \alpha_2^{d}R_{\alpha_2} - \alpha_1^{d}R_{\alpha_1}$. Then the entries of $\Omega^{\ddagger}$ can be expressed in terms of a function $\widetilde K_{\Omega^{\ddagger}}: \RR^d\to \RR$, with
\begin{align*}
\Omega^{\ddagger}_{ij} &= \widetilde K_{\Omega^{\ddagger}}(x_i-x_j) = \alpha_2^{d} K_{\alpha_2,\nu}(x_i-x_j) - \alpha_1^{d} K_{\alpha_1,\nu}(x_i-x_j),
\end{align*}
for $i,j\in \{1,\ldots,n\}$. The matrix $\Omega^{\ddagger}$ is positive definite if $\widetilde K_{\Omega^{\ddagger}}$ is a positive definite function. We compute the spectral density of $\widetilde K_{\Omega^{\ddagger}}$:
\begin{align} \label{eq:fOmega2}
f_{\Omega^{\ddagger}}(\omega) &= \frac{1}{(2\pi)^d} \int_{\RR^d} \ee^{-\imath \omega^\top x} \widetilde K_{\Omega^{\ddagger}} (x) \ud x \nonumber \\
&= \frac{1}{(2\pi)^d} \left\{ \alpha_2^{d}\int_{\RR^d} \ee^{-\imath \omega^\top x}K_{\alpha_2,\nu}(x)\ud x -  \alpha_1^{d}\int_{\RR^d} \ee^{-\imath \omega^\top x}K_{\alpha_1,\nu}(x)\ud x \right\} \nonumber \\
&= \frac{\Gamma(\nu+d/2)}{\pi^{d/2}\Gamma(\nu)} \left\{ \alpha_2^{d} \cdot \frac{\alpha_2^{2\nu}}{\left(\alpha_2^2 + \|\omega\|^2\right)^{\nu+d/2}} -  \alpha_1^{d} \cdot \frac{\alpha_1^{2\nu}}{\left(\alpha_1^2 + \|\omega\|^2\right)^{\nu+d/2}} \right\} \nonumber \\
&= \frac{\Gamma(\nu+d/2)}{\pi^{d/2}\Gamma(\nu)} \left\{\frac{1}{\left(1 + \alpha_2^{-2} \|\omega\|^2\right)^{\nu+d/2}} -  \frac{1}{\left(1+\alpha_1^{-2} \|\omega\|^2\right)^{\nu+d/2}} \right\}\nonumber \\
&>0, \text{ for all } \omega \in \RR^d,
\end{align}
where the last step follows because $0<\alpha_1<\alpha_2$. This has shown that $\widetilde K_{\Omega^{\ddagger}}$ is indeed a positive definite function. Therefore, $\Omega^{\ddagger} = \alpha_2^{d}R_{\alpha_2} - \alpha_1^{d}R_{\alpha_1}$ is a positive definite matrix.
\end{proof}

\vspace{8mm}

We restate and strengthen the monotonicity in Part (i) of Lemma \ref{lem:dimension reduction} in the main text as the following lemma.
\begin{lemma}[Monotonicity of $\widetilde \theta_{\alpha}$ in Lemma \ref{lem:dimension reduction} in the Main Text] \label{lem:theta1.monotone}
Both $\widetilde \theta_{\alpha}$ defined in \eqref{tildetheta2} and $\widetilde \theta_{\alpha}^{(1)}$ defined in \eqref{tildetheta2.1} are non-decreasing functions in $\alpha$ for all $\alpha\in \RR^+$, all $d\in \ZZ^+$, all $\nu\in \RR^+$, for any symmetric positive semidefinite matrix $\Omega_{\beta}$.
\end{lemma}

\begin{proof}[Proof of Lemma \ref{lem:theta1.monotone}]
We first show that $\widetilde \theta_{\alpha}$ is a non-decreasing function in $\alpha$. We notice that $M_n$ is full-rank by Assumption \ref{assump.m.func} and $\Omega_{\beta}$ is positive semidefinite. Consider two generic values $0<\alpha_1<\alpha_2$. By Lemma \ref{lem:alpha.monotone.matrix}, we have that $\alpha_2^{2\nu}R_{\alpha_2}^{-1} - \alpha_1^{2\nu}R_{\alpha_1}^{-1}$ is positive definite.

Therefore, in Lemma \ref{lem:2posdef}, we can set $A_1=\alpha_1^{2\nu} R_{\alpha_1}^{-1}$, $A_2=\alpha_2^{2\nu} R_{\alpha_2}^{-1}$, $G=M_n$, $\Omega=\alpha_1^{2\nu}\Omega_{\beta}$, then the conclusion of Lemma \ref{lem:2posdef} implies that the matrix $\Delta A$ should be positive semidefinite, which implies that
\begin{align}\label{eq:theta1.diff.1}
0_{n\times n} \overset{(i)}{\leq} \Delta A & = \left[\alpha_2^{2\nu} R_{\alpha_2}^{-1} - \alpha_2^{2\nu} R_{\alpha_2}^{-1} M_n\big(\alpha_2^{2\nu} M_n^\top R_{\alpha_2}^{-1}M_n + \alpha_1^{2\nu} \Omega_{\beta}\big)^{-1} M_n^\top \big(\alpha_2^{2\nu} R_{\alpha_2}^{-1}\big) \right] \nonumber \\
& \quad - \left[\alpha_1^{2\nu} R_{\alpha_1}^{-1} - \alpha_1^{2\nu} R_{\alpha_1}^{-1} M_n\big(\alpha_1^{2\nu} M_n^\top R_{\alpha_1}^{-1}M_n + \alpha_1^{2\nu} \Omega_{\beta}\big)^{-1} M_n^\top \big(\alpha_1^{2\nu} R_{\alpha_1}^{-1}\big) \right] \nonumber \\
&\overset{(ii)}{\leq} \left[\alpha_2^{2\nu} R_{\alpha_2}^{-1} - \alpha_2^{2\nu} R_{\alpha_2}^{-1} M_n\big(\alpha_2^{2\nu} M_n^\top R_{\alpha_2}^{-1}M_n + \alpha_2^{2\nu} \Omega_{\beta}\big)^{-1} M_n^\top \big(\alpha_2^{2\nu} R_{\alpha_2}^{-1}\big) \right] \nonumber \\
& \quad - \left[\alpha_1^{2\nu} R_{\alpha_1}^{-1} - \alpha_1^{2\nu} R_{\alpha_1}^{-1} M_n\big(\alpha_1^{2\nu} M_n^\top R_{\alpha_1}^{-1}M_n + \alpha_1^{2\nu} \Omega_{\beta}\big)^{-1} M_n^\top \big(\alpha_1^{2\nu} R_{\alpha_1}^{-1}\big) \right] \nonumber \\
&= \alpha_2^{2\nu}\left[ R_{\alpha_2}^{-1} - R_{\alpha_2}^{-1} M_n\big( M_n^\top R_{\alpha_2}^{-1}M_n +  \Omega_{\beta}\big)^{-1} M_n^\top  R_{\alpha_2}^{-1} \right] \nonumber \\
& \quad - \alpha_1^{2\nu}\left[ R_{\alpha_1}^{-1} -  R_{\alpha_1}^{-1} M_n\big( M_n^\top R_{\alpha_1}^{-1}M_n + \Omega_{\beta}\big)^{-1} M_n^\top  R_{\alpha_1}^{-1}\right] ,
\end{align}
where the $\leq$ relation in the inequalities (i) and (ii) of \eqref{eq:theta1.diff.1} means that if $A\leq B$ for two positive semidefinite matrices $A,B$, then $B-A$ is positive semidefinite; (i) follows from Lemma \ref{lem:2posdef}, and (ii) follows from replacing $\alpha_1^{2\nu}\Omega_{\beta}$ inside the first inverse by $\alpha_2^{2\nu}\Omega_{\beta}$. This implies that the right-hand side of \eqref{eq:theta1.diff.1} is positive semidefinite. Therefore, together with the form of $\widetilde\theta_{\alpha}$ in \eqref{tildetheta2}, we have proved that if $0<\alpha_1<\alpha_2$, then
\begin{align}
0& \leq  \alpha_2^{2\nu} Y_n^{\top} \left[ R_{\alpha_2}^{-1} - R_{\alpha_2}^{-1} M_n\big( M_n^\top R_{\alpha_2}^{-1}M_n +  \Omega_{\beta}\big)^{-1} M_n^\top  R_{\alpha_2}^{-1} \right]Y_n / (n-p) \nonumber \\
& \quad - \alpha_1^{2\nu} Y_n^{\top} \left[ R_{\alpha_1}^{-1} -  R_{\alpha_1}^{-1} M_n\big( M_n^\top R_{\alpha_1}^{-1}M_n + \Omega_{\beta}\big)^{-1} M_n^\top  R_{\alpha_1}^{-1}\right]  Y_n / (n-p) \nonumber \\
&= \widetilde\theta_{\alpha_2} - \widetilde\theta_{\alpha_1},
\end{align}
so $\widetilde\theta_{\alpha_1} \leq \widetilde\theta_{\alpha_2}$, i.e., $\widetilde\theta_{\alpha}$ is a non-decreasing function in $\alpha$.

For $\widetilde \theta_{\alpha}^{(1)} = \alpha^{2\nu} X_n^\top  R_{\alpha}^{-1} X_n /(n-p)$ from \eqref{tildetheta2.1}, since $\alpha_2^{2\nu}R_{\alpha_2}^{-1} - \alpha_1^{2\nu}R_{\alpha_1}^{-1}$ is positive definite by Lemma \ref{lem:alpha.monotone.matrix}, we have that for any $X_n\in \RR^n$, $\widetilde \theta_{\alpha_2}^{(1)} \geq \widetilde \theta_{\alpha_1}^{(1)}$, i.e., $\widetilde\theta_{\alpha}^{(1)}$ is a non-decreasing function in $\alpha$.
\end{proof}

\vspace{8mm}

\subsection{Proof of Uniform Convergence in Part (ii) of Lemma \ref{lem:dimension reduction}} \label{supsec:unif.conv}
We prove Part (ii) of Lemma \ref{lem:dimension reduction} in this subsection. We first restate the important quantities of $\underkappa,\overkappa,\underline\alpha_n,\overline\alpha_n$ as in \eqref{eq:2kappa} of the main text. We also define the constant $\tau\in (0,1/2)$:
\begin{align}\label{eq:2kappa.re}
& \underline \kappa = \frac{1}{2} \min\Bigg\{\frac{0.9}{(2d+0.94)(8\nu+3d-0.9)},~ \frac{1}{4(3\nu+d)}, ~0.01\Bigg\}, \quad \underline \alpha_n = n^{-\underline\kappa}, \nonumber \\
&\overline \kappa = \frac{1}{2} \min\Bigg\{\frac{0.9}{(2d+0.94)(8\nu+5d+0.9)},~~ \frac{1}{2(2\nu+d)},~~0.01\Bigg\}, \quad \overline \alpha_n = n^{-\overline\kappa},  \nonumber \\
&\tau = \frac{1}{2} \min\Bigg\{ \frac{0.9}{4d+1.88}- (4\nu+5d+0.45)\overkappa, ~~ \frac{15}{98}-5.95\overkappa, ~~ \frac{1}{2}-(2\nu+d)\overkappa, ~~ \frac{1}{2}-5\overkappa,  \nonumber \\
&\qquad \qquad \frac{0.9}{4d+1.88} - (4\nu+1.5d -0.45)\underkappa,~~  \frac{15}{98} - 4.05\underkappa, ~~ \frac{1}{2} -2(3\nu+d)\underkappa, ~~ \frac{1}{2} - 5\underkappa \Bigg\}.
\end{align}

\begin{lemma}\label{lem:REML.decomp}
For all $d\in \ZZ^+, \nu\in \RR^+, \alpha\in \RR^+$, the REML $\widetilde\theta_{\alpha}$ in \eqref{tildetheta2} can be decomposed into three terms:
\begin{align}\label{tildetheta2.1}
\widetilde \theta_{\alpha} & = \widetilde \theta_{\alpha}^{(1)} - \widetilde \theta_{\alpha}^{(2)} + \widetilde \theta_{\alpha}^{(3)}, \nonumber \\
\widetilde \theta_{\alpha}^{(1)} &= \frac{\alpha^{2\nu} X_n^\top  R_{\alpha}^{-1} X_n }{n-p}, \nonumber \\
\widetilde \theta_{\alpha}^{(2)} &= \frac{\alpha^{2\nu} X_n^\top R_{\alpha}^{-1}M_n \big(M_n^\top R_{\alpha}^{-1} M_n\big)^{-1} M_n^\top R_{\alpha}^{-1}  X_n }{n-p}, \nonumber \\
\widetilde \theta_{\alpha}^{(3)} &= \frac{\alpha^{2\nu} Y_n^\top  R_{\alpha}^{-1}M_n\left[ \big(M_n^\top R_{\alpha}^{-1} M_n \big)^{-1} - \big(M_n^\top R_{\alpha}^{-1} M_n + \Omega_{\beta}\big)^{-1} \right] M_n^\top R_{\alpha}^{-1} Y_n}{n-p} .
\end{align}
Furthermore,
$$0\leq \widetilde \theta_{\alpha}^{(2)}\leq \widetilde \theta_{\alpha}^{(1)},\qquad \widetilde \theta_{\alpha}^{(3)}\geq 0 .$$
\end{lemma}

\begin{proof}[Proof of Lemma \ref{lem:REML.decomp}]
The universal kriging model \eqref{eq:obs.model} implies that $Y_n=M_n \beta_0 + X_n$ with $X_n\sim \Ncal(0_n,\sigma_0^2 R_{\alpha_0})$. Therefore, the REML $\widetilde\theta_{\alpha}$ defined in \eqref{tildetheta2} can be rewritten as
\begin{align} \label{tildetheta33}
\widetilde \theta_{\alpha} & = \frac{\alpha^{2\nu} Y_n^\top  \left[R_{\alpha}^{-1} - R_{\alpha}^{-1}M_n \big(M_n^\top R_{\alpha}^{-1} M_n + \Omega_{\beta}\big)^{-1} M_n^\top R_{\alpha}^{-1} \right]  Y_n }{n-p} \nonumber \\
& = \frac{\alpha^{2\nu} (M_n \beta_0 + X_n)^\top  \left[R_{\alpha}^{-1} - R_{\alpha}^{-1}M_n \big(M_n^\top R_{\alpha}^{-1} M_n \big)^{-1} M_n^\top R_{\alpha}^{-1} \right]  (M_n \beta_0 + X_n)}{n-p} \nonumber \\
&\quad + \frac{\alpha^{2\nu} Y_n^\top  R_{\alpha}^{-1}M_n\left[ \big(M_n^\top R_{\alpha}^{-1} M_n \big)^{-1} - \big(M_n^\top R_{\alpha}^{-1} M_n + \Omega_{\beta}\big)^{-1} \right] M_n^\top R_{\alpha}^{-1} Y_n}{n-p} \nonumber \\
&\stackrel{(i)}{=} \frac{\alpha^{2\nu} X_n^\top  R_{\alpha}^{-1} X_n }{n-p} - \frac{\alpha^{2\nu} X_n^\top R_{\alpha}^{-1}M_n \big(M_n^\top R_{\alpha}^{-1} M_n \big)^{-1} M_n^\top R_{\alpha}^{-1}  X_n }{n-p} \nonumber \\
&\quad + \frac{\alpha^{2\nu} Y_n^\top  R_{\alpha}^{-1}M_n\left[ \big(M_n^\top R_{\alpha}^{-1} M_n \big)^{-1} - \big(M_n^\top R_{\alpha}^{-1} M_n + \Omega_{\beta}\big)^{-1} \right] M_n^\top R_{\alpha}^{-1} Y_n}{n-p} \nonumber \\
&= \widetilde \theta_{\alpha}^{(1)} - \widetilde \theta_{\alpha}^{(2)} + \widetilde \theta_{\alpha}^{(3)},
\end{align}
where in (i), we use the relation $\left[R_{\alpha}^{-1} - R_{\alpha}^{-1}M_n \big(M_n^\top R_{\alpha}^{-1} M_n \big)^{-1} M_n^\top R_{\alpha}^{-1} \right] M_n=0_{n\times p}$.

Since for any $\alpha>0$,
\begin{align*}
& R_{\alpha}^{-1} - R_{\alpha}^{-1}M_n \big(M_n^\top R_{\alpha}^{-1} M_n \big)^{-1} M_n^\top R_{\alpha}^{-1} \\
={}& R_{\alpha}^{-1/2} \left[ I_n - R_{\alpha}^{-1/2}M_n \big(M_n^\top R_{\alpha}^{-1} M_n \big)^{-1} M_n^\top R_{\alpha}^{-1/2}\right]  R_{\alpha}^{-1/2},
\end{align*}
where $I_n - R_{\alpha}^{-1/2}M_n \big(M_n^\top R_{\alpha}^{-1} M_n \big)^{-1} M_n^\top R_{\alpha}^{-1/2}$ is an idempotent matrix, it follows that $0\leq \widetilde \theta_{\alpha}^{(2)}\leq \widetilde \theta_{\alpha}^{(1)}$.

Since $\Omega_{\beta}$ is symmetric positive semidefinite, by Lemma \ref{lem:ABinv}, $\big(M_n^\top R_{\alpha}^{-1} M_n \big)^{-1} - \big(M_n^\top R_{\alpha}^{-1} M_n + \Omega_{\beta}\big)^{-1}$ is positive semidefinite. Therefore, $\widetilde \theta_{\alpha}^{(3)}\geq 0$ for any $\alpha>0$.
\end{proof}

\vspace{8mm}

\begin{lemma}\label{lem:theta2.bound.2ends}
For $\widetilde\theta_{\alpha}^{(2)}$ defined in \eqref{tildetheta2.1}, for $d\in \ZZ^+$ and $\nu\in \RR^+$, there exists a large integer $N_1'$ that only depends on $\nu,d,T,\theta_0,\alpha_0$, such that for all $n>N_1'$ and $\underline\alpha_n,\overline\alpha_n,\tau$ defined in \eqref{eq:2kappa.re},
\begin{align}
&\pr \left(\sqrt{n}\widetilde\theta_{\alpha_0}^{(2)} >  \theta_0 n^{-\tau}/16 \right) \leq \exp(-16\log^2 n), \label{eq:theta.alpha0.2} \\
&\pr \left(\sqrt{n}\widetilde\theta_{\underline\alpha_n}^{(2)} >  \theta_0 n^{-\tau}/16 \right) \leq \exp(-16\log^2 n), \label{eq:theta.alpha.u2} \\
&\pr \left(\sqrt{n}\widetilde\theta_{\overline\alpha_n}^{(2)} > \theta_0  n^{-\tau}/16 \right) \leq \exp(-16\log^2 n) . \label{eq:theta.alpha.o2}
\end{align}
\end{lemma}

\begin{proof}[Proof of Lemma \ref{lem:theta2.bound.2ends}]
\vspace{2mm}

We first prove \eqref{eq:theta.alpha.o2} below. Then the proofs of \eqref{eq:theta.alpha0.2} and \eqref{eq:theta.alpha.u2}  follow similarly.
\vspace{2mm}

For $\sqrt{n}\widetilde \theta_{\overline \alpha_n}^{(2)}$, we notice that by Lemma \ref{lem:specden_lambda}, $\lambda_{i,n}(\overline \alpha_n)\geq (\alpha_0/\overline\alpha_n)^{2\nu+d}$ for all $i=1,\ldots,n$, so $\lambda_{\max}\left(\Lambda_{\overline\alpha_n}^{-1}\right) \leq (\overline\alpha_n/\alpha_0)^{2\nu+d}$.

Using Lemma \ref{lem:URU}, we have $\overline\alpha_n^{2\nu}R_{\overline\alpha_n}^{-1}=\theta_0  U_{\overline\alpha_n} \Lambda_{\overline\alpha_n}^{-1} U_{\overline\alpha_n}^\top$. For any $\alpha>0$, we define $Z_n(\alpha)=(Z_{1,n}(\alpha),\ldots,Z_{n,n}(\alpha))^\top = U_{\alpha}^\top X_n$. Since $X_n\sim \Ncal(0_n,\sigma_0^2 R_{\alpha_0})$, by Lemma \ref{lem:URU}, we have $Z_n(\alpha)\sim \Ncal(0_n,I_n)$ for any $\alpha>0$. We can then write $X_n=U_{\overline\alpha_n}^{-\top}Z_n(\overline\alpha_n)$. Since $\Omega_{\beta}$ is positive semidefinite, we can upper bound $\widetilde\theta_{\overline\alpha_n}^{(2)}$ by
\begin{align} \label{eq:theta.tilde.2.1}
\widetilde \theta_{\overline\alpha_n}^{(2)} &= (n-p)^{-1} \overline\alpha_n^{2\nu} X_n^\top R_{\overline\alpha_n}^{-1} M_n \big(M_n^\top R_{\overline\alpha_n}^{-1} M_n + \Omega_{\beta}\big)^{-1} M_n^\top R_{\overline\alpha_n}^{-1}  X_n \nonumber \\
&\leq (n-p)^{-1} \overline\alpha_n^{2\nu} X_n^\top R_{\overline\alpha_n}^{-1} M_n \big(M_n^\top R_{\overline\alpha_n}^{-1} M_n \big)^{-1} M_n^\top R_{\overline\alpha_n}^{-1}  X_n \nonumber \\
&= (n-p)^{-1} \theta_0 X_n^\top \left(\overline\alpha_n^{2\nu}R_{\overline\alpha_n}^{-1}\right) M_n \big[M_n^\top \left(\overline\alpha_n^{2\nu}R_{\overline\alpha_n}^{-1}\right) M_n \big]^{-1} M_n^\top \left(\overline\alpha_n^{2\nu}R_{\overline\alpha_n}^{-1}\right)  X_n \nonumber \\
&\leq \frac{2\theta_0}{n} Z_n(\overline\alpha_n)^\top \Lambda_{\overline\alpha_n}^{-1} U_{\overline\alpha_n}^\top M_n \big(M_n^\top U_{\overline\alpha_n} \Lambda_{\overline\alpha_n}^{-1} U_{\overline\alpha_n}^\top  M_n \big)^{-1} M_n^\top U_{\overline\alpha_n} \Lambda_{\overline\alpha_n}^{-1} Z_n(\overline\alpha_n) \nonumber \\
&= \frac{2\theta_0}{n} Z_n(\overline\alpha_n)^\top \Lambda_{\overline\alpha_n}^{-1/2} H_{\overline\alpha_n} \Lambda_{\overline\alpha_n}^{-1/2} Z_n(\overline\alpha_n) ,
\end{align}
where $H_{\overline\alpha_n}=\Lambda_{\overline\alpha_n}^{-1/2} U_{\overline\alpha_n}^\top M_n \big(M_n^\top U_{\overline\alpha_n} \Lambda_{\overline\alpha_n}^{-1} U_{\overline\alpha_n}^\top  M_n \big)^{-1} M_n^\top U_{\overline\alpha_n} \Lambda_{\overline\alpha_n}^{-1/2}$ is an $n\times n$ idempotent matrix of rank $p$ (i.e., $H_{\overline\alpha_n}^2=H_{\overline\alpha_n}$), since $\text{rank}(M_n)=p\ll n$ as $n\to\infty$. Hence $\tr(H_{\overline\alpha_n})=p$.

We are going to apply the Hanson-Wright inequality in Lemma \ref{lem:Hsuetal12} to \eqref{eq:theta.tilde.2.1}, with $Z=Z_n(\overline\alpha_n)$, $z=16\log^2 n$, and $\Sigma = \Lambda_{\overline\alpha_n}^{-1/2} H_{\overline\alpha_n} \Lambda_{\overline\alpha_n}^{-1/2}$. For this purpose, we need to find upper bounds for $\tr(\Sigma)$, $\tr(\Sigma^2)$, and $\|\Sigma\|_{\op}$ in Lemma \ref{lem:Hsuetal12}. We first notice that for two generic $n\times n$ symmetric positive semidefinite matrices $A$ and $B$,
$$\tr(BA) = \tr(AB) = \tr(B^{1/2}AB^{1/2}) \leq \tr\{B^{1/2}(\lambda_{\max}(A) I)B^{1/2}\} \leq \lambda_{\max}(A)\tr(B).$$
Therefore, using $\lambda_{\max}(\Lambda_{\overline\alpha_n}^{-1})\leq (\overline\alpha_n/\alpha_0)^{2\nu+d}$, we apply the inequality above repeatedly to obtain that
\begin{align}\label{eq:hw-3sigma}
\tr\left(\Lambda_{\overline\alpha_n}^{-1/2} H_{\overline\alpha_n} \Lambda_{\overline\alpha_n}^{-1/2}\right) & = \tr\left(\Lambda_{\overline\alpha_n}^{-1} H_{\overline\alpha_n}\right) \leq \lambda_{\max}\left(\Lambda_{\overline\alpha_n}^{-1}\right) \tr\left(H_{\overline\alpha_n}\right) \leq p(\overline\alpha_n/\alpha_0)^{2\nu+d}, \nonumber \\
\tr\left[\left(\Lambda_{\overline\alpha_n}^{-1/2} H_{\overline\alpha_n} \Lambda_{\overline\alpha_n}^{-1/2}\right)^2\right] & = \tr\left(\Lambda_{\overline\alpha_n}^{-1} H_{\overline\alpha_n} \Lambda_{\overline\alpha_n}^{-1}H_{\overline\alpha_n}\right) \leq \lambda_{\max}\left(\Lambda_{\overline\alpha_n}^{-1}\right)\cdot \tr\left(H_{\overline\alpha_n} \Lambda_{\overline\alpha_n}^{-1}H_{\overline\alpha_n}\right) \nonumber \\
&\leq \lambda_{\max}\left(\Lambda_{\overline\alpha_n}^{-1}\right)\cdot \tr\left(\Lambda_{\overline\alpha_n}^{-1}H_{\overline\alpha_n}^2 \right) \leq \lambda_{\max}\left(\Lambda_{\overline\alpha_n}^{-1}\right)^2 \cdot \tr\left(H_{\overline\alpha_n}^2 \right) \nonumber\\
&= \lambda_{\max}\left(\Lambda_{\overline\alpha_n}^{-1}\right)^2 \cdot \tr\left(H_{\overline\alpha_n}\right) = p(\overline\alpha_n/\alpha_0)^{2(2\nu+d)}, \nonumber \\
\left\|\Lambda_{\overline\alpha_n}^{-1/2} H_{\overline\alpha_n} \Lambda_{\overline\alpha_n}^{-1/2}\right\|_{\op} & \leq \left[\lambda_{\max}\left\{\left(\Lambda_{\overline\alpha_n}^{-1/2} H_{\overline\alpha_n} \Lambda_{\overline\alpha_n}^{-1/2}\right)^2\right\}\right]^{1/2}\nonumber \\
&\leq \left[\tr\left\{\left(\Lambda_{\overline\alpha_n}^{-1/2} H_{\overline\alpha_n} \Lambda_{\overline\alpha_n}^{-1/2}\right)^2\right\}\right]^{1/2}  \leq \sqrt{p}(\overline\alpha_n/\alpha_0)^{2\nu+d}.
\end{align}
Therefore, for $z=16\log^2 n$ and $\Sigma = \Lambda_{\overline\alpha_n}^{-1/2} H_{\overline\alpha_n} \Lambda_{\overline\alpha_n}^{-1/2}$, given the choice of $\tau$ in \eqref{eq:2kappa.re}, $\tau< 1/2 - (2\nu+d)\overkappa$, so we have that for all sufficiently large $n$,
\begin{align}\label{eq:o2hw}
& \tr(\Sigma) + 2\sqrt{\tr(\Sigma^2)z} + 2\|\Sigma\|_{\op} z \nonumber \\
\leq{} & p(\overline\alpha_n/\alpha_0)^{2\nu+d} + 8\sqrt{p}(\overline\alpha_n/\alpha_0)^{2\nu+d} \log n + 32\sqrt{p}(\overline\alpha_n/\alpha_0)^{2\nu+d} \log^2 n  \nonumber \\
\leq{}&  42p n^{(2\nu+d)\overkappa} \log^2 n < n^{1/2-\tau}/128 .
\end{align}

We now apply Lemma \ref{lem:Hsuetal12} to \eqref{eq:theta.tilde.2.1} with $Z=Z_n(\overline\alpha_n)$, $z=16\log^2 n$, and $\Sigma = \Lambda_{\overline\alpha_n}^{-1/2} H_{\overline\alpha_n} \Lambda_{\overline\alpha_n}^{-1/2}$ to obtain that for all sufficiently large $n$,
\begin{align}\label{eq:theta.tilde.2.3}
& \pr \left( \sqrt{n}\widetilde\theta_{\overline\alpha_n}^{(2)} > \frac{\theta_0}{16} n^{-\tau} \right)  \nonumber \\
\leq{}& \pr \left(  Z_n(\overline\alpha_n)^\top \Lambda_{\overline\alpha_n}^{-1/2} H_{\overline\alpha_n} \Lambda_{\overline\alpha_n}^{-1/2} Z_n(\overline\alpha_n) > \frac{1}{32} n^{1/2 -\tau} \right) \nonumber \\
\leq {}& \pr \left( Z_n(\overline\alpha_n)^\top \Lambda_{\overline\alpha_n}^{-1/2} H_{\overline\alpha_n} \Lambda_{\overline\alpha_n}^{-1/2} Z_n(\overline\alpha_n) > \tr(\Sigma) + 2\sqrt{\tr(\Sigma^2)z} + 2\|\Sigma\|_{\op} z\right) \nonumber \\
\leq{}& \exp(-z) = \exp(-16\log^2 n).
\end{align}
This proves \eqref{eq:theta.alpha.o2}.
\vspace{2mm}

The proof of \eqref{eq:theta.alpha0.2} is similar to the proof of \eqref{eq:theta.alpha.o2} above. \eqref{eq:theta.tilde.2.1} still holds by replacing all $\overline\alpha_n$ with $\alpha_0$. We notice that $\Lambda_{\alpha_0}=I_n$, $\lambda_{\max}(\Lambda_{\alpha_0}^{-1})=1$, so the three upper bounds in \eqref{eq:hw-3sigma} become $p, p , \sqrt{p}$, respectively. With $\overline\alpha_n$ replaced by $\alpha_0$, the left-hand side \eqref{eq:o2hw} is upper bounded by $p+8\sqrt{p}\log n +32\sqrt{p}\log^2 n$, which is smaller than $n^{1/2-\tau}/32$ for all sufficiently large $n$. Hence \eqref{eq:theta.tilde.2.3} holds with $\overline\alpha_n$ replaced by $\alpha_0$. This proves \eqref{eq:theta.alpha0.2}.

The proof of \eqref{eq:theta.alpha.u2} is also similar to the proof of \eqref{eq:theta.alpha.o2} above. \eqref{eq:theta.tilde.2.1} still holds by replacing all $\overline\alpha_n$ with $\underline\alpha_n$. We notice that from \ref{lem:specden_lambda}, $\lambda_{i,n}(\underline \alpha_n)\geq 1$ for all $i=1,\ldots,n$, so $\lambda_{\max}\left(\Lambda_{\underline\alpha_n}^{-1}\right) \leq 1$. As a result, the three upper bounds in \eqref{eq:hw-3sigma} become $p, p , \sqrt{p}$, respectively. With $\overline\alpha_n$ replaced by $\underline\alpha_n$, the left-hand side \eqref{eq:o2hw} is upper bounded by $p+8\sqrt{p}\log n +32\sqrt{p}\log^2 n$, which is smaller than $n^{1/2-\tau}/32$ for all sufficiently large $n$. Hence \eqref{eq:theta.tilde.2.3} holds with $\overline\alpha_n$ replaced by $\underline\alpha_n$. This proves \eqref{eq:theta.alpha.u2}.
\end{proof}

\vspace{8mm}

\begin{lemma}\label{lem:theta3.bound.2ends}
For $\widetilde\theta_{\alpha}^{(3)}$ defined in \eqref{tildetheta2.1}, for $d\in \ZZ^+$ and $\nu\in \RR^+$, there exists a large integer $N_2'$ that only depends on $\nu,d,T,\beta_0,\theta_0,\alpha_0$ and the $\Wcal_2^{\nu+d/2}(\Scal)$ norms of $\bbm_1(\cdot),\ldots,\bbm_p(\cdot)$, such that for all $n>N_2'$ and $\underline\alpha_n,\overline\alpha_n,\tau$ defined in \eqref{eq:2kappa.re},
\begin{align}
&\pr \left(\sqrt{n}\widetilde\theta_{\alpha_0}^{(3)} >   \theta_0 n^{-\tau}/16 \right) \leq \exp(-16\log^2 n), \label{eq:theta.alpha0.3} \\
&\pr \left(\sqrt{n}\widetilde\theta_{\underline\alpha_n}^{(3)} >  \theta_0 n^{-\tau}/16 \right) \leq \exp(-16\log^2 n), \label{eq:theta.alpha.u3} \\
&\pr \left(\sqrt{n}\widetilde\theta_{\overline\alpha_n}^{(3)} >  \theta_0 n^{-\tau}/16 \right) \leq \exp(-16\log^2 n). \label{eq:theta.alpha.o3}
\end{align}
\end{lemma}

\begin{proof}[Proof of Lemma \ref{lem:theta3.bound.2ends}]

We first prove \eqref{eq:theta.alpha.o3}. The proofs of \eqref{eq:theta.alpha0.3} and \eqref{eq:theta.alpha.u3} follow similarly.

In the definition of $\widetilde\theta_{\overline\alpha_n}^{(3)}$ in \eqref{tildetheta2.1}, we directly drop the positive semidefinite matrix $\big(M_n^\top R_{\overline\alpha_n}^{-1} M_n + \Omega_{\beta}\big)^{-1}$ in the middle bracket, and obtain that
\begin{align} \label{eq:theta3.under1}
\widetilde\theta_{\overline\alpha_n}^{(3)}
={}& \frac{\overline\alpha_n^{2\nu} Y_n^\top  R_{\overline\alpha_n}^{-1}M_n\left[ \big(M_n^\top R_{\overline\alpha_n}^{-1} M_n \big)^{-1} - \big(M_n^\top R_{\overline\alpha_n}^{-1} M_n + \Omega_{\beta}\big)^{-1} \right] M_n^\top R_{\overline\alpha_n}^{-1} Y_n}{n-p}  \nonumber \\
\leq {}& \frac{\overline\alpha_n^{2\nu} Y_n^\top  R_{\overline\alpha_n}^{-1}M_n \big(M_n^\top R_{\overline\alpha_n}^{-1} M_n \big)^{-1} M_n^\top R_{\overline\alpha_n}^{-1} Y_n}{n-p}  \nonumber \\
={}& \frac{\overline\alpha_n^{2\nu} (M_n\beta_0 + X_n)^\top  R_{\overline\alpha_n}^{-1}M_n \big(M_n^\top R_{\overline\alpha_n}^{-1} M_n \big)^{-1} M_n^\top R_{\overline\alpha_n}^{-1} (M_n\beta_0 + X_n)}{n-p} .
\end{align}
For two vectors $u,v\in \RR^n$ and an $n\times n$ symmetric positive definite matrix $\Sigma$, we have the following inequality:
\begin{align} \label{eq:uvSigma}
& (u+v)^\top \Sigma (u+v) = u^\top \Sigma u + v^\top \Sigma v + 2u^\top \Sigma v \nonumber\\
&= u^\top \Sigma u + v^\top \Sigma v + 2(\Sigma^{1/2}u)^\top (\Sigma^{1/2}v) \leq u^\top \Sigma u + v^\top \Sigma v + u^\top \Sigma u + v^\top \Sigma v \nonumber \\
&= 2\left(u^\top \Sigma u + v^\top \Sigma v\right) .
\end{align}
We apply \eqref{eq:uvSigma} to the right-hand side of \eqref{eq:theta3.under1}, with $u=M_n\beta_0$, $v=X_n$, and\\ $\Sigma=R_{\overline\alpha_n}^{-1}M_n \big(M_n^\top R_{\overline\alpha_n}^{-1} M_n \big)^{-1} M_n^\top R_{\overline\alpha_n}^{-1}$ to obtain that
\begin{align} \label{eq:theta3.under2}
\widetilde\theta_{\overline\alpha_n}^{(3)} &\leq \frac{\overline\alpha_n^{2\nu} (M_n\beta_0 + X_n)^\top  R_{\overline\alpha_n}^{-1}M_n \big(M_n^\top R_{\overline\alpha_n}^{-1} M_n \big)^{-1} M_n^\top R_{\overline\alpha_n}^{-1} (M_n\beta_0 + X_n)}{n-p} \nonumber \\
&\leq \frac{2\overline\alpha_n^{2\nu}}{n-p} \left[\beta_0^\top M_n^\top R_{\overline\alpha_n}^{-1} M_n \beta_0 + X_n^\top R_{\overline\alpha_n}^{-1}M_n \big(M_n^\top R_{\overline\alpha_n}^{-1} M_n \big)^{-1} M_n^\top R_{\overline\alpha_n}^{-1} X_n \right] \nonumber \\
&\leq \frac{2\theta_0}{n-p} \beta_0^\top M_n^\top \left[(\theta_0/\overline\alpha_n^{2\nu})R_{\overline\alpha_n}\right]^{-1} M_n \beta_0 + \frac{2\overline\alpha_n^{2\nu}}{n-p} X_n^\top R_{\overline\alpha_n}^{-1}M_n \big(M_n^\top R_{\overline\alpha_n}^{-1} M_n \big)^{-1} M_n^\top R_{\overline\alpha_n}^{-1} X_n .
\end{align}
We bound the two terms in \eqref{eq:theta3.under2}. Because $\bbm_1,\ldots,\bbm_p\in \Wcal_2^{\nu+d/2}(\Scal)$ by Assumption \ref{assump.m.func}, Lemma \ref{lem:matern.sobolev} implies that $\bbm_1,\ldots,\bbm_p\in\Hcal_{\sigma_0^2 K_{\alpha_0,\nu}}$, the RKHS of Mat\'ern kernel $\sigma^2 K_{\alpha,\nu}$ for any $(\sigma^2,\alpha)\in \RR^+ \times \RR^+$. Let $\bbm_{j,n}=(\bbm_j(s_1),\ldots,\bbm_j(s_n))^\top\in \RR^n$ for $j=1,\ldots,p$. Then we can apply Lemma \ref{lem:rkhs.quadratic}, Lemma \ref{lem:rkhs.ordering}, and Lemma \ref{lem:matern.sobolev} to the first term in \eqref{eq:theta3.under2} and obtain that
\begin{align} \label{eq:theta3.under3}
& \frac{2\theta_0}{n-p} \beta_0^\top M_n^\top \left[(\theta_0/\overline\alpha_n^{2\nu})R_{\overline\alpha_n}\right]^{-1} M_n \beta_0 \nonumber \\
\leq{}&  \frac{2\theta_0}{n-p} \beta_0^\top \beta_0 \cdot \lambda_{\max} \left(M_n^\top \left[(\theta_0/\overline\alpha_n^{2\nu})R_{\overline\alpha_n}\right]^{-1} M_n \right)  \nonumber \\
\leq{}&  \frac{2\theta_0}{n-p} \|\beta_0\|^2 \cdot \tr \left(M_n^\top \left[(\theta_0/\overline\alpha_n^{2\nu})R_{\overline\alpha_n}\right]^{-1} M_n \right)  \nonumber \\
={}& \frac{2\theta_0}{n-p} \|\beta_0\|^2 \cdot  \sum_{j=1}^p \bbm_{j,n}^\top \left[(\theta_0/\overline\alpha_n^{2\nu})R_{\overline\alpha_n}\right]^{-1} \bbm_{j,n}  \nonumber \\
\stackrel{(i)}{\leq} {}&  \frac{2\theta_0}{n-p} \|\beta_0\|^2  \cdot  \sum_{j=1}^p \|\bbm_j\|_{\Hcal_{(\theta_0/\overline\alpha_n^{2\nu})K_{\overline\alpha_n,\nu}}}^2  \nonumber \\
\stackrel{(ii)}{\leq} {}&  \frac{2\theta_0}{n-p} \|\beta_0\|^2  \cdot  \sum_{j=1}^p (\overline\alpha_n/\alpha_0)^{2\nu+d} \|\bbm_j\|_{\Hcal_{\sigma_0^2K_{\alpha_0,\nu}}}^2  \nonumber \\
\stackrel{(iii)}{\leq} {}&  \frac{2\theta_0}{n-p} \|\beta_0\|^2 (\overline\alpha_n/\alpha_0)^{2\nu+d} \cdot c_2(\sigma_0,\alpha_0)^2 \sum_{j=1}^p \|\bbm_j\|_{\Wcal_2^{\nu+d/2}(\Scal)}^2   \nonumber \\
\leq{}& \frac{4\|\beta_0\|^2}{\alpha_0^{2\nu+d}} c_2(\sigma_0,\alpha_0)^2 \left(\sum_{j=1}^p \|\bbm_j\|_{\Wcal_2^{\nu+d/2}(\Scal)}^2\right)\cdot \theta_0 n^{(2\nu+d)\overkappa -1} \stackrel{(iv)}{\leq} \theta_0 n^{-\tau-1/2}/32,
\end{align}
for all sufficiently large $n$, where (i) follows by applying Lemma \ref{lem:rkhs.quadratic} to each $\bbm_1(\cdot),\ldots,\bbm_p(\cdot)$ with the covariance kernel $(\theta_0/\overline\alpha_n^{2\nu})K_{\overline\alpha_n,\nu}$, (ii) follows from Lemma \ref{lem:rkhs.ordering}, (iii) follows from Lemma \ref{lem:matern.sobolev} with the constant $c_2(\sigma_0,\alpha_0)$ defined in Lemma \ref{lem:matern.sobolev}, and (iv) follows from the definition of $\tau$ in \eqref{eq:2kappa.re} and $\tau < 1/2-(2\nu+d)\overkappa$.

For the second term in \eqref{eq:theta3.under2}, we notice that the exact term
$$\frac{\overline\alpha_n^{2\nu}}{n-p} X_n^\top R_{\overline\alpha_n}^{-1}M_n \big(M_n^\top R_{\overline\alpha_n}^{-1} M_n \big)^{-1} M_n^\top R_{\overline\alpha_n}^{-1} X_n $$
shows up as an upper bound for $\widetilde\theta_{\overline\alpha_n}^{(2)}$ in \eqref{eq:theta.tilde.2.1} in the proof of Lemma \ref{lem:theta2.bound.2ends}. Therefore, we can directly make use of the inequalities in \eqref{eq:theta.tilde.2.1}, \eqref{eq:o2hw}, and \eqref{eq:theta.tilde.2.3} to conclude that for all sufficiently large $n$,
\begin{align} \label{eq:theta3.under4}
& \pr\left(\frac{2\overline\alpha_n^{2\nu}}{n-p} X_n^\top R_{\overline\alpha_n}^{-1}M_n \big(M_n^\top R_{\overline\alpha_n}^{-1} M_n \big)^{-1} M_n^\top R_{\overline\alpha_n}^{-1} X_n > \theta_0 n^{-\tau-1/2}/32 \right) \nonumber \\
\leq{}& \pr \left( \frac{4\theta_0}{n} Z_n(\overline\alpha_n)^\top \Lambda_{\overline\alpha_n}^{-1/2} H_{\overline\alpha_n} \Lambda_{\overline\alpha_n}^{-1/2} Z_n(\overline\alpha_n) > \theta_0 n^{-\tau-1/2}/32 \right) \nonumber \\
={}& \pr \left( Z_n(\overline\alpha_n)^\top \Lambda_{\overline\alpha_n}^{-1/2} H_{\overline\alpha_n} \Lambda_{\overline\alpha_n}^{-1/2} Z_n(\overline\alpha_n) > n^{1/2-\tau}/128 \right) \nonumber \\
\leq{}& \exp(-16\log^2 n).
\end{align}

Therefore, we can combine \eqref{eq:theta3.under2}, \eqref{eq:theta3.under3}, and \eqref{eq:theta3.under4} together to conclude that for all sufficiently large $n$,
\begin{align}\label{eq:theta3.under5}
&\quad~ \pr \left(\sqrt{n}\widetilde\theta_{\overline\alpha_n}^{(3)} > \theta_0 n^{-\tau}/16 \right) \nonumber \\
& \leq
\pr \left(\sqrt{n}\cdot \frac{2\theta_0}{n-p} \beta_0^\top M_n^\top \left[(\theta_0/\overline\alpha_n^{2\nu})R_{\overline\alpha_n}\right]^{-1} M_n \beta_0 > \theta_0 n^{-\tau}/32 \right) \nonumber\\
&\quad +
\pr \left(\sqrt{n}\cdot \frac{2\overline\alpha_n^{2\nu}}{n-p} X_n^\top R_{\overline\alpha_n}^{-1}M_n \big(M_n^\top R_{\overline\alpha_n}^{-1} M_n \big)^{-1} M_n^\top R_{\overline\alpha_n}^{-1} X_n > \theta_0 n^{-\tau}/32 \right) \nonumber \\
&\leq 0 + \exp(-16\log^2 n) = \exp(-16\log^2 n).
\end{align}
This proves \eqref{eq:theta.alpha.o3}.

For the proofs of \eqref{eq:theta.alpha0.3} and \eqref{eq:theta.alpha.u3}, we only need to modify the proof above for \eqref{eq:theta.alpha.o3} for a looser upper bound. In particular, the relation \eqref{eq:theta3.under2} still holds with $\overline\alpha_n$ replaced by both $\alpha_0$ and $\underline\alpha_n$; in the inequality \eqref{eq:theta3.under3}, $(\overline\alpha_n/\alpha_0)^{2\nu+d}$ in step (ii) will be replaced by 1 if $\overline\alpha_n$ is replaced by both  $\alpha_0$ and $\underline\alpha_n$, such that $n^{(2\nu+d)\overkappa-1}$ before the last step of \eqref{eq:theta3.under3} is replaced by the smaller $n^{-1}$, which means that \eqref{eq:theta3.under3} remains true if $\overline\alpha_n$ is replaced by both $\alpha_0$ and $\underline\alpha_n$. Given Lemma \ref{lem:theta2.bound.2ends}, \eqref{eq:theta3.under4} still holds true if $\overline\alpha_n$ is replaced by both $\alpha_0$ and $\underline\alpha_n$. Therefore, \eqref{eq:theta3.under5} holds for both $\alpha_0$ and $\underline\alpha_n$. This completes the proof.
\end{proof}

\vspace{8mm}

\begin{lemma}\label{lem:theta1.bound.2ends}
For $\widetilde\theta_{\alpha}$ defined in \eqref{tildetheta2}, for $d\in\{1,2,3\}$ and $\nu\in \RR^+$, there exists a large integer $N_3'$ that only depends on $\nu,d,T,\beta_0,\theta_0,\alpha_0$ and the $\Wcal_2^{\nu+d/2}(\Scal)$ norms of $\bbm_1(\cdot),\ldots,\bbm_p(\cdot)$, such that for all $n>N_3'$,
\begin{align}
&\pr \left(0\leq \sqrt{n} \left(\widetilde \theta_{\overline {\alpha}_n} - \widetilde \theta_{\alpha_0} \right) \leq \frac{\theta_0}{2} n^{-\tau} \right) \geq 1-2\exp(-4\log^2 n), \label{eq:theta.diff.right} \\
&\pr \left(0\leq \sqrt{n} \left(\widetilde \theta_{\overline {\alpha}_n}^{(1)} - \widetilde \theta_{\alpha_0}^{(1)} \right) \leq \frac{\theta_0}{4} n^{-\tau} \right) \geq 1- \exp(-4\log^2 n), \label{eq:theta.diff.right.1} \\
&\pr \left(0\leq \sqrt{n} \left(\widetilde \theta_{\alpha_0} - \widetilde \theta_{\underline {\alpha}_n} \right) \leq \frac{\theta_0}{2} n^{-\tau} \right) \geq 1-2\exp(-4\log^2 n), \label{eq:theta.diff.left} \\
&\pr \left(0\leq \sqrt{n} \left(\widetilde \theta_{\alpha_0}^{(1)} - \widetilde \theta_{\underline {\alpha}_n}^{(1)} \right) \leq \frac{\theta_0}{4} n^{-\tau} \right) \geq 1- \exp(-4\log^2 n), \label{eq:theta.diff.left.1}
\end{align}
where $\tau, \underline\alpha_n ,\overline\alpha_n$ are as defined the same as in \eqref{eq:2kappa.re}.
\end{lemma}

\begin{proof}[Proof of Lemma \ref{lem:theta1.bound.2ends}]
\vspace{2mm}

\noindent \underline{Proof of \eqref{eq:theta.diff.right} and \eqref{eq:theta.diff.right.1} (for the case of $\overline \alpha_n=n^{\overkappa}$).}
\vspace{2mm}

Since $\overkappa>0$, $\overline \alpha_n = n^{\overline \kappa} > \alpha_0$ for all sufficiently large $n$. By Lemma \ref{lem:theta1.monotone}, we have $\widetilde \theta_{\overline \alpha_n} \geq \widetilde \theta_{\alpha_0}$ and $\widetilde \theta_{\overline \alpha_n}^{(1)} \geq \widetilde \theta_{\alpha_0}^{(1)}$. By the decomposition of $\widetilde \theta_{\alpha}$ in \eqref{tildetheta2.1} of Lemma \ref{lem:REML.decomp} and the fact that $0\leq \widetilde \theta_{\alpha}^{(2)}\leq \widetilde \theta_{\alpha}^{(1)}, \widetilde \theta_{\alpha}^{(3)}\geq 0$, we can rewrite the difference inside the probability in \eqref{eq:theta.diff.right} as
\begin{align}\label{eq:theta.diff1.1}
0&\leq \widetilde \theta_{\overline \alpha_n} - \widetilde \theta_{\alpha_0} \nonumber \\
&= \widetilde \theta_{\overline \alpha_n}^{(1)} - \widetilde \theta_{\overline \alpha_n}^{(2)} + \widetilde \theta_{\overline \alpha_n}^{(3)} - \widetilde \theta_{\alpha_0}^{(1)} + \widetilde \theta_{\alpha_0}^{(2)} - \widetilde \theta_{\alpha_0}^{(3)} \nonumber\\
&\leq \widetilde \theta_{\overline \alpha_n}^{(1)} - \widetilde \theta_{\alpha_0}^{(1)} + \widetilde \theta_{\overline \alpha_n}^{(3)} +\widetilde \theta_{\alpha_0}^{(2)}.
\end{align}
According to the definition of $\lambda_{k,n}(\alpha)$ ($k=1,\ldots,n$) in \eqref{diagonalize}, we have that for any $\alpha>0$,
\begin{align}\label{eq:RUU}
\alpha^{2\nu}R_{\alpha}^{-1} = \theta_0 \sigma^{-2}R_{\alpha}^{-1} = \theta_0  U_{\alpha} \Lambda_{\alpha}^{-1} U_{\alpha}^\top ,\quad \alpha_0^{2\nu}R_{\alpha_0}^{-1} = \theta_0 \sigma_0^{-2}R_{\alpha_0}^{-1} = \theta_0  U_{\alpha}  U_{\alpha}^\top ,
\end{align}
where $\Lambda_{\alpha}=\diag\{\lambda_{k,n}(\alpha):k=1,\ldots,n\}$. Similar to the proof of Lemma \ref{lem:theta2.bound.2ends}, for any $\alpha>0$, we define $Z_n(\alpha)=(Z_{1,n}(\alpha),\ldots,Z_{n,n}(\alpha))^\top = U_{\alpha}^\top X_n$. Since $X_n\sim \Ncal(0_n,\sigma_0^2 R_{\alpha_0})$, we have $Z_n(\alpha)\sim \Ncal(0_n,I_n)$ for any $\alpha>0$.

Then it follows that for $n$ sufficiently large,
\begin{align}\label{eq:theta.diff1.2}
\sqrt{n}\left(\widetilde \theta_{\overline \alpha_n}^{(1)} - \widetilde \theta_{\alpha_0}^{(1)}\right) &= \frac{\sqrt{n}}{n-p} X_n^\top \left(\overline\alpha_n^{2\nu}R_{\overline\alpha_n}^{-1} - \alpha_0^{2\nu}R_{\alpha_0}^{-1}\right) X_n \nonumber \\
&= \frac{\sqrt{n}\theta_0}{n-p} X_n^\top U_{\overline\alpha_n} \left(\Lambda_{\overline\alpha_n}^{-1}-I_n\right) U_{\overline\alpha_n}^\top X_n \nonumber \\
&= \frac{\sqrt{n}\theta_0}{n-p} \sum_{i=1}^n \left|\lambda_{i,n}(\overline\alpha_n)^{-1}-1\right| Z_{i,n}(\overline\alpha_n)^2.
\end{align}
$\sqrt{n}/(n-p)\leq 2n^{-1/2}$ for all large $n$. Now we apply Lemma \ref{lem:LauMas00} and Lemma \ref{lem:weight.bound} to \eqref{eq:theta.diff1.2},  with $z=4\log^2 n$, $Z_i=Z_{i,n}(\overline\alpha_n)$, $w_i=w_i(\overline\alpha_n)=\left|\lambda_{i,n}(\overline\alpha_n)^{-1}-1\right|/\sqrt{n}$, to obtain that
\begin{align} \label{eq:theta.diff1.3}
& \pr \left(\sqrt{n} \left|\widetilde \theta_{\overline {\alpha}_n}^{(1)} - \widetilde \theta_{\alpha_0}^{(1)} \right| > \frac{\theta_0}{4} n^{-\tau} \right) \nonumber \\
\leq{}& \pr \Big( \frac{1}{\sqrt{n}} \sum_{i=1}^n \left|\lambda_{i,n}(\overline\alpha_n)^{-1}-1\right| Z_{i,n}(\overline\alpha_n)^2 \nonumber\\
& \quad > \|w(\overline\alpha_n)\|_1 + 4\|w(\overline\alpha_n)\|\log n + 8\|w(\overline\alpha_n)\|_{\infty}\log^2 n  \Big) \nonumber \\
\leq{}& \exp(-4\log^2 n).
\end{align}
This proves \eqref{eq:theta.diff.right.1}.

We combine \eqref{eq:theta.diff1.1}, \eqref{eq:theta.diff1.3} with \eqref{eq:theta.alpha.o3} from Lemma \ref{lem:theta3.bound.2ends} and \eqref{eq:theta.alpha0.2} from Lemma \ref{lem:theta2.bound.2ends} to obtain that for all sufficiently large $n$,
\begin{align}\label{eq:theta.diff1.5}
&\pr \left(\sqrt{n} \left(\widetilde \theta_{\overline {\alpha}_n} - \widetilde \theta_{\alpha_0} \right) > \frac{\theta_0}{2} n^{-\tau} \right) \nonumber\\
\leq{}& \pr \left(\sqrt{n} \left(\widetilde \theta_{\overline {\alpha}_n}^{(1)} - \widetilde \theta_{\alpha_0}^{(1)} + \widetilde \theta_{\overline\alpha_n}^{(3)} + \widetilde \theta_{\alpha_0}^{(2)} \right) > \frac{\theta_0}{2} n^{-\tau} \right) \nonumber\\
\leq{}& \pr \left(\sqrt{n} \left(\widetilde \theta_{\overline {\alpha}_n}^{(1)} - \widetilde \theta_{\alpha_0}^{(1)} \right) > \frac{\theta_0}{4} n^{-\tau} \right)  + \pr \left(\sqrt{n}  \widetilde \theta_{\overline\alpha_n}^{(3)} > \frac{\theta_0}{16} n^{-\tau} \right) +  \pr \left(\sqrt{n}  \widetilde \theta_{\alpha_0}^{(2)} > \frac{\theta_0}{16} n^{-\tau} \right) \nonumber\\
\leq{}& \exp(-4\log^2 n) + 2\exp(-16\log^2 n) < 2\exp(-4\log^2 n),
\end{align}
which proves \eqref{eq:theta.diff.right}.

\vspace{5mm}

\noindent \underline{Proof of \eqref{eq:theta.diff.left} and \eqref{eq:theta.diff.left.1} (for the case of $\underline \alpha_n=n^{\underkappa}$).}
\vspace{2mm}

The proof for the case of $\underline \alpha_n=n^{\underkappa}$ is similar to the previous case of $\overline \alpha_n=n^{\overkappa}$. First by Lemma \ref{lem:theta1.monotone}, we have $\widetilde \theta_{\underline \alpha_n} \leq \widetilde \theta_{\alpha_0}$ and $\widetilde \theta_{\underline \alpha_n}^{(1)} \leq \widetilde \theta_{\alpha_0}^{(1)}$ for large $n$. By the decomposition of $\widetilde \theta_{\alpha}$ in \eqref{tildetheta2.1} of Lemma \ref{lem:REML.decomp} and the fact that $0\leq \widetilde \theta_{\alpha}^{(2)}\leq \widetilde \theta_{\alpha}^{(1)}, \widetilde \theta_{\alpha}^{(3)}\geq 0$, we can rewrite the difference inside the probability in \eqref{eq:theta.diff.left} as
\begin{align}\label{eq:theta.diff2.1}
0 &\leq \widetilde \theta_{\alpha_0} - \widetilde \theta_{\underline \alpha_n} \nonumber \\
&= \widetilde \theta_{\alpha_0}^{(1)}  - \widetilde \theta_{\alpha_0}^{(2)} + \widetilde \theta_{\alpha_0}^{(3)} -  \widetilde \theta_{\underline \alpha_n}^{(1)} + \widetilde \theta_{\underline \alpha_n}^{(2)} - \widetilde \theta_{\underline \alpha_n}^{(3)} \nonumber\\
&\leq \widetilde \theta_{\alpha_0}^{(1)} -  \widetilde \theta_{\underline \alpha_n}^{(1)} + \widetilde \theta_{\alpha_0}^{(3)} + \widetilde \theta_{\underline \alpha_n}^{(2)}.
\end{align}
Using Lemma \ref{lem:LauMas00} and Lemma \ref{lem:weight.bound} with $z=4\log^2 n$, $Z_i=Z_{i,n}(\underline\alpha_n)$, $w_i=w_i(\underline\alpha_n)=\left|\lambda_{i,n}(\underline\alpha_n)^{-1}-1\right|/\sqrt{n}$, we have that
\begin{align} \label{eq:theta.diff2.2}
& \pr \left(\sqrt{n} \left|\widetilde \theta_{\underline {\alpha}_n}^{(1)} - \widetilde \theta_{\alpha_0}^{(1)} \right| > \frac{\theta_0}{2} n^{-\tau} \right) \nonumber \\
\leq{}& \pr \Big( \frac{1}{\sqrt{n}} \sum_{i=1}^n \left|\lambda_{i,n}(\underline\alpha_n)^{-1}-1\right| Z_{i,n}(\overline\alpha_n)^2 \nonumber\\
& \quad > \|w(\underline\alpha_n)\|_1 + 4\|w(\underline\alpha_n)\|\log n + 8\|w(\underline\alpha_n)\|_{\infty}\log^2 n  \Big) \nonumber \\
\leq{}& \exp(-4\log^2 n).
\end{align}
This proves \eqref{eq:theta.diff.left.1}.

We then combine \eqref{eq:theta.diff2.1}, \eqref{eq:theta.diff2.2} with \eqref{eq:theta.alpha0.3} from Lemma \ref{lem:theta3.bound.2ends} and \eqref{eq:theta.alpha.u2} from Lemma \ref{lem:theta2.bound.2ends}  to obtain that for all sufficiently large $n$,
\begin{align}\label{eq:theta.diff2.4}
& \pr \left(\sqrt{n} \left(\widetilde \theta_{\alpha_0} - \widetilde \theta_{\underline\alpha_n} \right) > \frac{\theta_0}{2} n^{-\tau} \right) \nonumber\\
\leq{}& \pr \left(\sqrt{n} \left( \widetilde \theta_{\alpha_0}^{(1)} - \widetilde \theta_{\underline\alpha_n}^{(1)} + \widetilde \theta_{\alpha_0}^{(3)}  +  \widetilde \theta_{\underline\alpha_n}^{(2)} \right) > \frac{\theta_0}{2} n^{-\tau} \right) \nonumber\\
\leq{}& \pr \left(\sqrt{n} \left(\widetilde \theta_{\alpha_0}^{(1)} - \widetilde \theta_{\underline\alpha_n}^{(1)} \right) > \frac{\theta_0}{4} n^{-\tau} \right) + \pr \left(\sqrt{n}  \widetilde \theta_{\alpha_0}^{(3)} > \frac{\theta_0}{16} n^{-\tau} \right) + \pr \left(\sqrt{n}  \widetilde \theta_{\underline\alpha_n}^{(2)} > \frac{\theta_0}{16} n^{-\tau} \right) \nonumber\\
\leq{}& \exp(-4\log^2 n) + 2\exp(-16\log^2 n) < 2\exp(-4\log^2 n),
\end{align}
which proves \eqref{eq:theta.diff.left}.
\end{proof}

We restate and strengthen the uniform convergence in Part (ii) of Lemma \ref{lem:dimension reduction} in the main text as the following lemma. The inequality in Part (ii) of Lemma \ref{lem:dimension reduction} is implied by \eqref{theta.uniformbound1} below.
\begin{lemma}[Uniform Convergence of $\widetilde \theta_{\alpha}$ in Lemma \ref{lem:dimension reduction} in the Main Text] \label{lem:sup.theta1}
Suppose that $d\in\{1,2,3\}$. For $\widetilde\theta_{\alpha}$ defined in \eqref{tildetheta2} and $N_3'$ defined in Lemma \ref{lem:theta1.bound.2ends}, for all $n>N_3'$,
\begin{align}
&\pr \left(\sup_{\alpha\in [\underline\alpha_n,\overline\alpha_n]} \sqrt{n} \left|\widetilde \theta_{\alpha} - \widetilde \theta_{\alpha_0} \right| \leq \frac{\theta_0}{2} n^{-\tau} \right) \geq 1- 4\exp(-4\log^2 n), \label{theta.uniformbound1} \\
&\pr \left(\sup_{\alpha\in [\underline\alpha_n,\overline\alpha_n]} \sqrt{n} \left|\widetilde \theta_{\alpha}^{(1)} - \widetilde \theta_{\alpha_0}^{(1)} \right| \leq \frac{\theta_0}{4} n^{-\tau} \right) \geq 1- 2\exp(-4\log^2 n), \label{theta.uniformbound1.1}
\end{align}
where $\tau, \underline\alpha_n ,\overline\alpha_n$ are as defined the same as in \eqref{eq:2kappa.re}.
\end{lemma}

\begin{proof}[Proof of Lemma \ref{lem:sup.theta1}]
From Lemma \ref{lem:theta1.monotone}, we have that $\widetilde\theta_{\alpha}$ and $\widetilde\theta_{\alpha}^{(1)}$ are both non-decreasing in $\alpha$. Therefore,
\begin{align*}
& \sup_{\alpha \in[\underline\alpha_n, \alpha_0]} \left|\widetilde \theta_{\alpha} - \widetilde \theta_{\alpha_0}\right| = \widetilde \theta_{\alpha_0} - \widetilde \theta_{\underline\alpha_n} , \\
& \sup_{\alpha \in[\alpha_0, \overline \alpha_n]} \left|\widetilde \theta_{\alpha} - \widetilde \theta_{\alpha_0}\right| = \widetilde \theta_{\overline \alpha_n} - \widetilde \theta_{\alpha_0} , \\
& \sup_{\alpha \in[\underline\alpha_n, \overline \alpha_n]} \left|\widetilde \theta_{\alpha} - \widetilde \theta_{\alpha_0}\right| = \max\left( \widetilde \theta_{\alpha_0}- \widetilde \theta_{\underline \alpha_n}, \widetilde \theta_{\overline \alpha_n} - \widetilde \theta_{\alpha_0} \right), \\
& \sup_{\alpha \in[\underline\alpha_n, \overline \alpha_n]} \left|\widetilde \theta_{\alpha}^{(1)} - \widetilde \theta_{\alpha_0}^{(1)} \right| = \max\left( \widetilde \theta_{\alpha_0}^{(1)}- \widetilde \theta_{\underline \alpha_n}^{(1)}, \widetilde \theta_{\overline \alpha_n}^{(1)} - \widetilde \theta_{\alpha_0}^{(1)} \right).
\end{align*}
We can then combine \eqref{eq:theta.diff.right} and \eqref{eq:theta.diff.left} from Lemma \ref{lem:theta1.bound.2ends} to obtain that for all $n>N_3'$,
\begin{align}
& \pr \left(\sup_{\alpha \in[\underline\alpha_n, \overline \alpha_n]}\sqrt{n} \left|\widetilde \theta_{\alpha}- \widetilde \theta_{\alpha_0} \right| > \frac{\theta_0}{2} n^{-\tau} \right) \nonumber \\
={} & \pr \Bigg(\sqrt{n} \left(\widetilde \theta_{\alpha_0}- \widetilde \theta_{\underline \alpha_n}\right) > \frac{\theta_0}{2} n^{-\tau}  \text{ or }  \sqrt{n} \left(\widetilde \theta_{\underline \alpha_n} - \widetilde \theta_{\alpha_0}\right) > \frac{\theta_0}{2} n^{-\tau} \Bigg) \nonumber \\
\leq{}&  \pr \left(\sqrt{n} \left(\widetilde \theta_{\alpha_0} - \widetilde \theta_{\underline \alpha_n} \right) > \frac{\theta_0}{2} n^{-\tau} \right) + \pr \left(\sqrt{n} \left(\widetilde \theta_{\overline \alpha_n} - \widetilde \theta_{\alpha_0}\right) > \frac{\theta_0}{2} n^{-\tau} \right) \nonumber \\
\leq{}& 4 \ee^{-4\log^2 n} . \nonumber
\end{align}
The inequality \eqref{theta.uniformbound1.1} follows similarly using a union bound from \eqref{eq:theta.diff.left.1} and \eqref{eq:theta.diff.right.1} in Lemma \ref{lem:theta1.bound.2ends}.
\end{proof}

\vspace{5mm}

In the next lemma, we prove the asymptotic normality of $\widetilde\theta_{\alpha}$ for a fixed $\alpha>0$ in Theorem \ref{thm:bvm1:theta} in the main text. We also bound the tail probability of $\big|\widetilde \theta_{\alpha_0}-\theta_0\big|$.
\begin{lemma}\label{lem:theta.alpha0}
For $d\in\ZZ^+$ and $\nu\in \RR^+$, there exists a large integer $N_4'$ that only depends on $\nu,d,T,\beta_0,\theta_0,\alpha_0$ and the $\Wcal_2^{\nu+d/2}(\Scal)$ norms of $\bbm_1(\cdot),\ldots,\bbm_p(\cdot)$, such that for all $n>N_4'$,
\begin{align} \label{theta.alpha0}
&\pr \left(\sqrt{n} \left| \widetilde \theta_{\alpha_0} - \theta_0 \right| \leq 5 \theta_0 \log n \right) \geq 1 - 3\exp(-4\log^2 n).
\end{align}
Furthermore, for $d\in\{1,2,3\}$ and $\nu\in \RR^+$, for any fixed $\alpha>0$, as $n\to\infty$,
\begin{align}\label{eq:REML.normal}
& \sqrt{n}\left(\widetilde\theta_{\alpha}-\theta_0\right) \overset{\Dcal}{\rightarrow} \mathcal{N}\left(0,2\theta_0^2\right).
\end{align}
\end{lemma}

\begin{proof}[Proof of Lemma \ref{lem:theta.alpha0}]
Let $W_n = (W_{1,n},\ldots,W_{n,n})^\top = \sigma_0^{-1} R_{\alpha_0}^{-1/2} X_n \sim \Ncal(0_n,I_n)$. Using the decomposition in \eqref{tildetheta2.1}, we have
\begin{align*}
\sqrt{n} \left( \widetilde \theta_{\alpha_0} - \theta_0 \right) = \sqrt{n} \left( \widetilde \theta_{\alpha_0}^{(1)} - \theta_0 \right) - \sqrt{n} \widetilde\theta_{\alpha_0}^{(2)} + \sqrt{n} \widetilde\theta_{\alpha_0}^{(3)}.
\end{align*}
Since $\widetilde\theta_{\alpha_0}^{(1)}=\alpha_0^{2\nu} X_n^\top R_{\alpha_0}^{-1} X_n/(n-p) = \theta_0 W_n^\top W_n/(n-p)$, by the central limit theorem for $\chi^2_1$ random variables, we have that as $n\to\infty$,
\begin{align}\label{eq:REML0.normal1}
\sqrt{n}\left(\widetilde\theta_{\alpha_0}^{(1)}-\theta_0\right)=\sqrt{n}\theta_0\left(\frac{W_n^\top W_n}{n-p} - 1 \right) \overset{\Dcal}{\rightarrow} \mathcal{N}(0, 2\theta_0^2).
\end{align}
The first inequality in Lemma \ref{lem:LauMas00} with $Z_i=W_{i,n}$, $w_i=1$ for $i=1,\ldots,n$ and $z=4\log^2 n$ implies that for all sufficiently large $n$,
\begin{align} \label{eq:theta1.1.alpha0}
& \pr \left(\sqrt{n}\left(\widetilde\theta_{\alpha_0}^{(1)}-\theta_0\right) > 4.5 \theta_0\log n \right) = \pr \left( W_n^\top W_n > n - p + \frac{4.5(n-p)\log n}{\sqrt{n}} \right) \nonumber \\
\leq{}&  \pr \left( W_n^\top W_n > n + 4\sqrt{n}\log n + 8\log^2 n \right) \leq \exp(-4\log^2 n).
\end{align}
The second inequality in Lemma \ref{lem:LauMas00} with $Z_i=W_{i,n}$, $w_i=1$ for $i=1,\ldots,n$ and $z=4\log^2 n$ implies that for all sufficiently large $n$,
\begin{align} \label{eq:theta1.2.alpha0}
& \pr \left(\sqrt{n}\left(\widetilde\theta_{\alpha_0}^{(1)}-\theta_0\right) < -  4.5\theta_0\log n \right) = \pr \left( W_n^\top W_n < n - p - \frac{4.5(n-p)\log n}{\sqrt{n}} \right) \nonumber \\
\leq{}&  \pr \left( W_n^\top W_n < n-  4\sqrt{n}\log n  \right) \leq \exp(-4\log^2 n).
\end{align}
We combine \eqref{eq:theta1.1.alpha0}, \eqref{eq:theta1.2.alpha0}, \eqref{eq:theta.alpha0.2} from Lemma \ref{lem:theta2.bound.2ends} and \eqref{eq:theta.alpha0.3} from Lemma \ref{lem:theta3.bound.2ends} to obtain that for all sufficiently large $n$,
\begin{align*}
& \pr \left(  \sqrt{n} \left| \widetilde \theta_{\alpha_0} - \theta_0 \right| > 5 \theta_0 \log n \right) \\
\leq{}& \pr \left( \sqrt{n} \left| \widetilde \theta_{\alpha_0}^{(1)} - \theta_0 \right| > 4.5 \theta_0 \log n \right) + \pr \left(\sqrt{n} \widetilde\theta_{\alpha_0}^{(2)} > \frac{\theta_0}{4} \log n \right) + \pr \left(\sqrt{n} \widetilde\theta_{\alpha_0}^{(3)} > \frac{\theta_0}{4} \log n \right) \\
\leq{}& \pr \left(\sqrt{n}\left(\widetilde\theta_{\alpha_0}^{(1)}-\theta_0\right) > 4.5 \theta_0\log n \right) + \pr \left(\sqrt{n}\left(\widetilde\theta_{\alpha_0}^{(1)}-\theta_0\right) < -4.5 \theta_0 \log n\right) \\
& ~~ +  \pr \left(\sqrt{n} \widetilde\theta_{\alpha_0}^{(2)} > \theta_0 n^{-\tau}/16 \right) +  \pr \left(\sqrt{n} \widetilde\theta_{\alpha_0}^{(3)} > \theta_0 n^{-\tau}/16 \right)  \\
\leq{}& 2\exp(-4\log^2 n)  + 2\exp(-16\log^2 n) < 3\exp(-4\log^2 n),
\end{align*}
which has proved \eqref{theta.alpha0}.

Now for \eqref{eq:REML.normal}, we notice that \eqref{eq:theta.alpha0.2} from Lemma \ref{lem:theta2.bound.2ends} and \eqref{eq:theta.alpha0.3} from Lemma \ref{lem:theta3.bound.2ends} imply that both $\sqrt{n}\widetilde\theta_{\alpha_0}^{(2)}$ and $\sqrt{n}\widetilde\theta_{\alpha_0}^{(3)}$ converge to zero in $P_{(\beta_0,\sigma_0^2,\alpha_0)}$-probability as $n\to\infty$. Therefore, we combine this with \eqref{eq:REML0.normal1} and apply the Slutsky's theorem to obtain that as $n\to\infty$,
\begin{align}\label{eq:REML0.normal2}
& \sqrt{n}\left(\widetilde\theta_{\alpha_0}-\theta_0\right)= \sqrt{n}\left(\widetilde\theta_{\alpha_0}^{(1)}-\theta_0\right) - \sqrt{n}\widetilde\theta_{\alpha_0}^{(2)} + \sqrt{n}\widetilde\theta_{\alpha_0}^{(3)} \overset{\Dcal}{\rightarrow} \mathcal{N}(0, 2\theta_0^2).
\end{align}
Since $\alpha>0$ is fixed, it will be eventually covered by the interval $[\underline\alpha_n,\overline\alpha_n]$ as $n\to\infty$. Therefore, by Lemma \ref{lem:sup.theta1}, for any fixed $\alpha>0$, $\sqrt{n}\big|\widetilde \theta_{\alpha}-\widetilde \theta_{\alpha_0}\big|\to 0$ in $P_{(\beta_0,\sigma_0^2,\alpha_0)}$-probability as $n\to\infty$. We combine this with \eqref{eq:REML0.normal2} and apply the Slutsky's theorem again to conclude that
as $n\to\infty$,
\begin{align}\label{eq:REML.normal2}
& \sqrt{n}\left(\widetilde\theta_{\alpha}-\theta_0\right)= \sqrt{n}\left(\widetilde\theta_{\alpha_0}-\theta_0\right) + \sqrt{n}\left(\widetilde \theta_{\alpha}-\widetilde \theta_{\alpha_0}\right)  \overset{\Dcal}{\rightarrow} \mathcal{N}(0, 2\theta_0^2).
\end{align}
This completes the proof.
\end{proof}

\vspace{5mm}

\subsection{Auxiliary RKHS Theory} \label{supsec:rkhs}
In this subsection, we present some auxiliary technical results on the reproducing kernel Hilbert space (RKHS) of Mat\'ern kernels that are used to handle the regression functions $\bbm_1,\ldots,\bbm_p$. We define some concepts for a generic positive definite covariance function $K(\cdot,\cdot)$ on a fixed domain $\Scal=[0,T]^d$. Let $L_2(\Scal)$ be the space of square integrable functions on $\Scal$, and $\Ccal(\Scal)$ be the space of continuous functions on $\Scal$. We assume that $K(\cdot,\cdot)$ is symmetric with $K(s,t)=K(t,s)$ for any $s,t\in \Scal$. The reproducing kernel Hilbert space (RKHS) associated with $K$, denoted by $\Hcal_K$ (suppressing its dependence on the domain $\Scal$), can be defined to be the space endowed with an inner product $\langle\cdot,\cdot \rangle_{\Hcal_K}$ such that: (i) $K(s,\cdot)\in \Hcal_K$ for each $s\in \Scal$; (ii) reproducing property: for any $f\in \Hcal_K$, $\langle f, K(\cdot,s)\rangle_{\Hcal_K} = f(s)$ for all $s\in \Scal$ (see Definition 6.1 of \citet{RasWil06}).

For shift-invariant kernels (including the isotropic Mat\'ern in this paper), an alternative and equivalent definition of the RKHS norm is based on the spectral density of the kernel. Details can be found in \citet{Wen05}. Let $\imath=\sqrt{-1}$ and $\Fcal[f](\omega) = (2\pi)^{-d/2} \int_{\Scal} f(x) \ee^{- \imath x^\top \omega} \ud \omega$ for any $\omega\in \RR^d$. If $K(\cdot,\cdot)$ is a shift-invariant kernel on $\Scal$, with $\Phi(s-s')\equiv K(s,s')$ for any $s,s'\in \Scal$, then Theorem 10.12 of \citet{Wen05} has shown that the RKHS associated with $K$ can be written as
\begin{align}\label{eq:rkhs.specdef}
\Hcal_K &= \Big\{f\in L_2(\Scal)\cap \Ccal(\Scal) : \exists g\in L_2(\RR^d)\cap \Ccal(\RR^d) \text{ such that } g\big|_{\Scal}=f, \nonumber \\
&\|f\|_{\Hcal_K}^2 = \|g\|_{\Hcal_K}^2  = (2\pi)^{-d/2} \int_{\RR^d} \frac{|\Fcal[g](\omega)|^2}{\Fcal[\Phi](\omega)} \ud \omega <\infty\Big \},
\end{align}
where $g\big|_{\Scal}$ is the restriction of $g$ to the domain $\Scal$. For ease of notation, we suppress the dependence on $\Scal$ in the notation $\Hcal_K$.

In particular, for the isotropic Mat\'ern covariance function $\sigma^2 K_{\alpha,\nu}$ as defined in \eqref{eq:MaternCov} of the main text, we know that $\Fcal[\sigma^2 K_{\alpha,\nu}](\omega) = \frac{2^{d/2}\Gamma(\nu+d/2)}{\Gamma(\nu)} \frac{\sigma^2\alpha^{2\nu}}{(\alpha^2+\|\omega\|^2)^{\nu+d/2}}$. So the RKHS associated with $\sigma^2 K_{\alpha,\nu}$ can be written as
\begin{align}\label{eq:rkhs.matern}
& \Hcal_{\sigma^2 K_{\alpha,\nu}} = \Big\{f\in L_2(\Scal)\cap \Ccal(\Scal) : \exists g\in L_2(\RR^d)\cap \Ccal(\RR^d) \text{ such that } g\big|_{\Scal}=f, \nonumber\\
&\|f\|_{\Hcal_{\sigma^2 K_{\alpha,\nu}}}^2 =  \frac{\Gamma(\nu)}{2^{d}\pi^{d/2}\Gamma(\nu+d/2)\sigma^2\alpha^{2\nu}} \int_{\RR^d} (\alpha^2+\|\omega\|^2)^{\nu+d/2}|\Fcal[g](\omega)|^2 \ud \omega <\infty\Big \},
\end{align}

\vspace{2mm}

\begin{lemma}\label{lem:matern.sobolev}
(\citet{Wen05} Corollary 10.48) For any fixed $(\sigma^2,\alpha)\in \RR^+ \times \RR^+$, $d\in\ZZ^+$, $\nu\in\RR^+$, $\Hcal_{\sigma^2 K_{\alpha,\nu}}$ is norm equivalent to the Sobolev space $\Wcal_{2}^{\nu+d/2}(\Scal)$. In other words, there exist constants $0<c_1(\sigma,\alpha)\leq c_2(\sigma,\alpha)<\infty$, such that for any $f\in \Hcal_{\sigma^2 K_{\alpha,\nu}}$,
$$c_1(\sigma,\alpha) \|f\|_{\Wcal_{2}^{\nu+d/2}(\Scal)} \leq \|f\|_{\Hcal_{\sigma^2 K_{\alpha,\nu}}} \leq c_2(\sigma,\alpha) \|f\|_{\Wcal_{2}^{\nu+d/2}(\Scal)}.$$
\end{lemma}

\vspace{2mm}

\begin{lemma} \label{lem:rkhs.quadratic}
Suppose that $f\in \Hcal_K$ for a covariance function $K(\cdot,\cdot)$ defined on the fixed domain $\Scal=[0,T]^d$. Let $\Scal_n = \{s_1,\ldots,s_n\}$ be a set of distinct points in $\Scal$, $f_n = (f(s_1),\ldots,f(s_n))^\top$, and $K(\Scal_n,\Scal_n)$ be the matrix with $(i,j)$-entry equal to $K(s_i,s_j)$, for $i,j=1,\ldots,n$. Then
$f_n^\top K(\Scal_n,\Scal_n)^{-1} f_n \leq \|f\|_{\Hcal_K}^2$.
\end{lemma}

\begin{proof}[Proof of Lemma \ref{lem:rkhs.quadratic}]
We denote the $(i,j)$-entry of the matrix $K(\Scal_n,\Scal_n)^{-1}$ by $\big\{K(\Scal_n,\Scal_n)^{-1}\big\}_{ij}$, for $i,j=1,\ldots,n$. Let $K(\Scal_n,s)=(K(s_1,s),\ldots,K(s_n,s))^\top$ for any $s\in \Scal$. Because the function $K(s,\cdot)\in \Hcal_K$ for any $s\in \Scal$ by the definition of RKHS, we have that the function $f_n^\top K(\Scal_n,\Scal_n)^{-1} K(\Scal_n,\cdot)\in \Hcal_K$. For any $a=(a_1,\ldots,a_n)^\top \in \RR^n$, the RKHS norm of the function
$a^\top K(\Scal_n,\cdot)$ is
$$\left\|a^\top K(\Scal_n,\cdot)\right\|_{\Hcal_K}^2=\sum_{i=1}^n\sum_{j=1}^n a_ia_jK(s_i,s_j).$$
Therefore, the RKHS norm of $f_n^\top K(\Scal_n,\Scal_n)^{-1} K(\Scal_n,\cdot)$ is
\begin{align}\label{eq:fKK1}
&\quad~ \left\|f_n^\top K(\Scal_n,\Scal_n)^{-1} K(\Scal_n,\cdot)\right\|_{\Hcal_K}^2 \nonumber\\
&= \sum_{i=1}^n \sum_{j=1}^n \sum_{k=1}^n \sum_{l=1}^n \big\{K(\Scal_n,\Scal_n)^{-1}\big\}_{ij}  \big\{K(\Scal_n,\Scal_n)^{-1}\big\}_{kl}\cdot  f(s_i)f(s_k) K(s_j,s_l) \nonumber\\
&= \sum_{i=1}^n \sum_{k=1}^n f(s_i)f(s_k)  \left\{\sum_{j=1}^n \sum_{l=1}^n \big\{K(\Scal_n,\Scal_n)^{-1}\big\}_{ij}  \big\{K(\Scal_n,\Scal_n)^{-1}\big\}_{kl} K(s_j,s_l)\right\} \nonumber\\
&\stackrel{(i)}{=} \sum_{i=1}^n \sum_{k=1}^n f(s_i)f(s_k) \big\{K(\Scal_n,\Scal_n)^{-1}\big\}_{ik} = f_n^\top K(\Scal_n,\Scal_n)^{-1} f_n,
\end{align}
where the equality (i) follows from the expression of $(i,k)$-entry in the matrix multiplication $K(\Scal_n,\Scal_n)^{-1} K(\Scal_n,\Scal_n) K(\Scal_n,\Scal_n)^{-1}$.

On the other hand, using the RKHS inner product and the fact that $f\in \Hcal_K$, we have
\begin{align} \label{eq:fKK2}
&\quad ~ f_n^\top K(\Scal_n,\Scal_n)^{-1} f_n = \sum_{i=1}^n \sum_{k=1}^n f(s_i)f(s_k) \big\{K(\Scal_n,\Scal_n)^{-1}\big\}_{ik}  \nonumber \\
&= \left\langle \sum_{i=1}^n\sum_{k=1}^n f(s_k)\big\{K(\Scal_n,\Scal_n)^{-1}\big\}_{ik} K(s_i,\cdot), ~~ f(\cdot) \right \rangle_{\Hcal_K} \nonumber \\
&\stackrel{(i)}{\leq} \left\|\sum_{i=1}^n\sum_{k=1}^n f(s_k)\big\{K(\Scal_n,\Scal_n)^{-1}\big\}_{ik} K(s_i,\cdot) \right\|_{\Hcal_k} \cdot \|f\|_{\Hcal_K} \nonumber \\
&= \left\|f_n^\top K(\Scal_n,\Scal_n)^{-1} K(\Scal_n,\cdot)\right\|_{\Hcal_k} \cdot \|f\|_{\Hcal_K} \nonumber \\
&\stackrel{(ii)}{=} \sqrt{f_n^\top K(\Scal_n,\Scal_n)^{-1} f_n} \cdot \|f\|_{\Hcal_K} ,
\end{align}
where the inequality (i) follows from $\langle f_1,f_2 \rangle_{\Hcal_K} \leq \|f_1\|_{\Hcal_K} \|f_2\|_{\Hcal_K}$ for any $f_1,f_2\in \Hcal_K$, and the equality (ii) follows from \eqref{eq:fKK1}. Therefore, we conclude from the left-hand side and the right-hand side of \eqref{eq:fKK2} that $\sqrt{f_n^\top K(\Scal_n,\Scal_n)^{-1} f_n}  \leq \|f\|_{\Hcal_K}$, or
$f_n^\top K(\Scal_n,\Scal_n)^{-1} f_n  \leq \|f\|_{\Hcal_K}^2$.
\end{proof}
\vspace{5mm}

\begin{lemma}\label{lem:rkhs.ordering}
For any $f\in \Wcal_2^{\nu+d/2}(\Scal)$, any $d\in\ZZ^+$, $\nu\in\RR^+$, $\alpha\in \RR^+$,
\begin{align}\label{eq:rkhs12}
\|f\|_{\Hcal_{(\theta_0/\alpha^{2\nu}) K_{\alpha,\nu}}} \leq \max\left\{\left(\frac{\alpha}{\alpha_0}\right)^{\nu+d/2},1 \right\} \cdot \|f\|_{\Hcal_{\sigma_0^2 K_{\alpha_0,\nu}}} .
\end{align}
\end{lemma}

\begin{proof}[Proof of Lemma \ref{lem:rkhs.ordering}]
From \eqref{eq:rkhs.matern}, one can see that for any function $f\in \Wcal_2^{\nu+d/2}(\Scal)$, for any $\alpha>0$,
\begin{align}
&\quad~ \|f\|_{\Hcal_{(\theta_0/\alpha^{2\nu})K_{\alpha,\nu}}}^2 \nonumber \\
&=  \frac{\Gamma(\nu)}{2^{d}\pi^{d/2}\Gamma(\nu+d/2)\theta_0} \int_{\RR^d} (\alpha^2+\|\omega\|^2)^{\nu+d/2}|\Fcal[f](\omega)|^2 \ud \omega  \nonumber \\
& \leq \sup_{\omega\in \RR^d} \left(\frac{\alpha^2 + \|\omega\|^2}{\alpha_0^2 + \|\omega\|^2} \right)^{\nu+d/2} \cdot  \frac{\Gamma(\nu)}{2^{d}\pi^{d/2}\Gamma(\nu+d/2)\theta_0} \int_{\RR^d} (\alpha_0^2+\|\omega\|^2)^{\nu+d/2}|\Fcal[f](\omega)|^2 \ud \omega  \nonumber \\
& \leq \sup_{\omega\in \RR^d} \left(\frac{\alpha^2 + \|\omega\|^2}{\alpha_0^2 + \|\omega\|^2} \right)^{\nu+d/2} \cdot \|f\|_{\Hcal_{(\theta_0/\alpha_0^{2\nu})K_{\alpha_0,\nu}}}^2  \nonumber \\
& \leq \max\left\{\left(\frac{\alpha}{\alpha_0}\right)^{2(\nu+d/2)},1 \right\} \cdot \|f\|_{\Hcal_{\sigma_0^2 K_{\alpha_0,\nu}}}^2 . \nonumber
\end{align}
Hence the conclusion follows.
\end{proof}

\vspace{5mm}

\subsection{Auxiliary Results on Spectral Analysis of Mat\'ern Covariance Functions} \label{supsec:spectral}
In this subsection, we present a series of technical lemmas on the spectral analysis of Mat\'ern covariance functions. For a detailed background theory on the equivalence of Gaussian measures on Hilbert spaces, we refer the interested readers to Chapter III of \citet{IbrRoz78} and Chapter 4 of \citet{Stein99a}. Our Lemmas \ref{lem:fxi}, \ref{lem:qngn}, and \ref{lem:zetabound} below will use similar techniques in Section 4 of \citet{WangLoh11}. The key difference is that the theory of \citet{WangLoh11} only works for a \textit{fixed and known} value of range parameter $\alpha$. As a result, all those probabilistic error bounds in \citet{WangLoh11} do not depend on $\alpha$ and cannot be directly applied to varying values of $\alpha$ drawn from a posterior distribution. In contrast, our lemmas below will make all error bounds explicitly dependent on the value of $\alpha$. This is made possible by using our new results on Mat\'ern spectral densities in Lemma \ref{lem:specden_lambda}, which is not shown in \citet{WangLoh11}. These lemmas will be used for showing the \textit{uniform convergence} of $\big|\widetilde\theta_{\alpha}-\widetilde\theta_{\alpha_0}\big|$ over a large range of values of $\alpha$ as proved in Lemma \ref{lem:sup.theta1}, which is fundamental for deriving the limiting joint posterior distribution of $(\theta,\alpha)$.

We first consider the case when $\sigma^2\alpha^{2\nu}=\theta_0=\sigma_0^2\alpha_0^{2\nu}$. If $d\in\{1,2,3\}$, then the two Gaussian measures $\gp(0,\sigma^2K_{\alpha,\nu})$ and $\gp(0,\sigma_0^2K_{\alpha_0,\nu})$ are equivalent (\citet{Zhang04}). For a generic $\alpha>0$, we consider the two Mat\'ern covariance matrices $\sigma_0^2R_{\alpha_0}$ and $\sigma^2 R_{\alpha}$. We have the following lemma.

\begin{lemma}\label{lem:URU}
For any pair $(\sigma,\alpha)\in \RR^+ \times \RR^+$ that satisfies $\sigma^2\alpha^{2\nu}=\theta_0=\sigma_0^2\alpha_0^{2\nu}$, for all $d\in \ZZ^+$ and $\nu\in \RR^+$, there exists an $n\times n$ invertible matrix $U_{\alpha}$ that depends on $\alpha,\alpha_0,\sigma_0^2,\nu$, such that
\begin{align}\label{diagonalize}
& \sigma_0^2 U_{\alpha}^\top R_{\alpha_0} U_{\alpha}= I_n,\qquad \sigma^2 U_{\alpha}^\top R_{\alpha} U_{\alpha}= \diag\{\lambda_{k,n}(\alpha):k=1,\ldots,n\} \equiv \Lambda_{\alpha},
\end{align}
where $I_n$ is the $n\times n$ identity matrix, and $\{\lambda_{k,n}(\alpha),k=1,\ldots,n\}$ are the positive diagonal entries of the diagonal matrix $\Lambda_{\alpha}$.
\end{lemma}

\begin{proof}[Proof of Lemma \ref{lem:URU}]
The existence of such an invertible $U_{\alpha}$ is guaranteed by Theorem 7.6.4 and Corollary 7.6.5 on page 465--466 of \citet{HorJoh85}. For completeness, we directly prove the existence of such an invertible matrix in the following general claim.
\vspace{2mm}

\noindent \underline{\bf Claim:} Suppose that $A$ and $B$ are two generic $n\times n$ symmetric positive definite matrices. Then there always exists an invertible matrix $U$, such that
\begin{align} \label{eq:UAU}
& U^\top A U = I_n,\qquad U^\top B U = \Lambda,
\end{align}
where $I_n$ is the $n\times n$ identity matrix and $\Lambda$ is an $n\times n$ diagonal matrix whose diagonal entries are all positive.
\vspace{2mm}

\noindent \underline{Proof of the Claim:}
Since $B$ is symmetric positive definite, let $B=LL^\top$ be the Cholesky decomposition of $B$, where $L$ is an $n\times n$ lower triangular matrix with all positive diagonal entries and $L$ is invertible. Let $G=L^{-1} A L^{-\top}$. Then obviously $G$ is also a symmetric positive definite matrix with $G^\top=G$. Suppose that $G$ has the spectral decomposition $G=PDP^{-1}$ where $P$ is an $n\times n$ orthogonal matrix ($P^{-1}=P^\top$) and $D$ is a $n\times n$ diagonal matrix whose diagonal entries are all eigenvalues of $G$ and they are all positive. Then $P^\top G P = D$. We let $U=L^{-\top} P D^{-1/2}$. It follows that
\begin{align*}
U^\top A U &= D^{-1/2} P^\top L^{-1} A L^{-\top} P D^{-1/2} \\
&= D^{-1/2} P^\top G P D^{-1/2} = D^{-1/2} D D^{-1/2} = I_n,\\
U^\top B U &= D^{-1/2} P^\top L^{-1} B L^{-\top} P D^{-1/2} \\
&= D^{-1/2} P^\top L^{-1} LL^\top L^{-\top} P D^{-1/2} = D^{-1/2} P^\top P  D^{-1/2} = D^{-1}.
\end{align*}
We set $\Lambda=D^{-1}$ which is an $n\times n$ diagonal matrix whose diagonal entries are all positive. This proves the claim.
\vspace{3mm}

Based on the claim, if we set $A=\sigma_0^2 R_{\alpha_0}$ and $B=\sigma^2 R_{\alpha}$, then we can find an invertible matrix $U$ such that \eqref{eq:UAU} holds. Because $\sigma^2\alpha^{2\nu}=\theta_0=\sigma_0^2\alpha_0^{2\nu}$, and $\sigma_0^2,\alpha_0,\nu$ are assumed to be fixed numbers, we can see that $U$ only changes with $\alpha$ and we can write it as $U_{\alpha}$. Similarly, we write $\Lambda_{\alpha}$ to highlight its dependence on $\alpha$. Correspondingly, we have $\sigma_0^2 U_{\alpha}^\top R_{\alpha_0} U_{\alpha}= I_n$ and $\sigma^2 U_{\alpha}^\top R_{\alpha} U_{\alpha}= \diag\{\lambda_{k,n}(\alpha):k=1,\ldots,n\} \equiv \Lambda_{\alpha}$. This proves Lemma \ref{lem:URU}.
\end{proof}

\vspace{5mm}

Let $\imath=\sqrt{-1}$. For $\omega \in \RR^d$, let
\begin{align}\label{f.specden}
f_{\sigma,\alpha}(\omega) &= \frac{1}{(2\pi)^d}\int_{\RR^d} \ee^{-\imath \omega^\top x} \sigma^2 K_{\alpha,\nu}(x) \ud x \nonumber \\
&= \frac{\Gamma(\nu+d/2)}{\Gamma(\nu)}\cdot \frac{\sigma^2\alpha^{2\nu}}{\pi^{d/2}\left(\alpha^2+\|\omega\|^2\right)^{\nu+d/2}},
\end{align}
be the isotropic spectral density of the Gaussian process with isotropic Mat\'ern covariance function defined in \eqref{eq:MaternCov} of the main text. For any given pair $(\sigma,\alpha)$, let $\|\psi\|_{f_{\sigma,\alpha}}^2 = \langle \psi,\psi\rangle_{f_{\sigma,\alpha}} =\int_{\RR^d} |\psi(\omega)|^2 f_{\sigma,\alpha}(\omega)\ud \omega$ be the norm of a generic function $\psi$ in the Hilbert space $L_2(f_{\sigma,\alpha})$, with inner product $\langle \psi_1,\psi_2\rangle_{f_{\sigma,\alpha}} =\int_{\RR^d} \psi_1(\omega) \overline{\psi_2(\omega)} f_{\sigma,\alpha}(\omega)\ud \omega$ for any $\psi_1,\psi_2\in L_2(f_{\sigma,\alpha})$.

According to the spectral analysis in Section 4 of \citet{WangLoh11}, using the same notation as theirs, for any given pair $(\sigma,\alpha)$ that satisfies $\sigma^2\alpha^{2\nu}=\theta_0=\sigma_0^2\alpha_0^{2\nu}$, there exist orthonormal basis functions $\psi_1,\ldots,\psi_n \in L_2(f_{\sigma_0,\alpha_0})$ such that for any $j,k\in \{1,\ldots,n\}$,
\begin{align}\label{wangloh.eq13}
& \langle \psi_j,\psi_k \rangle_{f_{\sigma_0,\alpha_0}} =  \Ical(j=k), \qquad \langle \psi_j,\psi_k \rangle_{f_{\sigma,\alpha}} =  \lambda_{j,n}(\alpha) \Ical(j=k),
\end{align}
where $\Ical(\cdot)$ is the indicator function.

We prove the following lemma for the spectral density $f_{\sigma,\alpha}$ and the sequence $\{\lambda_{k,n}(\alpha),k=1,\ldots,n\}$.

\begin{lemma}\label{lem:specden_lambda}
Suppose that $d\in \ZZ^+$ and $\nu\in \RR^+$. For any pair $(\sigma,\alpha)\in \RR^+ \times \RR^+$ that satisfies $\sigma^2\alpha^{2\nu}=\theta_0=\sigma_0^2\alpha_0^{2\nu}$, and for all $\omega \in \RR^d$, the following relations hold:
\begin{align}
& \min\left\{\left(\frac{\alpha_0}{\alpha}\right)^{2\nu+d},1\right\} \leq \frac{f_{\sigma,\alpha}(\omega)}{f_{\sigma_0,\alpha_0}(\omega)} \leq \max \left\{\left(\frac{\alpha_0}{\alpha}\right)^{2\nu+d},1\right\}, \label{ratio.ff1} \\
& \left|\frac{f_{\sigma,\alpha}(\omega)}{f_{\sigma_0,\alpha_0}(\omega)} -1 \right| \leq \frac{(2\nu+d)\max(\alpha_0^2,\alpha^2) \max\left(\alpha_0^{2\nu+d-2},\alpha^{2\nu+d-2}\right)}{\alpha^{2\nu+d-2}(\alpha^2+\|\omega\|^2)}, \label{ratio.ff2} \\
& \lambda_{k,n}(\alpha) \leq \max \left\{\left(\frac{\alpha_0}{\alpha}\right)^{2\nu+d},1\right\}, \label{lambda.upper1} \\
& \lambda_{k,n}(\alpha) \geq \min\left\{\left(\frac{\alpha_0}{\alpha}\right)^{2\nu+d},1\right\}, \label{lambda.lower1}
\end{align}
for all $k=1,\ldots,n$.
\end{lemma}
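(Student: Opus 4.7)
The plan is to reduce all four claims to an explicit pointwise analysis of the ratio $f_{\sigma,\alpha}/f_{\sigma_0,\alpha_0}$. Substituting the closed form \eqref{f.specden} and using the compatibility constraint $\sigma^2\alpha^{2\nu}=\sigma_0^2\alpha_0^{2\nu}=\theta_0$, the microergodic factor and all Gamma and $\pi^{d/2}$ constants cancel, leaving
\[
\frac{f_{\sigma,\alpha}(\omega)}{f_{\sigma_0,\alpha_0}(\omega)} \;=\; \left(\frac{\alpha_0^2+\|\omega\|^2}{\alpha^2+\|\omega\|^2}\right)^{\nu+d/2}.
\]
Once the pointwise bounds \eqref{ratio.ff1} and \eqref{ratio.ff2} are established for this ratio, the eigenvalue bounds \eqref{lambda.upper1}--\eqref{lambda.lower1} will follow immediately from the integral identity $\lambda_{k,n}(\alpha)=\int_{\RR^d}|\psi_k(\omega)|^2 f_{\sigma,\alpha}(\omega)\ud\omega$ provided by \eqref{wangloh.eq13}, combined with the normalization $\int|\psi_k|^2 f_{\sigma_0,\alpha_0}\ud\omega = 1$.

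For \eqref{ratio.ff1}, set $x=\|\omega\|^2\ge 0$ and study the scalar function $h(x)=(\alpha_0^2+x)/(\alpha^2+x)$. A direct differentiation gives $h'(x)=(\alpha^2-\alpha_0^2)/(\alpha^2+x)^2$, which has a single sign, so $h$ is monotone with endpoints $h(0)=\alpha_0^2/\alpha^2$ and $h(+\infty)=1$. Hence $h(x)$ always lies between $1$ and $\alpha_0^2/\alpha^2$; raising to the power $\nu+d/2$ produces the claimed two-sided envelope in terms of $\min\{(\alpha_0/\alpha)^{2\nu+d},1\}$ and $\max\{(\alpha_0/\alpha)^{2\nu+d},1\}$.

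For \eqref{ratio.ff2}, let $u=\alpha^2+\|\omega\|^2$ and $v=\alpha_0^2+\|\omega\|^2$, and assume without loss of generality $\alpha_0\ge\alpha$, so that $v\ge u$. The mean value theorem applied to $t\mapsto t^{\nu+d/2}$ on $[u,v]$ gives $(v^{\nu+d/2}-u^{\nu+d/2})/u^{\nu+d/2} = (\nu+d/2)\,\xi^{\nu+d/2-1}(v-u)/u^{\nu+d/2}$ for some $\xi\in[u,v]$, and I will split into two regimes. If $\nu+d/2\ge 1$, I use $\xi^{\nu+d/2-1}\le v^{\nu+d/2-1}$ together with $v/u\le \alpha_0^2/\alpha^2$ from the previous step to extract the factor $(\alpha_0/\alpha)^{2\nu+d-2}$; since $2\nu+d-2\ge 0$ and $\alpha_0\ge\alpha$, this equals $\max(\alpha_0^{2\nu+d-2},\alpha^{2\nu+d-2})/\alpha^{2\nu+d-2}$. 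If $\nu+d/2<1$ (which can occur only when $d=1$ and $\nu<1/2$), I use the opposite monotonicity $\xi^{\nu+d/2-1}\le u^{\nu+d/2-1}$, so $\xi^{\nu+d/2-1}/u^{\nu+d/2}=1/u$ and no power of $\alpha_0/\alpha$ is introduced; since $2\nu+d-2<0$ and $\alpha_0\ge\alpha$ force $\max(\alpha_0^{2\nu+d-2},\alpha^{2\nu+d-2})=\alpha^{2\nu+d-2}$, the corresponding factor in the target bound is $1$, matching exactly. Combining these with $v-u=\alpha_0^2-\alpha^2\le\max(\alpha_0^2,\alpha^2)$ and absorbing the slack $2\nu+d=2(\nu+d/2)$ yields \eqref{ratio.ff2}; the symmetric case $\alpha>\alpha_0$ is handled by swapping the roles of $u$ and $v$.

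Finally, for \eqref{lambda.upper1}--\eqref{lambda.lower1}, multiply the two-sided pointwise bound \eqref{ratio.ff1} by the nonnegative weight $|\psi_k(\omega)|^2 f_{\sigma_0,\alpha_0}(\omega)$ and integrate over $\RR^d$; the unit $L_2(f_{\sigma_0,\alpha_0})$-norm of $\psi_k$ turns the constant ceiling and floor into the claimed bounds on $\lambda_{k,n}(\alpha)$. The main obstacle I anticipate is the bookkeeping in \eqref{ratio.ff2}: the specific combination $\max(\alpha_0^{2\nu+d-2},\alpha^{2\nu+d-2})/\alpha^{2\nu+d-2}$ is engineered precisely so that both signs of $\nu+d/2-1$ collapse into the same uniform expression, and verifying this requires selecting the correct monotone bound for $\xi^{\nu+d/2-1}$ in each regime, but introduces no genuinely new difficulty beyond careful case analysis.
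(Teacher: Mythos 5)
Your proposal is correct and follows essentially the same route as the paper: reduce to the explicit ratio $\bigl((\alpha_0^2+\|\omega\|^2)/(\alpha^2+\|\omega\|^2)\bigr)^{\nu+d/2}$, get \eqref{ratio.ff1} from monotonicity in $\|\omega\|$, get \eqref{ratio.ff2} from a first-order expansion with the case split on the sign of $\nu+d/2-1$, and get \eqref{lambda.upper1}--\eqref{lambda.lower1} by integrating the pointwise envelope against $|\psi_k|^2 f_{\sigma_0,\alpha_0}$. The only cosmetic difference is that you apply the mean value theorem in the shifted variable $t=\cdot^2+\|\omega\|^2$ whereas the paper Taylor-expands in $\alpha$ with an intermediate point $\alpha_1$ between $\alpha_0$ and $\alpha$; these are equivalent and your case bookkeeping for the factor $\max(\alpha_0^{2\nu+d-2},\alpha^{2\nu+d-2})/\alpha^{2\nu+d-2}$ checks out in both regimes.
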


\begin{proof}[Proof of Lemma \ref{lem:specden_lambda}]
For \eqref{ratio.ff1}, when $\sigma^2\alpha^{2\nu}=\theta_0$, we have that
\begin{align*}
\frac{f_{\sigma,\alpha}(\omega)}{f_{\sigma_0,\alpha_0}(\omega)}  & = \left(\frac{\alpha_0^2+\|\omega\|^2}{\alpha^2+\|\omega\|^2}\right)^{\nu+d/2}.
\end{align*}
If $\alpha\geq \alpha_0$, then this ratio is an increasing function in $\|\omega\|$, which implies that $f_{\sigma,\alpha}(\omega)/f_{\sigma_0,\alpha_0}(\omega) \leq 1$ (attained when $\|\omega\|\to +\infty$), and $f_{\sigma,\alpha}(\omega)/f_{\sigma_0,\alpha_0}(\omega) \geq (\alpha_0/\alpha)^{2\nu+d}$ (attained when $\|\omega\|\to 0$). The case of $\alpha < \alpha_0 $ follows similarly. \eqref{ratio.ff1} summarizes the two cases.

For \eqref{ratio.ff2}, if $\nu+d/2\geq 1$, then using a first order Taylor expansion, we have that
\begin{align}\label{ratio.ff21}
& \left|\frac{f_{\sigma,\alpha}(\omega)}{f_{\sigma_0,\alpha_0}(\omega)} -1 \right| = \left|\frac{\left(\alpha_0^2+\|\omega\|^2\right)^{\nu+d/2}}{\left(\alpha^2+\|\omega\|^2\right)^{\nu+d/2}} -1\right| \nonumber \\
&\leq \frac{(\nu+d/2)(\alpha_1^{2} + \|\omega\|^2 )^{\nu+d/2-1}\cdot 2\alpha_1\cdot |\alpha-\alpha_0|}{\left(\alpha^2+\|\omega\|^2\right)^{\nu+d/2}} \nonumber \\
&\leq (2\nu+d)\max(\alpha_0^2,\alpha^2) \left(\frac{\max(\alpha_0,\alpha)^2+\|\omega\|^2}{\alpha^2+\|\omega\|^2}\right)^{\nu+d/2-1} \cdot \frac{1}{\alpha^2 + \|\omega\|^2} \nonumber \\
&\leq \frac{(2\nu+d)\max(\alpha_0^2,\alpha^2) \max\left(\alpha_0^{2\nu+d-2},\alpha^{2\nu+d-2}\right)}{\alpha^{2\nu+d-2}(\alpha^2+\|\omega\|^2)} ,
\end{align}
where $\alpha_1$ is a value between $\alpha_0$ and $\alpha$.

If $\nu+d/2 < 1$, then we have that
\begin{align}\label{ratio.ff22}
& \left|\frac{f_{\sigma,\alpha}(\omega)}{f_{\sigma_0,\alpha_0}(\omega)} -1 \right| = \left|\frac{\left(\alpha_0^2+\|\omega\|^2\right)^{\nu+d/2}}{\left(\alpha^2+\|\omega\|^2\right)^{\nu+d/2}} -1\right| \nonumber \\
&\leq \frac{(\nu+d/2)(\alpha_1^{2} + \|\omega\|^2 )^{\nu+d/2-1}\cdot 2\alpha_1\cdot |\alpha-\alpha_0|}{\left(\alpha^2+\|\omega\|^2\right)^{\nu+d/2}} \nonumber \\
&\leq (2\nu+d)\max(\alpha_0^2,\alpha^2) \left(\frac{\alpha^2+\|\omega\|^2}{\alpha_1^2+\|\omega\|^2}\right)^{1-(\nu+d/2)} \cdot \frac{1}{\alpha^2 + \|\omega\|^2}.
\end{align}
In \eqref{ratio.ff22}, if $\alpha\geq \alpha_1 \geq\alpha_0$, then the function $\left(\frac{\alpha^2+\|\omega\|^2}{\alpha_1^2+\|\omega\|^2}\right)^{1-(\nu+d/2)}$ is decreasing in $\|\omega\|^2$, so
\begin{align*}
&\left(\frac{\alpha^2+\|\omega\|^2}{\alpha_1^2+\|\omega\|^2}\right)^{1-(\nu+d/2)}\leq \left(\frac{\alpha}{\alpha_1}\right)^{2-(2\nu+d)} = \left(\frac{\alpha_1}{\alpha}\right)^{2\nu+d-2} \leq \left(\frac{\alpha_0}{\alpha}\right)^{2\nu+d-2}.
\end{align*}
If $\alpha\leq \alpha_1 \leq\alpha_0$, then the function $\left(\frac{\alpha^2+\|\omega\|^2}{\alpha_1^2+\|\omega\|^2}\right)^{1-(\nu+d/2)}$ is increasing in $\|\omega\|^2$, so
\begin{align*}
&\left(\frac{\alpha^2+\|\omega\|^2}{\alpha_1^2+\|\omega\|^2}\right)^{1-(\nu+d/2)}\leq 1.
\end{align*}
Considering both cases, then from \eqref{ratio.ff2}, we can derive that
\begin{align}\label{ratio.ff23}
\left|\frac{f_{\sigma,\alpha}(\omega)}{f_{\sigma_0,\alpha_0}(\omega)} -1 \right|
&\leq (2\nu+d)\max(\alpha_0^2,\alpha^2) \left(\frac{\alpha^2+\|\omega\|^2}{\alpha_1^2+\|\omega\|^2}\right)^{1-(\nu+d/2)} \cdot \frac{1}{\alpha^2 + \|\omega\|^2} \nonumber \\
&\leq \frac{(2\nu+d)\max(\alpha_0^2,\alpha^2)}{\alpha^2 + \|\omega\|^2} \max\left\{\left(\frac{\alpha_0}{\alpha}\right)^{2\nu+d-2},1\right\} \nonumber \\
&\leq \frac{(2\nu+d)\max(\alpha_0^2,\alpha^2) \max\left(\alpha_0^{2\nu+d-2},\alpha^{2\nu+d-2}\right)}{\alpha^{2\nu+d-2}(\alpha^2+\|\omega\|^2)}.
\end{align}
\eqref{ratio.ff21} for $\nu+d/2\geq 1$ and \eqref{ratio.ff23} for $\nu+d/2<1$ lead to \eqref{ratio.ff2}.

For \eqref{lambda.upper1} and \eqref{lambda.lower1}, we use the relation $\lambda_{k,n}(\alpha)= \int_{\RR^d} |\psi_k(\omega)|^2 f_{\sigma_0,\alpha_0}(\omega) \cdot \frac{f_{\sigma,\alpha}(\omega)}{f_{\sigma_0,\alpha_0}(\omega)} \ud \omega$ for $k=1,\ldots,n$ and the bounds in \eqref{ratio.ff1} to obtain that
\begin{align}\label{lambda30}
\lambda_{k,n}(\alpha)&\leq \sup_{\omega \in \RR^d} \frac{f_{\sigma,\alpha}(\omega)}{f_{\sigma_0,\alpha_0}(\omega)}  \cdot \int_{\RR^d} |\psi_k(\omega)|^2 f_{\sigma_0,\alpha_0}(\omega) \ud \omega \leq \max \left\{\left(\frac{\alpha_0}{\alpha}\right)^{2\nu+d},1\right\}, \nonumber \\
\lambda_{k,n}(\alpha)&\geq \inf_{\omega \in \RR^d} \frac{f_{\sigma,\alpha}(\omega)}{f_{\sigma_0,\alpha_0}(\omega)}  \cdot \int_{\RR^d} |\psi_k(\omega)|^2 f_{\sigma_0,\alpha_0}(\omega) \ud \omega \geq \min\left\{\left(\frac{\alpha_0}{\alpha}\right)^{2\nu+d},1\right\}.
\end{align}	
\end{proof}

In the rest of this subsection, we focus exclusively on the case of $d\in\{1,2,3\}$. For any $a>0$, define $m_a= \lfloor a+d/2\rfloor +1$. For $\omega \in \RR^d$, let
\begin{align}
c_0(x) &= \|x\|^{\frac{\nu+d/2}{2m_{\nu}}-d} \Ical(\|x\|\leq 1), \label{c0func} \\
\xi_0(\omega) &= \int_{\RR^d} \ee^{-\imath x^\top w} c_0(x) \ud x, \label{xi0func}
\end{align}
and $\xi_1(\omega)=\xi_0(\omega)^{2 m_{\nu}}$ for all $\omega \in \RR^d$. If $c_1=c_0\ast \ldots \ast c_0$ is the $2m_{\nu}$-fold convolution of the function $c_0$ with itself, then $\xi_1(\omega)$ is the Fourier transform of $c_1(x)$. Then Lemma 6 in \citet{WangLoh11} has proved that for $d=1,2,3$, $\xi_0(\omega)\asymp \|\omega\|^{-\frac{\nu+d/2}{2m_{\nu}}}$ as $\|\omega\|\to \infty$, which means that $\xi_1(\omega)\asymp \|\omega\|^{-(\nu+d/2)}$. This implies that if $\sigma^2\alpha^{2\nu}=\theta_0$, then $f_{\sigma,\alpha}(\omega)/\xi_1(\omega) \asymp 1$ as $\|\omega\|\to \infty$. In fact, using Lemma 6 in \citet{WangLoh11}, we can prove the following lower and upper bound for his ratio.

\begin{lemma}\label{lem:fxi}
Suppose that $d\in \{1,2,3\}$ and $\nu\in \RR^+$. For any pair $(\sigma,\alpha)\in \RR^+ \times \RR^+$, the following holds for all $\omega \in \RR^d$:
\begin{align}\label{proxy.bound1}
& \underline c_{\xi} \sigma^2\alpha^{2\nu} \min\left\{\left(\frac{\alpha_0}{\alpha}\right)^{2\nu+d},1\right\} \leq \frac{f_{\sigma,\alpha}(\omega)}{\xi_1(\omega)^2} \leq \overline c_{\xi} \sigma^2 \alpha^{2\nu} \max\left\{\left(\frac{\alpha_0}{\alpha}\right)^{2\nu+d},1\right\},
\end{align}
where $\underline c_{\xi}$ and $\overline c_{\xi}$ are two positive constants that only depend on $d$, $\nu$ and $\alpha_0$.
\end{lemma}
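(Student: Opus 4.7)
The plan is to reduce the claim, which is about the function $f_{\sigma,\alpha}$ with two free parameters, to a single-parameter comparison. Writing $f_{\sigma,\alpha}(\omega) = \sigma^2\alpha^{2\nu} h_\alpha(\omega)$, where $h_\alpha(\omega) = C_{d,\nu}(\alpha^2+\|\omega\|^2)^{-(\nu+d/2)}$ and $C_{d,\nu} = \Gamma(\nu+d/2)/[\Gamma(\nu)\pi^{d/2}]$, the bound \eqref{proxy.bound1} is equivalent to showing
\begin{align*}
\underline c_\xi \min\bigl\{(\alpha_0/\alpha)^{2\nu+d},1\bigr\} \leq \frac{h_\alpha(\omega)}{\xi_1(\omega)^2} \leq \overline c_\xi \max\bigl\{(\alpha_0/\alpha)^{2\nu+d},1\bigr\},
\end{align*}
so that the $\sigma$-dependence drops out entirely and only $h_\alpha/\xi_1^2$ needs to be controlled.

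Next, I would split the ratio multiplicatively through $h_{\alpha_0}(\omega)$,
\begin{align*}
\frac{h_\alpha(\omega)}{\xi_1(\omega)^2} = \frac{h_\alpha(\omega)}{h_{\alpha_0}(\omega)} \cdot \frac{h_{\alpha_0}(\omega)}{\xi_1(\omega)^2}.
\end{align*}
The first factor equals $\bigl[(\alpha_0^2+\|\omega\|^2)/(\alpha^2+\|\omega\|^2)\bigr]^{\nu+d/2}$, which is monotone in $\|\omega\|^2$; the same monotonicity calculation used in the proof of \eqref{ratio.ff1} immediately gives that it lies in $\bigl[\min\{(\alpha_0/\alpha)^{2\nu+d},1\},\ \max\{(\alpha_0/\alpha)^{2\nu+d},1\}\bigr]$, which is precisely the $\alpha$-dependent envelope the statement demands.

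It then remains to show that the second factor $h_{\alpha_0}(\omega)/\xi_1(\omega)^2$, which involves only the fixed quantities $\alpha_0,\nu,d$, is pinched between two positive constants on all of $\RR^d$. Since $h_{\alpha_0}(\omega) \asymp \|\omega\|^{-(2\nu+d)}$ as $\|\omega\|\to\infty$ by direct inspection and Lemma 6 of \citet{WangLoh11} gives $\xi_1(\omega) \asymp \|\omega\|^{-(\nu+d/2)}$ in the same regime, so $\xi_1(\omega)^2 \asymp \|\omega\|^{-(2\nu+d)}$, the ratio is uniformly bounded above and below for $\|\omega\| \geq M$ with some large $M$. Inside the ball $\{\|\omega\| \leq M\}$, $h_{\alpha_0}$ is continuous and strictly positive, so by compactness the entire task reduces to establishing that the continuous function $\xi_1 = \xi_0^{2m_\nu}$ is strictly positive on that ball; granted this, extreme-value considerations hand over a positive minimum and a finite maximum, and gluing the two regimes yields constants $c_1, c_2 > 0$ with $c_1 \leq h_{\alpha_0}/\xi_1^2 \leq c_2$. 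Taking $\underline c_\xi = c_1$ and $\overline c_\xi = c_2$ and multiplying by the envelope from the first factor completes the argument.

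The main obstacle is the global positivity of $\xi_1$, since the cited Wang-Loh statement alone is only asymptotic. I would resolve this by inspecting the construction of $c_0$ in \eqref{c0func}: because $c_0$ is radial, nonnegative, and integrable, $\xi_0$ is real-valued and $\xi_0(0) = \int_{\RR^d} c_0 > 0$, and the global strict positivity of $\xi_1$ (which is $\xi_0$ raised to the even power $2m_\nu$) can be extracted from the more detailed two-sided bounds that appear inside the proof of Lemma 6 of \citet{WangLoh11}, rather than from the asymptotic statement quoted in the excerpt. This is the one delicate place where I would need to go beyond a direct citation; everything else is algebraic manipulation and monotonicity.
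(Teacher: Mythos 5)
Your proposal is correct and follows essentially the same route as the paper: factor out $\sigma^2\alpha^{2\nu}$, pass through the reference density at $\alpha_0$ so that the $\alpha$-dependent factor $\bigl[(\alpha_0^2+\|\omega\|^2)/(\alpha^2+\|\omega\|^2)\bigr]^{\nu+d/2}$ is controlled by the monotonicity argument of \eqref{ratio.ff1}, and control the remaining $\alpha_0$-only ratio via Lemma 6 of \citet{WangLoh11}. The paper simply asserts that the asymptotic equivalence $\xi_0(\omega)\asymp\|\omega\|^{-(\nu+d/2)/(2m_\nu)}$ upgrades to a global two-sided bound $\underline c_{\xi 0}\leq(\alpha_0^2+\|\omega\|^2)^{(\nu+d/2)/(4m_\nu)}\xi_0(\omega)\leq\overline c_{\xi 0}$, whereas you explicitly isolate the needed global strict positivity of $\xi_1$ on compact sets as the one point requiring the internal bounds of Wang--Loh's proof rather than the quoted asymptotic statement; that is, if anything, a more careful rendering of the same argument.
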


\begin{proof}[Proof of Lemma \ref{lem:fxi}]
Lemma 6 in \citet{WangLoh11} has proved that for $d=1,2,3$, $\xi_0(\omega)\asymp \|\omega\|^{-\frac{\nu+d/2}{2m_{\nu}}}$ as $\|\omega\|\to \infty$. This implies that there exists two positive absolute constants $\underline c_{\xi 0}$ and $\overline c_{\xi 0}$ that only depend on $d$, $\nu$ and $\alpha_0$, such that
$$\underline c_{\xi 0} \leq (\alpha_0^2+\|\omega\|^2)^{\frac{\nu+d/2}{4m_{\nu}}}\xi_0(\omega) \leq \overline c_{\xi 0},$$
for all $\omega \in \RR^d$. According to the definition of $\xi_1(\omega)$, this implies that
\begin{align}\label{eq:xi1.12}
& \underline c_{\xi 0}^{2m_{\nu}} \leq (\alpha_0^2+\|\omega\|^2)^{\frac{\nu+d/2}{2}}\xi_1(\omega) \leq \overline c_{\xi 0}^{2m_{\nu}},
\end{align}
for all $\omega \in \RR^d$. Now, from the definition of $f_{\sigma,\alpha}$ in \eqref{f.specden}, we have that
\begin{align}\label{fxi1}
\frac{f_{\sigma,\alpha}(\omega)}{\xi_1(\omega)^2} & = \frac{\sigma^2\alpha^{2\nu}(\alpha_0^2+\|\omega\|^2)^{\nu+d/2}}{\pi^{d/2}\left(\alpha^2+\|\omega\|^2\right)^{\nu+d/2}} \cdot \frac{1}{(\alpha_0^2+\|\omega\|^2)^{\nu+d/2}\xi_1(\omega)^2}.
\end{align}
Since
\begin{align*}
& \min\left\{\left(\frac{\alpha_0}{\alpha}\right)^{2\nu+d},1\right\} \leq \left(\frac{\alpha_0^2+\|\omega\|^2}{\alpha^2+\|\omega\|^2}\right)^{\nu+d/2} \leq \max\left\{\left(\frac{\alpha_0}{\alpha}\right)^{2\nu+d},1\right\},
\end{align*}
we have from \eqref{eq:xi1.12} and \eqref{fxi1} that
\begin{align*}
\frac{f_{\sigma,\alpha}(\omega)}{\xi_1(\omega)^2} &\geq \frac{\sigma^2\alpha^{2\nu}}{\pi^{d/2}\overline c_{\xi 0}^{4m_{\nu}}} \min\left\{\left(\frac{\alpha_0}{\alpha}\right)^{2\nu+d},1\right\}, \\
\frac{f_{\sigma,\alpha}(\omega)}{\xi_1(\omega)^2} &\leq \frac{\sigma^2\alpha^{2\nu}}{\pi^{d/2}\underline c_{\xi 0}^{4m_{\nu}}} \max\left\{\left(\frac{\alpha_0}{\alpha}\right)^{2\nu+d},1\right\}.
\end{align*}
Finally, we let $\underline c_{\xi}= 1/(\pi^{d/2}\overline c_{\xi 0}^{4m_{\nu}})$ and $\overline c_{\xi}= 1/(\pi^{d/2}\underline c_{\xi 0}^{4m_{\nu}})$ and the conclusion follows.
\end{proof}

\vspace{5mm}

Now to proceed, we define the function
\begin{align}\label{etafunc}
\eta(\omega) &= \frac{f_{\sigma,\alpha}(\omega)-f_{\sigma_0,\alpha_0}(\omega)}{\xi_1(\omega)^2}, \quad \forall \omega \in \RR^d.
\end{align}
Note that $\eta$ depends on $(\sigma,\alpha)$, but we suppress the dependence for the ease of notation.

For any given pair $(\sigma,\alpha)\in \RR^+ \times \RR^+$, from \eqref{ratio.ff2} in Lemma \ref{lem:specden_lambda} and \eqref{proxy.bound1} in Lemma \ref{lem:fxi}, we have that
\begin{align}\label{eq:eta.bound1}
& \int_{\RR^d} \eta_n(\omega)^2\ud \omega = \int_{\RR^d} \left\{\frac{f_{\sigma,\alpha}(\omega)-f_{\sigma_0,\alpha_0}(\omega)}{\xi_1(\omega)^2}\right\}^2\ud \omega  \nonumber \\
&= \int_{\RR^d} \left\{\frac{f_{\sigma,\alpha}(\omega)-f_{\sigma_0,\alpha_0}(\omega)}{f_{\sigma_0,\alpha_0}(\omega)}\right\}^2 \cdot \left(\frac{f_{\sigma_0,\alpha_0}(\omega)}{\xi_1(\omega)^2}\right)^2 \ud \omega  \nonumber \\
&\leq \sup_{\omega \in \RR^d} \left(\frac{f_{\sigma_0,\alpha_0}(\omega)}{\xi_1(\omega)^2}\right)^2 \cdot \int_{\RR^d} \left|\frac{f_{\sigma,\alpha}(\omega)}{f_{\sigma_0,\alpha_0}(\omega)} -1 \right|^2 \ud \omega  \nonumber \\
&\leq \overline c_{\xi}^2 \theta_0^2 \cdot \int_{\RR^d} \left\{\frac{(2\nu+d)\max(\alpha_0^2,\alpha^2) \max\left(\alpha_0^{2\nu+d-2},\alpha^{2\nu+d-2}\right)}{\alpha^{2\nu+d-2}(\alpha^2+\|\omega\|^2)}\right\}^2 \ud \omega   \nonumber \\
&= \frac{\overline c_{\xi}^2 \theta_0^2 (2\nu+d)^2 \max(\alpha_0^4,\alpha^4) \max\left\{\alpha_0^{2(2\nu+d-2)},\alpha^{2(2\nu+d-2)}\right\}}{\alpha^{2(2\nu+d-2)}}  \nonumber \\
&\quad \times \int_0^{\infty} \frac{r^{d-1}}{(\alpha^2+r^2)^2} \ud r <\infty,
\end{align}
where the last integral is finite because $\alpha>0$ and $4-(d-1)\geq 2$ for $d=1,2,3$. Therefore, we have shown that $\eta(\omega)$ is a square-integrable function of $w$. From the theory of Fourier transforms of $L_2(\RR^d)$, there exists a square-integrable function $g:\RR^d\to \RR$ such that
$$\int_{\RR^d} \left\{\eta(\omega)-\hat g_k(\omega)\right\}^2 \ud \omega \rightarrow 0, \text{ as } k\to\infty,$$
where
\begin{align}\label{gkfun}
& \hat g_k(\omega) = \int_{\RR^d} \ee^{-\imath \omega^\top x} g(x) \Ical (\|x\|_{\infty}\leq k) \ud x .
\end{align}
Furthermore, for any fixed number $a>0$ and $0< \cbeta < \min(4-d,2)$, we define the sequence $\varepsilon_n = n^{-1/(4a+2d+\cbeta)}$, such that $ \varepsilon_n \to 0$ as $n\to\infty$. We define the following functions similar to Equations (35) and (36) in \citet{WangLoh11}. Let
\begin{align*}
\tilde c_0(x) &= \|x\|^{\frac{a+d/2}{2m_a}-d} \Ical(\|x\|\leq 1), \quad \forall x \in \RR^d,
\end{align*}
and $\tilde c_1(x) = c_0 \ast\ldots \ast c_0(x)$ be the $2m_a$-fold convolution of $c_0$ with itself. Let $C_q = \int_{\RR^d} \tilde c_1(x) \ud x$. Define the following functions
\begin{align}\label{4func}
\tilde \xi_0(\omega) &= \int_{\RR^d} \ee^{-\imath x^\top w} \tilde c_0(x)  \ud x, \quad \forall \omega\in \RR^d, \nonumber \\
\tilde \xi_1(\omega) &= \int_{\RR^d} \ee^{-\imath x^\top w} \tilde c_1(x)  \ud x = \tilde \xi_0(\omega)^{2m_a} , \quad \forall \omega\in \RR^d, \nonumber \\
q_n(x) &= \frac{1}{C_q \varepsilon_n^d} \tilde c_1\left(\frac{x}{\varepsilon_n}\right), \quad \forall x \in \RR^d, \nonumber \\
\hat q_n(\omega) &= \int_{\RR^d} \ee^{-\imath \omega^\top x} q_n(x) \ud x = \frac{1}{C_q}  \int_{\RR^d} \ee^{-\imath \varepsilon_n \omega^\top x} \tilde c_1(x) \ud x = \frac{\tilde \xi_1(\varepsilon_n w)}{C_q},
\quad \forall \omega\in \RR^d.
\end{align}
Then using Lemma 6 of \citet{WangLoh11}, there exists a finite positive constant $C_{\hat q}$ that only depends on $d,\nu,a,\cbeta$, such that
\begin{align}\label{qn.bound1}
&\left|\hat q_n(\omega)\right| \leq \frac{C_{\hat q}}{(1+\varepsilon_n\|\omega\|)^{a+d/2}},\quad \forall \omega \in \RR^d.
\end{align}

\begin{lemma}\label{lem:qngn}
Suppose that $d\in \{1,2,3\}$ and $\nu\in \RR^+$. Let $a>0$ and $0< \cbeta < \min(4-d,2)$ be fixed constants. Let $\varepsilon_n = n^{-1/(4a+2d+\cbeta)}$. For the $g$ function in \eqref{gkfun} and the $q_n$ function in \eqref{4func}, there exists a positive constant $C_{g,q}$ that depends only on $d,\nu,\alpha_0,a,\cbeta$, such that
\begin{align*}
& \left\{\int_{\RR^d} \left|q_n \ast g(x) - g(x) \right|^2 \ud x \right\}^{1/2} \leq C_{g,q} \frac{\max(\alpha_0^4,\alpha^4) \max\left\{\alpha_0^{2(2\nu+d-2)},\alpha^{2(2\nu+d-2)}\right\}}{\alpha^{4\nu+3d/2-\cbeta/2}} \varepsilon_n^{\cbeta/2},
\end{align*}
where $q_n\ast g(x) = \int_{\RR^d} q_n(y) g(x-y) \ud y$ for any $x\in \RR^d$.
\end{lemma}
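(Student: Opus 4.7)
The plan is to pass to the Fourier side via Plancherel's identity. By construction of $g$ in \eqref{gkfun}, $\hat g = \eta$ as elements of $L^2(\RR^d)$; together with the convolution identity $\widehat{q_n \ast g} = \hat q_n \cdot \hat g$, Plancherel gives
\begin{align*}
\int_{\RR^d} |q_n \ast g(x) - g(x)|^2 \, \ud x = (2\pi)^{-d} \int_{\RR^d} |1 - \hat q_n(\omega)|^2 \, |\eta(\omega)|^2 \, \ud \omega,
\end{align*}
and the task reduces to estimating this weighted integral.

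The crucial pointwise input is a bound on $|1 - \hat q_n(\omega)|$ that is uniform in $\omega$ yet quantitatively small in $\varepsilon_n$. Since $q_n \geq 0$ and $\int q_n = 1$ by construction (indeed $C_q = \int \tilde c_1$), Jensen's inequality applied to the convex map $z \mapsto |z|^2$ yields
\begin{align*}
|1 - \hat q_n(\omega)|^2 \leq \int q_n(x) \, |1 - e^{-\imath \omega^\top x}|^2 \, \ud x.
\end{align*}
Combining this with the elementary inequality $|1 - e^{\imath y}|^2 \leq 4 |y|^\beta$ (valid for all $y \in \RR$ and $\beta \in (0,2]$, obtained by comparing $2 - 2\cos y$ to $y^2$ on $|y| \leq 1$ and to $4$ on $|y| > 1$), together with the fact that $q_n$ is supported in $\{\|x\| \leq 2 m_a \varepsilon_n\}$ because $\tilde c_0$ has support in $\{\|x\| \leq 1\}$ and $\tilde c_1$ is a $2m_a$-fold convolution, one obtains
\begin{align*}
|1 - \hat q_n(\omega)|^2 \leq 4 \|\omega\|^\beta \int q_n(x) \|x\|^\beta \ud x \leq 4 (2 m_a)^\beta \, \varepsilon_n^\beta \, \|\omega\|^\beta .
\end{align*}

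Plugging this into the Plancherel identity along with the pointwise estimate for $|\eta|^2$ obtained by combining \eqref{ratio.ff2} of Lemma \ref{lem:specden_lambda} with \eqref{proxy.bound1} of Lemma \ref{lem:fxi}, the problem collapses to evaluating $\int_0^\infty r^{\beta + d - 1}/(\alpha^2 + r^2)^2 \, \ud r$. The substitution $r = \alpha u$ yields $B(\beta,d) \, \alpha^{\beta + d - 4}$ with $B(\beta,d)$ finite \emph{precisely} when $\beta + d < 4$; this is exactly where the hypothesis $\beta \in (0, 4 - d)$, and through it the restriction $d \in \{1,2,3\}$ imposed throughout the paper, becomes essential. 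Collecting $\alpha$-powers and taking square roots produces a bound of the claimed type. The statement in its displayed form then follows by the crude majorization $\max(\alpha_0^2,\alpha^2) \max(\alpha_0^{2\nu + d - 2}, \alpha^{2\nu + d - 2}) \geq \alpha^{2\nu + d}$, which converts the single-max numerators that naturally appear in the calculation into the squared-max numerators of the lemma, at the cost of an additional $\alpha^{-(2\nu + d)}$ in the denominator.

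Conceptually this is textbook approximation-of-identity theory: $q_n$ is a smooth nonnegative kernel of width $\varepsilon_n$, and $\eta$ has polynomial $L^2$-decay of order $\|\omega\|^{-2}$, which leaves precisely $4 - d$ degrees of spare decay to absorb the fractional smoothness factor $\|\omega\|^\beta$. The main difficulty is therefore not analytic but bookkeeping: each constant must be tracked explicitly in its dependence on $\alpha$, since the downstream invocation of this lemma (ultimately in the proof of Lemma \ref{WangLoh:thm2.0}) requires uniformity over the expanding interval $\alpha \in [\underline\alpha_n, \overline\alpha_n]$, and a bound that looks qualitatively correct but hides an unfavorable $\alpha$-dependence would be useless there.
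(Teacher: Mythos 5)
Your proof is correct and in substance the same as the paper's: both arguments rest on the unit mass and $\{\|x\|\le 2m_a\varepsilon_n\}$ support of $q_n$, the fractional oscillation bound $|1-e^{\imath y}|^2\lesssim |y|^{\beta}$, the pointwise control of $\eta$ obtained by combining \eqref{ratio.ff2} with \eqref{proxy.bound1}, and the radial integral $\int_0^\infty r^{\beta+d-1}(\alpha^2+r^2)^{-2}\,\ud r$, which is finite precisely because $\beta<4-d$. The only organizational difference is that you apply Plancherel once and bound the multiplier $1-\hat q_n(\omega)$ via Jensen's inequality, whereas the paper stays in physical space, writes $q_n\ast g-g=\int q_n(y)\{g(\cdot-y)-g(\cdot)\}\,\ud y$, applies Minkowski's integral inequality, and only then invokes Plancherel on each translate difference; the two executions are dual and produce the same $\alpha$-dependence and the same $\varepsilon_n^{\beta/2}$ rate, including the final squared-max majorization of the numerator. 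One caveat you share with the paper: the oscillation inequality as you state it holds only for $\beta\le 2$, so for $d=1$ it does not by itself cover the full range $\beta\in(0,3)$ allowed by the hypothesis (and used downstream with $\beta=2.9$); this limitation is inherited from the paper's own step (iv) and is not a defect specific to your argument.
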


\begin{proof}[Proof of Lemma \ref{lem:qngn}]
We have the following derivation:
\begin{align}\label{lemma5wang}
& \left\{\int_{\RR^d} \left|q_n \ast g(x) - g(x) \right|^2 \ud x \right\}^{1/2} \nonumber \\
={}& \left[ \int_{\RR^d} \left| \int_{\|y\|\leq 2m_a \varepsilon_n} \{g(x-y)-g(x)\}q_n(y)\ud y\right|^2\right]^{1/2} \nonumber \\
\stackrel{(i)}{\leq} {}& \int_{\|y\|\leq 2m_a \varepsilon_n} \left[\int_{\RR^d} | g(x-y) - g(x) |^2 \ud x \right]^{1/2} q_n(y) \ud y \nonumber \\
\stackrel{(ii)}{=} {}& \int_{\|y\|\leq 2m_a \varepsilon_n} \left[\frac{1}{(2\pi)^d}\int_{\RR^d} |( \ee^{-\imath \omega^\top y}-1)\eta(\omega) |^2 \ud \omega\right]^{1/2} q_n(y) \ud y \nonumber \\
\stackrel{(iii)}{=} {}& \int_{\|y\|\leq 2m_a \varepsilon_n} \left[\frac{1}{(2\pi)^d}\int_{\RR^d} \left|( \ee^{-\imath \omega^\top y}-1) \cdot \frac{f_{\sigma,\alpha}(\omega)-f_{\sigma_0,\alpha_0}(\omega)}{f_{\sigma_0,\alpha_0}(\omega)}\cdot \frac{f_{\sigma_0,\alpha_0}(\omega)}{\xi_1(\omega)^2} \right|^2 \ud \omega\right]^{1/2} \nonumber \\
{}&\quad \cdot q_n(y) \ud y \nonumber \\
\leq {}& \frac{1}{(2\pi)^{d/2}}\sup_{\omega \in \RR^d} \frac{f_{\sigma_0,\alpha_0}(\omega)}{\xi_1(\omega)^2} \cdot \nonumber \\
{}& \int_{\|y\|\leq 2m_a \varepsilon_n} \left[\int_{\RR^d} \left|( \ee^{-\imath \omega^\top y}-1) \cdot
\left\{\frac{f_{\sigma,\alpha}(\omega)}{f_{\sigma_0,\alpha_0}(\omega)}-1\right\} \right|^2 \ud \omega\right]^{1/2} q_n(y) \ud y \nonumber \\
\stackrel{(iv)}{\leq} {}& \frac{1}{(2\pi)^{d/2}}\sup_{\omega \in \RR^d} \frac{f_{\sigma_0,\alpha_0}(\omega)}{\xi_1(\omega)^2} \cdot \nonumber \\
{}& 2^{1-\cbeta/2}\int_{\|y\|\leq 2m_a \varepsilon_n} \left[\int_{\RR^d} \|\omega\|^{\cbeta} \left|
\left\{\frac{f_{\sigma,\alpha}(\omega)}{f_{\sigma_0,\alpha_0}(\omega)}-1\right\} \right|^2 \ud \omega\right]^{1/2} \|y\|^{\cbeta/2} q_n(y) \ud y \nonumber \\
\stackrel{(v)}{\leq} {} & \frac{2^{1-\cbeta/2}}{(2\pi)^{d/2}}\sup_{\omega \in \RR^d} \frac{f_{\sigma_0,\alpha_0}(\omega)}{\xi_1(\omega)^2} \nonumber\\
{}& \cdot \left[\int_{\RR^d} \left\{\frac{(2\nu+d)\max(\alpha_0^2,\alpha^2) \max\left(\alpha_0^{2\nu+d-2},\alpha^{2\nu+d-2}\right)}{\alpha^{2\nu+d-2}}\right\}^2 \frac{\|\omega\|^{\cbeta}}{(\alpha^2+\|\omega\|^2)^2}\ud \omega\right]^{1/2} \nonumber \\
{}& \cdot \int_{\|y\|\leq 2m_a \varepsilon_n} \|y\|^{\cbeta/2} q_n(y) \ud y \nonumber \\
\stackrel{(vi)}{\leq} {}& \frac{2^{1-\cbeta/2}\theta_0}{(2\pi)^{d/2}} \cdot  \overline c_{\xi} \sigma^2 \alpha^{2\nu} \max\left\{\left(\frac{\alpha_0}{\alpha}\right)^{2\nu+d},1\right\} \nonumber \\
&\cdot \frac{(2\nu+d)\max(\alpha_0^2,\alpha^2) \max\left(\alpha_0^{2\nu+d-2},\alpha^{2\nu+d-2}\right)}{\alpha^{2\nu+d-2}} \nonumber \\
{}& \cdot \alpha^{\cbeta/2+d/2-2} \cdot \left[\int_{0}^{\infty} \frac{r^{\cbeta+d-1}}{(1+r^2)^2} \ud r\right]^{1/2} \cdot (2m_a \varepsilon_n)^{\cbeta/2} \nonumber \\
\leq {}& \left[\int_{0}^{\infty} \frac{r^{\cbeta+d-1}}{(1+r^2)^2} \ud r\right]^{1/2} \nonumber \\
{}& \quad \cdot \frac{2\overline c_{\xi} \theta_0(2\nu+d)m_a^{\cbeta/2}\max(\alpha_0^4,\alpha^4) \max\left(\alpha_0^{2(2\nu+d-2)},\alpha^{2(2\nu+d-2)}\right)}{(2\pi)^{d/2} \alpha^{4\nu+3d/2-\cbeta/2}}\cdot \varepsilon_n^{\cbeta/2}.
\end{align}
In the derivations above: (i) follows from the Minkowski's integral inequality; (ii) follows from the Plancherel's theorem; (iii) is based on the definition of $\eta(\omega)$ in \eqref{etafunc}; (iv) uses the fact that $| \ee^{\imath a}-1|^2=4\sin^2(a/2) \leq 2^{2-\cbeta}|a|^{\cbeta}$ for any $a\in \RR$ and all $0<\cbeta< 2$; (v) follows from \eqref{ratio.ff2} in Lemma \ref{lem:specden_lambda}. (vi) follows from \eqref{proxy.bound1} in Lemma \ref{lem:fxi}. Since $\cbeta<4-d$, the integral in the last display exists and hence the conclusion follows.
\end{proof}

\vspace{3mm}

\begin{lemma}\label{lem:zetabound}
Suppose that $d\in \{1,2,3\}$ and $\nu\in \RR^+$. Let $(\sigma,\alpha)\in \RR^+ \times \RR^+$ satisfy $\sigma^2\alpha^{2\nu}=\theta_0=\sigma_0^2\alpha_0^{2\nu}$. Let $a>0$ and $0<\cbeta< \min(4-d,2)$ be fixed constants. Let $\varepsilon_n = n^{-1/(4a+2d+\cbeta)}$.  For the $\lambda_{k,n}(\alpha)$ in \eqref{wangloh.eq13}, for any $\alpha>0$, there exist positive constants $C_1^{\dagger}, C_1^{\ddagger},C_2^{\ddagger}$ that depend only on $d,\nu,T,\alpha_0,a,\cbeta$, such that
\begin{align}\label{lambda.bound1}
& \sum_{k=1}^n \left|\lambda_{k,n}(\alpha)-1\right|  \nonumber \\
&\leq C_1^{\dagger} \frac{ \max(\alpha_0^{6},\alpha^{6}) \max\left\{\alpha_0^{3(2\nu+d-2)},\alpha^{3(2\nu+d-2)}\right\} \sqrt{n}\varepsilon_n^{\cbeta/2}}{\alpha^{4\nu+3d/2-\cbeta/2}} \nonumber \\
& ~~+ C^{\ddagger}_1 \frac{[\max(\alpha_0,\alpha)]^{2\nu+d}}{\varepsilon_n^{2a+d}} + C^{\ddagger}_2 \frac{\max(\alpha_0^{6},\alpha^{6}) \max\left\{\alpha_0^{3(2\nu+d-2)},\alpha^{3(2\nu+d-2)}\right\}}{\alpha^{2(3\nu+d)}}.
\end{align}
\end{lemma}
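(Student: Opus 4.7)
The natural starting point is the spectral identity
\[
\lambda_{k,n}(\alpha) - 1
 = \langle\psi_k,\psi_k\rangle_{f_{\sigma,\alpha}} - \langle\psi_k,\psi_k\rangle_{f_{\sigma_0,\alpha_0}}
 = \int_{\RR^d} |\psi_k(\omega)|^2\,\xi_1(\omega)^2\,\eta(\omega)\,\ud\omega,
\]
which follows from \eqref{wangloh.eq13} and the definition of $\eta$ in \eqref{etafunc}. This rewrites each eigenvalue deviation as an integral of $\eta$ weighted by the non-negative spectral density $|\psi_k|^2\xi_1^2$. Following the strategy of \citet{WangLoh11}, I would then split $\eta = \hat q_n\,\eta + (\eta - \hat q_n\,\eta)$, where $q_n$ is the smoothing kernel defined in \eqref{4func}, and control the two resulting contributions to $\sum_k|\lambda_{k,n}(\alpha)-1|$ separately. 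The first will supply the $\varepsilon_n^{-(2a+d)}$ term, the second the $\sqrt{n}\,\varepsilon_n^{\beta/2}$ term, and a residual low-frequency comparison the $\varepsilon_n$-free third term of \eqref{lambda.bound1}.

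The ``main'' piece $\sum_k\bigl|\int|\psi_k|^2\xi_1^2\,\hat q_n\eta\,\ud\omega\bigr|$ will be controlled using the pointwise decay $|\hat q_n(\omega)|\le C_{\hat q}(1+\varepsilon_n\|\omega\|)^{-(a+d/2)}$ from \eqref{qn.bound1}, together with the pointwise bound on $|\eta|$ obtained by combining \eqref{ratio.ff2} in Lemma \ref{lem:specden_lambda} with the two-sided estimate \eqref{proxy.bound1} in Lemma \ref{lem:fxi}. A change of variables $\omega\mapsto\omega/\varepsilon_n$ in the weight $(1+\varepsilon_n\|\omega\|)^{-(2a+d)}$ produces the factor $\varepsilon_n^{-(2a+d)}$; the $[\max(\alpha_0,\alpha)]^{2\nu+d}$ prefactor comes from the uniform upper bound \eqref{lambda.upper1} on $\lambda_{k,n}(\alpha)$, which controls how the $\psi_k$-dependent integrals aggregate after replacing $\xi_1^2$ by $f_{\sigma_0,\alpha_0}/(\underline c_{\xi}\theta_0)$ and exploiting $\int|\psi_k|^2 f_{\sigma_0,\alpha_0}\,\ud\omega = 1$. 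This yields $C_1^{\ddagger}[\max(\alpha_0,\alpha)]^{2\nu+d}/\varepsilon_n^{2a+d}$.

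The ``residual'' piece $\sum_k\bigl|\int|\psi_k|^2\xi_1^2(\eta-\hat q_n\eta)\,\ud\omega\bigr|$ will be treated by two Cauchy--Schwarz applications. First, each summand is bounded in $L^2(\ud\omega)$ using $\xi_1^2\le f_{\sigma_0,\alpha_0}/(\underline c_{\xi}\theta_0)$ (Lemma \ref{lem:fxi}) and the unit $L^2(f_{\sigma_0,\alpha_0})$-norm of $\psi_k$; a Cauchy--Schwarz across $k=1,\dots,n$ then extracts a factor $\sqrt{n}$. The remaining $L^2$-norm is, by the Plancherel identity, a constant multiple of $\|g - q_n\ast g\|_{L^2}$, which Lemma \ref{lem:qngn} bounds by an explicit polynomial in $\max(\alpha_0,\alpha)$ and $\alpha^{-1}$ times $\varepsilon_n^{\beta/2}$. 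After merging this polynomial with the prefactors from the preceding Cauchy--Schwarz steps (each of which contributes at most a square root of the $\alpha$-expression in Lemma \ref{lem:specden_lambda}), one obtains exactly the combined exponent displayed in $C_1^{\dagger}$. The third term $C_2^{\ddagger}\max(\alpha_0^6,\alpha^6)\max\{\alpha_0^{3(2\nu+d-2)},\alpha^{3(2\nu+d-2)}\}/\alpha^{2(3\nu+d)}$ is a residual arising from an $\varepsilon_n$-independent direct bound using only \eqref{ratio.ff2}, needed to absorb the low-frequency contribution where the smoothing by $q_n$ does not improve the estimate.

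The principal obstacle is the bookkeeping of the $\alpha$-dependent constants through the two Cauchy--Schwarz steps, the Plancherel identity, and the change of variables, so that they aggregate into the compact form $\max(\alpha_0^{6},\alpha^{6})\max\{\alpha_0^{3(2\nu+d-2)},\alpha^{3(2\nu+d-2)}\}$ in \eqref{lambda.bound1}; Lemmas \ref{lem:specden_lambda}--\ref{lem:qngn} each carry prefactors that are rational in $\max(\alpha_0,\alpha)$ and $\alpha^{-1}$, and the cubed-maximum form only emerges after careful multiplication of the square roots produced by Cauchy--Schwarz. A secondary subtlety is the $\psi_k$-summation: I plan to rely on the orthonormality of $\{\psi_k\}$ in $L^2(f_{\sigma_0,\alpha_0})$ and the finite rank $n$ of $\mathrm{span}\{\psi_k\}$, rather than on any pointwise control of $\sum_k|\psi_k(\omega)|^2$, which is not directly available and would not yield the correct $\sqrt{n}$ scaling in the residual term.
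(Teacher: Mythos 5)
Your decomposition is the right one — splitting $\eta$ into $\hat q_n\,\eta$ and $(1-\hat q_n)\eta$ is exactly the frequency-domain view of the paper's spatial splitting $g=q_n\ast g+(g-q_n\ast g)$, and you correctly assign the $\varepsilon_n^{-(2a+d)}$, $\sqrt{n}\,\varepsilon_n^{\beta/2}$ (via Lemma \ref{lem:qngn} and a Cauchy--Schwarz over $k$), and $\varepsilon_n$-free terms to the right pieces. The gap is in how you propose to sum over $k$. Starting from the diagonal identity $\lambda_{k,n}(\alpha)-1=\int|\psi_k(\omega)|^2\xi_1(\omega)^2\eta(\omega)\,\ud\omega$ and bounding each summand by $(\underline c_{\xi}\theta_0)^{-1}\int|\psi_k|^2f_{\sigma_0,\alpha_0}|\eta_n^*|$, the only route to a $\sqrt{n}\,\|\eta_n^*\|_{L^2}$ bound is a Cauchy--Schwarz that leaves you with $\bigl(\sum_k\int|\psi_k|^2f_{\sigma_0,\alpha_0}|\eta_n^*|^2\bigr)^{1/2}$, which requires pointwise control of $\sum_k|\psi_k(\omega)|^2$ — precisely the quantity you (correctly) note is unavailable. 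Orthonormality of $\{\psi_k\}$ in $L^2(f_{\sigma_0,\alpha_0})$ does not rescue the diagonal form: Bessel's inequality applies to expansions of a fixed $L^2$ function against the system, and the kernel implicit in the diagonal representation is $\eta_n^*(\omega)\,\delta(\omega-v)$, which is not square-integrable on $\RR^{2d}$.

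The missing idea, which is the heart of the paper's argument (following \citet{WangLoh11}), is to go back to the covariance difference $b(x,y)$ on the physical domain and exploit that the sampling points lie in $[0,T]^d$ and $c_1$ has compact support: one may insert the indicators $\Ical(|s+t|_{\infty}\le 4m_{\nu}+2T)$ and $\Ical(|u|_{\infty}\le 2m_{\nu}+2m_a+T)$ into the spatial representations \eqref{b1eq}--\eqref{hn2eq2} without changing $b(x_i,x_j)$ on the relevant arguments. Fourier transforming then replaces the diagonal kernel by the genuinely square-integrable off-diagonal kernels $\eta_n^*\!\left(\tfrac{\omega+v}{2}\right)\vartheta\!\left(\tfrac{\omega-v}{2}\right)$ and $\hat q_n(\omega)\eta(v)\int_{|u|_{\infty}\le\cdot}e^{-\imath(\omega-v)^\top u}\ud u$ appearing in \eqref{2zetafunc}; Bessel's inequality for the doubly indexed orthonormal system $\{\psi_j\otimes\overline{\psi_k}\}$ in $L^2(f_{\sigma,\alpha}\otimes f_{\sigma,\alpha})$ then gives $\sum_k|\zeta_{k,n}^{\dagger}|^2\preceq\|\eta_n^*\|_{L^2}^2\|\vartheta\|_{L^2}^2$, and the finite volume of the $u$-set controls $\sum_k|\zeta_{k,n}^{\ddagger}|$ after an AM--GM split (which is also where the $\varepsilon_n$-free third term actually arises, paired with $\int|\hat q_n|^2$). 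Two smaller points: the $[\max(\alpha_0,\alpha)]^{2\nu+d}$ prefactor in the second term comes from $\sup_{\omega}\xi_1(\omega)^2/f_{\sigma,\alpha}(\omega)$ via Lemma \ref{lem:fxi}, not from \eqref{lambda.upper1}; and without the $\vartheta$ device your residual-term estimate does not close, so the proposal as written does not constitute a proof.
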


\begin{proof}[Proof of Lemma \ref{lem:zetabound}]
For any $x,y\in \Scal$, let $b(x,y)={\EE}_{(\sigma,\alpha)}\{X(x)X(y)\} - {\EE}_{(\sigma_0,\alpha_0)}\{X(x)X(y)\}$. Then using the definition of $c_0(x)$ in \eqref{c0func} and $c_1(x)$ with the support of $c_1$ in $[-2m_{\nu},2m_{\nu}]^d$, the derivation after Equation (39) of \citet{WangLoh11} has shown that for $s,t\in \Scal$,
\begin{align}\label{b2eq}
b(x,y) &= (2\pi)^d \int_{\RR^d} \int_{\RR^d} g(s-t)c_1(x-s)c_1(y-t) \ud s \ud t \nonumber \\
&= \frac{1}{(2\pi)^d} \int_{\RR^{2d}} \ee^{\imath (\omega^\top x - v^\top y)} \eta_n^*\left(\frac{w+v}{2}\right) \vartheta\left(\frac{w-v}{2}\right) \xi_1(\omega)\xi_1(v) \ud \omega \ud v \nonumber \\
&~~~+ \frac{1}{(2\pi)^d} \int_{\RR^{2d}} \ee^{\imath(\omega^\top x - v^\top y)} \xi_1(\omega) \xi_1(v) \Bigg\{ \int_{\|u\|_{\infty}\leq 2m_{\nu}+2m_a+T} \ee^{-\imath (\omega^\top u-v^\top u)} \nonumber \\
&\quad \times \hat q_n(\omega)\eta(v) \ud u \Bigg\} \ud v \ud \omega,
\end{align}
where $\eta_n^*:\RR^d\to \CC$ is the Fourier transform of $g-q_n\ast g$ for $g$ defined in \eqref{gkfun} and $q_n$ in defined in \eqref{4func}, such that $\int_{\RR^d}\left|\eta^*_n(\omega)\right|^2\ud \omega = \int_{\RR^d} \left|q_n \ast g(x) - g(x) \right|^2 \ud x$ which can be upper bounded by Lemma \ref{lem:qngn}; $\vartheta(\omega)$ in \eqref{b2eq} is defined in the same way as Equation (23) of \citet{WangLoh11}:
\begin{align}\label{vartheta}
\vartheta(\omega) &= \frac{1}{2^d} \int_{\RR^d} e^{-\imath t^\top w} \Ical\left(\|t\|_{\infty}\leq 4m_{\nu}+2T\right) \ud t, \quad \text{ for all } \omega \in \RR^d.
\end{align}
Lemma 3 of \citet{WangLoh11} has proved that $\int_{\RR^d}\vartheta(\omega)^2\ud \omega<\infty$ and its value only depends on $d,\nu,T$.

Note that by the definition of covariance function,
\begin{align}\label{b3eq}
b(x,y) &= {\EE}_{(\sigma,\alpha)}\{X(x)X(y)\} - {\EE}_{(\sigma_0,\alpha_0)}\{X(x)X(y)\} \nonumber \\
&= \int_{\RR^d} \ee^{\imath(x-y)^\top w} \left\{f_{\sigma,\alpha}(\omega) - f_{\sigma_0,\alpha_0}(\omega)\right\}\ud \omega.
\end{align}
Hence, for any pair $(\sigma,\alpha)$ that satisfies $\sigma^2\alpha^{2\nu} =\theta_0=\sigma_0^2\alpha_0^{2\nu}$, for the $\{\psi_k:k=1\ldots,n\}$ functions in \eqref{wangloh.eq13}, we have that for $k=1,\ldots,n$,
\begin{align}\label{lambda2parts}
\lambda_{k,n}(\alpha)-1 & = \langle \psi_k,\psi_k \rangle_{f_{\sigma,\alpha}}  - \langle \psi_k,\psi_k \rangle_{f_{\sigma_0,\alpha_0}} := \zeta^{\dagger}_{k,n}+\zeta^{\ddagger}_{k,n},
\end{align}
where
\begin{align}\label{2zetafunc}
\zeta^{\dagger}_{k,n} &= \frac{1}{(2\pi)^d} \int_{\RR^{2d}} \psi_k(\omega)\overline{\psi_k(v)} \eta^*_n\left(\frac{w+v}{2}\right) \vartheta\left(\frac{w-v}{2}\right) \xi_1(\omega)\xi_1(v) \ud \omega \ud v, \nonumber \\
\zeta^{\ddagger}_{k,n} &= \frac{1}{(2\pi)^d} \int_{\RR^{2d}} \psi_k(\omega)\overline{\psi_k(v)} \xi_1(\omega)\xi_1(v) \hat q_n(\omega)\eta(v)\nonumber \\
&~~~ \times \left\{\int_{\|u\|_{\infty}\leq 2m_{\nu}+2m_a+T} \ee^{-\imath (\omega^\top u -v^\top u)}\ud u\right\}\ud \omega \ud v.
\end{align}
We follow the derivations on page 258-259 of \citet{WangLoh11}. By the Bessel's inequality, we have that
\begin{align}\label{zeta1bound1}
\sum_{k=1}^n \left|\zeta^{\dagger}_{k,n} \right|^2 &= \sum_{k=1}^n \left\{\frac{1}{(2\pi)^d} \int_{\RR^{2d}} \psi_k(\omega)\overline{\psi_k(\omega)} \eta^*_n\left(\frac{w+v}{2}\right) \vartheta\left(\frac{w-v}{2}\right) \xi_1(\omega)\xi_1(v) \ud \omega \ud v \right\}^2\nonumber \\
&\leq \frac{1}{(2\pi)^{2d}} \int_{\RR^{2d}}  \left|\eta^*_n\left(\frac{w+v}{2}\right) \vartheta\left(\frac{w-v}{2}\right) \right|^2 \frac{\xi_1(\omega)^2}{f_{\sigma,\alpha}(\omega)}\frac{\xi_1(v)^2}{f_{\sigma,\alpha}(v)} \ud \omega \ud v\nonumber \\
&\stackrel{(i)}{\leq} \frac{1}{2^d \pi^{2d}} \left\{\sup_{\omega \in \RR^d} \frac{\xi_1(\omega)^2}{f_{\sigma,\alpha}(\omega)}\right\}^2 \int_{\RR^d} \left |\vartheta(v)\right|^2\ud v \int_{\RR^d}\left|\eta^*_n(\omega)\right|^2\ud \omega \nonumber \\
&\stackrel{(ii)}{\leq} \frac{1}{2^d \pi^{2d}} \cdot \left\{\frac{\max\left\{(\alpha/\alpha_0)^{2\nu+d},1\right\}}{\underline c_{\xi} \theta_0}\right\}^2 \cdot \int_{\RR^d} \left |\vartheta(v)\right|^2\ud v \nonumber \\
&~~~ \times  C_{g,q}^2 \left[\frac{\max(\alpha_0^4,\alpha^4) \max\left\{\alpha_0^{2(2\nu+d-2)},\alpha^{2(2\nu+d-2)}\right\}}{ \alpha^{4\nu+3d/2-\cbeta/2}}\right]^2 \cdot \varepsilon_n^{\cbeta} \nonumber \\
&\leq (C_1^{\dagger})^2 \frac{\max(\alpha_0^{12},\alpha^{12}) \max\left\{\alpha_0^{6(2\nu+d-2)},\alpha^{6(2\nu+d-2)}\right\}}{\alpha^{2(4\nu+3d/2-\cbeta/2)}} \varepsilon_n^{\cbeta},
\end{align}
where (i) follows from the Cauchy-Schwarz inequality; (ii) follows from Lemma \ref{lem:fxi} and Lemma \ref{lem:qngn}, and $C_1^{\dagger}$ is a positive constant that depends only on $d,\nu,T,\alpha_0,a,\cbeta$.

For $\zeta^{\ddagger}_{k,n}$, we apply the Bessel's inequality to obtain that
\begin{align} \label{zeta2bound1}
&\quad \sum_{k=1}^n \left|\zeta^{\ddagger}_{k,n} \right|  \nonumber \\
& \leq \frac{1}{(2\pi)^{d}} \sum_{k=1}^n \int_{\|u\|_{\infty}\leq 2m_{\nu}+2m_a+T} \left|\int_{\RR^{d}} \ee^{-\imath \omega^\top u}\psi_k(\omega) \xi_1(\omega) \hat q_n(\omega) \ud \omega\right|  \nonumber \\
&~~~ \times \left|\int_{\RR^{d}} \ee^{\imath v^\top u}\overline \psi_k(v)\xi_1(v)\eta(v) \ud v\right|\ud u \nonumber \\
& \leq \frac{1}{2(2\pi)^d}    \int_{\|u\|_{\infty}\leq 2m_{\nu}+2m_a+T}  \sum_{k=1}^n \Bigg\{\left|\int_{\RR^d} \ee^{-\imath \omega^\top u} \psi_k(\omega) \frac{\xi_1(\omega)}{f_{\sigma,\alpha}(\omega)}\hat q_n(\omega)f_{\sigma,\alpha}(\omega) \ud \omega\right|^2 \nonumber \\
&~~~ +  \left|\int_{\RR^d} \ee^{-\imath v^\top u} \overline \psi_k(v) \frac{\xi_1(v)}{f_{\sigma,\alpha}(v)}\eta(v)f_{\sigma,\alpha}(v) \ud v\right|^2 \Bigg\}\ud u \nonumber \\
& \leq \frac{1}{2(2\pi)^d}    \int_{\|u\|_{\infty}\leq 2m_{\nu}+2m_a+T}  \Bigg\{\sup_{\omega \in \RR^d} \frac{\xi_1(\omega)^2}{f_{\sigma,\alpha}(\omega)}\int_{\RR^d} \left|\hat q_n(\omega)\right|^2 \ud \omega \nonumber \\
&~~~ +  \sup_{\omega \in \RR^d} \frac{f_{\sigma_0,\alpha_0}(\omega)}{\xi_1(\omega)^2} \int_{\RR^d} \left|\frac{f_{\sigma,\alpha}(v)}{f_{\sigma_0,\alpha_0}(v)} -1 \right|^2 \ud v\Bigg\} \ud u  \nonumber \\
&\stackrel{(i)}{\leq} \frac{1}{2(2\pi)^d} \cdot (4m_{\nu}+4m_a+2T)^d \cdot \left\{\frac{\max\left\{(\alpha/\alpha_0)^{2\nu+d},1\right\}}{\underline c_{\xi} \theta_0}\right\}   \nonumber \\
&~~~\times \int_{\RR^d}\frac{C_{\hat q}^2}{(1+\varepsilon_n\|\omega\|)^{2a+d}} \ud \omega \nonumber \\
&~~~ + \frac{1}{2(2\pi)^d} \cdot (4m_{\nu}+4m_a+2T)^d \cdot \overline c_{\xi} \theta_0
\max\left\{(\alpha_0/\alpha)^{2\nu+d},1 \right\}  \nonumber \\
&~~~\times \int_{\RR^d} \left\{\frac{(2\nu+d)\max(\alpha_0^2,\alpha^2) \max\left(\alpha_0^{2\nu+d-2},\alpha^{2\nu+d-2}\right)}{\alpha^{2\nu+d-2}}\right\}^2 \frac{1}{(\alpha^2+\|v\|^2)^2} \ud v \nonumber \\
&\leq \frac{(4m_{\nu}+4m_a+2T)^d}{2(2\pi)^d}\cdot \frac{C_{\hat q}^2 [\max(\alpha_0,\alpha)]^{2\nu+d} }{\underline c_{\xi}\theta_0 \alpha_0^{2\nu+d}\varepsilon_n^{2a+d}}\left\{\int_0^{\infty}\frac{r^{d-1}}{(1+r)^{2a+d}} \ud r \right\} \nonumber \\
&~~~ + \frac{(4m_{\nu}+4m_a+2T)^d \overline c_{\xi}\theta_0}{2(2\pi)^d} \cdot \frac{(2\nu+d)^2\max(\alpha_0^6,\alpha^6) \max\left\{\alpha_0^{3(2\nu+d-2)},\alpha^{3(2\nu+d-2)}\right\}}{\alpha^{3(2\nu+d)-4}} \nonumber \\
&~~~\times \alpha^{d-4} \left\{\int_0^{\infty}\frac{r^{d-1}}{(1+r^2)^2} \ud r \right\} \nonumber \\
&\leq C^{\ddagger}_1 \frac{[\max(\alpha_0,\alpha)]^{2\nu+d}}{\varepsilon_n^{2a+d}} + C^{\ddagger}_2 \frac{\max(\alpha_0^6,\alpha^6) \max\left\{\alpha_0^{3(2\nu+d-2)},\alpha^{3(2\nu+d-2)}\right\}}{\alpha^{2(3\nu+d)}},
\end{align}
where (i) follows from Lemma \ref{lem:specden_lambda}, Lemma \ref{lem:fxi}, and the inequality \eqref{qn.bound1}, and $C_1^{\ddagger},C_2^{\ddagger}$ are positive constants that depend only on $d,\nu,T,\alpha_0,a,\cbeta$.

Finally, we combine \eqref{zeta1bound1} and \eqref{zeta2bound1} to conclude that for any pair $(\sigma,\alpha)$ that satisfies $\sigma^2\alpha^{2\nu} =\theta_0=\sigma_0^2\alpha_0^{2\nu}$,
\begin{align}
& \sum_{k=1}^n \left|\lambda_{k,n}(\alpha)-1\right|  \leq \sum_{k=1}^n \left(\left|\zeta^{\dagger}_{k,n} \right|+\left|\zeta^{\ddagger}_{k,n} \right|\right) \leq \left(n\sum_{k=1}^n \left|\zeta^{\dagger}_{k,n} \right|^2\right)^{1/2} + \sum_{k=1}^n \left|\zeta^{\ddagger}_{k,n} \right| \nonumber \\
&\leq C_1^{\dagger} \frac{ \max(\alpha_0^{6},\alpha^{6}) \max\left\{\alpha_0^{3(2\nu+d-2)},\alpha^{3(2\nu+d-2)}\right\} \sqrt{n}\varepsilon_n^{\cbeta/2}}{\alpha^{4\nu+3d/2-\cbeta/2}} \nonumber \\
& + C^{\ddagger}_1 \frac{[\max(\alpha_0,\alpha)]^{2\nu+d}}{\varepsilon_n^{2a+d}} + C^{\ddagger}_2 \frac{\max(\alpha_0^{6},\alpha^{6}) \max\left\{\alpha_0^{3(2\nu+d-2)},\alpha^{3(2\nu+d-2)}\right\}}{\alpha^{2(3\nu+d)}}. \nonumber
\end{align}
\end{proof}

\vspace{3mm}

\begin{lemma}\label{lem:LauMas00}
(\citet{LauMas00} Lemma 1) Let $Z_1,\ldots,Z_n$ be i.i.d. $\mathcal{N}(0,1)$ random variables. Let $\{w_i:i=1\ldots,n\}$ be nonnegative constants. Let $\|w\|_{\infty}=\max_{1\leq i\leq n} w_i$, $\|w\|_1=\sum_{i=1}^n w_i$, and $\|w\|^2=\sum_{i=1}^n w_i^2$. Then for any positive $z>0$,
\begin{align*}
&\pr \left\{\sum_{i=1}^n w_iZ_i^2 \geq \|w\|_1 + 2\|w\|\sqrt{z} + 2\|w\|_{\infty}z \right\} \leq \ee^{-z}, \\
&\pr \left\{\sum_{i=1}^n w_iZ_i^2 \leq \|w\|_1 - 2\|w\|\sqrt{z}\right\} \leq \ee^{-z}.
\end{align*}
\end{lemma}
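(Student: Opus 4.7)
The plan is to derive both tail bounds by the classical Chernoff / Cramér-Chernoff method applied to the explicit moment generating function of a weighted sum of independent $\chi_1^2$ random variables, optimizing the Chernoff parameter carefully so that the resulting exponent matches $-z$ exactly. Throughout, write $W = \sum_{i=1}^n w_i Y_i^2$, so ${\EE} W = \|w\|_1$, and recall that for each $i$ and each $\lambda<1/(2w_i)$, ${\EE} e^{\lambda w_i Y_i^2} = (1-2\lambda w_i)^{-1/2}$; by independence, ${\EE} e^{\lambda W} = \prod_{i=1}^n (1-2\lambda w_i)^{-1/2}$ for $\lambda\in\bigl[0,1/(2\|w\|_\infty)\bigr)$.

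For the upper tail, I would first establish the key log-MGF estimate
\begin{align*}
\log {\EE} e^{\lambda(W-\|w\|_1)} \;=\; \sum_{i=1}^n \Bigl[-\tfrac{1}{2}\log(1-2\lambda w_i) - \lambda w_i\Bigr] \;\leq\; \frac{\lambda^2 \|w\|^2}{1-2\lambda \|w\|_\infty},
\end{align*}
valid for $\lambda\in\bigl[0,1/(2\|w\|_\infty)\bigr)$. The per-coordinate bound follows from $g(\lambda):=-\tfrac12\log(1-2\lambda u)-\lambda u$ having $g'(\lambda) = 2\lambda u^2/(1-2\lambda u)$, so that $g(\lambda) = \int_0^\lambda 2su^2/(1-2su)\,ds \leq (\lambda u)^2/(1-2\lambda u)$ by monotonicity of the integrand; summing over $i$ and using $\sum w_i^2 = \|w\|^2$ and $w_i\leq\|w\|_\infty$ gives the display. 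By Markov's inequality applied to $e^{\lambda(W-\|w\|_1)}$,
\begin{align*}
\pr\bigl(W \geq \|w\|_1 + t\bigr) \;\leq\; \exp\!\left(-\lambda t + \frac{\lambda^2\|w\|^2}{1-2\lambda\|w\|_\infty}\right).
\end{align*}
The crucial step is the choice $\lambda = \sqrt{z}/(\|w\| + 2\|w\|_\infty\sqrt{z})$ together with $t=2\|w\|\sqrt{z}+2\|w\|_\infty z$. A direct computation gives $1-2\lambda\|w\|_\infty = \|w\|/(\|w\|+2\|w\|_\infty\sqrt z)$, hence $\lambda^2\|w\|^2/(1-2\lambda\|w\|_\infty) = z\|w\|/(\|w\|+2\|w\|_\infty\sqrt z)$ and $\lambda t = 2z(\|w\|+\|w\|_\infty\sqrt z)/(\|w\|+2\|w\|_\infty\sqrt z)$; subtracting yields exactly $-z$.

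The lower tail is strictly easier. Apply Markov to $e^{-\lambda(W-\|w\|_1)}$ for $\lambda>0$ (no upper restriction needed since $1+2\lambda w_i>0$), and use the analogous one-variable estimate $\tilde g(\lambda):=-\tfrac12\log(1+2\lambda u)+\lambda u$, which satisfies $\tilde g'(\lambda)=2\lambda u^2/(1+2\lambda u)\leq 2\lambda u^2$ and hence $\tilde g(\lambda)\leq\lambda^2 u^2$. Summing gives $\log{\EE}e^{-\lambda(W-\|w\|_1)}\leq \lambda^2\|w\|^2$, so $\pr(W\leq\|w\|_1-t)\leq\exp(-\lambda t+\lambda^2\|w\|^2)$. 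Minimizing over $\lambda>0$ at $\lambda^*=t/(2\|w\|^2)$ yields the exponent $-t^2/(4\|w\|^2)$, which equals $-z$ for $t=2\|w\|\sqrt z$.

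The only real ``obstacle'' is pedagogical rather than mathematical: verifying by algebra that the specific $\lambda$ chosen in the upper-tail bound makes the Chernoff exponent collapse to $-z$ exactly, so that no sub-optimal constants appear. This is the standard proof from \citet{LauMas00} and no additional machinery is needed; the lemma is used as a black-box concentration tool for the quadratic forms $X_n^\top R_\alpha^{-1}X_n$ that arise in the spectral analysis of $\widetilde\theta_\alpha$ elsewhere in the paper.
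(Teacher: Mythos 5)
Your proof is correct, but note that the paper does not prove this lemma at all: it is imported verbatim as Lemma~1 of \citet{LauMas00} and used purely as a black-box concentration tool, so there is nothing in the paper to compare against except the citation. Your argument is the standard Cram\'er--Chernoff derivation from that reference, and the algebra checks out: with $\lambda=\sqrt{z}/(\|w\|+2\|w\|_\infty\sqrt{z})$ one indeed gets $1-2\lambda\|w\|_\infty=\|w\|/(\|w\|+2\|w\|_\infty\sqrt{z})$, and the exponent $-\lambda t+\lambda^2\|w\|^2/(1-2\lambda\|w\|_\infty)$ collapses exactly to $-z$ for $t=2\|w\|\sqrt{z}+2\|w\|_\infty z$; the lower tail with $\lambda^*=t/(2\|w\|^2)$ is likewise exact. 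One small caution on wording: in the per-coordinate bound $g(\lambda)=\int_0^\lambda 2su^2/(1-2su)\,\ud s\leq (\lambda u)^2/(1-2\lambda u)$, appealing to ``monotonicity of the integrand'' and evaluating at the endpoint would give the weaker $2(\lambda u)^2/(1-2\lambda u)$; to get the constant you actually state (and need), you must bound only the factor $1/(1-2su)$ by its value at $s=\lambda$ and integrate $2su^2$ exactly. The bound you wrote is the right one, so this is a matter of justification rather than a gap.
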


\vspace{3mm}

\begin{lemma}\label{lem:weight.bound}
Suppose that $d\in \{1,2,3\}$ and $\nu\in \RR^+$. For any $\alpha>0$, we define $w_i(\alpha)=\left|\lambda_{i,n}(\alpha)^{-1} - 1 \right|/\sqrt{n}$ for $i=1,\ldots,n$ and $w(\alpha)=(w_1(\alpha),\ldots,w_n(\alpha))^\top$, where $\lambda_{i,n}(\alpha)$'s are as defined in \eqref{diagonalize} and \eqref{wangloh.eq13}. Then there exists a large integer $N_5'$ that only depends on $\nu,d,T,\alpha_0$, such that for all $n>N_5'$, for $\tau,\underline\alpha_n,\overline\alpha_n$ defined in \eqref{eq:2kappa.re},
\begin{align*}
\sup_{\alpha\in [\underline\alpha_n, \overline \alpha_n]} \left\{\|w(\alpha)\|_1 + 4\|w(\alpha)\|\log n + 8\|w(\alpha)\|_{\infty}\log^2 n \right\} \leq n^{-\tau} /8 ,
\end{align*}
where $\|w(\alpha)\|_1=\sum_{i=1}^n |w_i(\alpha)|$, $\|w(\alpha)\|=\left(\sum_{i=1}^n w_i(\alpha)^2\right)^{1/2}$, and $\|w(\alpha)\|_{\infty}=\max_{1\leq i\leq n} |w_i(\alpha)|$.
\end{lemma}

\begin{proof}[Proof of Lemma \ref{lem:weight.bound}]
For abbreviation, we use $\Gamma$ to denote the right-hand side of Equation \ref{lambda.bound1} in Lemma \ref{lem:zetabound}. From Lemma \ref{lem:specden_lambda} and Lemma \ref{lem:zetabound}, we can obtain that
\begin{align}\label{w.bound1}
&\|w(\alpha)\|_1=\sum_{i=1}^n w_i(\alpha) = \frac{1}{\sqrt{n}} \sum_{i=1}^n \left|\lambda_{i,n}(\alpha)^{-1} - 1 \right|\nonumber \\
&\leq \frac{1}{\sqrt{n}\min_{1\leq i\leq n} \lambda_{i,n}(\alpha)} \sum_{i=1}^n \left|\lambda_{i,n}(\alpha) - 1 \right| \leq \frac{\{\max(\alpha_0,\alpha)\}^{2\nu+d}}{\sqrt{n}\alpha_0^{2\nu+d} } \times \Gamma,
\end{align}

\begin{align}\label{w.bound2}
&\|w(\alpha)\|^2 = \sum_{i=1}^n w_i^2 = \frac{1}{n} \sum_{i=1}^n \left|\lambda_{i,n}(\alpha)^{-1} - 1 \right|^2\nonumber \\
&\leq \frac{1}{n\left\{\min_{1\leq i\leq n} \lambda_{i,n}(\alpha)\right\}^2} \sum_{i=1}^n \left|\lambda_{i,n}(\alpha) - 1 \right|^2 \nonumber\\
&\leq \frac{1}{n\left\{\min_{1\leq i\leq n} \lambda_{i,n}(\alpha)\right\}^2} \left(\sum_{i=1}^n \left|\lambda_{i,n}(\alpha) - 1 \right| \right)^2  \leq \frac{\{\max(\alpha_0,\alpha)\}^{2(2\nu+d)}}{n\alpha_0^{2(2\nu+d)} } \times \Gamma^2.
\end{align}
We can see the upper bound in \eqref{w.bound2} is exactly the square of the upper bound in \eqref{w.bound1}.
\begin{align}\label{w.bound3}
&\|w(\alpha)\|_{\infty} = \max_{1\leq i\leq n} w_i = \frac{1}{\sqrt{n}} \max_{1\leq i\leq n} \left|\lambda_{i,n}(\alpha)^{-1} - 1 \right|\nonumber \\
&\leq \frac{1}{\sqrt{n}\min_{1\leq i\leq n} \lambda_{i,n}(\alpha)} \max_{1\leq i\leq n} \left|\lambda_{i,n}(\alpha) - 1 \right| \leq \frac{\max_{1\leq i\leq n} \lambda_{i,n}(\alpha) + 1}{\sqrt{n}\min_{1\leq i\leq n} \lambda_{i,n}(\alpha)}  \nonumber \\
&\leq \frac{\max\left\{(\alpha_0/\alpha)^{2\nu+d},1\right\}+1}{\sqrt{n} \min\left\{(\alpha_0/\alpha)^{2\nu+d},1\right\}}
\leq \frac{\left[\{\max(\alpha_0,\alpha)\}^{2\nu+d}+\alpha^{2\nu+d}\right]\{\max(\alpha_0,\alpha)\}^{2\nu+d}}
{\sqrt{n}\alpha_0^{2\nu+d} \alpha^{2\nu+d}} \nonumber \\
&\leq \frac{2\{\max(\alpha_0,\alpha)\}^{2(2\nu+d)}} {\sqrt{n}\alpha_0^{2\nu+d} \alpha^{2\nu+d}} .
\end{align}

Since $\varepsilon_n=n^{-1/(4a+2d+\cbeta)}$ in Lemma \ref{lem:qngn} and Lemma \ref{lem:zetabound}, we have $\sqrt{n}\varepsilon_n^{\cbeta/2}=1/\varepsilon_n^{2a+d}=n^{(2a+d)/(4a+2d+\cbeta)}$. Let $z=4\log^2 n$ in Lemma \ref{lem:LauMas00}. In the following, we analyze the necessary condition for $\underline \alpha_n$ and $\overline \alpha_n $ such that $\|w(\alpha)\|_1 + 4\|w(\alpha)\|\sqrt{z} + 8\|w(\alpha)\|_{\infty}z=o(1)$ for any $\alpha\in [\underline \alpha_n,\overline \alpha_n]$ as $n\to\infty$. We consider two situations according to the value of $\alpha$, each of which has two further sub-cases according to the sign of $2\nu+d-2$.

\vspace{4mm}

\noindent \textbf{(1)} When $\alpha\in [\alpha_0,\overline \alpha_n]$ and possibly $\alpha\to+\infty$ as $n\to\infty$:
\vspace{2mm}

In this case, in the upper bounds of \eqref{w.bound1} and \eqref{w.bound2}, since $\alpha\geq \alpha_0$, we have that \\ $\max\{\alpha_0^{3(2\nu+d-2)},\alpha^{3(2\nu+d-2)}\} \preceq \alpha^{3(2\nu+d-2)}$ if $2\nu+d-2\geq 0$, and that \\ $\max\{\alpha_0^{3(2\nu+d-2)},\alpha^{3(2\nu+d-2)}\} \preceq 1$ if $-1<2\nu+d-2<0$. We discuss the two sub-cases respectively:
\vspace{2mm}

\noindent \textbf{(1)-(i)} When $2\nu+d-2\geq 0$, we have $\max\{\alpha_0^{3(2\nu+d-2)},\alpha^{3(2\nu+d-2)}\} \preceq \alpha^{3(2\nu+d-2)}$.
Using \eqref{w.bound1}, \eqref{w.bound2}, and \eqref{w.bound3}, we can see that (neglecting all multiplicative constants by using the order relation $\preceq$):
\begin{align}\label{3wbound1}
& ~~~~ \|w(\alpha)\|_1 + 2\|w(\alpha)\|\sqrt{z} + 2\|w(\alpha)\|_{\infty}z \nonumber \\
&\preceq \frac{\alpha^{2\nu+d}\log n}{\sqrt{n}} \left(\alpha^{2\nu+3d/2+\cbeta/2} \sqrt{n}\varepsilon_n^{\cbeta/2} + \frac{\alpha^{2\nu+d}}{\varepsilon_n^{2a+d}} + \alpha^d \right) + \frac{\alpha^{2\nu+d}\log^2 n}{\sqrt{n}} \nonumber \\
&\preceq \frac{\overline \alpha_n^{2\nu+d}\log n}{\sqrt{n}} \cdot \overline \alpha_n ^{2\nu+3d/2+\cbeta/2} n^{(2a+d)/(4a+2d+\cbeta)} +  \frac{\overline \alpha_n ^{2\nu+d}\log^2 n}{\sqrt{n}} \nonumber \\
&= \frac{\overline \alpha_n^{4\nu + 5d/2 + \cbeta/2} \log n}{n^{\cbeta/(8a+4d+2\cbeta)}} +  \frac{\overline \alpha_n ^{2\nu+d}\log^2 n}{\sqrt{n}}.
\end{align}
In order to make the last upper bound $o(1)$, given that $\overline \alpha_n\succ 1$, we further need
\begin{align}\label{alphaover1}
& \overline \alpha_n \prec n^{\frac{\cbeta}{(4a+2d+\cbeta)(8\nu+5d+\cbeta)}} (\log n)^{-\frac{2}{8\nu+5d+\cbeta}},\quad
\overline \alpha_n \prec n^{\frac{1}{2(2\nu+d)}} (\log n)^{-\frac{2}{2\nu+d}},
\end{align}
which holds as long as
\begin{align}\label{const1}
& \overkappa  < \frac{\cbeta}{(4a+2d+\cbeta)(8\nu+5d+\cbeta)},\quad \overkappa  < \frac{1}{2(2\nu+d)}.
\end{align}
\vspace{3mm}

\noindent \textbf{(1)-(ii)} When $-1< 2\nu+d-2 < 0$, we have $\max\{\alpha_0^{3(2\nu+d-2)},\alpha^{3(2\nu+d-2)}\} \preceq 1$. Note that this special case can only happen when $d=1$ and $\nu\in (0,1/2)$. Using \eqref{w.bound1}, \eqref{w.bound2}, and \eqref{w.bound3}, we can see that:
\begin{align}\label{3wbound1.2}
& ~~~~ \|w(\alpha)\|_1 + 2\|w(\alpha)\|\sqrt{z} + 2\|w(\alpha)\|_{\infty}z \nonumber \\
&\preceq \frac{\alpha^{2\nu+d}\log n}{\sqrt{n}} \Big\{\alpha^{6-4\nu-3d/2+\cbeta/2} n^{(2a+d)/(4a+2d+\cbeta)} + \alpha^{2\nu+d}n^{(2a+d)/(4a+2d+\cbeta)} \nonumber \\
& \quad + \alpha^{6-6\nu-2d} \Big\} + \frac{\alpha^{2\nu+d}\log^2 n}{\sqrt{n}} .
\end{align}
Therefore,
\begin{align}\label{3wbound1.3}
& ~~~~ \|w(\alpha)\|_1 + 2\|w(\alpha)\|\sqrt{z} + 2\|w(\alpha)\|_{\infty}z \nonumber \\
&\preceq \frac{\overline \alpha_n^{2\nu+d}\log n}{\sqrt{n}} \Big\{\overline \alpha_n^{6-4\nu-3d/2+\cbeta/2} n^{(2a+d)/(4a+2d+\cbeta)}  + \overline \alpha_n^{6-6\nu-2d} \Big\} + \frac{\overline \alpha_n^{2\nu+d}\log^2 n}{\sqrt{n}} \nonumber \\
&\preceq \frac{\overline \alpha_n^{6-2\nu-d/2+\cbeta/2} \log n}{n^{\cbeta/(8a+4d+2\cbeta)}} + \frac{\overline \alpha_n^{6-4\nu-d}\log n}{\sqrt{n}} +  \frac{\overline \alpha_n ^{2\nu+d}\log^2 n}{\sqrt{n}}.
\end{align}
In order to make the last upper bound $o(1)$, given that $\overline \alpha_n\succ 1$ and $d=1$, we need
\begin{align}\label{alphaover1.2}
& \overline \alpha_n \prec n^{\frac{\cbeta}{(4a+2+\cbeta)(11-4\nu+\cbeta)}} (\log n)^{-\frac{2}{11-4\nu+\cbeta}},\nonumber \\
& \overline \alpha_n \prec n^{\frac{1}{2(5-4\nu)}} (\log n)^{-\frac{1}{5-4\nu}},\quad  \overline \alpha_n \prec n^{\frac{1}{2(2\nu+1)}} (\log n)^{-\frac{2}{2\nu+1}}.
\end{align}
Since $d=1$ and $\nu\in (0,1/2)$ in this case, $11-4\nu+\cbeta>0$ and $10> 2(5-4\nu) > 2(2\nu+1)$. We need that
\begin{align} \label{const2}
\overkappa & < \frac{\cbeta}{(4a+2+\cbeta)(11-4\nu+\cbeta)},\quad \overline \kappa< \frac{1}{10}.
\end{align}

\vspace{4mm}

\noindent \textbf{(2)} When $\alpha\in [\underline \alpha_n, \alpha_0]$ and possibly $\alpha\to 0+$ as $n\to\infty$:
\vspace{2mm}

In this case, in the upper bounds of \eqref{w.bound1} and \eqref{w.bound2}, since $\alpha\leq \alpha_0$, we have that \\ $\max\{\alpha_0^{3(2\nu+d-2)},\alpha^{3(2\nu+d-2)}\} \preceq 1$ if $2\nu+d-2\geq 0$, and that \\ $\max\{\alpha_0^{3(2\nu+d-2)},\alpha^{3(2\nu+d-2)}\} \preceq \alpha^{3(2\nu+d-2)}$ if $-1<2\nu+d-2<0$. We discuss the two sub-cases respectively:
\vspace{2mm}

\noindent \textbf{(2)-(i)} When $2\nu+d-2\geq 0$, we have $\max\{\alpha_0^{3(2\nu+d-2)},\alpha^{3(2\nu+d-2)}\} \preceq 1$ and $\max(\alpha_0,\alpha)\preceq 1$. Using \eqref{w.bound1}, \eqref{w.bound2}, and \eqref{w.bound3}, we can see that in this case:
\begin{align}\label{3wbound2.1}
& ~~~~ \|w(\alpha)\|_1 + 2\|w(\alpha)\|\sqrt{z} + 2\|w(\alpha)\|_{\infty}z \nonumber \\
&\preceq \frac{\log n}{\sqrt{n}} \left( \frac{\sqrt{n}\varepsilon_n^{\cbeta/2}}{\alpha^{4\nu+3d/2-\cbeta/2}} + \frac{1}{\varepsilon_n^{2a+d}} + \frac{1}{\alpha^{2(3\nu+d)}} \right) + \frac{\log^2 n}{\sqrt{n}\alpha^{2\nu+d}} \nonumber \\
&\preceq  \frac{\log n}{\underline\alpha_n^{4\nu+3d/2-\cbeta/2}n^{\frac{\cbeta}{2(4a+2d+\cbeta)}}} + \frac{\log n}{\sqrt{n}\underline\alpha_n^{2(3\nu+d)}} +   \frac{\log^2 n}{\sqrt{n}\underline\alpha_n^{2\nu+d}}.
\end{align}
In order to make the last upper bound $o(1)$, given that $\underline \alpha_n \prec 1$, we need that
\begin{align}\label{alphaunder1.1}
& \underline \alpha_n \succ n^{-\frac{\cbeta}{(4a+2d+\cbeta)(8\nu+3d-\cbeta)}} (\log n)^{\frac{2}{8\nu+3d-\cbeta}},\nonumber \\
&
\underline \alpha_n \succ n^{-\frac{1}{4(3\nu+d)}} (\log n)^{\frac{1}{2(3\nu+d)}}, \quad \underline \alpha_n \succ n^{-\frac{1}{2(2\nu+d)}} (\log n)^{\frac{2}{2\nu+d}}.
\end{align}
Since $4(3\nu+d)>2(2\nu+d)$, we only need
\begin{align}\label{const3}
\underkappa & < \frac{\cbeta}{(4a+2d+\cbeta)(8\nu+3d-\cbeta)}, \quad \underkappa < \frac{1}{4(3\nu+d)}.
\end{align}

\vspace{2mm}
\noindent \textbf{(2)-(ii)} When $-1<2\nu+d-2<0$, we have $\max\{\alpha_0^{3(2\nu+d-2)},\alpha^{3(2\nu+d-2)}\} \preceq \alpha^{3(2\nu+d-2)}$ and $\max(\alpha_0,\alpha)\preceq \alpha^{3(2\nu+d-2)}$. Note that this special case can only happen when $d=1$ and $\nu\in (0,1/2)$. Using \eqref{w.bound1}, \eqref{w.bound2}, and \eqref{w.bound3}, we can see that in this case:
\begin{align}\label{3wbound2}
& ~~~~ \|w(\alpha)\|_1 + 2\|w(\alpha)\|\sqrt{z} + 2\|w(\alpha)\|_{\infty}z \nonumber \\
&\preceq \frac{\log n}{\sqrt{n}} \left( \frac{\sqrt{n}\varepsilon_n^{\cbeta/2}}{\alpha^{6-2\nu-3d/2-\cbeta/2}} + \frac{1}{\varepsilon_n^{2a+d}} + \frac{1}{\alpha^{6-d}} \right) + \frac{\log^2 n}{\sqrt{n}\alpha^{2\nu+d}} \nonumber \\
&\preceq \frac{\log n}{\underline\alpha_n^{6-2\nu-3d/2-\cbeta/2}n^{\frac{\cbeta}{2(4a+2d+\cbeta)}}} + \frac{\log n}{\sqrt{n}\underline\alpha_n^{6-d}} +   \frac{\log^2 n}{\sqrt{n}\underline\alpha_n^{2\nu+d}}.
\end{align}
In order to make the last upper bound $o(1)$, given that $\underline \alpha_n \prec 1$ and $d=1$, we only need that
\begin{align}\label{alphaunder1}
& \underline \alpha_n \succ n^{- \frac{\cbeta}{(4a+2+\cbeta)(9-4\nu-\cbeta)}} (\log n)^{\frac{2}{9-4\nu-\cbeta}},\nonumber \\
& \underline \alpha_n \succ n^{-\frac{1}{10}} (\log n)^{\frac{1}{5}}, \quad \underline \alpha_n \succ n^{-\frac{1}{2(2\nu+1)}} (\log n)^{\frac{2}{2\nu+1}}.
\end{align}
Note that since $d=1$ and $\nu\in (0,1/2)$ in this case, $9-4\nu-\cbeta>0$ and $2(2\nu+d)<10$. Therefore, we only need
\begin{align} \label{const4}
\underkappa & < \frac{\cbeta}{(4a+2+\cbeta)(9-4\nu-\cbeta)}, \quad
\underkappa  < \frac{1}{10}.
\end{align}
\vspace{2mm}

Since all the right-hand sides of \eqref{const1}, \eqref{const2}, \eqref{const3}, and \eqref{const4} are positive, we choose $a=0.01$ and $\cbeta=0.9$ such that $a>0$ and $0<\cbeta<\min(4-d,2)$ with $d\in \{1,2,3\}$ are both satisfied. Then the choice of $\overkappa$ and $\underkappa$ in \eqref{eq:2kappa.re} satisfy \eqref{const1}, \eqref{const2}, \eqref{const3}, and \eqref{const4}. Furthermore, for $\tau$ defined in \eqref{eq:2kappa.re}, $n^{-\tau}/8$ is strictly larger in order than the maximum of the right-hand sides of \eqref{3wbound1}, \eqref{3wbound1.3}, \eqref{3wbound2.1}, and \eqref{3wbound2}.

With this $\tau$ and $z=4\log^2 n$, we have shown that uniformly for all $\alpha\in [\underline\alpha_n, \overline \alpha_n]$, there exists a large integer $N_5'$ that depends only on $\nu,d,T,\alpha_0$, such that for all $n>N_5'$,
\begin{align*}
\|w(\alpha)\|_1 + 4\|w(\alpha)\|\log n + 8\|w(\alpha)\|_{\infty}\log^2 n \leq n^{-\tau} /8 .
\end{align*}
\end{proof}

\vspace{3mm}

\begin{lemma}\label{lem:Hsuetal12}
(\citealt{Hsuetal12} Proposition 1.1) Let $Z_1,\ldots,Z_n$ be i.i.d. $\mathcal{N}(0,1)$ random variables and $Z=(Z_1,\ldots,Z_n)^\top$. Let $\Sigma$ be an $n\times n$ symmetric positive semidefinite matrix. Then for any positive $z>0$,
\begin{align*}
&\pr \left\{Z^\top \Sigma Z \geq \tr(\Sigma) + 2\sqrt{\tr(\Sigma^2)z} + 2\|\Sigma\|_{\op} z \right\} \leq e^{-z}.
\end{align*}
\end{lemma}

\vspace{5mm}

\section{Technical Lemmas for Profile Restricted Log-Likelihood} \label{supsec:proflik}

In this section, we derive some useful results for the profile restricted log-likelihood $\widetilde \Lcal_n(\alpha)$ defined in \eqref{def:prologlik} of the main text. In particular, we show Lemma \ref{lem:prolik_loosebound}, Lemma \ref{lem:profilelk.rightlower}, Lemma \ref{lem:profilelk.leftupper}, and Lemma \ref{lem:profilelk.rightupper}. These four lemmas play key roles in controlling the tail part of the posterior of $\alpha$, and will be used in the proof of Theorem \ref{thm:bvm2:joint}. Finally, Lemma \ref{lem:alpha.exist} proves the existence of the profile posterior $\widetilde \pi(\alpha|Y_n)$ as stated in Theorem \ref{thm:bvm2:joint}.
\vspace{5mm}

We recall from the main text that the profile restricted log-likelihood $\widetilde \Lcal_n(\alpha)$ defined in \eqref{def:prologlik} of the main text is
\begin{align} \label{def:prologlik2}
\widetilde \Lcal_n(\alpha) & \equiv \Lcal_n(\alpha^{-2\nu}\widetilde \theta_{\alpha}, \alpha) \nonumber \\
&= -\frac{n-p}{2}\log  \frac{Y_n^\top  \left[R_{\alpha}^{-1} - R_{\alpha}^{-1} M_n \big(M_n^\top R_{\alpha}^{-1} M_n + \Omega_{\beta}\big)^{-1} M_n^\top R_{\alpha}^{-1}  \right] Y_n }{n-p} \nonumber \\
&\quad - \frac{1}{2}\log \left|R_{\alpha}\right| - \frac{1}{2}\log \big|M_n^\top R_{\alpha}^{-1} M_n + \Omega_{\beta}\big| -\frac{n-p}{2}.
\end{align}

\begin{lemma}\label{lem:prolik_loosebound}
Suppose that $d\in \ZZ^+$ and $\nu\in \RR^+$. The profile restricted log-likelihood function defined in \eqref{def:prologlik2} satisfies that for any $0<\alpha_1<\alpha_2<\infty$, for all possible value of $Y_n\in \RR^n$,
\begin{align*}
& \left(\frac{\alpha_1}{\alpha_2}\right)^{n(\nu+d/2)} < \exp\left\{\widetilde \Lcal_n(\alpha_2) - \widetilde \Lcal_n(\alpha_1)\right\} < \left(\frac{\alpha_2}{\alpha_1}\right)^{n(\nu+d/2)}.
\end{align*}
\end{lemma}

\begin{proof}[Proof of Lemma \ref{lem:prolik_loosebound}]
From the expression \eqref{def:prologlik2}, we have that for any $0<\alpha_1<\alpha_2<\infty$,
\begin{align} \label{eq:prolik:diff}
&\quad \widetilde \Lcal_n(\alpha_2) - \widetilde \Lcal_n(\alpha_1) \nonumber \\
&= -\frac{n-p}{2}\log  \frac{Y_n^\top  \left[R_{\alpha_2}^{-1} - R_{\alpha_2}^{-1} M_n \big(M_n^\top R_{\alpha_2}^{-1} M_n + \Omega_{\beta}\big)^{-1} M_n^\top R_{\alpha_2}^{-1}  \right] Y_n }{Y_n^\top  \left[R_{\alpha_1}^{-1} - R_{\alpha_1}^{-1} M_n \big(M_n^\top R_{\alpha_1}^{-1} M_n + \Omega_{\beta}\big)^{-1} M_n^\top R_{\alpha_1}^{-1}  \right] Y_n}  \nonumber \\
&\quad  -\frac{1}{2}\log \frac{|R_{\alpha_2}|}{|R_{\alpha_1}|} - \frac{1}{2}\log \frac{\big|M_n^\top R_{\alpha_2}^{-1}M_n + \Omega_{\beta}\big|}{\big|M_n^\top R_{\alpha_1}^{-1}M_n + \Omega_{\beta}\big|} .
\end{align}
From \eqref{eq:theta1.diff.1} in the proof of Lemma \ref{lem:theta1.monotone}, we have that for any value of $Y_n\in \RR^n$,
\begin{align}\label{eq:alpha12.term1.1}
& \frac{Y_n^\top  \left[R_{\alpha_2}^{-1} - R_{\alpha_2}^{-1} M_n \big(M_n^\top R_{\alpha_2}^{-1} M_n + \Omega_{\beta}\big)^{-1} M_n^\top R_{\alpha_2}^{-1}  \right] Y_n }{Y_n^\top  \left[R_{\alpha_1}^{-1} - R_{\alpha_1}^{-1} M_n \big(M_n^\top R_{\alpha_1}^{-1} M_n + \Omega_{\beta}\big)^{-1} M_n^\top R_{\alpha_1}^{-1}  \right] Y_n} \geq \left(\frac{\alpha_1}{\alpha_2}\right)^{2\nu}.
\end{align}
Similar to the proof of \eqref{eq:theta1.diff.1}, now we notice that the second relation in Lemma \ref{lem:alpha.monotone.matrix} implies that $\alpha_1^{-d}R_{\alpha_1}^{-1} > \alpha_2^{-d}R_{\alpha_2}^{-1}$ for any $0<\alpha_1<\alpha_2<\infty$. Therefore, we apply Lemma \ref{lem:2posdef} with $A_1=\alpha_2^{-d}R_{\alpha_2}^{-1}$, $A_2=\alpha_1^{-d}R_{\alpha_1}^{-1}$, $G=M_n$, and $\Omega=\alpha_2^{-d}\Omega_{\beta}$ to obtain that
\begin{align}\label{eq:Rd.diff}
0_{n\times n} &\overset{(i)}{\leq} \left[\alpha_1^{-d} R_{\alpha_1}^{-1} - \alpha_1^{-d} R_{\alpha_1}^{-1} M_n\big(\alpha_1^{-d} M_n^\top R_{\alpha_1}^{-1}M_n + \alpha_2^{-d} \Omega_{\beta}\big)^{-1} M_n^\top \big(\alpha_1^{-d} R_{\alpha_1}^{-1}\big) \right] \nonumber \\
& \quad - \left[\alpha_2^{-d} R_{\alpha_2}^{-1} - \alpha_2^{-d} R_{\alpha_2}^{-1} M_n\big(\alpha_2^{-d} M_n^\top R_{\alpha_2}^{-1}M_n + \alpha_2^{-d} \Omega_{\beta}\big)^{-1} M_n^\top \big(\alpha_2^{-d} R_{\alpha_2}^{-1}\big) \right] \nonumber \\
&\overset{(ii)}{\leq} \left[\alpha_1^{-d} R_{\alpha_1}^{-1} - \alpha_1^{-d} R_{\alpha_1}^{-1} M_n\big(\alpha_1^{-d} M_n^\top R_{\alpha_1}^{-1}M_n + \alpha_1^{-d} \Omega_{\beta}\big)^{-1} M_n^\top \big(\alpha_1^{-d} R_{\alpha_1}^{-1}\big) \right] \nonumber \\
& \quad - \left[\alpha_2^{-d} R_{\alpha_2}^{-1} - \alpha_2^{-d} R_{\alpha_2}^{-1} M_n\big(\alpha_2^{-d} M_n^\top R_{\alpha_2}^{-1}M_n + \alpha_2^{-d} \Omega_{\beta}\big)^{-1} M_n^\top \big(\alpha_2^{-d} R_{\alpha_2}^{-1}\big) \right] \nonumber \\
&= \alpha_1^{-d}\left[ R_{\alpha_1}^{-1} - R_{\alpha_1}^{-1} M_n\big( M_n^\top R_{\alpha_1}^{-1}M_n +  \Omega_{\beta}\big)^{-1} M_n^\top  R_{\alpha_1}^{-1} \right] \nonumber \\
& \quad - \alpha_2^{-d}\left[ R_{\alpha_2}^{-1} -  R_{\alpha_2}^{-1} M_n\big( M_n^\top R_{\alpha_2}^{-1}M_n + \Omega_{\beta}\big)^{-1} M_n^\top  R_{\alpha_2}^{-1}\right] ,
\end{align}
where (i) follows from the conclusion of Lemma \ref{lem:2posdef} and (ii) follows from replacing $\alpha_2^{-d}\Omega_{\beta}$ inside the first inverse by $\alpha_1^{-d}\Omega_{\beta}$. This implies that the right-hand side of \eqref{eq:Rd.diff} is positive semidefinite. Therefore, we have that if $\alpha_1<\alpha_2$, then for any value of $Y_n\in \RR^n$,
\begin{align} \label{eq:alpha12.term1.2}
& \frac{Y_n^\top  \left[R_{\alpha_2}^{-1} - R_{\alpha_2}^{-1} M_n \big(M_n^\top R_{\alpha_2}^{-1} M_n + \Omega_{\beta}\big)^{-1} M_n^\top R_{\alpha_2}^{-1}  \right] Y_n }{Y_n^\top  \left[R_{\alpha_1}^{-1} - R_{\alpha_1}^{-1} M_n \big(M_n^\top R_{\alpha_1}^{-1} M_n + \Omega_{\beta}\big)^{-1} M_n^\top R_{\alpha_1}^{-1}  \right] Y_n} \leq \left(\frac{\alpha_2}{\alpha_1}\right)^{d}.
\end{align}
Using Lemma \ref{lem:alpha.monotone.matrix} again, we can see that $\alpha_2^{2\nu}R_{\alpha_2}^{-1} > \alpha_1^{2\nu}R_{\alpha_1}^{-1}$ and $\alpha_2^{d}R_{\alpha_2} > \alpha_1^{d}R_{\alpha_1}$ imply
\begin{align} \label{eq:alpha12.term2}
& \left(\frac{\alpha_1}{\alpha_2}\right)^{nd} \leq \frac{|R_{\alpha_2}|}{|R_{\alpha_1}|} \leq \left(\frac{\alpha_2}{\alpha_1}\right)^{2n\nu} .
\end{align}
Next we find upper and lower bounds for the last term in \eqref{eq:prolik:diff} involving $\big|M_n^\top R_{\alpha}^{-1}M_n + \Omega_{\beta}\big|$. We first notice that
\begin{align} \label{eq:det.ratio1}
& \frac{\big|M_n^\top R_{\alpha_2}^{-1}M_n + \Omega_{\beta}\big|}{\big|M_n^\top R_{\alpha_1}^{-1}M_n + \Omega_{\beta}\big|} =
\left| \big(M_n^\top R_{\alpha_1}^{-1}M_n + \Omega_{\beta}\big)^{-1} \big(M_n^\top R_{\alpha_2}^{-1}M_n + \Omega_{\beta}\big)\right| .
\end{align}
For a lower bound of this ratio, we use the result of Lemma \ref{lem:alpha.monotone.matrix} that $\alpha_2^{2\nu} R_{\alpha_2}^{-1} > \alpha_1^{2\nu} R_{\alpha_1}^{-1}$ if $\alpha_1<\alpha_2$ and derive that
\begin{align} \label{eq:det.ratio2}
&\quad ~ \frac{\big|M_n^\top R_{\alpha_2}^{-1}M_n + \Omega_{\beta}\big|}{\big|M_n^\top R_{\alpha_1}^{-1}M_n + \Omega_{\beta}\big|} =
\left| \big(M_n^\top R_{\alpha_1}^{-1}M_n + \Omega_{\beta}\big)^{-1} \big(M_n^\top R_{\alpha_2}^{-1}M_n + \Omega_{\beta}\big)\right|  \nonumber \\
&= \left|\big(M_n^\top R_{\alpha_1}^{-1}M_n + \Omega_{\beta}\big)^{-1} \left[\alpha_2^{-2\nu}\big(\alpha_2^{2\nu}M_n^\top R_{\alpha_2}^{-1}M_n - \alpha_1^{2\nu} M_n^\top R_{\alpha_1}^{-1}M_n +\alpha_1^{2\nu} M_n^\top R_{\alpha_1}^{-1}M_n \big) + \Omega_{\beta} \right] \right| \nonumber \\
&\overset{(i)}{\geq} \left|\big(M_n^\top R_{\alpha_1}^{-1}M_n + \Omega_{\beta}\big)^{-1} \left[\left(\frac{\alpha_1}{\alpha_2}\right)^{2\nu} M_n^\top R_{\alpha_1}^{-1}M_n  + \Omega_{\beta} \right] \right| \nonumber \\
&\overset{(ii)}{\geq} \left|\big(M_n^\top R_{\alpha_1}^{-1}M_n + \Omega_{\beta}\big)^{-1}\left(\frac{\alpha_1}{\alpha_2}\right)^{2\nu} \left[ M_n^\top R_{\alpha_1}^{-1}M_n  + \Omega_{\beta} \right] \right| = \left(\frac{\alpha_1}{\alpha_2}\right)^{2p\nu} ,
\end{align}
where (i) follows from that $\alpha_2^{2\nu}M_n^\top R_{\alpha_2}^{-1}M_n - \alpha_1^{2\nu} M_n^\top R_{\alpha_1}^{-1}M_n$ is positive semidefinite and that the determinant $|A+B|\geq |B|$ if both $A$ and $B$ are positive semidefinite matrices, and (ii) follows from $(\alpha_1/\alpha_2)^{2\nu}<1$ and that the matrix inside the determinant is $p\times p$.

Similarly, we have the upper bound from Lemma \ref{lem:alpha.monotone.matrix} that $\alpha_2^{-d} R_{\alpha_2}^{-1} < \alpha_1^{-d} R_{\alpha_1}^{-1}$ if $\alpha_1<\alpha_2$:
\begin{align} \label{eq:det.ratio3}
&\quad ~ \frac{\big|M_n^\top R_{\alpha_2}^{-1}M_n + \Omega_{\beta}\big|}{\big|M_n^\top R_{\alpha_1}^{-1}M_n + \Omega_{\beta}\big|} =
\left| \big(M_n^\top R_{\alpha_1}^{-1}M_n + \Omega_{\beta}\big)^{-1} \big(M_n^\top R_{\alpha_2}^{-1}M_n + \Omega_{\beta}\big)\right|  \nonumber \\
&= \left|\big(M_n^\top R_{\alpha_1}^{-1}M_n + \Omega_{\beta}\big)^{-1} \left[\alpha_2^{d}\big(\alpha_2^{-d}M_n^\top R_{\alpha_2}^{-1}M_n - \alpha_1^{-d} M_n^\top R_{\alpha_1}^{-1}M_n +\alpha_1^{-d} M_n^\top R_{\alpha_1}^{-1}M_n \big) + \Omega_{\beta} \right] \right| \nonumber \\
&\leq \left|\big(M_n^\top R_{\alpha_1}^{-1}M_n + \Omega_{\beta}\big)^{-1} \left[\left(\frac{\alpha_2}{\alpha_1}\right)^{d} M_n^\top R_{\alpha_1}^{-1}M_n  + \Omega_{\beta} \right] \right| \nonumber \\
&\leq \left|\big(M_n^\top R_{\alpha_1}^{-1}M_n + \Omega_{\beta}\big)^{-1}\left(\frac{\alpha_2}{\alpha_1}\right)^{d} \left[ M_n^\top R_{\alpha_1}^{-1}M_n  + \Omega_{\beta} \right] \right| = \left(\frac{\alpha_2}{\alpha_1}\right)^{pd}.
\end{align}
Therefore, we can combine the inequalities in \eqref{eq:alpha12.term1.1}, \eqref{eq:alpha12.term1.2}, \eqref{eq:alpha12.term2}, \eqref{eq:det.ratio2}, and \eqref{eq:det.ratio3} with \eqref{eq:prolik:diff} to conclude that for any $0<\alpha_1<\alpha_2<\infty$,
\begin{align*}
& \widetilde \Lcal_n(\alpha_2) - \widetilde \Lcal_n(\alpha_1) \nonumber \\
\geq{}& -\frac{n-p}{2} \log\left(\frac{\alpha_2}{\alpha_1}\right)^{d}  - \frac{1}{2}\log \left(\frac{\alpha_2}{\alpha_1}\right)^{2n\nu} - \frac{1}{2}\log \left(\frac{\alpha_2}{\alpha_1}\right)^{pd} = n(\nu+d/2)\log \left(\frac{\alpha_1}{\alpha_2}\right), \nonumber \\
& \widetilde \Lcal_n(\alpha_2) - \widetilde \Lcal_n(\alpha_1) \nonumber \\
\leq{}& -\frac{n-p}{2} \log\left(\frac{\alpha_1}{\alpha_2}\right)^{2\nu}  - \frac{1}{2}\log \left(\frac{\alpha_1}{\alpha_2}\right)^{nd} - \frac{1}{2}\log \left(\frac{\alpha_1}{\alpha_2}\right)^{2p\nu} = n(\nu+d/2)\log \left(\frac{\alpha_2}{\alpha_1}\right) .
\end{align*}
Exponentiating both sides leads to the conclusion.
\end{proof}
\vspace{4mm}

The following lemma is a consequence of Lemmas \ref{lem:REML.decomp}, \ref{lem:theta2.bound.2ends}, \ref{lem:theta3.bound.2ends}, \ref{lem:theta1.bound.2ends}, \ref{lem:sup.theta1} in Section \ref{supsec:lem.dimred.main}. It will be used in proving Lemma \ref{lem:profilelk.rightlower}, Lemma \ref{lem:profilelk.leftupper} and Lemma \ref{lem:profilelk.rightupper} below.
\begin{lemma}\label{lem:MRM.high.order}
For $\tau,\underline\alpha_n,\overline\alpha_n$ defined in \eqref{eq:2kappa.re} and $\widetilde\theta_{\alpha},\widetilde\theta_{\alpha}^{(1)}$ defined in \eqref{tildetheta2.1}, for $d\in \{1,2,3\}$ and $\nu\in \RR^+$, there exists a large integer $N_{6,1}'$ that only depends on $\nu,d,T,\beta_0,\theta_0,\alpha_0$ and the $\Wcal_2^{\nu+d/2}(\Scal)$ norms of $\bbm_1(\cdot),\ldots,\bbm_p(\cdot)$, such that for all $n>N_{6,1}'$,
\begin{align} \label{eq:theta01.ho1}
& \pr \left(\sup_{\alpha\in [\underline\alpha_n,\overline\alpha_n]} \frac{\big|\widetilde\theta_{\alpha}-\widetilde\theta_{\alpha}^{(1)}\big|}{\widetilde\theta_{\alpha}^{(1)}} \leq 2n^{-1/2-\tau} \right) \geq 1 - 10\exp(-4\log^2 n).
\end{align}
Furthermore, for any given $c\geq 1/(2\nu+d)$, for all $d\in \ZZ^+$ and $\nu\in \RR^+$, there exists a large integer $N_{6,2}'$ that only depends on $c,\nu,d,T,\beta_0,\theta_0,\alpha_0$ and the $\Wcal_2^{\nu+d/2}(\Scal)$ norms of $\bbm_1(\cdot),\ldots,\bbm_p(\cdot)$, such that for all $n>N_{6,2}'$,
\begin{align} \label{eq:theta01.ho2}
& \pr \left(\sup_{\alpha\in \big[(1-n^{-c})\alpha_0,(1+n^{-c})\alpha_0\big]} \frac{\big|\widetilde\theta_{\alpha}-\widetilde\theta_{\alpha}^{(1)}\big|}{\widetilde\theta_{\alpha}^{(1)}} \leq n^{-1} \log^4 n \right) \geq 1 - 8\exp(-4\log^2 n) , \nonumber \\
& \pr \left(\sup_{\alpha\in \big[(1-n^{-c})\alpha_0,(1+n^{-c})\alpha_0\big]} \big|\widetilde\theta_{\alpha}-\widetilde\theta_{\alpha_0}\big| \leq 10\theta_0n^{-(2\nu+d)c} \right) \geq 1 - 8\exp(-4\log^2 n).
\end{align}
\end{lemma}

\begin{proof}[Proof of Lemma \ref{lem:MRM.high.order}]
\noindent \underline{Proof of \eqref{eq:theta01.ho1}:}
\vspace{2mm}

We consider the case of $d\in\{1,2,3\}$. From the inequalities  \eqref{eq:theta.alpha0.2}, \eqref{eq:theta.alpha0.3}, \eqref{theta.uniformbound1}, \eqref{theta.uniformbound1.1} and \eqref{theta.alpha0}, a simple union bound shows that for all sufficiently large $n$,
\begin{align} \label{eq:3lem.ineq}
& \pr \Big(\sqrt{n}\widetilde\theta_{\alpha_0}^{(2)} \leq \frac{\theta_0}{16} n^{-\tau},~~ \sqrt{n}\widetilde\theta_{\alpha_0}^{(3)} \leq \frac{\theta_0}{16} n^{-\tau}, ~~ \sup_{\alpha\in [\underline\alpha_n,\overline\alpha_n]} \sqrt{n} \left|\widetilde \theta_{\alpha}^{(1)} - \widetilde \theta_{\alpha_0}^{(1)} \right| \leq \frac{\theta_0}{4} n^{-\tau}, \nonumber \\
& \quad ~~ \sup_{\alpha\in [\underline\alpha_n,\overline\alpha_n]} \sqrt{n} \left|\widetilde \theta_{\alpha} - \widetilde \theta_{\alpha_0} \right| \leq \frac{\theta_0}{2} n^{-\tau}, ~~ \sqrt{n} \left| \widetilde \theta_{\alpha_0} - \theta_0 \right| \leq 5 \theta_0 \log n \Big) \nonumber \\
&\geq 1 - \exp(-16\log^2 n) - \exp(-16\log^2 n) - 2\exp(-4\log^2 n) \nonumber \\
&\quad - 4\exp(-4\log^2 n) - 3\exp(-4\log^2 n ) > 1 - 10\exp(- 4\log^2 n).
\end{align}
From Lemma \ref{lem:REML.decomp}, we have $\widetilde\theta_{\alpha}=\widetilde\theta_{\alpha}^{(1)}-\widetilde\theta_{\alpha}^{(2)}+\widetilde\theta_{\alpha}^{(3)}$ , $\widetilde\theta_{\alpha}^{(1)}\geq \widetilde\theta_{\alpha}^{(2)} \geq 0$, and $\widetilde\theta_{\alpha}^{(3)}\geq 0$ for all $\alpha\in \RR^+$. Therefore, with probability at least $1- 10\exp(-4\log^2 n)$, uniformly over all $\alpha\in [\underline\alpha_n,\overline\alpha_n]$,
\begin{align}
\frac{\big|\widetilde\theta_{\alpha}- \widetilde\theta_{\alpha}^{(1)}\big|}{\widetilde\theta_{\alpha}^{(1)}} & = \frac{\left|\left(\widetilde\theta_{\alpha} - \widetilde\theta_{\alpha_0} \right) - \left(\widetilde\theta_{\alpha}^{(1)} - \widetilde\theta_{\alpha_0}^{(1)} \right)  - \widetilde\theta_{\alpha_0}^{(2)} + \widetilde\theta_{\alpha_0}^{(3)} \right|}
{\left(\widetilde\theta_{\alpha}^{(1)} - \widetilde\theta_{\alpha_0}^{(1)} \right) + \widetilde\theta_{\alpha_0}^{(2)} - \widetilde\theta_{\alpha_0}^{(3)}  + \left(\widetilde\theta_{\alpha_0} - \theta_0\right) + \theta_0 }  \nonumber \\
& \leq \frac{\left|\widetilde\theta_{\alpha} - \widetilde\theta_{\alpha_0} \right| + \left|\widetilde\theta_{\alpha}^{(1)} - \widetilde\theta_{\alpha_0}^{(1)} \right| + \widetilde\theta_{\alpha_0}^{(2)} + \widetilde\theta_{\alpha_0}^{(3)}}
{\theta_0 - \left|\widetilde\theta_{\alpha}^{(1)} - \widetilde\theta_{\alpha_0}^{(1)} \right| - \left|\widetilde\theta_{\alpha_0} - \theta_0\right| - \widetilde\theta_{\alpha_0}^{(2)} - \widetilde\theta_{\alpha_0}^{(3)}  } \nonumber  \\
&\leq \frac{(\theta_0/2)n^{-\frac{1}{2}-\tau} +(\theta_0/4)n^{-\frac{1}{2}-\tau} + (\theta_0/16)n^{-\frac{1}{2}-\tau} + (\theta_0/16)n^{-\frac{1}{2}-\tau} }
{\theta_0 - (\theta_0/4)n^{-\frac{1}{2}-\tau} - 5\theta_0 n^{-1/2}\log n - (\theta_0/16)n^{-\frac{1}{2}-\tau} - (\theta_0/16)n^{-\frac{1}{2}-\tau}} \nonumber \\
&\leq 2n^{-1/2-\tau}. \nonumber
\end{align}

\noindent \underline{Proof of \eqref{eq:theta01.ho2}:}
\vspace{2mm}

Now we consider the case of $d\in \ZZ^+$ and change the interval of supremum to $[(1-n^{-c})\alpha_0,(1+n^{-c})\alpha_0]$. According to \eqref{lambda.upper1} and \eqref{lambda.lower1} in Lemma \ref{lem:specden_lambda}, if $\alpha\in[\alpha_0,(1+n^{-c})\alpha_0]$, then for all $k=1,\ldots,n$ and all sufficiently large $n$,
\begin{align}\label{lam.11}
& 1\geq \lambda_{k,n}(\alpha) \geq \left(\frac{\alpha_0}{\alpha}\right)^{2\nu+d} \geq (1+n^{-c})^{-(2\nu+d)} \geq 1 - 2n^{-(2\nu+d)c} > \frac{1}{2}.
\end{align}
If $\alpha\in[(1-n^{-c})\alpha_0,\alpha_0]$, then for all $k=1,\ldots,n$ and all sufficiently large $n$,
\begin{align}\label{lam.12}
& 1\leq \lambda_{k,n}(\alpha) \leq \left(\frac{\alpha_0}{\alpha}\right)^{2\nu+d} \leq (1-n^{-c})^{-(2\nu+d)} \leq 1 + 2n^{-(2\nu+d)c} < 2.
\end{align}

For short, we let $\alpha_{1n}=(1-n^{-c})\alpha_0$ and $\alpha_{2n}=(1+n^{-c})\alpha_0$. Following a similar argument to the proof of Lemmas \ref{lem:theta2.bound.2ends} and \ref{lem:theta3.bound.2ends}, we can show that for all sufficiently large $n$, with probability $1-6\exp(-16\log^2 n)$,
\begin{align} \label{eq:theta23.alpha1n}
& \widetilde\theta_{\alpha_0}^{(2)} \leq (\theta_0/16) n^{-1} \log^3 n, \qquad  \widetilde\theta_{\alpha_0}^{(3)} \leq (\theta_0/16) n^{-1} \log^3 n, \nonumber \\
& \widetilde\theta_{\alpha_{1n}}^{(2)} \leq (\theta_0/16) n^{-1} \log^3 n, \quad \text{and } \widetilde\theta_{\alpha_{1n}}^{(3)} \leq (\theta_0/16) n^{-1} \log^3 n \nonumber \\
& \widetilde\theta_{\alpha_{2n}}^{(2)} \leq (\theta_0/16) n^{-1} \log^3 n, \quad \text{and } \widetilde\theta_{\alpha_{2n}}^{(3)} \leq (\theta_0/16) n^{-1} \log^3 n .
\end{align}
For $\widetilde\theta_{\alpha_{2n}}^{(1)}$, we first notice that by Lemma \ref{lem:theta1.monotone}, $\widetilde\theta_{\alpha_0}^{(1)}\leq \widetilde\theta_{\alpha}^{(1)} \leq \widetilde\theta_{\alpha_{2n}}^{(1)}$ for all $\alpha \in [\alpha_0, (1+n^{-c})\alpha_0]$. Similar to \eqref{eq:theta.diff1.2} in the proof of Lemma \ref{lem:theta1.bound.2ends}, we have that
\begin{align} \label{eq:alpha.2n.1}
\widetilde \theta_{\alpha_{2n}}^{(1)} - \widetilde \theta_{\alpha_0}^{(1)} &=  \frac{\theta_0}{n-p} \sum_{i=1}^n \left\{\lambda_{i,n}(\alpha_{2n})^{-1}-1\right\} Z_{i,n}(\alpha_{2n})^2,
\end{align}
where $Z_n(\alpha)=(Z_{1,n}(\alpha),\ldots,Z_{n,n}(\alpha))^\top = U_{\alpha}^\top X_n$ with $U_{\alpha}$ given in Lemma \ref{lem:URU}. We let $w=(w_1,\ldots,w_n)^\top$ with $w_i= \frac{\theta_0}{n-p}\left|\lambda_{i,n}(\alpha_{2n})^{-1}-1\right| $ for $i=1,\ldots,n$. Then by \eqref{lam.12}, we have
\begin{align*}
\|w\|_1 &\leq \frac{2\theta_0}{n} \frac{\sum_{i=1}^n [1-\lambda_{i,n}(\alpha_{2n})]}{\min_{1\leq i\leq n}\lambda_{i,n}(\alpha_{2n})} \leq \frac{8\theta_0 n \cdot n^{-(2\nu+d)c}}{n} = 8\theta_0 n^{- (2\nu+d)c} , \\
\|w\| &\leq \frac{2\theta_0}{n} \frac{\left\{\sum_{i=1}^n [1-\lambda_{i,n}(\alpha_{2n})]^2\right\}^{1/2}}{\min_{1\leq i\leq n}\lambda_{i,n}(\alpha_{2n})} \leq \frac{4\theta_0 \left(n \cdot 4n^{-2(2\nu+d)c}\right)^{1/2}}{n} = 8\theta_0 n^{-1/2-(2\nu+d)c} , \\
\|w\|_{\infty} &\leq \frac{2\theta_0}{n} \frac{\max_{1\leq i\leq n}[1-\lambda_{i,n}(\alpha_{2n})]}{\min_{1\leq i\leq n}\lambda_{i,n}(\alpha_{2n})} \leq 8\theta_0 n^{-1-(2\nu+d)c}.
\end{align*}
Therefore, if we apply the first inequality in Lemma \ref{lem:LauMas00} with $z=16\log^2 n$ and $w_i$'s given as above, we obtain that for all sufficiently large $n$,
\begin{align}\label{eq:alpha.2n.2}
&\quad~ \pr \left( \sup_{\alpha\in[\alpha_0, (1+n^{-c})\alpha_0]} \left(\widetilde \theta_{\alpha}^{(1)} - \widetilde \theta_{\alpha_0}^{(1)}\right) > 9\theta_0 n^{-(2\nu+d)c} \right) \nonumber \\
&=\pr \left( \widetilde \theta_{\alpha_{2n}}^{(1)} - \widetilde \theta_{\alpha_0}^{(1)} > 9\theta_0 n^{-(2\nu+d)c} \right) \nonumber \\
&\leq \pr \left( \widetilde \theta_{\alpha_{2n}}^{(1)} - \widetilde \theta_{\alpha_0}^{(1)} > \|w\|_1 + 8\|w\|\log n + 32\|w\|_{\infty} \log^2 n \right) \nonumber \\
&\leq \exp(-16\log^2 n).
\end{align}
Similarly we can show from \eqref{lam.11} that
\begin{align}\label{eq:alpha.1n.2}
&\quad~ \pr \left( \sup_{\alpha\in[(1-n^{-c})\alpha_0, \alpha_0]} \left(\widetilde \theta_{\alpha}^{(1)} - \widetilde \theta_{\alpha_0}^{(1)}\right) > 9\theta_0 n^{-(2\nu+d)c} \right) \leq \exp(-16\log^2 n).
\end{align}
\eqref{eq:alpha.2n.2} and \eqref{eq:alpha.1n.2} together imply that for all sufficiently large $n$,
\begin{align}\label{eq:alpha.12n}
&\quad~ \pr \left( \sup_{\alpha\in \big[(1-n^{-c})\alpha_0, (1+n^{-c})\alpha_0\big]} \left|\widetilde \theta_{\alpha}^{(1)} - \widetilde \theta_{\alpha_0}^{(1)}\right| > 9\theta_0 n^{-(2\nu+d)c} \right) \leq 2\exp(-16\log^2 n).
\end{align}
Finally, from Lemma \ref{lem:REML.decomp}, Lemma \ref{lem:theta.alpha0}, \eqref{eq:theta23.alpha1n}, \eqref{eq:alpha.2n.2} and \eqref{eq:alpha.1n.2}, we obtain that for all sufficiently large $n$, with probability at least $1-8\exp(-4\log^2 n)$, uniformly over all $\alpha\in [\alpha_0,(1+n^{-c})\alpha_0]$,
\begin{align*}
\frac{\big|\widetilde\theta_{\alpha}- \widetilde\theta_{\alpha}^{(1)}\big|}{\widetilde\theta_{\alpha}^{(1)}}
&\leq \frac{\left(\widetilde\theta_{\alpha_{2n}} - \widetilde\theta_{\alpha_0} \right) + \left(\widetilde\theta_{\alpha_{2n}}^{(1)} - \widetilde\theta_{\alpha_0}^{(1)} \right) + \widetilde\theta_{\alpha_0}^{(2)} + \widetilde\theta_{\alpha_0}^{(3)}}
{\theta_0 - \left(\widetilde\theta_{\alpha_{2n}}^{(1)} - \widetilde\theta_{\alpha_0}^{(1)} \right) - \left|\widetilde\theta_{\alpha_0} - \theta_0\right| - \widetilde\theta_{\alpha_0}^{(2)} - \widetilde\theta_{\alpha_0}^{(3)}  } \\
&\leq \frac{2\left(\widetilde\theta_{\alpha_{2n}}^{(1)} - \widetilde\theta_{\alpha_0}^{(1)} \right) + 2\widetilde\theta_{\alpha_0}^{(2)} + 2\widetilde\theta_{\alpha_0}^{(3)} + \widetilde\theta_{\alpha_{2n}}^{(2)} + \widetilde\theta_{\alpha_{2n}}^{(3)}}
{\theta_0 - \left(\widetilde\theta_{\alpha_{2n}}^{(1)} - \widetilde\theta_{\alpha_0}^{(1)} \right) - \left|\widetilde\theta_{\alpha_0} - \theta_0\right| - \widetilde\theta_{\alpha_0}^{(2)} - \widetilde\theta_{\alpha_0}^{(3)}  } \\
&\leq \frac{18\theta_0n^{-(2\nu+d)c} + (\theta_0/4) n^{-1}\log^3 n + (\theta_0/8)n^{-1}\log^3 n}
{\theta_0 - 9\theta_0n^{-(2\nu+d)c} - 5\theta_0n^{-1/2}\log n - (\theta_0/8)n^{-1}\log^3 n} \\
&\leq n^{-\min\{(2\nu+d)c, 1\}} \log^4 n \overset{(i)}{=} n^{-1} \log^4 n,
\end{align*}
and similarly for all $\alpha\in [(1-n^{-c})\alpha_0,\alpha_0]$, $\big|\widetilde \theta_{\alpha} - \widetilde \theta_{\alpha}^{(1)}\big|/\widetilde\theta_{\alpha}^{(1)} \leq n^{-1} \log^4 n$. The step (i) follows from our condition $c\geq 1/(2\nu+d)$. This proves the first inequality in \eqref{eq:theta01.ho2}. The second inequality in \eqref{eq:theta01.ho2} follows from combining the first inequality with \eqref{eq:alpha.12n}.
\end{proof}

\vspace{5mm}

\begin{lemma}\label{lem:profilelk.rightlower}
For $\tau,\underline\alpha_n,\overline\alpha_n$ defined in \eqref{eq:2kappa.re}, for all $d\in \ZZ^+,\nu\in \RR^+$, for any $c>1/(2\nu+d)$, there exists a large integer $N_7'$ that only depends on $c,\nu,d,T,\beta_0,\theta_0,\alpha_0$ and the $\Wcal_2^{\nu+d/2}(\Scal)$ norms of $\bbm_1(\cdot),\ldots,\bbm_p(\cdot)$, such that with probability at least $1-9\exp(-4\log^2 n)$, for all $n>N_7'$,
\begin{align}\label{diffpro1}
\inf_{\alpha\in \left[\alpha_0,(1+n^{-c})\alpha_0 \right]} \exp\left\{\widetilde \Lcal_n(\alpha) - \widetilde \Lcal_n(\alpha_0)\right\} \geq \exp \left( - 3\log^4 n \right).
\end{align}
\end{lemma}

\begin{proof}[Proof of Lemma \ref{lem:profilelk.rightlower}]
Let $\overline \lambda_n(\alpha) = \left\{\prod_{k=1}^n \lambda_{k,n}(\alpha)\right\}^{1/n}$. \eqref{lam.11} implies that for all $\alpha \in [\alpha_0,(1+n^{-c})\alpha_0]$, $\overline \lambda_n(\alpha) \leq 1$. Let $Z_n(\alpha) = U_{\alpha}^\top X_n = (Z_{1,n}(\alpha),\ldots,Z_{n,n}(\alpha))^\top \sim \Ncal(0_n,I_n)$ for any given $\alpha>0$, where $U_{\alpha}$ is given in \eqref{diagonalize} of Lemma \ref{lem:URU}. Then using \eqref{diagonalize} in Lemma \ref{lem:URU} and the definition $\widetilde \theta_{\alpha}^{(1)}$ in \eqref{tildetheta2.1} in Lemma \ref{lem:REML.decomp}, we have that
\begin{align} \label{eq:prolik.simplify}
& -\frac{n-p}{2} \log \frac{\alpha^{-2\nu} \widetilde \theta_{\alpha}^{(1)}}{ \alpha_0^{-2\nu} \widetilde \theta_{\alpha_0}^{(1)}} -\frac{1}{2}\log \frac{|R_{\alpha}|}{|R_{\alpha_0}|} \nonumber \\
={}& -\frac{n-p}{2}\log \frac{ \alpha^{-2\nu} X_n^\top U_{\alpha} \Lambda_{\alpha}^{-1} U_{\alpha}^\top X_n} {\alpha_0^{-2\nu} X_n^\top U_{\alpha} U_{\alpha}^\top X_n} - \frac{1}{2}\log \frac{\alpha^{2\nu n}\prod_{k=1}^n \lambda_{k,n}(\alpha)}{\left|U_{\alpha}\right|^2}  +  \frac{1}{2}\log \frac{\alpha_0^{2\nu n}}{\left|U_{\alpha}\right|^2} \nonumber \\
={}& -\frac{n-p}{2}\log \frac{\sum_{k=1}^n \lambda_{k,n}(\alpha)^{-1} Z_{k,n}(\alpha)^2}{\sum_{k=1}^n Z_{k,n}(\alpha)^2} - \frac{1}{2}\sum_{k=1}^n \log \lambda_{k,n}(\alpha) - p\nu \log\frac{\alpha}{\alpha_0} .
\end{align}
Denote the event on the left-hand side of the first inequality in \eqref{eq:theta01.ho2} in Lemma \ref{lem:MRM.high.order} as $\Acal_{1n}$ such that $\pr(\Acal_{1n})\geq 1- 8\exp(-4\log^2 n)$ given the condition $c> 1/(2\nu+d)$. Then from the expression \eqref{def:prologlik2} and the relation \eqref{diagonalize}, we have that on the event $\Acal_{1n}$, uniformly over all $\alpha\in [\alpha_0, (1+n^{-c})\alpha_0]$,
\begin{align} \label{eq:prolik:diff2}
&\quad~ \widetilde \Lcal_n(\alpha) - \widetilde \Lcal_n(\alpha_0) \nonumber \\
&= -\frac{n-p}{2}\log  \frac{Y_n^\top  \left[R_{\alpha}^{-1} - R_{\alpha}^{-1} M_n \big(M_n^\top R_{\alpha}^{-1} M_n + \Omega_{\beta}\big)^{-1} M_n^\top R_{\alpha}^{-1}  \right] Y_n }{Y_n^\top  \left[R_{\alpha_0}^{-1} - R_{\alpha_0}^{-1} M_n \big(M_n^\top R_{\alpha_0}^{-1} M_n + \Omega_{\beta}\big)^{-1} M_n^\top R_{\alpha_0}^{-1}  \right] Y_n}  \nonumber \\
&\quad  -\frac{1}{2}\log \frac{|R_{\alpha}|}{|R_{\alpha_0}|} - \frac{1}{2}\log \frac{\big|M_n^\top R_{\alpha_0}^{-1}M_n + \Omega_{\beta}\big|}{\big|M_n^\top R_{\alpha_0}^{-1}M_n + \Omega_{\beta}\big|} \nonumber \\
&= -\frac{n-p}{2}\log\frac{\alpha^{-2\nu} \widetilde \theta_{\alpha}}{\alpha_0^{-2\nu} \widetilde \theta_{\alpha_0}} -\frac{1}{2}\log \frac{|R_{\alpha}|}{|R_{\alpha_0}|} - \frac{1}{2}\log \frac{\big|M_n^\top R_{\alpha_0}^{-1}M_n + \Omega_{\beta}\big|}{\big|M_n^\top R_{\alpha_0}^{-1}M_n + \Omega_{\beta}\big|} \nonumber \\
&\overset{(i)}{\geq} -\frac{n-p}{2} \log \frac{\alpha^{-2\nu} \widetilde \theta_{\alpha}^{(1)}\left(1 + n^{-1} \log^4 n \right)}{ \alpha_0^{-2\nu} \widetilde \theta_{\alpha_0}^{(1)}\left(1 - n^{-1} \log^4 n \right)} -\frac{1}{2}\log \frac{|R_{\alpha}|}{|R_{\alpha_0}|} - \frac{1}{2}\log \frac{\big|M_n^\top R_{\alpha}^{-1}M_n + \Omega_{\beta}\big|}{\big|M_n^\top R_{\alpha_0}^{-1}M_n + \Omega_{\beta}\big|} \nonumber \\
&\overset{(ii)}{=} -\frac{n-p}{2}\log \frac{\sum_{k=1}^n \lambda_{k,n}(\alpha)^{-1} Z_{k,n}(\alpha)^2}{\sum_{k=1}^n Z_{k,n}(\alpha)^2} - \frac{1}{2}\sum_{k=1}^n \log \lambda_{k,n}(\alpha)  - p\nu \log\frac{\alpha}{\alpha_0} \nonumber \\
&\quad + \frac{n-p}{2}\log \frac{1 - n^{-1} \log^4 n}{1 + n^{-1} \log^4 n} - \frac{1}{2}\log \frac{\big|M_n^\top R_{\alpha}^{-1}M_n + \Omega_{\beta}\big|}{\big|M_n^\top R_{\alpha_0}^{-1}M_n + \Omega_{\beta}\big|}\nonumber \\
&\overset{(iii)}{\geq} -\frac{n-p}{2}\log \frac{\sum_{k=1}^n \lambda_{k,n}(\alpha)^{-1} Z_{k,n}(\alpha)^2}{\sum_{k=1}^n Z_{k,n}(\alpha)^2} - \frac{1}{2}\sum_{k=1}^n \log \lambda_{k,n}(\alpha) - \frac{p\nu}{2\nu+d} \log 2 \nonumber \\
&\quad - 2 \log^4 n - \frac{pd\log 2}{2(2\nu+d)} \nonumber \\
&= -\frac{n-p}{2}\log \frac{\sum_{k=1}^n \lambda_{k,n}(\alpha)^{-1} Z_{k,n}(\alpha)^2}{\sum_{k=1}^n Z_{k,n}(\alpha)^2} - \frac{1}{2}\sum_{k=1}^n \log \lambda_{k,n}(\alpha) - 2 \log^4 n - \frac{p}{2}\log 2 ,
\end{align}
where (i) follows from \eqref{eq:theta01.ho2} in Lemma \ref{lem:MRM.high.order}; (ii) follows from \eqref{eq:prolik.simplify}; to derive (iii), we first apply
\begin{align}\label{eq:log.ineq1}
& \quad~ \frac{n-p}{2}\log \frac{1 - n^{-1} \log^4 n}{1 + n^{-1} \log^4 n}  \geq \frac{n}{2} \cdot \left(-3n^{-1} \log^4 n\right) = - 2 \log^4 n ,
\end{align}
for all sufficiently large $n$, then notice that $p\nu\log(\alpha/\alpha_0) \leq \frac{p\nu}{2\nu+d} \log 2$ for all $\alpha \in [\alpha_0,(1+n^{-c})\alpha_0]\subseteq [\alpha_0,2^{1/(2\nu+d)}\alpha_0]$, and finally apply \eqref{eq:det.ratio3} to obtain that
$$ - \frac{1}{2}\log \frac{\big|M_n^\top R_{\alpha}^{-1}M_n + \Omega_{\beta}\big|}{\big|M_n^\top R_{\alpha_0}^{-1}M_n + \Omega_{\beta}\big|} \geq -\frac{1}{2}\log \left(\frac{\alpha}{\alpha_0}\right)^{pd} \geq -\frac{1}{2} \log 2^{pd/(2\nu+d)} = \frac{pd\log 2}{2(2\nu+d)} , $$
for all $\alpha \in [\alpha_0,(1+n^{-c})\alpha_0]\subseteq [\alpha_0,2^{1/(2\nu+d)}\alpha_0]$.

Now we further control the first two terms on the right-hand side of \eqref{eq:prolik:diff2}. Since $\overline\lambda_n(\alpha)\leq 1$ for all $\alpha\in [\alpha_0,(1+n^{-c})\alpha_0]$, we have that
\begin{align}\label{eq:prolik:11}
&\quad  \exp\left\{-\frac{n-p}{2}\log \frac{\sum_{k=1}^n \lambda_{k,n}(\alpha)^{-1} Z_{k,n}(\alpha)^2}{\sum_{k=1}^n Z_{k,n}(\alpha)^2} - \frac{1}{2}\sum_{k=1}^n \log \lambda_{k,n}(\alpha)\right\} \nonumber \\
&= \left[\frac{\sum_{k=1}^n \lambda_{k,n}(\alpha)^{-1} Z_{k,n}(\alpha)^2}{\sum_{k=1}^n Z_{k,n}(\alpha)^2}\cdot \left\{\prod_{k=1}^n \lambda_{k,n}(\alpha)\right\}^{1/(n-p)}\right]^{-(n-p)/2} \nonumber \\
&\geq \left[\frac{\sum_{k=1}^n \lambda_{k,n}(\alpha)^{-1} Z_{k,n}(\alpha)^2}{\sum_{k=1}^n Z_{k,n}(\alpha)^2} \right]^{-(n-p)/2} \nonumber \\
&= \left[1 + \frac{\sum_{k=1}^n \left\{\lambda_{k,n}(\alpha)^{-1}-1\right\} Z_{k,n}(\alpha)^2}{\sum_{k=1}^n Z_{k,n}(\alpha)^2} \right]^{-(n-p)/2}.
\end{align}
By \eqref{eq:alpha.2n.1} and \eqref{eq:alpha.2n.2} in the proof of Lemma \ref{lem:MRM.high.order}, we have that on the event $\Acal_{1n}$,
\begin{align}\label{eq:short1up1}
&\quad~ \sup_{\alpha \in [\alpha_0, (1+n^{-c})\alpha_0]} \sum_{k=1}^n \left\{\lambda_{k,n}(\alpha)^{-1}-1\right\} Z_{k,n}(\alpha)^2 \nonumber\\
& \leq  \sup_{\alpha \in [\alpha_0, (1+n^{-c})\alpha_0] } (n-p) \left(\widetilde \theta_{\alpha}^{(1)} - \widetilde \theta_{\alpha_0}^{(1)}\right) / \theta_0  \leq 9n^{1-(2\nu+d)c} .
\end{align}
On the other hand, for any $\alpha>0$,
\begin{align}\label{eq:Z2xi2}
& \sum_{k=1}^n Z_{k,n}(\alpha)^2 = Z_n(\alpha)^\top Z_n(\alpha) = X_n^\top U_{\alpha} U_{\alpha}^\top X_n = X_n^\top (\sigma_0^2 R_{\alpha_0})^{-1} X_n \nonumber \\
& \qquad = W_n^\top W_n = \sum_{k=1}^n W_{k,n}^2,
\end{align}
where $W_n = (W_{1,n},\ldots,W_{n,n})^\top =\sigma_0^{-1}R_{\alpha_0}^{-1/2} X_n \sim \mathcal{N}(0_n,I_n)$. Therefore, we apply the second inequality in Lemma \ref{lem:LauMas00} directly to the $\chi^2_1$ random variables of $\{W_{k,n}^2:k=1,\ldots,n\}$ with $z=4\log^2 n$ and obtain that for all sufficiently large $n$,
\begin{align} \label{eq:Y2up1}
& \pr \left(\inf_{\alpha\in [\underline\alpha_n, \overline\alpha_n]}\sum_{k=1}^n Z_{k,n}(\alpha)^2 \leq n- 4\sqrt{n} \log n \right) \nonumber \\
&= \pr \left( \sum_{k=1}^n W_{k,n}^2 \leq n- 4\sqrt{n} \log n \right) \leq \exp(-4\log^2 n).
\end{align}
We combine \eqref{eq:prolik:diff2}, \eqref{eq:prolik:11}, \eqref{eq:short1up1} and \eqref{eq:Y2up1} to obtain that with probability at least $1 - 9\exp(-4\log^2 n)$, uniformly for all $\alpha \in [\alpha_0,(1+n^{-c})\alpha_0]$ and for all sufficiently large $n$,
\begin{align}\label{eq:prolik:12}
&\quad \inf_{\alpha\in [\alpha_0, (1+n^{-c})\alpha_0]} \exp\left\{\widetilde \Lcal_n(\alpha) - \widetilde \Lcal_n(\alpha_0)\right\}  \nonumber \\
&\geq \left[1 + \frac{\sup_{\alpha\in [\alpha_0,(1+n^{-c})\alpha_0]}\sum_{k=1}^n \left\{\lambda_{k,n}(\alpha)^{-1}-1\right\} Z_{k,n}(\alpha)^2}{\inf_{\alpha\in [\alpha_0,(1+n^{-c})\alpha_0]}\sum_{k=1}^n Z_{k,n}(\alpha)^2} \right]^{-(n-p)/2} \nonumber \\
&\quad \times \exp\left\{- 2 \log^4 n - \frac{p}{2}\log 2 \right\}  \nonumber \\
&\geq \left(1 + \frac{9n^{1-(2\nu+d)c}}{n-4\sqrt{n}\log n} \right)^{-(n-p)/2} \cdot \exp\left\{- 2 \log^4 n - \frac{p}{2}\log 2 \right\}  \nonumber \\
&\geq \left( 1 + \frac{10}{n^{(2\nu+d)c}} \right)^{-(n-p)/2} \cdot  \exp\left\{- 2 \log^4 n - \frac{p}{2}\log 2 \right\}  \nonumber \\
&\overset{(i)}{\geq} \exp\left\{- 10n^{1-(2\nu+d)c} - 2 \log^4 n - \frac{p}{2}\log 2 \right\} \nonumber \\
& \geq \exp\left(-3\log^4 n \right),
\end{align}
where in (i), we apply the relation $(1+x^{-1})^x \leq \exp(1)$ for all $x>0$ and the condition $c>1/(2\nu+d)$.
\end{proof}

\vspace{5mm}

\begin{lemma}\label{lem:profilelk.leftupper}
For $\tau,\underline\alpha_n,\overline\alpha_n$ defined in \eqref{eq:2kappa.re}, for $d\in\{1,2,3\}$ and $\nu\in \RR^+$, there exists a large integer $N_8'$ that only depends on $\nu,d,T,\beta_0,\theta_0,\alpha_0$ and the $\Wcal_2^{\nu+d/2}(\Scal)$ norms of $\bbm_1(\cdot),\ldots,\bbm_p(\cdot)$, such that with probability at least $1-10\exp(-4\log^2 n)$, for all $n>N_8'$,
\begin{align}\label{diffpro2}
\sup_{\alpha\in \left[\underline \alpha_n,\alpha_0 \right]} \exp\left\{\widetilde \Lcal_n(\alpha) - \widetilde \Lcal_n(\alpha_0)\right\} < \exp\left(3n^{1/2-\tau}\right).
\end{align}
\end{lemma}

\begin{proof}[Proof of Lemma \ref{lem:profilelk.leftupper}]
According to \eqref{lambda.upper1} and \eqref{lambda.lower1} in Lemma \ref{lem:specden_lambda}, we have that for all $k=1,\ldots,n$ and all $\alpha \in [\underline \alpha_n, \alpha_0]$,
\begin{align}\label{lam.21}
& 1\leq \lambda_{k,n}(\alpha) \leq \left(\frac{\alpha_0}{\alpha}\right)^{2\nu+d} \leq \left(\frac{\alpha_0}{\underline \alpha_n}\right)^{2\nu+d}.
\end{align}
Let $\overline \lambda_n(\alpha) = \left\{\prod_{k=1}^n \lambda_{k,n}(\alpha)\right\}^{1/n}$. \eqref{lam.21} implies that $\overline \lambda_n(\alpha) \geq 1$. For any $\alpha>0$, let $Z_n(\alpha) = U_{\alpha}^\top X_n = (Z_{1,n}(\alpha),\ldots,Z_{n,n}(\alpha))^\top$ with $U_{\alpha}$ given in \eqref{diagonalize}.

Denote the event on the left-hand side of \eqref{eq:theta01.ho1} in Lemma \ref{lem:MRM.high.order} as $\Acal_{2n}$ such that $\pr(\Acal_{2n})\geq 1- 10\exp(-4\log^2 n)$. Then using the relation \eqref{eq:prolik.simplify}, we have that on the event $\Acal_{2n}$,
\begin{align}\label{eq:prolik:21}
&\quad  \exp\left\{\Lcal_n(\alpha) - \widetilde \Lcal_n(\alpha_0)\right\} \nonumber \\
&\leq \exp\Bigg\{-\frac{n-p}{2} \log \frac{\alpha^{-2\nu} \widetilde \theta_{\alpha}^{(1)}\left(1 - 2n^{-1/2-\tau} \right)}{ \alpha_0^{-2\nu} \widetilde \theta_{\alpha_0}^{(1)}\left(1 + 2n^{-1/2-\tau} \right)} -\frac{1}{2}\log \frac{|R_{\alpha}|}{|R_{\alpha_0}|} - \frac{1}{2}\log \frac{\big|M_n^\top R_{\alpha}^{-1}M_n + \Omega_{\beta}\big|}{\big|M_n^\top R_{\alpha_0}^{-1}M_n + \Omega_{\beta}\big|} \Bigg\} \nonumber \\
&\overset{(i)}{\leq} \left[\frac{\sum_{k=1}^n \lambda_{k,n}(\alpha)^{-1} Z_{k,n}(\alpha)^2}{\sum_{k=1}^n Z_{k,n}(\alpha)^2}\cdot \left\{\prod_{k=1}^n \lambda_{k,n}(\alpha)\right\}^{1/(n-p)}\right]^{-(n-p)/2} \nonumber \\
&\quad \times  \exp\left\{ 2n^{1/2-\tau} - p\nu\log \frac{\alpha}{\alpha_0} + \frac{1}{2}\log \left(\frac{\alpha_0}{\alpha}\right)^{pd} \right\}   \nonumber \\
&\overset{(ii)}{\leq} \left[\frac{\sum_{k=1}^n \lambda_{k,n}(\alpha)^{-1} Z_{k,n}(\alpha)^2}{\sum_{k=1}^n Z_{k,n}(\alpha)^2} \right]^{-(n-p)/2} \cdot \exp\left\{ 2n^{1/2-\tau} + \frac{p(2\nu+d)}{2}\log \left(\frac{\alpha_0}{\alpha}\right) \right\}   \nonumber \\
&= \left[1 + \frac{\sum_{k=1}^n \left\{\lambda_{k,n}(\alpha)^{-1}-1\right\} Z_{k,n}(\alpha)^2}{\sum_{k=1}^n Z_{k,n}(\alpha)^2} \right]^{-(n-p)/2} \cdot \exp\left\{ 2n^{1/2-\tau} + \frac{p(2\nu+d)}{2}\log \left(\frac{\alpha_0}{\alpha}\right) \right\}  ,
\end{align}
where in (i), we use the inequality
\begin{align}\label{eq:log.ineq2}
& \frac{n-p}{2}\log \frac{1 + 2n^{-1/2-\tau}}{1 - 2n^{-1/2-\tau}} \leq \frac{n}{2} \cdot \left(4n^{-1/2-\tau}\right) =  2n^{1/2-\tau} ,
\end{align}
for all sufficiently large $n$ and \eqref{eq:det.ratio3} similar to the derivation of \eqref{eq:prolik:diff2}; in (ii) we use the fact that $\overline \lambda_n(\alpha) \geq 1$.

Notice that $\lambda_{k,n}^{-1}(\alpha)-1\leq 0$ for all $k=1,\ldots,n$ for all $\alpha\in [\underline\alpha_n,\alpha_0]$. Then using the relation \eqref{eq:theta.diff1.2} in the proof of Lemma \ref{lem:theta1.bound.2ends}, on the event $\Acal_{2n}$, uniformly for all $\alpha \in [\underline\alpha_n,\alpha_0]$ and for all sufficiently large $n$,
\begin{align} \label{eq:neglamb}
&\quad~ \inf_{\alpha \in [\underline\alpha_n,\alpha_0]}\sum_{k=1}^n \left\{\lambda_{k,n}(\alpha)^{-1}-1\right\} Z_{k,n}(\alpha)^2 \nonumber\\
&= \inf_{\alpha \in [\underline\alpha_n,\alpha_0]} \frac{(n-p)\left(\widetilde\theta_{\alpha}^{(1)}-\widetilde\theta_{\alpha_0}^{(1)}\right)}{\theta_0}\geq -n^{1/2-\tau} /4.
\end{align}
We combine \eqref{eq:prolik:21}, \eqref{eq:neglamb}, and \eqref{eq:Y2up1} together to derive that uniformly for all all $\alpha \in [\underline\alpha_n,\alpha_0]$, for all sufficiently large $n$, with probability at least $1- 10\exp(-4\log^2 n)$,
\begin{align}\label{eq:prolik:22}
&\quad  \exp\left\{\widetilde \Lcal_n(\alpha) - \widetilde \Lcal_n(\alpha_0)\right\} \nonumber \\
& \leq \left[1 - \frac{n^{1/2-\tau}/4}{\inf_{\alpha \in [\underline\alpha_n, \alpha_0]} \sum_{k=1}^n Z_{k,n}(\alpha)^2} \right]^{-(n-p)/2} \cdot \exp\left\{ 2n^{1/2-\tau} + \inf_{\alpha \in [\underline\alpha_n, \alpha_0]} \frac{p(2\nu+d)}{2}\log \left(\frac{\alpha_0}{\alpha}\right) \right\} \nonumber \\
&\stackrel{(i)}{\leq} \left(1 - \frac{n^{1/2-\tau}/4}{n- 4\sqrt{n} \log n} \right)^{-(n-p)/2} \cdot \exp\left\{ 2n^{1/2-\tau} + \frac{p(2\nu+d)}{2}\left(\log \alpha_0 + \underkappa \log n\right) \right\} \nonumber \\
&\leq \left(1 - \frac{1}{2n^{1/2+\tau}} \right)^{-n/2} \cdot \exp\left\{ 2n^{1/2-\tau} + \frac{p(2\nu+d)}{2}\left(\log \alpha_0 + \underkappa \log n\right) \right\}
\nonumber \\
&= \left\{\left(1-\frac{1}{2n^{1/2+\tau}}\right)^{2n^{1/2+\tau}}\right\}^{-n^{1/2-\tau}/4} \cdot \exp\left\{ 2n^{1/2-\tau} + \frac{p(2\nu+d)}{2}\left(\log \alpha_0 + \underkappa \log n\right) \right\}
\nonumber \\
& \stackrel{(ii)}{<} \exp\left( n^{1/2-\tau}/2\right) \cdot \exp\left\{ 2n^{1/2-\tau} + \frac{p(2\nu+d)}{2}\left(\log \alpha_0 + \underkappa \log n\right) \right\} < \exp \left(3n^{1/2-\tau}\right) ,
\end{align}
where (i) follows from \eqref{eq:Y2up1}, and for (ii), we use the fact that the function $(1-x^{-1})^x$ is continuous and monotonically increasing to $1/\ee$ for $x>1$, so $(1-x^{-1})^x > 1/\ee^2$ for $x=n^{1/2+\tau}$ given that $n$ is sufficiently large.
\end{proof}

\vspace{5mm}

\begin{lemma} \label{lem:geomean}
Suppose that the sequence $\{w_i:i=1,\ldots,n\}$ satisfies $\sum_{i=1}^n w_i \geq n-c_1n^{b_1}$, $\max_{1\leq i\leq n} w_i\leq 1$ and $\min_{1\leq i\leq n} w_i\geq c_2n^{-b_2}$, where $0<b_2<b_1<1$, $c_1>0$, and $c_2>0$ are all constants. Then $\prod_{i=1}^n w_i \geq \exp\left(-4b_2c_1n^{b_1}\log n\right)$ for all $n>\max\left\{c_2^{-1/b_2},(2c_2)^{1/b_2}\right\}$.
\end{lemma}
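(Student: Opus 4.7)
The plan is to pass to logarithms, turning the product bound into a sum bound, and then use concavity of $\log$ to control each $\log w_i$ linearly in $1 - w_i$, so that the only use of the hypotheses is through the aggregate quantity $\sum_i (1 - w_i)$.

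First I would rewrite the sum condition as $\sum_{i=1}^n (1 - w_i) \le c_1 n^{b_1}$, noting that each $w_i$ lies in the interval $[c_2 n^{-b_2}, 1]$. Under the two lower bounds $n > c_2^{-1/b_2}$ and $n > (2 c_2)^{1/b_2}$ assumed in the statement, this interval is nontrivial with $c_2 n^{-b_2} < 1/2$, so in particular $\log(c_2 n^{-b_2}) < 0$ and $1 - c_2 n^{-b_2} > 1/2$; both facts will be used in the last step.

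The key step uses concavity of $x \mapsto \log x$ on $[c_2 n^{-b_2}, 1]$: the graph lies on or above the chord joining $(c_2 n^{-b_2}, \log(c_2 n^{-b_2}))$ to $(1, 0)$, so for each $i$,
$$\log w_i \;\ge\; \frac{\log(c_2 n^{-b_2})}{1 - c_2 n^{-b_2}}\,(1 - w_i).$$
Summing over $i$, and using that the prefactor is negative together with $\sum_i (1 - w_i) \le c_1 n^{b_1}$, I obtain
$$\sum_{i=1}^n \log w_i \;\ge\; \frac{\log(c_2 n^{-b_2})}{1 - c_2 n^{-b_2}}\, c_1 n^{b_1}.$$

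The remainder is arithmetic. Since $\log(c_2 n^{-b_2}) < 0$ and $1 - c_2 n^{-b_2} > 1/2$, the prefactor satisfies
$$\frac{\log(c_2 n^{-b_2})}{1 - c_2 n^{-b_2}} \;\ge\; 2 \log(c_2 n^{-b_2}) \;=\; 2 \log c_2 - 2 b_2 \log n.$$
The condition $n > c_2^{-1/b_2}$ gives $\log c_2 \ge - b_2 \log n$, whence $2 \log c_2 - 2 b_2 \log n \ge -4 b_2 \log n$. Multiplying by $c_1 n^{b_1}$ yields $\sum_i \log w_i \ge -4 b_2 c_1 n^{b_1} \log n$, and exponentiating gives the claim. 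The proof is elementary, and I do not foresee an obstacle; the factor $4 b_2$ arises as a clean combination of two factor-of-two losses (one from $1/(1 - c_2 n^{-b_2}) \le 2$ and one from absorbing the stray $\log c_2$ into a second $-b_2 \log n$ term). Neither the assumption $b_1 < 1$ nor the ordering $b_2 < b_1$ is actually used in this lemma; these enter only in the calling context.
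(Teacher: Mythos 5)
Your proof is correct, and it reaches the paper's bound by a genuinely different route. The paper argues via an extremal configuration: to minimize $\prod_i w_i$ one pushes as many $w_i$ as possible down to the floor $c_2 n^{-b_2}$ and the rest up to $1$; the sum constraint then caps the number of floor-level weights at $k\leq c_1 n^{b_1}/(1-c_2 n^{-b_2})$, giving $\prod_i w_i \geq (c_2 n^{-b_2})^{c_1 n^{b_1}/(1-c_2 n^{-b_2})}$. You instead apply the termwise chord inequality $\log w_i \geq \frac{\log(c_2 n^{-b_2})}{1-c_2 n^{-b_2}}(1-w_i)$ from concavity of $\log$ on $[c_2 n^{-b_2},1]$ and sum. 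The two arguments land on exactly the same intermediate quantity (the logarithm of the paper's bound is precisely $\frac{\log(c_2 n^{-b_2})}{1-c_2 n^{-b_2}}\,c_1 n^{b_1}$), and the closing arithmetic — the two factor-of-two losses from $1/(1-c_2 n^{-b_2})<2$ and from $\log c_2 \geq -b_2\log n$ — is identical. What your version buys is rigor at no extra cost: the paper's "choose as many $w_i$ at the lower bound as possible" step is stated informally and would need a small rearrangement argument to be airtight, whereas your chord bound is a one-line pointwise inequality that needs no optimization over configurations. Your observation that neither $b_1<1$ nor $b_2<b_1$ is used in the lemma itself is also accurate.
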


\begin{proof}[Proof of Lemma \ref{lem:geomean}]
Given the constraints in the lemma, minimizing $\prod_{i=1}^n w_i$ is equivalent to choosing as many $w_i$'s to reach the lower bound of $c_2n^{-b_2}$ as possible. On the other hand, the constraints $\sum_{i=1}^n w_i \geq n-c_1n^{b_1}$ and $\max_{1\leq i\leq n} w_i\leq 1$ imply that the number of $w_i$'s that attain the lower bound cannot be too large. Suppose that out of $n$ terms of $w_i$'s, $w_1=\ldots=w_k=c_2n^{-b_2}$, where $k$ is an integer between $1$ and $n$. Then $k$ must satisfy the relation (since all $w_i$'s satisfy $w_i\leq 1$):
$$kc_2n^{-b_2} + (n-k)\cdot 1 \geq n - c_1n^{b_1},$$
which implies that $k\leq c_1n^{b_1}/(1-c_2n^{-b_2})$. Therefore,
\begin{align*}
\prod_{i=1}^n w_i &\geq (c_2n^{-b_2})^k \cdot 1^{n-k} \geq (c_2n^{-b_2})^{\frac{ c_1n^{b_1}}{1-c_2n^{-b_2}}}.
\end{align*}
Finally, for all $n>\max\left\{c_2^{-1/b_2},(2c_2)^{1/b_2}\right\}$, we have that $c_2>n^{-b_2}$ and $1-c_2n^{-b_2}<1/2$. Hence the conclusion follows.
\end{proof}

\vspace{5mm}

\begin{lemma}\label{lem:profilelk.rightupper}
For $\tau,\underline\alpha_n,\overline\alpha_n$ defined in \eqref{eq:2kappa.re}, for $d\in\{1,2,3\}$ and $\nu\in \RR^+$, there exist constants $\kappa_1\in (1/2-\tau,1)$, $C_{p,1}>0$, and a large integer $N_9'$ that only depend on $\nu,d,T,\beta_0,\theta_0,\alpha_0$ and the $\Wcal_2^{\nu+d/2}(\Scal)$ norms of $\bbm_1(\cdot),\ldots,\bbm_p(\cdot)$, such that with probability at least $1-10\exp(-4\log^2 n)$, for all $n>N_9'$,
\begin{align}\label{diffpro3}
\sup_{\alpha\in \left[\alpha_0, \overline\alpha_n \right]} \exp\left\{\widetilde \Lcal_n(\alpha) - \widetilde \Lcal_n(\alpha_0)\right\} < \exp\left(C_{p,1} n^{\kappa_1} \log n\right).
\end{align}
\end{lemma}

\begin{proof}[Proof of Lemma \ref{lem:profilelk.rightupper}]
According to \eqref{lambda.upper1} and \eqref{lambda.lower1} in Lemma \ref{lem:specden_lambda}, we have that for all $k=1,\ldots,n$ and all $\alpha \in \left[\alpha_0, \overline\alpha_n \right]$,
\begin{align}\label{lam.31}
& 1\geq \lambda_{k,n}(\alpha) \geq \left(\frac{\alpha_0}{\alpha}\right)^{2\nu+d} \geq \left(\frac{\alpha_0}{\overline \alpha_n}\right)^{2\nu+d} = \frac{\alpha_0^{2\nu+d}}{n^{(2\nu+d)\overkappa}}.
\end{align}
Let $\overline \lambda_n(\alpha) = \left\{\prod_{k=1}^n \lambda_{k,n}(\alpha)\right\}^{1/n}$.

If $2\nu+d-2\geq 0$, then by \eqref{lambda.bound1} of Lemma \ref{lem:zetabound}, for all $\alpha \in [\alpha_0,\overline\alpha_n]$, and for all sufficiently large $n$,
\begin{align}\label{lam.32.1}
& \sum_{k=1}^n \left\{1-\lambda_{k,n}(\alpha)\right\}  \nonumber \\
\preceq {}& n^{(2\nu+3d/2+\cbeta/2)\overkappa} \cdot n^{(2a+d)/(4a+2d+\cbeta)}
+ n^{(2\nu+d)\overkappa} \cdot n^{(2a+d)/(4a+2d+\cbeta)} + n^{d\overkappa}  .
\end{align}
Given the definition of $\overkappa$ in \eqref{eq:2kappa.re} and $d\geq 1$, with the choice $a=0.01$ and $\cbeta=0.9$,
\begin{align*}
& (2\nu+3d/2+\cbeta/2)\overkappa + \frac{2a+d}{4a+2d+\cbeta} < 1 ,\\
& (2\nu+d)\overkappa + \frac{2a+d}{4a+2d+\cbeta} <1, \quad d\overkappa <1.
\end{align*}
Therefore, \eqref{lam.32.1} implies that there exist constants $\kappa_1\in (0,1)$ ($\kappa_1$ can be chosen close to 1) and $C_1>0$, such that $\sum_{k=1}^n \left\{1-\lambda_{k,n}(\alpha)\right\} < C_1 n^{\kappa_1}$.

If $-1< 2\nu+d-2 <0$ ($d=1$ and $\nu\in (0,1/2)$), then for all $\alpha \in [\alpha_0,\overline\alpha_n]$, and for all sufficiently large $n$, \eqref{lambda.bound1} of Lemma \ref{lem:zetabound} implies that
\begin{align}\label{lam.32.2}
& \sum_{k=1}^n \left\{1-\lambda_{k,n}(\alpha)\right\}  \nonumber \\
\preceq {}&  n^{(6-4\nu-3d/2+\cbeta/2)\overkappa} \cdot n^{(2a+d)/(4a+2d+\cbeta)}
+  n^{(2\nu+d)\overkappa} \cdot n^{(2a+d)/(4a+2d+\cbeta)} +  n^{d\overkappa}.
\end{align}
Again given $\overkappa$ in \eqref{eq:2kappa.re} and the choice $a=0.01$, $\cbeta=0.9$, we have that
\begin{align*}
& (6-4\nu-3d/2+\cbeta/2)\overkappa + \frac{2a+d}{4a+2d+\cbeta} < 1 ,\\
& (2\nu+d)\overkappa + \frac{2a+d}{4a+2d+\cbeta} <1, \quad d\overkappa <1.
\end{align*}
Therefore, \eqref{lam.32.2} also implies that there exist constants $\kappa_1\in (0,1)$  ($\kappa_1$ can be chosen close to 1) and $C_1>0$, such that $\sum_{k=1}^n \left\{1-\lambda_{k,n}(\alpha)\right\} < C_1 n^{\kappa_1}$. Combining \eqref{lam.32.1} and \eqref{lam.32.2}, we have that for all sufficiently large $n$,
\begin{align} \label{lam.32}
& \sum_{k=1}^n \left\{ 1- \lambda_{k,n}(\alpha) \right\} \leq C_1 n^{\kappa_1},\quad\text{or } \sum_{k=1}^n \lambda_{k,n}(\alpha) \geq n-C_1 n^{\kappa_1}.
\end{align}
Now in Lemma \ref{lem:geomean}, we set $w_i=\lambda_{i,n}$, $c_1=C_1$, $b_1=\kappa_1$, $c_2=\alpha_0^{2\nu+d}$, $b_2=(2\nu+d)\overkappa$, and use \eqref{lam.31} and \eqref{lam.32} to obtain that for all sufficiently large $n$,
\begin{align}\label{lam.33}
\inf_{\alpha \in [\alpha_0,\overline\alpha_n]} \overline \lambda_n(\alpha) & = \left(\inf_{\alpha \in [\alpha_0,\overline\alpha_n]} \prod_{k=1}^n \lambda_{k,n}(\alpha)\right)^{1/n} \geq \exp\left\{-4C_1(2\nu+d)\overkappa n^{\kappa_1-1} \log n\right\}.
\end{align}
On the other hand, \eqref{lam.31} implies that
\begin{align}\label{eq:lambda.Y.positive}
& \sum_{k=1}^n \left\{\lambda_{k,n}(\alpha)^{-1}-1\right\} Y_{k,n}(\alpha)^2\geq 0.
\end{align}
Therefore, on the event $\Acal_{2n}$ (the event on the left-hand side of \eqref{eq:theta01.ho1} in Lemma \ref{lem:MRM.high.order}, where for any $\alpha \in [\underline\alpha_n,\overline\alpha_n]$, $\big|\widetilde\theta_{\alpha}-\widetilde\theta_{\alpha}^{(1)}\big|/\widetilde\theta_{\alpha}^{(1)} \leq 2n^{-1/2-\tau}$), we have that for all $\alpha\in [\alpha_0, \overline\alpha_n]$, for all sufficiently large $n$,
\begin{align}\label{eq:prolik:31}
&\quad \exp\left\{\widetilde \Lcal_n(\alpha) - \widetilde \Lcal_n(\alpha_0)\right\} \nonumber \\
&\leq \exp\Bigg\{-\frac{n-p}{2} \log \frac{\alpha^{-2\nu} \widetilde \theta_{\alpha}^{(1)}\left(1 - 2n^{-1/2-\tau} \right)}{ \alpha_0^{-2\nu} \widetilde \theta_{\alpha_0}^{(1)}\left(1 + 2n^{-1/2-\tau} \right)} -\frac{1}{2}\log \frac{|R_{\alpha}|}{|R_{\alpha_0}|} - \frac{1}{2}\log \frac{\big|M_n^\top R_{\alpha}^{-1}M_n + \Omega_{\beta}\big|}{\big|M_n^\top R_{\alpha_0}^{-1}M_n + \Omega_{\beta}\big|} \Bigg\} \nonumber \\
&\stackrel{(i)}{\leq} \overline \lambda_n (\alpha)^{-(n-p)/2} \left[1 +  \frac{\sum_{k=1}^n \left\{\lambda_{k,n}(\alpha)^{-1}-1\right\} Z_{k,n}(\alpha)^2}{\sum_{k=1}^n Z_{k,n}(\alpha)^2} \right]^{-(n-p)/2} \nonumber \\
&\quad \times \exp\left\{2n^{1/2-\tau} - p\nu \log \frac{\alpha}{\alpha_0} - \frac{1}{2}\log \frac{\big|M_n^\top R_{\alpha}^{-1}M_n + \Omega_{\beta}\big|}{\big|M_n^\top R_{\alpha_0}^{-1}M_n + \Omega_{\beta}\big|} \right\} \nonumber \\
&\stackrel{(ii)}{\leq} \overline \lambda_n (\alpha)^{-(n-p)/2}\cdot 1^{-(n-p)/2} \cdot \exp\left\{ 2n^{1/2-\tau} - p\nu \log \frac{\alpha}{\alpha_0} - \frac{1}{2} \log \left(\frac{\alpha_0}{\alpha}\right)^{2p\nu } \right\} \nonumber \\
&\overset{(iii)}{\leq} \exp\left\{ 2C_1(2\nu+d)\overkappa n^{\kappa_1} \log n\right\} \cdot \exp\left( 2n^{1/2-\tau}\right)  \nonumber \\
&\overset{(iv)}{\leq} \exp\left\{ 3C_1(2\nu+d)\overkappa n^{\kappa_1} \log n\right\},
\end{align}
where (i) follows from  \eqref{eq:prolik.simplify} and \eqref{eq:log.ineq2}; (ii) follows from \eqref{eq:det.ratio2} and \eqref{eq:lambda.Y.positive}; (iii) follows from \eqref{lam.33}; (iv) follows since we can choose $\kappa_1\in (1/2-\tau,1)$. The conclusion follows by taking $C_{p,1}=3C_1(2\nu+d)\overkappa$.
\end{proof}

\vspace{5mm}

\begin{lemma}\label{lem:alpha.exist}
Suppose that Assumptions \ref{assump.m.func}, \ref{prior.1} and \ref{prior.3} hold. Then for all $d\in \ZZ^+$ and $\nu\in \RR^+$, the profile posterior distribution of $\alpha$ given by $\widetilde \pi(\alpha|Y_n)$ in \eqref{profile:post1} is a proper posterior almost surely $P_{(\beta_0,\sigma_0^2,\alpha_0)}$ for any given $n\geq p$.
\end{lemma}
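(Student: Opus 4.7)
The goal is to show that the normalizing constant of the profile posterior,
$Z_n := \int_0^\infty \exp\{\widetilde\Lcal_n(\alpha)\}\pi(\theta_0|\alpha)\pi(\alpha)\ud\alpha$, is finite for any $n$ and any data vector $X_n$. Up to a null event ($X_n=0$) we can assume $X_n\neq 0$, so that $X_n^\top R_\alpha^{-1}X_n>0$ by positive definiteness of $R_\alpha$ for every $\alpha>0$, which makes $\widetilde\Lcal_n(\alpha)$ a finite, continuous function of $\alpha$ on $(0,\infty)$. The plan is to split the integration into three pieces cut by the thresholds $\underline\alpha_n,\overline\alpha_n$ from \eqref{eq:2kappa} and handle each piece using Lemma \ref{lem:prolik_loosebound} together with the tail integrability conditions in Assumption \ref{prior.3}.

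First I would record the key consequences of Lemma \ref{lem:prolik_loosebound}: applying it once with $\alpha_1=\alpha<\underline\alpha_n=\alpha_2$ yields
$\exp\{\widetilde\Lcal_n(\alpha)\}\leq \exp\{\widetilde\Lcal_n(\underline\alpha_n)\}\cdot(\underline\alpha_n/\alpha)^{n(\nu+d/2)}$
for all $\alpha\in(0,\underline\alpha_n]$, and once with $\alpha_1=\overline\alpha_n<\alpha=\alpha_2$ yields
$\exp\{\widetilde\Lcal_n(\alpha)\}\leq \exp\{\widetilde\Lcal_n(\overline\alpha_n)\}\cdot(\alpha/\overline\alpha_n)^{n(\nu+d/2)}$
for all $\alpha\in[\overline\alpha_n,\infty)$. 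Both prefactors $\exp\{\widetilde\Lcal_n(\underline\alpha_n)\}$ and $\exp\{\widetilde\Lcal_n(\overline\alpha_n)\}$ are finite numbers (depending on $X_n$ and $n$ only).

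Next I would handle each of the three regions. On the compact interval $[\underline\alpha_n,\overline\alpha_n]$, the integrand $\exp\{\widetilde\Lcal_n(\alpha)\}\pi(\theta_0|\alpha)\pi(\alpha)$ is continuous and hence bounded, so the contribution is finite. On $(0,\underline\alpha_n)$, the first bound above gives a factor $\alpha^{-n(\nu+d/2)}$, and the first inequality of \eqref{A3.1} in Assumption \ref{prior.3} delivers
$\int_0^{\underline\alpha_n}\alpha^{-n(\nu+d/2)}\pi(\theta_0|\alpha)\pi(\alpha)\ud\alpha\leq \exp(\underline{c_\pi}n\log n)<\infty$.
On $(\overline\alpha_n,\infty)$, the second bound produces a factor $\alpha^{n(\nu+d/2)}$, and the second inequality of \eqref{A3.2} yields
$\int_{\overline\alpha_n}^\infty \alpha^{n(\nu+d/2)}\pi(\theta_0|\alpha)\pi(\alpha)\ud\alpha\leq \exp(\overline{c_\pi}n\log n)<\infty$.
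Summing the three contributions gives $Z_n<\infty$, and since $\pi(\theta_0|\alpha)\pi(\alpha)\geq 0$ with $\pi(\alpha_0)>0$, $\pi(\theta_0|\alpha_0)>0$, and both densities continuous by Assumptions \ref{prior.1} and \ref{prior.3}, we also have $Z_n>0$. Hence $\widetilde\pi(\alpha|X_n)$ is a well-defined proper density.

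There is no serious obstacle: the argument is essentially a pigeonholed application of the monotonicity/polynomial-envelope bound in Lemma \ref{lem:prolik_loosebound} together with the prior tail conditions already built into Assumption \ref{prior.3}. The only point requiring mild care is ensuring that one uses the correct direction of the inequality in Lemma \ref{lem:prolik_loosebound} to upper bound $\exp\{\widetilde\Lcal_n(\alpha)\}$ in each tail, which follows directly from the two-sided form $(\alpha_1/\alpha_2)^{n(\nu+d/2)}<\exp\{\widetilde\Lcal_n(\alpha_2)-\widetilde\Lcal_n(\alpha_1)\}<(\alpha_2/\alpha_1)^{n(\nu+d/2)}$ by taking the appropriate endpoints.
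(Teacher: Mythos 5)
Your argument is correct in substance, but it takes a genuinely different route from the paper's own proof. You control the two tails of the integrand via the deterministic polynomial envelope of Lemma \ref{lem:prolik_loosebound} (with the inequality directions applied correctly at the endpoints $\underline\alpha_n$ and $\overline\alpha_n$) and then invoke the quantitative tail bounds \eqref{A3.1} and \eqref{A3.2} of Assumption \ref{prior.3}. The paper instead uses only the qualitative part of \ref{prior.3}, namely $\int_0^\infty \pi(\theta_0|\alpha)\pi(\alpha)\,\ud\alpha<\infty$, together with the pointwise behavior of the profile likelihood at the two boundaries: as $\alpha\to+\infty$, $R_\alpha\to I_n$ so $\exp\{\widetilde\Lcal_n(\alpha)\}\to (X_n^\top X_n/n)^{-n/2}$ is bounded on $[1,\infty)$, and as $\alpha\to 0+$ it converges to $0$ at an explicit rate by Lemma 3.3 of \citet{Guetal18}. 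Your version is more self-contained (no external citation) and gives explicit quantitative bounds; the paper's version needs a strictly weaker portion of \ref{prior.3}, which is why the same well-definedness claim can also be asserted under the weaker Assumption \ref{prior.3OU} in Theorem \ref{thm:OU1}. One small point to tighten: the inequalities \eqref{A3.1} and \eqref{A3.2} are only assumed to hold \emph{for all sufficiently large $n$}, whereas the lemma asserts properness for \emph{any given} $n$. For $n$ below the threshold $N_0$ you need a one-line patch, e.g.\ for the left tail split $(0,\underline\alpha_n]=(0,\underline\alpha_{N_0}]\cup[\underline\alpha_{N_0},\underline\alpha_n]$ and use $\alpha^{-n(\nu+d/2)}\le \alpha^{-N_0(\nu+d/2)}$ on the first piece (since $\alpha\le 1$ there) and boundedness of $\alpha^{-n(\nu+d/2)}$ on the second, with the analogous argument on the right tail; without this remark your proof as written only covers large $n$. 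Your final observation that $Z_n>0$ by continuity and positivity of the prior near $\alpha_0$ is a correct and necessary complement that the paper leaves implicit.
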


\begin{proof}[Proof of Lemma \ref{lem:alpha.exist}]
We consider a fixed $n\geq p$. Since the Mat\'ern covariance function is continuous in $\alpha\in \RR^+$, $R_{\alpha}$ is also continuous in $\alpha\in \RR^+$, and so is the profile restricted likelihood $\exp\{\widetilde\Lcal_n(\alpha)\}$. Furthermore, both $\pi(\theta_0|\alpha)$ and $\pi(\alpha)$ are continuous functions in $\alpha\in \RR^+$ by Assumptions \ref{prior.1} and \ref{prior.3}. As a result, the profile posterior in \eqref{profile:post1} is well defined as long as the function $\exp\{\widetilde\Lcal_n(\alpha)\} \pi(\theta_0|\alpha) \pi(\alpha)$ is integrable as $\alpha\to +\infty$ and $\alpha \to 0+$.

As $\alpha\to +\infty$, $R_{\alpha} \to I_n$ elementwise. Since $M_n$ is rank-$p$ for all $n\geq p$ by Assumption \ref{assump.m.func}, $M_n^\top M_n$ is invertible for each fixed $n$ and $\Scal_n$. Therefore, as $\alpha\to +\infty$, the profile restricted likelihood $\exp\{\widetilde\Lcal_n(\alpha)\}$ becomes proportional to
\begin{align*}
&\exp\left\{-\frac{n-p}{2}\log \frac{Y_n^\top\big[I_n-M_n(M_n^\top M_n+\Omega_{\beta})^{-1} M_n^\top\big] Y_n}{n} - \frac{1}{2}\log \big|M_n^\top M_n + \Omega_{\beta} \big|\right\} \\
&= \left(\frac{Y_n^\top \big[I_n-M_n(M_n^\top M_n+ \Omega_{\beta})^{-1} M_n^\top\big] Y_n}{n}\right)^{-(n-p)/2}\cdot \big|M_n^\top M_n + \Omega_{\beta} \big|^{-1/2},
\end{align*}
which is a finite positive number almost surely $P_{(\beta_0,\sigma_0^2,\alpha_0)}$ for any given $n\geq p$. Since Assumption \ref{prior.3} says that $\int_0^{\infty} \pi(\theta_0|\alpha) \pi(\alpha) \ud \alpha <\infty$, and $\exp\{\widetilde\Lcal_n(\alpha)\}$ is a continuous function in $\alpha$, it follows that the integral of $ \exp\{\widetilde\Lcal_n(\alpha)\} \pi(\theta_0|\alpha) \pi(\alpha)$ on $\alpha\in [1,+\infty)$ is finite.

Then we consider the case when $\alpha \to 0+$. The property of the Mat\'ern covariance function as $\alpha \to 0+$ has been analyzed in detail in \citet{Beretal01} and \citet{Guetal18}. Lemma 3.3 of \citet{Guetal18} has shown that for given $n$, $M_n$ and $Y_n$, the profile restricted likelihood function converges to zero as $\alpha \to 0+$ with the following rates:
\begin{align*}
& \exp\{\widetilde\Lcal_n(\alpha)\} \leq \left\{
\begin{array}{ll}
C(n,M_n,Y_n)\alpha^{\nu}, & \text{if } \nu\in (0,1),\\
C(n,M_n,Y_n)\alpha \left\{\log(1/\alpha)\right\}^{1/2}, & \text{if } \nu=1,\\
C(n,M_n,Y_n)\alpha,  & \text{if } \nu>1,
\end{array}
\right.
\end{align*}
where $C(n,M_n,Y_n)$ is a finite positive number that depends on $d$, $n$, $M_n$ and $Y_n$ but not $\alpha$. In all three cases, $\exp\{\widetilde\Lcal_n(\alpha)\} \to 0$ as $\alpha \to 0+$. Together with $\int_0^{\infty} \pi(\theta_0|\alpha) \pi(\alpha) \ud \alpha <\infty$ from Assumption \ref{prior.3}, we conclude that the integral of $ \exp\{\widetilde\Lcal_n(\alpha)\} \pi(\theta_0|\alpha) \pi(\alpha)$ on $\alpha\in (0,1)$ is also finite. Therefore, $\int_0^{\infty} \exp\{\widetilde\Lcal_n(\alpha)\} \pi(\theta_0|\alpha) \pi(\alpha) \ud \alpha <\infty$, and the profile posterior defined in \eqref{profile:post1} is a proper posterior almost surely $P_{(\beta_0,\sigma_0^2,\alpha_0)}$ for any given $n\geq p$.
\end{proof}

\vspace{5mm}

\section{Proof of Theorems \ref{thm:bvm1:theta} and \ref{thm:bvm2:joint}} \label{sec:auxiliary}
In this section, we provide the proof of Theorems \ref{thm:bvm1:theta} and \ref{thm:bvm2:joint} in the main text. We first prove a useful Lemma \ref{lemma:gndiff1} that establishes the local asymptotic normality (LAN) condition for the microergodic parameter $\theta$ for a given $\alpha$. This lemma is essential for showing the limiting normal posterior for $\theta$. In Section \ref{subsec:d5}, we present the theory on the limiting posterior distribution of $(\theta,\alpha)$ for the case of $d\geq 5$.

\subsection{Proof of Lemma \ref{lemma:gndiff1}} \label{supsec:gndiff}

For a given $\alpha>0$, let $t=\sqrt{n-p}(\theta-\widetilde \theta_{\alpha})$ be the local parameter. We define the following function:
\begin{align}\label{func:gn}
\varrho_n(t;\alpha) &= \exp\left\{\Lcal_n(\alpha^{-2\nu}(\widetilde \theta_{\alpha} +\tfrac{t}{\sqrt{n-p}}),\alpha)-\Lcal_n(\alpha^{-2\nu}\widetilde \theta_{\alpha},\alpha)\right\}\cdot \frac{\pi\left(\widetilde \theta_{\alpha} +\frac{t}{\sqrt{n-p}} ~\Big |~ \alpha\right)}{\pi(\theta_0|\alpha)} \nonumber \\
& \quad - \exp\left(-\frac{t^2}{4\theta_0^2}\right).
\end{align}
\begin{lemma}\label{lemma:gndiff1}
Suppose that Assumption \ref{assump.m.func} and \ref{prior.1} hold. Then for all $d\in \ZZ^+, \nu\in\RR^+$, for any fixed $\alpha>0$, for any positive sequences $\epsilon_{1n} \to 0$ as $n\to\infty$ and $1\preceq s_n \prec \min\left(n^{1/6},\epsilon_{1n}^{-1/2}\right)$ that do not depend on $\alpha$, for all sufficiently large $n$, the $\varrho_n$ function in \eqref{func:gn} satisfies the following upper bound on the event $\Ecal_1(\epsilon_{1n},\alpha)=\{|\widetilde\theta_{\alpha}-\theta_0|<\epsilon_{1n}\}$:
\begin{align}\label{gnt1}
\int_{\RR} |\varrho_n(t;\alpha)| \ud t &\leq B_n(\alpha),
\end{align}
where
\begin{align}\label{en:rho}
B_n(\alpha) \equiv &~ 4\theta_0 \exp\left(-\frac{n-p}{64}\right) + \frac{\sqrt{n-p}}{\pi(\theta_0|\alpha)} \exp\{-0.007(n-p)\} \nonumber \\
&+ 10\theta_0 \exp\left(-\frac{4s_n^2}{125\theta_0^2}\right)\cdot  \sup_{\theta\in\left(\frac{1}{2}\theta_0,\frac{3}{2}\theta_0\right)} \frac{\pi(\theta|\alpha)}{\pi(\theta_0|\alpha)} + 4\theta_0 \exp\left(-\frac{s_n^2}{4\theta_0^2}\right) \nonumber \\
&+ \frac{8}{\theta_0^2} \left(s_n^2 \epsilon_{1n} + \frac{2s_n^3}{\sqrt{n-p}}\right)\cdot \sup_{\theta\in\left(\frac{3}{4}\theta_0,\frac{3}{2}\theta_0\right)} \frac{\pi(\theta|\alpha)}{\pi(\theta_0|\alpha)} \nonumber \\
&+ 4\theta_0 \sup_{\theta\in \left(\frac{3}{4}\theta_0,\frac{3}{2}\theta_0\right)} \left|\frac{\partial \log \pi(\theta|\alpha)}{\partial \theta} \right| \sup_{\theta\in\left(\frac{3}{4}\theta_0,\frac{3}{2}\theta_0\right)} \frac{\pi(\theta|\alpha)}{\pi(\theta_0|\alpha)}\cdot \left(\epsilon_{1n}+\frac{s_n}{\sqrt{n-p}}\right).
\end{align}
\end{lemma}

\begin{proof}[Proof of Lemma \ref{lemma:gndiff1}]
we first take the difference of the log-likelihood in \eqref{eq:loglik2} and the profile restricted log-likelihood in \eqref{def:prologlik} of the main text, and use the definition of $\widetilde\theta_{\alpha}$ in \eqref{tildetheta1} of the main text to obtain that
\begin{align}
\Lcal_n(\alpha^{-2\nu}\theta,\alpha)-\Lcal_n(\alpha^{-2\nu}\widetilde \theta_{\alpha},\alpha)&=
-\frac{n-p}{2}\log \frac{\theta}{\widetilde \theta_{\alpha}} + \frac{(n-p)(\theta-\widetilde \theta_{\alpha})}{2\theta} \label{llh.diff1} \\
&= -\frac{n-p}{2}\log \left(1+\frac{t}{\sqrt{n-p} \cdot \widetilde \theta_{\alpha}}\right) + \frac{\sqrt{n-p}\cdot t}{2\left(\widetilde \theta_{\alpha}+\frac{t}{\sqrt{n-p}}\right)} \label{llh.diff2}
\end{align}

We decompose the integral in \eqref{gnt1} into three parts:
\begin{align}\label{gnt2}
\int_{\RR} |\varrho_n(t;\alpha)| \ud t = \int_{A_1} |\varrho_n(t;\alpha)| \ud t + \int_{A_2} |\varrho_n(t;\alpha)| \ud t  +\int_{A_3} |\varrho_n(t;\alpha)| \ud t,
\end{align}
where $A_1=\{t\in \RR: ~ |t|\geq (\theta_0/4)\sqrt{n-p}\}$, $A_2=\{t\in \RR:~ s_n\leq |t|< (\theta_0/4)\sqrt{n-p}\}$, and $A_3=\{t\in \RR:~|t|< s_n\}$, with the sequence $s_n$ as specified in the lemma.
\vspace{2mm}

\noindent \underline{Bound the first term in \eqref{gnt2}:} We have
\begin{align}\label{ga1.1}
\int_{A_1} |\varrho_n(t;\alpha)| \ud t &\leq \int_{A_1} \exp\left\{\Lcal_n(\alpha^{-2\nu}\theta,\alpha)-\Lcal_n(\alpha^{-2\nu}\widetilde \theta_{\alpha},\alpha)\right\} \frac{\pi\left(\widetilde \theta_{\alpha} +\frac{t}{\sqrt{n-p}} ~\Big |~ \alpha\right)}{\pi(\theta_0|\alpha)} \ud t \nonumber \\
&~~~~ + \int_{A_1}  \ee^{-\frac{t^2}{4\theta_0^2}}  \ud t .
\end{align}
The second term in \eqref{ga1.1} can be bounded by
\begin{align}\label{ga1.bound2}
\int_{A_1}  \ee^{-\frac{t^2}{4\theta_0^2}} \ud t & \leq 2\sqrt{\pi}\theta_0\cdot \int_{|t|\geq (\theta_0/4)\sqrt{n-p}}  \frac{1}{\sqrt{2\pi\cdot 2\theta_0^2}} \ee^{-\frac{t^2}{4\theta_0^2}} \ud t \nonumber \\
&\leq 2\sqrt{\pi}\theta_0 \exp\left\{-\frac{(n-p) (\theta_0/4)^2}{4\theta_0^2}\right\} = 2\sqrt{\pi}\theta_0 \exp\left(-\frac{n-p}{64}\right),
\end{align}
where the last inequality follows from the tail bounds for a normal random variable: if $Z\sim \Ncal(0,1)$, then for any $z>0$,
\begin{align}\label{normal.tail}
\pr (|Z|>z) \leq \ee^{-z^2/2}.
\end{align}
For the first term in \eqref{ga1.1}, we note that $\theta$ is a linear transformation of $t$. We use the relation \eqref{llh.diff1} and obtain that
\begin{align}\label{ga1.bound1.1}
& \int_{A_1} \exp\left\{\Lcal_n(\alpha^{-2\nu}\theta,\alpha)-\Lcal_n(\alpha^{-2\nu}\widetilde \theta_{\alpha},\alpha)\right\} \frac{\pi\left(\widetilde \theta_{\alpha} +\frac{t}{\sqrt{n-p}} ~\Big |~ \alpha\right)}{\pi(\theta_0|\alpha)} \ud t \nonumber \\
={}& \int_{|t|\geq (\theta_0/4)\sqrt{n-p}} \exp\left\{-\frac{n-p}{2}\log \frac{\theta}{\widetilde \theta_{\alpha}} + \frac{(n-p)(\theta-\widetilde \theta_{\alpha})}{2\theta}\right\} \frac{\pi\left(\widetilde \theta_{\alpha} +\frac{t}{\sqrt{n-p}} ~\Big |~ \alpha\right)}{\pi(\theta_0|\alpha)} \ud t \nonumber \\
\leq{}& \sqrt{n-p}\int_{|\theta-\widetilde \theta_{\alpha}|\geq \theta_0/4} \frac{\pi\left(\widetilde \theta_{\alpha} +\frac{t}{\sqrt{n-p}} ~\Big |~ \alpha\right)}{\pi(\theta_0|\alpha)} \cdot\exp\left\{-\frac{n-p}{2}\varphi\left(\frac{\widetilde \theta_{\alpha}}{\theta}\right)\right\} \ud \theta.
\end{align}
For any constant $\epsilon>0$, define the event $\Ecal_1(\epsilon,\alpha)=\{|\widetilde \theta_{\alpha}-\theta_0|<\epsilon\}$. Let $0<\epsilon_{1n}<\theta_0/4$, where $\epsilon_{1n}\to 0$ as $n\to\infty$ and its order will be determined later. Then, on the event $\Ecal_1(\epsilon_{1n},\alpha)$ and $\{|\theta-\widetilde \theta_{\alpha}|\geq \theta_0/4\}$, we consider two cases: If $\theta>\widetilde\theta_{\alpha}+\theta_0/4$, then
\begin{align*}
1-\frac{\widetilde \theta_{\alpha}}{\theta} &= 1-\frac{\widetilde \theta_{\alpha}}{\theta-\widetilde \theta_{\alpha}+\widetilde \theta_{\alpha}}\geq 1- \frac{\widetilde \theta_{\alpha}}{\theta_0/4+\widetilde \theta_{\alpha}} = \frac{\theta_0/4}{\theta_0/4+\widetilde \theta_{\alpha}} > \frac{\theta_0/4}{\theta_0/4+\theta_0+\epsilon_{1n}} > \frac{1}{6}.
\end{align*}
If $\theta<\widetilde\theta_{\alpha}-\theta_0/4$, then
\begin{align*}
\frac{\widetilde \theta_{\alpha}}{\theta}-1 &= \frac{\widetilde \theta_{\alpha}}{\theta-\widetilde \theta_{\alpha}+\widetilde \theta_{\alpha}} -1 \geq \frac{\widetilde \theta_{\alpha}}{-\theta_0/4+\widetilde \theta_{\alpha}}-1 = \frac{\theta_0/4}{\widetilde \theta_{\alpha}-\theta_0/4} > \frac{\theta_0/4}{\theta_0+\epsilon_{1n}-\theta_0/4} > \frac{1}{4}.
\end{align*}
This implies that on the event $\Ecal_1(\epsilon_{1n},\alpha)$ and $\{|\theta-\widetilde \theta_{\alpha}|\geq \theta_0/4\}$, we must have either $\widetilde \theta_{\alpha}/\theta<\frac{5}{6}$ or $\widetilde \theta_{\alpha}/\theta>\frac{5}{4}$.
Since the function $\varphi(u)=u-\log u-1$ is monotonically decreasing on $(0,1)$ and monotonically increasing on $[1,+\infty)$, we have that on the event $\Ecal_1(\epsilon_{1n},\alpha)$ and $\{|\theta-\widetilde \theta_{\alpha}|\geq \theta_0/4\}$, either $\varphi(\widetilde\theta_{\alpha}/\theta)>\min\{\varphi(5/6),\varphi(5/4)\} >0.015$. Therefore, from \eqref{ga1.bound1.1}, we obtain that on the event $\Ecal_1(\epsilon_{1n},\alpha)$,
\begin{align}\label{ga1.bound1.2}
& \int_{A_1} \exp\left\{\Lcal_n(\alpha^{-2\nu}\theta,\alpha)-\Lcal_n(\alpha^{-2\nu}\widetilde \theta_{\alpha},\alpha)\right\} \frac{\pi\left(\widetilde \theta_{\alpha} +\frac{t}{\sqrt{n-p}} ~\Big |~ \alpha\right)}{\pi(\theta_0|\alpha)} \ud t \nonumber \\
\leq{}& \sqrt{n-p} \int_{|\theta-\widetilde \theta_{\alpha}|\geq \theta_0/4} \frac{\pi\left(\theta  | \alpha\right)}{\pi(\theta_0|\alpha)}\cdot\exp\left\{-\frac{0.015(n-p)}{2}\right\} \ud\theta \nonumber \\
<{}& \frac{\sqrt{n-p}}{\pi(\theta_0|\alpha)} \exp\{-0.007(n-p)\},
\end{align}
where in the last inequality, we use the fact that $\pi(\theta|\alpha)$ is a proper prior density. Thus, combining \eqref{ga1.1}, \eqref{ga1.bound2} and \eqref{ga1.bound1.2} yields that on the event $\Ecal_1(\epsilon_{1n},\alpha)$,
\begin{align}\label{ga1.1.bound}
\int_{A_1} |\varrho_n(t;\alpha)| \ud t &\leq 2\sqrt{\pi}\theta_0 \exp\left(-\frac{n-p}{64}\right) + \frac{\sqrt{n-p}}{\pi(\theta_0|\alpha) }  \exp\{-0.007(n-p)\}.
\end{align}

\vspace{2mm}

\noindent \underline{Bound the second term in \eqref{gnt2}:} On the event $\Ecal_1(\epsilon_{1n},\alpha)$ and $\{|\theta-\widetilde \theta_{\alpha}|< \theta_0/4\}$ with $0<\epsilon_{1n}<\theta_0/4$, if $\theta\geq \widetilde \theta_{\alpha}$, then
\begin{align*}
1-\frac{\widetilde \theta_{\alpha}}{\theta} &= 1-\frac{\widetilde \theta_{\alpha}}{\theta-\widetilde \theta_{\alpha}+\widetilde \theta_{\alpha}}< 1- \frac{\widetilde \theta_{\alpha}}{\theta_0/4+\widetilde \theta_{\alpha}} = \frac{\theta_0/4}{\theta_0/4+\widetilde \theta_{\alpha}} \leq \frac{\theta_0/4}{\theta_0/4+\theta_0-\epsilon_{1n}} < \frac{1}{4}.
\end{align*}
If $\theta<\widetilde\theta_{\alpha}$, then
\begin{align*}
\frac{\widetilde \theta_{\alpha}}{\theta}-1 &= \frac{\widetilde \theta_{\alpha}}{\theta-\widetilde \theta_{\alpha}+\widetilde \theta_{\alpha}} -1 < \frac{\widetilde \theta_{\alpha}}{-\theta_0/4+\widetilde \theta_{\alpha}}-1 = \frac{\theta_0/4}{\widetilde \theta_{\alpha}-\theta_0/4} < \frac{\theta_0/4}{\theta_0-\epsilon_{1n}-\theta_0/4} < \frac{1}{2}.
\end{align*}
Hence on the event $\Ecal_1(\epsilon_{1n},\alpha)$ and $\{|\theta-\widetilde \theta_{\alpha}|< \theta_0/4 \}$, $\widetilde \theta_{\alpha}/\theta\in (\frac{3}{4},\frac{3}{2})$. For any $u\in (\frac{3}{4},\frac{3}{2})$, by simple calculus, we have
\begin{align}\label{varphi.bound1}
\left|\varphi(u) - \frac{1}{2}\left(\frac{1}{u}-1\right)^2 \right| \leq \frac{6}{5}\left|\frac{1}{u}-1\right|^3.
\end{align}
Let
\begin{align*}
g_n(t) &= \frac{1}{n-p}\left[\Lcal_n(\alpha^{-2\nu}(\widetilde \theta_{\alpha} +\tfrac{t}{\sqrt{n-p}}),\alpha)-\Lcal_n(\alpha^{-2\nu}\widetilde \theta_{\alpha},\alpha)\right] - \frac{t^2}{2(n-p)\widetilde \theta_{\alpha}^2}\\
&= \varphi\left(\left[1+\frac{t}{\sqrt{n-p}\cdot \widetilde \theta_{\alpha}}\right]^{-1}\right)- \frac{t^2}{2(n-p) \widetilde \theta_{\alpha}^2}.
\end{align*}
In \eqref{varphi.bound1}, if we set $u=\widetilde \theta_{\alpha}/\theta$, then $\frac{1}{2}\left(\frac{1}{u}-1\right)^2=t^2/[2(n-p)\widetilde \theta_{\alpha}^2]$. Thus, we can obtain that on the event $\Ecal_1(\epsilon_{1n},\alpha)$ and $t\in A_2$ (so that $|\theta-\widetilde\theta_{\alpha}|<\theta_0/4$),
\begin{align}\label{rn.bound1}
|g_n(t)|&= \left|\varphi\left(\left[1+\frac{t}{\sqrt{n-p}\cdot \widetilde \theta_{\alpha}}\right]^{-1}\right)- \frac{t^2}{2(n-p) \widetilde \theta_{\alpha}^2}\right| \leq \frac{6|t|^3}{5(n-p)^{3/2}\widetilde\theta_{\alpha}^3} = \frac{6|\theta-\widetilde\theta_{\alpha}|^3}{5\widetilde\theta_{\alpha}^3}\nonumber \\
&\leq \frac{12|\theta-\widetilde\theta_{\alpha}|}{5\widetilde\theta_{\alpha}} \cdot \frac{|\theta-\widetilde\theta_{\alpha}|^2}{2\widetilde\theta_{\alpha}^2}
\leq
\frac{4}{5} \cdot \frac{|\theta-\widetilde\theta_{\alpha}|^2}{2\widetilde\theta_{\alpha}^2} = \frac{2t^2}{5(n-p)\widetilde\theta_{\alpha}^2}.
\end{align}
Therefore, on the event $\Ecal_1(\epsilon_{1n},\alpha)$ with $0<\epsilon_{1n}<\theta_0/4$,
\begin{align}\label{ga1.2.bound}
& ~\quad \int_{A_2} |\varrho_n(t;\alpha)| \ud t \leq \int_{A_2} \exp\left\{-\frac{n-p}{2}\varphi(\widetilde \theta_{\alpha}/\theta)\right\} \frac{\pi\left(\widetilde \theta_{\alpha} +\frac{t}{\sqrt{n-p}} ~\Big |~ \alpha\right)}{\pi(\theta_0|\alpha)} \ud t  + \int_{A_2}  \ee^{-\frac{t^2}{4\theta_0^2}} \ud t  \nonumber \\
&\leq  \int_{A_2} \exp\left\{- \frac{t^2}{4\widetilde \theta_{\alpha}^2} + \frac{n-p}{2}|g_n(t)|\right\} \frac{\pi\left(\widetilde \theta_{\alpha} +\frac{t}{\sqrt{n-p}} ~\Big |~ \alpha\right)}{\pi(\theta_0|\alpha)} + \int_{A_2}  \ee^{-\frac{t^2}{4\theta_0^2}} \ud t  \nonumber \\
&\stackrel{(i)}{\leq} \int_{A_2} \exp\left\{- \frac{t^2}{20\widetilde \theta_{\alpha}^2}\right\} \frac{\pi\left(\widetilde \theta_{\alpha} +\frac{t}{\sqrt{n-p}} ~\Big |~ \alpha\right)}{\pi(\theta_0|\alpha)} \ud t + \int_{A_2}  \ee^{-\frac{t^2}{4\theta_0^2}}  \ud t  \nonumber \\
&\leq \sup_{|\theta-\widetilde\theta_{\alpha}|<\theta_0/4} \frac{\pi\left(\theta |\alpha\right) }{\pi\left(\theta_0|\alpha\right)}\cdot \int_{A_2} \exp\left\{- \frac{t^2}{20\widetilde \theta_{\alpha}^2}\right\} \ud t + \int_{A_2}  \ee^{-\frac{t^2}{4\theta_0^2}}  \ud t  \nonumber \\
&\stackrel{(ii)}{\leq} \sup_{\theta\in\left(\frac{1}{2}\theta_0,\frac{3}{2}\theta_0\right)}
\frac{\pi\left(\theta |\alpha\right) }{\pi\left(\theta_0|\alpha\right)} \cdot \int_{|t|>s_n} \exp\left\{- \frac{t^2}{20\widetilde \theta_{\alpha}^2}\right\} \ud t + \int_{|t|>s_n}  \ee^{-\frac{t^2}{4\theta_0^2}}  \ud t  \nonumber \\
&\stackrel{(iii)}{\leq} \sup_{\theta\in\left(\frac{1}{2}\theta_0,\frac{3}{2}\theta_0\right)} \frac{\pi\left(\theta |\alpha\right) }{\pi\left(\theta_0|\alpha\right)} \cdot 2\sqrt{5\pi}\widetilde \theta_{\alpha} \exp\left(-\frac{s_n^2}{20\widetilde \theta_{\alpha}^2}\right) + 2\sqrt{\pi}\theta_0  \exp\left(-\frac{s_n^2}{4\theta_0^2}\right) \nonumber \\
&\stackrel{(iv)}{\leq} \sup_{\theta\in\left(\frac{1}{2}\theta_0,\frac{3}{2}\theta_0\right)} \frac{\pi\left(\theta |\alpha\right) }{\pi\left(\theta_0|\alpha\right)} \cdot \frac{5}{2}\sqrt{5\pi}\theta_0 \exp\left(-\frac{4s_n^2}{125\theta_0^2}\right) + 2\sqrt{\pi}\theta_0 \exp\left(-\frac{s_n^2}{4\theta_0^2}\right),
\end{align}
where (i) is from the upper bound of $g_n(t)$ in \eqref{rn.bound1}; (ii) is based on the relation $|\theta-\theta_0|\leq |\theta-\widetilde \theta_{\alpha}| + |\widetilde \theta_{\alpha}-\theta_0|< \theta_0/4 +\epsilon_{1n}< \theta_0/2$; (iii) follows from the normal tail inequality \eqref{normal.tail}; (iv) is based on the relation $\widetilde \theta_{\alpha}\leq \theta_0+\epsilon_{1n}< \theta_0+\theta_0/4< 5\theta_0/4$.
\vspace{3mm}

\noindent \underline{Bound the third term in \eqref{gnt2}:} We continue to use the bound in \eqref{varphi.bound1} and \eqref{rn.bound1} for $t\in A_3$ on the event $\Ecal_1(\epsilon_{1n},\alpha)$ and obtain that
\begin{align}\label{rn.bound2}
|g_n(t)|& \leq \frac{6|t|^3}{5(n-p)^{3/2}\widetilde\theta_{\alpha}^3} \leq \frac{6s_n^3}{5(n-p)^{3/2}\widetilde\theta_{\alpha}^3}.
\end{align}
Therefore,
\begin{align}\label{ga1.3.0}
&~~~~\int_{A_3} |\varrho_n(t;\alpha)| \ud t \nonumber \\
&= \int_{A_3} \left|\exp\left\{-\frac{n-p}{2}\varphi(\widetilde\theta_{\alpha}/\theta)\right\} \frac{\pi\left(\widetilde \theta_{\alpha} +\frac{t}{\sqrt{n-p}} ~\Big |~ \alpha\right)}{\pi(\theta_0|\alpha)} - \ee^{-\frac{t^2}{4\theta_0^2}} \right| \ud t \nonumber \\
&= \int_{A_3} \Bigg|\exp\Big\{-\frac{t^2}{4\widetilde \theta_{\alpha}^2} - \frac{n-p}{2} g_n(t)\Big\} \frac{\pi\left(\widetilde \theta_{\alpha} +\frac{t}{\sqrt{n-p}} ~\Big |~ \alpha\right)}{\pi(\theta_0|\alpha)} - \ee^{-\frac{t^2}{4\theta_0^2}} \Bigg| \ud t \nonumber \\
&\leq \int_{A_3} \Bigg|\exp\Big\{-\frac{t^2}{4\widetilde \theta_{\alpha}^2} - \frac{n-p}{2} g_n(t)\Big\} - \exp\Big(-\frac{t^2}{4\theta_0^2} \Big)\Bigg| \cdot \frac{\pi\left(\widetilde \theta_{\alpha} +\frac{t}{\sqrt{n-p}} ~\Big |~ \alpha\right)}{\pi(\theta_0|\alpha)} \ud t  \nonumber \\
&\quad + \int_{A_3} \ee^{-\frac{t^2}{4\theta_0^2}}\Bigg|  \frac{\pi\left(\widetilde \theta_{\alpha} +\frac{t}{\sqrt{n-p}} ~\Big |~ \alpha\right)}{\pi(\theta_0|\alpha)} - 1\Bigg| \ud t \nonumber \\
&\leq  \sup_{|t|<s_n} \left|\exp\left\{\frac{t^2}{4}\left(\theta_0^{-2}-\widetilde \theta_{\alpha}^{-2}\right) - \frac{n-p}{2} g_n(t) \right\} -1 \right| \cdot \sup_{|t|<s_n}\frac{\pi\left(\widetilde \theta_{\alpha} +\frac{t}{\sqrt{n-p}} ~\Big |~ \alpha\right)}{\pi(\theta_0|\alpha)} \cdot \int_{|t|<s_n} \ee^{-\frac{t^2}{4\theta_0^2}}\ud t   \nonumber \\
&\quad +\sup_{|t|<s_n} \Bigg|  \frac{\pi\left(\widetilde \theta_{\alpha} +\frac{t}{\sqrt{n-p}} ~\Big |~ \alpha\right)}{\pi(\theta_0|\alpha)} - 1 \Bigg|  \times \int_{|t|<s_n} \ee^{-\frac{t^2}{4\theta_0^2}}\ud t \nonumber \\
&\leq 2\sqrt{\pi}\theta_0\cdot \sup_{|t|<s_n} \left|\exp\left\{\frac{t^2}{4}\left(\theta_0^{-2}-\widetilde \theta_{\alpha}^{-2}\right)- \frac{n-p}{2} g_n(t)\right\} -1 \right| \cdot \sup_{|t|<s_n} \frac{\pi\left(\widetilde \theta_{\alpha} +\frac{t}{\sqrt{n-p}} ~\Big |~ \alpha\right)}{\pi(\theta_0|\alpha)}   \nonumber \\
&\quad + 2\sqrt{\pi}\theta_0\cdot \sup_{|t|<s_n} \left|  \frac{\pi\left(\widetilde \theta_{\alpha} +\frac{t}{\sqrt{n-p}} ~\Big |~ \alpha\right)}{\pi(\theta_0|\alpha)} - 1 \right|.
\end{align}
For the first term in \eqref{ga1.3.0}, we can choose $\epsilon_{1n} \to 0$ as $n\to \infty$ and $\epsilon_{1n}<\theta_0/4$, such that on the event $\Ecal_1(\epsilon_{1n},\alpha)$, for all $|t|<s_n$, using \eqref{rn.bound2}, we have
\begin{align}\label{ga1.3.1}
& \left|\frac{t^2}{4}\left(\theta_0^{-2}-\widetilde \theta_{\alpha}^{-2}\right) - \frac{n-p}{2} g_n(t)\right| \leq  \frac{s_n^2}{4}\frac{\left|\widetilde \theta_{\alpha}^2 - \theta_0^2\right|}{\widetilde \theta_{\alpha}^2\theta_0^2} + \left|\frac{n-p}{2} g_n(t)\right| \nonumber \\
&\leq \frac{s_n^2 \epsilon_{1n}}{4}\frac{\left|\widetilde \theta_{\alpha}+ \theta_0\right|}{\widetilde \theta_{\alpha}^2\theta_0^2} + \left|\frac{n-p}{2} g_n(t)\right| \leq \frac{s_n^2 \epsilon_{1n}}{4}\frac{(2\theta_0+\epsilon_{1n})}{(\theta_0-\epsilon_{1n})^2\theta_0^2} + \frac{3s_n^3}{5\sqrt{n-p}\widetilde\theta_{\alpha}^3} \nonumber \\
&<\frac{s_n^2 \epsilon_{1n}}{\theta_0^3} + \frac{2s_n^3}{\sqrt{n-p}\theta_0^3}.
\end{align}
We choose sufficiently large $n$ that satisfies $\epsilon_{1n}\leq \frac{\theta_0^3}{2s_n^2}$ and $n\geq \frac{16s_n^6}{\theta_0^6}+p$, such that the upper bound in \eqref{ga1.3.1} is smaller than 1. Then we can apply the inequality $| \ee^u-1|\leq 2|u|$ for all $|u|\leq 1$ and obtain that
\begin{align}\label{ga1.3.2}
&\quad \sup_{|t|<s_n} \left|\exp\left\{\frac{t^2}{4}\left(\theta_0^{-2}-\widetilde \theta_{\alpha}^{-2}\right)- \frac{n-p}{2} g_n(t)\right\} -1 \right| \nonumber \\
&\leq 2\left|\frac{t^2}{4}\left(\theta_0^{-2}-\widetilde \theta_{\alpha}^{-2}\right) - \frac{n-p}{2} g_n(t)\right|  <\frac{2s_n^2 \epsilon_{1n}}{\theta_0^3} + \frac{4s_n^3}{\sqrt{n-p}\theta_0^3}.
\end{align}
Furthermore, we can choose $n\geq \frac{16s_n^2}{\theta_0^2}+p$ such that for all $|t|<s_n$, on the event $\Ecal_1(\epsilon_{1n},\alpha)$, $\widetilde\theta_{\alpha}+t/\sqrt{n-p} \leq \theta_0+\epsilon_{1n} + s_n/\sqrt{n-p}<\frac{3}{2}\theta_0$ and $\widetilde\theta_{\alpha}+t/\sqrt{n-p} >\theta_0-\epsilon_{1n}>\frac{3}{4}\theta_0$. Then from Assumption \ref{prior.1} (ii), we have that on the interval $(\frac{3}{4}\theta_0,\frac{3}{2}\theta_0)$,
\begin{align}\label{ga1.3.3}
\sup_{|t|<s_n} \frac{\pi\left(\widetilde \theta_{\alpha} +\frac{t}{\sqrt{n-p}} ~\Big |~ \alpha\right)}{\pi(\theta_0|\alpha)} &\leq \sup_{\theta\in\left(\frac{3}{4}\theta_0,\frac{3}{2}\theta_0\right)} \frac{\pi(\theta|\alpha)}{\pi(\theta_0|\alpha)}.
\end{align}
For the second term in \eqref{ga1.3.1}, by Assumption \ref{prior.1} and the fact that $\epsilon_{1n}\to 0, s_n/\sqrt{n-p}\to 0$, we have that on the event $\Ecal_1(\epsilon_{1n},\alpha)$, for all sufficiently large $n$,
\begin{align}\label{ga1.3.4}
& \quad \sup_{|t|<s_n} \left|  \frac{\pi\left(\widetilde \theta_{\alpha} +\frac{t}{\sqrt{n-p}} ~\Big |~ \alpha\right)}{\pi(\theta_0|\alpha)} - 1 \right| \leq \sup_{\theta\in (3\theta_0/4,3\theta_0/2)} \left|\frac{\pi(\theta|\alpha) - \pi(\theta_0|\alpha)}{\pi(\theta_0|\alpha)}\right| \nonumber \\
& \leq \sup_{\theta\in (3\theta_0/4,3\theta_0/2)} \left|\frac{\partial \log \pi(\theta|\alpha)}{\partial \theta} \right| \cdot \sup_{\theta\in (3\theta_0/4,3\theta_0/2)} \frac{\pi(\theta|\alpha)}{\pi(\theta_0|\alpha)}
\cdot \sup_{|t|<s_n} \left|\widetilde \theta_{\alpha} +\frac{t}{\sqrt{n-p}} - \theta_0 \right| \nonumber\\
&\leq  \sup_{\theta\in \left(\frac{3}{4}\theta_0,\frac{3}{2}\theta_0\right)} \left|\frac{\partial \log \pi(\theta|\alpha)}{\partial \theta} \right| \sup_{\theta\in \left(\frac{3}{4}\theta_0,\frac{3}{2}\theta_0\right)} \frac{\pi(\theta|\alpha)}{\pi(\theta_0|\alpha)} \cdot \left(\epsilon_{1n}+\frac{s_n}{\sqrt{n-p}}\right).
\end{align}
Therefore, \eqref{ga1.3.0}, \eqref{ga1.3.2}, \eqref{ga1.3.3}, and \eqref{ga1.3.4} together yield that on the event $\Ecal_1(\epsilon_{1n},\alpha)$, with $\epsilon_{1n}\leq \min\left(\frac{\theta_0^3}{2s_n^2},\frac{\theta_0}{4}\right)$ and $n\geq \max\left(\frac{16s_n^6}{\theta_0^6},\frac{16s_n^2}{\theta_0^2}\right)+p$,
\begin{align}\label{ga1.3.bound}
&\int_{A_3} |\varrho_n(t;\alpha)| \ud t \nonumber\\
\leq{} & ~\frac{4\sqrt{\pi}}{\theta_0^2} \left(s_n^2 \epsilon_{1n} + \frac{2s_n^3}{\sqrt{n-p}}\right)\cdot \sup_{\theta\in\left(\frac{3}{4}\theta_0,\frac{3}{2}\theta_0\right)} \frac{\pi(\theta|\alpha)}{\pi(\theta_0|\alpha)} \nonumber \\
& + 2\sqrt{\pi}\theta_0  \sup_{\theta\in \left(\frac{3}{4}\theta_0,\frac{3}{2}\theta_0\right)} \left|\frac{\partial \log \pi(\theta|\alpha)}{\partial \theta} \right| \sup_{\theta\in \left(\frac{3}{4}\theta_0,\frac{3}{2}\theta_0\right)} \frac{\pi(\theta|\alpha)}{\pi(\theta_0|\alpha)} \cdot\left(\epsilon_{1n}+\frac{s_n}{\sqrt{n-p}}\right).
\end{align}

Finally, we combine \eqref{ga1.1.bound}, \eqref{ga1.2.bound}, and \eqref{ga1.3.bound} to conclude that on the event $\Ecal_1(\epsilon_{1n},\alpha)$ with $\epsilon_{1n}\leq \min\left(\frac{\theta_0^3}{2s_n^2},\frac{\theta_0}{4}\right)$ and $n\geq \max\left(\frac{16s_n^6}{\theta_0^6},\frac{16s_n^2}{\theta_0^2}\right)+p$,
\begin{align}\label{ga1.bound.full1}
& \int_{\RR} |\varrho_n(t;\alpha)| \ud t \leq 2\sqrt{\pi}\theta_0 \exp\{-(n-p)/64\} + \frac{\sqrt{n-p}}{\pi(\theta_0|\alpha)} \exp\{-0.007(n-p)\} \nonumber \\
&~~+ \sup_{\theta\in\left(\frac{1}{2}\theta_0,\frac{3}{2}\theta_0\right)} \frac{\pi(\theta|\alpha)}{\pi(\theta_0|\alpha)}\cdot \frac{5}{2}\sqrt{5\pi}\theta_0 \exp\left(-\frac{4s_n^2}{125\theta_0^2}\right) + 2\sqrt{\pi}\theta_0 \exp\left(-\frac{s_n^2}{4\theta_0^2}\right) \nonumber \\
&~~+ \frac{4\sqrt{\pi}}{\theta_0^2} \left(s_n^2 \epsilon_{1n} + \frac{2s_n^3}{\sqrt{n-p}}\right)\cdot \sup_{\theta\in\left(\frac{3}{4}\theta_0,\frac{3}{2}\theta_0\right)} \frac{\pi(\theta|\alpha)}{\pi(\theta_0|\alpha)} \nonumber \\
&~~ + 2\sqrt{\pi}\theta_0  \sup_{\theta\in \left(\frac{3}{4}\theta_0,\frac{3}{2}\theta_0\right)} \left|\frac{\partial \log \pi(\theta|\alpha)}{\partial \theta} \right| \sup_{\theta\in \left(\frac{3}{4}\theta_0,\frac{3}{2}\theta_0\right)} \frac{\pi(\theta|\alpha)}{\pi(\theta_0|\alpha)} \cdot \left(\epsilon_{1n}+\frac{s_n}{\sqrt{n-p}}\right).
\end{align}
By adjusting the constants to be slightly larger, we obtain the bound in \eqref{en:rho}.
\end{proof}

\vspace{4mm}

The proof of Theorem \ref{thm:bvm1:theta} has used on the following lemmas.
\begin{lemma}\label{lemma:intdiff}
For two nonnegative functions $f$ and $g$, if their integrals are $F=\int f$ and $G=\int g$, then
\begin{align*}
\int \left|\frac{f}{\int f} - \frac{g}{\int g}\right| \leq \frac{2\int |f-g|}{G}.
\end{align*}
\end{lemma}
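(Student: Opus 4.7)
\noindent\textbf{Proof sketch of Lemma \ref{lemma:intdiff}.} The plan is to reduce the claim to a one-line algebraic identity followed by the triangle inequality, so that the only genuine estimate used is $|F-G|\le \int|f-g|$, which is just the linearity and monotonicity of the integral.

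First I would put both integrands over a common denominator and write
\begin{equation*}
\frac{f}{F}-\frac{g}{G} \;=\; \frac{fG-gF}{FG}.
\end{equation*}
The key algebraic step is to add and subtract $Ff$ inside the numerator so as to isolate the $f-g$ difference while keeping $G$ (rather than $F$) in one of the denominators after cancellation:
\begin{equation*}
fG-gF \;=\; f(G-F)+F(f-g).
\end{equation*}
Dividing by $FG$ and applying the triangle inequality pointwise gives
\begin{equation*}
\left|\frac{f}{F}-\frac{g}{G}\right| \;\le\; \frac{f\,|G-F|}{FG}+\frac{|f-g|}{G}.
\end{equation*}

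Next I would integrate both sides. The first term contributes $|G-F|/G$ because $\int f = F$ cancels exactly, while the second term contributes $\int|f-g|/G$ directly. Hence
\begin{equation*}
\int\left|\frac{f}{F}-\frac{g}{G}\right| \;\le\; \frac{|G-F|}{G}+\frac{\int|f-g|}{G}.
\end{equation*}
Finally, using $|G-F|=\bigl|\int(g-f)\bigr|\le\int|f-g|$ yields the stated bound $2\int|f-g|/G$.

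There is no real obstacle here; the only choice that matters is selecting which of the two symmetric decompositions $fG-gF = f(G-F)+F(f-g)$ or $fG-gF = G(f-g)+g(G-F)$ to use, and I would pick the second (equivalent) form $fG-gF = g(F-G)+G(f-g)$ precisely so that the $G$ in the denominator survives after integrating against $g$ (since $\int g = G$). Either choice works; the asymmetry in the final bound reflects which normalizer remains after this cancellation.
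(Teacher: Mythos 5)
Your proof is correct and is essentially the paper's own argument: the same decomposition $fG-gF=f(G-F)+F(f-g)$, the pointwise triangle inequality, and the bound $|G-F|\le\int|f-g|$. The only blemish is in your closing remark, where the alternative identity you say you would prefer is miswritten (it should read $fG-gF=g(G-F)+G(f-g)$, not $g(F-G)+G(f-g)$) and in any case contradicts the decomposition you actually used; since the executed proof is the first form and is correct, this does not affect the result.
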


\begin{proof}[Proof of Lemma \ref{lemma:intdiff}]
\begin{align*}
& \int \left|\frac{f}{\int f} - \frac{g}{\int g}\right| = \int \frac{|fG-gF|}{FG} \leq \int \frac{f|G-F| + F|g-f|}{FG} \\
&= \frac{|G-F|\int f + F\int |f-g|}{FG} \leq \frac{F\int|f-g| + F\int |f-g|}{FG} = \frac{2\int |f-g|}{G}.
\end{align*}
\end{proof}

\begin{lemma}\label{lemma:normaltv}
For two univariate normal distributions $\mathcal{N}(\mu_1,\sigma^2)$ and $\mathcal{N}(\mu_2,\sigma^2)$ on $\RR$, their total variation distance is given by
\begin{align*}
& \left\|\mathcal{N}(\mu_1,\sigma^2) - \mathcal{N}(\mu_2,\sigma^2)\right\|_{\tv} = 2\Phi\left(\frac{|\mu_1-\mu_2|}{2\sigma}\right)-1,
\end{align*}
where $\Phi(x)=\int_{-\infty}\frac{1}{\sqrt{2\pi}} \ee^{-z^2/2}\ud z$ is the standard normal cdf.
\end{lemma}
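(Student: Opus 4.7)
The plan is a standard Scheff\'e-type calculation, with attention paid to the convention for total variation distance in use. Inspecting \eqref{joint:theta31}, where the lemma is invoked, I see that $d_{\tv}(P,Q)$ is being identified with the $L^{1}$ distance between densities, $\int |p - q|\,\ud x$, which equals exactly twice the sup-over-measurable-sets version; the outer factor of $2$ in the stated formula reflects this identification.

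First I would reduce to a canonical case by translation invariance: setting $\delta := |\mu_1 - \mu_2|\geq 0$, it suffices to take $\mu_2 = 0$ and $\mu_1 = \delta$. Next I would identify the Scheff\'e set on which one density dominates the other. Because both Gaussians share the same variance, the log-ratio $\log(p_1/p_2)$ is affine in $x$, so the set $A^{*} = \{x : p_1(x) \geq p_2(x)\}$ reduces to the half-line $\{x \geq (\mu_1+\mu_2)/2\}$.

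Then it is a one-line computation with the standard normal cdf:
\[
P_1(A^{*}) = \Phi\!\bigl(\tfrac{\delta}{2\sigma}\bigr), \qquad P_2(A^{*}) = 1 - \Phi\!\bigl(\tfrac{\delta}{2\sigma}\bigr),
\]
so $P_1(A^{*}) - P_2(A^{*}) = 2\Phi(\delta/(2\sigma)) - 1$. Finally I would invoke the elementary identity $\int |p_1 - p_2|\,\ud x = 2[P_1(A^{*}) - P_2(A^{*})]$, which follows by splitting the integral over $A^{*}$ and its complement and using $\int(p_1 - p_2)\,\ud x = 0$, to arrive at the stated formula.

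There is essentially no obstacle in this argument; the whole calculation is a textbook exercise. The only mild care needed is tracking the factor of $2$ arising from the $L^{1}$ versus sup convention for $d_{\tv}$, and verifying that the resulting expression is consistent with how the lemma is actually invoked in the bound \eqref{joint:theta31} in the main proof of Theorem \ref{thm:bvm2:joint}.
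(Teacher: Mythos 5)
Your proposal is correct and follows essentially the same route as the paper's proof: both identify the midpoint $(\mu_1+\mu_2)/2$ as the crossing point of the two densities, split the $L^{1}$ integral accordingly, and evaluate the pieces with $\Phi$, with the outer factor of $2$ coming from the $\int|f_1-f_2|$ convention for $d_{\tv}$ used in \eqref{joint:theta31}. Nothing further is needed.
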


\begin{proof}[Proof of Lemma \ref{lemma:normaltv}]
Let $f_i(x)$ be the normal density of $\mathcal{N}(\mu_i,\sigma^2)$, $i=1,2$. Suppose that $\mu_1<\mu_2$ without loss of generality. Then it is clear that $f_1(x)>f_2(x)$ if $x<(\mu_1+\mu_2)/2$ and $f_1(x)<f_2(x)$ if $x>(\mu_1+\mu_2)/2$. Therefore,
\begin{align*}
&\quad ~ \left\|\mathcal{N}(\mu_1,\sigma^2) - \mathcal{N}(\mu_2,\sigma^2)\right\|_{\tv} \\
&= \frac{1}{2} \int_{-\infty}^{+\infty} |f_1(x)-f_2(x)|\ud x \\
&= \frac{1}{2}\int_{-\infty}^{(\mu_1+\mu_2)/2} \{f_1(x)-f_2(x)\}\ud x + \frac{1}{2}\int_{(\mu_1+\mu_2)/2}^{+\infty} \{f_2(x)-f_1(x)\}\ud x \\
&= \frac{1}{2} \left[\Phi\left(\frac{\mu_2-\mu_1}{2\sigma}\right) - \Phi\left(\frac{\mu_1-\mu_2}{2\sigma}\right) + 1 - \Phi\left(\frac{\mu_1-\mu_2}{2\sigma}\right) - \left\{1-\Phi\left(\frac{\mu_2-\mu_1}{2\sigma}\right)\right\} \right]\\
&= 2\Phi\left(\frac{\mu_2-\mu_1}{2\sigma}\right)-1 .
\end{align*}
\end{proof}

\vspace{3mm}

\subsection{Proof of Theorem \ref{thm:bvm1:theta}} \label{supsubsec:thm1.proof}
\begin{proof}[Proof of Theorem \ref{thm:bvm1:theta}]
The asymptotic normality of $\widetilde\theta_{\alpha}$, i.e., $\sqrt{n}\big(\widetilde \theta_{\alpha}-\theta_0\big)\overset{\Dcal}{\rightarrow} \Ncal(0,2\theta_0^2)$ as $n\to\infty$, has already been proved in Lemma \ref{lem:theta.alpha0}. In the following, we focus on proving the normal limit for the conditional posterior of $\theta$.

From \eqref{post:theta:rho}, the posterior density of $\theta$ can be written as
\begin{align}\label{post:t:rho1}
\pi(\theta|Y_n,\alpha)&= \frac{ \ee^{\Lcal_n\left(\alpha^{-2\nu}\theta,\alpha\right)}\pi\left(\theta|\alpha\right)}
{\int_0^{\infty} \ee^{\Lcal_n\left(\alpha^{-2\nu}\theta,\alpha\right)}\pi\left(\theta|\alpha\right)\ud \theta} = \frac{ \ee^{\Lcal_n\left(\alpha^{-2\nu}\theta,\alpha\right)
-\Lcal_n\left(\alpha^{-2\nu}\widetilde\theta_{\alpha},\alpha\right)}
\frac{\pi\left(\theta|\alpha\right)}{\pi\left(\theta_0|\alpha\right)}}
{\int_0^{\infty} \ee^{\Lcal_n\left(\alpha^{-2\nu}\theta,\alpha\right)
-\Lcal_n\left(\alpha^{-2\nu}\widetilde\theta_{\alpha},\alpha\right)}
\frac{\pi\left(\theta|\alpha\right)}{\pi\left(\theta_0|\alpha\right)}\ud \theta}.
\end{align}
We can rewrite \eqref{gnt1} in Lemma \ref{lemma:gndiff1} in terms of $\theta=\widetilde \theta_{\alpha}+(n-p)^{-1/2}t$:
\begin{align}\label{cond:denom1}
&\int_{\RR} \left| \ee^{\Lcal_n\left(\alpha^{-2\nu}\theta,\alpha\right)
-\Lcal_n\left(\alpha^{-2\nu}\widetilde\theta_{\alpha},\alpha\right)}\frac{\pi\left(\theta|\alpha\right)}{\pi(\theta_0|\alpha)}
- \ee^{-\frac{(n-p)(\theta-\widetilde\theta_{\alpha})^2}{4\theta_0^2}}\right| \ud \theta  \leq  \frac{B_n(\alpha)}{\sqrt{n-p}}.
\end{align}
For the fixed $\alpha>0$, define the events $\Ecal_1'(\epsilon,\alpha)=\{|\widetilde\theta_{\alpha}-\widetilde\theta_{\alpha_0}|<\epsilon\}$ and   $\Ecal_1''(\epsilon)=\{|\widetilde\theta_{\alpha_0}-\theta_0|<\epsilon\}$ for any $\epsilon>0$. From Lemma \ref{lem:sup.theta1}, $\pr\left\{\Ecal_1'(\theta_0 n^{-1/2-\tau}/2,\alpha)\right\} \geq 1-4\exp(-4\log^2 n)$ for all sufficiently large $n$. From Lemma \ref{lem:theta.alpha0}, $\pr\left\{\Ecal_1''(5\theta_0 n^{-1/2}\log n )\right\} \geq 1-3\exp(-4\log^2 n)$ for all sufficiently large $n$. Since when $n$ is sufficiently large,
\begin{align*}
\Ecal_1'(\theta_0 n^{-1/2-\tau}/2,\alpha) \cap \Ecal_1''(5\theta_0 n^{-1/2}\log n,\alpha) \subseteq \Ecal_1(6\theta_0 n^{-1/2}
\log n, \alpha),
\end{align*}
we have that $\pr\left\{\Ecal_1(6\theta_0 n^{-1/2}\log n,\alpha)\right\} \geq 1-7\exp(-4\log^2 n)$. In the expression of $B_n(\alpha)$ in \eqref{en:rho}, we set $\epsilon_{1n}=6\theta_0 n^{-1/2}\log n$ and $s_n=\log n$ which satisfies the conditions in Lemma \ref{lemma:gndiff1}. By Assumption \ref{prior.1}, for a fixed $\alpha>0$, there exists some finite constant $C_1>0$ that depends on $\alpha$, such that
\begin{align} \label{eq:C1}
\sup_{\theta\in\left(\frac{1}{2}\theta_0,\frac{3}{2}\theta_0\right)}\frac{\pi(\theta|\alpha)}{\pi(\theta_0|\alpha)} \leq C_1,\quad \sup_{\theta\in\left(\frac{3}{4}\theta_0,\frac{3}{2}\theta_0\right)} \left|\frac{\partial \log \pi(\theta|\alpha)}{\partial\theta}\right| \leq C_1.
\end{align}
Hence, on the event $\Ecal_1(6\theta_0 n^{-1/2}\log n,\alpha)$, the order of $B_n(\alpha)$ can be quantified from \eqref{en:rho} in Lemma \ref{lemma:gndiff1}:
\begin{align}\label{eq:bn.bound1}
B_n(\alpha) & \leq 4\theta_0 \exp\left(-\frac{n-p}{64}\right) + \frac{\sqrt{n}}{\pi(\theta_0|\alpha)} \exp\left\{-0.007(n-p)\right\} \nonumber\\
&\quad + 10C_1\theta_0 \exp\left(-\frac{4\log^2 n}{125\theta_0^2}\right) + 4\theta_0 \exp\left(-\frac{\log^2 n}{4\theta_0^2}\right)  \nonumber \\
&\quad + \frac{8C_1}{\theta_0^2} \left(6\theta_0 n^{-1/2}\log^3 n + 2(n-p)^{-1/2}\log^3 n\right) + 4C_1^2 \theta_0  \left(6\theta_0+1\right) (n-p)^{-1/2}\log n \nonumber \\
&\leq C_2 n^{-1/2}\log^3 n \rightarrow 0, \text{ as } n\to\infty,
\end{align}
for some constant $C_2>0$ that depends on $\theta_0,p,\pi(\theta_0|\alpha)$ and $C_1$ in \eqref{eq:C1}.
This together with \eqref{cond:denom1} implies that on the event $\Ecal_1(6\theta_0 n^{-1/2}\log n,\alpha)$, the denominator of \eqref{post:t:rho1} converges to
$$\int_{\RR} \exp\left\{-\frac{(n-p)(\theta-\widetilde\theta_{\alpha})^2}{4\theta_0^2}\right\}\ud \theta = 2\theta_0\sqrt{\pi/(n-p)} .$$

Now in Lemma \ref{lemma:intdiff}, we set $f$ to be the numerator of \eqref{post:t:rho1} and $g$ to be $\exp\left\{-\frac{(n-p)(\theta-\widetilde\theta_{\alpha})^2}{4\theta_0^2}\right\}$. Using \eqref{eq:bn.bound1}, we obtain that on the event $\Ecal_1(6\theta_0 n^{-1/2}\log n,\alpha)$, as $n\to\infty$,
\begin{align}\label{eq:bn.bound2}
&\int_{\RR} \left|\pi(\theta|Y_n,\alpha)-\frac{1}{2\sqrt{\pi/(n-p)}\theta_0}\exp\left\{-\frac{(n-p)(\theta-\widetilde\theta_{\alpha})^2}{4\theta_0^2}\right\}\right|\ud \theta \nonumber \\
&\leq
\frac{2\int_{\RR} \left| \ee^{\Lcal_n\left(\alpha^{-2\nu}\theta,\alpha\right)
-\Lcal_n\left(\alpha^{-2\nu}\widetilde\theta_{\alpha},\alpha\right)}\frac{\pi\left(\theta|\alpha\right)}{\pi\left(\theta_0|\alpha\right)}
- \exp\left\{-\frac{(n-p)(\theta-\widetilde\theta_{\alpha})^2}{4\theta_0^2}\right\}\right| \ud \theta}
{2\theta_0\sqrt{\pi/(n-p)}} \nonumber \\
&\leq \frac{B_n(\alpha)/\sqrt{n-p}}{\theta_0\sqrt{\pi/(n-p)}} = \frac{B_n(\alpha)}{\theta_0\sqrt{\pi}} \leq C_3 n^{-1/2}\log^3 n \rightarrow 0,
\end{align}
for some constant $C_3>0$ that depends on $\theta_0,p,\pi(\theta_0|\alpha)$ and $C_1$ in \eqref{eq:C1}.

Since $\pr\left(\left\{\Ecal_1(6\theta_0 n^{-1/2}\log n,\alpha)\right\}^c\right) \leq 7\exp(-4\log^2 n)$ and $\sum_{n=1}^{\infty} 7\exp(-4\log^2 n)<\infty$, by the Borel-Cantelli lemma, we have shown that as $n\to\infty$ almost surely $P_{(\beta_0,\sigma_0^2,\alpha_0)}$,
\begin{align}\label{eq:cond.pi.np}
\left\| \Pi(\ud\theta|Y_n,\alpha) - \Ncal \left(\ud\theta \big| \widetilde\theta_{\alpha}, 2\theta_0^2/(n-p) \right)\right\|_{\tv} \leq \frac{B_n(\alpha)}{2\theta_0\sqrt{\pi}} \to 0 .
\end{align}
On the other hand, Theorem 1.3 of \citet{Devetal18} implies that
\begin{align}\label{eq:cond.np.n}
& \left\| \Ncal \left(\ud\theta \big| \widetilde\theta_{\alpha}, 2\theta_0^2/n \right) - \Ncal \left(\ud\theta \big| \widetilde\theta_{\alpha}, 2\theta_0^2/(n-p) \right)\right\|_{\tv} \nonumber \\
\leq{}& \frac{3}{2} \cdot \frac{2\theta_0^2/(n-p) - 2\theta_0^2/n}{2\theta_0^2/n} = \frac{3p}{2(n-p)} \to 0, ~~ \text{ as } n\to\infty.
\end{align}
Therefore, by \eqref{eq:cond.pi.np}, \eqref{eq:cond.np.n}, and the triangle inequality, we have
\begin{align}
\left\| \Pi(\ud\theta|Y_n,\alpha) - \Ncal \left(\ud\theta \big| \widetilde\theta_{\alpha}, 2\theta_0^2/n \right)\right\|_{\tv} \leq C_3 n^{-1/2}\log^3 n + \frac{3p}{2(n-p)}  \leq C_4 n^{-1/2}\log^3 n \to 0 ,  \nonumber
\end{align}
as $n\to\infty$ almost surely $P_{(\beta_0,\sigma_0^2,\alpha_0)}$, for some constant $C_4>0$ that depends on $\theta_0,p,\pi(\theta_0|\alpha)$ and $C_1$ in \eqref{eq:C1}. This completes the proof of Theorem \ref{thm:bvm1:theta}.
\end{proof}

\vspace{5mm}

\subsection{Proof of Theorem \ref{thm:bvm2:joint}} \label{supsubsec:thm2.proof}
\begin{proof}[Proof of Theorem \ref{thm:bvm2:joint}]
It has been proved in Lemma \ref{lem:alpha.exist} that the profile posterior density \eqref{profile:post1} is well defined almost surely for every $n\geq p$. The convergence in total variation norm for the marginal posterior distributions of $\theta$ and $\alpha$ will follow trivially once the convergence for the joint posterior is proved. The convergence in total variation norm for the joint posterior \eqref{joint:theta1} is implied by adding the following relations using a triangle inequality:
\begin{align}
& \int_{0}^{\infty}\int_{\RR} \left|\pi(\theta,\alpha|Y_n) - \frac{\sqrt{n-p}}{2\sqrt{\pi}\theta_0} \ee^{-\frac{(n-p)(\theta-\widetilde\theta_{\alpha})^2}{4\theta_0^2}} \cdot \widetilde \pi(\alpha|Y_n)\right| \ud \theta \ud \alpha \rightarrow 0, \label{joint:theta2} \\
& \int_{0}^{\infty}  \int_{\RR} \left|\frac{\sqrt{n-p}}{2\sqrt{\pi}\theta_0} \ee^{-\frac{(n-p)(\theta-\widetilde\theta_{\alpha})^2}{4\theta_0^2}} - \frac{\sqrt{n}}{2\sqrt{\pi}\theta_0} \ee^{-\frac{n(\theta-\widetilde\theta_{\alpha_0})^2}{4\theta_0^2}} \right| \cdot \widetilde \pi(\alpha|Y_n) \ud \theta \ud \alpha \rightarrow 0, \label{joint:theta3}
\end{align}
as $n\to\infty$ almost surely $P_{(\beta_0,\sigma_0^2,\alpha_0)}$. We prove \eqref{joint:theta2} and \eqref{joint:theta3} respectively.

\vspace{6mm}

\noindent \underline{Proof of \eqref{joint:theta2}:}
\vspace{3mm}

In Lemma \ref{lemma:intdiff}, we take
\begin{align*}
& f = \ee^{\Lcal_n(\alpha^{-2\nu}\theta,\alpha)-\Lcal_n(\alpha^{-2\nu}\widetilde\theta_{\alpha},\alpha)} \pi(\theta|\alpha) \cdot \ee^{\Lcal_n(\alpha^{-2\nu}\widetilde\theta_{\alpha},\alpha)} \pi(\alpha), \\
& g = \ee^{-\frac{(n-p)(\theta-\widetilde\theta_{\alpha})^2}{4\theta_0^2}} \pi(\theta_0|\alpha) \cdot \ee^{\Lcal_n(\alpha^{-2\nu}\widetilde\theta_{\alpha},\alpha)} \pi(\alpha),
\end{align*}
such that by applying Lemma \ref{lemma:intdiff}, we can obtain that
\begin{align}\label{joint:theta23}
&\int_0^{\infty} \int_{\RR} \left|\pi(\theta,\alpha|Y_n) - \frac{\sqrt{n-p}}{2\sqrt{\pi}\theta_0} \ee^{-\frac{(n-p)(\theta-\widetilde\theta_{\alpha})^2}{4\theta_0^2}} \cdot \widetilde \pi(\alpha|Y_n)\right| \ud \theta \ud \alpha \nonumber \\
=& \int_0^{\infty} \int_{\RR} \Bigg|\frac{ \ee^{\Lcal_n(\alpha^{-2\nu}\theta,\alpha)-\Lcal_n(\alpha^{-2\nu}\widetilde\theta_{\alpha},\alpha)}\cdot \ee^{\Lcal_n(\alpha^{-2\nu}\widetilde\theta_{\alpha},\alpha)}
\pi(\theta|\alpha)\pi(\alpha)}
{\int_0^{\infty} \int_0^{\infty} \ee^{\Lcal_n(\alpha^{-2\nu}\theta,\alpha)-\Lcal_n(\alpha^{-2\nu}\widetilde\theta_{\alpha},\alpha)}
\cdot \ee^{\Lcal_n(\alpha^{-2\nu}\widetilde\theta_{\alpha},\alpha)}\pi(\theta|\alpha)\pi(\alpha)\ud \theta \ud \alpha}
\nonumber \\
& - \frac{ \ee^{-\frac{(n-p)(\theta-\widetilde\theta_{\alpha})^2}{4\theta_0^2}}\cdot \ee^{\Lcal_n(\alpha^{-2\nu}\widetilde\theta_{\alpha},\alpha)}
\pi(\theta_0|\alpha)\pi(\alpha)}
{\int_0^{\infty} \int_{\RR} \ee^{-\frac{(n-p)(\theta-\widetilde\theta_{\alpha})^2}{4\theta_0^2}}\cdot \ee^{\Lcal_n(\alpha^{-2\nu}\widetilde\theta_{\alpha},\alpha)} \pi(\theta_0|\alpha)\pi(\alpha) \ud \theta \ud \alpha}
\Bigg| \ud \theta \ud \alpha \leq \frac{\numer}{\denom},
\end{align}
where (with $\varrho_n(t;\alpha)$ defined in \eqref{func:gn})
\begin{align}
\numer & = 2\int_0^{\infty} \int_{\RR} \left| \ee^{\Lcal_n(\alpha^{-2\nu}\theta,\alpha)-\Lcal_n(\alpha^{-2\nu}\widetilde\theta_{\alpha},\alpha)} \frac{\pi(\theta|\alpha)}{\pi(\theta_0|\alpha)}
- \ee^{-\frac{(n-p)(\theta-\widetilde\theta_{\alpha})^2}{4\theta_0^2}}
\right| \nonumber \\
&\quad \times \ee^{\Lcal_n(\alpha^{-2\nu}\widetilde\theta_{\alpha},\alpha)} \pi(\theta_0|\alpha) \pi(\alpha) \ud \theta \ud \alpha  \nonumber \\
& = 2\int_0^{\infty} \int_{\RR} \left|\varrho_n(\sqrt{n-p}(\theta-\widetilde\theta_{\alpha});\alpha)\right| \ee^{\Lcal_n(\alpha^{-2\nu}\widetilde\theta_{\alpha},\alpha)} \pi(\theta_0|\alpha) \pi(\alpha) \ud \theta \ud \alpha , \label{numerator1} \\
\denom & = \int_0^{\infty} \int_{\RR} \ee^{-\frac{(n-p)(\theta-\widetilde\theta_{\alpha})^2}{4\theta_0^2}}\cdot \ee^{\Lcal_n(\alpha^{-2\nu}\widetilde\theta_{\alpha},\alpha)} \pi(\theta_0|\alpha)\pi(\alpha) \ud \theta \ud \alpha \nonumber \\
& = \frac{2\theta_0\sqrt{\pi}}{\sqrt{n-p}}\int_0^{\infty} \ee^{\Lcal_n(\alpha^{-2\nu}\widetilde\theta_{\alpha},\alpha)} \pi(\theta_0|\alpha)\pi(\alpha) \ud \alpha, \label{denominator1}
\end{align}
We decompose the numerator in \eqref{numerator1} into three terms:
\begin{align}\label{numerator2}
\numer &= {\numer}_1 + {\numer}_2 + {\numer}_3 , \nonumber \\
{\numer}_1 &= 2\int_{\underline\alpha_n}^{\overline\alpha_n} \int_{\RR} \left|\varrho_n(\sqrt{n-p}(\theta-\widetilde\theta_{\alpha});\alpha)\right| \ee^{\Lcal_n(\alpha^{-2\nu}\widetilde\theta_{\alpha},\alpha)} \pi(\theta_0|\alpha) \pi(\alpha) \ud \theta \ud \alpha, \nonumber  \\
{\numer}_2 &= 2\int_0^{\underline\alpha_n} \int_{\RR}
\left|\varrho_n(\sqrt{n-p}(\theta-\widetilde\theta_{\alpha});\alpha)\right| \ee^{\Lcal_n(\alpha^{-2\nu}\widetilde\theta_{\alpha},\alpha)} \pi(\theta_0|\alpha) \pi(\alpha) \ud \theta \ud \alpha, \nonumber \\
{\numer}_3 &= 2\int_{\overline\alpha_n}^{\infty} \int_{\RR} \left|\varrho_n(\sqrt{n-p}(\theta-\widetilde\theta_{\alpha});\alpha)\right| \ee^{\Lcal_n(\alpha^{-2\nu}\widetilde\theta_{\alpha},\alpha)} \pi(\theta_0|\alpha) \pi(\alpha) \ud \theta \ud \alpha,
\end{align}
To show \eqref{joint:theta2}, from \eqref{joint:theta23} and \eqref{numerator2}, it suffices to show that ${\numer}_j/\denom\to 0$ for $j=1,2,3$ as $n\to\infty$ almost surely $P_{(\beta_0,\sigma_0^2,\alpha_0)}$.
\vspace{5mm}

\noindent \underline{Proof of ${\numer}_1/\denom \to 0$:}
\vspace{3mm}

We consider all $\alpha \in [\underline\alpha_n,\overline\alpha_n]$. For any $\epsilon>0$, define three events
\begin{align} \label{eq:Ecal.234}
& \Ecal_2(\epsilon) = \Big\{\sup_{\alpha\in [\underline \alpha_n, \overline \alpha_n]} |\widetilde\theta_{\alpha}-\theta_0| < \epsilon \Big\}, \quad \Ecal_3(\epsilon) = \Big\{\sup_{\alpha\in [\underline \alpha_n, \overline \alpha_n]} |\widetilde\theta_{\alpha}-\widetilde \theta_{\alpha_0}| < \epsilon \Big\}, \nonumber \\
& \Ecal_4(\epsilon) = \Big\{|\widetilde\theta_{\alpha_0}-\theta_0| < \epsilon \Big\}.
\end{align}

For sufficiently large $n$, Lemma \ref{lem:sup.theta1} shows that  $\pr\{\Ecal_3(\theta_0n^{-1/2-\tau}/2)\}\geq 1-4\exp(-4\log^2 n)$ for some constant $\tau\in (0,1/2)$. Lemma \ref{lem:theta.alpha0} shows that $\pr\{\Ecal_4(5\theta_0n^{-1/2}\log n)\}\geq 1-3\exp(-4\log^2 n)$. By the triangle inequality, for sufficiently large $n$,
$$\Ecal_2(6\theta_0n^{-1/2}\log n)\supseteq \Ecal_3(\theta_0n^{-1/2-\tau}/2) \cap \Ecal_4(5\theta_0n^{-1/2}\log n),$$
it follows that $\pr\{\Ecal_2(6\theta_0n^{-1/2}\log n)\}\geq 1 - 7\exp(-4\log^2 n)$.

We again use the inequality \eqref{cond:denom1} from Lemma \ref{lemma:gndiff1},
with $B_n(\alpha)$ defined in \eqref{en:rho} with $\epsilon_{1n}=6\theta_0n^{-1/2}\log n$ and $s_n=\log n$. Since $\Ecal_1(6\theta_0n^{-1/2}\log n, \alpha) \supseteq \Ecal_2(6\theta_0n^{-1/2}\log n)$ for every $\alpha\in [\underline \alpha_n, \overline \alpha_n]$, Lemma \ref{lemma:gndiff1} can be applied to all $\alpha\in [\underline \alpha_n, \overline \alpha_n]$ with $\epsilon_{1n}=6\theta_0n^{-1/2}\log n$ and $s_n=\log n$. Therefore, \eqref{cond:denom1} holds uniformly for all $\alpha\in [\underline \alpha_n, \overline \alpha_n]$ on the event $\Ecal_2(6\theta_0n^{-1/2}\log n)$, such that $\pr\{\Ecal_2(6\theta_0n^{-1/2}\log n)\}\geq 1 - 7\exp(-4\log^2 n)$.

Integrating \eqref{cond:denom1} over the interval $[\underline \alpha_n, \overline \alpha_n]$ gives that
\begin{align}\label{joint:theta22}
& \int_{\underline\alpha_n}^{\overline\alpha_n} \int_{\RR} \left| \ee^{\Lcal_n(\alpha^{-2\nu}\theta,\alpha)-\Lcal_n(\alpha^{-2\nu}\widetilde\theta_{\alpha},\alpha)} \frac{\pi(\theta|\alpha)}{\pi(\theta_0|\alpha)}
- \ee^{-\frac{(n-p)(\theta-\widetilde\theta_{\alpha})^2}{4\theta_0^2}}
\right| \nonumber \\
&~~ \times \ee^{\Lcal_n(\alpha^{-2\nu}\widetilde\theta_{\alpha},\alpha)} \pi(\theta_0|\alpha)\pi(\alpha) \ud \theta \ud \alpha \nonumber \\
\leq& \int_{\underline\alpha_n}^{\overline\alpha_n} \frac{B_n(\alpha)}{\sqrt{n-p}} \ee^{\Lcal_n(\alpha^{-2\nu}\widetilde\theta_{\alpha},\alpha)} \pi(\theta_0|\alpha) \pi(\alpha) \ud \alpha \nonumber \\
\leq& \frac{\sup_{\alpha\in [\underline\alpha_n,\overline\alpha_n]} B_n(\alpha)}{\sqrt{n-p}} \int_{\underline\alpha_n}^{\overline\alpha_n} \ee^{\Lcal_n(\alpha^{-2\nu}\widetilde\theta_{\alpha},\alpha)} \pi(\theta_0|\alpha) \pi(\alpha) \ud \alpha.
\end{align}
According to Assumption \ref{prior.2}, with $\epsilon_{1n}=6\theta_0n^{-1/2}\log n$ and $s_n=\log n$, $B_n(\alpha)$ as defined in \eqref{en:rho} satisfies that for all sufficiently large $n$,
\begin{align}\label{en:Bnrate}
& \sup_{\alpha\in [\underline\alpha_n,\overline\alpha_n]} B_n(\alpha) \nonumber \\
&\leq 4\theta_0 \exp\left(-\frac{n-p}{64}\right) +  \frac{\sqrt{n-p}}{\inf_{\alpha\in [\underline\alpha_n,\overline\alpha_n]} \pi(\theta_0|\alpha)} \exp\{-0.007(n-p)\} \nonumber \\
&+ \sup_{\alpha\in [\underline\alpha_n,\overline\alpha_n]} \sup_{\theta\in\left(\frac{1}{2}\theta_0,\frac{3}{2}\theta_0\right)} \frac{\pi\left(\theta |\alpha\right)}{ \pi\left(\theta_0|\alpha\right)}  \cdot 10\theta_0 \exp\left(-\frac{4\log^2 n}{125\theta_0^2}\right) + 4\theta_0 \exp\left(-\frac{\log^2 n}{4\theta_0^2}\right) \nonumber \\
&+ \frac{8}{\theta_0^2} \left(\frac{6\theta_0\log^3 n}{\sqrt{n}} + \frac{2\log^3 n}{\sqrt{n-p}}\right)\cdot \sup_{\alpha\in [\underline\alpha_n,\overline\alpha_n]} \sup_{\theta\in\left(\frac{3}{4}\theta_0,\frac{3}{2}\theta_0\right)} \frac{\pi\left(\theta |\alpha\right)}{ \pi\left(\theta_0|\alpha\right)} \nonumber \\
&+ 4\theta_0 \sup_{\alpha\in [\underline\alpha_n,\overline\alpha_n]} \sup_{\theta\in \left(\frac{3}{4}\theta_0,\frac{3}{2}\theta_0\right)} \left|\frac{\partial \log \pi(\theta|\alpha)}{\partial \theta} \right| \nonumber \\
&\times \sup_{\alpha\in [\underline\alpha_n,\overline\alpha_n]} \sup_{\theta\in\left(\frac{3}{4}\theta_0,\frac{3}{2}\theta_0\right)} \frac{\pi(\theta|\alpha)}{\pi(\theta_0|\alpha)} \left(\frac{6\theta_0\log n}{\sqrt{n}}+\frac{\log n}{\sqrt{n-p}}\right) \nonumber \\
&\leq 4\theta_0 \exp\left(-\frac{n-p}{64}\right)  + \exp\left(n^{C_{\pi,3}}\right) \cdot \sqrt{n} \exp\{-0.007(n-p)\} + n^{C_{\pi,2}} \cdot 10\theta_0 \exp\left(-\frac{4\log^2 n}{125\theta_0^2}\right)  \nonumber \\
&+ 4\theta_0 \exp\left(-\frac{\log^2 n}{4\theta_0^2}\right) + \frac{8(6\theta_0+2)}{\theta_0^2} \frac{\log^3 n}{\sqrt{n-p}} \cdot n^{C_{\pi,2}} + 4(6\theta_0+1)\theta_0 n^{C_{\pi,1}+C_{\pi,2}} \cdot \frac{\log n}{\sqrt{n-p}} \nonumber \\
&\rightarrow 0, \text{ as } n\to\infty,
\end{align}
where in the last step, we have used the fact that $C_{\pi,3}<1$ and $C_{\pi,1}+C_{\pi,2}<1/2$ according to Assumption \ref{prior.2}.

Therefore, \eqref{joint:theta22}, \eqref{en:Bnrate}, \eqref{numerator2}, and \eqref{denominator1} together imply that on the event \\
$\Ecal_2(6\theta_0n^{-1/2}\log n)$,
\begin{align}\label{en:bound1}
\frac{{\numer}_1}{\denom} & = \frac{2\int_{\underline\alpha_n}^{\overline\alpha_n} \int_{\RR} \left|\varrho_n(\sqrt{n-p}(\theta-\widetilde\theta_{\alpha});\alpha)\right| \ee^{\Lcal_n(\alpha^{-2\nu}\widetilde\theta_{\alpha},\alpha)} \pi(\theta_0|\alpha) \pi(\alpha) \ud \theta \ud \alpha} {\frac{2\theta_0\sqrt{\pi}}{\sqrt{n-p}}\int_0^{\infty} \ee^{\Lcal_n(\alpha^{-2\nu}\widetilde\theta_{\alpha},\alpha)} \pi(\theta_0|\alpha)\pi(\alpha) \ud \alpha} \nonumber \\
&\leq \frac{ \frac{\sup_{\alpha\in [\underline\alpha_n,\overline\alpha_n]} B_n(\alpha)}{\sqrt{n-p}} \int_{\underline\alpha_n}^{\overline\alpha_n} \ee^{\Lcal_n(\alpha^{-2\nu}\widetilde\theta_{\alpha},\alpha)} \pi(\theta_0|\alpha) \pi(\alpha) \ud \alpha}
{\frac{\theta_0\sqrt{\pi}}{\sqrt{n-p}}\int_{\underline\alpha_n}^{\overline\alpha_n} \ee^{\Lcal_n(\alpha^{-2\nu}\widetilde\theta_{\alpha},\alpha)} \pi(\theta_0|\alpha)\pi(\alpha) \ud \alpha} \nonumber \\
&\leq \frac{\sup_{\alpha\in [\underline\alpha_n,\overline\alpha_n]} B_n(\alpha)}{\theta_0\sqrt{\pi}} \rightarrow 0,
\end{align}
as $n\to\infty$. Since $\pr\{\Ecal_2(6\theta_0n^{-1/2}\log n)^c\}\leq 7\exp(-4\log^2 n)$ and $\sum_{n=1}^{\infty} 7\exp(-4\log^2 n) <\infty$, by the Borel-Cantelli lemma, we have shown that ${\numer}_1/\denom \to 0$ as $n\to\infty$ almost surely $P_{(\beta_0,\sigma_0^2,\alpha_0)}$.

\vspace{7mm}

\noindent \underline{Proof of ${\numer}_2/\denom \to 0$:}
\vspace{3mm}

We start with an upper bound for ${\numer}_2$:
\begin{align}\label{en:numer21}
{\numer}_2 &= 2\int_0^{\underline\alpha_n} \int_{\RR} \left| \ee^{\Lcal_n(\alpha^{-2\nu}\theta,\alpha)-\Lcal_n(\alpha^{-2\nu}\widetilde \theta_{\alpha},\alpha)} \pi\left(\theta | \alpha\right) - \ee^{-\frac{(n-p)(\theta-\widetilde\theta_{\alpha})^2}{4\theta_0^2}}\pi(\theta_0|\alpha)\right| \nonumber \\
&\qquad \times \ee^{\Lcal_n(\alpha^{-2\nu}\widetilde\theta_{\alpha},\alpha)} \pi(\alpha) \ud \theta \ud \alpha, \nonumber \\
&\leq 2\int_0^{\underline\alpha_n} \int_{\RR} \left( \ee^{\Lcal_n(\alpha^{-2\nu}\theta,\alpha)-\Lcal_n(\alpha^{-2\nu}\widetilde \theta_{\alpha},\alpha)} \pi\left(\theta | \alpha\right) + \ee^{-\frac{(n-p)(\theta-\widetilde\theta_{\alpha})^2}{4\theta_0^2}}\pi(\theta_0|\alpha)\right)\nonumber \\
&\qquad \times \ee^{\Lcal_n(\alpha^{-2\nu}\widetilde\theta_{\alpha},\alpha)} \pi(\alpha) \ud \theta \ud \alpha, \nonumber \\
&\stackrel{(i)}{\leq} 2\int_0^{\underline\alpha_n} \left\{\int_0^\infty \pi(\theta|\alpha) \ud \theta\right\} \ee^{\Lcal_n(\alpha^{-2\nu}\widetilde\theta_{\alpha},\alpha)} \pi(\alpha)  \ud \alpha \nonumber \\
&\qquad + 2\int_0^{\underline\alpha_n} \left\{\int_{\RR}  \ee^{-\frac{(n-p)(\theta-\widetilde\theta_{\alpha})^2}{4\theta_0^2}} \ud \theta\right\} \ee^{\Lcal_n(\alpha^{-2\nu}\widetilde\theta_{\alpha},\alpha)} \pi(\theta_0|\alpha)\pi(\alpha)  \ud \alpha \nonumber \\
&\leq 2\int_0^{\underline\alpha_n}  \ee^{\Lcal_n(\alpha^{-2\nu}\widetilde\theta_{\alpha},\alpha)} \pi(\alpha)  \ud \alpha + \frac{4\theta_0\sqrt{\pi}}{\sqrt{n-p}}\int_0^{\underline\alpha_n} \ee^{\Lcal_n(\alpha^{-2\nu}\widetilde\theta_{\alpha},\alpha)} \pi(\theta_0|\alpha)\pi(\alpha)  \ud \alpha,
\end{align}
where (i) follows from the fact that $\Lcal_n(\alpha^{-2\nu}\theta,\alpha) \leq \Lcal_n(\alpha^{-2\nu}\widetilde \theta_{\alpha},\alpha)$ as $\widetilde \theta_{\alpha}$ is the maximizer of $\Lcal_n(\alpha^{-2\nu}\theta,\alpha)$ given $\alpha$.

On the other hand, since $2\nu+d>1$, we choose $c=1>1/(2\nu+d)$ in Lemma \ref{lem:profilelk.rightlower}, and define $\Ecal_5$ to be the event that \eqref{diffpro1} in Lemma \ref{lem:profilelk.rightlower} happens, such that $\pr(\Ecal_5)\geq 1-9\exp(-4\log^2 n)$. Then on the event $\Ecal_5$, the denominator \eqref{denominator1} can be lower bounded by
\begin{align}\label{denominator2}
\denom & \geq \frac{2\theta_0\sqrt{\pi}}{\sqrt{n-p}} \ee^{\widetilde \Lcal_n(\alpha_0)} \int_{\alpha_0}^{(1+n^{-1})\alpha_0}  \ee^{\widetilde \Lcal_n(\alpha)-\widetilde \Lcal_n(\alpha_0)} \pi(\theta_0|\alpha) \pi(\alpha) \ud \alpha \nonumber \\
&\geq \frac{2\theta_0\sqrt{\pi}}{\sqrt{n-p}} \exp\left\{\widetilde \Lcal_n(\alpha_0)- 3 \log^4 n \right\} \int_{\alpha_0}^{(1+n^{-1})\alpha_0}   \pi(\theta_0|\alpha) \pi(\alpha) \ud \alpha  \nonumber \\
&\overset{(i)}{\geq} \frac{2\theta_0\sqrt{\pi}c_{\pi,0}}{n\sqrt{n-p}} \exp\left\{\widetilde \Lcal_n(\alpha_0)- 3\log^4 n \right\},
\end{align}
where $c_{\pi,0} = \pi(\theta_0|\alpha_0) \pi(\alpha_0)\cdot \alpha_0/4$, and the inequality (i) holds because by Assumptions \ref{prior.1} and \ref{prior.3}, $\pi(\theta_0|\alpha)>\pi(\theta_0|\alpha_0)/2 >0$ and $\pi(\alpha)>\pi(\alpha_0)/2>0$ for all $\alpha\in [\alpha_0,(1+n^{-1})\alpha_0]$ and sufficiently large $n$, such that $\int_{\alpha_0}^{(1+n^{-1})\alpha_0}   \pi(\theta_0|\alpha) \pi(\alpha) \ud \alpha \geq n^{-1}\alpha_0 \cdot \pi(\theta_0|\alpha_0) \pi(\alpha_0)/4 = c_{\pi,0} n^{-1}$.

We combine \eqref{en:numer21} and \eqref{denominator2} to obtain that
\begin{align}\label{en:numer22}
\frac{{\numer}_2}{\denom } &\leq \frac{n^{3/2}}{\theta_0\sqrt{\pi}c_{\pi,0}} \exp\left(3\log^4 n\right) \int_0^{\underline\alpha_n}  \ee^{\widetilde \Lcal_n(\alpha) - \widetilde \Lcal_n(\alpha_0)} \pi(\alpha)  \ud \alpha \nonumber \\
&\quad + \frac{2n}{c_{\pi,0}}\exp\left(3\log^4 n\right)\int_0^{\underline\alpha_n} \ee^{\widetilde \Lcal_n(\alpha) - \widetilde \Lcal_n(\alpha_0)} \pi(\theta_0|\alpha)\pi(\alpha)  \ud \alpha.
\end{align}
To upper bound the two terms in \eqref{en:numer22}, we first derive a simple relation for the part $\exp\{\widetilde \Lcal_n(\alpha) - \widetilde \Lcal_n(\alpha_0)\}$. Let $\Ecal_6$ be the event on which \eqref{diffpro2} in Lemma \ref{lem:profilelk.leftupper} happens, such that $\pr(\Ecal_6)\geq 1-10\exp(-4\log^2 n)$ for sufficiently large $n$. On the event $\Ecal_6$, the monotonicity bound from Lemma \ref{lem:prolik_loosebound} and the upper bound from Lemma \ref{lem:profilelk.leftupper} imply that for any $\alpha\in (0,\underline\alpha_n)$,
\begin{align}\label{en:prolik.left1}
&\exp\left\{\widetilde \Lcal_n(\alpha) - \widetilde \Lcal_n(\alpha_0)\right\} \nonumber \\
={} & \exp\left\{\widetilde \Lcal_n(\alpha) - \widetilde \Lcal_n(\underline\alpha_n)\right\} \cdot \exp\left\{\widetilde \Lcal_n(\underline\alpha_n) - \widetilde \Lcal_n(\alpha_0)\right\} \nonumber \\
<{}& \left(\frac{\underline\alpha_n}{\alpha}\right)^{n(\nu+d/2)} \exp\left(3n^{1/2-\tau}\right) \nonumber \\
={}& \alpha^{-n(\nu+d/2)} \exp\left\{-(\nu+d/2)\underkappa n\log n + 3n^{1/2-\tau}\right\},
\end{align}
where $\tau\in (0,1/2)$ and $\underkappa\in (0,1/2)$ are defined in \eqref{eq:2kappa.re}. Since $3\log^4 n / n^{1/2-\tau} \to 0$ as $n\to\infty$, we now plug \eqref{en:prolik.left1} in \eqref{en:numer22} and use Assumption \ref{prior.3} to obtain that on the event $\Ecal_5\cap \Ecal_6$,
\begin{align}\label{en:numer23}
\frac{{\numer}_2}{\denom } &\leq \frac{n^{3/2}}{\theta_0\sqrt{\pi}c_{\pi,0}} \exp\left\{-(\nu+d/2)\underkappa n\log n + 4n^{1/2-\tau}\right\}\int_0^{\underline\alpha_n}  \alpha^{-n(\nu+d/2)} \pi(\alpha)  \ud \alpha \nonumber \\
&\quad + \frac{2n}{c_{\pi,0}}\exp\left\{-(\nu+d/2)\underkappa n\log n + 4n^{1/2-\tau}\right\} \int_0^{\underline\alpha_n} \alpha^{-n(\nu+d/2)} \pi(\theta_0|\alpha)\pi(\alpha)  \ud \alpha \nonumber \\
&\leq \frac{n^{3/2}}{\theta_0\sqrt{\pi}c_{\pi,0}} \exp\left\{-(\nu+d/2)\underkappa n\log n + 4n^{1/2-\tau} + \underline{c_{\pi}}n\log n\right\}  \nonumber \\
&\quad + \frac{2n}{c_{\pi,0}}\exp\left\{-(\nu+d/2)\underkappa n\log n + 4n^{1/2-\tau} + \underline{c_{\pi}}n\log n\right\}  \nonumber \\
&\rightarrow 0,  \text{ as } n\to\infty,
\end{align}
where the last step follows because $\underline{c_{\pi}}<(\nu+d/2)\underkappa$ by Assumption \ref{prior.3} and $\tau\in (0,1/2)$. Since $\pr\{(\Ecal_5\cap \Ecal_6)^c\}\leq 20\exp(-4\log^2 n)$ and $\sum_{n=1}^{\infty} 20\exp(-4\log^2 n) <\infty$, by the Borel-Cantelli lemma, we have shown that ${\numer}_2/\denom \to 0$ as $n\to\infty$ almost surely $P_{(\beta_0,\sigma_0^2,\alpha_0)}$.
\vspace{5mm}

\noindent \underline{Proof of ${\numer}_3/\denom \to 0$:}
\vspace{3mm}

Similar to the derivation of \eqref{en:numer21}, we have the following upper bound for ${\numer}_3$:
\begin{align}\label{en:numer31}
{\numer}_3
&\leq 2\int_{\overline\alpha_n}^{\infty}  \ee^{\Lcal_n(\alpha^{-2\nu}\widetilde\theta_{\alpha},\alpha)} \pi(\alpha)  \ud \alpha + \frac{4\theta_0\sqrt{\pi}}{\sqrt{n-p}}\int_{\overline\alpha_n}^{\infty} \ee^{\Lcal_n(\alpha^{-2\nu}\widetilde\theta_{\alpha},\alpha)} \pi(\theta_0|\alpha)\pi(\alpha)  \ud \alpha.
\end{align}
\eqref{denominator2} and \eqref{en:numer31} imply that on the event $\Ecal_5$,
\begin{align}\label{en:numer32}
\frac{{\numer}_3}{\denom } &\leq \frac{n^{3/2}}{\theta_0\sqrt{\pi}c_{\pi,0}} \exp\left(3\log^4 n\right)\int_{\overline\alpha_n}^{\infty}  \ee^{\widetilde \Lcal_n(\alpha) - \widetilde \Lcal_n(\alpha_0)} \pi(\alpha)  \ud \alpha \nonumber \\
&\quad + \frac{2n}{c_{\pi,0}}\exp\left(3\log^4 n\right)\int_{\overline\alpha_n}^{\infty} \ee^{\widetilde \Lcal_n(\alpha) - \widetilde \Lcal_n(\alpha_0)} \pi(\theta_0|\alpha)\pi(\alpha)  \ud \alpha.
\end{align}
Let $\Ecal_7$ be the event on which \eqref{diffpro3} in Lemma \ref{lem:profilelk.rightupper} happens, such that $\pr(\Ecal_7)\geq 1-10\exp(-4\log^2 n)$ for sufficiently large $n$. Similar to the proof of ${\numer}_2/\denom  \to 0$, on the event $\Ecal_7$, we use Lemma \ref{lem:prolik_loosebound} and Lemma \ref{lem:profilelk.rightupper} to obtain that for any $\alpha\in (\overline\alpha_n,+\infty)$,
\begin{align}\label{en:prolik.right1}
&\exp\left\{\widetilde \Lcal_n(\alpha) - \widetilde \Lcal_n(\alpha_0)\right\} \nonumber \\
={} & \exp\left\{\widetilde \Lcal_n(\alpha) - \widetilde \Lcal_n(\overline\alpha_n)\right\} \cdot \exp\left\{\widetilde \Lcal_n(\overline\alpha_n) - \widetilde \Lcal_n(\alpha_0)\right\} \nonumber \\
<{}& \left(\frac{\alpha}{\overline\alpha_n}\right)^{n(\nu+d/2)} \exp\left(C_{p,1}n^{\kappa_1}\log n\right) \nonumber \\
={}& \alpha^{n(\nu+d/2)} \exp\left\{-(\nu+d/2)\overline \kappa n\log n + C_{p,1}n^{\kappa_1}\log n\right\},
\end{align}
where $C_{p,1}>0$ and $\kappa_1 \in (1/2-\tau,1)$ are given in Lemma \ref{lem:profilelk.rightupper}, and $\overkappa\in (0,1/2)$ is given in \eqref{eq:2kappa.re}. Since $3\log^4 n/ (C_{p,1}n^{\kappa_1}\log n) \to 0 $ as $n\to\infty$, we now plug \eqref{en:prolik.right1} in \eqref{en:numer32} and use Assumption \ref{prior.3} to obtain that on the event $\Ecal_5\cap \Ecal_7$,
\begin{align}\label{en:numer33}
\frac{{\numer}_3}{\denom }
&\leq \frac{n^{3/2}}{\theta_0\sqrt{\pi}c_{\pi,0}} \exp\left\{-(\nu+d/2)\overline \kappa n\log n + 2C_{p,1}n^{\kappa_1}\log n\right\} \times \int_{\overline\alpha_n}^{\infty}  \alpha^{n(\nu+d/2)} \pi(\alpha)  \ud \alpha \nonumber \\
&\quad + \frac{2n}{c_{\pi,0}}\exp\left\{-(\nu+d/2)\overline \kappa n\log n + 2C_{p,1}n^{\kappa_1}\log n \right\} \times \int_{\overline\alpha_n}^{\infty} \alpha^{n(\nu+d/2)} \pi(\theta_0|\alpha)\pi(\alpha)  \ud \alpha \nonumber \\
&\leq \frac{n^{3/2}}{\theta_0\sqrt{\pi}c_{\pi,0}} \exp\left\{-(\nu+d/2)\overline \kappa n\log n + 2C_{p,1}n^{\kappa_1}\log n + \overline{c_{\pi}}n\log n\right\} \nonumber \\
&\quad + \frac{2n}{c_{\pi,0}}\exp\left\{-(\nu+d/2)\overline \kappa n\log n + 2C_{p,1}n^{\kappa_1}\log n + \overline{c_{\pi}}n\log n \right\}  \nonumber \\
&\rightarrow 0, \text{ as } n\to\infty,
\end{align}
where the last step follows because $\overline{c_{\pi}}<(\nu+d/2)\overline \kappa$ by Assumption \ref{prior.3} and $\kappa_1 \in (1/2-\tau,1)$. Since $\pr\{(\Ecal_5\cap\Ecal_7)^c\}\leq 20\exp(-4\log^2 n)$ and $\sum_{n=1}^{\infty} 20\exp(-4\log^2 n) <\infty$, by the Borel-Cantelli lemma, we have shown that ${\numer}_3/\denom \to 0$ as $n\to\infty$ almost surely $P_{(\beta_0,\sigma_0^2,\alpha_0)}$.

\vspace{5mm}

\noindent \underline{Proof of \eqref{joint:theta3}:}

We use Lemma \ref{lemma:normaltv} and obtain that
\begin{align}\label{joint:theta31}
& \int_0^{\infty} \int_{\RR} \left|\frac{\sqrt{n-p}}{2\sqrt{\pi}\theta_0} \ee^{-\frac{(n-p)(\theta-\widetilde\theta_{\alpha})^2}{4\theta_0^2}} - \frac{\sqrt{n}}{2\sqrt{\pi}\theta_0} \ee^{-\frac{n(\theta-\widetilde\theta_{\alpha_0})^2}{4\theta_0^2}} \right| \cdot \widetilde \pi(\alpha|Y_n) \ud \theta \ud \alpha \nonumber \\
= & \int_0^{\infty} \left\|\mathcal{N}(\widetilde\theta_{\alpha},2\theta_0^2/(n-p)) -   \mathcal{N}(\widetilde\theta_{\alpha_0},2\theta_0^2/n) \right\|_{\tv} \widetilde\pi(\alpha|Y_n) \ud \alpha \nonumber \\
\stackrel{(i)}{\leq} & \int_0^{\infty} \left\|\mathcal{N}(\widetilde\theta_{\alpha},2\theta_0^2/(n-p)) - \mathcal{N}(\widetilde\theta_{\alpha_0},2\theta_0^2/(n-p)) \right\|_{\tv} \widetilde\pi(\alpha|Y_n) \ud \alpha \nonumber \\
&\quad + \int_0^{\infty} \left\|\mathcal{N}(\widetilde\theta_{\alpha_0},2\theta_0^2/(n-p)) - \mathcal{N}(\widetilde\theta_{\alpha_0},2\theta_0^2/n) \right\|_{\tv} \widetilde\pi(\alpha|Y_n) \ud \alpha \nonumber \\
\stackrel{(ii)}{\leq} & \int_0^{\infty} \left\{2\Phi\left(\frac{(n-p)^{1/2}|\widetilde\theta_{\alpha}-\widetilde\theta_{\alpha_0}|}{2\sqrt{2}\theta_0}\right)-1\right\} \widetilde \pi(\alpha|Y_n) \ud \alpha \nonumber \\
&\quad + \int_0^{\infty}  \frac{3}{2}\cdot \frac{2\theta_0^2/(n-p) - 2\theta_0^2/n}{2\theta_0^2/n}\widetilde \pi(\alpha|Y_n) \ud \alpha \nonumber \\
\stackrel{(iii)}{\leq} & \int_{\underline\alpha_n}^{\overline\alpha_n} \frac{(n-p)^{1/2}}{2\sqrt{\pi}\theta_0}|\widetilde\theta_{\alpha}-\widetilde\theta_{\alpha_0}|\widetilde \pi(\alpha|Y_n) \ud \alpha \nonumber \\
& + \int_0^{\underline\alpha_n} \widetilde \pi(\alpha|Y_n) \ud \alpha + \int_{\overline\alpha_n}^{\infty} \widetilde \pi(\alpha|Y_n) \ud \alpha + \frac{3p}{2(n-p)},
\end{align}
where (i) follows from the triangle inequality of total variation distance; (ii) follows from Lemma \ref{lemma:normaltv} and Theorem 1.3 of \citet{Devetal18}; for (iii), we use the relation $\Phi(x)-0.5 = \Phi(x)-\Phi(0) \leq \phi(0)x =x/\sqrt{2\pi}$ for all $x\geq 0$ (where $\phi(x)$ is the standard normal density), and the direct bound $|2\Phi(x)-1|\leq 1$ for all $x\in \RR$.

On the event $\Ecal_3(\theta_0n^{-1/2-\tau}/2)$, we have that $n^{1/2}|\widetilde\theta_{\alpha}-\widetilde\theta_{\alpha_0}| \leq \theta_0 n^{-\tau}/2$ uniformly for all $\alpha\in [\underline \alpha_n, \overline \alpha_n]$. Together with the fact that $\widetilde \pi(\alpha|Y_n)$ is almost surely a proper probability density from Lemma \ref{lem:alpha.exist}, we can derive from \eqref{joint:theta31} that on the event $\Ecal_3(\theta_0n^{-1/2-\tau}/2) $,
\begin{align}\label{joint:theta32}
& \int_{\underline\alpha_n}^{\overline\alpha_n} \frac{(n-p)^{1/2}}{2\sqrt{\pi}\theta_0}|\widetilde\theta_{\alpha}-\widetilde\theta_{\alpha_0}|\widetilde \pi(\alpha|Y_n) \ud \alpha
\leq  \frac{n^{-\tau}}{4\sqrt{\pi}} \int_0^{\infty} \widetilde \pi(\alpha|Y_n) \ud \alpha \leq \frac{n^{-\tau}}{4\sqrt{\pi}} \to 0,
\end{align}
as $n\to \infty$. Since $\pr\left\{\Ecal_3(\theta_0n^{-1/2-\tau}/2)^c\right\}\leq 4\exp(-4\log^2 n)$ and $\sum_{n=1}^{\infty} 4\exp(-4\log^2 n) <\infty$, by the Borel-Cantelli lemma, we have shown that \eqref{joint:theta32} holds as $n\to \infty$ almost surely $P_{(\beta_0,\sigma_0^2,\alpha_0)}$.

For the second term on the right-hand side of \eqref{joint:theta31}, we have that by the definition \eqref{profile:post1},
\begin{align*}
 \int_0^{\underline\alpha_n} \widetilde \pi(\alpha|Y_n) \ud \alpha
&\leq \frac{\int_0^{\underline\alpha_n} \ee^{\widetilde \Lcal_n(\alpha)- \widetilde \Lcal_n(\alpha_0)} \pi(\theta_0|\alpha) \pi(\alpha) \ud \alpha}{\int_{\alpha_0}^{(1+n^{-1})\alpha_0} \ee^{\widetilde \Lcal_n(\alpha)- \widetilde \Lcal_n(\alpha_0)} \pi(\theta_0|\alpha) \pi(\alpha) \ud \alpha}.
\end{align*}
The denominator is lower bounded by $c_{\pi,0}n^{-1}\exp(-3\log^4 n)$ on the event $\Ecal_5$, similar to the proof of \eqref{denominator2}. The numerator can be upper bounded on the event $\Ecal_6$, using the same derivation as in \eqref{en:numer22} and \eqref{en:prolik.left1}. As a result, on the event $\Ecal_5\cap \Ecal_6$, using $\underline{c_{\pi}}<(\nu+d/2)\underkappa$ in Assumption \ref{prior.3}, we have that
\begin{align} \label{joint:theta33}
\int_0^{\underline\alpha_n} \widetilde \pi(\alpha|Y_n) \ud \alpha
&\leq \frac{\exp\left\{-(\nu+d/2)\underkappa n\log n + 3n^{1/2-\tau} \right\}\int_0^{\underline\alpha_n}  \alpha^{-n(\nu+d/2)} \pi(\alpha)  \ud \alpha}{c_{\pi,0}n^{-1}\exp(-3\log^4 n)} \nonumber \\
&\leq \frac{n}{c_{\pi,0}} \exp\left\{-(\nu+d/2)\underkappa n\log n+ 3n^{1/2-\tau} + \underline{c_{\pi}}n\log n + 3\log^4 n \right\}\nonumber \\
& \rightarrow 0,  \text{ as } n\to\infty.
\end{align}
\eqref{joint:theta33} holds as $n\to\infty$ almost surely $P_{(\beta_0,\sigma_0^2,\alpha_0)}$ since $\pr\{(\Ecal_5\cap \Ecal_6)^c\}\leq 20\exp(-4\log^2 n)$ and $\sum_{n=1}^{\infty} 20\exp(-4\log^2 n) <\infty$.

Similarly, for the third term on the right-hand side of \eqref{joint:theta31}, we have that by the definition \eqref{profile:post1},
\begin{align*}
\int_{\overline\alpha_n}^{\infty} \widetilde \pi(\alpha|Y_n) \ud \alpha
&\leq \frac{\int_{\overline\alpha_n}^{\infty} \ee^{\widetilde \Lcal_n(\alpha)- \widetilde \Lcal_n(\alpha_0)} \pi(\theta_0|\alpha) \pi(\alpha) \ud \alpha}{\int_{\alpha_0}^{(1+n^{-1})\alpha_0} \ee^{\widetilde \Lcal_n(\alpha)- \widetilde \Lcal_n(\alpha_0)} \pi(\theta_0|\alpha) \pi(\alpha) \ud \alpha}.
\end{align*}
On the event $\Ecal_5\cap \Ecal_7$, the denominator is lower bounded by $c_{\pi,0}n^{-1}\exp(-3\log^4 n)$, and the numerator can be upper bounded using the same derivation as in \eqref{en:prolik.right1} and \eqref{en:numer33}. As a result, using $\overline{c_{\pi}}<(\nu+d/2)\overkappa$ in Assumption \ref{prior.3}, we have that on $\Ecal_5\cap \Ecal_7$,
\begin{align} \label{joint:theta34}
&~~~~ \int_{\overline\alpha_n}^{\infty} \widetilde \pi(\alpha|Y_n) \ud \alpha \nonumber \\
&\leq \frac{\exp\left\{-(\nu+d/2)\overline \kappa n\log n + C_{p,1}n^{\kappa_1}\log n \right\}\int_{\overline\alpha_n}^{\infty}  \alpha^{n(\nu+d/2)} \pi(\alpha)  \ud \alpha}{c_{\pi,0}n^{-1}\exp(-3\log^4 n)} \nonumber \\
&\leq \frac{n}{c_{\pi,0}} \exp\left\{-(\nu+d/2)\overline \kappa n\log n + C_{p,1}n^{\kappa_1}\log n + \overline{c_{\pi}}n\log n + 3\log^4 n\right\}\nonumber \\
& \rightarrow 0, \text{ as } n\to\infty.
\end{align}
\eqref{joint:theta34} holds as $n\to\infty$ almost surely $P_{(\beta_0,\sigma_0^2,\alpha_0)}$ since $\pr\{(\Ecal_5\cap\Ecal_7)^c\}\leq 20\exp(-4\log^2 n)$ and $\sum_{n=1}^{\infty} 20\exp(-4\log^2 n) <\infty$.

Finally, \eqref{joint:theta32}, \eqref{joint:theta33}, and \eqref{joint:theta34} together imply that the right-hand side of \eqref{joint:theta31} converges to zero as $n\to\infty$ almost surely $P_{(\beta_0,\sigma_0^2,\alpha_0)}$. This has proved \eqref{joint:theta3}, and hence has completed the proof of Theorem \ref{thm:bvm2:joint}.
\end{proof}

\subsection{Limiting Posterior Distribution When $d\geq 5$} \label{subsec:d5}

We present a theorem for the limiting posterior distribution of $(\theta,\alpha)$ when the domain dimension $d\geq 5$ in the universal kriging model \eqref{eq:obs.model} with the isotropic Mat\'ern covariance function \eqref{eq:MaternCov}. The theorem is similar to Theorem \ref{thm:bvm2:joint} for the case of $d\in\{1,2,3\}$ but requires more assumptions and has some important difference in its proof from that of Theorem \ref{thm:bvm2:joint}, mainly because that the range parameter $\alpha$ can be consistently estimated for $d\geq 5$ (\citet{And10}).

For any $\epsilon_1>0, \epsilon_2>0$, we define the set
\begin{align} \label{eq:Bcal0}
\Bcal_0(\epsilon_1,\epsilon_2) &= \Big\{(\beta,\theta,\alpha)\in \RR^p\times \RR^+ \times \RR^+:  |\theta/\theta_0 - 1|<\epsilon_1, |\alpha/\alpha_0 -1|<\epsilon_2 \Big\}.
\end{align}
This set can be viewed as a neighborhood of $(\theta_0,\alpha_0)$. For the case of $d\geq 5$, the following assumptions will replace Assumption \ref{prior.3} in the main text for the case of $d\in \{1,2,3\}$.
\begin{enumerate}[label=(S.\arabic*)]
\item \label{assump.alpha.est}
For the model \eqref{eq:obs.model} with isotropic Mat\'ern covariance function in \eqref{eq:MaternCov} with $d\geq 5$, there exist constants $0<\kappa_1'\leq 1/2$, $1/(2\nu+d)< \kappa_2'\leq 1/2$, $c_{5}>0$ and consistent estimators $\widehat\theta_n$ for $\theta$ and $\widehat \alpha_n$ for $\alpha$ based on $(Y_n,M_n)$, such that for any $\epsilon_1>0$ and $\epsilon_2>0$,
\begin{align} \label{eq:exp.test.alpha}
P_{(\beta_0,\sigma_0^2,\alpha_0)} \left(\left|\widehat\theta_n/\theta_0-1\right| \geq \epsilon_1 /2 \right) & \leq \exp\left\{- c_{5}  \min\big[n^{\kappa_1'}\epsilon_1, (n^{\kappa_1'}\epsilon_1)^2\big] \right\}, \nonumber \\
P_{(\beta_0,\sigma_0^2,\alpha_0)} \left(\left|\widehat\alpha_n/\alpha_0-1\right| \geq \epsilon_2 /2 \right) & \leq \exp\left\{- c_{5} \min\big[n^{\kappa_2'}\epsilon_2, (n^{\kappa_2'}\epsilon_2)^2\big] \right\}, \nonumber \\
\sup_{\Bcal_0(\epsilon_1,\epsilon_2)^c \cap \Fcal_n} P_{(\beta,\theta/\alpha^{2\nu},\alpha)} \left( \left|\widehat\theta_n/\theta_0-1\right| \leq \epsilon_1 /2 \right) & \leq \exp\left\{ - c_{5} \min\big[n^{\kappa_1'}\epsilon_1, (n^{\kappa_1'}\epsilon_1)^2\big] \right\}, \nonumber \\
\sup_{\Bcal_0(\epsilon_1,\epsilon_2)^c\cap \Fcal_n} P_{(\beta,\theta/\alpha^{2\nu},\alpha)} \left( \left|\widehat\alpha_n/\alpha_0-1\right| \leq \epsilon_2 /2 \right) & \leq \exp\left\{ - c_{5} \min\big[n^{\kappa_2'}\epsilon_2, (n^{\kappa_2'}\epsilon_2)^2\big] \right\}  ,
\end{align}
where the sieve $\Fcal_n\subseteq \left\{(\beta,\theta,\alpha)\in \RR^p\times \RR^+ \times \RR^+ \right\}$, such that the prior satisfies $\Pi(\Fcal_n^c) \leq n^{-(3p+6)}$ for all sufficiently large $n$.
\end{enumerate}
Assumption \ref{assump.alpha.est} requires the existence of consistent estimators $\widehat\theta_n$ and $\widehat\alpha_n$. The exponentially small tail bounds in the inequalities in \eqref{eq:exp.test.alpha} imply the convergence rates of $O(n^{-\kappa_1'})$ and $O(n^{-\kappa_2'})$ for $\widehat\theta_n$ and $\widehat\alpha_n$, respectively. The inequalities in \eqref{eq:exp.test.alpha} will be used to construct exponentially consistent tests for $\theta$ and $\alpha$, which are commonly used for showing the posterior consistency and posterior contraction rates in the Bayesian nonparametrics literature; see for example, Sections 6.4 and 8.2 in \citet{GhoVan17}.

Since Assumption \ref{assump.alpha.est} is a high level condition, we explain why such estimators $\widehat\theta_n$ and $\widehat\alpha_n$ exist for the isotropic Mat\'ern covariance function with $d\geq 5$. To the best of our knowledge, \citet{And10} is the only work that has systematically studied the fixed-domain asymptotics for the isotropic Mat\'ern covariance function with domain dimension $d\geq 5$. \citet{And10} has considered a special case of our model \eqref{eq:obs.model}, in which (i) $Y(\cdot)$ is a GP with mean zero and no regression terms $\bbm(\cdot)^\top\beta$, and (ii) the sampling location set $\Scal_n$ consists of equispaced grids in a fixed domain. For this special case, \citet{And10} proposed consistent moment estimators for both $\theta$ and $\alpha$ when $d\geq 5$ if we set their $M$ matrix to be the identity matrix; see their Theorem 1, Theorem 2, and the discussion after the two theorems. The proofs of Theorems 1 and 2 in \citet{And10} have derived tail bound inequalities similar to \eqref{eq:exp.test.alpha}, where both $\kappa_1'$ and $\kappa_2'$ can be taken as $1/2$, which satisfies our condition $0<\kappa_1'\leq 1/2$ and $1/(2\nu+d)< \kappa_2'\leq 1/2$ since $1/(2\nu+d)<1/5$ when $d\geq 5$.

The supremum in the inequalities of \eqref{eq:exp.test.alpha} can often be established using a union bound argument over the set $\Bcal_0(\epsilon_1,\epsilon_2)^c\cap\Fcal_n$. The parameter set $\Fcal_n$ in Assumption \ref{assump.alpha.est} is typically a bounded set whose radius increases slowly with $n$, such that it is a sieve to the whole parameter space of $\left\{(\beta,\theta,\alpha)\in \RR^p\times \RR^+ \times \RR^+ \right\}$. The supremum inequalities and the sieve are also commonly used in Bayesian nonparametrics for showing posterior consistency and contraction rates; see for example, Theorem 6.17, Theorem 8.9 and their proofs in \citet{GhoVan17}. We assume that the prior mass outside the sieve $\Fcal_n$ is polynomially small, which is usually satisfied if $\beta$ is assigned a normal prior and $(\theta,\alpha)$ are assigned the priors described in Section \ref{subsec:prior.assumption}. In Bayesian nonparametrics, it is often assumed that $\Pi(\Fcal_n^c)$ is exponentially small in $n$, so our assumption is weaker in comparison.

Although Assumption \ref{assump.alpha.est} is currently verifiable only for the special case considered in \citet{And10}, we expect that the inequalities in \eqref{eq:exp.test.alpha} continue to hold for more general sampling designs and the model with regression terms in the case of $d \geq 5$, where the two constants $\kappa_1'$ and $\kappa_2'$ can be possibly smaller than $1/2$ depending on the sampling designs. Detailed construction of such consistent estimators $\widehat\theta_n$ and $\widehat\alpha_n$ for $d\geq 5$ in the general universal kriging model \eqref{eq:obs.model} can be based on the recently proposed higher-order quadratic variation techniques in \citet{Loh15} and \citet{Lohetal20} and will be left for future investigation.

Before stating the main theorem for $d\geq 5$, we first prove two technical lemmas. Lemma \ref{lem:d5.denom.lower} can be used to show a theoretical lower bound of the denominator in the posterior distribution for $d\geq 5$. Lemma \ref{lem:exp.test.alpha} proves the posterior contraction for $(\theta,\alpha)$ for $d\geq 5$. This will be used later for truncating the posterior to a shrinking neighborhood of $(\theta_0,\alpha_0)$, which will be important for deriving the limiting posterior distribution for $d\geq 5$.
\begin{lemma} \label{lem:d5.denom.lower}
Suppose that Assumptions \ref{assump.m.func} holds for $d\geq 5$ and $\nu\in\RR^+$. Let
\begin{align*}
\Acal_{n}^{\dagger}&= \big\{(\beta,\theta,\alpha)\in \RR^p\times \RR^+ \times \RR^+:~\|\beta-\beta_0\|\leq n^{-3}, \nonumber \\
&\qquad \theta_0 \leq \theta < \theta_0(1+n^{-2}),~ \alpha_0(1- n^{-2})\leq \alpha \leq \alpha_0 \big\}.
\end{align*}
Then $\inf_{\Acal_{n}^{\dagger}} \left\{ \Lcal_n(\beta,\theta/\alpha^{2\nu},\alpha) - \Lcal_n(\beta_0,\sigma_0^2,\alpha_0) \right\} \geq  c_{5L} n^{-1}$ with probability at least $1-\exp(-16\log^2 n)$ for all sufficiently large $n$, where $c_{5L}>0$ is a constant that depends on $\nu,d,T,\beta_0,\sigma_0^2,\alpha_0$ and the $\Wcal_2^{\nu+d/2}(\Scal)$ norms of $\bbm_1(\cdot),\ldots,\bbm_p(\cdot)$.
\end{lemma}

\begin{proof}[Proof of Lemma \ref{lem:d5.denom.lower}]
By definition of the log-likelihood function $\Lcal_n(\beta,\sigma^2,\alpha)$ in \eqref{eq:loglik} and the true model $Y_n=M_n\beta_0 + X_n$, we have
\begin{align} \label{eq:loglik.decomp1}
&\Lcal_n(\beta,\theta/\alpha^{2\nu},\alpha) - \Lcal_n(\beta_0,\sigma_0^2,\alpha_0) = -\frac{n}{2}\log \frac{\theta}{\theta_0} + \nu n\log \frac{\alpha}{\alpha_0} - \frac{1}{2}\log \frac{|R_{\alpha}|}{|R_{\alpha_0}|} \nonumber \\
&\qquad - (Y_n-M_n\beta)^\top \left(\frac{\alpha^{2\nu} R_{\alpha}^{-1}}{2\theta} - \frac{\alpha_0^{2\nu} R_{\alpha_0}^{-1}}{2\theta_0} \right) (Y_n-M_n\beta)  \nonumber \\
&\qquad + (\beta-\beta_0)^\top M_n^\top \frac{\alpha_0^{2\nu} R_{\alpha_0}^{-1}}{\theta_0} X_n - (\beta-\beta_0)^\top M_n^\top \frac{\alpha_0^{2\nu} R_{\alpha_0}^{-1}}{2\theta_0} M_n (\beta-\beta_0).
\end{align}

On the right-hand side of \eqref{eq:loglik.decomp1}, using Lemma \ref{lem:URU} and Lemma \ref{lem:specden_lambda}, the first line can be lower bounded as follows in the set $\Acal_{n}^{\dagger}$ for all sufficiently large $n$:
\begin{align} \label{eq:loglik.term1.lower}
&\quad~  -\frac{n}{2}\log \frac{\theta}{\theta_0} + \nu n\log \frac{\alpha}{\alpha_0} - \frac{1}{2}\log \frac{|R_{\alpha}|}{|R_{\alpha_0}|} \nonumber \\
&\overset{(i)}{\geq} -\frac{n}{2} \log(1+n^{-2}) + \nu n \log \frac{\alpha}{\alpha_0} - \frac{1}{2}\sum_{k=1}^n \log \lambda_{k,n}(\alpha) - \nu p\log\frac{\alpha}{\alpha_0} \nonumber \\
&\overset{(ii)}{\geq} -\frac{n}{2} \log(1+n^{-2}) +\nu(n-p)\log(1-n^{-2}) + \frac{2\nu+d}{2}\sum_{k=1}^n \log\big(1-n^{-2}\big) \nonumber \\
&\overset{(iii)}{\geq} -\frac{1}{2n} - \frac{2\nu}{n} - \frac{2\nu+d}{n} ,
\end{align}
where (i) follows from Lemma \ref{lem:URU}, (ii) follows from \eqref{lambda.upper1} in Lemma \ref{lem:specden_lambda} given that $\alpha\leq \alpha_0$ on $\Acal_{n}^{\dagger}$, and (iii) follows from $\log(1+x)\leq x$ and $\log (1-x) \geq -2x$ for $x\in (0,1/2)$.

By Lemma \ref{lem:alpha.monotone.matrix}, in the set $\Acal_{n}^{\dagger}$, $\frac{\alpha^{2\nu} R_{\alpha}^{-1}}{2\theta} - \frac{\alpha_0^{2\nu} R_{\alpha_0}^{-1}}{2\theta_0}$ is negative definite. Therefore,
\begin{align} \label{eq:loglik.term2.lower}
& - (Y_n-M_n\beta)^\top \left(\frac{\alpha^{2\nu} R_{\alpha}^{-1}}{2\theta} - \frac{\alpha_0^{2\nu} R_{\alpha_0}^{-1}}{2\theta_0} \right) (Y_n-M_n\beta) \geq 0.
\end{align}

For the third line of \eqref{eq:loglik.decomp1}, since $X_n\sim \Ncal(0,\sigma_0^2 R_{\alpha_0})$, by Lemma \ref{lem:LauMas00}, $\pr(\|\sigma_0^{-1}R_{\alpha_0}^{-1/2} X_n\|^2 \leq n+ 8\log n + 16\log^2 n ) \geq 1-\exp(-16\log^2 n)$. By Assumption \ref{assump.m.func}, using similar derivation to \eqref{eq:theta3.under3} based on Lemmas \ref{lem:matern.sobolev}, \ref{lem:rkhs.quadratic} and \ref{lem:rkhs.ordering}, we have that on the set $\Acal_{n}^{\dagger}$,
\begin{align*}
& \big\|\sigma_0^{-1}R_{\alpha_0}^{-1/2}M_n(\beta-\beta_0)\big\|^2 \leq \sum_{j=1}^p \|\bbm_j\|_{\Hcal_{\sigma_0^2K_{\alpha_0,\nu}}}^2 \cdot \|\beta-\beta_0\|^2 \leq c_2(\sigma_0,\alpha_0)^2 \sum_{j=1}^p \|\bbm_j\|_{\Wcal_2^{\nu+d/2}(\Scal)}^2 \cdot n^{-6}.
\end{align*}
Therefore, by Cauchy-Schwarz inequality, with probability at least $1-\exp(-16\log^2 n)$,
\begin{align} \label{eq:loglik.term3.lower}
&\quad~  (\beta-\beta_0)^\top M_n^\top \frac{\alpha_0^{2\nu} R_{\alpha_0}^{-1}}{\theta_0} X_n - (\beta-\beta_0)^\top M_n^\top \frac{\alpha_0^{2\nu} R_{\alpha_0}^{-1}}{2\theta_0} M_n (\beta-\beta_0) \nonumber \\
&\geq - (n+ 8\log n + 16\log^2 n)^{1/2} c_2(\sigma_0,\alpha_0)\left(\sum_{j=1}^p \|\bbm_j\|_{\Wcal_2^{\nu+d/2}(\Scal)}^2\right)^{1/2} n^{-3} \nonumber\\
&\quad ~ - \frac{c_2(\sigma_0,\alpha_0)^2}{2} \sum_{j=1}^p \|\bbm_j\|_{\Wcal_2^{\nu+d/2}(\Scal)}^2 \cdot n^{-6} \nonumber \\
&\geq - c_1'n^{-2},
\end{align}
where $c_1'>0$ is a constant dependent on the $\Wcal_2^{\nu+d/2}(\Scal)$ norms of $\bbm_1(\cdot),\ldots,\bbm_p(\cdot)$.

Finally, we combine \eqref{eq:loglik.decomp1}, \eqref{eq:loglik.term1.lower}, \eqref{eq:loglik.term2.lower} and \eqref{eq:loglik.term3.lower} to obtain that on the set $\Acal_{n}^{\dagger}$, with probability at least $1-\exp(-16\log^2 n)$,
\begin{align*}
&\Lcal_n(\beta,\theta/\alpha^{2\nu},\alpha) - \Lcal_n(\beta_0,\sigma_0^2,\alpha_0)  \geq -\frac{1}{2n} - \frac{2\nu}{n} - \frac{2\nu+d}{n} - c_1'n^{-2} \geq c_{5L} n^{-1},
\end{align*}
for some constant $c_{5L}>0$. This completes the proof.
\end{proof}

\begin{lemma} \label{lem:exp.test.alpha}
Suppose that Assumptions \ref{assump.m.func}, \ref{prior.1} and \ref{assump.alpha.est} hold for $d\geq 5$ and $\nu\in\RR^+$. Then the profile posterior distribution satisfies
\begin{align}
& \Pi\left(|\theta/\theta_0-1| \leq n^{-\kappa_1'}\log^2 n, ~ |\alpha/\alpha_0-1| \leq n^{-\kappa_2'}\log^2 n ~\big |~ Y_n\right) \rightarrow 1,  \nonumber
\end{align}
as $n\to\infty$ almost surely $P_{(\beta_0,\sigma_0^2,\alpha_0)}$.
\end{lemma}

\begin{proof}[Proof of Lemma \ref{lem:exp.test.alpha}]
The proof proceeds in a similar way to that of the Schwartz's theorem for posterior consistency (\citet{Sch65}); see for example, Theorem 6.17 and its proof in \citet{GhoVan17}. Let $\epsilon_{1n}'= n^{-\kappa_1'} \log^2 n$ and $\epsilon_{2n}'= n^{-\kappa_2'} \log^2 n$. Define the testing function (indicator function):
\begin{align} \label{eq:test}
T_n & = \Ical \left( |\widehat\theta_n/\theta_0-1|\geq \epsilon_{1n}'/2, \text{ or } |\widehat\alpha_n/\alpha_0-1|\geq \epsilon_{2n}'/2 \right),
\end{align}
where $\widehat\theta_n$ and $\widehat\alpha_n$ are the consistent estimators of $\theta$ and $\alpha$ from Assumption \ref{assump.alpha.est}. Recall that the log-likelihood function $\Lcal_n(\beta,\theta/\alpha^{2\nu},\alpha)$ is defined in \eqref{eq:loglik} of the main text. We have the following decomposition:
\begin{align} \label{eq:d5.decomp1}
&~ \quad \Pi \left(\Bcal_0(\epsilon_{1n}',\epsilon_{2n}')^c ~|~ Y_n  \right) \nonumber \\
& = \frac{(T_n+1-T_n)\int_{\Bcal_0(\epsilon_{1n}',\epsilon_{2n}')^c}\exp\left\{ \Lcal_n(\beta,\theta/\alpha^{2\nu},\alpha) \right\} \pi(\beta,\theta,\alpha) \ud \beta \ud \theta \ud \alpha }
{\int_{\RR^p \times \RR^+ \times \RR^+} \exp\left\{\Lcal_n(\beta',\theta'/{\alpha'}^{2\nu},\alpha') \right\} \pi(\beta',\theta',\alpha') \ud \beta' \ud \theta' \ud \alpha' } \nonumber \\
&\leq T_n + \frac{(1-T_n)\int_{\Bcal_0(\epsilon_{1n}',\epsilon_{2n}')^c \cap \Fcal_n}\exp\left\{ \Lcal_n(\beta,\theta/\alpha^{2\nu},\alpha) - \Lcal_n(\beta_0,\sigma_0^2,\alpha_0) \right\} \pi(\beta,\theta,\alpha) \ud \beta \ud \theta \ud \alpha }
{\int_{\RR^p \times \RR^+ \times \RR^+} \exp\left\{\Lcal_n(\beta',\theta'/{\alpha'}^{2\nu},\alpha') - \Lcal_n(\beta_0,\sigma_0^2,\alpha_0) \right\} \pi(\beta',\theta',\alpha') \ud \beta' \ud \theta' \ud \alpha' } \nonumber \\
&\qquad + \frac{\int_{\Fcal_n^c}\exp\left\{ \Lcal_n(\beta,\theta/\alpha^{2\nu},\alpha) - \Lcal_n(\beta_0,\sigma_0^2,\alpha_0) \right\} \pi(\beta,\theta,\alpha) \ud \beta \ud \theta \ud \alpha }
{\int_{\RR^p \times \RR^+ \times \RR^+} \exp\left\{\Lcal_n(\beta',\theta'/{\alpha'}^{2\nu},\alpha') - \Lcal_n(\beta_0,\sigma_0^2,\alpha_0) \right\} \pi(\beta', \theta',\alpha') \ud \beta' \ud \theta' \ud \alpha' }.
\end{align}
By Assumption \ref{assump.alpha.est}, we have that as $n\to\infty$,
\begin{align} \label{eq:d5.post1}
{\EE}_{(\beta_0,\sigma_0^2,\alpha_0)} (T_n) & \leq P_{(\beta_0,\sigma_0^2,\alpha_0)} \big(\big|\widehat\theta_n/\theta_0-1\big| \geq \epsilon_{1n}'/2 \big) +  P_{(\beta_0,\sigma_0^2,\alpha_0)} \left(\left|\widehat\alpha_n/\alpha_0-1\right| \geq \epsilon_{2n}'/2 \right) \nonumber \\
&\leq 2\exp\left(-c_5 \log^2 n \right) .
\end{align}
For the second term in \eqref{eq:d5.decomp1}, we use the same proof technique as the Schwartz's theorem for posterior consistency. By Assumption \ref{assump.alpha.est} and the Fubini's theorem, its numerator has expectation upper bounded by
\begin{align} \label{eq:d5.post2}
&\quad~ {\EE}_{(\beta_0,\sigma_0^2,\alpha_0)} (1-T_n)\int_{\Bcal_0(\epsilon_{1n}',\epsilon_{2n}')^c \cap \Fcal_n}\exp\left\{ \Lcal_n(\beta,\theta/\alpha^{2\nu},\alpha) - \Lcal_n(\beta_0,\sigma_0^2,\alpha_0) \right\} \pi(\beta,\theta,\alpha) \ud \beta \ud \theta \ud \alpha \nonumber \\
&= \int_{\Bcal_0(\epsilon_{1n}',\epsilon_{2n}')^c \cap \Fcal_n}{\EE}_{(\beta_0,\sigma_0^2,\alpha_0)} (1-T_n) \exp\left\{ \Lcal_n(\beta,\theta/\alpha^{2\nu},\alpha) - \Lcal_n(\beta_0,\sigma_0^2,\alpha_0) \right\} \pi(\beta,\theta,\alpha) \ud \beta \ud \theta \ud \alpha \nonumber \\
&= \int_{\Bcal_0(\epsilon_{1n}',\epsilon_{2n}')^c \cap \Fcal_n}{\EE}_{(\beta,\theta/\alpha^{2\nu},\alpha)} (1-T_n) \pi(\beta,\theta,\alpha) \ud \beta \ud \theta \ud \alpha \nonumber \\
&\leq \sup_{\Bcal_0(\epsilon_{1n}',\epsilon_{2n}')^c \cap \Fcal_n} {\EE}_{(\beta,\theta/\alpha^{2\nu},\alpha)} \left\{ (1-T_n) \int_{\Bcal_0(\epsilon_{1n}',\epsilon_{2n}')^c \cap \Fcal_n} \pi(\beta,\theta,\alpha) \ud \beta \ud \theta \ud \alpha \right\} \nonumber \\
&\leq \sup_{\Bcal_0(\epsilon_{1n}',\epsilon_{2n}')^c \cap \Fcal_n} {\EE}_{(\beta,\theta/\alpha^{2\nu},\alpha)} (1-T_n)  \nonumber \\
&\leq \sup_{\Bcal_0(\epsilon_{1n}',\epsilon_{2n}')^c \cap \Fcal_n} P_{(\beta,\theta/\alpha^{2\nu},\alpha)} \big( \big|\widehat\theta_n/\theta_0-1\big| \leq \epsilon_{1n}' /2 \big) \nonumber \\
&\qquad + \sup_{\Bcal_0(\epsilon_{1n}',\epsilon_{2n}')^c\cap \Fcal_n} P_{(\beta,\theta/\alpha^{2\nu},\alpha)} \left( \left|\widehat\alpha_n/\alpha_0-1\right| \leq \epsilon_{2n}' /2 \right)  \nonumber \\
&\leq 2\exp\left(-c_5 \log^2 n \right).
\end{align}
Since $\sum_{n=1}^{\infty} 2\exp\left(-c_5 \log^2 n \right) < \infty$, by applying the Markov's inequality and the Borel-Cantelli lemma, the numerator of the second term in \eqref{eq:d5.decomp1} is upper bounded by $2\exp\left\{-(c_5/2) \log^2 n \right\} $ as $n\to\infty$ almost surely $P_{(\beta_0,\sigma_0^2,\alpha_0)}$. On the other hand, by Lemma \ref{lem:d5.denom.lower}, for $d\geq 5$ and for all sufficiently large $n$, with probability at least $1-\exp(-16\log^2 n)$ the denominator of the second term in \eqref{eq:d5.decomp1} can be lower bounded by
\begin{align} \label{eq:d5.post3}
& \int_{\RR^p \times \RR^+ \times \RR^+} \exp\left\{\Lcal_n(\beta',\theta'/{\alpha'}^{2\nu},\alpha') - \Lcal_n(\beta_0,\sigma_0^2,\alpha_0) \right\} \pi(\beta',\theta',\alpha') \ud \beta' \ud \theta' \ud \alpha'  \nonumber \\
&\geq \int_{\Acal_{n}^{\dagger}} \exp\left\{\Lcal_n(\beta',\theta'/{\alpha'}^{2\nu},\alpha') - \Lcal_n(\beta_0,\sigma_0^2,\alpha_0) \right\} \pi(\beta',\theta',\alpha') \ud \beta' \ud \theta' \ud \alpha' \nonumber \\
&\geq \exp(-c_{5L}n^{-1}) \cdot \Pi(\Acal_{n}^{\dagger}) \overset{(i)}{\geq} c_{6L} n^{-(3p+4)},
\end{align}
for some constant $c_{6L}>0$, where (i) follows because $\exp(-c_{5L}n^{-1})>1/2$ for large $n$, the prior density $\pi(\beta,\theta,\alpha)=\pi(\beta|\theta/\alpha^{2\nu})\pi(\theta|\alpha)\pi(\alpha)$ is lower bounded by constant in the set $\Acal_{n}^{\dagger}$ by Assumptions \ref{assump.m.func} and \ref{prior.1}, and the set $\Acal_{n}^{\dagger}$ defined in Lemma \ref{lem:d5.denom.lower} has a volume of order $n^{-3p}\cdot n^{-2}\cdot n^{-2} = n^{-(3p+4)}$. Therefore, we combine \eqref{eq:d5.post2} and \eqref{eq:d5.post3} to obtain that almost surely $P_{(\beta_0,\sigma_0^2,\alpha_0)}$ as $n\to\infty$, the second term in \eqref{eq:d5.decomp1} is upper bounded by
\begin{align} \label{eq:d5.post4}
& \frac{(1-T_n)\int_{\Bcal_0(\epsilon_{1n}',\epsilon_{2n}')^c \cap \Fcal_n}\exp\left\{ \Lcal_n(\beta,\theta/\alpha^{2\nu},\alpha) - \Lcal_n(\beta_0,\sigma_0^2,\alpha_0) \right\} \pi(\beta,\theta,\alpha) \ud \beta \ud \theta \ud \alpha }
{\int_{\RR^p \times \RR^+ \times \RR^+} \exp\left\{\Lcal_n(\beta',\theta'/{\alpha'}^{2\nu},\alpha') - \Lcal_n(\beta_0,\sigma_0^2,\alpha_0) \right\} \pi(\beta',\theta',\alpha') \ud \beta' \ud \theta' \ud \alpha' } \nonumber \\
&\leq 2c_{6L}^{-1} n^{3p+4} \exp\left\{-(c_5/2) \log^2 n \right\} \to 0, \text{ as } n\to\infty.
\end{align}
For the third term in \eqref{eq:d5.decomp1}, similar to \eqref{eq:d5.post2}, by the Fubini's theorem and Assumption \ref{assump.alpha.est}, we have that
\begin{align} \label{eq:d5.post5}
&\quad~ {\EE}_{(\beta_0,\sigma_0^2,\alpha_0)} \int_{\Fcal_n^c }\exp\left\{ \Lcal_n(\beta,\theta/\alpha^{2\nu},\alpha) - \Lcal_n(\beta_0,\sigma_0^2,\alpha_0) \right\} \pi(\beta,\theta,\alpha) \ud \beta \ud \theta \ud \alpha \nonumber \\
&= \int_{\Fcal_n^c } {\EE}_{(\beta_0,\sigma_0^2,\alpha_0)} \exp\left\{ \Lcal_n(\beta,\theta/\alpha^{2\nu},\alpha) - \Lcal_n(\beta_0,\sigma_0^2,\alpha_0) \right\} \pi(\beta,\theta,\alpha) \ud \beta \ud \theta \ud \alpha \nonumber \\
&= \int_{\Fcal_n^c }{\EE}_{(\beta,\theta/\alpha^{2\nu},\alpha)} \pi(\beta,\theta,\alpha) \ud \beta \ud \theta \ud \alpha= \Pi(\Fcal_n^c) \leq n^{-(3p+6)},
\end{align}
which by the Markov's inequality and the Borel-Cantelli lemma, implies that the numerator of the second term in \eqref{eq:d5.decomp1} is upper bounded by $n^{-(3p+6)}$ as $n\to\infty$ almost surely $P_{(\beta_0,\sigma_0^2,\alpha_0)}$. Therefore, \eqref{eq:d5.post3} and \eqref{eq:d5.post5} imply that the second term in \eqref{eq:d5.decomp1} is upper bounded by
\begin{align} \label{eq:d5.post6}
& \frac{\int_{\Fcal_n^c}\exp\left\{ \Lcal_n(\beta,\theta/\alpha^{2\nu},\alpha) - \Lcal_n(\beta_0,\sigma_0^2,\alpha_0) \right\} \pi(\beta,\theta,\alpha) \ud \beta \ud \theta \ud \alpha }
{\int_{\RR^p \times \RR^+ \times \RR^+} \exp\left\{\Lcal_n(\beta',\theta'/{\alpha'}^{2\nu},\alpha') - \Lcal_n(\beta_0,\sigma_0^2,\alpha_0) \right\} \pi(\beta', \theta',\alpha') \ud \beta' \ud \theta' \ud \alpha' } \nonumber \\
&\leq c_{6L}^{-1} n^{3p+4}\cdot n^{-(3p+6)} = c_{6L}^{-1} n^{-2}  \to 0, \text{ as } n\to\infty.
\end{align}
The conclusion follows by combining \eqref{eq:d5.decomp1}, \eqref{eq:d5.post1}, \eqref{eq:d5.post4}, and \eqref{eq:d5.post6}.
\end{proof}

We state and prove the following Theorem \ref{thm:d5.bvm.joint} for the limiting posterior distribution of the covariance parameters $(\theta,\alpha)$ for the case of $d\geq 5$. Theorem \ref{thm:d5.bvm.joint} for $d\geq 5$ is a parallel to Theorem \ref{thm:bvm2:joint} in the main text for $d\in\{1,2,3\}$. We emphasize that in Theorem \ref{thm:d5.bvm.joint}, we only derive the asymptotic normality for the posterior of $\theta$, since the limiting posterior distribution of the range parameter $\alpha$ will depend on the exact form of sampling design $\Scal_n$. Another difference in Theorem \ref{thm:d5.bvm.joint} from Theorem \ref{thm:bvm2:joint} is that the profile posterior distribution for $\alpha$ will be a truncated distribution to the neighborhood $\big[(1-n^{-\kappa_2'}\log^2 n)\alpha_0, (1+n^{-\kappa_2'}\log^2 n)\alpha_0\big]$, given the posterior contraction result in Lemma \ref{lem:exp.test.alpha}.

\begin{theorem} \label{thm:d5.bvm.joint}
Suppose that Assumptions \ref{assump.m.func}, \ref{prior.1}, \ref{prior.2} and \ref{assump.alpha.est} hold for $d\geq 5$ and $\nu\in\RR^+$. The posterior distributions of $\theta$ and $\alpha$ are asymptotically independent, in the sense that the joint posterior distribution of $(\theta,\alpha)$ satisfies
\begin{align}\label{eq:d5.joint:theta1}
\left\|\Pi(\ud \theta, \ud \alpha |Y_n) - \mathcal{N}\left(\ud \theta \big| \widetilde\theta_{\alpha_0}, 2\theta_0^2/n \right) \times \widetilde \Pi^{\dagger}(\ud \alpha|Y_n)\right\|_{\tv} \rightarrow 0,
\end{align}
as $n\to\infty$ almost surely $P_{(\beta_0,\sigma_0^2,\alpha_0)}$, and $\widetilde \Pi^{\dagger}(\ud \alpha|Y_n)$ is the truncated profile posterior distribution with the density
\begin{align}\label{eq:d5.profile:post1}
\widetilde \pi^{\dagger} (\alpha|Y_n) &= \frac{\exp\big\{\widetilde \Lcal_n(\alpha)\big\}\pi(\alpha|\theta_0)}{\displaystyle \int_{\max\big\{0, \big(1-n^{-\kappa_2'}\log^2 n\big)\alpha_0 \big\}}^{(1+n^{-\kappa_2'}\log^2 n)\alpha_0} \exp\big\{\widetilde \Lcal_n(\alpha')\big\}\pi(\alpha'|\theta_0)\ud \alpha'},
\end{align}
where the profile restricted log-likelihood $\widetilde \Lcal_n(\alpha)$ is given in \eqref{def:prologlik} of the main text and $\pi(\alpha|\theta_0)$ is the conditional prior density of $\alpha$ given $\theta=\theta_0$.
\end{theorem}

\begin{proof}[Proof of Theorem \ref{thm:d5.bvm.joint}]
For short, let $\Bcal_{0n}= \Bcal_0(n^{-\kappa_1'}\log^2 n, n^{-\kappa_2'}\log^2 n)$ as defined in \eqref{eq:Bcal0}. For the joint posterior distribution $\Pi(\ud\theta,\ud\alpha|Y_n)$, we define the truncated posterior distribution $\Pi^{\dagger} (\ud \theta,\ud\alpha | Y_n) = \Pi(\ud\theta,\ud\alpha|Y_n) \cdot \Ical\{(\theta,\alpha)\in \Bcal_{0n}\}/ \Pi(\Bcal_{0n}|Y_n)$ on the truncated support $\Bcal_{0n}$. For all sufficiently large $n$, this support is a subset of $\RR^+ \times \RR^+$. By Lemma \ref{lem:exp.test.alpha}, the posterior probability of the set $\Bcal_{0n}$ converges to 1 as $n\to\infty$ almost surely $P_{(\beta_0,\sigma_0^2,\alpha_0)}$, which immediately implies that
\begin{align} \label{eq:true.truncate}
&\left\|\Pi^{\dagger} (\ud \theta,\ud\alpha | Y_n) - \Pi (\ud \theta,\ud\alpha | Y_n) \right\|_{\tv}  = \sup_{\Acal \in \RR^+ \times \RR^+} \left|\Pi^{\dagger} (\Acal | Y_n) - \Pi (\Acal | Y_n) \right| \nonumber \\
&= \frac{\sup_{\Acal \in \RR^+ \times \RR^+}\Pi\left( \Acal |Y_n\right) \left[1-\Pi\left(\Bcal_{0n}|Y_n\right)\right] }{\Pi\left(\Bcal_{0n}|Y_n\right)}
= \frac{ \Pi\left(\Bcal_{0n}^c |Y_n\right) }{\Pi\left(\Bcal_{0n}|Y_n\right)}
\rightarrow 0,
\end{align}
as $n\to\infty$ almost surely $P_{(\beta_0,\sigma_0^2,\alpha_0)}$. Therefore, to show \eqref{eq:d5.joint:theta1}, it suffices to show that as $n\to\infty$ almost surely $P_{(\beta_0,\sigma_0^2,\alpha_0)}$,
\begin{align}\label{eq:d5.joint:theta2}
\left\|\Pi^{\dagger}(\ud \theta, \ud \alpha |Y_n) - \mathcal{N}\left(\ud \theta \big| \widetilde\theta_{\alpha_0}, 2\theta_0^2/n \right) \times \widetilde \Pi^{\dagger}(\ud \alpha|Y_n)\right\|_{\tv} \rightarrow 0.
\end{align}
The rest of the proof proceeds in a similar way to the proof of Theorem \ref{thm:bvm2:joint}, with a few key differences. Without loss of generality, we only consider those sufficiently large $n$ such that $1-n^{-\kappa_2'}\log^2 n>0$. For short, let $\alpha_{1n}=\big(1-n^{-\kappa_2'}\log^2 n\big)\alpha_0$ and $\alpha_{2n}=\big(1+n^{-\kappa_2'}\log^2 n\big)\alpha_0$. First, \eqref{joint:theta2} and \eqref{joint:theta3} in the proof of Theorem \ref{thm:bvm2:joint} will be replaced by
\begin{align}
& \int_{\alpha_{1n}}^{\alpha_{2n}}  \int_{\RR} \left|\pi^{\dagger}(\theta,\alpha|Y_n) - \frac{\sqrt{n-p}}{2\sqrt{\pi}\theta_0} \ee^{-\frac{(n-p)(\theta-\widetilde\theta_{\alpha})^2}{4\theta_0^2}} \cdot \widetilde \pi^{\dagger}(\alpha|Y_n)\right| \ud \theta \ud \alpha \rightarrow 0, \label{d5.joint:theta2} \\
& \int_{\alpha_{1n}}^{\alpha_{2n}}  \int_{\RR} \left|\frac{\sqrt{n-p}}{2\sqrt{\pi}\theta_0} \ee^{-\frac{(n-p)(\theta-\widetilde\theta_{\alpha})^2}{4\theta_0^2}} - \frac{\sqrt{n}}{2\sqrt{\pi}\theta_0} \ee^{-\frac{n(\theta-\widetilde\theta_{\alpha_0})^2}{4\theta_0^2}} \right| \cdot \widetilde \pi^{\dagger}(\alpha|Y_n) \ud \theta \ud \alpha \rightarrow 0, \label{d5.joint:theta3}
\end{align}
as $n\to\infty$ almost surely $P_{(\beta_0,\sigma_0^2,\alpha_0)}$, where $\pi^{\dagger}(\theta,\alpha|Y_n)$ is the density of $\Pi^{\dagger}(\ud \theta,\ud\alpha | Y_n)$ and $\widetilde \pi^{\dagger}(\alpha|Y_n)$ is as defined in \eqref{eq:d5.profile:post1}. The lower and upper bounds in the integrals of \eqref{d5.joint:theta2} and \eqref{d5.joint:theta3} are because the range parameter $\alpha$ in both $\pi^{\dagger}(\theta,\alpha|Y_n)$ and $\widetilde \pi^{\dagger}(\alpha|Y_n)$ is supported on $[\alpha_{1n},\alpha_{2n}]$. Similar to \eqref{joint:theta23}, using Lemma \ref{lemma:intdiff} and the definition of $\varrho_n(t;\alpha)$ in \eqref{func:gn}, the left-hand side of \eqref{d5.joint:theta2} is smaller than ${\numer}'/{\denom}'$, where
\begin{align}
{\numer}' &= 2\int_{\alpha_{1n}}^{\alpha_{2n}} \int_{\RR} \left|\varrho_n(\sqrt{n-p}(\theta-\widetilde\theta_{\alpha});\alpha)\right| \ee^{\Lcal_n(\alpha^{-2\nu}\widetilde\theta_{\alpha},\alpha)} \pi(\theta_0|\alpha) \pi(\alpha) \ud \theta \ud \alpha , \label{eq:d5.numer} \\
{\denom}' &= \frac{2\theta_0\sqrt{\pi}}{\sqrt{n-p}} \int_{\alpha_{1n}}^{\alpha_{2n}} \ee^{\Lcal_n(\alpha^{-2\nu}\widetilde\theta_{\alpha},\alpha)} \pi(\theta_0|\alpha)\pi(\alpha) \ud \alpha, \label{eq:d5.denom}.
\end{align}
For any $\epsilon>0$, let $\Ecal_2'(\epsilon) = \big\{\sup_{\alpha\in [\alpha_{1n},\alpha_{2n}]} |\widetilde\theta_{\alpha}-\theta_0| < \epsilon \big\}$, $\Ecal_3'(\epsilon) = \big\{\sup_{\alpha\in  [\alpha_{1n},\alpha_{2n}]} |\widetilde\theta_{\alpha}-\widetilde \theta_{\alpha_0}| < \epsilon \big\}$, $\Ecal_4(\epsilon) = \big\{|\widetilde\theta_{\alpha_0}-\theta_0| < \epsilon \big\}$. We can set $c=\kappa_2'$ in Lemma \ref{lem:MRM.high.order}, which satisfies $c=\kappa_2'> 1/(2\nu+d)$ given Assumption \ref{assump.alpha.est} and hence $[\alpha_{1n},\alpha_{2n}]\subseteq \big[\big(1-n^{-1/(2\nu+d)}\big)\alpha_0, \big(1+n^{-1/(2\nu+d)}\big)\alpha_0]$. Thus we can apply \eqref{eq:theta01.ho2} of Lemma \ref{lem:MRM.high.order} to obtain that $\pr\big\{\Ecal_3'(10\theta_0n^{-(2\nu+d)\kappa_2'}\log^4 n)\big\}\geq 1-8\exp(-4\log^2 n)$. Lemma \ref{lem:theta.alpha0} implies that $\pr\big\{\Ecal_4(5\theta_0n^{-1/2}\log n)\big\}\geq 1-3\exp(-4\log^2 n)$. Since $(2\nu+d)\kappa_2'> 1$ from Assumption \ref{assump.alpha.est}, by the triangle inequality, for sufficiently large $n$,
$$\Ecal_2'\big(6\theta_0n^{-1/2}\log n\big)\supseteq \Ecal_3'\big(10\theta_0n^{-(2\nu+d)\kappa_2'}\log^4 n\big) \cap \Ecal_4\big(5\theta_0n^{-1/2}\log n\big),$$
and hence it follows that $\pr\big\{\Ecal_2'(6\theta_0n^{-1/2}\log n)\big\}\geq 1 - 11\exp(-4\log^2 n)$.

Lemma \ref{lemma:gndiff1} still applies when $d\geq 5$ and $\Ecal_1(6\theta_0n^{-1/2}\log n, \alpha)\supseteq \Ecal_2'(6\theta_0n^{-1/2}\log n)$ for every $\alpha\in [\alpha_{1n},\alpha_{2n}]$. Also, under Assumption \ref{prior.2}, the inequality and convergence in \eqref{en:Bnrate} in the proof of Theorem \ref{thm:bvm2:joint} still holds. Since $[\alpha_{1n},\alpha_{2n}] \subseteq [\underline\alpha_n,\overline\alpha_n]$, we apply Lemma \ref{lemma:gndiff1} with $\epsilon_{1n}=6\theta_0n^{-1/2}\log n$ and $s_n=\log n$ and obtain from \eqref{joint:theta22} and \eqref{en:Bnrate} that
\begin{align}
& \int_{\alpha_{1n}}^{\alpha_{2n}} \int_{\RR} \left| \ee^{\Lcal_n(\alpha^{-2\nu}\theta,\alpha)-\Lcal_n(\alpha^{-2\nu}\widetilde\theta_{\alpha},\alpha)} \frac{\pi(\theta|\alpha)}{\pi(\theta_0|\alpha)}
- \ee^{-\frac{(n-p)(\theta-\widetilde\theta_{\alpha})^2}{4\theta_0^2}}
\right| \ee^{\Lcal_n(\alpha^{-2\nu}\widetilde\theta_{\alpha},\alpha)} \pi(\theta_0|\alpha) \pi(\alpha) \ud\theta \ud\alpha \nonumber \\
\leq{}& \frac{\sup_{\alpha\in [\alpha_{1n},\alpha_{2n}]} B_n(\alpha)}{\sqrt{n-p}} \int_{\alpha_{1n}}^{\alpha_{2n}} \ee^{\Lcal_n(\alpha^{-2\nu}\widetilde\theta_{\alpha},\alpha)} \pi(\theta_0|\alpha) \pi(\alpha) \ud \alpha \to 0, \text{ as } n\to\infty .  \nonumber
\end{align}
Therefore, similar to \eqref{en:bound1}, we have that on the event $\Ecal_2'(6\theta_0n^{-1/2}\log n)$,
\begin{align}
\frac{{\numer}'}{{\denom}'} &\leq \frac{2\int_{\alpha_{1n}}^{\alpha_{2n}} \int_{\RR} \left|\varrho_n(\sqrt{n-p}(\theta-\widetilde\theta_{\alpha});\alpha)\right| \ee^{\Lcal_n(\alpha^{-2\nu}\widetilde\theta_{\alpha},\alpha)} \pi(\theta_0|\alpha) \pi(\alpha) \ud \theta \ud \alpha} {\frac{2\theta_0\sqrt{\pi}}{\sqrt{n-p}}\int_{\alpha_{1n}}^{\alpha_{2n}} \ee^{\Lcal_n(\alpha^{-2\nu}\widetilde\theta_{\alpha},\alpha)} \pi(\theta_0|\alpha)\pi(\alpha) \ud \alpha} \nonumber \\
&\leq \frac{ \frac{\sup_{\alpha\in [\alpha_{1n},\alpha_{2n}]} B_n(\alpha)}{\sqrt{n-p}} \int_{\alpha_{1n}}^{\alpha_{2n}} \ee^{\Lcal_n(\alpha^{-2\nu}\widetilde\theta_{\alpha},\alpha)} \pi(\theta_0|\alpha) \pi(\alpha) \ud \alpha}
{\frac{\theta_0\sqrt{\pi}}{\sqrt{n-p}}\int_{\alpha_{1n}}^{\alpha_{2n}} \ee^{\Lcal_n(\alpha^{-2\nu}\widetilde\theta_{\alpha},\alpha)} \pi(\theta_0|\alpha)\pi(\alpha) \ud \alpha} \nonumber \\
&\leq \frac{\sup_{\alpha\in [\alpha_{1n},\alpha_{2n}]} B_n(\alpha)}{\theta_0\sqrt{\pi}} \rightarrow 0,  \text{ as } n\to\infty ,  \nonumber
\end{align}
where the last step follows from \eqref{en:Bnrate}. This has proved \eqref{d5.joint:theta2}.

For \eqref{d5.joint:theta3}, similar to \eqref{joint:theta31} and \eqref{joint:theta32}, on the event $\Ecal_3'(10\theta_0n^{-(2\nu+d)\kappa_2'}\log^4 n)$, we have that for all sufficiently large $n$,
\begin{align}
& \int_{\alpha_{1n}}^{\alpha_{2n}}  \int_{\RR} \left|\frac{\sqrt{n-p}}{2\sqrt{\pi}\theta_0} \ee^{-\frac{(n-p)(\theta-\widetilde\theta_{\alpha})^2}{4\theta_0^2}} - \frac{\sqrt{n}}{2\sqrt{\pi}\theta_0} \ee^{-\frac{n(\theta-\widetilde\theta_{\alpha_0})^2}{4\theta_0^2}} \right| \cdot \widetilde \pi^{\dagger}(\alpha|Y_n) \ud \theta \ud \alpha  \nonumber \\
\leq{}& \int_{\alpha_{1n}}^{\alpha_{2n}} \frac{(n-p)^{1/2}}{2\sqrt{\pi}\theta_0} \big|\widetilde\theta_{\alpha}-\widetilde\theta_{\alpha_0}\big| \widetilde \pi^{\dagger}(\alpha|Y_n) \ud \alpha \nonumber \\
\leq{}& \frac{5}{\sqrt{\pi}} n^{-(2\nu+d)\kappa_2'+1/2}\log^4 n \int_{\alpha_{1n}}^{\alpha_{2n}} \widetilde \pi^{\dagger}(\alpha|Y_n) \ud \alpha \nonumber \\
\overset{(i)}{\leq}{}& \frac{5}{\sqrt{\pi}} n^{-1/2}\log^4 n \rightarrow 0,  \text{ as } n\to\infty ,
\end{align}
where (i) follows from the condition $\kappa_2' > 1/(2\nu+d)$. This has proved \eqref{d5.joint:theta3}. Therefore, \eqref{d5.joint:theta2} and \eqref{d5.joint:theta3} together imply \eqref{eq:d5.joint:theta2}, and \eqref{eq:d5.joint:theta2} together with \eqref{eq:true.truncate} proves Theorem \ref{thm:d5.bvm.joint}.
\end{proof}
\vspace{8mm}

\section{Proof of Propositions \ref{prop:prior2} and \ref{prop:prior3}} \label{supsec:2prop}
In this section, we provide the proof of Propositions \ref{prop:prior2} and \ref{prop:prior3} in the main text, which verify Assumptions \ref{prior.2} and \ref{prior.3} on the prior, respectively.

\begin{proof}[Proof of Proposition \ref{prop:prior2}:]

\noindent (i) Since $\pi(\theta|\alpha)=\pi(\theta)$ and does not depend on $\alpha$, we have that $\frac{\partial \log \pi(\theta|\alpha)}{\partial \theta} = \pi'(\theta)/\pi(\theta)$. Since $\pi(\theta)>0$ and $\pi'(\theta)=\ud \pi(\theta)/\ud \theta$ is continuous on $\RR^+$, \eqref{A2.1} is satisfied for all sufficiently large $n$ since
$$\sup_{\alpha\in [\underline\alpha_n, \overline\alpha_n]} \sup_{\theta\in (\theta_0/2,2\theta_0)}
\left|\frac{\partial \log \pi(\theta|\alpha)}{\partial \theta} \right| \leq \frac{\sup_{\theta\in (\theta_0/2,2\theta_0)}\pi'(\theta)}{\inf_{\theta\in (\theta_0/2,2\theta_0)}\pi(\theta)} < n^{C_{\pi,1}},$$
for arbitrary $C_{\pi,1}>0$.

The prior density $\pi(\theta)$ has finite supremum and positive infimum on $(\theta_0/2,2\theta_0)$. Hence \eqref{A2.2} is satisfied for all sufficiently large $n$ since
$$\sup_{\alpha\in [\underline\alpha_n, \overline\alpha_n]} \sup_{\theta\in (\theta_0/2,2\theta_0)} \frac{\pi(\theta|\alpha)}{\pi(\theta_0|\alpha)} \leq \frac{\sup_{\theta\in (\theta_0/2,2\theta_0)}\pi(\theta)}{\inf_{\theta\in (\theta_0/2,2\theta_0)}\pi(\theta)} < n^{C_{\pi,2}},$$
for arbitrary $C_{\pi,2}>0$. Since $C_{\pi,1}$ and $C_{\pi,2}$ can be arbitrarily small, $C_{\pi,1}+C_{\pi,2}<1/2$ is satisfied. Finally, \eqref{A2.3} is satisfied for all sufficiently large $n$ since $\pi(\theta_0)>0$ and for all sufficiently large $n$,
$$\inf_{\alpha\in [\underline\alpha_n, \overline\alpha_n]} \log \pi(\theta_0|\alpha) = \log \pi(\theta_0) > - n^{C_{\pi,3}},$$
for arbitrarily small $C_{\pi,3}>0$.

\vspace{5mm}

\noindent (ii) If $\pi(\alpha)$ is supported on a compact interval $[\alpha_1,\alpha_2]$, then all $\sup_{\alpha\in [\underline\alpha_n, \overline\alpha_n]}$ can be replaced by $\sup_{\alpha\in [\alpha_1, \alpha_2]}$. Based on the conditions, for all sufficiently large $n$,
$$\sup_{\alpha\in [\alpha_1, \alpha_2]} \sup_{\theta\in (\theta_0/2,2\theta_0)}
\left|\frac{\partial \log \pi(\theta|\alpha)}{\partial \theta} \right| <  n^{C_{\pi,1}},$$
for arbitrary $C_{\pi,1}>0$.

Since $\pi(\theta|\alpha)>0$ for all $(\theta,\alpha)\in \RR^+ \times \RR^+$, for all sufficiently large $n$,
$$\sup_{\alpha\in [\alpha_1, \alpha_2]} \sup_{\theta\in (\theta_0/2,2\theta_0)}
\left|\frac{\partial \log \pi(\theta|\alpha)}{\partial \theta} \right| <  n^{C_{\pi,2}},$$
for arbitrary $C_{\pi,2}>0$. Since $C_{\pi,1}$ and $C_{\pi,2}$ can be arbitrarily small, $C_{\pi,1}+C_{\pi,2}<1/2$ is satisfied.

Since $\pi(\theta|\alpha)>0$ is continuous in $\alpha\in \RR^+$, for all sufficiently large $n$,
$$\inf_{\alpha\in [\alpha_1, \alpha_2]} \log \pi(\theta_0|\alpha) =\inf_{\alpha\in [\alpha_1, \alpha_2]} \log \pi(\theta_0|\alpha) > -n^{C_{\pi,3}},$$
for arbitrarily small $C_{\pi,3}>0$.

\vspace{5mm}

\noindent (iii) If the prior of $\sigma^2$ is independent of $\alpha$, then by the relation $\theta=\sigma^2\alpha^{2\nu}$, the prior of $\theta$ given $\alpha$ is $\pi(\theta|\alpha) = \pi_{\sigma^2}(\theta/\alpha^{2\nu})/\alpha^{2\nu}$, where we use $\pi_{\sigma^2}(\cdot)$ to denote the prior density of $\sigma^2$. Therefore, $\frac{\partial \log \pi(\theta|\alpha)}{\partial \theta} = \frac{\pi_{\sigma^2}'(\theta/\alpha^{2\nu})}{\alpha^{2\nu}\pi_{\sigma^2}(\theta/\alpha^{2\nu})}$. For the transformed beta family density, the derivative is
\begin{align*}
\pi_{\sigma^2}'(\sigma^2) & = \frac{\Gamma(\gamma_1+\gamma_2)}{\Gamma(\gamma_1)\Gamma(\gamma_2)} \frac{\left(\frac{\sigma^2}{b}\right)^{\gamma_2/\gamma-2}\left[\gamma_2-\gamma-\left(\gamma_1+\gamma\right)
\left(\frac{\sigma^2}{b}\right)^{1/\gamma}\right]}{(b\gamma)^2 [1+(\sigma^2/b)^{1/\gamma}]^{\gamma_1+\gamma_2+1}}.
\end{align*}
Therefore, for all sufficiently large $n$,
\begin{align*}
&\sup_{\alpha\in [\underline\alpha_n, \overline\alpha_n]} \sup_{\theta\in (\theta_0/2,2\theta_0)}
\left|\frac{\partial \log \pi(\theta|\alpha)}{\partial \theta} \right|  \leq \sup_{\alpha\in [\underline\alpha_n, \overline\alpha_n]} \sup_{\theta\in (\theta_0/2,2\theta_0)}  \frac{\left|\gamma_2-\gamma-\left(\gamma_1+\gamma\right)
\left(\frac{\theta}{b\alpha^{2\nu}}\right)^{1/\gamma}\right|}{b\gamma\alpha^{2\nu} \left(\frac{\theta}{b\alpha^{2\nu}}\right) [1+\left(\frac{\theta}{b\alpha^{2\nu}}\right)^{1/\gamma}]} \\
&\leq \frac{2|\gamma_2-\gamma|}{\gamma\theta_0} + \frac{2(\gamma_1+\gamma)}{\gamma\theta_0} < n^{C_{\pi,1}},
\end{align*}
for arbitrary $C_{\pi,1}>0$.
\begin{align*}
\sup_{\alpha\in [\underline\alpha_n, \overline\alpha_n]} \sup_{\theta\in (\theta_0/2,2\theta_0)}
\frac{\pi(\theta|\alpha)}{\pi(\theta_0|\alpha)} &\leq \sup_{\alpha\in [\underline\alpha_n, \overline\alpha_n]} \sup_{\theta\in (\theta_0/2,2\theta_0)} \left(\frac{\theta}{\theta_0}\right)^{\gamma_2/\gamma-1} \left[\frac{b^{1/\gamma}\alpha^{2\nu/\gamma}+\theta_0^{1/\gamma}}{b^{1/\gamma}\alpha^{2\nu/\gamma}+\theta^{1/\gamma}}\right]^{\gamma_1+\gamma_2} \\
&\leq \sup_{\theta\in (\theta_0/2,2\theta_0)}  \max\left\{\left(\frac{\theta}{\theta_0}\right)^{\gamma_2/\gamma-1}, \left(\frac{\theta}{\theta_0}\right)^{-\gamma_1/\gamma-1}\right\}< n^{C_{\pi,2}} ,
\end{align*}
for arbitrary $C_{\pi,2}>0$. Since $C_{\pi,1}$ and $C_{\pi,2}$ can be arbitrarily small, $C_{\pi,1}+C_{\pi,2}<1/2$ is satisfied.
\begin{align*}
\inf_{\alpha\in [\underline\alpha_n, \overline\alpha_n]} \log \pi(\theta_0|\alpha)
& \geq \inf_{\alpha\in [\underline\alpha_n, \overline\alpha_n]} \Bigg\{ -2\nu \log \alpha- \log \frac{\Gamma(\gamma_1+\gamma_2)}{b\gamma \Gamma(\gamma_1)\Gamma(\gamma_2)} + \left(\frac{\gamma_2}{\gamma}-1\right)\log \frac{\theta_0}{b\alpha^{2\nu}} \\
&\quad  - \left(\gamma_1+\gamma_2\right) \log \left[1+\left(\frac{\theta_0}{b\alpha^{2\nu}}\right)^{1/\gamma}\right] \Bigg\} \\
& \geq -2\nu \overkappa \log n- \log \frac{\Gamma(\gamma_1+\gamma_2)}{b\gamma \Gamma(\gamma_1)\Gamma(\gamma_2)} + \left(\frac{\gamma_2}{\gamma}-1\right)\log \frac{\theta_0}{b} \\
& \quad - \left|\frac{\gamma_2}{\gamma}-1\right|\cdot 2\nu (\overkappa  + \underkappa) \log n  - \left(\gamma_1+\gamma_2\right) \log \left[1+\left(\frac{\theta_0 n^{2\nu \underkappa}}{b}\right)^{1/\gamma} \right] \\
&\succeq -\log n \succ -n^{C_{\pi,3}},
\end{align*}
for arbitrarily small $C_{\pi,3}>0$.
\end{proof}

\vspace{8mm}

\begin{proof}[Proof of Proposition \ref{prop:prior3}]
We will verify only \eqref{ineq:g.right} with $0<\overline{c_{\pi}}<(\nu+d/2)\overkappa$ for each conditions in the list. The verification of \eqref{ineq:g.left} with $0<\underline{c_{\pi}}<(\nu+d/2)\underkappa$ is similar and omitted.

For $p(\alpha)$ that satisfies $(i)$, we use the change of variable $u=\alpha^{1/\delta_1}$ to obtain that
\begin{align}\label{ineq:g11}
& \int_{\overline\alpha_n}^{\infty} \alpha^{n(\nu+d/2)} p(\alpha) \ud \alpha
\leq \int_{\overline\alpha_n}^{\infty} \alpha^{n(\nu+d/2)} \exp\left(-\alpha^{\delta_1}\right) \ud \alpha \nonumber \\
& \leq \frac{1}{\delta_1} \int_{\overline\alpha_n^{\delta_1}}^{\infty} u^{\{n(\nu+d/2) + 1\}/\delta_1 -1 } \ee^{-u} \ud u
< \frac{1}{\delta_1} \int_0^{\infty} u^{\{n(\nu+d/2) + 1\}/\delta_1 -1 } \ee^{-u} \ud u \nonumber \\
& = \frac{1}{\delta_1} \Gamma\left(\delta_1^{-1}\{n(\nu+d/2) + 1\}\right),
\end{align}
where $\Gamma(x) = \int_0^{\infty} u^{x-1} \ee^{-u} \ud u$ is the gamma function. Using the Stirling's approximation for gamma functions ($\Gamma(x)< 2\sqrt{2\pi x}(x/\ee)^x$ for all large $x>0$), we have that for sufficiently large $n$,
\begin{align}\label{ineq:g12}
& \Gamma\left(\delta_1^{-1}\{n(\nu+d/2) + 1\}\right)  \nonumber \\
&< 2\sqrt{2\pi\delta_1^{-1}\{n(\nu+d/2) + 1\}}\left( \ee^{-1}\delta_1^{-1}\{n(\nu+d/2) + 1\}\right)^{\delta_1^{-1}\{n(\nu+d/2) + 1\}}.
\end{align}
From \eqref{ineq:g11} and \eqref{ineq:g12}, we can see that \eqref{ineq:g.right} will be satisfied if for all sufficiently large $n$,
\begin{align}
&2\delta_1^{-1}\sqrt{2\pi\delta_1^{-1}\{n(\nu+d/2) + 1\}}\left( \ee^{-1}\delta_1^{-1}\{n(\nu+d/2) + 1\}\right)^{\delta_1^{-1}\{n(\nu+d/2) + 1\}} \nonumber \\
&< \exp(\overline{c_{\pi}} n\log n). \nonumber
\end{align}
A comparison of the orders in $n$ on both sides immediately shows that this relation holds for all sufficiently large $n$, as long as $\delta_1^{-1}(\nu+d/2) < \overline{c_{\pi}}$. Since $\overline{c_{\pi}}$ can be chosen as any constant between 0 and $(\nu+d/2)\overkappa$, it suffices to have $\delta_1^{-1}(\nu+d/2) < (\nu+d/2)\overkappa$, or equivalently $\delta_1> 1/\overkappa$.

For $p(\alpha)$ that satisfies $(ii)$, we use the change of variable $u=n^{\delta_2}\alpha$ and the Stirling's approximation to obtain that
\begin{align}
& \int_{\overline\alpha_n}^{\infty} \alpha^{n(\nu+d/2)} p(\alpha) \ud \alpha
\leq \int_{\overline\alpha_n}^{\infty} \alpha^{n(\nu+d/2)} n^{\delta_3} \exp\left(-n^{\delta_2}\alpha\right) \ud \alpha \nonumber \\
& \leq n^{\delta_3-\delta_2\{n(\nu+d/2)+1\}} \int_{n^{\delta_2}\overline\alpha_n}^{\infty} u^{n(\nu+d/2)} \ee^{-u} \ud u
< n^{\delta_3-\delta_2\{n(\nu+d/2)+1\}} \int_0^{\infty} u^{n(\nu+d/2)} \ee^{-u} \ud u \nonumber \\
& = n^{\delta_3-\delta_2\{n(\nu+d/2)+1\}} \Gamma\left(n(\nu+d/2) + 1 \right) \nonumber \\
&\leq n^{\delta_3-\delta_2\{n(\nu+d/2)+1\}} \cdot 2\sqrt{2\pi\{n(\nu+d/2) + 1\}} \times \left( \ee^{-1}\{n(\nu+d/2) + 1\}\right)^{n(\nu+d/2) + 1}. \nonumber
\end{align}
From the last display, \eqref{ineq:g.right} will be satisfied if for all sufficiently large $n$,
\begin{align*}
& n^{\delta_3-\delta_2\{n(\nu+d/2)+1\}} \cdot 2\sqrt{2\pi\{n(\nu+d/2) + 1\}} \nonumber \\
&\quad \times \left( \ee^{-1}\{n(\nu+d/2) + 1\}\right)^{n(\nu+d/2) + 1} < \exp(\overline{c_{\pi}} n\log n).
\end{align*}
A comparison of the orders in $n$ on both sides immediately shows that this relation holds for all sufficiently large $n$, as long as $-\delta_2(\nu+d/2)+ (\nu+d/2) < \overline{c_{\pi}}$. Since $\overline{c_{\pi}}$ can be chosen as any constant between 0 and $(\nu+d/2)\overkappa$, it suffices to have $(1-\delta_2)(\nu+d/2) < (\nu+d/2)\overkappa$, or equivalently $\delta_2> 1-\overkappa$.
\end{proof}

\vspace{3mm}

\section{Proof of Theorem \ref{thm:OU1} and Corollary \ref{cor:OU2}} \label{supsec:OU1}
In this section, we provide the proof of Theorem \ref{thm:OU1} and Corollary \ref{cor:OU2} for the limiting distribution for 1-dimensional Ornstein-Uhlenbeck process. Before that, we first elaborate on the possible choices of prior $\pi(\alpha)$ and its hyperparameters that satisfy the relaxed Assumption \ref{prior.3OU} on the tails of $\pi(\alpha)$.

\begin{itemize}[leftmargin=5mm]
\item If we take $\pi(\alpha)$ to be the gamma density $\pi(\alpha)=\frac{b^a}{\Gamma(a)}\alpha^{a-1} \ee^{-b\alpha}$, then for all sufficiently large $n$,
\begin{align*}
\sqrt{n} \int_0^{\underline\alpha_n} \sqrt{\alpha} \pi(\alpha) \ud \alpha & = \sqrt{n} \int_0^{n^{-\underline\kappa}} \frac{b^a}{\Gamma(a)}\alpha^{a+1/2-1} \ee^{-b\alpha} \ud \alpha \\
&\leq \frac{\sqrt{n} b^a}{\Gamma(a)} \int_0^{n^{-\underline\kappa}} \alpha^{a+1/2-1} \ud \alpha = \frac{b^a}{(a+1/2)\Gamma(a)} n^{-\underline\kappa(a+1/2)+1/2}, \\
\text{and }\quad \sqrt{n} \int_{\overline\alpha_n}^{\infty} \sqrt{\alpha} \pi(\alpha) \ud \alpha &= \sqrt{n} \int_{n^{\overline\kappa}}^{\infty} \frac{b^a}{\Gamma(a)}\alpha^{a+1/2-1} \ee^{-b\alpha} \ud \alpha \\
&\leq \frac{\sqrt{n} b^a}{\Gamma(a)}  \int_{n^{\overline\kappa}}^{\infty} \ee^{-b\alpha/2} \ud \alpha = \frac{2b^{a-1}}{\Gamma(a)}\sqrt{n} \exp(-bn^{\overline\kappa}/2).
\end{align*}
To satisfy \eqref{A3.1.OU} in Assumption \ref{prior.3OU}, we need the condition $-\underline\kappa(a+1/2)+1/2<0$, or $a>(\underline\kappa^{-1}-1)/2$. Therefore, Assumption \ref{prior.3OU} holds for the gamma prior density $\pi(\alpha)$ with hyperparameters $a>(\underline\kappa^{-1}-1)/2$ and all $b>0$.
\item If we take $\pi(\alpha)$ to be the inverse gamma density $\pi(\alpha)=\frac{b^a}{\Gamma(a)}\alpha^{-(a+1)} \ee^{-b/\alpha}$, then similar to the derivation above, we obtain that Assumption \ref{prior.3OU} holds for the inverse gamma prior density $\pi(\alpha)$ with hyperparameters $a>(\overline\kappa^{-1}-1)/2$ and all $b>0$.
\item If we take $\pi(\alpha)$ to be the inverse Gaussian density $\pi(\alpha)= \sqrt{\frac{b}{2\pi\alpha^3}}\exp\left\{-\frac{b(\alpha-a)^2}{2a^2\alpha}\right\}$ for $a>0,b>0$, then
 for all sufficiently large $n$,
\begin{align*}
\sqrt{n} \int_0^{\underline\alpha_n} \sqrt{\alpha} \pi(\alpha) \ud \alpha & = \sqrt{n} \int_0^{n^{-\underline\kappa}} \sqrt{\frac{b}{2\pi\alpha^3}}\exp\left\{-\frac{b(\alpha-a)^2}{2a^2\alpha}\right\} \ud \alpha \\
&\leq \sqrt{n} \sqrt{\frac{b}{2\pi}} \exp(b/a) \int_{n^{\underline\kappa}}^{\infty} t^{-1/2} \exp\left\{-bt/(2a^2)\right\} \ud t    \\
&\leq \sqrt{n} \sqrt{\frac{b}{2\pi}} \exp(b/a) \int_{n^{\underline\kappa}}^{\infty} \exp\left\{-bt/(4a^2)\right\} \ud t    \\
&= \frac{4a^2}{b}\sqrt{\frac{b}{2\pi}} \exp(b/a)\sqrt{n} \exp\left\{ - bn^{\underline\kappa}/(4a^2) \right\} \to 0, \\
\text{and }\quad \sqrt{n} \int_{\overline\alpha_n}^{\infty} \sqrt{\alpha} \pi(\alpha) \ud \alpha &= \sqrt{n} \int_{n^{\overline\kappa}}^{\infty} \sqrt{\frac{b}{2\pi\alpha^3}}\exp\left\{-\frac{b(\alpha-a)^2}{2a^2\alpha}\right\} \ud \alpha \\
&\leq \sqrt{n} \sqrt{\frac{b}{2\pi}} \exp(b/a) \int_{n^{\overline\kappa}}^{\infty} \alpha^{-3/2} \exp\left\{-b\alpha/(2a^2)\right\} \ud \alpha    \\
&\leq \sqrt{n} \sqrt{\frac{b}{2\pi}} \exp(b/a) \int_{n^{\overline\kappa}}^{\infty} \exp\left\{-b\alpha/(2a^2)\right\} \ud \alpha    \\
&= \frac{2a^2}{b}\sqrt{\frac{b}{2\pi}} \exp(b/a)\sqrt{n}  \exp\left\{ - bn^{\overline\kappa}/(2a^2) \right\} \to 0 .
\end{align*}
Therefore, the inverse Gaussian density $\pi(\alpha)$ satisfies \eqref{A3.1.OU} in Assumption \ref{prior.3OU} for all hyperparameter values of $a>0$ and $b>0$.
\item If we take $\pi(\alpha)$ to be the generalized beta density of the second kind: $$\pi(\alpha)=\frac{\Gamma(\gamma_1+\gamma_2)}{\Gamma(\gamma_1)\Gamma(\gamma_2)}\frac{(\alpha/b)^{\gamma_2/\gamma-1}}{b\gamma [1+(\alpha/b)^{1/\gamma}]^{\gamma_1+\gamma_2}}$$
    with parameters $b>0,\gamma>0,\gamma_1>0,\gamma_2>0$, then for all sufficiently large $n$,
\begin{align*}
\sqrt{n} \int_0^{\underline\alpha_n} \sqrt{\alpha} \pi(\alpha) \ud \alpha & = \sqrt{n} \int_0^{n^{-\underline\kappa}} \frac{\Gamma(\gamma_1+\gamma_2)}{\Gamma(\gamma_1)\Gamma(\gamma_2)}\frac{(\alpha/b)^{\gamma_2/\gamma-1}}{b\gamma [1+(\alpha/b)^{1/\gamma}]^{\gamma_1+\gamma_2}} \ud \alpha \\
&\leq \frac{\Gamma(\gamma_1+\gamma_2)}{b^{\gamma_2/\gamma}\gamma\Gamma(\gamma_1)\Gamma(\gamma_2)} \sqrt{n} \int_0^{n^{-\underline\kappa}} \alpha^{\gamma_2/\gamma-1} \ud \alpha \\
&= \frac{\Gamma(\gamma_1+\gamma_2)}{b^{\gamma_2/\gamma}\gamma_2 \Gamma(\gamma_1)\Gamma(\gamma_2)} n^{-\underline\kappa \gamma_2/\gamma+1/2}, \\
\text{and } \quad \sqrt{n} \int_{\overline\alpha_n}^{\infty} \sqrt{\alpha} \pi(\alpha) \ud \alpha &= \sqrt{n} \int_{n^{\overline\kappa}}^{\infty} \frac{\Gamma(\gamma_1+\gamma_2)}{\Gamma(\gamma_1)\Gamma(\gamma_2)}\frac{(\alpha/b)^{\gamma_2/\gamma-1}}{b\gamma [1+(\alpha/b)^{1/\gamma}]^{\gamma_1+\gamma_2}}  \ud \alpha \\
&\leq \frac{\Gamma(\gamma_1+\gamma_2)}{b\gamma \Gamma(\gamma_1)\Gamma(\gamma_2)} \sqrt{n}  \int_{n^{\overline\kappa}}^{\infty}  (\alpha/b)^{-(\gamma_1/\gamma + 1)} \ud \alpha \\
&= \frac{\Gamma(\gamma_1+\gamma_2)}{b^{\gamma_1/\gamma}\gamma_1 \Gamma(\gamma_1)\Gamma(\gamma_2)} n^{-\overline\kappa \gamma_1/\gamma + 1/2}.
\end{align*}
To satisfy \eqref{A3.1.OU} in Assumption \ref{prior.3OU}, we need the conditions $-\underline\kappa \gamma_2/\gamma+1/2<0$ and $-\overline\kappa \gamma_1/\gamma + 1/2<0$, or equivalently, $\gamma_2/\gamma > 1/(2\underline\kappa)$ and $\gamma_1/\gamma> 1/(2\overline\kappa)$. Therefore, if $\pi(\alpha)$ is the generalized beta density of the second kind, then it satisfies Assumption \ref{prior.3OU} if its hyperparameters $(b,\gamma,\gamma_1,\gamma_2)$ satisfy $\gamma_1/\gamma> 1/(2\overline\kappa)$ and $\gamma_2/\gamma > 1/(2\underline\kappa)$.
\end{itemize}

\subsection{Proof of Theorem \ref{thm:OU1}} \label{supsubsec:OU1}

Recall that for Case (i) in Section \ref{subsec:OU1} of the main text, we observe the 1-dimensional Ornstein-Uhlenbeck process without regression term $Y(\cdot)=X(\cdot)\sim \gp(0,\sigma_0^2 K_{\alpha_0,\nu})$ on the grid $s_i=i/n$, for $i=1,\ldots,n$. Since $Y(s_i)=X(s_i)$ for all $i=1,\ldots,n$ in this case, we have
$$A_1=\sum_{i=2}^{n-1} X(s_i)^2, \quad A_2=\sum_{i=1}^{n-1} X(s_i)X(s_{i+1}), \quad A_3=\sum_{i=1}^n X(s_i)^2.$$
In the following, for any random variable $Z_n$, we write $Z_n\asymp 1$ to denote that $Z_n$ is lower bounded away from zero and upper bounded from infinity as $n\to\infty$ in $P_{(\sigma_0^2,\alpha_0)}$-probability. The $O_p(\cdot)$ notation refers to the true probability measure $P_{(\sigma_0^2,\alpha_0)}$.

We introduce two technical Lemmas \ref{lem:OU.orders} and \ref{lem:2limdiff}.
\begin{lemma} \label{lem:OU.orders}
Under the model setup of Theorem \ref{thm:OU1}, we have the following results:
\begin{itemize}[leftmargin=5mm]
\item[(i)] $A_1+A_3-2A_2>0$ a.s. $P_{(\sigma_0^2,\alpha_0)}$;
\item[(ii)] $A_1+A_3-2A_2\asymp 1$ as $n\to\infty$ a.s. $P_{(\sigma_0^2,\alpha_0)}$;
\item[(iii)] $|A_1-A_2|=O_p(1)$ as $n\to\infty$ in $P_{(\sigma_0^2,\alpha_0)}$-probability, and $|A_1-A_2|\preceq \log^2 n$ as $n\to\infty$ a.s. $P_{(\sigma_0^2,\alpha_0)}$;
\item[(iv)] $A_1/n\asymp 1$ and $A_3/n\asymp 1$ as $n\to\infty$ in $P_{(\sigma_0^2,\alpha_0)}$-probability;
\item[(v)] $|u_*|=n|A_1-A_2|/A_1=O_p(1)$ as $n\to\infty$ in $P_{(\sigma_0^2,\alpha_0)}$-probability;
\item[(vi)] $v_*=n(A_1-2A_2+A_3)/A_1\asymp 1$ as $n\to\infty$ in $P_{(\sigma_0^2,\alpha_0)}$-probability;
\item[(vii)] $\frac{A_1A_3-A_2^2}{A_1(A_1-2A_2+A_3)}  = 1 + O_p(n^{-1})$ as $n\to\infty$ in $P_{(\sigma_0^2,\alpha_0)}$-probability;
\item[(viii)] Uniformly over all $\alpha\in [0,n^{1/6}]$,
\begin{align*}
& \frac{\left| \left(A_1 \ee^{-2\alpha/n}-2A_2 \ee^{-\alpha/n} +A_3\right) - \left[ A_1\left(\frac{\alpha}{n}-\frac{A_1-A_2}{A_1}\right)^2 + \frac{A_1A_3-A_2^2}{A_1}\right] \right|}{A_1\left(\frac{\alpha}{n}-\frac{A_1-A_2}{A_1}\right)^2 + \frac{A_1A_3-A_2^2}{A_1}} = O_p\left(n^{-3/2}\right),
\end{align*}
as $n\to\infty$ in $P_{(\sigma_0^2,\alpha_0)}$-probability;;
\item[(ix)] Uniformly over all $\alpha\in [0,n^{1/6}]$,
\begin{align*}
& \sqrt{1- \ee^{-2\alpha/n}} = \sqrt{\frac{2\alpha}{n}}\left[1+O(n^{-5/12})\right],
\end{align*}
as $n\to\infty$.
\end{itemize}
\end{lemma}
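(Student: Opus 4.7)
The whole lemma hinges on the AR(1) representation of the sampled Ornstein--Uhlenbeck process on the equispaced grid. Setting $\rho_n=e^{-\alpha_0/n}$ and $\sigma_n^2=\sigma_0^2(1-\rho_n^2)=2\sigma_0^2\alpha_0/n+O(n^{-2})$, the innovations
\[
\varepsilon_i := X(s_{i+1})-\rho_n X(s_i),\qquad i=1,\ldots,n-1,
\]
are i.i.d.\ $\mathcal{N}(0,\sigma_n^2)$, with $\varepsilon_i$ independent of $\{X(s_j):j\leq i\}$ by the Markov property. Let $M_k=\sum_{i=1}^k\varepsilon_i X(s_i)$ and $B_k=\sum_{i=1}^k X(s_i)^2$; then $M_k$ is a martingale with predictable quadratic variation $\sigma_n^2 B_k$. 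This single decomposition converts every random sum in the lemma into an i.i.d.\ chi-square part, a martingale part, and deterministic $O(n^{-1})$ corrections.

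I would dispatch (i)--(vi) as a single block. The algebraic identity $A_1+A_3-2A_2=\sum_{i=1}^{n-1}(X(s_{i+1})-X(s_i))^2$ gives (i) immediately (strict positivity holds a.s.\ since the OU process is a.s.\ non-constant on any grid). Substituting $X(s_{i+1})-X(s_i)=\varepsilon_i+(\rho_n-1)X(s_i)$ and expanding,
\[
A_1+A_3-2A_2 = \sum_{i=1}^{n-1}\varepsilon_i^2 + 2(\rho_n-1) M_{n-1} + (\rho_n-1)^2 B_{n-1};
\]
the first term tends to $2\sigma_0^2\alpha_0$ a.s.\ by the strong law on the i.i.d.\ family $\{\varepsilon_i^2/\sigma_n^2\}$, the second is $O(n^{-1}\sqrt{\log\log n})$ by the martingale LIL (using $\sigma_n^2 B_{n-1}\asymp 1$), and the third is $O(n^{-1})$, which yields (ii). Part (iv), $A_1/n\to\sigma_0^2$ a.s., is Birkhoff's ergodic theorem applied to the stationary ergodic Gaussian sequence $\{X(i/n)\}$ (ergodicity follows from summable OU covariances). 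For (iii), use $A_1-A_2=(1-\rho_n)B_{n-1} - X(s_1)^2 - M_{n-1}$ and bound each piece: $(1-\rho_n)B_{n-1}\to\alpha_0\sigma_0^2$ a.s., $X(s_1)^2$ is bounded a.s.\ by path-continuity at $0$, and $|M_{n-1}|=O(\sqrt{\log\log n})$ a.s. Parts (v) and (vi) are immediate algebraic consequences of (iii) or (ii) divided by $A_1\asymp n$.

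Parts (vii) and (viii) are then uniform deterministic Taylor expansions, with $A_1,A_2,A_3$ frozen at their typical orders from the previous block. For (viii), $1-e^{-2\alpha/n}=(2\alpha/n)[1-\alpha/n+O((\alpha/n)^2)]$; taking square roots and using $\alpha/n\leq n^{-5/6}$ on $[0,n^{1/6}]$ gives a correction of order $O(n^{-5/6})$, well within the claimed $O(n^{-5/12})$. For (vii), expand $e^{-\alpha/n}$ and $e^{-2\alpha/n}$ to cubic order in $x=\alpha/n\in[0,n^{-5/6}]$ and compare coefficient-by-coefficient with the target quadratic $A_1(x-(A_1-A_2)/A_1)^2+(A_1+A_3-2A_2)$: the linear coefficients cancel identically, the $x^2$ correction is $(A_1-A_2)x^2=O(\log n\cdot n^{-5/3})$, the cubic remainder is $O(\max(A_1,A_2)\cdot x^3)=O(n\cdot n^{-5/2})=O(n^{-3/2})$, and higher orders are smaller. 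Since the denominator is bounded below by $A_1+A_3-2A_2\asymp 1$ by (ii), the ratio inherits this bound. The main obstacle will be handling the zeroth-order residual $-(A_1-A_2)^2/A_1$, which from (iii)--(iv) is only $O(\log^2 n/n)$ a.s.: matching the announced $O(n^{-3/2})$ rate uniformly will require the sharper LIL-based bound $|A_1-A_2|=O(\sqrt{\log\log n})$ (already latent in the proof of (iii)) together with a careful separation of the very-small-$\alpha$ regime, where this constant residual dominates, from the bulk of $[0,n^{1/6}]$, where the cubic term rules.
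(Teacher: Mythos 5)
Your treatment of (i)--(vi) and (viii) is sound and follows essentially the same route as the paper: the identity $A_1+A_3-2A_2=\sum_{i=1}^{n-1}[X(s_{i+1})-X(s_i)]^2$ for (i), the AR(1) innovation decomposition for (ii), and Riemann-sum/boundedness arguments for (iii)--(iv), with (v)--(vi) following algebraically. One simplification worth noting: since $A_3-A_1=X(s_1)^2+X(s_n)^2$, one has the exact identity $A_1-A_2=\tfrac12(A_1+A_3-2A_2)-\tfrac12\{X(s_1)^2+X(s_n)^2\}$, so (iii) follows from (ii) and $\sup_{s\in[0,1]}X(s)^2<\infty$ a.s.\ without any martingale or law-of-the-iterated-logarithm machinery; this is essentially what the paper does.

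The gap is in (vii), and you have correctly put your finger on where it lies but not on how to close it. Completing the square gives, exactly,
$A_1e^{-2\alpha/n}-2A_2e^{-\alpha/n}+A_3=A_1\bigl[(1-e^{-\alpha/n})-\tfrac{A_1-A_2}{A_1}\bigr]^2+(A_1+A_3-2A_2)-\tfrac{(A_1-A_2)^2}{A_1}$, so after replacing $1-e^{-\alpha/n}$ by $\alpha/n$ the difference from the displayed target still contains the $\alpha$-independent residual $-(A_1-A_2)^2/A_1$. Your proposed repair cannot work: by the identity above, $A_1-A_2$ converges a.s.\ to $\tfrac12\lim_n(A_1+A_3-2A_2)-\tfrac12\{X(0+)^2+X(1)^2\}$, a nondegenerate and a.s.\ nonzero random variable, so the residual is of exact order $n^{-1}$ a.s.; no sharper bound on $|A_1-A_2|$ is available, and since the residual does not depend on $\alpha$, no splitting of $[0,n^{1/6}]$ removes it. Already at $\alpha=0$ the ratio in (vii) is $\asymp n^{-1}$, so the announced $O(n^{-3/2})$ is not attainable by this (or any) route for the comparison function as written. (The paper's own proof states the completed square without the $-(A_1-A_2)^2/A_1$ term, i.e.\ relies on an identity that is off by exactly this amount, so your accounting is the more careful one.) The correct resolution is to take the comparison function to be $A_1(\alpha/n-\tfrac{A_1-A_2}{A_1})^2+A_3-A_2^2/A_1$, for which your coefficient-by-coefficient Taylor bookkeeping does yield $O(n^{-3/2})$ uniformly on $[0,n^{1/6}]$, and then to note that the discarded constant, once raised to the power $n/2$ in the profile likelihood, contributes only the $\alpha$-free factor $\exp\{u_*^2/(2v_*)\}$, which cancels in the normalization of $\pi_*(\alpha|X_n)$; the conclusion of Theorem \ref{thm:OU1} is therefore unaffected.
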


\begin{proof}[Proof of Lemma \ref{lem:OU.orders}]
\noindent (i) By definition, $A_1+A_3-2A_2=\sum_{i=1}^{n-1}\left[ X(s_{i+1}) - X(s_i) \right]^2 > 0$ almost surely $P_{(\sigma_0^2,\alpha_0)}$.
\vspace{4mm}

\noindent (ii) Let $W_{i,n}=[X(s_i)- \ee^{-\alpha_0/n}X(s_{i-1})]/\sqrt{\sigma_0^2(1- \ee^{-2\alpha_0/n})}$ for $i=2,\ldots,n$. Then by the Markov property of Ornstein-Uhlenbeck process, $W_{i,n}$'s are i.i.d. $\mathcal{N}(0,1)$ random variables, such that $W_{i,n}$ is independent of $X(s_{i-1})$, and $X(s_i)= \ee^{-\alpha_0/n}X(s_{i-1})+\sqrt{\sigma_0^2(1- \ee^{-2\alpha_0/n})}W_{i,n}$, for $i=2,\ldots,n$. We can derive that
\begin{align} \label{eq:3Aorder1}
A_1+A_3-2A_2 ={}& \sum_{i=1}^{n-1}\left[ X(s_{i+1}) - \ee^{-\alpha_0/n}X(s_i) - (1- \ee^{-\alpha_0/n})X(s_i) \right]^2 \nonumber \\
={}& \sum_{i=1}^{n-1}\left[ X(s_{i+1}) - \ee^{-\alpha_0/n}X(s_i)\right]^2 + \sum_{i=1}^{n-1}(1- \ee^{-\alpha_0/n})^2X(s_i)^2 \nonumber \\
&+2\sum_{i=1}^{n-1}(1- \ee^{-\alpha_0/n})X(s_i)\left[ X(s_{i+1}) - \ee^{-\alpha_0/n}X(s_i)\right].
\end{align}
The first term in \eqref{eq:3Aorder1} is
\begin{align*}
\sum_{i=1}^{n-1}\left[ X(s_{i+1}) - \ee^{-\alpha_0/n}X(s_i)\right]^2 & = \sum_{i=2}^{n}\sigma_0^2(1- \ee^{-2\alpha_0/n})W_{i,n}^2 = \frac{\sigma_0^2\alpha_0[1+o(1)]}{n}\sum_{i=2}^n W_{i,n}^2,
\end{align*}
using a Taylor expansion of $1- \ee^{-x}$ around $x=0$. Since $W_{i,n}$'s are i.i.d. $\mathcal{N}(0,1)$ random variables, we have that $n^{-1}\sum_{i=2}^n W_{i,n}^2\to 1$ as $n\to\infty$ almost surely $P_{(\sigma_0^2,\alpha_0)}$.

The second term in \eqref{eq:3Aorder1} is
\begin{align*}
\sum_{i=1}^{n-1}(1- \ee^{-\alpha_0/n})^2X(s_i)^2 & \leq \frac{\alpha_0^2}{n} \sup_{s\in [0,1]}X(s)^2.
\end{align*}
For the Ornstein-Uhlenbeck process, $\sup_{s\in [0,1]}X(s)^2<\infty$ almost surely $P_{(\sigma_0^2,\alpha_0)}$. Therefore, $\sum_{i=1}^{n-1}(1- \ee^{-\alpha_0/n})^2X(s_i)^2=O(1/n)$ almost surely $P_{(\sigma_0^2,\alpha_0)}$.

The third term in \eqref{eq:3Aorder1} can be upper bounded by
\begin{align}\label{eq:W.cross1}
& 2\sum_{i=1}^{n-1}(1- \ee^{-\alpha_0/n})X(s_i)\left[ X(s_{i+1}) - \ee^{-\alpha_0/n}X(s_i)\right] \nonumber \\
={}& 2\sum_{i=1}^{n-1} \sqrt{\sigma_0^2(1- \ee^{-2\alpha_0/n})} (1- \ee^{-\alpha_0/n}) X(s_i)W_{i+1,n}  \nonumber \\
={}& \frac{2\sqrt{2}\sigma_0\alpha_0^{3/2}[1+o(1)]}{n^{3/2}} \sum_{i=1}^{n-1}X(s_i)W_{i+1,n} \nonumber  \\
\leq{}& \frac{2\sqrt{2}\sigma_0\alpha_0^{3/2}[1+o(1)]}{n} \sqrt{\sum_{i=1}^{n-1}X(s_i)^2W_{i+1,n}^2}  \nonumber \\
\leq{}& \frac{2\sqrt{2}\sigma_0\alpha_0^{3/2}[1+o(1)]}{\sqrt{n}} \sqrt{\sup_{s\in[0,1]}X(s)^2 \frac{1}{n}\sum_{i=2}^{n}W_{i,n}^2},
\end{align}
which shows that the third term is $O(n^{-1/2})$ almost surely $P_{(\sigma_0^2,\alpha_0)}$.

In combination with \eqref{eq:3Aorder1}, we have shown that $A_1+A_3-A_2 \to \sigma_0^2\alpha_0 = \theta_0>0$ as $n\to\infty$ almost surely $P_{(\sigma_0^2,\alpha_0)}$, which means that $A_1+A_3-A_2\asymp 1$.
\vspace{4mm}

\noindent (iii)
\begin{align}
|A_1-A_2| &\leq \frac{1}{2}\left|A_1+A_3-2A_2\right| + \frac{1}{2}[X(s_1)^2+X(s_n)^2].\nonumber
\end{align}
Since $X(s_1)\sim \mathcal{N}(0,\sigma_0^2)$, $X(s_n)\sim \mathcal{N}(0,\sigma_0^2)$, we have $X(s_1)=O_p(1)$ and $X(s_n)=O_p(1)$. Furthermore, by the Borel-Cantelli lemma, $X(s_1)\preceq \log n$ and $X(s_n)\preceq \log n$ as $n\to\infty$ almost surely $P_{(\sigma_0^2,\alpha_0)}$. Then the conclusion follows by combining these relations with Part (ii).
\vspace{4mm}

\noindent (iv) First $A_3/n \leq \sup_{s\in [0,1]}X(s)^2 <\infty$ almost surely $P_{(\sigma_0^2,\alpha_0)}$. The expectation of $A_3/n$ is ${\EE}_{(\sigma_0^2,\alpha_0)}(A_3/n)=\sum_{i=1}^n {\EE}_{(\sigma_0^2,\alpha_0)}(X(s_i)^2)/n = \sigma_0^2$. To calculate the variance of $A_3$, we let $T_{ij}=\left[X(s_i)-\ee^{-\alpha_0|i-j|/n}X(s_j)\right]/\sqrt{\sigma_0^2\left(1-\ee^{-2\alpha_0|i-j|/n}\right)}$ for any $i\neq j$ and $i,j=1,\ldots,n$. By the Markov property of the OU process, $T_{ij}\sim \Ncal(0,1)$. Therefore, given that each $X(s_i)\sim \Ncal(0,\sigma_0^2)$, ${\EE}_{(\sigma_0^2,\alpha_0)}\left[X(s_i)^2\right]=\sigma_0^2$, ${\EE}_{(\sigma_0^2,\alpha_0)}\left[X(s_i)^3\right]=0$, ${\EE}_{(\sigma_0^2,\alpha_0)}\left[X(s_i)^4\right]=3\sigma_0^4$, we have that
\begin{align*}
&\quad~ {\Var}_{(\sigma_0^2,\alpha_0)}(A_3/n) \\
& = \frac{1}{n^2} \left\{ \sum_{i=1}^n {\Var}_{(\sigma_0^2,\alpha_0)}(X(s_i)^2) + \sum_{i\neq j} {\Cov}_{(\sigma_0^2,\alpha_0)}\left(X(s_i)^2, X(s_j)^2\right) \right\} \\
&= \frac{1}{n^2} \left\{2n\sigma_0^4 + \sum_{i\neq j} \left[{\EE}_{(\sigma_0^2,\alpha_0)}\left(X(s_i)^2X(s_j)^2\right)- {\EE}_{(\sigma_0^2,\alpha_0)}\left(X(s_i)^2\right){\EE}_{(\sigma_0^2,\alpha_0)}\left(X(s_j)^2\right) \right] \right\} \\
&= \frac{1}{n^2} \left\{2n\sigma_0^4 + \sum_{i\neq j} \left[{\EE}_{(\sigma_0^2,\alpha_0)}\left(\left[\ee^{-\alpha_0|i-j|/n}X(s_j) + \sqrt{\sigma_0^2\left(1-\ee^{-2\alpha_0|i-j|/n}\right)}T_{ij}\right]^2X(s_j)^2\right)- \sigma_0^4 \right] \right\} \\
&= \frac{1}{n^2} \Bigg\{2n\sigma_0^4 + \sum_{i\neq j} \Bigg[{\EE}_{(\sigma_0^2,\alpha_0)}\Bigg\{\ee^{-2\alpha_0|i-j|/n}X(s_j)^4 + \sigma_0^2\left(1-\ee^{-2\alpha_0|i-j|/n}\right)T_{ij}^2 X(s_j)^2 \\
&\quad + 2X(s_j)^3\cdot \sqrt{\sigma_0^2\left(1-2\ee^{-\alpha_0|i-j|/n}\right)}T_{ij} \Bigg\}- \sigma_0^4 \Bigg] \Bigg\} \\
&= \frac{1}{n^2} \Bigg\{2n\sigma_0^4 + \sum_{i\neq j} \Bigg[3\sigma_0^4\ee^{-2\alpha_0|i-j|/n} + \sigma_0^4\left(1-\ee^{-2\alpha_0|i-j|/n}\right) + 0 - \sigma_0^4 \Bigg] \Bigg\} \\
&= \frac{1}{n^2} \Big\{2n\sigma_0^4 + 2\sigma_0^4\sum_{i\neq j} \ee^{-2\alpha_0|i-j|/n} \Big\} \\
&= 2\sigma_0^4 \left\{\frac{1}{n} + \frac{2(n-1)\ee^{-2\alpha_0/n}-2n\ee^{-4\alpha_0/n}+2\ee^{-2\alpha_0(n+1)/n}}{n^2\left(1-\ee^{-2\alpha_0/n}\right)^2}\right\}.
\end{align*}
Therefore, as $n\to\infty$, we have ${\Var}_{(\sigma_0^2,\alpha_0)}(A_3/n) \to \frac{\sigma_0^4 \ee^{-2\alpha_0}}{\alpha_0^2} $.

For any small number $\epsilon \in (0,1)$, we can apply the one-sided Chebyshev's inequality (or Cantelli's inequality) to obtain that as $n\to\infty$,
\begin{align*}
& \pr \left(A_3/n \leq {\EE}_{(\sigma_0^2,\alpha_0)}(A_3/n)  (1- \epsilon) \right) \\
\leq{}&  \frac{{\Var}_{(\sigma_0^2,\alpha_0)}(A_3/n)}{{\Var}_{(\sigma_0^2,\alpha_0)}(A_3/n) + \epsilon^2 \left[{\EE}_{(\sigma_0^2,\alpha_0)}(A_3/n)\right]^2} \to \frac{\ee^{-2\alpha_0}/\alpha_0^2}{\ee^{-2\alpha_0}/\alpha_0^2 + \epsilon^2} < 1 .
\end{align*}
Therefore, for any $\epsilon \in (0,1)$, $\pr\left(A_3/n >(1- \epsilon)\sigma_0^2\right)>0$ for all sufficiently large $n$, which implies that $A_3/n$ is lower bounded as $n\to\infty$ in $P_{(\sigma_0^2,\alpha_0)}$-probability (or equivalently, $A_3/n$ does not converge to zero as $n\to\infty$ in $P_{(\sigma_0^2,\alpha_0)}$-probability).

Since $A_1\leq A_3$, $A_1/n$ is also upper bounded as $n\to\infty$ in $P_{(\sigma_0^2,\alpha_0)}$-probability. Since $A_1/n=A_3/n- \left[X(s_1)^2+X(s_n)^2\right]/n$ and $\left[X(s_1)^2+X(s_n)^2\right]/n\to 0$ as $n\to\infty$ in $P_{(\sigma_0^2,\alpha_0)}$-probability, we can see that $A_1/n$ is also lower bounded as $n\to\infty$ in $P_{(\sigma_0^2,\alpha_0)}$-probability.
\vspace{4mm}

\noindent (v) Since $u_*=(A_1-A_2)/(A_1/n)$, the conclusion follows from (iii) and (iv).

\vspace{4mm}

\noindent (vi) Since $v_*=(A_1-2A_2+A_3)/(A_1/n)$, the conclusion follows from (ii) and (iv).

\vspace{4mm}

\noindent (vii) Using the notation of $u_*$ and $v_*$ in Parts (v) and (vi), we have
\begin{align*}
& 1 - \frac{A_1A_3-A_2^2}{A_1(A_1-2A_2+A_3)} = \frac{(A_1-A_2)^2}{A_1(A_1-2A_2+A_3)} = \frac{u_*^2}{nv_*} .
\end{align*}
From Parts (v) and (vi), we have that $u_*^2/(nv_*)=O_p(n^{-1})$ as $n\to\infty$ in $P_{(\sigma_0^2,\alpha_0)}$-probability.

\vspace{4mm}

\noindent (viii) We have
\begin{align} \label{eq:3Asq.1}
& A_1 \ee^{-2\alpha/n}-2A_2 \ee^{-\alpha/n} +A_3 \nonumber \\
={}& A_1\left[(1- \ee^{-\alpha/n})-\frac{A_1-A_2}{A_1}\right]^2 + A_1+A_3-2A_2 - \frac{(A_1-A_2)^2}{A_1}
\nonumber \\
={}& A_1\left[(1- \ee^{-\alpha/n})-\frac{A_1-A_2}{A_1}\right]^2 + \frac{A_1A_3-A_2^2}{A_1} .
\end{align}
Now if we replace $1- \ee^{-\alpha/n}$ with $\alpha/n$ for all $\alpha\in [0, n^{1/6}]$, then the difference would be
\begin{align} \label{eq:3Asq.2}
& \left| \left(A_1 \ee^{-2\alpha/n}-2A_2 \ee^{-\alpha/n} +A_3\right) - \left[ A_1\left(\frac{\alpha}{n}-\frac{A_1-A_2}{A_1}\right)^2 + \frac{A_1A_3-A_2^2}{A_1}\right] \right|\nonumber \\
\leq{}& A_1\left|\left[(1- \ee^{-\alpha/n})-\frac{A_1-A_2}{A_1}\right]^2 - \left(\frac{\alpha}{n}-\frac{A_1-A_2}{A_1}\right)^2 \right|  \nonumber \\
={}& A_1 \left|1- \ee^{-\alpha/n} + \frac{\alpha}{n} + \frac{2(A_1-A_2)}{A_1}\right| \cdot \left|1- \ee^{-\alpha/n} - \frac{\alpha}{n} \right|  \nonumber \\
\stackrel{(i)}{\leq}{}& \left( A_1\frac{n^{1/6}}{n} + |A_1-A_2|\right) \frac{n^{1/3}}{n^2} ,
\end{align}
where (i) follows from the fact that $1- \ee^{-x}\leq x$ and $|x-(1- \ee^{-x})|\leq x^2/2$ for all $x>0$.
\eqref{eq:3Asq.2} implies that
\begin{align}
& \frac{\left| \left(A_1 \ee^{-2\alpha/n}-2A_2 \ee^{-\alpha/n} +A_3\right) - \left[ A_1\left(\frac{\alpha}{n}-\frac{A_1-A_2}{A_1}\right)^2 + \frac{A_1A_3-A_2^2}{A_1}\right] \right|}{A_1\left(\frac{\alpha}{n}-\frac{A_1-A_2}{A_1}\right)^2 + \frac{A_1A_3-A_2^2}{A_1}} \nonumber \\
\leq{}& \frac{n^{1/3}}{n^2} \cdot \frac{\left(\frac{A_1n^{1/6}}{n} + |A_1-A_2|\right)}{\frac{A_1A_3-A_2^2}{A_1}} . \nonumber
\end{align}
Using Parts (ii), (iii), (iv) and (vii) together with the definition of $\overline\alpha_n$, we observe that
\begin{align}
& \frac{n^{1/3}}{n^2} \cdot \frac{\left(\frac{A_1n^{1/6}}{n} + |A_1-A_2|\right)}{\frac{A_1A_3-A_2^2}{A_1}} \preceq \frac{n^{1/3}}{n^2} \cdot \frac{\left(n^{1/6} + \log^2 n\right)}{(A_1+A_3-2A_2)(1+O_p(n^{-1}))} \nonumber \\
&\preceq  \frac{n^{1/3}}{n^2} \cdot \frac{n^{1/6}}{1+O_p(n^{-1})} =  O_p\left(n^{-3/2}\right), \nonumber
\end{align}
as $n\to\infty$ in $P_{(\sigma_0^2,\alpha_0)}$-probability. Hence the conclusion follows.
\vspace{4mm}

\noindent (ix) For $\alpha\in [0,n^{1/6}]$, $\alpha/n\leq n^{-5/6}\to 0$ as $n\to\infty$. With the Taylor expansion of $1- \ee^{-x}$ around $x=0$, as $n\to\infty$ almost surely $P_{(\sigma_0^2,\alpha_0)}$,
\begin{align*}
& \sqrt{1- \ee^{-2\alpha/n}} = \sqrt{\frac{2\alpha}{n}\left[1+O(n^{-5/6})\right]} = \sqrt{\frac{2\alpha}{n}}\left[1+O(n^{-5/12})\right]
\end{align*}
and the $o(1)$ term is uniformly over all $\alpha\in [0,n^{1/6}]$.
\end{proof}

\vspace{8mm}

\begin{lemma} \label{lem:2limdiff}
Define a normalized log profile likelihood function
\begin{align} \label{eq:norm.logprofile}
\widetilde \Lcal_*(\alpha) &= \Lcal_n(\alpha^{-2\nu}\widetilde\theta_{\alpha}, \alpha) + \frac{n}{2}\log\left(\frac{A_1A_3-A_2^2}{A_1}\right) +\frac{1}{2}\log \frac{n}{2} \nonumber \\
&= -\frac{n}{2}\log \left(A_1 \ee^{-2\alpha/n}-2A_2 \ee^{-\alpha/n} +A_3\right) + \frac{1}{2}\log \left(1- \ee^{-2\alpha/n}\right) \nonumber \\
&\quad + \frac{n}{2}\log\left(\frac{A_1A_3-A_2^2}{A_1}\right) +\frac{1}{2}\log \frac{n}{2}.
\end{align}
$\widetilde \Lcal_*(\alpha)$ in \eqref{eq:norm.logprofile} is well defined for all sufficiently large $n$ in $P_{(\sigma_0^2,\alpha_0)}$-probability. Then, under the model setup of Theorem \ref{thm:OU1} and Assumptions \ref{prior.1}, \ref{prior.2}, and \ref{prior.3OU}, the integrals
\begin{align*}
\int_0^{\infty} \exp\left\{\widetilde \Lcal_*(\alpha)\right\} \pi(\theta_0|\alpha)\pi(\alpha) \ud \alpha, \quad \text{and } \int_0^{\infty} \exp\left\{\widetilde \Lcal_*(\alpha)\right\} \pi(\alpha) \ud \alpha
\end{align*}
are lower bounded by positive constants in $P_{(\sigma_0^2,\alpha_0)}$-probability. Furthermore, the following convergence relations hold
\begin{align}
&\int_0^{\infty} \left| \exp\left\{\widetilde \Lcal_*(\alpha)\right\} - \sqrt{\alpha} \exp\left\{-\frac{(\alpha-u_*)^2}{2v_*}\right\} \right|\pi(\theta_0|\alpha)\pi(\alpha) \ud \alpha \to 0, \label{eq:diff.2pi.OU} \\
&\int_0^{\infty} \left| \exp\left\{\widetilde \Lcal_*(\alpha)\right\} - \sqrt{\alpha} \exp\left\{-\frac{(\alpha-u_*)^2}{2v_*}\right\} \right| \pi(\alpha) \ud \alpha \to 0, \label{eq:diff.1pi.OU} \\
&\int_0^{\infty} \left|\widetilde \pi(\alpha|Y_n) - \pi_*(\alpha|Y_n) \right| \ud \alpha \to 0, \label{eq:2prolik.OU}
\end{align}
as $n\to\infty$ in $P_{(\sigma_0^2,\alpha_0)}$-probability, for $\widetilde \pi(\alpha|Y_n)$ given in Theorem \ref{thm:bvm2:joint} and $\pi_*(\alpha|Y_n)$ given in Theorem \ref{thm:OU1}.
\end{lemma}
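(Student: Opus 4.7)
The core ingredient of my plan is a uniform asymptotic expansion of $\widetilde\Lcal_*(\alpha)$ on $[0,\overline\alpha_n]$, derived directly from Lemma~\ref{lem:OU.orders}. Writing $G(\alpha):=A_1 e^{-2\alpha/n}-2A_2 e^{-\alpha/n}+A_3$ and using the identity $A_1/(A_1+A_3-2A_2)=n/v_*$, parts (vii)--(viii) of Lemma~\ref{lem:OU.orders} give, uniformly in $\alpha\in[0,\overline\alpha_n]\subseteq[0,n^{1/6}]$ (since $\overline\kappa<1/6$) and in $P_{(\sigma_0^2,\alpha_0)}$-probability,
\[
\frac{G(\alpha)}{A_1+A_3-2A_2}=1+\frac{(\alpha-u_*)^2}{nv_*}+O(n^{-3/2}), \qquad \frac{1}{2}\log\frac{n(1-e^{-2\alpha/n})}{2}=\frac{1}{2}\log\alpha+O(n^{\overline\kappa-1}).
\]
Since $(\alpha-u_*)^2/(nv_*)=O_p(n^{2\overline\kappa-1})\to 0$ on the interval and $\overline\kappa<1/4$, the first-order Taylor expansion of $-\frac{n}{2}\log(1+\cdot)$ collapses to
\[
\widetilde\Lcal_*(\alpha)=\frac{1}{2}\log\alpha-\frac{(\alpha-u_*)^2}{2v_*}+o_p(1),
\]
uniformly, where the $o_p(1)$ is free of $\alpha$. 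Exponentiating yields $\exp\{\widetilde\Lcal_*(\alpha)\}=\sqrt{\alpha}\exp\{-(\alpha-u_*)^2/(2v_*)\}\cdot(1+o_p(1))$ uniformly on $[0,\overline\alpha_n]$, and integrating the difference against $\pi(\alpha)$ or $\pi(\theta_0|\alpha)\pi(\alpha)$ contributes $o_p(1)$ to \eqref{eq:diff.1pi.OU} and \eqref{eq:diff.2pi.OU}, because $\int_0^\infty\sqrt{\alpha}\,\pi(\alpha)\,d\alpha<\infty$ and $\int_0^\infty\sqrt{\alpha}\,\pi(\theta_0|\alpha)\pi(\alpha)\,d\alpha<\infty$ by Assumption~\ref{prior.3OU}.

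The second step is to bound the tails $[\overline\alpha_n,\infty)$ in both integrands. For the target density, since $u_*=O_p(1)$ and $v_*\asymp 1$ by Lemma~\ref{lem:OU.orders}(v)--(vi) while $\overline\alpha_n\to\infty$, the Gaussian factor $\exp\{-(\alpha-u_*)^2/(2v_*)\}$ decays super-polynomially on the tail, so the tail integral is $o_p(1)$. For $\exp\{\widetilde\Lcal_*(\alpha)\}$ I would use the exact factored form $G(\alpha)=A_1(e^{-\alpha/n}-A_2/A_1)^2+(A_1A_3-A_2^2)/A_1$ together with the near-identity $(A_1A_3-A_2^2)/A_1=(A_1+A_3-2A_2)+O_p(n^{-1})$, which follows from the parameterization $A_2=A_1(1-u_*/n)$. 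Since $A_2/A_1-e^{-\alpha/n}\geq(1-e^{-\alpha/n})/2$ for $\alpha\geq\overline\alpha_n$ (as $\overline\alpha_n\gg u_*$) and $A_1/(A_1+A_3-2A_2)\asymp n$, one obtains $G(\alpha)/(A_1+A_3-2A_2)\geq 1+c_3\,n(1-e^{-\alpha/n})^2$ for $\alpha\geq\overline\alpha_n$ with probability tending to $1$. Applying $(1+x)^{-n/2}\leq e^{-nx/4}$ for $x\in(0,1]$ and $(1+x)^{-n/2}\leq 2^{-n/2}$ for $x>1$ then gives the uniform tail bound $\exp\{\widetilde\Lcal_*(\alpha)\}\leq\sqrt{n/2}\max\{e^{-c_4 n^{2\overline\kappa}/4},2^{-n/2}\}$ on $[\overline\alpha_n,\infty)$, whose integral against $\pi(\alpha)$ (or $\pi(\theta_0|\alpha)\pi(\alpha)$) is $o(1)$.

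The lower bound on the normalizing integrals $\int_0^\infty\exp\{\widetilde\Lcal_*\}\pi(\alpha)\,d\alpha$ and $\int_0^\infty\exp\{\widetilde\Lcal_*\}\pi(\theta_0|\alpha)\pi(\alpha)\,d\alpha$ follows by restricting the uniform expansion to a fixed-length neighborhood of $u_*$, which lies inside $[0,\overline\alpha_n]$ with probability tending to $1$ (by $u_*=O_p(1)$ and $\overline\alpha_n\to\infty$) and carries positive mass under $\pi(\alpha)$ and $\pi(\theta_0|\alpha)\pi(\alpha)$ by continuity and positivity of these densities on $\RR_+$ (Assumptions~\ref{prior.1} and \ref{prior.3OU}). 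The convergence \eqref{eq:2prolik.OU} then drops out of Lemma~\ref{lemma:intdiff} applied with $f=\exp\{\widetilde\Lcal_*(\alpha)\}\pi(\theta_0|\alpha)\pi(\alpha)$ and $g=\sqrt{\alpha}\exp\{-(\alpha-u_*)^2/(2v_*)\}\pi(\theta_0|\alpha)\pi(\alpha)$: the numerator $\int|f-g|=o_p(1)$ is \eqref{eq:diff.2pi.OU} and the denominator $\int g$ is bounded below by a positive constant in probability. The main technical hurdle I anticipate is establishing the tail lower bound $G(\alpha)/(A_1+A_3-2A_2)\geq 1+c_3\,n(1-e^{-\alpha/n})^2$ uniformly over the whole tail $[\overline\alpha_n,\infty)$, since the quadratic Taylor approximation of Lemma~\ref{lem:OU.orders}(vii) is only justified on $[0,n^{1/6}]$; the key observation is that the exact factored identity for $G$ combined with the parameterization $A_2=A_1(1-u_*/n)$ sidesteps any need for Taylor expansion of $e^{-\alpha/n}$ in the tail.
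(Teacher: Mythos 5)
Your plan follows the same overall architecture as the paper's proof: Taylor-expand $\widetilde\Lcal_*(\alpha)$ on a bounded region using parts (vii)--(viii) of Lemma~\ref{lem:OU.orders}, kill the two tails separately, lower-bound the normalizing integral on a fixed compact interval using $u_*=O_p(1)$, $v_*\asymp 1$ and positivity of the prior, and finish with Lemma~\ref{lemma:intdiff}. The one place where you genuinely diverge is the tail of $\exp\{\widetilde\Lcal_*(\alpha)\}$, and there your route is correct but far more laborious than necessary. The "main technical hurdle" you flag --- establishing $G(\alpha)/(A_1+A_3-2A_2)\ge 1+c_3\,n(1-e^{-\alpha/n})^2$ uniformly on the tail via the factored identity and the parameterization $A_2=A_1(1-u_*/n)$ --- is a non-issue in the paper: since $G(\alpha)=A_1\bigl[(1-e^{-\alpha/n})-\tfrac{A_1-A_2}{A_1}\bigr]^2+(A_1+A_3-2A_2)\ge A_1+A_3-2A_2$ holds deterministically for every $\alpha$, one has the pointwise bound $\exp\{\widetilde\Lcal_*(\alpha)\}\le\sqrt{\tfrac{n}{2}(1-e^{-2\alpha/n})}\le\sqrt{\alpha}$ on all of $\RR_+$, and the tail integral then vanishes directly from $\int_0^\infty\sqrt{\alpha}\,\pi(\theta_0|\alpha)\pi(\alpha)\,\ud\alpha<\infty$ and $\int_0^\infty\sqrt{\alpha}\,\pi(\alpha)\,\ud\alpha<\infty$ in Assumption~\ref{prior.3OU} (which is precisely why those moment conditions appear there). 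Your exponential tail bound buys nothing here, and it forces you to verify that $(A_1-A_2)^2/A_1\preceq\log^2 n/n$ is dominated by $A_1(1-e^{-\alpha/n})^2/4\succeq n^{2\overline\kappa-1}$ on $[\overline\alpha_n,\infty)$ --- true, but an extra probabilistic event you must track. Two minor points: the paper splits at $n^{1/6}$ rather than $\overline\alpha_n$ (either works, since Lemma~\ref{lem:OU.orders}(vii)--(viii) are stated on $[0,n^{1/6}]$ and your interval sits inside it), and your near-identity $(A_1A_3-A_2^2)/A_1=(A_1+A_3-2A_2)-(A_1-A_2)^2/A_1$ carries an error of order $\log^2 n/n$ almost surely, not $n^{-1}$; this does not affect the argument.
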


\begin{proof}[Proof of Lemma \ref{lem:2limdiff}]
Based on Part (vii) of Lemma \ref{lem:OU.orders}, $(A_1A_3-A_2^2)/A_1>0$ as $n\to\infty$ in $P_{(\sigma_0^2,\alpha_0)}$-probability. Therefore, $\widetilde \Lcal_*(\alpha)$ in \eqref{eq:norm.logprofile} is well defined for all sufficiently large $n$ in $P_{(\sigma_0^2,\alpha_0)}$-probability.

We first prove the convergence in $P_{(\sigma_0^2,\alpha_0)}$-probability in \eqref{eq:diff.2pi.OU}, and that the integral $\int_0^{\infty} \exp\{\widetilde \Lcal_*(\alpha)\} \pi(\theta_0|\alpha)\pi(\alpha) \ud \alpha$ is lower bounded by positive constant in $P_{(\sigma_0^2,\alpha_0)}$-probability. Note that the only difference between \eqref{eq:diff.2pi.OU} and \eqref{eq:diff.1pi.OU} is that $\pi(\theta_0|\alpha)\pi(\alpha)$ is replaced by $\pi(\alpha)$. The integral condition \eqref{A3.1.OU} in Assumption \ref{prior.3OU} guarantees that in the following derivation, all $\pi(\theta_0|\alpha)\pi(\alpha)$ can be replaced by $\pi(\alpha)$. Therefore, in the derivation below, we will only prove for the integrals involving $\pi(\theta_0|\alpha)\pi(\alpha)$, and the proof of \eqref{eq:diff.1pi.OU} and lower boundedness of $\int_0^{\infty} \exp\{\widetilde \Lcal_*(\alpha)\} \pi(\alpha) \ud \alpha$ follow similarly.
\vspace{2mm}

\noindent \underline{Proof of \eqref{eq:diff.2pi.OU}:}
\vspace{2mm}

Define the following quantities
\begin{align*}
\widetilde {\numer}_1 & = \int_{0}^{n^{1/6}} \left| \exp\left\{\widetilde \Lcal_*(\alpha)\right\} - \sqrt{\alpha} \exp\left\{-\frac{(\alpha-u_*)^2}{2v_*}\right\} \right|\pi(\theta_0|\alpha)\pi(\alpha) \ud \alpha , \\
\widetilde {\numer}_2 & = \int_{n^{1/6}}^{\infty}\exp\left\{\widetilde \Lcal_*(\alpha)\right\} \pi(\theta_0|\alpha)\pi(\alpha) \ud \alpha , \\
\widetilde {\numer}_3 & = \int_{n^{1/6}}^{\infty} \sqrt{\alpha} \exp\left\{-\frac{(\alpha-u_*)^2}{2v_*}\right\} \pi(\theta_0|\alpha)\pi(\alpha) \ud \alpha , \\
\widetilde \denom & = \int_0^{\infty} \sqrt{\alpha} \exp\left\{-\frac{(\alpha-u_*)^2}{2v_*}\right\} \pi(\theta_0|\alpha)\pi(\alpha)\ud \alpha.
\end{align*}
We define an auxiliary ``variance" $\widetilde v_* = n(A_1A_3-A_2^2)/A_1^2$ which is positive as $n\to\infty$ in $P_{(\sigma_0^2,\alpha_0)}$-probability given Parts (i) and (vii) of Lemma \ref{lem:OU.orders}. Then, we have that uniformly for all $\alpha\in [0,n^{1/6}]$, as $n\to\infty$ in $P_{(\sigma_0^2,\alpha_0)}$-probability,
\begin{align} \label{eq:Lstar.diff1}
& \quad \left| \exp\left\{\widetilde \Lcal_*(\alpha)\right\} - \sqrt{\alpha} \exp\left\{-\frac{(\alpha-u_*)^2}{2\widetilde v_*}\right\} \right|  \nonumber \\
&= \left| \frac{[(A_1A_3-A_2^2)/A_1]^{n/2}\sqrt{\frac{n}{2}(1- \ee^{-2\alpha/n})}}{\left( A_1 \ee^{-2\alpha/n}-2A_2 \ee^{-\alpha/n} +A_3 \right)^{n/2}} - \sqrt{\alpha} \exp\left\{-\frac{(\alpha-u_*)^2}{2\widetilde v_*}\right\} \right| \nonumber \\
&\overset{(i)}{=}  \Bigg| \frac{[(A_1A_3-A_2^2)/A_1]^{n/2} \sqrt{\alpha}\left[1+O(n^{-5/12})\right]}{\left[ A_1\left(\frac{\alpha}{n}-\frac{A_1-A_2}{A_1}\right)^2 + \frac{A_1A_3-A_2^2}{A_1} \right]^{n/2}\left[1+O_p\left(n^{-3/2}\right)\right]^{n/2}} \nonumber\\
&\quad ~~ - \sqrt{\alpha} \exp\left\{-\frac{(\alpha-u_*)^2}{2\widetilde v_*}\right\} \Bigg|  \nonumber \\
&\overset{(ii)}{=} \Bigg| \sqrt{\alpha}\left[1+O_p(n^{-5/12})\right]\left[ 1+ \frac{1}{n} \frac{\left(\alpha-u_*\right)^2}{\widetilde v_*} \right]^{-n/2} - \sqrt{\alpha} \exp\left\{-\frac{(\alpha-u_*)^2}{2\widetilde v_*}\right\} \Bigg|  \nonumber\\
&\overset{(iii)}{\leq} O_p(n^{-5/12})\cdot \sqrt{\alpha}\left[ 1+ \frac{1}{n} \frac{\left(\alpha-u_*\right)^2}{\widetilde v_*} \right]^{-n/2} + \sqrt{\alpha}\exp\left\{-\frac{(\alpha-u_*)^2}{2\widetilde v_*}\right\} \nonumber\\
&\quad \times \left|\exp\left(\frac{n}{2}\left\{\frac{\left(\alpha-u_*\right)^2}{n\widetilde v_*} - \log \left[ 1+ \frac{\left(\alpha-u_*\right)^2}{n\widetilde v_*} \right]\right\}\right) - 1\right|\nonumber \\
&\overset{(iv)}{\leq} O_p(n^{-1/3}) + \sqrt{\alpha} \exp\left\{-\frac{(\alpha-u_*)^2}{2\widetilde v_*}\right\} \cdot \left[\exp\left(\frac{n}{2}\left\{\frac{\left(\alpha-u_*\right)^2}{n\widetilde v_*}\right\}^{11/6}\right) - 1\right] \nonumber \\
&\overset{(v)}{\leq} O_p(n^{-1/3}) + \sqrt{\alpha} \exp\left\{-\frac{(\alpha-u_*)^2}{2\widetilde v_*}\right\} \cdot \frac{n}{2} \left[\frac{\left(\alpha-u_*\right)^2}{n\widetilde v_*}\right]^{11/6} \nonumber \\
&\overset{(vi)}{\leq} O_p(n^{-1/3}) + O_p(n^{1/12} \cdot 1 \cdot n/2 \cdot n^{-11/9}) = O_p( n^{-5/36} )  .
\end{align}
In the derivations above, (i) follows from Lemma \ref{lem:OU.orders} (vii) and (viii); (ii) follows from the fact that $[1+O_p\left(n^{-3/2}\right)]^{-n/2}=1+O_p(n^{-1/2})$ and the definitions of $u_*$ and $\widetilde v_*$; (iii) follows from the triangle inequality; (iv) follows from Lemma \ref{lem:OU.orders} (v), (vi), and the fact that $\alpha\in [0,n^{1/6}]$, hence $(\alpha-u_*)^2/(2v_*)\preceq n^{1/3}$, and the inequality $0<x-\log(1+x)\leq x^{11/6}$ for all $x>0$; (v) follows from the inequality $\ee^x-1\leq 2x$ for $x\in (0,1)$ and for sufficiently large $n$; (vi) follows from a comparison of orders.

On the other hand, if we replace $\widetilde v_*$ with $v_*$, then Part (vii) of Lemma \ref{lem:OU.orders} implies that $\big(\widetilde v_*-v_*\big)/ v_* = O_p(n^{-1})$ as $n\to\infty$ in $P_{(\sigma_0^2,\alpha_0)}$-probability. Therefore, uniformly for all $\alpha\in [0,n^{1/6}]$, as $n\to\infty$ in $P_{(\sigma_0^2,\alpha_0)}$-probability,
\begin{align*}
& \left|\frac{(\alpha-u_*)^2(\widetilde v_*-v_*)}{ v_*\widetilde v_*}\right| = \left|\frac{(\alpha-u_*)^2 \cdot O_p(n^{-1})}{ v_*\left(1+O_p(n^{-1})\right)}\right| \leq O_p(n^{1/3} \cdot n^{-1}) = O_p(n^{-2/3}) ,
\end{align*}
and hence by $|\ee^x-1|\leq 2|x|$ for all $|x|\leq 1/2$,
\begin{align} \label{eq:Lstar.diff2}
& \quad \left| \sqrt{\alpha} \exp\left\{-\frac{(\alpha-u_*)^2}{2\widetilde v_*}\right\} - \sqrt{\alpha} \exp\left\{-\frac{(\alpha-u_*)^2}{2 v_*}\right\} \right|  \nonumber \\
&= \sqrt{\alpha} \exp\left\{-\frac{(\alpha-u_*)^2}{2 v_*}\right\} \left|\exp\left\{-\frac{(\alpha-u_*)^2(\widetilde v_*-v_*)}{2 v_*\widetilde v_*}\right\}  - 1 \right|  \nonumber \\
&\leq \sqrt{\alpha} \exp\left\{-\frac{(\alpha-u_*)^2}{2 v_*}\right\}  \cdot \left|\frac{(\alpha-u_*)^2(\widetilde v_*-v_*)}{ v_*\widetilde v_*}\right|  \nonumber \\
&\leq O_p\left( n^{1/12}\cdot 1 \cdot n^{-2/3} \right) = O_p(n^{-1/2}) .
\end{align}
We combine \eqref{eq:Lstar.diff1} and \eqref{eq:Lstar.diff2} with the triangle inequality to conclude that uniformly for all $\alpha\in [0,n^{1/6}]$, as $n\to\infty$ in $P_{(\sigma_0^2,\alpha_0)}$-probability,
\begin{align} \label{eq:Lstar.diff3}
& \quad \left| \exp\left\{\widetilde \Lcal_*(\alpha)\right\} - \sqrt{\alpha} \exp\left\{-\frac{(\alpha-u_*)^2}{2 v_*}\right\} \right| \leq O_p(n^{-5/36}) + O_p(n^{-1/2}) \leq O_p(n^{-5/36}).
\end{align}

As a result, we have that there exists a constant $C_1>0$ such that as $n\to\infty$ in $P_{(\sigma_0^2,\alpha_0)}$-probability,
\begin{align} \label{eq:2lim.numer1}
&\widetilde  {\numer}_1 \leq C_1 n^{-5/36} \int_0^{\infty} \pi(\theta_0|\alpha)\pi(\alpha) \ud \alpha \to 0 .
\end{align}
For $\widetilde {\numer}_2$, since $(A_1A_3-A_2^2)/A_1>0$ as $n\to\infty$ in $P_{(\sigma_0^2,\alpha_0)}$-probability, we have that
\begin{align} \label{eq:2lim.numer2}
\widetilde {\numer}_2 & = \int_{n^{1/6}}^{\infty}\exp\left\{\widetilde \Lcal_*(\alpha)\right\} \pi(\theta_0|\alpha)\pi(\alpha) \ud \alpha  \nonumber \\
&= \int_{n^{1/6}}^{\infty} \frac{[(A_1A_3-A_2^2)/A_1]^{n/2}\sqrt{\frac{n}{2}\left(1- \ee^{-2\alpha/n}\right)}}{\left( A_1 \ee^{-2\alpha/n}-2A_2 \ee^{-\alpha/n} +A_3 \right)^{n/2}} \pi(\theta_0|\alpha)\pi(\alpha) \ud \alpha \nonumber \\
&= \int_{n^{1/6}}^{\infty} \frac{[(A_1A_3-A_2^2)/A_1]^{n/2}\sqrt{\frac{n}{2}\left(1- \ee^{-2\alpha/n}\right)}}{\left\{ A_1\left[(1- \ee^{-\alpha/n})-\frac{A_1-A_2}{A_1}\right]^2 + (A_1A_3-A_2^2)/A_1 \right\}^{n/2}} \pi(\theta_0|\alpha)\pi(\alpha) \ud \alpha \nonumber \\
&\leq \int_{n^{1/6}}^{\infty} \sqrt{\frac{n}{2}\left(1- \ee^{-2\alpha/n}\right)} \pi(\theta_0|\alpha)\pi(\alpha) \ud \alpha \nonumber \\
&\leq \int_{n^{1/6}}^{\infty} \sqrt{\alpha} \pi(\theta_0|\alpha)\pi(\alpha) \ud \alpha \to 0,
\end{align}
as $n\to\infty$ in $P_{(\sigma_0^2,\alpha_0)}$-probability according to Assumption \ref{prior.3OU} since $\overline{\kappa}<1/6$.

For $\widetilde {\numer}_3$, similarly we have that as $n\to\infty$ in $P_{(\sigma_0^2,\alpha_0)}$-probability,
\begin{align} \label{eq:2lim.numer3}
\widetilde {\numer}_3 & = \int_{n^{1/6}}^{\infty} \sqrt{\alpha} \exp\left\{-\frac{(\alpha-u_*)^2}{2v_*}\right\} \pi(\theta_0|\alpha)\pi(\alpha) \ud \alpha \nonumber \\
&\leq \int_{n^{1/6}}^{\infty} \sqrt{\alpha} \pi(\theta_0|\alpha)\pi(\alpha) \ud \alpha \to 0.
\end{align}
Hence, \eqref{eq:diff.2pi.OU} follows by combining \eqref{eq:2lim.numer1}, \eqref{eq:2lim.numer2}, and \eqref{eq:2lim.numer3} using the triangle inequality.
\vspace{3mm}

\noindent \underline{Proof of the lower boundedness of $\int_0^{\infty} \exp\left\{\widetilde \Lcal_*(\alpha)\right\} \pi(\theta_0|\alpha)\pi(\alpha) \ud \alpha$:}
\vspace{2mm}

We first derive a lower bound for $\widetilde \denom$. By Lemma \ref{lem:OU.orders} (v), $|u_*|\leq C_5$ for some constant $C_5>0$ as $n\to\infty$ in $P_{(\sigma_0^2,\alpha_0)}$-probability. By Lemma \ref{lem:OU.orders} (vi), $v_*>C_6$ for some constant $C_6>0$ as $n\to\infty$ in $P_{(\sigma_0^2,\alpha_0)}$-probability. By Assumptions \ref{prior.1} and \ref{prior.3OU}, $\inf_{\alpha\in [1,2]}\pi(\theta_0|\alpha)\pi(\alpha)\geq C_7>0$ for some constant $C_7$. This implies that there exists a constant $C_8>0$, such that
\begin{align}\label{eq:2lim.denom}
\widetilde \denom & \geq \int_1^{2} \sqrt{\alpha} \exp\left\{-\frac{(\alpha-u_*)^2}{2v_*}\right\} \pi(\theta_0|\alpha)\pi(\alpha) \ud \alpha \geq C_7\int_1^{2} \exp\left\{-\frac{(\alpha-u_*)^2}{2v_*}\right\}\ud \alpha \nonumber \\
&\geq C_7\int_1^{2} \exp\left\{-\frac{(\alpha+C_5)^2}{2C_6}\right\} \ud \alpha \equiv C_8>0,
\end{align}
as $n\to\infty$ in $P_{(\sigma_0^2,\alpha_0)}$-probability.

Now given the convergence in \eqref{eq:diff.2pi.OU}, we have that as $n\to\infty$ in $P_{(\sigma_0^2,\alpha_0)}$-probability,
\begin{align*}
& \left| \int_0^{\infty} \exp\left\{\widetilde \Lcal_*(\alpha)\right\} \pi(\theta_0|\alpha)\pi(\alpha) \ud \alpha - \int_0^{\infty} \sqrt{\alpha} \exp\left\{-\frac{(\alpha-u_*)^2}{2v_*}\right\} \pi(\theta_0|\alpha)\pi(\alpha) \ud \alpha \right| \to 0.
\end{align*}
This and \eqref{eq:2lim.denom} together imply that
\begin{align} \label{eq:2lim.denom2}
& \quad \int_0^{\infty} \exp\left\{\widetilde \Lcal_*(\alpha)\right\} \pi(\theta_0|\alpha)\pi(\alpha) \ud \alpha \nonumber \\
&\geq \widetilde \denom - \left| \int_0^{\infty} \exp\left\{\widetilde \Lcal_*(\alpha)\right\} \pi(\theta_0|\alpha)\pi(\alpha) \ud \alpha - \int_0^{\infty} \sqrt{\alpha} \exp\left\{-\frac{(\alpha-u_*)^2}{2v_*}\right\} \pi(\theta_0|\alpha)\pi(\alpha) \ud \alpha \right| \nonumber \\
& > C_8/2,
\end{align}
as $n\to\infty$ in $P_{(\sigma_0^2,\alpha_0)}$-probability, which proves the lower boundedness.

We note that as stated at the beginning of this proof, proving the convergence in \eqref{eq:diff.1pi.OU} and the lower boundedness of $\int_0^{\infty} \exp\left\{\widetilde \Lcal_*(\alpha)\right\} \pi(\alpha) \ud \alpha$ follows exactly the same procedure as proving \eqref{eq:diff.2pi.OU} and the lower boundedness of $\int_0^{\infty} \exp\left\{\widetilde \Lcal_*(\alpha)\right\} \pi(\theta_0|\alpha)\pi(\alpha) \ud \alpha$ under Assumption \ref{prior.3OU}, and is therefore omitted.
\vspace{2mm}

\noindent \underline{Proof of \eqref{eq:2prolik.OU}:}
\vspace{2mm}

Based on the definitions of $\widetilde \pi(\alpha|Y_n)$ and $\pi_*(\alpha|Y_n)$, by Lemma \ref{lemma:intdiff}, the convergence in \eqref{eq:2prolik.OU} holds true if the following relation holds as $n\to\infty$, in $P_{(\sigma_0^2,\alpha_0)}$-probability,
\begin{align} \label{eq:OU.joint11}
\frac{\int_0^{\infty} \left| \exp\left\{\widetilde \Lcal_*(\alpha)\right\} - \sqrt{\alpha} \exp\left\{-\frac{(\alpha-u_*)^2}{2v_*}\right\} \right|\pi(\theta_0|\alpha)\pi(\alpha) \ud \alpha} {\int_0^{\infty} \sqrt{\alpha} \exp\left\{-\frac{(\alpha-u_*)^2}{2v_*}\right\} \ud \alpha} \rightarrow 0,
\end{align}
which follows from \eqref{eq:diff.2pi.OU} and \eqref{eq:2lim.denom}. Hence the proof for Lemma \ref{lemma:intdiff} is complete.
\end{proof}

\vspace{8mm}

\noindent \textbf{Proof of Theorem \ref{thm:OU1}}

\begin{proof}[Proof of Theorem \ref{thm:OU1}]

We first prove the convergence in \eqref{eq:OU.joint1}. The proof follows the same process in the proof of Theorem \ref{thm:bvm2:joint}, with some differences due to the new Assumption \ref{prior.3OU}. The conclusion of Theorem \ref{thm:bvm2:joint} is proved by showing \eqref{joint:theta2} and \eqref{joint:theta3}. We show them respectively under the new Assumption \ref{prior.3OU}. We notice that since $p=0$ in Theorem \ref{thm:OU1}, $n-p=n$ in \eqref{joint:theta2} and \eqref{joint:theta3}.
\vspace{3mm}

\noindent \underline{Proof of \eqref{joint:theta2}:}

Using the same notation as in the proof of Theorem \ref{thm:bvm2:joint}, we define ${\numer}_1$, ${\numer}_2$, ${\numer}_3$, and $\denom$ as in \eqref{numerator2} and \eqref{denominator1}. The first step of showing ${\numer}_1/\denom\to 0$ is exactly the same as in the proof of Theorem \ref{thm:bvm2:joint}, since this step only relies on Assumptions \ref{prior.1} and \ref{prior.2}, which are both assumed in Theorem \ref{thm:OU1} as well. The main differences lie in the next two steps of showing ${\numer}_2/\denom\to 0$ and ${\numer}_3/\denom\to 0$.

\vspace{3mm}

\noindent \underline{Proof of ${\numer}_2/\denom \to 0$:}

Using the upper bound of ${\numer}_2$ in \eqref{en:numer21}, together with the definition of $\denom$ in \eqref{denominator1}, we have that
\begin{align} \label{eq:numer2.OU}
&\frac{{\numer}_2}{\denom} \leq \frac{2\int_0^{\underline\alpha_n}  \ee^{\Lcal_n(\alpha^{-2\nu}\widetilde\theta_{\alpha},\alpha)} \pi(\alpha)  \ud \alpha + \frac{4\theta_0\sqrt{\pi}}{\sqrt{n}}\int_0^{\underline\alpha_n} \ee^{\Lcal_n(\alpha^{-2\nu}\widetilde\theta_{\alpha},\alpha)} \pi(\theta_0|\alpha)\pi(\alpha)  \ud \alpha} {\frac{2\theta_0\sqrt{\pi}}{\sqrt{n}}\int_0^{\infty} \ee^{\Lcal_n(\alpha^{-2\nu}\widetilde\theta_{\alpha},\alpha)} \pi(\theta_0|\alpha)\pi(\alpha) \ud \alpha} \nonumber \\
&= \frac{\sqrt{n} \int_0^{\underline\alpha_n}  \exp \big\{\widetilde \Lcal_*(\alpha) \big\} \pi(\alpha)  \ud \alpha}{\theta_0\sqrt{\pi} \int_0^{\infty} \exp \big\{\widetilde \Lcal_*(\alpha) \big\} \pi(\theta_0|\alpha)\pi(\alpha) \ud \alpha} +
\frac{2\int_0^{\underline\alpha_n} \exp \big\{\widetilde \Lcal_*(\alpha) \big\} \pi(\theta_0|\alpha)\pi(\alpha)  \ud \alpha} {\int_0^{\infty} \exp \big\{\widetilde \Lcal_*(\alpha) \big\} \pi(\theta_0|\alpha)\pi(\alpha) \ud \alpha},
\end{align}
where $\widetilde \Lcal_*(\alpha)$ is the normalized log profile likelihood defined in \eqref{eq:norm.logprofile}.

We now show the first term in \eqref{eq:numer2.OU} converges to zero in probability. For the numerator, by the definition of $\widetilde \Lcal_*(\alpha)$, since $(A_1A_3-A_2^2)/A_1>0$ as $n\to\infty$ in $P_{(\sigma_0^2,\alpha_0)}$-probability, we have that
\begin{align} \label{eq:numer2.11}
& \sqrt{n} \int_0^{\underline\alpha_n}  \exp \big\{\widetilde \Lcal_*(\alpha) \big\} \pi(\alpha)  \ud \alpha \nonumber \\
={}& \sqrt{n} \int_0^{\underline\alpha_n}  \frac{[(A_1A_3-A_2^2)/A_1]^{n/2}}{(A_1 \ee^{-2\alpha/n}-2A_2 \ee^{-\alpha/n}+A_3)^{n/2}} \sqrt{\frac{n}{2}(1- \ee^{-2\alpha/n})} \pi(\alpha)  \ud \alpha \nonumber \\
={}& \sqrt{n} \int_0^{\underline\alpha_n}  \frac{[(A_1A_3-A_2^2)/A_1]^{n/2}}{\left[A_1\left(1- \ee^{-\alpha/n}-\frac{A_1-A_2}{A_1}\right)^2+(A_1A_3-A_2^2)/A_1\right]^{n/2}} \sqrt{\frac{n}{2}(1- \ee^{-2\alpha/n})} \pi(\alpha)  \ud \alpha \nonumber \\
\leq{}& \sqrt{n} \int_0^{\underline\alpha_n}  1\cdot \sqrt{\frac{n}{2}(1- \ee^{-2\alpha/n})} \pi(\alpha)  \ud \alpha \nonumber \\
\leq{}& \sqrt{n} \int_0^{\underline\alpha_n} \sqrt{\alpha} \pi(\alpha)  \ud \alpha,
\end{align}
where in the last step, the first ratio in the integral is less than 1 and we have used $1- \ee^{-x}\leq x$ for all $x>0$. By \eqref{A3.1.OU} in Assumption \ref{prior.3OU}, we have that this upper bound goes to zero as $n\to \infty$. Therefore, $\sqrt{n} \int_0^{\underline\alpha_n}  \exp \big\{\widetilde \Lcal_*(\alpha) \big\} \pi(\alpha)  \ud \alpha \to 0$ as $n\to \infty$ in $P_{(\sigma_0^2,\alpha_0)}$-probability. Since the denominator $\theta_0\sqrt{\pi} \int_0^{\infty} \exp \big\{\widetilde \Lcal_*(\alpha) \big\} \pi(\theta_0|\alpha)\pi(\alpha) \ud \alpha$ is lower bounded by positive constant in $P_{(\sigma_0^2,\alpha_0)}$-probability according to Lemma \ref{lem:2limdiff} (in \eqref{eq:2lim.denom2}), we have that the first term in \eqref{eq:numer2.OU} converges to zero as $n\to\infty$ in $P_{(\sigma_0^2,\alpha_0)}$-probability.

We then show the second term in \eqref{eq:numer2.OU} converges to zero in probability. For the numerator, similar to \eqref{eq:numer2.11}, we have that
\begin{align}
& \int_0^{\underline\alpha_n}  \exp \big\{\widetilde \Lcal_*(\alpha) \big\} \pi(\theta_0|\alpha) \pi(\alpha)  \ud \alpha
\leq \int_0^{\underline\alpha_n} \sqrt{\alpha} \pi(\theta_0|\alpha) \pi(\alpha)  \ud \alpha, \nonumber
\end{align}
which converges to zero as $n\to\infty$ since $\underline\alpha_n\to 0$ as $n\to\infty$ and $\int_0^{\infty} \sqrt{\alpha} \pi(\theta_0|\alpha) \pi(\alpha)  \ud \alpha$ is finite according to Assumption \ref{prior.3OU}. Therefore, with the lower bounded denominator, the second term in \eqref{eq:numer2.OU} also converges to zero as $n\to\infty$ in $P_{(\sigma_0^2,\alpha_0)}$-probability. This together with \eqref{eq:numer2.OU} has shown that ${\numer}_2/\denom\to 0$ as $n\to\infty$ in $P_{(\sigma_0^2,\alpha_0)}$-probability.

\vspace{3mm}

\noindent \underline{Proof of ${\numer}_3/\denom \to 0$:}
Using the upper bound of ${\numer}_3$ in \eqref{en:numer31}, together with the definition of $\denom$ in \eqref{denominator1}, we have that
\begin{align} \label{eq:numer3.OU}
&\frac{{\numer}_3}{\denom} \leq \frac{2\int_{\overline\alpha_n}^{\infty}  \ee^{\Lcal_n(\alpha^{-2\nu}\widetilde\theta_{\alpha},\alpha)} \pi(\alpha)  \ud \alpha + \frac{4\theta_0\sqrt{\pi}}{\sqrt{n}}\int_{\overline\alpha_n}^{\infty} \ee^{\Lcal_n(\alpha^{-2\nu}\widetilde\theta_{\alpha},\alpha)} \pi(\theta_0|\alpha)\pi(\alpha)  \ud \alpha} {\frac{2\theta_0\sqrt{\pi}}{\sqrt{n}}\int_0^{\infty} \ee^{\Lcal_n(\alpha^{-2\nu}\widetilde\theta_{\alpha},\alpha)} \pi(\theta_0|\alpha)\pi(\alpha) \ud \alpha} \nonumber \\
&= \frac{\sqrt{n} \int_{\overline\alpha_n}^{\infty}  \exp \big\{\widetilde \Lcal_*(\alpha) \big\} \pi(\alpha)  \ud \alpha}{\theta_0\sqrt{\pi} \int_0^{\infty} \exp \big\{\widetilde \Lcal_*(\alpha) \big\} \pi(\theta_0|\alpha)\pi(\alpha) \ud \alpha} +
\frac{2\int_{\overline\alpha_n}^{\infty} \exp \big\{\widetilde \Lcal_*(\alpha) \big\} \pi(\theta_0|\alpha)\pi(\alpha)  \ud \alpha} {\int_0^{\infty} \exp \big\{\widetilde \Lcal_*(\alpha) \big\} \pi(\theta_0|\alpha)\pi(\alpha) \ud \alpha},
\end{align}
For both terms in \eqref{eq:numer3.OU}, the denominators are lower bounded by positive constants in $P_{(\sigma_0^2,\alpha_0)}$-probability by Lemma \ref{lem:2limdiff}. Using the same derivation as in \eqref{eq:numer2.11}, the numerator in the first term of \eqref{eq:numer3.OU} can be upper bounded by
\begin{align*}
& \sqrt{n} \int_{\overline\alpha_n}^{\infty}   \exp \big\{\widetilde \Lcal_*(\alpha) \big\} \pi(\alpha)  \ud \alpha
\leq \sqrt{n} \int_{\overline\alpha_n}^{\infty}  \sqrt{\alpha} \pi(\alpha)  \ud \alpha,
\end{align*}
which converges to zero as $n\to\infty$ by \eqref{A3.1.OU} in Assumption \ref{prior.3OU}. The numerator in the second term of \eqref{eq:numer3.OU} also converges to zero since $\overline\alpha_n\to\infty$ as $n\to\infty$ and $\int_0^{\infty} \sqrt{\alpha} \pi(\theta_0|\alpha) \pi(\alpha)  \ud \alpha$ is finite according to Assumption \ref{prior.3OU}. Therefore, it follows that ${\numer}_3/\denom\to 0$ as $n\to\infty$ in $P_{(\sigma_0^2,\alpha_0)}$-probability. Thus, the convergence in \eqref{joint:theta2} happens as $n\to\infty$ in $P_{(\sigma_0^2,\alpha_0)}$-probability.
\vspace{3mm}

\noindent \underline{Proof of \eqref{joint:theta3}:}

Compared to the proof of \eqref{joint:theta3} in the proof of Theorem \ref{thm:bvm2:joint}, the upper bounds in \eqref{joint:theta31} and \eqref{joint:theta32} still hold. We only need to show the convergence in \eqref{joint:theta33} and \eqref{joint:theta34} using the new Assumption \ref{prior.3OU}. In particular, using the definition of $\widetilde \Lcal_*(\alpha)$ in \eqref{eq:norm.logprofile}, we have
\begin{align*}
\int_0^{\underline\alpha_n} \widetilde \pi(\alpha|Y_n) \ud \alpha & = \frac{\int_0^{\underline\alpha_n} \exp \big\{\widetilde \Lcal_*(\alpha) \big\} \pi(\theta_0|\alpha)\pi(\alpha)  \ud \alpha} {\int_0^{\infty} \exp \big\{\widetilde \Lcal_*(\alpha) \big\} \pi(\theta_0|\alpha)\pi(\alpha) \ud \alpha}
\end{align*}
which converges to zero in $P_{(\sigma_0^2,\alpha_0)}$-probability as already shown above in the proof of ${\numer}_2/\denom \to 0$. Similarly, $\int_{\overline\alpha_n}^{\infty} \widetilde \pi(\alpha|Y_n)\ud \alpha \to 0$ in $P_{(\sigma_0^2,\alpha_0)}$-probability as shown in the proof of ${\numer}_3/\denom \to 0$. Therefore, the convergence in \eqref{joint:theta3} happens as $n\to\infty$ in $P_{(\sigma_0^2,\alpha_0)}$-probability. This completes the proof of the convergence in \eqref{eq:OU.joint1}.

For the proof of the convergence in \eqref{eq:OU.joint2}, we notice that
\begin{align*}
& \int_0^{\infty} \int_{\RR} \left|\frac{\sqrt{n}}{2\sqrt{\pi}\theta_0} \ee^{-\frac{n(\theta-\widetilde\theta_{\alpha_0})^2}{4\theta_0^2}} \cdot \widetilde \pi(\alpha|Y_n) - \frac{\sqrt{n}}{2\sqrt{\pi}\theta_0} \ee^{-\frac{n(\theta-\widetilde\theta_{\alpha_0})^2}{4\theta_0^2}} \cdot \pi_*(\alpha|Y_n)\right| \ud \theta \ud \alpha \\
={}& \int_0^{\infty} \left|\widetilde \pi(\alpha|Y_n) - \pi_*(\alpha|Y_n) \right| \ud \alpha \to 0,
\end{align*}
as $n\to\infty$ in $P_{(\sigma_0^2,\alpha_0)}$-probability, by \eqref{eq:2prolik.OU} of Lemma \ref{lem:2limdiff}. Then \eqref{eq:OU.joint2} follows from \eqref{eq:OU.joint1} and the triangle inequality.
\end{proof}

\vspace{5mm}

\subsection{Proof of Corollary \ref{cor:OU2}} \label{supsubsec:OU2}
\begin{proof}[Proof of Corollary \ref{cor:OU2}]
Recall that for Case (ii) in Section \ref{subsec:OU1} of the main text, we observe the 1-dimensional Ornstein-Uhlenbeck process with a constant regression term $\bbm_1(\cdot)\equiv 1$, so $Y(\cdot)=\beta_0 + X(\cdot)\sim \gp(0,\sigma_0^2 K_{\alpha_0,\nu})$ on the grid $s_i=i/n$, for $i=1,\ldots,n$, where $\beta_0$ denotes the true mean parameter. In Corollary \ref{cor:OU2}, we have defined $B_1 = \sum_{i=2}^{n-1} Y(s_i)$, $B_2  = \sum_{i=1}^{n} Y(s_i)$, and $A_1,A_2,A_3$ as in \eqref{eq:3A}.

We briefly explain the derivation of the expressions for $\widetilde\theta_{\alpha}$ and $\widetilde\Lcal_n(\alpha)$ in Corollary \ref{cor:OU2}. With $M_n=1_n$, using the expression of $R_{\alpha}$ in Section \ref{subsec:OU1}, it follows that
\begin{align*}
& M_n^\top R_{\alpha}^{-1} M_n = \frac{(n-2)\left(1-\ee^{-\alpha/n}\right)+2}{1+\ee^{-\alpha/n}}, \quad M_n^\top R_{\alpha}^{-1} Y_n = \frac{B_2 - B_1\ee^{-\alpha/n}}{1+\ee^{-\alpha/n}} .
\end{align*}
We then plug in these formulas to the expression of $\widetilde\theta_{\alpha}$ in \eqref{tildetheta1} and $\widetilde\Lcal_n(\alpha)$ in \eqref{def:prologlik} with $\Omega_{\beta}=0_{p\times p}$ to obtain the expressions for $\widetilde\theta_{\alpha}$ and $\widetilde\Lcal_n(\alpha)$ in Corollary \ref{cor:OU2}. We notice that the profile restricted log-likelihood $\widetilde\Lcal_n(\alpha)$ is defined up to an additive constant.

Similarly, we obtain the normal conditional posterior of $\beta$ in \eqref{eq:OU2.beta} of the main text, by plugging the formulas above to the conditional posterior of $\beta$ in \eqref{eq:beta.post} of the main text. The convergence in total variation norm of \eqref{eq:OU2.joint} follows directly from Theorem \ref{thm:bvm2:joint}, under Assumptions \ref{assump.m.func}, \ref{prior.1}, \ref{prior.2}, and \ref{prior.3}.

Next, we prove that the posterior of $\beta$ is inconsistent for $\beta_0$. We already know that the conditional posterior of $\beta$ is given by $\beta | Y_n,\theta,\alpha \sim \Ncal\left(\mu_n, v_n \right) $, where
\begin{align*}
& \mu_n=\frac{B_2-B_1\ee^{-\alpha/n}}{(n-2)(1-\ee^{-\alpha/n})+2}, \quad v_n=\frac{\theta\left(1+\ee^{-\alpha/n}\right)}{\left[(n-2)(1-\ee^{-\alpha/n})+2\right]\alpha} .
\end{align*}
Let $\Phi(x)=\int_{-\infty}^x \frac{1}{\sqrt{2\pi}} \ee^{-z^2/2}\ud z$ be the standard normal cumulative distribution function. For a given $\epsilon_0>0$ whose value will be chosen later, using the mean value theorem, we have that
\begin{align} \label{eq:Pi.beta.1}
&\quad ~ \Pi(|\beta-\beta_0| > \epsilon_0 | Y_n,\theta,\alpha)  =  1- \Pi(|\beta-\beta_0| \leq \epsilon_0 | Y_n,\theta,\alpha)  \nonumber \\
&= 1 - \left\{\Phi\left(\frac{\beta_0+\epsilon_0-\mu_n}{\sqrt{v_n}}\right) - \Phi\left(\frac{\beta_0-\epsilon_0-\mu_n}{\sqrt{v_n}}\right)\right\} \nonumber \\
&=  1 - \frac{2\epsilon_0}{\sqrt{2\pi v_n}} \exp\left(-\frac{x_1^2}{2v_n}\right) \nonumber \\
&\geq 1 -  \frac{2\epsilon_0}{\sqrt{2\pi v_n}},
\end{align}
for some value $x_1\in [\beta_0-\epsilon_0-\mu_n,\beta_0+\epsilon_0-\mu_n]$, where the inequality follows from the bound $\exp\left(-x_1^2/(2v_n)\right)\leq 1$.

Let $\Ecal_9=\{\alpha\in [\underline\alpha_n, \overline\alpha_n]\}$. Then under Assumptions \ref{assump.m.func}-\ref{prior.3}, Theorem \ref{thm:bvm2:joint} implies that $\left|\Pi(\Ecal_9^c|Y_n)-\widetilde\Pi(\Ecal_9^c|Y_n)\right|\to 0$ as $n\to\infty$ almost surely $P_{(\beta_0,\sigma_0^2,\alpha_0)}$. \eqref{joint:theta33} and \eqref{joint:theta34} in the proof of Theorem \ref{thm:bvm2:joint} imply that $\widetilde\Pi(\Ecal_9^c|Y_n)\to 0$ as $n\to\infty$ almost surely $P_{(\beta_0,\sigma_0^2,\alpha_0)}$. Therefore, $\Pi(\Ecal_9^c|Y_n)\to 0$ as $n\to\infty$ almost surely $P_{(\beta_0,\sigma_0^2,\alpha_0)}$. This implies that given any $\eta\in (0,1/4)$, any $\delta\in (0,1/4)$, there exist two numbers $0<\alpha_1<\alpha_2<\infty$ and a sufficiently large integer $N_{10}'>0$ ($\alpha_1,\alpha_2,N_{10}'$ are dependent on $\eta,\delta$), such that for all $n>N_{10}'$, $\Pr\left(\Pi(\Acal_{3n}^c~|~Y_n) \leq \delta/2 \right) >1- \eta/2$, where we let $\Acal_{3n}=\{\alpha\in [\alpha_1,\alpha_2]\}$.

We find the limit of $v_n$. For the $\alpha_1,\alpha_2$ above, it is clear that using Taylor series expansion for $\ee^{-x}$, we have that as $n\to\infty$,
\begin{align} \label{eq:sup.exp.alpha}
\sup_{\alpha\in [\alpha_1,\alpha_2]} n\big|1-\ee^{-\alpha/n}-\alpha/n\big| \to 0.
\end{align}

Therefore, for a given $\theta>0$, as $n\to\infty$,
\begin{align} \label{eq:vn.unif1}
\sup_{\alpha\in [\alpha_1,\alpha_2]} \left| v_n - \frac{2\theta}{\alpha(\alpha+2)} \right| \to 0.
\end{align}
\eqref{eq:Pi.beta.1} and \eqref{eq:vn.unif1} imply that by choosing $N_{10}'$ to be large, for all $n>N_{10}'$, on the event $\Acal_{3n}$, $v_n>\frac{\theta}{2\alpha(\alpha+2)}$, such that
\begin{align} \label{eq:Pi.beta.2}
&\quad ~ \Pi(|\beta-\beta_0| > \epsilon_0| Y_n,\theta,\alpha)  \nonumber \\
&\geq  1 - \frac{2\epsilon_0}{\sqrt{2\pi v_n}} > 1 - \frac{2\epsilon_0}{\sqrt{2\pi \cdot \frac{\theta}{2\alpha(\alpha+2)}}} = 1 - 2\epsilon_0\sqrt{\frac{\alpha_2(\alpha_2+2)}{\pi \theta}}.
\end{align}
Let $\Acal_{4n}= \big\{|\theta-\theta_0|\leq n^{-1/2}\log^2 n \big\}$. Under Assumptions \ref{assump.m.func}-\ref{prior.3}, Theorem \ref{thm:bvm2:joint} and Lemma \ref{lem:theta.alpha0} imply that $\Pi(\Acal_{4n}^c | Y_n)\to 0$ as $n\to\infty$ almost surely $P_{(\beta_0,\sigma_0^2,\alpha_0)}$. In other words, for any small $\eta \in (0,1/4)$, any small $\delta \in (0,1/4)$, there exists a large integer $N_{11}'$, such that for all $n>N_{11}'$, $\pr(\Pi(\Acal_{4n}^c| Y_n) \leq \delta/2) > 1-\eta/2$. Therefore, together with $\Pr\left(\Pi(\Acal_{3n}^c~|~Y_n) \leq \delta/2 \right) >1- \eta/2$ for all $n>N_{10}'$, we have that $\pr(\Pi(\Acal_{3n}^c \cup \Acal_{4n}^c | Y_n) \leq \delta) > 1-\eta$ for all $n>\max(N_{10}',N_{11}')$, which implies that $\Pi(\Acal_{3n}\cap \Acal_{4n}|Y_n)>1-\delta$ happens with $P_{(\beta_0,\sigma_0^2,\alpha_0)}$-probability at least $1-\eta$ for all $n>\max(N_{10}',N_{11}')$. On the event $\Pi(\Acal_{3n}\cap \Acal_{4n}|Y_n)>1-\delta$ for all $n>\max(N_{10}',N_{11}')$,
\begin{align} \label{eq:Pi.beta.3}
&\quad~ \Pi(|\beta-\beta_0| > \epsilon_0 | Y_n) = {\EE}_{\Pi(\ud\theta,\ud\alpha|Y_n)} \left[ \Pi(|\beta-\beta_0| > \epsilon_0 | Y_n,\theta,\alpha) \right ]  \nonumber \\
&= {\EE}_{\Pi(\ud\theta,\ud\alpha|Y_n)} \left[ \Pi(|\beta-\beta_0| > \epsilon_0 | Y_n,\theta,\alpha)\cdot  \Ical(\Acal_{3n}\cap \Acal_{4n}) \right ] \nonumber \\
&\quad ~ +  {\EE}_{\Pi(\ud\theta,\ud\alpha|Y_n)} \left[ \Pi(|\beta-\beta_0| > \epsilon_0 | Y_n,\theta,\alpha)  \cdot \Ical(\Acal_{3n}^c \cup \Acal_{4n}^c) \right ] \nonumber \\
& \stackrel{(i)}{\geq}  {\EE}_{\Pi(\ud\theta,\ud\alpha|Y_n)} \left[ \left\{1 - 2\epsilon_0\sqrt{\frac{\alpha_2(\alpha_2+2)}{\pi \theta_0/2}}\right\} \cdot \Ical(\Acal_{3n}\cap\Acal_{4n}) \right ] \nonumber \\
& = \left\{1 - 2\epsilon_0\sqrt{\frac{2\alpha_2(\alpha_2+2)}{\pi \theta_0}}\right\} \Pi(\Acal_{3n}\cap \Acal_{4n}|Y_n) \nonumber \\
&> (1-\delta)\left\{1 - 2\epsilon_0\sqrt{\frac{2\alpha_2(\alpha_2+2)}{\pi \theta_0}}\right\},
\end{align}
where ${\EE}_{\Pi(\ud\theta,\ud\alpha|Y_n)} $ denotes the posterior expectation with respect to $(\theta,\alpha)$; the inequality (i) follows because $\theta>\theta_0-n^{-1/2}\log^2 n>\theta_0/2$ on the event $\Acal_{4n}$ for $n>N_{11}'$ and the second expectation in the previous line is nonnegative.

On the right-hand side of \eqref{eq:Pi.beta.3}, we can set $\delta=1/2$ and $\epsilon_0 = \frac{1}{4}\sqrt{\frac{\pi \theta_0}{2\alpha_2(\alpha_2+2)}}$ ($\epsilon_0$ depends on $\alpha_2$ and hence depends on $\eta$), such that \eqref{eq:Pi.beta.3} leads to $\pr(\Pi(|\beta-\beta_0| > \epsilon_0 | Y_n) > 1/4) > 1-\eta$ for all $n>\max(N_{10}',N_{11}')$. The conclusion of Corollary \ref{cor:OU2} follows by taking $\epsilon_0 = \frac{1}{4}\sqrt{\frac{\pi \theta_0}{2\alpha_2(\alpha_2+2)}}$, $\delta_0=1/4$, and $N_2=\max(N_{10}',N_{11}')$.
\end{proof}

\vspace{8mm}

\section{Proof of Theorems in Section \ref{sec:PAE}}
\label{supsec:pae}

In this section, we present the proofs of Theorems \ref{thm:pae.micro}, \ref{thm:pae.main}, \ref{thm:suprate.OU}, and \ref{thm:vn.rate} in Section \ref{sec:PAE} of the main text.

\subsection{Proof of \eqref{eq:varBLUP.true} and Theorem \ref{thm:pae.micro}} \label{supsubsec:pae.micro}
\noindent \textbf{Derivation of $\vv_n(s^*;\sigma^2,\alpha)$ in \eqref{eq:varBLUP.true}:}
\vspace{2mm}

Define $b_{\alpha}(s^*)=\bbm(s^*) - M_n^\top R_{\alpha}^{-1} r_{\alpha}(s^*)$. First we recall that
$$\beta|Y_n,\sigma^2,\alpha\sim \Ncal\left(\big(M_n^\top R_{\alpha}^{-1} M_n + \Omega_{\beta}\big)^{-1} M_n^\top R_{\alpha}^{-1}Y_n, ~\sigma^2\big(M_n^\top R_{\alpha}^{-1} M_n + \Omega_{\beta}\big)^{-1}\right) . $$
Given $Y_n$, the GP predictive distribution for $\widetilde Y(s^*)$ is
\begin{align*}
\widetilde Y(s^*)|Y_n,\beta,\sigma^2,\alpha &\sim \Ncal \left(\widehat Y(s^*;\beta,\alpha), ~\sigma^2\left[1 - r_{\alpha}(s^*)^\top R_{\alpha}^{-1} r_{\alpha}(s^*)\right] \right),
\end{align*}
where
$$\widehat Y(s^*;\beta,\alpha) = \bbm(s^*)^\top \beta + r_{\alpha}(s^*)^\top R_{\alpha}^{-1} \left(Y_n - M_n \beta\right) = r_{\alpha}(s^*)^\top R_{\alpha}^{-1} Y_n + b_{\alpha}(s^*)^\top \beta.$$

Therefore, by the law of iterated expectation, we can integrate out $\beta$ and obtain that $Y(s^*)|Y_n,\sigma^2,\alpha$ still follows a normal distribution, whose mean is
\begin{align}\label{eq:form.e2.0}
& {\EE}\left\{\widetilde Y(s^*)|Y_n,\sigma^2,\alpha\right\} = {\EE}_{\beta|Y_n,\sigma^2,\alpha} {\EE}\left\{Y(s^*)|Y_n,\beta,\sigma^2,\alpha\right\} = {\EE}_{\beta|Y_n,\sigma^2,\alpha}\left\{\widehat Y(s^*;\beta,\alpha)\right\} \nonumber \\
={}& r_{\alpha}(s^*)^\top R_{\alpha}^{-1} Y_n + b_{\alpha}(s^*)^\top \big(M_n^\top R_{\alpha}^{-1} M_n + \Omega_{\beta}\big)^{-1} M_n^\top R_{\alpha}^{-1}Y_n ,
\end{align}
and by the law of total variance, the variance of $Y(s^*)|Y_n,\sigma^2,\alpha$ is
\begin{align}\label{eq:form.e2.1}
& {\Var}\left\{\widetilde Y(s^*)|Y_n,\sigma^2,\alpha\right\} \nonumber \\
={}& {\Var}_{\beta|Y_n,\sigma^2,\alpha} \left\{\widehat Y(s^*;\beta,\alpha) \right\} + {\EE}_{\beta|Y_n,\sigma^2,\alpha}\left\{ {\Var}[Y(s^*)|Y_n,\beta,\sigma^2,\alpha] \right\}\nonumber \\
={}& \sigma^2 b_{\alpha}(s^*)^\top \big(M_n^\top R_{\alpha}^{-1} M_n + \Omega_{\beta}\big)^{-1} b_{\alpha}(s^*)  + \sigma^2\left[1 - r_{\alpha}(s^*)^\top R_{\alpha}^{-1} r_{\alpha}(s^*)\right],
\end{align}
which has proved \eqref{eq:varBLUP.true}.

\vspace{5mm}

\begin{proof}[Proof of Theorem \ref{thm:pae.micro}]
The proof of Part (i) closely follow Theorem \ref{thm:bvm1:theta} for a given $\alpha>0$, and the proof of Part (ii) closely follow Theorem \ref{thm:bvm2:joint} for the joint posterior of $(\theta,\alpha)$. The two proofs are highly similar and we only show the proof of Part (ii) below, while the proof of Part (i) follows similarly.

We first use the reparameterization $\theta=\sigma^2\alpha^{2\nu}$ to replace $\sigma^2$ by $\theta$. By the definition of ${\vv}_n(s^*;\sigma^2,\alpha)$ in \eqref{eq:varBLUP.true}, we have the following decomposition of ratios:
\begin{align}
& \frac{{\vv}_n(s^*;\sigma^2,\alpha)}{{\vv}_n(s^*;\theta_0/\alpha^{2\nu},\alpha)} = \frac{{\vv}_n(s^*;\sigma^2,\alpha)}{{\vv}_n(s^*;\widetilde \theta_{\alpha_0}/\alpha^{2\nu},\alpha)} \cdot \frac{{\vv}_n(s^*;\widetilde \theta_{\alpha_0}/\alpha^{2\nu},\alpha)}{{\vv}_n(s^*;\theta_0/\alpha^{2\nu},\alpha)} . \nonumber
\end{align}
Using the formula \eqref{eq:varBLUP.true} for ${\vv}_n(s^*;\sigma^2,\alpha)$, we can see that for any $s^*\in \Scal \backslash \Scal_n$,
\begin{align} \label{eq:v.ratios}
& \frac{{\vv}_n(s^*;\sigma^2,\alpha)}{{\vv}_n(s^*;\widetilde \theta_{\alpha_0}/\alpha^{2\nu},\alpha)} = \frac{\sigma^2}{\widetilde \theta_{\alpha_0}/\alpha^{2\nu}} = \frac{\theta}{\widetilde\theta_{\alpha_0}}, \nonumber \\
& \frac{{\vv}_n(s^*;\widetilde \theta_{\alpha_0}/\alpha^{2\nu},\alpha)}{{\vv}_n(s^*;\theta_0/\alpha^{2\nu},\alpha)} =\frac{\widetilde\theta_{\alpha_0}/\alpha^{2\nu}}{\theta_0/\alpha^{2\nu}}= \frac{\widetilde\theta_{\alpha_0}}{\theta_0},
\end{align}

Recall that Lemma \ref{lem:theta.alpha0} has proved that for the event $\Ecal_4(\epsilon)=\big\{|\widetilde \theta_{\alpha_0} - \theta_0| < \epsilon\big\}$,
$$\pr\left\{\Ecal_4(5\theta_0n^{-1/2}\log n)^c\right\}\leq 3\exp(-4\log^2 n)$$
for all sufficiently large $n$. Let $\Ecal_8=\{ |\theta/ \widetilde\theta_{\alpha_0}-1 |> n^{-1/2}\log n \}$. Then by Theorem \ref{thm:bvm2:joint}, as $n\to\infty$, almost surely $P_{(\sigma_0^2,\alpha_0)}$,
\begin{align}\label{eq:vardiff2}
& \left|\Pi\left( \Ecal_8 |Y_n \right) - \int_0^{\infty} \int_{\Ecal_8} \frac{\sqrt{n}}{2\sqrt{\pi}\theta_0} \ee^{-\frac{n(\theta-\widetilde\theta_{\alpha_0})^2}{4\theta_0^2}} \cdot \widetilde \pi(\alpha|Y_n)\ud \theta \ud \alpha \right| \to 0.
\end{align}
(For Part (i), we simply use Theorem \ref{thm:bvm1:theta} instead and replace $\Pi\left( \Ecal_8 |Y_n \right)$ in \eqref{eq:vardiff2} by $\Pi\left( \Ecal_8 |Y_n,\alpha \right)$ and remove the integral over $\alpha$, similarly for the rest of the proof.)

On the event $\Ecal_4(5\theta_0n^{-1/2}\log n)\cap \Ecal_8$, for all sufficiently large $n$,
\begin{align*}
\left|\theta-\widetilde\theta_{\alpha_0}\right| > \widetilde\theta_{\alpha_0} n^{-1/2} \log n > (\theta_0-5\theta_0n^{-1/2}\log n)n^{-1/2}  \log n > (\theta_0 /2) n^{-1/2}  \log n.
\end{align*}
Using the normal tail inequality \eqref{normal.tail}, the integral in \eqref{eq:vardiff2} can be bounded by
\begin{align}\label{eq:norm.e81}
& \int_0^{\infty} \int_{\Ecal_8} \frac{\sqrt{n}}{2\sqrt{\pi}\theta_0} \ee^{-\frac{n(\theta-\widetilde\theta_{\alpha_0})^2}{4\theta_0^2}} \cdot \widetilde \pi(\alpha|Y_n)\ud \theta \ud \alpha \nonumber \\
\leq{}& \int_{\left|\theta-\widetilde\theta_{\alpha_0}\right| > \frac{\theta_0}{2} n^{-1/2}  \log n } \frac{\sqrt{n}}{2\sqrt{\pi}\theta_0} \ee^{-\frac{n(\theta-\widetilde\theta_{\alpha_0})^2}{4\theta_0^2}} \ud \theta \cdot  \int_0^{\infty} \widetilde \pi(\alpha|Y_n) \ud \alpha  \nonumber \\
\leq{}&  \exp\left(- \log^2 n/16\right) \rightarrow 0, \text{ as } n\to\infty.
\end{align}
Therefore, by combining \eqref{eq:v.ratios}, \eqref{eq:vardiff2} and \eqref{eq:norm.e81} and noticing that $\Ecal_4(5\theta_0n^{-1/2}\log n,\alpha)$ happens almost surely $P_{(\sigma_0^2,\alpha_0)}$ as $n\to\infty$ by the Borel-Cantelli lemma, we have that
\begin{align}\label{eq:pae.joint1.1}
& \Pi\left(\sup_{s^* \in \Scal \backslash \Scal_n} \left|\frac{{\vv}_n(s^*;\sigma^2,\alpha)}{{\vv}_n(s^*;\widetilde \theta_{\alpha_0}/\alpha^{2\nu},\alpha)} - 1\right| > n^{-1/2} \log n \Big |Y_n \right) \nonumber \\
={}& \Pi\left( \left|\frac{\theta}{\widetilde\theta_{\alpha_0}} - 1\right| > n^{-1/2} \log n \Big |Y_n \right) =  \Pi\left( \Ecal_8 \big |Y_n \right) \rightarrow 0, \text{ a.s. } P_{(\sigma_0^2,\alpha_0)}.
\end{align}
The relation of \eqref{eq:v.ratios} and the almost sure convergence property of $\Ecal_4(5\theta_0n^{-1/2}\log n)$ also implies that
\begin{align}\label{eq:pae.joint1.2}
& \Pi\left(\sup_{s^* \in \Scal \backslash \Scal_n} \left| \frac{{\vv}_n(s^*;\widetilde \theta_{\alpha_0}/\alpha^{2\nu},\alpha)}{{\vv}_n(s^*;\theta_0/\alpha^{2\nu},\alpha)} - 1 \right| > 5 n^{-1/2} \log n \Big |Y_n\right) \nonumber \\
& = \Pi\left( \left| \frac{\widetilde \theta_{\alpha_0}} {\theta_0} - 1 \right| > 5n^{-1/2} \log n \Big |Y_n \right) = 0, \text{ a.s. } P_{(\sigma_0^2,\alpha_0)}.
\end{align}
For $n$ sufficiently large, we have $5n^{-1/2}\log n<1/5$. Hence, $ | \widetilde \theta_{\alpha_0}/\theta_0 - 1 |< 1/5$ and $ \widetilde \theta_{\alpha_0}/\theta_0< 6/5$ as $n\to\infty$ almost surely $P_{(\sigma_0^2,\alpha_0)}$. We combine \eqref{eq:pae.joint1.1} and \eqref{eq:pae.joint1.2} to obtain that
\begin{align}
& \Pi\left( \sup_{s^* \in \Scal \backslash \Scal_n} \left| \frac{{\vv}_n(s^*;\sigma^2,\alpha)}{{\vv}_n(s^*;\theta_0/\alpha^{2\nu},\alpha)} - 1 \right| > 7 n^{-1/2} \log n \Big |Y_n \right) \nonumber \\
={}& \Pi\left( \sup_{s^* \in \Scal \backslash \Scal_n} \left| \frac{{\vv}_n(s^*;\sigma^2,\alpha)}{{\vv}_n(s^*;\widetilde \theta_{\alpha_0}/\alpha^{2\nu},\alpha)} \cdot \frac{{\vv}_n(s^*;\widetilde \theta_{\alpha_0}/\alpha^{2\nu},\alpha)}{{\vv}_n(s^*;\theta_0/\alpha^{2\nu},\alpha)} - 1 \right| > 7 n^{-1/2} \log n \Big |Y_n \right)  \nonumber \\
\leq{}&  \Pi\Bigg( \sup_{s^* \in \Scal \backslash \Scal_n} \Bigg\{\left| \frac{{\vv}_n(s^*;\sigma^2,\alpha)}{{\vv}_n(s^*;\widetilde \theta_{\alpha_0}/\alpha^{2\nu},\alpha)} -1 \right | \cdot \left|\frac{{\vv}_n(s^*; \widetilde \theta_{\alpha_0}/\alpha^{2\nu},\alpha)}{{\vv}_n(s^*;\theta_0/\alpha^{2\nu},\alpha)}\right| \nonumber \\
&\quad  + \left|\frac{{\vv}_n(s^*;\widetilde \theta_{\alpha_0}/\alpha^{2\nu},\alpha)}{{\vv}_n(s^*;\theta_0/\alpha^{2\nu},\alpha)} - 1 \right|\Bigg\} > 7 n^{-1/2} \log n \Big |Y_n \Bigg)  \nonumber \\
\leq{}&  \Pi\Bigg( \frac{6}{5}\sup_{s^* \in \Scal \backslash \Scal_n} \left| \frac{{\vv}_n(s^*;\sigma^2,\alpha)}{{\vv}_n(s^*;\widetilde \theta_{\alpha_0}/\alpha^{2\nu},\alpha)} -1 \right | \nonumber \\
&\quad + \sup_{s^* \in \Scal \backslash \Scal_n} \left|\frac{{\vv}_n(s^*;\widetilde \theta_{\alpha_0}/\alpha^{2\nu},\alpha)}{{\vv}_n(s^*;\theta_0/\alpha^{2\nu},\alpha)} - 1 \right| > 7 n^{-1/2} \log n \Big |Y_n \Bigg)  \nonumber \\
\leq{}& \Pi\left( \sup_{s^* \in \Scal \backslash \Scal_n} \left| \frac{{\vv}_n(s^*;\sigma^2, \alpha)}{{\vv}_n(s^*;\widetilde \theta_{\alpha_0}/\alpha^{2\nu},\alpha)} -1 \right | > n^{-1/2} \log n \Big |Y_n \right)  \nonumber \\
& ~~ + \Pi\left(\sup_{s^* \in \Scal \backslash \Scal_n} \left|\frac{{\vv}_n(s^*;\widetilde \theta_{\alpha_0}/\alpha^{2\nu},\alpha)}{{\vv}_n(s^*;\theta_0/\alpha^{2\nu},\alpha)} - 1 \right| > 5 n^{-1/2} \log n \Big |Y_n \right)  \nonumber \\
\rightarrow {}&  0, \text{ a.s. } P_{(\sigma_0^2,\alpha_0)}. \nonumber
\end{align}
Since $\sup_{s^* \in \Scal \backslash \Scal_n} \left| \frac{{\vv}_n(s^*;\sigma^2,\alpha)}{{\vv}_n(s^*;\theta_0/\alpha^{2\nu},\alpha)} - 1 \right| = |\theta/\theta_0-1|$, this has also proved that
\begin{align}\label{eq:theta.theta0.ratio7}
& \Pi\left(\left| \frac{\theta}{\theta_0} - 1 \right| > 7n^{-1/2} \log n \Big |Y_n\right) \rightarrow  0, \text{ a.s. } P_{(\sigma_0^2,\alpha_0)}.
\end{align}
This completes the proof.
\end{proof}

\vspace{8mm}

\subsection{Proof of Theorems \ref{thm:pae.main} and \ref{thm:suprate.OU}} \label{supsubsec:pae.main}
\begin{proof}[Proof of Theorem \ref{thm:pae.main}]

\noindent \underline{Proof of Part (i):}
\vspace{3mm}

First, we show the existence of the sequence $\varsigma_n(\alpha)$. Since the two Gaussian measures $\gp(0,(\theta_0/\alpha^{2\nu}) K_{\alpha,\nu})$ and $\gp(0,\sigma_0^2 K_{\alpha_0,\nu})$ are equivalent, by Assumption \ref{assump.dense1}, Equation (3.4) in \citet{Stein90a} implies that there exists a positive sequence $\varsigma_{1n}(\alpha) \to 0$ as $n\to\infty$, such that
\begin{align}
\sup_{s^* \in \Scal \backslash \Scal_n} \left | \frac{{\EE}_{(\sigma_0^2,\alpha_0)}\left\{e_n(s^*;\alpha)^2\right\}}{{\EE}_{(\theta_0/\alpha^{2\nu},\alpha)}\left\{e_n(s^*;\alpha)^2\right\}}
-1 \right| < \frac{1}{2} \varsigma_{1n}(\alpha). \nonumber
\end{align}
Notice that for a small $\epsilon\in (0,1/2)$, $\left|a/b-1\right|<\epsilon$ implies that $a/b\geq 1-\epsilon$ and hence $|b/a-1| \leq |a/b-1|/|a/b|\leq \epsilon/(1-\epsilon)< 2\epsilon$. Therefore, for sufficiently large $n$, $ \varsigma_{1n}(\alpha)<1/7$ and
\begin{align} \label{eq:stein93.1}
&\quad \sup_{s^* \in \Scal \backslash \Scal_n} \left | \frac {{\EE}_{(\theta_0/\alpha^{2\nu},\alpha)}\left\{e_n(s^*;\alpha)^2\right\}} {{\EE}_{(\sigma_0^2,\alpha_0)}\left\{e_n(s^*;\alpha)^2\right\}} -1 \right|  \leq \varsigma_{1n}(\alpha).
\end{align}
Theorem 1 and Lemma 2 of \citet{Stein90b} further imply that there exists a positive sequence $\varsigma_{2n}(\alpha) \to 0$ as $n\to\infty$, such that
\begin{align}\label{eq:stein93.2}
\sup_{s^* \in \Scal \backslash \Scal_n} \left | \frac{{\EE}_{(\theta_0/\alpha^{2\nu},\alpha)}\left\{e_n(s^*;\alpha)^2\right\}}
{{\EE}_{(\sigma_0^2,\alpha_0)}\left\{e_n(s^*;\alpha_0)^2\right\}} -1 \right| < \varsigma_{2n}(\alpha).
\end{align}
See our Lemma \ref{lem:stein90b} below for more details. Therefore, we can set $\varsigma_n(\alpha)=\max\{\varsigma_{1n}(\alpha),\varsigma_{2n}(\alpha)\}$ and $\varsigma_n(\alpha)\to 0$ as $n\to\infty$.

For abbreviation, let $\epsilon_{2n}(\alpha) = \max\left\{8n^{-1/2}\log n, \varsigma_n(\alpha) \right\}$. Then based on \eqref{eq:stein93.1} and Theorem \ref{thm:pae.micro}, we have that
\begin{align} \label{eq:cpae.cond1}
& \Pi\left( \sup_{s^* \in \Scal \backslash \Scal_n}\left|\frac{{\EE}_{(\sigma^2,\alpha)}\left\{e_n(s^*;\alpha)^2\right\}}
{{\EE}_{(\sigma_0^2,\alpha_0)}\left\{e_n(s^*;\alpha)^2\right\}} - 1 \right| > 2\epsilon_{2n}(\alpha) \Big |Y_n,\alpha \right) \nonumber \\
={}& \Pi\left( \sup_{s^* \in \Scal \backslash \Scal_n} \left|\frac{{\vv}_n(s^*;\theta/\alpha^{2\nu},\alpha)}{{\vv}_n(s^*;\theta_0/\alpha^{2\nu},\alpha)} \cdot \frac{{\EE}_{(\theta_0/\alpha^{2\nu},\alpha)}\left\{e_n(s^*;\alpha)^2\right\}}
{{\EE}_{(\sigma_0^2,\alpha_0)}\left\{e_n(s^*;\alpha)^2\right\}}  -  1 \right| > 2\epsilon_{2n}(\alpha) \Big |Y_n,\alpha \right) \nonumber \\
\leq{}& \Pi\left( \sup_{s^* \in \Scal \backslash \Scal_n} \left|\frac{{\EE}_{(\theta_0/\alpha^{2\nu},\alpha)}\left\{e_n(s^*;\alpha)^2\right\}}
{{\EE}_{(\sigma_0^2,\alpha_0)}\left\{e_n(s^*;\alpha)^2\right\}} \right| \cdot \left| \frac{{\vv}_n(s^*;\theta/\alpha^{2\nu},\alpha)}{{\vv}_n(s^*;\theta_0/\alpha^{2\nu},\alpha)} -  1 \right| > \epsilon_{2n}(\alpha) \Big |Y_n,\alpha \right) \nonumber \\
&\quad + \Pi\left( \sup_{s^* \in \Scal \backslash \Scal_n} \left| \frac{{\EE}_{(\theta_0/\alpha^{2\nu},\alpha)}\left\{e_n(s^*;\alpha)^2\right\}}
{{\EE}_{(\sigma_0^2,\alpha_0)}\left\{e_n(s^*;\alpha)^2\right\}} -  1 \right| \geq \epsilon_{2n}(\alpha) \Big |Y_n,\alpha \right) .
\end{align}
The second term on the right-hand side of \eqref{eq:cpae.cond1} is zero, due to \eqref{eq:stein93.1} and $\epsilon_{2n}(\alpha)\geq \varsigma_n(\alpha) \geq \varsigma_{1n}(\alpha)$. In the first term on the right-hand side of \eqref{eq:cpae.cond1}, using \eqref{eq:stein93.1} and the fact that $\varsigma_{1n}(\alpha)<1/7$ for sufficiently large $n$, we have from \eqref{eq:cpae.cond1} that
\begin{align*}
& \Pi\left( \sup_{s^* \in \Scal \backslash \Scal_n}\left|\frac{{\EE}_{(\sigma^2,\alpha)}\left\{e_n(s^*;\alpha)^2\right\}}
{{\EE}_{(\sigma_0^2,\alpha_0)}\left\{e_n(s^*;\alpha)^2\right\}} - 1 \right| > 2\epsilon_{2n}(\alpha) \Big |Y_n,\alpha \right) \nonumber \\
\leq{}& \Pi\left( \sup_{s^* \in \Scal \backslash \Scal_n} \left| \frac{{\vv}_n(s^*;\theta/\alpha^{2\nu},\alpha)}{{\vv}_n(s^*;\theta_0/\alpha^{2\nu},\alpha)} -  1 \right| > \frac{7}{8}\epsilon_{2n}(\alpha) \Big |Y_n,\alpha \right) \nonumber \\
\leq{}& \Pi\left( \sup_{s^* \in \Scal \backslash \Scal_n} \left| \frac{{\vv}_n(s^*;\theta/\alpha^{2\nu},\alpha)}{{\vv}_n(s^*;\theta_0/\alpha^{2\nu},\alpha)} -  1 \right| > 7n^{-1/2}\log n \Big |Y_n,\alpha \right) \rightarrow 0, \text{ a.s. } P_{(\sigma_0^2,\alpha_0)},
\end{align*}
following the result of Theorem \ref{thm:pae.micro} Part (i). This has proved the first convergence in Theorem \ref{thm:pae.main} Part (i). The proof of the second convergence in Theorem \ref{thm:pae.main} Part (i) is similar, by instead using \eqref{eq:stein93.2} and replacing all ${\EE}_{(\sigma_0^2,\alpha_0)}\left\{e_n(s^*;\alpha)^2\right\}$ in the display above by ${\EE}_{(\sigma_0^2,\alpha_0)}\left\{e_n(s^*;\alpha_0)^2\right\}$.
\vspace{4mm}

\noindent \underline{Proof of Part (ii):}
\vspace{3mm}

Let $\epsilon_{3n}=\max(8n^{-1/2}\log n, \varsigma_n)$. Let $\Ecal_9=\{\alpha\in [\underline\alpha_n, \overline \alpha_n]\}$. By Assumption \ref{assump.Mn}, for all sufficiently large $n$, on the event $\Ecal_9$,
\begin{align*}
\sup_{s^* \in \Scal \backslash \Scal_n} \left | \frac{{\EE}_{(\theta_0/\alpha^{2\nu},\alpha)}\left\{e_n(s^*;\alpha)^2\right\}}
{{\EE}_{(\sigma_0^2,\alpha_0)}\left\{e_n(s^*;\alpha)^2\right\}} -1 \right| \leq \varsigma_n < 1/7 , \\
\sup_{s^* \in \Scal \backslash \Scal_n} \left | \frac{{\EE}_{(\theta_0/\alpha^{2\nu},\alpha)}\left\{e_n(s^*;\alpha)^2\right\}}
{{\EE}_{(\sigma_0^2,\alpha_0)}\left\{e_n(s^*;\alpha_0)^2\right\}} -1 \right| \leq \varsigma_n < 1/7 .
\end{align*}
Therefore, we have that
\begin{align} \label{eq:Ecal9}
& \Pi\left( \sup_{s^* \in \Scal \backslash \Scal_n}\left|\frac{{\EE}_{(\sigma^2,\alpha)}\left\{e_n(s^*;\alpha)^2\right\}}
{{\EE}_{(\sigma_0^2,\alpha_0)}\left\{e_n(s^*;\alpha)^2\right\}}- 1 \right| > 2\epsilon_{3n} , \Ecal_9 \Big |Y_n \right) \nonumber \\
={}& \Pi\left( \sup_{s^* \in \Scal \backslash \Scal_n} \left|\frac{{\vv}_n(s^*;\theta/\alpha^{2\nu},\alpha)}{{\vv}_n(s^*;\theta_0/\alpha^{2\nu},\alpha)} \cdot \frac{{\EE}_{(\theta_0/\alpha^{2\nu},\alpha)}\left\{e_n(s^*;\alpha)^2\right\}}
{{\EE}_{(\sigma_0^2,\alpha_0)}\left\{e_n(s^*;\alpha)^2\right\}} -  1 \right| > 2\epsilon_{3n} , \Ecal_9 \Big |Y_n \right) \nonumber \\
\leq{}& \Pi\left( \sup_{s^* \in \Scal \backslash \Scal_n} \left| \frac{{\EE}_{(\theta_0/\alpha^{2\nu},\alpha)}\left\{e_n(s^*;\alpha)^2\right\}}
{{\EE}_{(\sigma_0^2,\alpha_0)}\left\{e_n(s^*;\alpha)^2\right\}} \right| \cdot \left| \frac{{\vv}_n(s^*;\theta/\alpha^{2\nu},\alpha)}{{\vv}_n(s^*;\theta_0/\alpha^{2\nu},\alpha)} -  1 \right| > \epsilon_{3n} , \Ecal_9 \Big |Y_n \right) \nonumber \\
&\quad + \Pi\left( \sup_{s^* \in \Scal \backslash \Scal_n} \left| \frac{{\EE}_{(\theta_0/\alpha^{2\nu},\alpha)}\left\{e_n(s^*;\alpha)^2\right\}}
{{\EE}_{(\sigma_0^2,\alpha_0)}\left\{e_n(s^*;\alpha)^2\right\}} -  1 \right| > \epsilon_{3n} , \Ecal_9 \Big |Y_n \right) \nonumber \\
\leq{}& \Pi\left( \sup_{s^* \in \Scal \backslash \Scal_n} \left| \frac{{\vv}_n(s^*;\theta/\alpha^{2\nu},\alpha)}{{\vv}_n(s^*;\theta_0/\alpha^{2\nu},\alpha)} -  1 \right| > 7 n^{-1/2}\log n, \Ecal_9 \Big |Y_n \right) \nonumber \\
&\quad + \Pi\left( \sup_{s^* \in \Scal \backslash \Scal_n} \left| \frac{{\EE}_{(\theta_0/\alpha^{2\nu},\alpha)}\left\{e_n(s^*;\alpha)^2\right\}}
{{\EE}_{(\sigma_0^2,\alpha_0)}\left\{e_n(s^*;\alpha)^2\right\}} -  1 \right| > \varsigma_n , \Ecal_9 \Big |Y_n \right) \nonumber \\
\leq{}& \Pi\left( \sup_{s^* \in \Scal \backslash \Scal_n} \left| \frac{{\vv}_n(s^*;\theta/\alpha^{2\nu},\alpha)}{{\vv}_n(s^*;\theta_0/\alpha^{2\nu},\alpha)} -  1 \right| > 7n^{-1/2}\log n \Big |Y_n \right) \rightarrow 0, \text{ a.s. } P_{(\sigma_0^2,\alpha_0)},
\end{align}
where the last convergence follows from Theorem \ref{thm:pae.micro} Part (ii).

On the other hand, for the event $\Ecal_9^c$, Theorem \ref{thm:bvm2:joint} implies that for the event
$$\Ecal_{10} = \left\{\sup_{s^* \in \Scal \backslash \Scal_n}\left| \frac{{\EE}_{(\sigma^2,\alpha)}\left\{e_n(s^*;\alpha)^2\right\}}
{{\EE}_{(\sigma_0^2,\alpha_0)}\left\{e_n(s^*;\alpha)^2\right\}} - 1 \right| > \max\left(16 n^{-1/2}\log n, 2\varsigma_n\right) \right\}\cap \Ecal_9^c,$$
as $n\to\infty$, almost surely $P_{(\sigma_0^2,\alpha_0)}$,
\begin{align}\label{eq:vardiff3}
& \left|\Pi\left( \Ecal_{10} |Y_n \right) - \int_{\Ecal_{10}} \frac{\sqrt{n}}{2\sqrt{\pi}\theta_0} \ee^{-\frac{n(\theta-\widetilde\theta_{\alpha_0})^2}{4\theta_0^2}} \cdot \widetilde \pi(\alpha|Y_n)\ud \theta \ud \alpha \right| \to 0.
\end{align}
But from \eqref{joint:theta33} and \eqref{joint:theta34} in the proof of Theorem \ref{thm:bvm2:joint}, it follows that as $n\to\infty$, almost surely $P_{(\sigma_0^2,\alpha_0)}$,
\begin{align}\label{eq:vardiff4}
& \int_{\Ecal_{10}} \frac{\sqrt{n}}{2\sqrt{\pi}\theta_0} \ee^{-\frac{n(\theta-\widetilde\theta_{\alpha_0})^2}{4\theta_0^2}} \cdot \widetilde \pi(\alpha|Y_n)\ud \theta \ud \alpha
\leq \int_{\Ecal_9^c} \widetilde \pi(\alpha|Y_n)\ud \theta \ud \alpha  \rightarrow 0.
\end{align}
Therefore, \eqref{eq:vardiff3} and \eqref{eq:vardiff4} imply that $\Pi\left( \Ecal_{10} |Y_n \right)\to 0$ almost surely $P_{(\sigma_0^2,\alpha_0)}$ as $n\to\infty$. The first convergence in Theorem \ref{thm:pae.main} Part (ii) follows by combining this with \eqref{eq:Ecal9}. The second convergence in Theorem \ref{thm:pae.main} Part (ii) follows from the similar argument as above by replacing all ${\EE}_{(\sigma_0^2,\alpha_0)}\left\{e_n(s^*;\alpha)^2\right\}$ by ${\EE}_{(\sigma_0^2,\alpha_0)}\left\{e_n(s^*;\alpha_0)^2\right\}$.
\end{proof}

\vspace{8mm}

Define $\kl(P_1,P_2)=\int \log (\ud P_1/\ud P_2) \ud P_1$ to be the Kullback-Leibler divergence between two measures $P_1$ and $P_2$, where $\ud P_1/\ud P_2$ is the Radon-Nickdym derivative of $P_1$ with respect to $P_2$. For two mean zero Gaussian processes with Mat\'ern covariance functions $\sigma_i^2 K_{\alpha_i,\nu}$ ($i=1,2$), let $P_{(\sigma_i^2,\alpha_i)}^{(n)}$ be the joint Gaussian distribution of the observations ${X(s_1),\ldots,X(s_n)}$. Then one can show that
$$\kl\left(P_{(\sigma_1^2,\alpha_1)}^{(n)},P_{(\sigma_2^2,\alpha_2)}^{(n)}\right)= \frac{1}{2}\left\{ \log \frac{|\sigma_2^2 R_{\alpha_2}|}{|\sigma_1^2 R_{\alpha_1}|} - n + \frac{\sigma_1^2}{\sigma_2^2}\tr\left(R_{\alpha_2}^{-1}R_{\alpha_1}\right)\right\}.$$
For $d\in\{1,2,3\}$, let us consider two equivalent Gaussian measures with Mat\'ern covariance functions $\sigma_0^2 K_{\alpha_0,\nu}$ and $\sigma^2 K_{\alpha,\nu}$, such that $\sigma_0^2\alpha_0^{2\nu}=\theta_0=\sigma^2\alpha^{2\nu}$. Let
\begin{align} \label{eq:rrn}
\rr_n(\alpha) &= \kl\left(P_{(\sigma_0^2,\alpha_0)}^{(n)},P_{(\sigma^2,\alpha)}^{(n)}\right) + \kl\left(P_{(\sigma^2,\alpha)}^{(n)},P_{(\sigma_0^2,\alpha_0)}^{(n)}\right) \nonumber \\
&= -n + \frac{\alpha^{2\nu}}{2\alpha_0^{2\nu}}\tr\left(R_{\alpha}^{-1}R_{\alpha_0}\right) + \frac{\alpha_0^{2\nu}}{2\alpha^{2\nu}}\tr\left(R_{\alpha_0}^{-1}R_{\alpha}\right).
\end{align}
Then due to the equivalence, for any given $\alpha>0$, under Assumption \ref{assump.dense1} that $\Scal_n$ is dense in $\Scal=[0,T]^d$ as $n\to\infty$, the sequence $\{\rr_n(\alpha)\}_{n=1}^{\infty}$ is increasing with $n$ to a finite limit $\rr(\alpha)=\lim_{n\to \infty} \rr_n(\alpha)$ (\citet{IbrRoz78}), which satisfies $\rr(\alpha)=\kl(P_{(\sigma_0^2,\alpha_0)},P_{(\sigma^2,\alpha)}) + \kl(P_{(\sigma^2,\alpha)},P_{(\sigma_0^2,\alpha_0)})$, where $\kl(P_{(\sigma_0^2,\alpha_0)},P_{(\sigma^2,\alpha)})$ and $\kl(P_{(\sigma^2,\alpha)},P_{(\sigma_0^2,\alpha_0)})$ are the limits of \\
$\kl(P_{(\sigma_0^2,\alpha_0)}^{(n)},P_{(\sigma^2,\alpha)}^{(n)})$ and $\kl(P_{(\sigma^2,\alpha)}^{(n)}, P_{(\sigma_0^2,\alpha_0)}^{(n)})$ as $n\to\infty$ (\citet{Kuletal87}); see Section 3 of \citet{Stein90b}.

The following lemma is a result from \citet{Stein90b}.
\begin{lemma} \label{lem:stein90b}
Suppose that $d\in\{1,2,3\}$, $\nu\in \RR^+$, and Assumption \ref{assump.dense1} holds. Consider two mean zero Gaussian processes with Mat\'ern covariance functions $\sigma_0^2 K_{\alpha_0,\nu}$ and $\sigma^2 K_{\alpha,\nu}$, where $\sigma_0^2\alpha_0^{2\nu}=\theta_0=\sigma^2\alpha^{2\nu}$ and $\alpha>0$ is given. If $\rr_n(\cdot)$ is defined as in \eqref{eq:rrn} and $\rr(\alpha)=\lim_{n\to \infty} \rr_n(\alpha)$, then as $n\to\infty$,
\begin{align}
\sup_{s^* \in \Scal \backslash \Scal_n} \left| \frac{{\EE}_{(\sigma_0^2, \alpha_0)}\big\{e_n(s^*;\alpha)^2\big\}}{{\EE}_{(\sigma^2, \alpha)}\big\{e_n(s^*;\alpha)^2\big\}} - 1 \right| \leq 2\sqrt{\rr(\alpha) -\rr_n(\alpha)} \to 0, \label{eq:stein90b.1} \\
\sup_{s^* \in \Scal \backslash \Scal_n} \left| \frac{{\EE}_{(\sigma^2, \alpha)}\big\{e_n(s^*;\alpha)^2\big\}}{{\EE}_{(\sigma_0^2, \alpha_0)}\big\{e_n(s^*;\alpha_0)^2\big\}} - 1 \right| \leq 4\sqrt{\rr(\alpha) -\rr_n(\alpha)} \to 0. \label{eq:stein90b.2}
\end{align}
\end{lemma}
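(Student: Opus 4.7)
The plan is to adapt the incremental-sampling technique of Stein (1990b): treat $s^*$ as an additional sampling point, express the sum of the two target ratios as an increment of the symmetric Kullback–Leibler sequence $\{\rr_m(\alpha)\}_{m\ge 1}$, and then bound this increment by exploiting the monotone convergence $\rr_m\uparrow\rr(\alpha)<\infty$ that is guaranteed by the equivalence of the two Gaussian measures on the full process.

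First, I would fix $s^*\in\Scal\setminus\Scal_n$ and extend the $n$-dimensional marginals $P^{(n)}_{(\sigma_j^2,\alpha_j)}$ to $(n+1)$-dimensional ones by appending $X(s^*)$. Under each $P_j$, the conditional distribution of $X(s^*)$ given $X_n$ is $\mathcal{N}(\mu_j(X_n),\,v_j(s^*))$ with $\mu_j$ the simple-kriging BLUP and $v_j(s^*)$ the corresponding optimal prediction variance. Combining the chain rule for KL divergence with the closed-form Gaussian KL formula
\[
\kl(\mathcal{N}(\mu_0,v_0),\mathcal{N}(\mu_1,v_1))=\tfrac12\log(v_1/v_0)-\tfrac12+\tfrac{v_0+(\mu_0-\mu_1)^2}{2v_1},
\]
and the BLUP-orthogonality identity $\EE_j[(\mu_0-\mu_1)^2]=\EE_j\{e_n(s^*;\alpha_{\bar j})^2\}-v_j(s^*)$ (with $\bar 0=1,\,\bar 1=0$), which holds because $\mu_0-\mu_1\in\mathrm{span}(X_n)$ while $X(s^*)-\mu_j$ is $P_j$-orthogonal to $\mathrm{span}(X_n)$, the log-terms cancel in the symmetric sum and one obtains
\[
\rr_{n+1}(\alpha;s^*)-\rr_n(\alpha)=\tfrac12\Big[\tfrac{\EE_{(\sigma_0^2,\alpha_0)}\{e_n(s^*;\alpha)^2\}}{v_1(s^*)}+\tfrac{\EE_{(\sigma^2,\alpha)}\{e_n(s^*;\alpha_0)^2\}}{v_0(s^*)}\Big]-1.
\]

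Second, because $\rr_m(\alpha)$ is non-decreasing in $m$ and converges to the same limit $\rr(\alpha)$ regardless of the order in which further sampling points are appended, we obtain $\rr_{n+1}(\alpha;s^*)-\rr_n(\alpha)\le\Delta_n\equiv\rr(\alpha)-\rr_n(\alpha)$ uniformly in $s^*$. Setting $a=\EE_0\{e_n(s^*;\alpha)^2\}/v_1(s^*)$, $b=\EE_1\{e_n(s^*;\alpha_0)^2\}/v_0(s^*)$, and $\tau=v_0(s^*)/v_1(s^*)$, this becomes $a+b\le 2+2\Delta_n$. Adjoining the BLUP-optimality inequalities $a\ge\tau$ and $b\ge 1/\tau$ (since each simple-kriging predictor is optimal under its own measure), one derives both the joint bound on $a+b$ and the marginal bound $(\sqrt\tau-1/\sqrt\tau)^2=\tau+1/\tau-2\le a+b-2\le 2\Delta_n$ on the variance-ratio $\tau$.

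The main technical obstacle lies in the third step, extracting the individual \emph{linear}-in-$\Delta_n$ bounds on $|a-1|$ and $|1/\tau-1|$ rather than the weaker square-root rate that the marginal inequality $(\sqrt\tau-1/\sqrt\tau)^2\le 2\Delta_n$ would supply on its own. I would proceed by case analysis on the signs of $a-1$ and $b-1$: whenever both are non-negative, the linear bound $|a-1|\le a+b-2\le 2\Delta_n$ is immediate; otherwise, if $a<1$, BLUP-optimality forces $b\ge 1/\tau>1$ and $\tau>a$, and the available constraints jointly pin $(a,b,\tau)$ to a neighbourhood of $(1,1,1)$ whose diameter scales linearly with $\Delta_n$ once one fully exploits the BLUP-discrepancy slacks $a-\tau\ge 0$ and $b-1/\tau\ge 0$ alongside the constraint on $a+b$. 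For the second ratio, dropping these slacks reduces the problem to pinning $|1/\tau-1|$ from $\tau+1/\tau-2\le 2\Delta_n$, and the constant $18$ absorbs both the careful small-$\Delta_n$ expansion and the trivial bound $|1/\tau-1|\le 2$ valid whenever $\Delta_n$ exceeds a threshold. Book-keeping the worst-case constants $4$ and $18$ through this case analysis is the most tedious part; the conceptual core is the identity of the first step and the monotone bound of the second.
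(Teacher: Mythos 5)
Your first two steps are sound, and they are in fact the machinery that sits behind the result the paper invokes: the chain-rule/orthogonality computation giving
\[
\rr_{n+1}(\alpha;s^*)-\rr_n(\alpha)\;=\;\tfrac12\left[\frac{\EE_{\sigma_0^2,\alpha_0}\{e_n(s^*;\alpha)^2\}}{v_1(s^*)}+\frac{\EE_{\sigma^2,\alpha}\{e_n(s^*;\alpha_0)^2\}}{v_0(s^*)}\right]-1
\]
and the monotone bound of this increment by $\Delta_n:=\rr(\alpha)-\rr_n(\alpha)$ are both correct. The proof breaks at the third step, and not for reasons of constant bookkeeping: the constraint system you extract, namely $a+b\le 2+2\Delta_n$, $a\ge\tau$, $b\ge 1/\tau$, is provably too weak to force any bound on $|a-1|$ or $|1/\tau-1|$ that is linear in $\Delta_n$. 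Take $a=\tau$, $b=1/\tau$, and choose $\tau<1$ with $\tau+1/\tau=2+2\Delta_n$: every constraint holds, both of your ``BLUP-discrepancy slacks'' $a-\tau$ and $b-1/\tau$ are exactly zero (so there is nothing further to exploit), and yet $|a-1|=1-\tau=\sqrt{2\Delta_n\tau}\asymp\sqrt{2\Delta_n}$, which exceeds $4\Delta_n$ for every $\Delta_n<1/8$ --- i.e.\ precisely in the regime where the lemma is applied. This configuration is not an artifact of relaxing the constraints; it is what happens when the two models yield the same kriging weights but different nominal prediction variances, and nothing in your steps one and two excludes it. Hence no case analysis on the signs of $a-1$ and $b-1$ can ``pin $(a,b,\tau)$ to a neighbourhood of $(1,1,1)$ whose diameter scales linearly with $\Delta_n$''; the single-point KL increment simply cannot distinguish a linear rate from a square-root rate.

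For comparison, the paper does not attempt to derive the linear bounds from the incremental identity at all. It imports the two estimates $\sup_{s^*}|b_n(s^*;\alpha)|\le 4\Delta_n$ and $\sup_{s^*}|\widetilde b_n(s^*;\alpha)|\le 4\Delta_n$ (in its notation $\widetilde b_n=a-1$ and $b_n=b-1$) directly from Lemma~2 of \citet{Stein90b}; the only work it performs is algebraic, using $(1+a_n)(1+\widetilde a_n)=(1+b_n)(1+\widetilde b_n)$ and $\widetilde a_n\ge 0$ to get $\sup_{s^*} a_n\le 9\Delta_n$ and then $\bigl|(1+a_n)/(1+\widetilde b_n)-1\bigr|\le 18\Delta_n$ for \eqref{eq:stein90b.2}. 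In other words, the entire content of \eqref{eq:stein90b.1} is the external citation, and that is exactly the piece your argument tries, and fails, to reconstruct. To repair your route you would need whatever additional structure Stein's lemma exploits beyond the one-point increment (your fallback for the second ratio fares no better: $\tau+1/\tau-2\le 2\Delta_n$ gives $|1/\tau-1|\le\Delta_n+\sqrt{2\Delta_n+\Delta_n^2}$, which is below $18\Delta_n$ only when $\Delta_n\ge 1/144$, not as $\Delta_n\to 0$).
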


\begin{proof}[Proof of Lemma \ref{lem:stein90b}]
Using similar notation to \citet{Stein90b}, we let
\begin{align*}
a_n(s^*;\alpha) &= \frac{{\EE}_{(\sigma_0^2, \alpha_0)}\big\{e_n(s^*;\alpha)^2\big\}}{{\EE}_{(\sigma_0^2, \alpha_0)}\big\{e_n(s^*;\alpha_0)^2\big\}} - 1, \quad \widetilde a_n(s^*;\alpha) = \frac{{\EE}_{(\sigma^2, \alpha)}\big\{e_n(s^*;\alpha_0)^2\big\}}{{\EE}_{(\sigma^2, \alpha)}\big\{e_n(s^*;\alpha)^2\big\}} - 1,\\
b_n(s^*;\alpha) &= \frac{{\EE}_{(\sigma^2, \alpha)}\big\{e_n(s^*;\alpha_0)^2\big\}}{{\EE}_{(\sigma_0^2, \alpha_0)}\big\{e_n(s^*;\alpha_0)^2\big\}} - 1 , \quad \widetilde b_n(s^*;\alpha) = \frac{{\EE}_{(\sigma_0^2, \alpha_0)}\big\{e_n(s^*;\alpha)^2\big\}}{{\EE}_{(\sigma^2, \alpha)}\big\{e_n(s^*;\alpha)^2\big\}} - 1 .
\end{align*}
In \citet{Stein90b}, their Theorem 1, Lemma 2 and the analysis in Section 3 imply that for every given $\alpha>0$, as $n\to\infty$,
\begin{align}
& 0\leq \sup_{s^* \in \Scal \backslash \Scal_n} \left[b_n(s^*;\alpha) + \widetilde b_n(s^*;\alpha)\right]  \leq 2[\rr(\alpha) -\rr_n(\alpha)], \label{eq:s90b.0} \\
& \sup_{s^* \in \Scal \backslash \Scal_n} \left|b_n(s^*;\alpha)\right|  \leq \sqrt{4[\rr(\alpha) -\rr_n(\alpha)]\max\{1,4[\rr(\alpha) -\rr_n(\alpha)]\}} \stackrel{(i)}{\leq} 2\sqrt{\rr(\alpha) -\rr_n(\alpha)}, \label{eq:s90b.1}  \\
& \text{and similarly } \sup_{s^* \in \Scal \backslash \Scal_n} \big| \widetilde b_n(s^*;\alpha) \big| \leq 2\sqrt{\rr(\alpha) -\rr_n(\alpha)}, \label{eq:s90b.2}
\end{align}
where (i) follows because $\rr_n(\alpha)$ increases to $\rr(\alpha)$ as $n\to\infty$. Therefore, \eqref{eq:stein90b.1} follows from \eqref{eq:s90b.2} and the definition of $\widetilde b_n(s^*;\alpha)$.

Using the relation
\begin{align*}
& \left[1+a_n(s^*;\alpha)\right]\left[1+\widetilde a_n(s^*;\alpha)\right] = \left[1+b_n(s^*;\alpha)\right]\left[1+\widetilde b_n(s^*;\alpha)\right],
\end{align*}
and the fact that $\widetilde a_n(s^*;\alpha)\geq 0$, we can obtain that
\begin{align} \label{eq:s90b.3}
\sup_{s^* \in \Scal \backslash \Scal_n} a_n(s^*;\alpha) & \leq \sup_{s^* \in \Scal \backslash \Scal_n} \left[b_n(s^*;\alpha) + \widetilde b_n(s^*;\alpha) + b_n(s^*;\alpha) \widetilde b_n(s^*;\alpha)\right] \nonumber \\
&\leq \sup_{s^* \in \Scal \backslash \Scal_n}\left|b_n(s^*;\alpha) + \widetilde b_n(s^*;\alpha)\right| + \sup_{s^* \in \Scal \backslash \Scal_n}\left|b_n(s^*;\alpha) \widetilde b_n(s^*;\alpha)\right| \nonumber \\
&\stackrel{(i)}{\leq} 2\left[\rr(\alpha) -\rr_n(\alpha)\right] + 4\left[\rr(\alpha) -\rr_n(\alpha)\right] \nonumber \\
&\leq 6\left[\rr(\alpha) -\rr_n(\alpha)\right],
\end{align}
where (i) follows from \eqref{eq:s90b.0}, \eqref{eq:s90b.1} and \eqref{eq:s90b.2}.

On the other hand, by the definition of $a_n(s^*;\alpha)$ and $\widetilde b_n(s^*;\alpha)$, we have
\begin{align*}
&\sup_{s^* \in \Scal \backslash \Scal_n} \left| \frac{{\EE}_{(\sigma^2, \alpha)}\big\{e_n(s^*;\alpha)^2\big\}}{{\EE}_{(\sigma_0^2, \alpha_0)}\big\{e_n(s^*;\alpha_0)^2\big\}} - 1 \right|
= \sup_{s^* \in \Scal \backslash \Scal_n} \left| \frac{a_n(s^*;\alpha)+1}{1+\widetilde b_n(s^*;\alpha)}-1\right| \\
&= \sup_{s^* \in \Scal \backslash \Scal_n} \left| \frac{a_n(s^*;\alpha) - \widetilde b_n(s^*;\alpha)}{1+\widetilde b_n(s^*;\alpha)}\right|
\stackrel{(i)}{\leq} \frac{3}{2}\sup_{s^* \in \Scal \backslash \Scal_n} a_n(s^*;\alpha) + \frac{3}{2}\sup_{s^* \in \Scal \backslash \Scal_n} \left|\widetilde b_n(s^*;\alpha)\right| \\
&\stackrel{(ii)}{\leq}  9\left[\rr(\alpha) -\rr_n(\alpha)\right] + 3\sqrt{\rr(\alpha) -\rr_n(\alpha)} \\
&\stackrel{(iii)}{\leq}  4\sqrt{\rr(\alpha) -\rr_n(\alpha)} ,
\end{align*}
where (i) follows from that $a_n(s^*;\alpha)\geq 0$ and for all sufficiently large $n$, $\rr(\alpha) -\rr_n(\alpha)<1/81$ so $\big|\widetilde b_n(s^*;\alpha)\big|\leq 1/3$ by \eqref{eq:s90b.2}; (ii) follows from \eqref{eq:s90b.2} and \eqref{eq:s90b.3}; and (iii) follows from $\sqrt{\rr(\alpha) -\rr_n(\alpha)}<1/9$ as $n\to\infty$. This has proved \eqref{eq:stein90b.2}.
\end{proof}

\vspace{5mm}

\begin{proof}[Proof of Theorem \ref{thm:suprate.OU}]
We verify Assumption \ref{assump.Mn} for this special case. We can calculate that
\begin{align} \label{eq:rn.OU}
\rr_n(\alpha) & = \frac{\alpha}{2\alpha_0}\tr\left(R_{\alpha}^{-1}R_{\alpha_0}\right) + \frac{\alpha_0}{2\alpha}\tr\left(R_{\alpha_0}^{-1}R_{\alpha}\right) -n \nonumber \\
&= \frac{\alpha}{2\alpha_0} \left[n + \frac{(n-1)\alpha}{\alpha_0}\frac{ \ee^{-\alpha/n}( \ee^{-\alpha/n}- \ee^{-\alpha_0/n})}{1- \ee^{-2\alpha/n}} \right] \nonumber\\
&\quad +\frac{\alpha_0}{2\alpha} \left[ n+ \frac{(n-1)\alpha_0}{\alpha}\frac{ \ee^{-\alpha_0/n}( \ee^{-\alpha_0/n}- \ee^{-\alpha/n})}{1- \ee^{-2\alpha_0/n}}\right]-n.
\end{align}
The Taylor series expansion of the first term in \eqref{eq:rn.OU} over all $\alpha\in [\underline\alpha_n,\overline\alpha_n]$ gives
\begin{align} \label{eq:Taylor1}
& \frac{\alpha}{2\alpha_0} \left[n + \frac{(n-1)\alpha}{\alpha_0}\frac{ \ee^{-\alpha/n}( \ee^{-\alpha/n}- \ee^{-\alpha_0/n})}{1- \ee^{-2\alpha/n}} \right] \nonumber \\
={}& \frac{n}{2} + \frac{(\alpha-\alpha_0)(\alpha+\alpha_0+2)}{4\alpha_0} + \frac{(\alpha_0^2-\alpha^2)(\alpha_0+3)}{12\alpha_0 n} \nonumber \\
& \quad + \frac{(\alpha^2-\alpha_0^2)(\alpha_0^2+4\alpha_0-\alpha^2)}{48\alpha_0 n^2} + O\left(\frac{1}{n^{5/2}}\right).
\end{align}
The order of the remainder is at most $O(n^{-5/2})$ since $\overline\alpha_n\preceq n^{0.02}$ and $\underline\alpha_n \succeq n^{-0.05}$.

By symmetry, for the second term in \eqref{eq:rn.OU}, we have
\begin{align} \label{eq:Taylor2}
& \frac{\alpha_0}{2\alpha} \left[n + \frac{(n-1)\alpha_0}{\alpha}\frac{ \ee^{-\alpha_0/n}( \ee^{-\alpha_0/n}- \ee^{-\alpha/n})}{1- \ee^{-2\alpha_0/n}} \right] \nonumber \\
={}& \frac{n}{2} + \frac{(\alpha_0-\alpha)(\alpha+\alpha_0+2)}{4\alpha} + \frac{(\alpha^2-\alpha_0^2)(\alpha+3)}{12\alpha n} \nonumber \\
& \quad + \frac{(\alpha_0^2-\alpha^2)(\alpha^2+4\alpha-\alpha_0^2)}{48\alpha n^2} + O\left(\frac{1}{n^{5/2}}\right).
\end{align}
Therefore, \eqref{eq:rn.OU}, \eqref{eq:Taylor1}, and \eqref{eq:Taylor2} together imply that
\begin{align}
\rr_n(\alpha) & = \frac{(\alpha-\alpha_0)^2(\alpha+\alpha_0+2)}{4\alpha\alpha_0} - \frac{(\alpha-\alpha_0)^2(\alpha+\alpha_0)}{4\alpha\alpha_0 n}  \nonumber \\
&\quad - \frac{(\alpha-\alpha_0)^2(\alpha+\alpha_0)^3}{48\alpha\alpha_0 n^2} + O\left(\frac{1}{n^{5/2}}\right), \nonumber
\end{align}
and
\begin{align}
\rr(\alpha) & = \lim_{n\to\infty}\rr_n(\alpha) =  \frac{(\alpha-\alpha_0)^2(\alpha+\alpha_0+2)}{4\alpha\alpha_0} .  \nonumber
\end{align}
Therefore, uniformly over all $\alpha\in [\underline\alpha_n,\overline\alpha_n]$,
\begin{align}
\rr(\alpha) - \rr_n(\alpha) & =  \frac{(\alpha-\alpha_0)^2(\alpha+\alpha_0)}{4\alpha\alpha_0 n}  + \frac{(\alpha-\alpha_0)^2(\alpha+\alpha_0)^3}{48\alpha\alpha_0 n^2} + O\left(\frac{1}{n^{5/2}}\right).
\end{align}
By \eqref{eq:stein90b.1} in Lemma \ref{lem:stein90b} and the uniformity over all $\alpha\in [\underline\alpha_n,\overline\alpha_n]$, we obtain that for sufficiently large $n$,
\begin{align} \label{eq:thm6.ineq1}
&\sup_{\alpha\in [\underline\alpha_n,\overline\alpha_n]} \sup_{s^* \in \Scal \backslash \Scal_n} \left[ \frac{{\EE}_{(\sigma_0^2, \alpha_0)}\big\{e_n(s^*;\alpha)^2\big\}}{{\EE}_{(\theta_0/\alpha^{2\nu}, \alpha)}\big\{e_n(s^*;\alpha)^2\big\}} - 1 \right]^2  \nonumber \\
&\leq \sup_{\alpha\in [\underline\alpha_n,\overline\alpha_n]} 4[\rr(\alpha) - \rr_n(\alpha)] \leq \sup_{\alpha\in [\underline\alpha_n,\overline\alpha_n]} \frac{2(\alpha-\alpha_0)^2(\alpha+\alpha_0)}{n\alpha\alpha_0 } \nonumber \\
&\leq \sup_{\alpha\in [\underline\alpha_n,\overline\alpha_n]} \frac{2(\alpha+\alpha_0)}{n}\cdot \left(\frac{\alpha}{\alpha_0} + \frac{\alpha_0}{\alpha} - 2\right) \leq \frac{2(\overline\alpha_n+\alpha_0)}{n}\cdot \max\left\{\frac{(\overline\alpha_n - \alpha_0)^2}{\overline\alpha_n \alpha_0}, \frac{(\underline\alpha_n - \alpha_0)^2}{\underline\alpha_n \alpha_0} \right\} \nonumber  \\
&\leq \frac{3\overline\alpha_n\max\left(\frac{\overline\alpha_n }{\alpha_0}, \frac{\alpha_0}{\underline\alpha_n} \right)}{n} \leq 3n^{-1} \overline\alpha_n \left(\frac{\overline\alpha_n }{\alpha_0} + \frac{\alpha_0}{\underline\alpha_n}\right) \leq 4n^{2\overline\kappa+\underline\kappa-1}.
\end{align}
Since ${\EE}_{(\sigma_0^2, \alpha_0)}\big\{e_n(s^*;\alpha)^2\big\} \geq {\EE}_{(\theta_0/\alpha^{2\nu},\alpha)}\big\{e_n(s^*;\alpha)^2\big\}$, it follows from \eqref{eq:thm6.ineq1} that
\begin{align} \label{eq:thm6.ineq1.1}
&\quad \sup_{\alpha \in [\underline\alpha_n, \overline\alpha_n]} \sup_{s^* \in \Scal \backslash \Scal_n} \left| \frac{{\EE}_{(\theta_0/\alpha^{2\nu},\alpha)}\big\{e_n(s^*;\alpha)^2\big\}}{{\EE}_{(\sigma_0^2,\alpha_0)}\big\{e_n(s^*;\alpha)^2\big\}} - 1 \right| \nonumber \\
&=\sup_{\alpha \in [\underline\alpha_n, \overline\alpha_n]} \sup_{s^* \in \Scal \backslash \Scal_n} \frac{ {\EE}_{(\sigma_0^2,\alpha_0)}\big\{e_n(s^*;\alpha)^2\big\}\Big / {\EE}_{(\theta_0/\alpha^{2\nu},\alpha)}\big\{e_n(s^*;\alpha)^2\big\} - 1 } {{\EE}_{(\sigma_0^2,\alpha_0)}\big\{e_n(s^*;\alpha)^2\big\}\Big/ {\EE}_{(\theta_0/\alpha^{2\nu},\alpha)}\big\{e_n(s^*;\alpha)^2\big\}} \nonumber  \\
&\leq \sup_{\alpha \in [\underline\alpha_n, \overline\alpha_n]} \sup_{s^* \in \Scal \backslash \Scal_n} \left[ \frac{{\EE}_{(\sigma_0^2,\alpha_0)}\big\{e_n(s^*;\alpha)^2\big\}}{{\EE}_{(\theta_0/\alpha^{2\nu},\alpha)}\big\{e_n(s^*;\alpha)^2\big\}} - 1 \right] \nonumber \\
&\leq 2n^{(\overline\kappa+\underline\kappa/2) - 1/2} .
\end{align}
From \eqref{eq:stein90b.2} in Lemma \ref{lem:stein90b}, we obtain that for sufficiently large $n$,
\begin{align}
& \sup_{\alpha\in [\underline\alpha_n,\overline\alpha_n]} \sup_{s^* \in \Scal \backslash \Scal_n} \left| \frac{{\EE}_{(\theta_0/\alpha^{2\nu}, \alpha)}\big\{e_n(s^*;\alpha)^2\big\}}{{\EE}_{(\sigma_0^2, \alpha_0)}\big\{e_n(s^*;\alpha_0)^2\big\}} - 1 \right|^2 \nonumber \\
&\leq \sup_{\alpha\in [\underline\alpha_n,\overline\alpha_n]} \frac{5(\alpha-\alpha_0)^2(\alpha+\alpha_0)}{n\alpha\alpha_0 } \leq \sup_{\alpha\in [\underline\alpha_n,\overline\alpha_n]} \frac{5(\alpha+\alpha_0)}{n}\cdot \left(\frac{\alpha}{\alpha_0} + \frac{\alpha_0}{\alpha} - 2\right) \nonumber \\
&\leq \frac{5(\overline\alpha_n+\alpha_0)}{n}\cdot \max\left\{\frac{(\overline\alpha_n - \alpha_0)^2}{\overline\alpha_n \alpha_0}, \frac{(\underline\alpha_n - \alpha_0)^2}{\underline\alpha_n \alpha_0} \right\} \leq \frac{6\overline\alpha_n\max\left(\frac{\overline\alpha_n }{\alpha_0}, \frac{\alpha_0}{\underline\alpha_n} \right)}{n} \leq 7n^{2\overkappa + \underkappa - 1} . \nonumber
\end{align}
Therefore, for sufficiently large $n$,
\begin{align} \label{eq:thm6.ineq2}
& \sup_{\alpha\in [\underline\alpha_n,\overline\alpha_n]} \sup_{s^* \in \Scal \backslash \Scal_n} \left| \frac{{\EE}_{(\theta_0/\alpha^{2\nu}, \alpha)}\big\{e_n(s^*;\alpha)^2\big\}}{{\EE}_{(\sigma_0^2, \alpha_0)}\big\{e_n(s^*;\alpha_0)^2\big\}} - 1 \right| \leq 3 n^{(\overline\kappa+\underline\kappa/2) - 1/2} .
\end{align}
Based on \eqref{eq:thm6.ineq1.1} and \eqref{eq:thm6.ineq2}, we conclude that Assumption \ref{assump.Mn} is satisfied with $\varsigma_n=3n^{- 1/2+(\overline\kappa+\underline\kappa/2)}$. Therefore, the posterior convergence rates of asymptotic efficiency in Theorem \ref{thm:pae.main} become $\max\left(16n^{-1/2}\log n, 2\varsigma_n\right) = 6n^{- 1/2+(\overline\kappa+\underline\kappa/2)}$ as $n\to\infty$. This completes the proof of Theorem \ref{thm:suprate.OU}.
\end{proof}

\vspace{8mm}

\subsection{Proof of Theorem \ref{thm:vn.rate}} \label{supsubsec:vn.rate}

We introduce some concepts from scattered data approximation. For technical details, we refer the readers to the book \citet{Wen05}. For a generic kernel function $K(\cdot,\cdot)$ on $\Scal$, we define the \textit{power function} (Chapter 11 of \citealt{Wen05}) as
\begin{align}\label{eq:power.func}
\sP(s;K,\Scal_n) &= \left\{ K(s,s) - K(\Scal_n,s)^\top K(\Scal_n,\Scal_n)^{-1} K(\Scal_n,s)\right\}^{1/2}, \quad \text{for any } s\in \Scal,
\end{align}
where $\Scal_n=\{s_1,\ldots,s_n\}$, $K(\Scal_n,s) = (K(s_1,s),\ldots,K(s_n,s))^\top$, and $K(\Scal_n,\Scal_n)$ is the $n\times n$ covariance matrix with entries $\{K(\Scal_n,\Scal_n)\}_{ij}=K(s_i,s_j)$ for $i,j=1,\ldots,n$. The power function $\sP_{K,\Scal_n}(s)$ plays an important role in error estimates of kriging interpolation. We cite the following results from \citet{Wen05}:
\begin{lemma}  \label{lem:error.est.power}
(\citealt{Wen05} Theorem 11.4) For any $f\in \Hcal_K$, let $f_n = (f(s_1),\ldots,f(s_n))^\top$. Then
\begin{align}
& \left|f(s) - f_n^\top K(\Scal_n,\Scal_n)^{-1}K(\Scal_n,s) \right| \leq \|f\|_{\Hcal_K} \sP(s;K,\Scal_n), \text{ for any } s\in \Scal.  \nonumber
\end{align}
\end{lemma}

\vspace{3mm}

\begin{proof}[Proof of Theorem \ref{thm:vn.rate}]
By Assumption \ref{assump.m.func} and Lemma \ref{lem:matern.sobolev}, $\|\bbm_j\|_{\Hcal_{\sigma^2 K_{\alpha,\nu}}}$ is finite for any $(\sigma^2,\alpha)\in \RR^+ \times \RR^+$ and for all $j=1,\ldots,p$.

Define $\bbm_{j,n}=(\bbm_j(s_1),\ldots,\bbm_j(s_n))^\top$, for $j=1,\ldots,p$. Then Lemma \ref{lem:error.est.power} shows that for any $\alpha>0$, any $s\in \Scal$ and each $j=1,\ldots,p$,
\begin{align}\label{eq.bbm.bias1}
\big|\bbm_j(s)-r_{\alpha}(s)^{\top} R_{\alpha}^{-1} \bbm_{j,n}\big|\leq  \|\bbm_j\|_{\Hcal_{(\theta_0/\alpha^{2\nu}) K_{\alpha,\nu}}} \sP(s; (\theta_0/\alpha^{2\nu}) K_{\alpha,\nu},\Scal_n) .
\end{align}
Therefore, using the definition of $b_{\alpha}(s)$ in \eqref{eq:varBLUP.true}, for any $s^*\in \Scal\backslash \Scal_n$,
\begin{align}\label{eq:m.sum1}
b_{\alpha}(s^*)^{\top} b_{\alpha}(s^*) & =  \sum_{j=1}^p \big| \bbm_j(s^*) - r_{\alpha}(s^*)^{\top} R_{\alpha}^{-1} \bbm_{j,n} \big|^2    \nonumber \\
& \leq \sP(s^*;(\theta_0/\alpha^{2\nu}) K_{\alpha,\nu},\Scal_n)^2 \sum_{j=1}^p \|\bbm_j\|^2_{\Hcal_{(\theta_0/\alpha^{2\nu}) K_{\alpha,\nu}}}.
\end{align}
By the inequality (B.4) and the subsequent argument in the proof of Theorem 2 in \citet{Wangetal19}, for any $\alpha>0$,
\begin{align} \label{eq:MRM.min}
& \lambda_{\min}\left(M_n^\top R_{\alpha}^{-1} M_n \right) \geq \max_{\Ical\subseteq \{1,\ldots,n\},|\Ical|=p} \lambda_{\min}\left(M_{\Ical}^\top M_{\Ical}\right) / p  = \underline \lambda(M_n,p).
\end{align}

Therefore, using the reparameterization $\theta=\sigma^2\alpha^{2\nu}$ and the definition of ${\vv}_n(s^*;\sigma^2,\alpha)$ in \eqref{eq:varBLUP.true}, we can combine \eqref{eq:m.sum1} and \eqref{eq:MRM.min} and obtain that for any $(\sigma^2,\alpha)\in \RR^+ \times \RR^+$,
\begin{align} \label{eq:vv0.1}
&\quad ~ {\vv}_n(s^*;\sigma^2,\alpha) \nonumber \\
&=\sigma^2\left\{1 - r_{\alpha}(s^*)^\top R_{\alpha}^{-1} r_{\alpha}(s^*)\right\} + \sigma^2 b_{\alpha}(s^*)^\top \big(M_n^\top R_{\alpha}^{-1} M_n + \Omega_{\beta}\big)^{-1} b_{\alpha}(s^*) \nonumber \\
&\leq \frac{\theta}{\theta_0} \sP(s^*;(\theta_0/\alpha^{2\nu}) K_{\alpha,\nu},\Scal_n)^2 + \frac{\theta}{\theta_0}\cdot \frac{\theta_0}{\alpha^{2\nu}} \cdot  \lambda_{\min}\left(M_n^\top R_{\alpha}^{-1} M_n \right)^{-1} b_{\alpha}(s^*)^\top b_{\alpha}(s^*) \nonumber \\
&\leq \frac{\theta}{\theta_0}\sP(s^*;(\theta_0/\alpha^{2\nu}) K_{\alpha,\nu},\Scal_n)^2 \Bigg\{ \underline \lambda(M_n,p)^{-1} \frac{\theta_0}{\alpha^{2\nu}}\sum_{j=1}^p \|\bbm_j\|_{\Hcal_{(\theta_0/\alpha^{2\nu}) K_{\alpha,\nu}}}^2 + 1 \Bigg\} .
\end{align}
Because $\Scal=[0,T]^d$ is compact and convex with positive Lebesgue measure, and $\Scal_n$ is dense in $\Scal$ by Assumption \ref{assump.dense1}, Theorem 5.14 of \citet{WuSch93} has shown that for a constant $C_{\vv,1}>0$ that depends on $\sigma_0^2,\alpha_0,\nu,d,T$ and for all sufficiently large $n$,
\begin{align}\label{eq:wusch93}
\sup_{s^*\in \Scal} \sP(s^*;\sigma_0^2 K_{\alpha_0,\nu},\Scal_n)\leq C_{\vv,1} h_{\Scal_n}^{\nu} .
\end{align}
Therefore, the upper bound for ${\vv}_n(s^*;\sigma_0^2,\alpha_0)$ in the first convergence of Theorem \ref{thm:vn.rate} in the main text immediately follows from the upper bounds in \eqref{eq:vv0.1} and \eqref{eq:wusch93}, by setting $\theta=\theta_0$ and $\alpha=\alpha_0$:
\begin{align*}
& \sup_{s^*\in \Scal} {\vv}_n(s^*;\sigma_0^2,\alpha_0) \leq C_{\vv,1} \left[C_{\bbm} \sigma_0^2 \underline \lambda(M_n,p)^{-1} + 1 \right] h_{\Scal_n}^{2\nu}.
\end{align*}
Now we turn to the second convergence in Theorem \ref{thm:vn.rate}, where $(\sigma^2,\alpha)$ is randomly drawn from the posterior distribution $\Pi(\cdot |Y_n)$. We notice that Assumption \ref{assump.Mn2} can be equivalently written as
\begin{align}\label{eq:varsigma.p2}
\sup_{\alpha\in [\underline\alpha_n,\overline\alpha_n]} \sup_{s^* \in \Scal\backslash\Scal_n} \left|\frac{\sP(s^*;(\theta_0/\alpha^{2\nu})K_{\alpha,\nu},\Scal_n)^2}
    {\sP(s^*;\sigma_0^2K_{\alpha_0,\nu},\Scal_n)^2} - 1 \right| \leq \tilde \varsigma_n,
\end{align}
for a deterministic sequence $\tilde \varsigma_n\to 0$ as $n\to\infty$. It is trivial to see that if $s^*\in \Scal_n$, then $\sP(s^*;(\theta_0/\alpha^{2\nu}) K_{\alpha,\nu},\Scal_n)=\sP(s^*;\sigma_0^2 K_{\alpha_0,\nu},\Scal_n)=0$.

We recall from the proof of Theorem \ref{thm:pae.main} Part (ii) that $\Ecal_9=\{\alpha\in [\underline\alpha_n, \overline\alpha_n]\}$, and that Theorem \ref{thm:bvm2:joint} and its proof implies that $\Pi(\Ecal_9^c|Y_n)\to 0$ as $n\to\infty$ almost surely $P_{(\sigma_0^2,\alpha_0)}$. This implies that given any $\eta\in (0,1)$, any $\delta\in (0,1)$, there exist two numbers $0<\alpha_1<\alpha_2<\infty$ and a sufficiently large integer $N_{12}'$ ($\alpha_1,\alpha_2,N_{12}'$ are dependent on $\eta,\delta$), such that for all $n>N_{12}'$, $\Pr\left(\Pi(\alpha\in [\alpha_1,\alpha_2]~|~Y_n) \leq 1-\delta/2\right) < \eta/2$.

On the other hand, in the proof of Theorem \ref{thm:pae.micro}, \eqref{eq:theta.theta0.ratio7} has shown that $\Pi(|\theta/\theta_0 - 1| > 7n^{-1/2}\log n ~|~Y_n)\to 0$ as $n\to\infty$ almost surely in $P_{(\sigma_0^2,\alpha_0)}$. This implies that for a sufficiently large $N_{13}'$, such that for all $n>N_{13}'$, $\Pr(\Pi(|\theta/\theta_0 - 1| \leq 7n^{-1/2}\log n ) \leq 1- \delta/2) < \eta/2$. Define the event
\begin{align}\label{eq:Ecal11}
\Ecal_{11} & = \left\{|\theta/\theta_0 - 1| \leq 7n^{-1/2}\log n, \text{ and } \alpha\in [\alpha_1,\alpha_2] \right\}.
\end{align}
Then for all $n>N_{14}'=\max(N_{12}',N_{13}')$,
\begin{align}\label{eq:Ecal11.2delta}
& \quad ~ \Pr\left(\Pi(\Ecal_{11}^c|Y_n) > \delta\right)  \nonumber \\
&\leq \Pr\left(\Pi(|\theta/\theta_0 - 1| > 7n^{-1/2}\log n, \text{ or } \alpha\notin [\alpha_1,\alpha_2]~|~Y_n) > \delta\right) \nonumber \\
&\leq \Pr\left(\Pi(|\theta/\theta_0 - 1| > 7n^{-1/2}\log n ~|~Y_n) > \delta/2, \text{ or } \Pi(\alpha\notin [\alpha_1,\alpha_2]~|~Y_n) > \delta/2 \right) \nonumber \\
&\leq \Pr\left(\Pi(|\theta/\theta_0 - 1| > 7n^{-1/2}\log n ~|~Y_n) > \delta/2 \right) + \Pr\left(\Pi(\alpha\notin [\alpha_1,\alpha_2]~|~Y_n) > \delta/2 \right) \nonumber \\
&<\frac{\eta}{2} + \frac{\eta}{2} = \eta,
\end{align}
which can be equivalently written as $\Pr\left(\Pi(\Ecal_{11}|Y_n) > 1- \delta\right) > 1-\eta$.

Therefore, we combine \eqref{eq:vv0.1}, \eqref{eq:wusch93}, \eqref{eq:varsigma.p2}, and the posterior convergence of $\theta$ to $\theta_0$ above together, and obtain that on the event $\Ecal_{11}$,  for all $n>N_{14}'$,
\begin{align} \label{eq:vv0.2}
&\quad~ \sup_{s^* \in \Scal} {\vv}_n(s^*;\sigma^2,\alpha) \nonumber\\
&\leq \frac{\theta}{\theta_0} \cdot \left[\sup_{s^*\in \Scal}  \sP(s^*;(\theta_0/\alpha^{2\nu}) K_{\alpha,\nu},\Scal_n)^2 \right]  \cdot \Bigg\{ \underline \lambda(M_n,p)^{-1} \frac{\theta_0}{\alpha^{2\nu}}\sum_{j=1}^p \|\bbm_j\|_{\Hcal_{(\theta_0/\alpha^{2\nu}) K_{\alpha,\nu}}}^2 + 1 \Bigg\}  \nonumber \\
&\stackrel{(i)}{\leq} \left(1 + 7n^{-1/2} \log n \right)\cdot \left[ \left(1 + \tilde \varsigma_n \right)\sup_{s^*\in \Scal} \sP(s^*;\sigma_0^2 K_{\alpha_0,\nu},\Scal_n)^2 \right] \nonumber\\
&\quad \times \Bigg[ \underline \lambda(M_n,p)^{-1} \frac{\theta_0}{\alpha_1^{2\nu}} \max\left\{\left(\frac{\alpha_2}{\alpha_0}\right)^{2\nu+d},1\right\} \sum_{j=1}^p \|\bbm_j\|_{\Hcal_{\sigma_0^2 K_{\alpha_0,\nu}}}^2 + 1 \Bigg] \nonumber\\
&\stackrel{(ii)}{\leq} \left(1 + 7n^{-1/2} \log n \right) \left(1 + \tilde \varsigma_n \right) C_{\vv,1}^2 h_{\Scal_n}^{2\nu}  \nonumber\\
&\quad \times \Bigg[ \underline \lambda(M_n,p)^{-1} \frac{\theta_0}{\alpha_1^{2\nu}} \max\left\{\left(\frac{\alpha_2}{\alpha_0}\right)^{2\nu+d},1\right\} c_2(\sigma_0,\alpha_0)^2 \sum_{j=1}^p\|\bbm_j\|_{\Wcal_2^{\nu+d/2}}^2 + 1 \Bigg] , \nonumber \\
&\stackrel{(iii)}{\leq} C_{\vv,2} \left[ C_{\vv,3} C_{\bbm}\underline\lambda(M_n,p)^{-1} + 1  \right] h_{\Scal_n}^{2\nu} ,
\end{align}
where the inequality (i) follows from $\alpha_1\leq \alpha\leq \alpha_2$ on $\Ecal_{11}$, the inequality \eqref{eq:varsigma.p2}, and the relation between the RKHS norms of $\Hcal_{(\theta_0/\alpha^{2\nu})K_{\alpha,\nu}}$ and $\Hcal_{\sigma_0^2K_{\alpha_0,\nu}}$ in Lemma \ref{lem:rkhs.ordering}; (ii) follows from \eqref{eq:wusch93} and the equivalence between the Mat\'ern RKHS norm and the Sobolev norm in Lemma \ref{lem:matern.sobolev}; in (iii), the constant $C_{\vv,2} \geq \sup_{n\geq 1} \left(1 + 7n^{-1/2} \log n \right) \left(1 + \tilde \varsigma_n \right) C_{\vv,1}^2$, which depends on $\sigma_0^2,\alpha_0,\nu,d,T$, and $C_{\vv,3}= (\theta_0/\alpha_1^{2\nu}) \max\big\{\left(\alpha_2/\alpha_0\right)^{2\nu+d},1\big\}c_2(\sigma_0,\alpha_0)^2$, which depends on $\eta,\delta,\sigma_0^2,\alpha_0,\nu,d,T$.

Finally, we combine \eqref{eq:Ecal11.2delta} and \eqref{eq:vv0.2} to conclude that for all $n>N_{14}'$,
\begin{align}
& \Pr\left(\Pi\left[\sup_{s^* \in \Scal} {\vv}_n(s^*;\sigma^2,\alpha) \leq C_{\vv,2} \left[C_{\vv,3} C_{\bbm} \underline \lambda(M_n,p)^{-1} + 1 \right] h_{\Scal_n}^{2\nu}~ \Big|~ Y_n\right] > 1-\delta \right) > 1-\eta . \nonumber
\end{align}
Setting $N_3=N_{14}'$ completes the proof.
\end{proof}

\vspace{8mm}

\section{Additional Simulation Results for Universal Kriging Model with Regression Terms} \label{sec:add.sim}
We present additional simulation results for the universal kriging model \eqref{eq:obs.model} with regression terms $Y(s)=\bbm(s)^\top\beta_0 + X(s)$ for $s\in \Scal$, and $X(\cdot)\sim \gp(0,\sigma_0^2 K_{\alpha_0,\nu})$. We consider three values of the smoothness parameter $\nu=1/2$, $\nu=1/4$ and $\nu=3/2$. We still set $\Scal=[0,1]^d$ and $\Scal_n$ to be the regular grid as in Section \ref{sec:simulation} for $d=1,2$. For the $d=1$ case, we let $\bbm(s)=\left(1,s,s^2,s^3\right)^\top$ for $s\in [0,1]$ and $\beta_0=(1,0.66,-1.5,1)^\top$. For $d=2$, we let $\bbm(s)=\left(1,s_1,s_2,s_1^2,s_1s_2,s_2^2\right)^\top$ and $\beta_0=(1,-1.5,-1.5,2,1,2)^\top$ for $s=(s_1,s_2)\in [0,1]^2$. The true covariance parameters are $\alpha_0=1,\sigma_0^2=2,\theta_0=\sigma_0^2\alpha_0^{2\nu}=2$ for $\nu=1/2,1/4,3/2$. We impose the noninformative improper prior $\pi(\beta|\sigma^2,\alpha)\propto 1$ on $\beta$, corresponding to $\Omega_{\beta}=0_{p\times p}$. The prior specification for $(\theta,\alpha)$ and the posterior sampling and estimation procedures are all the same as in Section \ref{sec:simulation}.

For $\nu=1/2$, we report the posterior means and variances of $(\theta,\alpha)$ from both the true posterior distribution and the limiting posterior from Theorem \ref{thm:bvm2:joint}, as well as the $W_2$ distance between these two distributions in Tables \ref{tab:W2.dim1.m} and \ref{tab:W2.dim2.m}. We have similar observation to Tables \ref{tab:W2.dim1} and \ref{tab:W2.dim2} for the model without regression terms in Section \ref{sec:simulation} of the main text. The marginal posterior of $\theta$ is close to the normal limiting distribution whose center is increasingly close to $\theta_0=2$ with a shrinking variance as $n$ increases. The marginal posterior of $\alpha$ maintains a large posterior variance. The approximation errors from the limiting marginal posterior distributions of $\theta$ and $\alpha$ decrease as $n$ increases. Figure \ref{fig:contour2} illustrates the convergence of posterior densities for the $d=1$ case, which shows similar convergence to that in Figure \ref{fig:contour1} in the main text.

\begin{table}[ht]
\caption{Parameter estimation and Wasserstein-2 distances between the true posterior and the limiting posteriors in Theorem \ref{thm:bvm2:joint} for the model with $\nu=1/2$, $d=1$ and with regression terms $\bbm(\cdot)^\top \beta$. $\EE(\cdot|Y_n)$, $\Var(\cdot|Y_n)$, $\widetilde \EE(\cdot|Y_n)$, and $\widetilde \Var(\cdot|Y_n)$ are the posterior means and variances under the true posterior, the limiting posterior in Theorem \ref{thm:bvm2:joint}, and the limiting posterior in Theorem \ref{thm:OU1}. The true parameter values are $\theta_0=2$ and $\alpha_0=1$. All numbers are averaged over 100 macro replications. The standard errors are in the parentheses.}
\label{tab:W2.dim1.m}
\centering
{
\footnotesize
\begin{tabular}{c|ccccc}
\hline
$d=1$ & $n=25$ & $n=50$ & $n=100$ & $n=200$ & $n=400$ \\
\hline
$\EE(\theta|Y_n)$ & 2.7152 (0.0826) & 2.3743 (0.0505) & 2.2113 (0.0333) & 2.0659 (0.0193) & 2.0334 (0.0130) \\
$\Var(\theta|Y_n)$ & 1.3074 (0.0800) & 0.3269 (0.0143) & 0.1162 (0.0036) & 0.0465 (0.0009) & 0.0214 (0.0003) \\
\hdashline
$\widetilde \EE(\theta|Y_n)$ & 1.9597 (0.0630) &  2.0529 (0.0436) & 2.0664 (0.0311) & 1.9983 (0.0188) & 2.0004 (0.0127) \\
$\widetilde \Var(\theta|Y_n)$ & 0.3204 (0.0007) & 0.1604 (0.0003) & 0.0802 (0.0002) & 0.0399 (0.0001) & 0.0200 (0.0000) \\
\hline
$\EE(\alpha|Y_n)$ &  9.4697 (0.3697) & 8.5853 (0.4230) & 8.2324 (0.4578) & 8.1489 (0.4035) & 7.5458 (0.3547) \\
$\Var(\alpha|Y_n)$& 67.3011 (4.6625) & 46.8428 (4.0312) & 39.2354 (3.5382) & 36.8218 (2.9984) & 31.6578 (2.4480) \\
\hdashline
$\widetilde \EE(\alpha|Y_n)$ &  8.5783 (0.3278) & 8.1409 (0.3872) & 8.0208 (0.4350) & 8.0678 (0.4043) & 7.5148 (0.3569) \\
$\widetilde \Var(\alpha|Y_n)$ & 56.5017 (4.0339) & 42.9290 (3.6248) & 37.2086 (3.3029) & 35.9076 (2.9028) & 31.5305 (2.4689) \\
\hline
\end{tabular}
\begin{tabular}{c|ccccc}
\hline
$d=1$ & $n=25$ & $n=50$ & $n=100$ & $n=200$ & $n=400$ \\
\hline
\multirow{2}{*}{$W_2\left(\Pi(\ud\theta|Y_n),\Ncal\left(\ud\theta\Big|\widetilde\theta_{\alpha_0}, \tfrac{2\theta_0^2}{n}\right) \right)$} & 0.9812  & 0.3834  & 0.1672  & 0.0753  & 0.0370  \\
& (0.0360) & (0.0132) & (0.0050) & (0.0018) & (0.0008) \\
\multirow{2}{*}{$W_2(\Pi(\ud\alpha|Y_n),\widetilde \Pi(\ud\alpha|Y_n))$} & 1.3446  & 0.7517  & 0.5161  & 0.4720  & 0.3998 \\
& (0.0679) & (0.0521) & (0.0380) & (0.0269) & (0.0203) \\
\hline
\end{tabular}
}
\end{table}

\begin{table}[ht]
\caption{Parameter estimation and Wasserstein-2 distances between the true posterior and the limiting posteriors in Theorem \ref{thm:bvm2:joint} for the model with $\nu=1/2$, $d=2$ and with regression terms $\bbm(\cdot)^\top \beta$. $\EE(\cdot|Y_n)$, $\Var(\cdot|Y_n)$, $\widetilde \EE(\cdot|Y_n)$, and $\widetilde \Var(\cdot|Y_n)$ are the posterior means and variances under the true posterior and the limiting posterior in Theorem \ref{thm:bvm2:joint}. The true parameter values are $\theta_0=2$ and $\alpha_0=1$. All numbers are averaged over 100 macro replications. The standard errors are in the parentheses.}
\label{tab:W2.dim2.m}
\centering
{
\footnotesize
\begin{tabular}{c|ccc}
\hline
$d=2$ & $n=10^2$ & $n=20^2$ & $n=30^2$ \\
\hline
$\EE(\theta|Y_n)$ & 2.0309 (0.0307) & 2.0139 (0.0146)& 1.9947 (0.0096)  \\
$\Var(\theta|Y_n)$ & 0.0884 (0.0026) & 0.0210 (0.0003) & 0.0090 (0.0001) \\
\hdashline
$\widetilde \EE(\theta|Y_n)$ & 2.0223 (0.0320) & 2.0099 (0.0146) & 1.9927 (0.0097)  \\
$\widetilde \Var(\theta|Y_n)$ & 0.0800 (0.0001) & 0.0200 (0.0000) & 0.0089 (0.0000) \\
\hdashline
$\EE(\alpha|Y_n)$ & 1.1007 (0.0179) & 1.0905 (0.0197) & 1.0981 (0.0252)  \\
$\Var(\alpha|Y_n)$ & 1.0745 (0.0441) & 1.0086 (0.0352) &  1.0276 (0.0578) \\
\hdashline
$\widetilde \EE(\alpha|Y_n)$ & 1.1028 (0.0179) & 1.0767 (0.0427) & 1.0871 (0.0240)  \\
$\widetilde \Var(\alpha|Y_n)$ &  1.0952 (0.0462) & 1.0019 (0.0444) & 1.0192 (0.0588) \\
\hline
$W_2\left(\Pi(\ud\theta|Y_n),\Ncal\left(\ud\theta\Big|\widetilde\theta_{\alpha_0}, \tfrac{2\theta_0^2}{n}\right) \right)$ & 0.0595 (0.0215) & 0.0167 (0.0055) & 0.0086 (0.0025) \\
$W_2(\Pi(\ud\alpha|Y_n),\widetilde \Pi(\ud\alpha|Y_n))$ & 0.1011 (0.0365) & 0.1020 (0.0459)  & 0.0963 (0.0458) \\
\hline
\end{tabular}
}
\end{table}

\begin{figure}
\centering
\includegraphics[width=0.84\textwidth]{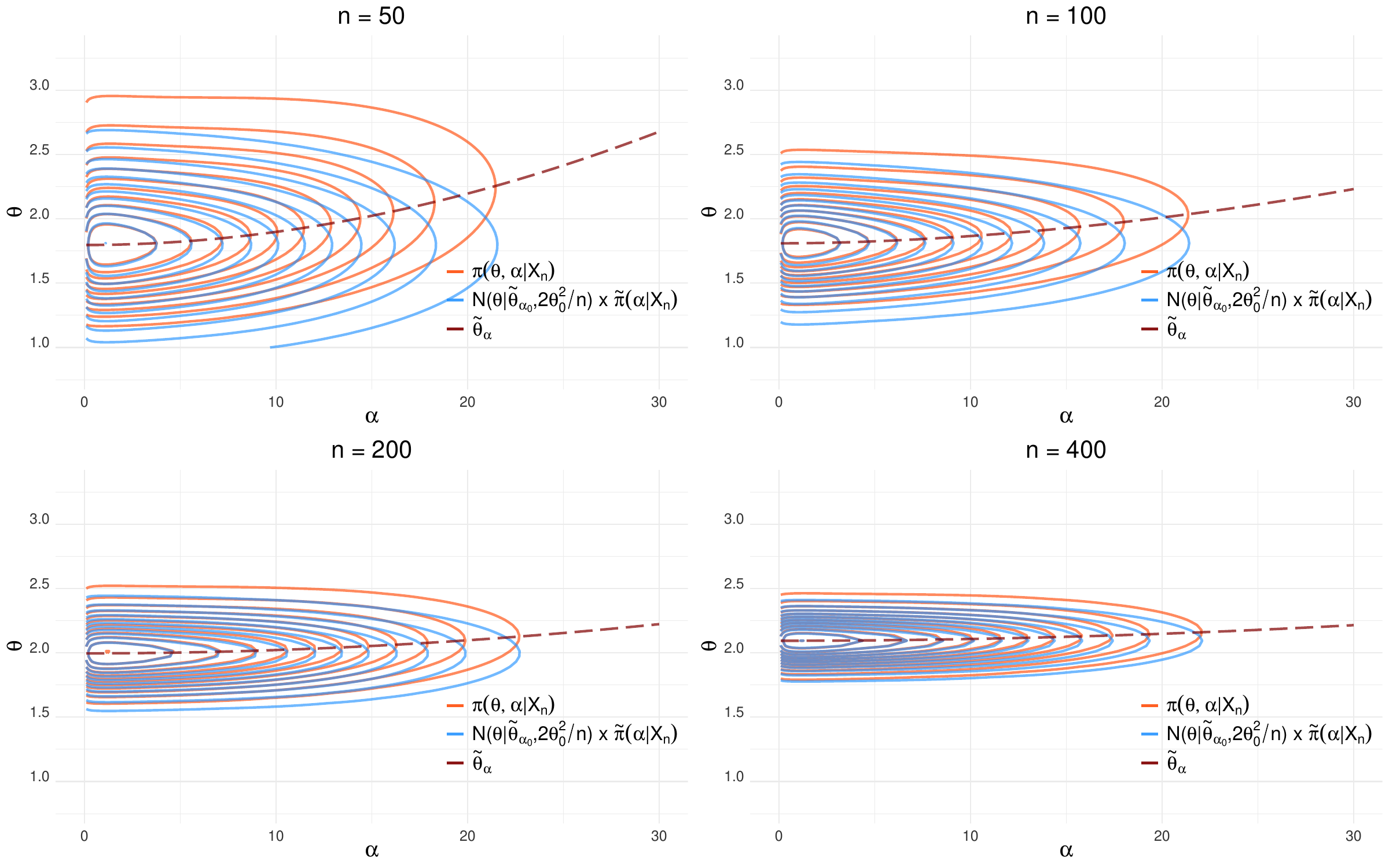}
\caption{Contour plots of the true joint posterior density $\pi(\theta,\alpha|Y_n)$ (in red) and the limiting posterior density $\mathcal{N}(\theta|\widetilde \theta_{\alpha_0}, 2\theta_0^2/n)\times \widetilde \pi(\alpha|Y_n)$ (in blue) in Theorem \ref{thm:bvm2:joint}, for the 1-d Ornstein-Uhlenbeck process with sample size $n=50,100,200,400$ in the model with regression terms $\bbm(\cdot)^\top\beta$. The dashed line is the ``ridge" REML $\widetilde\theta_{\alpha}$ (given in \eqref{tildetheta1}), the value of $\theta$ that maximizes the joint likelihood for each given $\alpha$. The true parameter values are $\theta_0=2$ and $\alpha_0=1$.}
\label{fig:contour2}
\end{figure}

Similar to Table \ref{tab:mse} in Section \ref{sec:simulation} of the main text, we further compute the asymptotic efficiency measure for the model with regression terms, using the relative error of GP predictive variance to the oracle predictive variance defined as
\begin{align}\label{eq:ratiodiff.m}
\mathsf{r}_{n}(s^*) = \left| \tfrac{\vv_n(s^*;\sigma^2,\alpha)}{\vv_n(s^*;\sigma_0^2,\alpha_0) } - 1 \right|,
\end{align}
over a large number of testing points $s^*$ from the Latin hypercube design, where $\vv_n(s^*;\sigma^2,\alpha)$ is given in \eqref{eq:varBLUP.true}. We again use $1000$ testing points in $\Scal=[0,1]$ for the $d=1$ case, and $2500$ testing points in $\Scal=[0,1]^2$ for the $d=2$ case. The posterior expectations of $\mathsf{r}_{n}(s^*)$ are reported in Table \ref{tab:mse.m}. We can see that for both $d=1$ and $d=2$ cases, the GP predictive variance based on a randomly drawn $(\sigma^2,\alpha)$ from the posterior has a decreasing relative error to the oracle predictive variance as $n$ increases.

\begin{table}[ht]
\caption{The posterior means of the ratio of predictive variance defined in \eqref{eq:ratiodiff.table} maximized over $2500$ testing points $s^*$ for the model with $\nu=1/2$ and with regression terms $\bbm(\cdot)^\top \beta$, averaged over 100 macro replications. The standard errors are in the parentheses.}
\label{tab:mse.m}
\centering
{
\footnotesize
\begin{tabular}{c|ccccc}
\hline
$d=1$ & $n=25$ & $n=50$ & $n=100$ & $n=200$ & $n=400$ \\
\hline
\multirow{2}{*}{$\EE\big[\max\limits_{s^*\in\Scal^*}\mathsf{r}_{n}(s^*)|Y_n\big]$} & 0.5452 & 0.3197 & 0.2055 & 0.1201 & 0.0795 \\
& (0.0520) & (0.0245) & (0.0142) & (0.0082) & (0.0055)  \\
\hline
\hline
$d=2$ & $n=10^2$ & $n=20^2$ & $n=30^2$ & \\
\hline
\multirow{2}{*}{$\EE\big[\max\limits_{s^*\in\Scal^*}\mathsf{r}_{n}(s^*)|Y_n\big]$} & 0.1458 & 0.0861 & 0.0696 & \\
& (0.0105) & (0.0054) & (0.0041) &   \\
\hline
\end{tabular}
}
\end{table}

For $\nu=1/4$, we summarize the estimation and prediction results of in Tables \ref{tab:W2.dim1.nu0.25}, \ref{tab:W2.dim2.nu0.25}, and \ref{tab:mse.nu0.25}. For $\nu=3/2$, we summarize the results in Tables \ref{tab:W2.dim1.nu1.5}, \ref{tab:W2.dim2.nu1.5}, and \ref{tab:mse.nu1.5}. All results are averaged over 100 macro simulations. In particular, Tables \ref{tab:W2.dim1.nu0.25}, \ref{tab:W2.dim2.nu0.25},  \ref{tab:W2.dim1.nu1.5}, and \ref{tab:W2.dim2.nu1.5} provide the estimation results for marginal posterior means, variances, and the $W_2$ distance to the limiting distribution for the parameters $\theta$ and $\alpha$, in $d=1$ and $d=2$ cases. Tables \ref{tab:mse.nu0.25} and \ref{tab:mse.nu1.5} provide the prediction results for the asymptotic efficiency measure $\mathsf{r}_{n}(s^*)$ defined in \eqref{eq:ratiodiff.m}.

Overall, the tables for $\nu=1/4$ and $\nu=3/2$ show similar trends as the tables for $\nu=1/2$. The marginal posterior distribution of $\theta$ becomes concentrated around the true value $\theta_0=2$ as $n$ increases in all cases, and the normal limiting distribution is accurate in approximation. The marginal posterior of $\alpha$ does not converge to the true value $\alpha_0=1$ with a non-shrinking variance. The asymptotic efficiency measure $\mathsf{r}_{n}(s^*)$ decreases quickly to zero as $n$ increases for all cases except the case of $\nu=3/2,d=1$, where $\mathsf{r}_{n}(s^*)$ seems to decrease slower with $n$.

\begin{table}[H]
\caption{Parameter estimation and Wasserstein-2 distances between the true posterior and the limiting posteriors in Theorem \ref{thm:bvm2:joint} for the model with $\nu=1/4$, $d=1$ and with regression terms $\bbm(\cdot)^\top \beta$. $\EE(\cdot|Y_n)$, $\Var(\cdot|Y_n)$, $\widetilde \EE(\cdot|Y_n)$, and $\widetilde \Var(\cdot|Y_n)$ are the posterior means and variances under the true posterior and the limiting posterior in Theorem \ref{thm:bvm2:joint}. The true parameter values are $\theta_0=2$ and $\alpha_0=1$. All numbers are averaged over 100 macro replications. The standard errors are in the parentheses.}
\label{tab:W2.dim1.nu0.25}
\centering
{
\footnotesize
\begin{tabular}{c|ccccc}
\hline
$d=1$ & $n=25$ & $n=50$ & $n=100$ & $n=200$ & $n=400$ \\
\hline
$\EE(\theta|Y_n)$ & 2.5894 (0.0795) & 2.3458 (0.0493) & 2.2013 (0.0332) & 2.0612 (0.0193) & 2.0331 (0.0129) \\
$\Var(\theta|Y_n)$ & 1.0144 (0.0620) & 0.3020 (0.0124) & 0.1143 (0.0035) & 0.0457 (0.0009) & 0.0215 (0.0003) \\
\hdashline
$\widetilde \EE(\theta|Y_n)$ & 1.9593 (0.0630) &  2.0576 (0.0437) & 2.0680 (0.0312) & 1.9980 (0.0187) & 2.0004 (0.0127) \\
$\widetilde \Var(\theta|Y_n)$ & 0.3204 (0.0007) & 0.1604 (0.0003) & 0.0802 (0.0002) & 0.0399 (0.0001) & 0.0200 (0.0000) \\
\hline
$\EE(\alpha|Y_n)$ &  10.6603 (0.2895) & 10.0808 (0.4566) & 9.2329 (0.4538) & 9.3252 (0.4351) & 8.7252 (0.3941) \\
$\Var(\alpha|Y_n)$& 96.3202 (4.4944) & 78.058 (6.4254) & 59.2596 (5.3221) & 56.6530 (4.9028) & 49.7329 (4.0853) \\
\hdashline
$\widetilde \EE(\alpha|Y_n)$ &  9.8786 (0.2713) & 9.6640 (0.4221) & 8.9950 (0.4364) & 9.2310 (0.4351) & 8.7241 (0.3947) \\
$\widetilde \Var(\alpha|Y_n)$ & 83.3761 (3.9361) & 71.4120 (5.7540) & 56.1845 (5.0395) & 55.3350 (4.7184) & 49.4396 (4.0132) \\
\hline
\end{tabular}
\begin{tabular}{c|ccccc}
\hline
$d=1$ & $n=25$ & $n=50$ & $n=100$ & $n=200$ & $n=400$ \\
\hline
\multirow{2}{*}{$W_2\left(\Pi(\ud\theta|Y_n),\Ncal\left(\ud\theta\Big|\widetilde\theta_{\alpha_0}, \tfrac{2\theta_0^2}{n}\right) \right)$} & 0.8028  & 0.3450  & 0.1562  & 0.0709  & 0.0366  \\
& (0.0291) & (0.0100) & (0.0041) & (0.0016) & (0.0008) \\
\multirow{2}{*}{$W_2(\Pi(\ud\alpha|Y_n),\widetilde \Pi(\ud\alpha|Y_n))$} & 1.4547  & 0.9177  & 0.6772  & 0.5813  & 0.5626 \\
& (0.0580) & (0.0667) & (0.0403) & (0.0336) & (0.0287) \\
\hline
\end{tabular}
}
\end{table}

\begin{table}[H]
\caption{Parameter estimation and Wasserstein-2 distances between the true posterior and the limiting posteriors in Theorem \ref{thm:bvm2:joint} for the model with $\nu=1/4$, $d=2$ and with regression terms $\bbm(\cdot)^\top \beta$. $\EE(\cdot|Y_n)$, $\Var(\cdot|Y_n)$, $\widetilde \EE(\cdot|Y_n)$, and $\widetilde \Var(\cdot|Y_n)$ are the posterior means and variances under the true posterior and the limiting posterior in Theorem \ref{thm:bvm2:joint}. The true parameter values are $\theta_0=2$ and $\alpha_0=1$. All numbers are averaged over 100 macro replications. The standard errors are in the parentheses.}
\label{tab:W2.dim2.nu0.25}
\centering
{
\footnotesize
\begin{tabular}{c|ccc}
\hline
$d=2$ & $n=10^2$ & $n=20^2$ & $n=30^2$ \\
\hline
$\EE(\theta|Y_n)$ & 2.0277 (0.0303) & 2.0138 (0.0146)& 1.9951 (0.0096)  \\
$\Var(\theta|Y_n)$ & 0.0880 (0.0026) & 0.0209 (0.0003) & 0.0089 (0.0001) \\
\hdashline
$\widetilde \EE(\theta|Y_n)$ & 2.0228 (0.0316) & 2.0104 (0.0146) & 1.9928 (0.0097)  \\
$\widetilde \Var(\theta|Y_n)$ & 0.0800 (0.0001) & 0.0200 (0.0000) & 0.0089 (0.0000) \\
\hdashline
$\EE(\alpha|Y_n)$ & 1.1063 (0.0134) & 1.1009 (0.0154) & 1.1035 (0.0196)  \\
$\Var(\alpha|Y_n)$ & 1.1027 (0.0328) & 1.0606 (0.0366) &  1.0937 (0.0519) \\
\hdashline
$\widetilde \EE(\alpha|Y_n)$ & 1.0986 (0.0125) & 1.0844 (0.0157) & 1.0903 (0.0186)  \\
$\widetilde \Var(\alpha|Y_n)$ &  1.0958 (0.0319) & 1.0411 (0.0392) & 1.0632 (0.0511) \\
\hline
$W_2\left(\Pi(\ud\theta|Y_n),\Ncal\left(\ud\theta\Big|\widetilde\theta_{\alpha_0}, \tfrac{2\theta_0^2}{n}\right) \right)$ & 0.0584 (0.0239) & 0.0169 (0.0053) & 0.0086 (0.0026) \\
$W_2(\Pi(\ud\alpha|Y_n),\widetilde \Pi(\ud\alpha|Y_n))$ & 0.1099 (0.0422) & 0.1075 (0.0475)  & 0.1037 (0.0433) \\
\hline
\end{tabular}
}
\end{table}

\begin{table}[H]
\caption{The posterior means of the ratio of predictive variance defined in \eqref{eq:ratiodiff.table} maximized over $2500$ testing points $s^*$ for the model with $\nu=1/4$ and with regression terms $\bbm(\cdot)^\top \beta$, averaged over 100 macro replications. The standard errors are in the parentheses.}
\label{tab:mse.nu0.25}
\centering
{
\footnotesize
\begin{tabular}{c|ccccc}
\hline
$d=1$ & $n=25$ & $n=50$ & $n=100$ & $n=200$ & $n=400$ \\
\hline
\multirow{2}{*}{$\EE\big[\max\limits_{s^*\in\Scal^*}\mathsf{r}_{n}(s^*)|Y_n\big]$} & 0.4872 & 0.3088 & 0.2019 & 0.1188 & 0.0794 \\
& (0.0434) & (0.0232) & (0.0141) & (0.0082) & (0.0055)  \\
\hline
\hline
$d=2$ & $n=10^2$ & $n=20^2$ & $n=30^2$ & \\
\hline
\multirow{2}{*}{$\EE\big[\max\limits_{s^*\in\Scal^*}\mathsf{r}_{n}(s^*)|Y_n\big]$} & 0.1480 & 0.0868 & 0.0697 & \\
& (0.0111) & (0.0053) & (0.0041) &   \\
\hline
\end{tabular}
}
\end{table}

\begin{table}[H]
\caption{Parameter estimation and Wasserstein-2 distances between the true posterior and the limiting posteriors in Theorem \ref{thm:bvm2:joint} for the model with $\nu=3/2$, $d=1$ and with regression terms $\bbm(\cdot)^\top \beta$. $\EE(\cdot|Y_n)$, $\Var(\cdot|Y_n)$, $\widetilde \EE(\cdot|Y_n)$, and $\widetilde \Var(\cdot|Y_n)$ are the posterior means and variances under the true posterior and the limiting posterior in Theorem \ref{thm:bvm2:joint}. The true parameter values are $\theta_0=2$ and $\alpha_0=1$. All numbers are averaged over 100 macro replications. The standard errors are in the parentheses.}
\label{tab:W2.dim1.nu1.5}
\centering
{
\footnotesize
\begin{tabular}{c|ccccc}
\hline
$d=1$ & $n=25$ & $n=50$ & $n=100$ & $n=200$ & $n=400$ \\
\hline
$\EE(\theta|Y_n)$ & 2.8495 (0.0913) & 2.3841 (0.0507) & 2.2177 (0.0344) & 2.0674 (0.0196) & 2.0305 (0.0142) \\
$\Var(\theta|Y_n)$ & 1.6724 (0.1238) & 0.3364 (0.0162) & 0.1167 (0.0038) & 0.0466 (0.0009) & 0.0215 (0.0003) \\
\hdashline
$\widetilde \EE(\theta|Y_n)$ & 1.9658 (0.0664) &  2.0504 (0.0427) & 2.0693 (0.0319) & 1.9972 (0.0189) & 1.9983 (0.0139) \\
$\widetilde \Var(\theta|Y_n)$ & 0.3204 (0.0007) & 0.1604 (0.0003) & 0.0802 (0.0002) & 0.0399 (0.0001) & 0.0200 (0.0000) \\
\hline
$\EE(\alpha|Y_n)$ &  6.0370 (0.3310) & 5.0376 (0.2576) & 4.8005 (0.3143) & 5.0102 (0.2890) & 4.2705 (0.2080) \\
$\Var(\alpha|Y_n)$& 17.9624 (1.5519) & 11.4256 (0.8790) & 9.3582 (0.7143) & 9.6294 (0.6663) & 7.9125 (0.5840) \\
\hdashline
$\widetilde \EE(\alpha|Y_n)$ &  5.5050 (0.2907) & 4.8298 (0.2484) & 4.7126 (0.3044) & 4.9523 (0.2834) & 4.2357 (0.2059) \\
$\widetilde \Var(\alpha|Y_n)$ & 15.6473 (1.3884) & 10.5893 (0.7763) & 9.0112 (0.6664) & 9.5040 (0.6710) & 7.8498 (0.5704) \\
\hline
\end{tabular}
\begin{tabular}{c|ccccc}
\hline
$d=1$ & $n=25$ & $n=50$ & $n=100$ & $n=200$ & $n=400$ \\
\hline
\multirow{2}{*}{$W_2\left(\Pi(\ud\theta|Y_n),\Ncal\left(\ud\theta\Big|\widetilde\theta_{\alpha_0}, \tfrac{2\theta_0^2}{n}\right) \right)$} & 1.1592  & 0.3972  & 0.1710  & 0.0781  & 0.0366  \\
& (0.0540) & (0.0155) & (0.0063) & (0.0021) & (0.0009) \\
\multirow{2}{*}{$W_2(\Pi(\ud\alpha|Y_n),\widetilde \Pi(\ud\alpha|Y_n))$} & 0.6743  & 0.3400  & 0.2608  & 0.2630  & 0.2205 \\
& (0.0486) & (0.0185) & (0.0193) & (0.0165) & (0.0124) \\
\hline
\end{tabular}
}
\end{table}

\begin{table}[H]
\caption{Parameter estimation and Wasserstein-2 distances between the true posterior and the limiting posteriors in Theorem \ref{thm:bvm2:joint} for the model with $\nu=3/2$, $d=2$ and with regression terms $\bbm(\cdot)^\top \beta$. $\EE(\cdot|Y_n)$, $\Var(\cdot|Y_n)$, $\widetilde \EE(\cdot|Y_n)$, and $\widetilde \Var(\cdot|Y_n)$ are the posterior means and variances under the true posterior and the limiting posterior in Theorem \ref{thm:bvm2:joint}. The true parameter values are $\theta_0=2$ and $\alpha_0=1$. All numbers are averaged over 100 macro replications. The standard errors are in the parentheses.}
\label{tab:W2.dim2.nu1.5}
\centering
{
\footnotesize
\begin{tabular}{c|ccc}
\hline
$d=2$ & $n=10^2$ & $n=20^2$ & $n=30^2$ \\
\hline
$\EE(\theta|Y_n)$ & 2.0504 (0.0315) & 2.0162 (0.0148)& 1.9956 (0.0096)  \\
$\Var(\theta|Y_n)$ & 0.0953 (0.0029) & 0.0211 (0.0003) & 0.0091 (0.0001) \\
\hdashline
$\widetilde \EE(\theta|Y_n)$ & 2.0293 (0.0328) & 2.0118 (0.0149) & 1.9936 (0.0097)  \\
$\widetilde \Var(\theta|Y_n)$ & 0.0800 (0.0001) & 0.0200 (0.0000) & 0.0089 (0.0000) \\
\hdashline
$\EE(\alpha|Y_n)$ & 1.1005 (0.0451) & 0.9758 (0.0304) & 1.0077 (0.0445)  \\
$\Var(\alpha|Y_n)$ & 0.8706 (0.0676) & 0.6304 (0.0363) &  0.6375 (0.0474) \\
\hdashline
$\widetilde \EE(\alpha|Y_n)$ & 1.1206 (0.0481) & 0.9722 (0.0306) & 0.9976 (0.0434)  \\
$\widetilde \Var(\alpha|Y_n)$ &  0.9128 (0.0737) & 0.6283 (0.0356) & 0.6353 (0.0479) \\
\hline
$W_2\left(\Pi(\ud\theta|Y_n),\Ncal\left(\ud\theta\Big|\widetilde\theta_{\alpha_0}, \tfrac{2\theta_0^2}{n}\right) \right)$ & 0.0732 (0.0327) & 0.0184 (0.0066) & 0.0102 (0.0046) \\
$W_2(\Pi(\ud\alpha|Y_n),\widetilde \Pi(\ud\alpha|Y_n))$ & 0.0801 (0.0470) & 0.0672 (0.0349)  & 0.0651 (0.0394) \\
\hline
\end{tabular}
}
\end{table}

\begin{table}[H]
\caption{The posterior means of the ratio of predictive variance defined in \eqref{eq:ratiodiff.table} maximized over $2500$ testing points $s^*$ for the model with $\nu=3/2$ and with regression terms $\bbm(\cdot)^\top \beta$, averaged over 100 macro replications. The standard errors are in the parentheses.}
\label{tab:mse.nu1.5}
\centering
{
\footnotesize
\begin{tabular}{c|ccccc}
\hline
$d=1$ & $n=25$ & $n=50$ & $n=100$ & $n=200$ & $n=400$ \\
\hline
\multirow{2}{*}{$\EE\big[\max\limits_{s^*\in\Scal^*}\mathsf{r}_{n}(s^*)|Y_n\big]$} & 0.8196 & 0.4218 & 0.3957 & 0.3152 & 0.2998 \\
& (0.5916) & (0.1615) & (0.2152) & (0.1874) & (0.1255)  \\
\hline
\hline
$d=2$ & $n=10^2$ & $n=20^2$ & $n=30^2$ & \\
\hline
\multirow{2}{*}{$\EE\big[\max\limits_{s^*\in\Scal^*}\mathsf{r}_{n}(s^*)|Y_n\big]$} &  0.1773 & 0.0935 & 0.0806 & \\
& (0.0122) & (0.0052) & (0.0083) &   \\
\hline
\end{tabular}
}
\end{table}

\bibliographystyle{Chicago}
\bibliography{bvm}

\end{document}